 \theoremstyle{plain}
  \newtheorem{thm}{Theorem}[section]
  \newtheorem{lem}[thm]{Lemma}
  \newtheorem{slem}[thm]{Sublemma}
  \newtheorem{cor}[thm]{Corollary}
  \newtheorem{prop}[thm]{Proposition}
  \newtheorem{clm}[thm]{Claim}
  \newtheorem{ass}[thm]{Assertion}
  \newtheorem{obs}[thm]{Observation}
\theoremstyle{definition}
  \newtheorem{defn}[thm]{Definition}
  \newtheorem{conj}[thm]{Conjecture}
  \newtheorem{ex}[thm]{Example}
  \newtheorem{prob}[thm]{Problem}
  \newtheorem{asmp}[thm]{Assumption}
\theoremstyle{remark}
  \newtheorem{rem}[thm]{Remark}
  \newtheorem{ack}{Acknowledgment}
\renewcommand{\labelenumi}{(\theenumi)}
\numberwithin{equation}{section}
\newenvironment{proclaim}[1]
  {\par \vspace{1.5ex} \noindent #1 \ }
  {\par \vspace{1.5ex}}
\DeclareMathOperator{\diam}{diam}               % diam
\DeclareMathOperator{\interior}{int}            % int
\DeclareMathOperator{\capp}{cap}                 % cap
\DeclareMathOperator{\rad}{rad}                 % rad
\DeclareMathOperator{\grad}{grad}               % grad
\DeclareMathOperator{\Ext}{Ext}                 % ext
\newcommand{\bdy}{\partial}                     % boundary
\newcommand{\field}[1]{\mathbb{#1}}
\newcommand{\C}{\field{C}}                      % C
\newcommand{\R}{\field{R}}                      % R
\newcommand{\Z}{\field{Z}}                      % Z
\DeclareMathOperator{\isom}{Isom}
\DeclareMathOperator{\CCap}{Cap}
\DeclareMathOperator{\Mo}{\mbox{M\"o}}
\DeclareMathOperator{\sys_1}{\mbox{sys}_1}
\newcommand{\cal}{\mathcal}
\renewcommand{\setminus}{-}
\newcommand{\pa}[0]{\partial}
\newcommand{\pmed}[0]{\par\medskip}
\newcommand{\pbig}[0]{\par\bigskip}
\newcommand{\beq}{\begin{equation}}
\newcommand{\eeq}{\end{equation}}
\begin{document}

                \setcounter{section}{-1}
%   \input{title}

% title.tex
\title{Collapsing 4-manifolds\\
  under a lower curvature bound\\
}

\author{Takao Yamaguchi}

\address{Faculty of Mathematics, Kyushu University, Fukuoka
  812-8581, JAPAN}

\curraddr{Institute of Mathematics, University of Tsukuba, Tsukuba
 \hspace*{0.4cm}  305-8571, JAPAN}

\email{takao@math.tsukuba.ac.jp}

%\dedicatory{Dedicated to Professor Katsuhiro Shiohama on his 
%   Sixtieth birthday}

\footnote{
This is a slightly modified version of arXiv:1205.0323, which was first distributed in June 2002.}

\date{\today}
%\date{June 28, 2002}

\subjclass{Primary 53C20, 53C23; Secondary 57N10, 57M99}

\keywords{collapsing, Gromov-Hausdorff convergence, Alexandrov
  spaces, topology of 4-manifolds }

\begin{abstract}
  In this paper we describe the topology
  of 4-dimensional closed orientable Riemannian manifolds 
  with a uniform lower bound
  of sectional curvature and with a uniform upper bound of diameter
  which collapse to metric spaces of lower dimensions.
  This enables us to understand the set of homeomorphism classes of
  closed orientable $4$-manifolds with those
  geometric bounds on curvature and diameter.
  In the course of the proof of the above results,
  we obtain the soul theorem for 4-dimensional complete
  noncompact Alexandrov spaces with nonnegative curvature.
  A metric classification for 3-dimensional complete
  Alexandrov spaces with nonnegative curvature is also
  given.
\end{abstract}

\maketitle

                  \tableofcontents

%                   \input{intro}

%\intro.tex 
\section{Introduction} \label{sec:intro}

The study of the Gromov-Hausdorff convergence of 
Riemannian manifolds has been a significant branch 
in differential geometry. In the study of convergence or collapsing
of Riemannian manifolds,  one usually considers  
a curvature bound of Riemannian manifolds.
Our main concern is the study of the collapsing phenomena
of Riemannian manifolds under a uniform lower bound on the 
sectional curvature $K$. 
When the absolute value $|K|$ is uniformly bounded, Cheeger, Fukaya and 
Gromov \cite{CFG} developed a general theory of collapsing,
where the collapsing phenomena were described in terms of the
generalized group actions by nilpotent groups,
called $N$-structures.
It should be noted that these actions are not permitted to have 
fixed points.
Now if we turn attention to the case  when 
$K$ has only a uniform lower bound,
we recognize several kinds of different collapsing 
phenomena in this new situation:
\begin{itemize}
 \item  {\it Any} effective action on a compact manifold by 
    a compact connected Lie group of positive dimension causes a collapsing 
    under a lower curvature bound (\cite{SY:3mfd});
 \item There is a phenomenon of curvature-concentration, for instance  
    in the convergence of surfaces
    to a singular surface with conical singularities, 
    under a lower curvature bound.
\end{itemize}
Thus the study of collapsing of Riemannian manifolds
under a lower curvature bound will enable us to 
understand a wider class of collapsing phenomena
than the case of absolute bound on $K$. 

In spite of several studies of collapsing 
or convergence under a lower sectional curvature bound,
the general collapsing structure is still unclear.
Recently in \cite{SY:3mfd}, we have classified all the collapsing 
phenomena of $3$-manifolds under a lower curvature bound.
In the present paper, we discuss and determine the collapsing of 
$4$-manifolds under a lower curvature bound.
Note that $S^4$, $\pm\C P^2$, $S^2\times S^2$ and arbitrary 
connected sum of them all admit nontrivial $S^1$-actions,
and therefore can collapse under $K\ge -1$ while
no simply connected closed $4$-manifold can
collapse under $|K|\le 1$ because of nonzero Euler 
characteristics (\cite{Ch:finite}, \cite{CG:collapseI}).

For a given $D>0$, let $\cal M(4,D)$ denote the family of closed 
orientable Riemannian $4$-manifolds $M$ with 
a lower sectional curvature bound $K\ge -1$ and an upper diameter 
bound $\diam (M)\le D$. Since $\cal M(4,D)$ is precompact(\cite{GLP}),
it is quite natural to consider an infinite sequence 
$M_i^4$ in $\cal M(4,D)$ converging to a compact metric space $X$
with respect to the Gromov-Hausdorff distance.
Now it is well-known that $X$ has the structure of Alexandrov 
space with curvature $\ge -1$ and of dimension $\le 4$.
It is a quite important problem to reproduce the topology of $M_i^4$
for sufficiently large $i$  by using the information on the singularities
of $X$. 
This enables us to understand the elements of a small neighborhood
of $X$ in $\cal M(4,D)$ and therefore to describe the structure of 
$\cal M(4,D)$ itself because of the precompactness.

In the case of $\dim X=4$ or $\dim X=0$, we know some
answers for this problem ( \cite{Pr:alex2}, \cite{FY:fundgp}).
In this paper, we discuss the cases of $1\le \dim X\le 3$.
Our results can be simply stated as follows:

\begin{thm} \label{thm:general}
Suppose $1\le \dim X\le 3$. Then $M_i^4$ has a singular fibre 
structure over $X$ in a generalized sense.
\end{thm}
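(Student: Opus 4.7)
The plan is, for each sufficiently large $i$, to construct a map $f_i : M_i^4 \to X$ close to a given Gromov-Hausdorff approximation whose fibres realize the asserted singular fibration, and then to classify its local models. On the $\varepsilon$-regular part $X^{\text{reg}}_\varepsilon$ (points whose space of directions is close to a standard sphere of dimension $\dim X - 1$), I would apply the fibration theorem under a lower sectional curvature bound to obtain a locally trivial fibration with fibre a closed almost nonnegatively curved Alexandrov space of dimension $4 - \dim X$. For $\dim X = 3$ the generic fibre is $S^1$, for $\dim X = 2$ it is a closed surface, and for $\dim X = 1$ it is a closed $3$-dimensional almost nonnegatively curved space (controlled via \cite{SY:3mfd}). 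This already accounts for the regular part of the claimed structure.

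The substantive work occurs near singular points $p \in X$. Rescaling the converging sequence at points approaching $p$ and passing to a subsequence, I would extract a blow-up limit $Y_p$, which is a complete noncompact $4$-dimensional Alexandrov space of nonnegative curvature. Here the soul theorem for such spaces, announced in the abstract, furnishes a soul $S_p$ of lower dimension together with a distance-nonincreasing deformation retraction $Y_p \to S_p$. The soul $S_p$, being a nonnegatively curved Alexandrov space of dimension at most $3$, is then identified by the metric classification of $3$-dimensional nonnegatively curved Alexandrov spaces also announced in the abstract. Combined with Perelman's stability theorem applied at an appropriate scale, these two ingredients determine the homeomorphism type of a neighbourhood of $f_i^{-1}(p)$ in $M_i^4$ and give the local model of the singular fibre over $p$.

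The assembly into a global generalized fibration is then carried out according to $\dim X$. When $\dim X = 3$ one expects a singular Seifert-like structure with isolated exceptional fibres and a boundary stratum where the circle fibre collapses; when $\dim X = 1$, $X$ is a closed interval or circle and the endpoint fibres are precisely the local models above; the case $\dim X = 2$ requires simultaneous bookkeeping of codimension-$1$ boundary strata and codimension-$2$ cone-type singularities, together with the way $2$-dimensional fibres degenerate above them. Overlapping local pieces are matched by another application of the stability theorem.

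The main obstacle is twofold. First, one must prove the soul theorem in dimension $4$ in a form sharp enough to recover the topology of an entire neighbourhood of the soul in $Y_p$, not merely its homotopy type; this is why a substantial portion of the paper is devoted to that statement. Second, one must match local models at points where several kinds of degeneration meet, and rule out pathological rescaled limits; I expect the case $\dim X = 2$, in which both fibres and singularities are two-dimensional, to be the most delicate, especially at points where a boundary stratum of $X$ meets a codimension-$2$ singularity.
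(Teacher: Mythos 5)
Your overall strategy -- fibration theorem on the regular part, rescaling near singular points, soul theorem for the rescaled limit, stability theorem, then patching -- matches the paper's architecture in outline. But there is a concrete gap in the rescaling step that the paper spends much of its length resolving, and which your proposal silently assumes away.

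You assert that the blow-up limit $Y_p$ obtained by rescaling near a singular point is ``a complete noncompact $4$-dimensional Alexandrov space of nonnegative curvature.'' This is only guaranteed when $\dim X = 3$. The rescaling theorem (Theorem \ref{thm:rescal}) yields $\dim Y \ge \dim X + 1$, not $\dim Y = 4$. When $\dim X = 2$ or $\dim X = 1$, the rescaled limit can easily be $2$- or $3$-dimensional; in other words, after blowing up, the $4$-manifolds are \emph{still collapsing}, just to a higher-dimensional nonnegatively curved noncompact space, and the topology of the metric ball cannot be read off from a soul and the stability theorem alone. This is precisely why the paper's argument is inductive in the dimension of the limit: one first settles collapsing to $3$-dimensional compact spaces (Sections \ref{sec:dim3nobdyloc}--\ref{sec:dim3wbdy}), then classifies all collapses of pointed $4$-manifolds to noncompact nonnegatively curved $3$-spaces (Theorem \ref{thm:dim3class} in Section \ref{sec:dim3positive}), and only then can use that classification to determine the singular fibre near a point of a $2$-dimensional limit, and likewise down to $\dim X = 1$ via Section \ref{sec:dim2positive}. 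Your proposal needs this classification step $(c)$ of the program in the introduction, not merely the soul theorem plus stability; without it, the case $\dim Y < 4$ is unresolved.

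A second, smaller omission: when $X$ has boundary the regular part is not covered by the ordinary fibration theorem. The paper develops an equivariant fibration-capping theorem (Theorem \ref{thm:cap}, Part \ref{part:cap}) to produce the decomposition $N = N_{\rm int} \cup N_{\rm cap}$ with bundle maps over $Y_\nu$ and $\partial_0 Y_\nu$, and in the $\dim X = 3$ case additionally relies on the collar theorem for Alexandrov $3$-spaces (Theorem \ref{thm:collar}) and the generalized Schoenflies theorem to build the $D^2$-bundle over $\partial X^3$. Simply invoking ``stability at an appropriate scale'' does not produce these compatible boundary fibre structures, nor the local $S^1$-action (for $\dim X = 3$) whose gluing across charts (Lemma \ref{lem:extend} and Theorem \ref{thm:patch}) is what actually yields the global generalized fibration.
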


Here the fibre type is constant along each strata of a stratification 
of $X$, but may change when the strata changes.
In particular, $M_i^4$ is a fibre bundle over $X$ if $X$ has 
no {\it essential singular points}.
 
To state our results more precisely and more explicitly, let us describe
the collapsing structure in each case of the dimension of $X$.

We begin with the case of $\dim X=3$.

\begin{thm} \label{thm:dim3}
Suppose that $M_i^4$ collapses to a three-dimensional  compact Alexandrov
space $X^3$ under $K\ge -1$. 
Then there exists a locally smooth, local $S^1$-action 
$\psi_i$ on $M_i^4$ such that the orbit space 
$M_i^4/\psi_i$ is homeomorphic to $X^3$.
\end{thm}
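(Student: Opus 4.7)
The plan is to construct the local $S^1$-action in three stages: produce a singular fibration $f_i : M_i^4 \to X^3$ from the collapse, determine the allowed local models, and patch the locally defined $S^1$-actions into a global one.

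First, I would stratify $X^3$ using the standard topological stratification of $3$-dimensional Alexandrov spaces: the interior where $\Sigma_p X^3\cong S^2$, a $2$-dimensional boundary stratum where $\Sigma_p X^3$ is a closed hemisphere, a $1$-dimensional edge stratum, and a finite set of essentially singular vertices. Over the complement of the vertices, Yamaguchi's fibration theorem for collapses under a lower curvature bound provides, for $i$ large, a continuous map $f_i:M_i^4 \to X^3$ that is a singular fibration with connected $1$-dimensional fibres. The only compact $1$-dimensional Alexandrov spaces of nonnegative curvature are $S^1$ and the interval $I$, corresponding to a free circle action and the reflection quotient of one.

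Second, I would identify the local model at each type of stratum by a blow-up argument. Rescaling $M_i^4$ around a point $p \in X^3$ by the inverse of the local collapsing scale yields a $4$-dimensional complete noncompact Alexandrov space of nonnegative curvature which collapses to $T_pX^3$. The soul theorem for $4$-dimensional nonnegatively curved Alexandrov spaces, stated in the abstract as one of the tools developed here, constrains this limit to be (topologically) a singular $1$-dimensional disk bundle over a soul, carrying a canonical rotation action on the normal direction. At boundary, edge, and vertex points of $X^3$, the isotropy picks up an involution, a dihedral action, and a finite cyclic/dihedral subgroup of $O(2)$, respectively, each dictated by the local topology of $X^3$.

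Third, I would assemble these canonical local actions into a global local $S^1$-action $\psi_i$ on $M_i^4$. The principal obstacle is the compatibility check on overlaps: two candidate local actions must coincide up to an isotopy through $S^1$-actions, so that they glue to a bona fide local action rather than merely to a singular circle foliation. I would address this by choosing the rotation canonically from the normal exponential map to a soul on each chart, then smoothing the transition cocycle via an equivariant partition of unity subordinate to the stratification, in the spirit of the gluing arguments used in the $3$-manifold paper \cite{SY:3mfd}.

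Finally, to identify $M_i^4/\psi_i$ with $X^3$ I would note that $f_i$ factors through the orbit projection, both maps are proper surjections with the same point-fibres, and the induced map $M_i^4/\psi_i \to X^3$ is a stratified homeomorphism by a stratum-by-stratum check using the local models above.
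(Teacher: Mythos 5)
Your overall architecture — local construction via blowup, then global patching — does match the paper's strategy, but there are two genuine gaps in the execution that would not survive being made precise.

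\textbf{The patching step is not valid as stated.} You propose to "smooth the transition cocycle via an equivariant partition of unity subordinate to the stratification." There is no such operation for $S^1$-actions: a convex combination of two circle actions is not a group action, and transition cocycles between local actions do not live in a vector space where averaging makes sense. The paper's patching argument (Theorem \ref{thm:patch}, proved in Section \ref{sec:dim3nobdyglob}) instead relies on Lemma \ref{lem:extend}, a hands-on extension result for $S^1$-actions on $S^1\times D^2\times I$ that reduces the compatibility check to the isotopy class of the gluing homeomorphism, together with Fintushel's equivariant classification (Proposition \ref{prop:circ-class}) which says that two actions with isomorphic weighted orbit data are equivariantly homeomorphic. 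The delicate case is when two small singular balls $B^{f_i}(p_i^\alpha, r_\alpha)$ and $B^{f_i}(p_i^\beta, r_\beta)$ overlap along a common piece of the singular locus; the paper resolves this by an explicit deformation inside a product $(S^1\times D^2)\times I$ rather than by any averaging.

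\textbf{The "canonical rotation from the normal exponential map to a soul" does not produce the local action.} After rescaling, the pointed limit $(Y,y_0)$ of $(\tfrac{1}{\delta_i}M_i^4,\hat p_i)$ has $\dim Y = 4$ (Corollary \ref{cor:dim3top}), and by the soul theorem $B(p_i,r)\simeq D^4$ or $S^1\times D^3$, but this gives only the \emph{topology} of the small ball, not an $S^1$-action on it — the normal bundle of the soul of $Y$ has no preferred rotation, and $Y$ is the limit of the rescaled $M_i^4$, not the manifold on which the action lives. The action has to be \emph{constructed} on $B^{f_i}(p_i,r)$ by first showing $\partial B^{f_i}(p_i,r)$ is a Seifert fibred space over $\partial B(p,r)$ with at most two singular fibres (Proposition \ref{prop:bdyseif}, which uses the orientability of $M_i^4$ and the classification of Seifert fibrations on $S^3$ and $S^1\times S^2$), and then extending this boundary Seifert structure into the ball using Lemma \ref{lem:extend} again. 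The control on the multiplicities (Proposition \ref{prop:lochopf}) comes from a further equivariant rescaling argument, not from the soul theorem. You should also note that along the way one must prove $X^3$ is a topological manifold (Proposition \ref{prop:topmfd}); your stratification simply assumes this and even posits a $1$-dimensional "edge stratum" that does not occur for orientable collapses. Finally, the treatment of $\partial X^3$ is a separate and substantial piece: the part of $M_i^4$ over a collar of $\partial X^3$ is a $D^2$-bundle over $\partial X^3_\nu$ compatible with the interior $S^1$-bundle (Theorem \ref{thm:D2along}), and proving this requires the metric collar theorem \ref{thm:collar} and the capping Theorem \ref{thm:orig-cap}, not merely the observation that the fibre over boundary points is an interval.
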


\begin{rem} \label{rem:dim3infom}
In Theorem \ref{thm:dim3}, one can obtain the information on the 
slice representations of the isotropy groups at the fixed points or 
the exceptional orbits of $\psi_i$ from the information on the 
singularities of the corresponding singular points of $X$ as described
below.  See Theorems \ref{thm:patch} and 
\ref{thm:patchwbdy} for details:
\begin{enumerate}
 \item  We have a bijection $\iota_i$ of the fixed point set 
   $F(\psi_i)$ to the union of $\partial X^3$ and a subset of 
   $\Ext(\interior X^3)$, where $\Ext(\interior X^3)$, the set of 
   {\it extremal points} of $\interior X^3$, denotes the set of points 
   $p$ of $\interior X^3$ whose spaces of directions, $\Sigma_p$,
   have diameters $\le\pi/2$.
 \item The singular locus $S(\psi_i)\subset X$ of $\psi_i$
   is a quasigeodesic consisting of essential singular points of $X^3$.
\end{enumerate}
\end{rem}

\begin{rem} \label{rem:dim3mfd}
In a course of the proof of Theorem \ref{thm:dim3}, 
we prove that the limit space $X^3$ is a topological manifold
(Proposition \ref{prop:topmfd}).
\end{rem}

 In \cite{Fn:circleI}, Fintushel classified locally smooth 
$S^1$-actions on simply connected
closed $4$-manifolds. Applying this result to Theorem \ref{thm:dim3},
we obtain 

\begin{cor} \label{cor:sum}
Under the hypothesis of Theorem \ref{thm:dim3}, suppose that 
$M_i^4$ is simply connected. Then 
$M_i^4$ is homeomorphic to the connected sum 
  \begin{equation}
      S^4\,\#\, k_i\C P^2\,\#\, \ell_i(-\C P^2)\,\#\, 
              m_i(S^2\times S^2), \label{eq:conn-sum}
  \end{equation}
    where $k_i$, $\ell_i$ and $m_i$ have a definite upper bound
    in terms of the number of the extremal points of $\interior X^3$ and 
    the number, say $\alpha(\partial X^3)$, of the connected components 
    of $\partial X^3:$
     $$
      k_i+\ell_i+2m_i\le \#\, \Ext(\interior X^3)
            + 2 \alpha(\partial X^3) -2.
     $$
\end{cor}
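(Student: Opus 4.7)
The plan is to combine Theorem \ref{thm:dim3} with Fintushel's classification \cite{Fn:circleI}. By Theorem \ref{thm:dim3}, $M_i^4$ carries a locally smooth, local $S^1$-action $\psi_i$ whose orbit space is homeomorphic to $X^3$. The first step is to upgrade $\psi_i$ to a global $S^1$-action. A local $S^1$-action fails to globalize precisely when it has nontrivial monodromy along loops in the regular stratum of the orbit space; such monodromy would produce a nontrivial loop in $M_i^4$, so the hypothesis $\pi_1(M_i^4)=0$ forces $\psi_i$ to be a genuine global $S^1$-action. Fintushel's theorem applies and yields the connected-sum decomposition
\[
M_i^4\cong S^4\,\#\,k_i\C P^2\,\#\,\ell_i(-\C P^2)\,\#\,m_i(S^2\times S^2).
\]

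For the numerical bound I would compare two expressions for $\chi(M_i^4)$. Using $\chi(S^4)=2$, $\chi(\pm\C P^2)=3$, $\chi(S^2\times S^2)=4$ and the connected-sum formula gives
\[
\chi(M_i^4)=2+k_i+\ell_i+2m_i.
\]
On the other hand, for a smooth $S^1$-action on a closed oriented manifold the Euler characteristic of the total space equals that of the fixed-point set $F(\psi_i)$. By Remark \ref{rem:dim3infom}(1), the connected components of $F(\psi_i)$ are in bijection with $S\cup\bdy X^3$ for some $S\subset\Ext(\interior X^3)$: points of $S$ correspond to isolated fixed points, while each component of $\bdy X^3$ corresponds to a connected $2$-dimensional component of $F(\psi_i)$. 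For a smooth effective $S^1$-action on a simply connected closed $4$-manifold, every $2$-dimensional fixed component is a $2$-sphere, so each of the $\alpha(\bdy X^3)$ surface components contributes $\chi(S^2)=2$, whence
\[
\chi(M_i^4)=\#\,S+2\,\alpha(\bdy X^3)\le\#\,\Ext(\interior X^3)+2\,\alpha(\bdy X^3).
\]
Equating the two expressions and subtracting $2$ delivers the claimed estimate $k_i+\ell_i+2m_i\le\#\,\Ext(\interior X^3)+2\,\alpha(\bdy X^3)-2$.

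The two technical points requiring verification are (i) the promotion of $\psi_i$ from a local to a global $S^1$-action and (ii) the identification of the $2$-dimensional components of $F(\psi_i)$ as $2$-spheres. The first is an application of covering-space theory to the monodromy of $\psi_i$, using $\pi_1(M_i^4)=0$; the second is the classical fact, built into Fintushel's setup, that a smooth $S^1$-action on a simply connected closed $4$-manifold can only have $2$-spheres as its $2$-dimensional fixed components. Neither point is deep, and I expect the main substance of the corollary to lie in the invocation of Theorem \ref{thm:dim3} and Fintushel's classification rather than in the routine bookkeeping that combines them.
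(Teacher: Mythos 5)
Your argument is essentially the same as the paper's: invoke Theorem \ref{thm:dim3} for the local $S^1$-action, use simple connectedness to promote it to a global action, apply Fintushel's classification, and then combine $\chi(M_i^4)=2+k_i+\ell_i+2m_i$ with $\chi(F(\psi_i))=\chi(M_i^4)$ and the count of fixed components from Remark \ref{rem:dim3infom}. Two small points where the paper's route is cleaner. First, the local-to-global upgrade in the paper goes through Lemma \ref{lem:simply}: part (1) gives $\pi_1(X^3)=1$, hence $X^3$ and $X^3-S^*$ are orientable, and then part (2) (orientability of $M_i^4$ and of $X^3-S^*$) yields the global $S^1$-action. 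Your phrasing ``such monodromy would produce a nontrivial loop in $M_i^4$'' is not a correct justification as stated --- nontrivial monodromy is an obstruction living in $H^1(X^3-S^*;\Z_2)$, and a nontrivial class there need not produce a nontrivial element of $\pi_1(M_i^4)$ (indeed $\pi_1(X^3-S^*)$ can be nontrivial even when $X^3$ is simply connected); the actual argument is about orientability. Second, the paper deduces that the $2$-dimensional fixed components are spheres by noting that $X^3$ is a simply connected topological $3$-manifold, so every boundary component is $S^2$, rather than citing the Fintushel-side fact; both are fine. Finally, the paper cites Freedman alongside Fintushel, since Fintushel's theorem gives an $h$-cobordism to the connected sum and Freedman is needed for the homeomorphism --- worth keeping that citation.
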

 
Actually $M_i^4$ is diffeomorphic to the connected sum
\eqref{eq:conn-sum} (see Remark \ref{rem:conn-sum}).
\par

Now we turn to the case of $\dim X=2$. First we consider the case when 
$X^2$ has no boundary.
By definition, a closed $4$-manifold is a Seifert $T^2$-bundle
over $X^2$ if there exists a surjective continuous map
$f:M^4\to X^2$ such that any point $p\in X^2$ has a 
neighborhood $U$ 
admitting the following commutative diagram $:$
    \begin{equation*}
     \begin{CD}
       f^{-1}(U) @>{\simeq} >> (T^2\times D^2)/\Z_m \\
          @V{f} VV      @VV{\pi} V    \\
           U @>    {\simeq} >>    D^2/\Z_m,
     \end{CD}
    \end{equation*}
where $(T^2\times D^2)/\Z_m$ denotes a free diagonal $\Z_m$-quotient;
the $\Z_m$-action is free on the $T^2$-factor and 
by rotation on the $D^2$-factor,
$p$ corresponds to the $\Z_m$-fixed point of $D^2$ in the above
diagram, and $m$ is a positive integer called the multiplicity  
of the torus fibre $f^{-1}(p)$.

\begin{thm} \label{thm:dim2nobdy}
Suppose that $M_i^4$ collapses to a two-dimensional compact Alexandrov
space $X^2$ without boundary under $K\ge -1$. 
Then $M_i^4$ is homeomorphic to either an $S^2$-bundle over $X^2$ or 
a Seifert $T^2$-bundle over $X^2$.
\end{thm}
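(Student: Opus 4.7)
The plan is to apply the equivariant fibration theorem on the regular part of $X^2$, and then perform a careful local analysis near each conical singularity of $X^2$ to extend the fibre structure globally. Since $X^2$ is a $2$-dimensional Alexandrov space without boundary, it is a closed topological surface, and by the Gauss--Bonnet bound its essential singular points form a finite subset $\Sigma \subset X^2$ consisting of the cone points of $X^2$.

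Over $X^2 \setminus \Sigma$, the fibration theorem gives, for $i$ large, an honest fibre bundle with fibre a closed $2$-manifold $F$ carrying metrics of almost nonnegative curvature in the rescaled limit. Thus $F \in \{S^2, T^2, \R P^2, K\}$, and connectedness of $X^2 \setminus \Sigma$ makes $F$ constant. Orientability of $M_i^4$ rules out the non-orientable cases: for any $F$-bundle $\pi \colon E \to B$, the splitting $TE = T^{\mathrm{vert}}E \oplus \pi^*TB$ gives $w_1(E) = w_1(T^{\mathrm{vert}}E) + \pi^* w_1(B)$, and restriction to any fibre yields $w_1(E)|_F = w_1(TF)$ since $(\pi^* w_1(B))|_F = 0$. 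When $F$ is non-orientable this forces $w_1(E) \neq 0$, contradicting orientability of $M_i^4$. Hence $F$ is $S^2$ or $T^2$, and this dichotomy determines the two conclusions of the theorem.

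The main step is the local analysis at each $p \in \Sigma$. Choose a small metric disk $U \ni p$ whose cone angle at $p$ equals $2\pi/m$, and pass to the $m$-fold cover $\tilde U \to U$ branched at $p$. Over $\tilde U \simeq D^2$ the lifted fibration has no cone point, and contractibility of $D^2$ identifies the pulled-back bundle with a trivial $F$-bundle $F \times \tilde U$, on which the deck group $\Z_m$ acts diagonally as a rotation of order $m$ on $\tilde U$ and as some action on $F$. Because $M_i^4$ is a manifold, this $\Z_m$-action is forced to produce only manifold quotient points, and in particular the induced $\Z_m$-action on the central fibre $F \times \{0\}$ must be free. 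When $F = S^2$, since $S^2$ admits no nontrivial free finite group action, the $\Z_m$-action on $S^2$ is trivial and $f_i^{-1}(U)$ is homeomorphic to $S^2 \times D^2$; the bundle thus extends smoothly across $p$. When $F = T^2$, the $\Z_m$-action may be any free action on $T^2$ (for instance by translation), and $f_i^{-1}(U) \cong (T^2 \times D^2)/\Z_m$ is in precisely the Seifert local form required by the definition.

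Gluing these local descriptions across $\Sigma$ with the regular fibration over $X^2 \setminus \Sigma$ produces either a global $S^2$-bundle or a global Seifert $T^2$-bundle on $M_i^4$ over $X^2$. The hardest part is the local analysis at singular points: establishing that the local collapse pulls back to a trivial bundle on the branched cover and that the $\Z_m$-deck action takes the required diagonal form. This requires the full strength of the equivariant fibration theorem extended to essential singular points, together with the observation that $M_i^4$ being a manifold forbids all non-free orbifold-type quotients.
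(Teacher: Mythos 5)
Your outline over the regular part of $X^2$ is sound: the fibration theorem gives an $F$-bundle with $F$ an almost nonnegatively curved closed surface, and orientability of $M_i^4$ together with the $w_1$ splitting rules out non-orientable fibres. But the local analysis at a cone point — which you correctly identify as the crux — has several real gaps, and the approach you sketch is not the one that works.

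First, you assume the cone angle at $p$ equals $2\pi/m$ for an integer $m$, so that the $m$-fold branched cover of a neighborhood of $p$ is smooth. For a general Alexandrov surface arising as a collapsed limit, $L(\Sigma_p)$ is merely bounded by $2\pi$; it need not be of the form $2\pi/m$. The multiplicity $m$ of the singular torus fibre in the theorem satisfies $m \le 2\pi/L(\Sigma_p)$, an inequality, not an equality — the branched cover of the base does not desingularize and is not the right object. Second, and more seriously, you claim the lifted fibration is trivial over $\tilde U \simeq D^2$ and then deduce that the $\Z_m$-deck action is diagonal on $F \times D^2$. But the fibration theorem gives you a fibre bundle only away from $p$; there is no fibration near $p_i$ to pull back, and the topology of $B(p_i, r)$ is exactly the unknown quantity. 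Asserting that $f_i^{-1}(U) \cong (F \times D^2)/\Z_m$ and then checking the $\Z_m$-action is free because $M_i^4$ is a manifold presupposes the structure you are trying to establish. Third, your aside that $S^2$ admits no nontrivial free finite group action is false (the antipodal $\Z_2$ is free); what you need is that no nontrivial orientation-preserving finite cyclic action on $S^2$ is free, and orientation-preservation must itself be argued from orientability of $M_i^4$. Fourth, even granting the local model, gluing the local bundle structure across $\Sigma$ with the regular fibration is not automatic: for $F = S^2$ the paper has to compute $\cal M_+(S^1 \times S^2) \simeq \Z_2 \oplus \Z_2$ and verify that every gluing class is realized by a bundle chart, which your sketch does not address.

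The machinery that actually closes these gaps, and that your proposal lacks, is the rescaling argument (Theorem \ref{thm:rescal}): choose $\delta_i \to 0$ and $\hat p_i \to p$ so that $B(p_i,r) \simeq B(\hat p_i, R\delta_i)$, pass to a higher-dimensional limit $(Y, y_0)$ of $(\tfrac{1}{\delta_i}M_i^4, \hat p_i)$ with $\dim Y \ge 3$, and then use the generalized soul theorem and the classification of collapsings to $3$-dimensional nonnegatively curved spaces to nail down the topology of $B(p_i, r)$. For the torus case one additionally analyzes the deck transformation group of the universal cover of $B(p_i,r)$ and its equivariant Gromov--Hausdorff limit to produce the $\Z_m$-structure and the bound $m \le 2\pi/L(\Sigma_p)$. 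Without this step there is no way to see what $B(p_i, r)$ is, and the rest of the argument cannot get off the ground.
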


Thus if the general fibre is a torus, $M_i^4$ may have 
singular torus-fibres (multiple tori) near essential singular
points of $X^2$, while in the sphere-fibre case,
$M_i^4$ is just a sphere-bundle even if $X^2$ has serious 
singular points. It should be remarked that the multiplicity $m$
of a multiple torus near a essential singular point $p\in X$
can be estimated as $m\le 2\pi/L(\Sigma_p)$, where 
$L(\Sigma_p)$ denotes the length of $\Sigma_p$.

Next we consider the case when the compact Alexandrov 
surface $X^2$ has nonempty boundary. In this case, 
we have a decomposition of $M_i^4$ into two parts:
one is a part of $M_i^4$ near a compact domain of $\interior X^2$
approximating $\interior X^2$ which is either an $S^2$-bundle
or a Seifert $T^2$-bundle (Theorem \ref{thm:dim2nobdy}).
The other is a part of $M_i^4$ near $\partial X^2$,
on which we can define a 
singular fibre structure as indicated in the theorem below.

\begin{thm} \label{thm:dim2wbdy}
Suppose that $M_i^4$ collapses to a two-dimensional compact Alexandrov
space $X^2$ with boundary under $K\ge -1$. 
Then $M_i^4$ is homeomorphic to a singular fibre space 
$\cal F(X^2)$ over $X^2$. 
\end{thm}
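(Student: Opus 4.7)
The plan is to decompose $X^2$ into a slightly shrunken interior subsurface $X_\epsilon$ and a collar of $\partial X^2$, handle the two regions by different techniques, and glue along their common circle interface.

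\medskip

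\noindent\textbf{Interior region.} Fix small $\epsilon>0$ and choose a compact subsurface $X_\epsilon \subset \interior X^2$ with smooth boundary, containing no essential singular point and approximating $\interior X^2$ from inside. The double $2X_\epsilon$ along $\partial X_\epsilon$ is a closed Alexandrov surface, and the double $2 f_i^{-1}(X_\epsilon)$ along $\partial f_i^{-1}(X_\epsilon)$ forms a sequence of closed $4$-manifolds collapsing to $2X_\epsilon$. Applying Theorem~\ref{thm:dim2nobdy} to the doubled sequence, combined with an equivariance check under the doubling involution, exhibits $f_i^{-1}(X_\epsilon)$ for large $i$ as either an $S^2$-bundle or a Seifert $T^2$-bundle over $X_\epsilon$.

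\medskip

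\noindent\textbf{Boundary collar.} For each point $p\in\partial X^2$ I would perform a blow-up argument: rescale the metric on $M_i^4$ around $f_i^{-1}(p)$ at the appropriate curvature scale and pass to a non-collapsed Gromov--Hausdorff limit. Each limit is a $4$-dimensional complete noncompact nonnegatively curved Alexandrov space fibering over a model half-plane (or over a sector of angle $L(\Sigma_p)$ when $p$ is a boundary singular point of $X^2$). The soul theorem for such $4$-spaces announced in the abstract restricts these limits to a finite explicit list of standard models, indexed by the isomorphism type of $\Sigma_p$. Via Perelman's stability theorem, each model pulls back to a singular fibration structure on a tubular neighborhood of $f_i^{-1}(\partial X^2)$, with exactly the fibre types prescribed in the definition of $\cal F(X^2)$.

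\medskip

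\noindent\textbf{Gluing and main obstacle.} Along $\partial X_\epsilon$ both the interior bundle structure and the boundary collar structure restrict to a fibre bundle with the same generic fibre ($S^2$ or $T^2$) and matching structure group, since both arise from the same $M_i^4$-geometry on a common annular region; gluing them via the identity yields the homeomorphism $M_i^4 \cong \cal F(X^2)$. The hard part will be the local analysis at essential singular boundary points, where the fibre topology and local monodromy can change relative to the generic fibre. Enumerating the admissible $4$-dimensional nonnegatively curved blow-up models whose soul has positive dimension, and then verifying that the resulting local fibrations patch together coherently around all of $\partial X^2$—rather than only locally near each point—is the technical core of the theorem, and is the step where the soul theorem feeds most directly into the classification.
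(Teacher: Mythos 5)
Your overall strategy—shrink to an interior subsurface, blow up near boundary points, classify the rescaled limits, then glue—is the right skeleton and matches the paper's Section 12. But two of the steps have genuine problems.

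First, the interior doubling step does not work as stated. To apply Theorem \ref{thm:dim2nobdy} to the doubled sequence $2f_i^{-1}(X_\epsilon)$, that sequence must satisfy a uniform lower sectional curvature bound. Doubling a Riemannian manifold across a hypersurface preserves a lower curvature bound (in the Alexandrov sense) only when that hypersurface is totally geodesic, or at least convex toward the region being doubled. The hypersurface $\partial f_i^{-1}(X_\epsilon)$ has no such property: it is the preimage of a curve under an almost-Lipschitz submersion, and its second fundamental form is uncontrolled. In general the double will develop points of curvature $-\infty$ (concave edges), so the hypothesis of Theorem \ref{thm:dim2nobdy} fails. The paper avoids this entirely by applying the Fibration-Capping Theorem \ref{thm:orig-cap} directly to a closed domain $Y\subset R_\delta^D(X^2)$; when $Y$ is well inside $\interior X^2$ this degenerates to the ordinary fibration theorem and gives the $S^2$- or Seifert $T^2$-bundle over $X_\epsilon$ with no doubling needed.

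Second, your description of the blow-up limits and how they feed back into the fibration structure is off in an important way. After rescaling by $\delta_i$ at a boundary point $p$, the limit $(Y,y_0)$ has $\dim Y\in\{3,4\}$ (Theorem \ref{thm:rescal}), not a fibration over a half-plane. Only when $\dim Y=4$ does one get a non-collapsed limit, and there Corollary \ref{cor:soul} (the soul theorem plus stability) applies directly. When $\dim Y=3$ the rescaled sequence is \emph{still} collapsing, so Perelman's stability theorem does not apply; instead one needs the local $S^1$-action produced by Theorem \ref{thm:dim3} and the full classification of collapses to $3$-dimensional nonnegatively curved spaces (Theorem \ref{thm:dim3class}), together with the metric classification of such spaces with boundary (Theorem \ref{thm:dim3wbdy-sp}). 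The paper organizes this via the cap fibre $F_{i,\mathrm{cap}}$ of the Fibration-Capping Theorem, showing $F_{i,\mathrm{cap}}$ is one of $D^3,\,P^2\tilde\times I$ ($S^2$ fibre case) or $S^1\times D^2,\,K^2\tilde\times I$ ($T^2$ fibre case), and then reads off the allowed connecting parts $B(p_i,r)$ from the boundary conditions on $\partial B(p_i,r)$. Both the $\dim Y=3$ branch and the explicit compatibility statement (Theorem \ref{thm:dim2wbdy-text}(3), Lemma \ref{lem:non-ext}) are needed for the gluing to actually close up around $\partial X^2$; the soul theorem alone does not produce the list.
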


As a particular consequence of Theorem \ref{thm:dim2wbdy},
there is a continuous surjective map
$f_i: M_i^4\to X^2$ such that $f_i$ restricted to $\interior X^2$
is either an $S^2$-bundle or a Seifert $T^2$-bundle and 
$f_i$ restricted to $\partial X^2$ is a singular fibration 
whose fibres are ones of a point, $S^1$, $S^2$, the real projective 
plane $P^2$ and the Klein bottle $K^2$.
The fiber types of $f_i|_{\partial X^2}$ may change at
extremal points $\in {\rm Ext}(X^2)\cap \partial X^2$.
(see Theorem \ref{thm:dim2wbdy-break} for details).

Finally we assume $\dim X=1$. If $X$ is a circle, we can apply the 
fibration theorem (\cite{Ym:collapsing}) to get a fibre bundle structure 
on $M_i^4$ over $X$. More precisely,

\begin{thm}[\cite{FY:fundgp}] \label{thm:circle}
Suppose that $M_i^4$ collapses to a circle under $K\ge -1$. 
Then $M_i^4$ fibers over $S^1$ with fibre 
finitely covered by $S^1\times S^2$, $T^3$, a nilmanifold or 
a homotopy $3$-sphere.
\end{thm}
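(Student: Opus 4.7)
The plan is to combine Yamaguchi's fibration theorem with the classification of almost nonnegatively curved closed $3$-manifolds.

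First, since the limit $S^1$ is a smooth closed Riemannian $1$-manifold with no boundary and no singular points, Yamaguchi's fibration theorem \cite{Ym:collapsing} applies to the sequence $M_i^4$ with $K \ge -1$. For all sufficiently large $i$ it produces a smooth fiber bundle $f_i : M_i^4 \to S^1$ whose fiber $F_i^3$ is a closed orientable $3$-manifold (orientability follows from that of $M_i^4$ and $S^1$), and $f_i$ is an $\epsilon_i$-almost Riemannian submersion with $\epsilon_i \to 0$. In particular the extrinsic diameter of each fiber tends to zero and the second fundamental form of the fibers in $M_i^4$ is controlled.

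Next, I would show that the fibers $F_i^3$ are almost nonnegatively curved after rescaling. Writing $d_i = \diam(F_i^3)$ for the intrinsic diameter, the Gauss equation applied to the fibers, together with the near-vanishing of their second fundamental form and the ambient bound $K(M_i^4) \ge -1$, yields an intrinsic lower bound of the form $K(F_i^3) \ge -1 - c\epsilon_i$. Rescaling the fiber metric by $d_i^{-2}$ produces a sequence of closed orientable $3$-manifolds of unit diameter whose sectional curvature is bounded below by a quantity tending to $0$, which is exactly the statement that $F_i^3$ is almost nonnegatively curved.

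Finally, I would invoke the classification of closed orientable $3$-manifolds with almost nonnegative sectional curvature: by the work of Fukaya and Yamaguchi \cite{FY:fundgp} on fundamental groups of spaces collapsing under a lower curvature bound, combined with the three-dimensional collapsing results of \cite{SY:3mfd}, any such $3$-manifold has a finite cover homeomorphic to $S^1 \times S^2$, $T^3$, a nilmanifold, or a homotopy $3$-sphere. Applied to $F_i^3$, this gives the fiber structure asserted in the theorem.

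The principal obstacle is the second step: extracting a uniform lower sectional curvature bound on $F_i^3$ from the ambient bound on $M_i^4$. This relies on the full strength of the fibration theorem, namely that the almost-Riemannian-submersion property gives sufficient control on the second fundamental form of the fibers for Gauss's equation to produce a usable intrinsic curvature bound; once this is in place, the three-dimensional classification does the remaining work.
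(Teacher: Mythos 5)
Your outline — fibration theorem over $S^1$, almost nonnegative curvature of the fibers, then the three-dimensional classification of almost nonnegatively curved closed orientable manifolds from \cite{FY:fundgp} — matches the standard argument; the paper attributes the result to \cite{FY:fundgp} without reproducing a proof, and what you give is the intended chain of reasoning.

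One step is stated more loosely than it can be verified from the statement quoted in this paper. Theorem~\ref{thm:orig-cap} asserts only that $f$ is a $\tau$-almost Lipschitz submersion, a purely metric condition, which does not by itself bound the second fundamental form of the fibers; so the Gauss-equation derivation of an intrinsic curvature lower bound on $F_i^3$ does not follow directly from that statement. The correct justification is that in the smooth Riemannian setting the fibration of \cite{Ym:collapsing} is built from smoothed distance functions with controlled Hessians, and the almost-nonnegative curvature of the diameter-normalized fibers is one of the direct conclusions of that construction (this is what is summarized in the remark after Theorem~\ref{thm:orig-cap}, where the fibers are stated to have almost nilpotent fundamental groups via \cite{FY:fundgp}). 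Once you replace the Gauss-equation paragraph with a citation to this stronger form of the fibration theorem, the rest of your argument — orientability of the fiber from orientability of the total space (a fiber bundle over $S^1$ with non-orientable fiber has non-orientable total space), and the three-dimensional classification — is correct and complete.
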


Thus the essential case here is the case when $X$ is isometric to a 
closed interval:

\begin{thm} \label{thm:dim1}
Suppose that $M_i^4$ collapses to a one-dimensional closed interval
under $K\ge -1$. 
Then $M_i^4$ is homeomorphic to a gluing $U_i^4\cup V_i^4$ along 
their boundaries, where  each of $U_i^4$ and $V_i^4$ is either
a disk bundle over $k$-dimensional closed manifold $N^k$ with
$0\le k\le 3$, or a gluing of two disk-bundles over
$k_j$-dimensional closed manifolds
$Q^{k_j}$, $j=1,2$, with $0\le k_j\le 2$,
where $N^k$ and $Q^{k_j}$ have nonnegative Euler numbers,
and if $k=3$,  $N^3$ is one of the closed 3-manifolds
appearing as the fibre types in Theorem \ref{thm:circle}.
\par
Therefore $M_i^4$ is a gluing of at least two and at most four
disk-bundles.
\end{thm}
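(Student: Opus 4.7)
Write $X=[0,L]$ with endpoints $p$ and $q$, and fix small $\epsilon>0$. I will decompose $M_i^4$ into three pieces: a central product region over $[\epsilon,L-\epsilon]$ and two ``cap'' regions over the endpoints. The central piece is handled by the fibration theorem \cite{Ym:collapsing}: applied to the Gromov--Hausdorff approximation $f_i:M_i^4\to X$ over $[\epsilon,L-\epsilon]$, it produces, for large $i$, a locally trivial fibration whose fibre is a closed $3$-manifold $N_i^3$ collapsing to a point under a lower curvature bound. Contractibility of the base makes the bundle trivial, and the 3-dimensional collapsing classification of \cite{SY:3mfd} identifies $N_i^3$ as (finitely covered by) one of the four types listed in Theorem~\ref{thm:circle}.

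Next I analyze each cap by a blow-up. Fix $p_i\in f_i^{-1}(p)$ and rescale the metric on $M_i^4$ by $\epsilon_i^{-1}$, where $\epsilon_i$ is the Gromov--Hausdorff distance from $M_i^4$ to $X$. Passing to a subsequence yields a pointed limit $(Y,y_0)$, a complete noncompact $4$-dimensional Alexandrov space of nonnegative curvature whose asymptotic ray corresponds to the tangent half-line of $X$ at $p$. Invoking the $4$-dimensional soul theorem announced in the abstract of this paper, I expect the following dichotomy: either (i) $Y$ is homeomorphic to the total space of a normal disk-bundle over a soul $N^k$ with $0\le k\le 3$, or (ii) $Y$ is homeomorphic to the union along a common boundary of two disk-bundles over closed souls $Q^{k_j}$ of dimension at most $2$, the latter case occurring when $\partial Y$ is nonempty and carries its own extremal subset. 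Since each soul inherits nonnegative curvature, the required nonnegativity of Euler numbers follows from Gauss--Bonnet in the relevant low dimensions; when $k=3$ the possibilities for $N^3$ are constrained to the list of Theorem~\ref{thm:circle}, because $N^3$ itself collapses and its topology must match the fibre of the central product region.

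To transfer this picture back to $M_i^4$, I appeal to Perelman's stability theorem: for large $i$, a suitably rescaled metric ball around $p_i$ is homeomorphic to the corresponding ball in $Y$. This identifies $U_i^4:=f_i^{-1}([0,\epsilon])$ with the disk-bundle of case (i) or the two-bundle gluing of case (ii). The same argument at $q$ produces $V_i^4$, and the remaining product cobordism $[\epsilon,L-\epsilon]\times N_i^3$ is absorbed into one of the caps, yielding $M_i^4=U_i^4\cup V_i^4$ glued along $\partial U_i^4\cong\partial V_i^4$. In total, between two and four disk-bundle pieces appear, as asserted.

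The main obstacle is the $4$-dimensional soul theorem itself and the careful analysis producing the gluing case (ii). In the Alexandrov category with nonempty boundary, the soul of a complete noncompact nonnegatively-curved space no longer governs the entire topology via a single disk-bundle neighborhood; one must locate a second extremal subset supported on $\partial Y$ and show that $Y$ decomposes as two disk-bundles glued along a common Alexandrov hypersurface, with explicit control on the dimensions $k_j\le 2$. A secondary verification is that the boundary of each cap matches the fibre $N_i^3$ of the central product region so that the global gluing actually reconstructs $M_i^4$, and that the Euler number constraint survives this identification.
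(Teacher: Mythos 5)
Your overall strategy---decompose $M_i^4$ into endcaps, blow up at each endpoint, and use the limit geometry to read off the topology of the cap---is structurally the same as the paper's, which writes $M_i^4$ as $B(p_i,r)\cup B(q_i,r)$ and analyzes each ball. (The paper absorbs your central product region directly into the balls, which is a cosmetic difference.) But there are two genuine gaps.

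First, the rescaling factor is wrong. You rescale by $\epsilon_i^{-1}$ where $\epsilon_i=d_{GH}(M_i^4,X)$, but this naive choice does not guarantee either that the rescaled limit has dimension $>\dim X=1$ in a usable way or, more importantly, that the rescaled ball $B(\hat p_i,R\delta_i)$ is \emph{homeomorphic} to the original ball $B(p_i,r)$. The paper's Theorem~\ref{thm:rescal} proves precisely this: it produces $\delta_i\to 0$ and base points $\hat p_i$ such that $B(p_i,r)\simeq B(\hat p_i,R\delta_i)$ for all large $R$ and $i$, and any limit of $(\frac{1}{\delta_i}M_i^4,\hat p_i)$ has dimension $\ge 2$. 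Getting both properties simultaneously is the point of the rescaling argument (a local-maximum construction for an averaged distance function plus critical point theory); a generic scale has no reason to satisfy the homeomorphism claim, which is what lets you transfer topology from $Y$ back to $M_i^4$.

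Second, and more seriously, you assume the rescaled limit $Y$ is $4$-dimensional and apply the soul theorem directly. In fact $\dim Y$ can be $2$, $3$, or $4$, and in the collapsing cases $\dim Y\in\{2,3\}$ the soul theorem for $Y$ is nowhere near enough: you are then facing a \emph{collapse} of the $4$-manifolds to a lower-dimensional noncompact nonnegatively curved limit, and the topology of $B(\hat p_i,R\delta_i)$ is governed by the classification of such collapses, namely Theorems~\ref{thm:dim3class} and \ref{thm:dim2class}, not by the soul of $Y$ alone. In particular, the ``gluing of two disk-bundles'' alternative in the statement does not come from any dichotomy in the $4$-dimensional soul theorem (that theorem, Theorem~\ref{thm:soul}, applies only to complete \emph{open}---i.e.\ boundaryless---topologically regular spaces and yields a single disk-bundle); it arises from the structure of local $S^1$- and $T^2$-fibrations over $B(y_0,R)$ when $\dim Y\le 3$, as catalogued in Theorem~\ref{thm:dim3class}~B-III and Theorem~\ref{thm:dim2class}~II-(2)-(i). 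Without invoking those classifications, the step from ``$Y$ is a noncompact nonnegatively curved space'' to the asserted list of cap types is missing.
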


In the case when $X$ is a point, rescaling metric 
of $M_i^4$ with the new diameter $1$, we can reduce the problem 
to the cases of $1\le \dim X\le 4$. \par

As a conclusion of the results in this paper together with the
stability theorem (\cite{Pr:alex2}) and the above remark, 
we have a description of the homeomorphism classes in 
$\cal M(4,D)$ as follows:

\begin{cor}
For a given $D>0$, there exist finitely many elements
$N_1^4,\ldots, N_k^4$ of $\cal M(4,D)$, where $k=k(D)$, such that
for any element $M^4$ of $\cal M(4,D)$ one of the following holds:
\begin{enumerate}
  \item $M^4$ is homeomorphic to one of $\{ N_1^4,\ldots, N_k^4\}$;
  \item $M^4$ is homeomorphic to a closed $4$-manifold 
    as described in Theorem \ref{thm:dim3},
    \ref{thm:dim2nobdy}, \ref{thm:dim2wbdy}, \ref{thm:circle}
    or \ref{thm:dim1} with a suitable compact Alexandrov space $X$
    of $1\le\dim X\le 3$.
\end{enumerate}
\end{cor}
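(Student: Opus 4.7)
The plan is to combine the Gromov precompactness of $\mathcal{M}(4,D)$ with the main structure theorems of this paper and Perelman's stability theorem in a standard compactness-and-contradiction scheme. Suppose no finite list $\{N_1^4,\ldots,N_k^4\}$ as in the statement exists. Then one can extract an infinite sequence $M_i^4\in\mathcal{M}(4,D)$ that are pairwise non-homeomorphic and such that no $M_i^4$ falls under alternative~(2). By Gromov's precompactness theorem \cite{GLP}, a subsequence (still denoted $M_i^4$) converges in the Gromov--Hausdorff sense to a compact Alexandrov space $X$ of curvature $\ge-1$ and dimension $0\le\dim X\le 4$.

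The argument now splits according to $\dim X$. If $\dim X=4$, Perelman's stability theorem \cite{Pr:alex2} yields that $M_i^4$ is homeomorphic to $X$ for all sufficiently large $i$, so the $M_i^4$ are eventually pairwise homeomorphic, contradicting our choice. If $1\le\dim X\le 3$, the appropriate one of Theorems \ref{thm:dim3}, \ref{thm:dim2nobdy}, \ref{thm:dim2wbdy}, \ref{thm:circle}, \ref{thm:dim1} applies to the sequence $M_i^4\to X$ and exhibits each $M_i^4$ (for $i$ large) as a $4$-manifold of the form required in alternative~(2), again contradicting our choice. If $\dim X=0$, rescale the metric by $\lambda_i:=1/\diam(M_i^4)\to\infty$: the rescaled manifolds $\lambda_i M_i^4$ have diameter $1$ and sectional curvature $\ge-\lambda_i^{-2}\ge-1$ eventually, rescaling preserves homeomorphism type, and a further convergent subsequence $\lambda_i M_i^4\to\tilde X$ satisfies $\diam\tilde X=1$, hence $\dim\tilde X\ge 1$. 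We are reduced to one of the preceding two cases applied to the rescaled sequence, yielding the same contradiction.

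The main content of the argument is not the contradiction scheme itself but the two deep inputs it invokes: Perelman's stability theorem for the non-collapsed regime $\dim X=4$, which produces the finiteness of homeomorphism types encoded in the set $\{N_1^4,\ldots,N_k^4\}$, and the structure theorems proven earlier in the paper, which handle each collapsing regime $1\le\dim X\le 3$. The remaining minor technical point is the rescaling reduction of the $\dim X=0$ case, which terminates in a single step because the diameter has been normalized to be positive. The principal obstacle, therefore, is not present in the corollary itself but was absorbed into the proofs of the structure theorems and of stability; once those are in hand, the present statement is an almost formal consequence of Gromov--Hausdorff precompactness.
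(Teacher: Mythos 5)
Your argument is correct and matches the paper's intended proof: Gromov precompactness plus a contradiction scheme, with Perelman's stability theorem handling $\dim X=4$, the structure theorems handling $1\le\dim X\le 3$, and the diameter-normalizing rescaling reducing $\dim X=0$ to the other cases (as the paper indicates in the remark just before the corollary). You also correctly supply the one point the paper leaves implicit, namely that after rescaling the new limit has diameter $1$ and hence dimension at least $1$, so the $\dim X=0$ case cannot recur.
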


In the proof of each result mentioned above, 
it is crucial to understand
the topology of a small (but of a fixed radius) metric ball 
in a collapsed 
$4$-manifold which is very close to a singular point of 
the limit space.
As the important first step, we establish the rescaling 
technique in dimension $4$ which makes it possible 
to pursue the topology of the metric ball. 
Under the rescaling of metrics, we obtain a new
limit space, a complete noncompact Alexandrov space with 
nonnegative curvature and with dimension larger than that of
the original limit space.
This reduces the problem to the study of collapsing
or convergence to a higher dimensional spaces
(with nonnegative curvature).
The second step is to analyze the new limit space in several aspects.
Among them, we especially need to establish the generalized soul 
theorem (Theorem \ref{thm:soul}) for $4$-dimensional 
complete open Alexandrov spaces with nonnegative curvature.
In the third step, we actually determine the structure of collapsing
to the new limit space, and obtain the topology 
of the small metric ball using an inductive procedure.
Using this topological information, we define a 
local fibre structure caused by collapsing.
Through some patching argument under the consideration of the singular
point set of the limit space, we glue those locally 
defined fibre structures to obtain a globally defined 
fibre structure.

A similar strategy was used in \cite{SY:3mfd}. However 
the present situation is more  involved in several aspects like;
a generalization of the rescaling argument in
\cite{SY:3mfd} (our approach seems to be possible to generalize to 
the general dimensions),
the complexity of the singular sets in the limit spaces,
the generalized soul theorem, classification of the collapses 
to the complete noncompact nonnegatively curved spaces with 
dimension less than four.

Our results in this paper suggest a possibility of the 
study of collapse of general $n$-dimensional Riemannian manifolds with 
a lower curvature bound by the following program:
\par\medskip\noindent
{\bf Assumption $(A_k)$}  \,\, 
   We already know the fibre structure of closed $n$-manifolds 
   collapsing to spaces of dimension $\ge k$,
   where the orbit types of the singular fibres can be determined 
   or estimated in terms of the singularities of the corresponding
   singular points of the limit spaces 
   (this is always satisfied for $k=n$).\par
\noindent
{\bf Purpose $(P_{k-1})$} \,\,
   We want to determine the structure of closed 
   $n$-manifolds collapsing to $(k-1)$-dimensional spaces.\par
\medskip\noindent
To realize $(P_{k-1})$, in the first step, we obtain the 
topology of a small metric ball using the following procedure
\begin{itemize}
 \item[(a)] a suitable rescaling of metrics;
 \item[(b)] the (metric) classification of the complete noncompact 
       spaces of dimension $\ge k$ and with nonnegative curvature 
       by means of the geometric invariants of them (like souls,
       the dimensions of the ideal boundaries, the extremal point
       sets and the essential singular point sets);
 \item[(c)] the classification of the phenomena of collapsing to
       the spaces in (b) using $(A_k)$;
\end{itemize}
In the second step, we first define a fibre structure on the small
metric ball, and then do a gluing process under the 
presence of the singular point set of the limit space
and realize the purpose $(P_{k-1})$.

As stated above, this program has been carried out for $n=4$ in the 
present paper.

The organization of this paper is as follows:
In Section \ref{sec:cap}, we provide some basic notions
about Alexandrov spaces with curvature bounded below and 
the Gromov-Hausdorff convergence.
We formulate a main result in this section,
the fibration-capping theorem, which gives a fibre
structure on a collapsed manifold in the case when 
the limit space has nonempty boundary. The proof of this 
result is postponed to Sections 
\ref{sec:cap-state}-\ref{sec:cap-lip}
(Part\ref{part:cap}), where we actually prove an equivariant
version of it.

In Section \ref{sec:action}, we prepare the basic
results about $S^1$-actions on oriented $4$-manifolds
(\cite{Fn:circleI})
providing the properties of singular loci and
the equivariant classification of such actions
in terms of weighted orbit data. We apply 
those results to obtain a topological classification of 
$S^1$-action on compact $4$-manifolds with certain
simple orbit data in terms of $D^2$-bundles over $S^2$.
This result is useful to classify the collapsing phenomena 
of $4$-manifolds to $3$-dimensional complete noncompact 
spaces with nonnegative curvature.

In Section \ref{sec:rescal}, we establish a key result
to obtain a topology of a small metric ball 
of a collapsed $4$-manifold close to a singular point of the limit 
space by generalizing a rescaling argument in \cite{SY:3mfd}. 
This makes it possible to reduce the problem to
the study of collapsing to higher dimensional limit spaces
of nonnegative curvature.

The rescaling argument in Section \ref{sec:rescal}
makes clear that the first step in the study of collapsing
of $4$-manifolds should be the case when the limit space is of 
dimension three. In Section \ref{sec:dim3alex}, as a preliminary
section, we establish two results about the properties of 
the essential singular point set of $3$-dimensional 
Alexandrov spaces with curvature bounded below:
One is a characterization of certain essential singular point set
in terms of quasigeodesics, and 
the other is about the existence of a metric collar neighborhood.

From Section \ref{sec:dim3nobdyloc} to Section \ref{sec:dim3wbdy},
we analyze the collapsing phenomena of orientable $4$-manifolds to
$3$-dimensional Alexandrov spaces. 
In Section \ref{sec:dim3nobdyloc}, we concentrate on 
finding the collapsing structure on a small
metric ball in a collapsed $4$-manifold which is close to a 
small metric ball in the interior of the limit three-space.
As the local collapsing structure, we construct an $S^1$-action 
on a small perturbation of the small metric ball 
extending the $S^1$-bundle structure 
on a regular part of the $4$-manifold.
In Section \ref{sec:dim3nobdyglob}, we patch those locally defined 
$S^1$-actions to obtain a globally defined local $S^1$-action
on a collapsed $4$-manifold.
In Section \ref{sec:dim3wbdy}, we determine the structure of 
collapsing to a $3$-dimensional Alexandrov space $X^3$ with boundary
using the collar neighborhood theorem established in 
Section \ref{sec:dim3alex} and the capping theorem,
a generalization of the fibration theorem.
We prove that a part of a collapsed $4$-manifold close to the 
boundary $\partial X^3$ is a $D^2$-bundle over $\partial X^3$,
compatible with the $S^1$-bundle structure near $\partial X^3$.
This provides a globally defined, local $S^1$-action on the 
$4$-manifold.

Before proceeding to the study of collapsing of $4$-manifolds 
to $2$-dimensional Alexandrov spaces, in Section 
\ref{sec:dim3positive}  we classify all the phenomena of collapsing of 
pointed complete $4$-manifolds to a pointed complete
noncompact $3$-dimensional Alexandrov spaces with nonnegative 
curvature. Using $S^1$-actions with the study of the singular
locus, we determine the topology of large metric balls in the
$4$-manifolds centered at the reference points. 
We prove that they are homeomorphic to either a disk-bundle
or a gluing of two disk-bundles.

In Sections \ref{sec:dim2nobdysph} and \ref{sec:dim2nobdytor},
we apply the classification result in Section 
\ref{sec:dim3positive} to obtain the topological information 
on a small metric ball in a $4$-manifold collapsed to
$2$-dimensional space without boundary.
Considering the new limit space under the rescaling of 
metrics, we come to know what kinds of collapsing phenomena
really occur near a singular fibre. 
After those discussions together with the equivariant fibration 
theorem,  we prove Theorem \ref{thm:dim2nobdy}
putting the desired fibre structures.

In Section \ref{sec:dim2wbdy}, we consider the case
when the limit surface has nonempty boundary, 
and prove Theorem \ref{thm:dim2wbdy}.

In Section \ref{sec:dim2positive}, 
we classify all the phenomena of collapsing of 
pointed complete $4$-manifolds to a pointed complete
noncompact $2$-dimensional Alexandrov spaces with nonnegative 
curvature using Theorems \ref{thm:dim2nobdy} and \ref{thm:dim2wbdy}. 
Theorem \ref{thm:dim1} is also proved there.

In Sections \ref{sec:local}--\ref{sec:soulII} of Part \ref{part:alex}, 
we prove the generalized soul theorem
in dimension $4$,  and in Section \ref{sec:alexwbdy} we classify 
all the $3$-dimensional complete
nonnegatively curved Alexandrov spaces with nonempty boundary. 

In Sections \ref{sec:cap-state}--\ref{sec:cap-lip} of Part \ref{part:cap}, 
we give the proof of the equivariant fibration-capping theorem.

\begin{ack}
  The author would like to thank Fuquan Fang, Kenji Fukaya, 
  Karsten Grove, Yukio Otsu, Takashi Shioya and Ayato Mitsuishi
  for discussions concerning this work.  
  This research was partially supported
  by  Institut des Hautes \'Etudes Scientifiques
  and Universidade Federal do Cear\'a. He is grateful
  for their hospitality and financial supports.
\end{ack}

%   \input{cap}

%cap.tex
\section{Alexandrov spaces and the Gromov-Hausdorff 
      convergence}\label{sec:cap}

In this section, we present some results on Alexandrov 
spaces with curvature bounded below and on 
the Gromov-Hausdorff convergence which will be used 
in the subsequent sections.
\par\medskip

\noindent 
{\bf 1. Alexandrov spaces.} \quad
We begin with some preliminary results on Alexandrov spaces.
We refer to \cite{BGP} for the basic materials and the details 
on Alexandrov spaces mentioned below.

Let $X$ be a finite-dimensional complete Alexandrov space with 
curvature bounded below, say $\ge \kappa$.
For any two point $x$ and $y$, let $xy$ denote a minimal geodesic
joining $x$ to $y$.
For any geodesic triangle 
$\Delta xyz$ in $X$ with vertices $x, y$ and $z$, if we denote by 
$\tilde\Delta xyz$ a {\it comparison triangle} 
in the $\kappa$-plane $M_{\kappa}^2$, 
the simply connected complete surface with constant curvature 
$\kappa$, the natural
map $\Delta xyz \to \tilde\Delta xyz$ is non-expanding,
where we assume that the perimeter 
of $\Delta xyz$ is less than $2\pi/\sqrt\kappa$ if $\kappa>0$.
This property is called the {\it Alexandrov convexity}. 
\par
  The angle between the geodesics $xy$ and $yz$ in $X$ is denoted by 
$\angle xyz$, and the corresponding angle of $\tilde\Delta xyz$
by $\tilde\angle xyz$.
It holds that $\angle xyz \ge \tilde\angle xyz$.
We denote by $\Sigma_p=\Sigma_p(X)$ the space of directions at $p\in X$,
and by $K_p=K_p(X)$ the tangent cone at $p$ with vertex $o_p$,
the Euclidean cone $K(\Sigma_p)$ over $\Sigma_p$.
It is known that $\Sigma_p$ (resp. $K_p$) is an Alexandrov space with 
curvature $\ge 1$ (resp $\ge 0$). 
More generally if $\Sigma$ is a compact Alexandrov space with
curvature $\ge 1$, then the Euclidean cone $K(\Sigma)$ over
$\Sigma$ is a complete Alexandrov space with nonnegative 
curvature.

For a compact set $A\subset X$ and $p\in X\setminus A$, we denote by 
$A^{\prime}=A^{\prime}_p$ the closed set of $\Sigma_p$ consisting 
of all the directions of minimal geodesics from $p$ to $A$.

  Let $k$ be the dimension of $X$, and  $\delta>0$.
A system of $k$ pairs of points, $(a_i,b_i)_{i=1}^k$ is called an 
$(k,\delta)$-\emph{strainer} at $p\in X$ if it satisfies 
\begin{align*}
   \tilde\angle a_ipb_i > \pi - \delta, \quad & 
                \tilde\angle a_ipa_j > \pi/2 - \delta, \\
      \tilde\angle b_ipb_j > \pi/2 - \delta,\quad &
                \tilde\angle a_ipb_j > \pi/2 - \delta,
\end{align*}
for every $i\neq j$. 
The number $\min\,\{ d(a_i,p), d(b_i,p)\,|\, 1\le i\le k\}$ 
is called the {\em length} of the strainer. \par

 Let   $R_{\delta}(X)$ denote the set of $(k,\delta)$-strained
points of $X$.  
Note that every point in $R_{\delta}(X)$ has a small neighborhood 
almost isometric to an open subset of $\R^k$ for small $\delta$,
where the almost isometry is given by 
$f(\,\cdot\,)=(d(a_1, \cdot\,),\ldots,d(a_k, \cdot\,))$.
In particular, $R_{\delta}(X)$ is a Lipschitz $k$-manifold.
The set $S_{\delta}(X):=X-R_{\delta}(X)$ is called the 
{\it $\delta$-singular set} of $X$.

  A point $p$ of an Alexandrov space $X$ 
is called an {\em extremal} point if $\diam(\Sigma_p) \le \pi/2$.
This coincides with the case when an extremal subset in the sense of 
\cite{PrPt:extremal} is a point.
The set of all extremal points of $X$ is denoted 
$\Ext(X)$. A point $p$ of $X$ 
is called an {\em essential} singular point if the radius 
of $\Sigma_p$ satisfies 
$\rad(\Sigma_p):=\min_{\xi}\max_{\eta}\angle(\xi,\eta) \le \pi/2$,
and the set of all essential singular points of $X$ is denoted 
$ES(X)$.
Notice that if a point $p\in X$ is not an essential
singular point, then $\Sigma_p$ is homeomorphic to a sphere
(\cite{GP:radius}, \cite{PrPt:extremal}), and a small metric ball around $p$ 
is homeomorphic to $\R^k$ (\cite{Pr:alex2}).

  We also say that $p$ is a {\em topological} singular point if
$\Sigma_p$ is not homeomorphic to a sphere.  
If $p$ is not a topological singular point, then it is called 
topologically regular. 
$X$ is called {\em topologically regular} if any point of $X$ 
is topologically regular.
It is well known (cf.\cite{Dav:decomp})that 
the double suspension $S^2(\Sigma^3)$
of the Poincare homology $3$-sphere $\Sigma^3$ 
is a topological sphere. Since $S^2(\Sigma^3)$
carries a metric as an Alexandrov space of curvature $\ge 1$,
one recognizes that
a topological singular point of an Alexandrov space can be a 
manifold point at least in dimension $\ge 5$.
The non-collapsing limit of a sequence of 
$n$-manifolds with a uniform lower sectional curvature bound
is called {\em smoothable}. 
Recently in \cite{Kp:non-collapse}, V. Kapovitch has proved that 
a smoothable Alexandrov space is topologically regular.
More precisely, 

\begin{prop}[\cite{Kp:non-collapse}] \label{prop:smoothable}
Let $X^n$ be a smoothable Alexandrov space.
Then for any $p_0\in X^n$, $p_1\in\Sigma_{p_0}X,
\ldots, p_i\in\Sigma_{p_{i-1}}\cdots\Sigma_{p_0}X$,
the iterated space of directions 
$\Sigma_{p_i}\Sigma_{p_{i-1}}\cdots\Sigma_{p_0}X$
is a topological sphere.
\end{prop}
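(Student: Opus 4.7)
The plan is to proceed by induction on the depth $i$ of iteration, with the main technical point being that smoothability is preserved under passage to the space of directions. The base case $i=0$ is precisely the statement that a smoothable Alexandrov space is topologically regular, which is the main theorem of \cite{Kp:non-collapse} being quoted; thus $\Sigma_{p_0}X$ is a topological sphere. For the inductive step, set $Y:=\Sigma_{p_{i-1}}\cdots\Sigma_{p_0}X$ and assume $Y$ is smoothable; to conclude that $\Sigma_{p_i}Y$ is a topological sphere it then suffices to establish that $\Sigma_{p_i}Y$ is itself smoothable and re-apply the base case.

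The heart of the argument is therefore: the space of directions at any point of a smoothable Alexandrov space is smoothable. Suppose $Y^k$ is the non-collapsing Gromov--Hausdorff limit of closed Riemannian $k$-manifolds $N_j$ with $K\ge -1$, let $q\in Y$, pick $q_j\in N_j$ with $q_j\to q$, and choose $\lambda_j\to\infty$ with $\lambda_j\,d_{GH}(N_j,Y)\to 0$. The rescaled pointed manifolds $(N_j,\lambda_j^{2}g_j,q_j)$ then have sectional curvatures $\ge -\lambda_j^{-2}\to 0$ and, by Gromov's precompactness, subconverge to the tangent cone $(K_qY,o_q)=K(\Sigma_qY)$. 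Since both sides have dimension $k$, the convergence is non-collapsing, so $K_qY$ is smoothable and nonnegatively curved.

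To descend from $K_qY$ to $\Sigma_qY$, I would look at geodesic spheres. For a suitable small $r>0$ and large $j$, the distance sphere $S_r(q_j)\subset(N_j,\lambda_j^{2}g_j)$ Gromov--Hausdorff converges to the metric $r$-sphere of $K_qY$, which is isometric to $r\cdot\Sigma_qY$. Using Perelman's smoothing of distance functions and the fact that $d(q_j,\cdot)$ has no critical points on a spherical shell around $q_j$ of scales fixed in the rescaled metric, one can replace the $S_r(q_j)$ by smooth hypersurfaces carrying a uniform lower sectional curvature bound inherited from $K(N_j)\ge -\lambda_j^{-2}$ and from bounds on the second fundamental form of the smoothed distance function. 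Passing to a further subsequence exhibits $\Sigma_qY$ as a non-collapsing limit of smooth closed $(k-1)$-manifolds with a uniform lower curvature bound, and hence as smoothable. Iterating this construction yields smoothability at every step, and the base case closes the induction.

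The main obstacle is the last step: distance spheres in Riemannian manifolds are only $C^{1,1}$, and their intrinsic metrics need not satisfy a two-sided sectional curvature bound in the classical sense, so one cannot simply declare $\Sigma_qY$ to be a manifold-limit under a lower curvature bound. Making this rigorous is the essential technical content of \cite{Kp:non-collapse}: one must combine Perelman's stability theorem with a careful smoothing of strained distance coordinates centered at $q_j$, taking into account where such strainers may degenerate on the singular set of the limit. Once this approximation-by-smooth-manifolds package is in place, the inductive argument above runs without further difficulty.
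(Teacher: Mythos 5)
This proposition is not proved in the paper: it is quoted from Kapovitch's work \cite{Kp:non-collapse}, so there is no in-paper argument to compare against. Judged on its own terms, however, your proposal has a genuine gap that you partially notice but misdiagnose.

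Your inductive framework hinges on the claim that the space of directions of a smoothable Alexandrov space is itself smoothable, obtained by exhibiting $\Sigma_qY$ as a non-collapsing limit of smoothed distance spheres in the rescaled manifolds. The first half of the argument is sound and is indeed a key ingredient in Kapovitch's paper: the rescaled pointed manifolds $(\lambda_j^2 g_j, q_j)$ subconverge non-collapsingly to the tangent cone $K_qY$, so the tangent cone is smoothable and nonnegatively curved. But the descent from $K_qY$ to $\Sigma_qY$ does not go through. By the Gauss equation, a level set of a smoothed distance function inherits a lower sectional curvature bound only if one additionally controls its second fundamental form; under $K\ge -1$ alone there is no pointwise Hessian bound on (smoothed) distance functions, and the distance spheres can have arbitrarily negative intrinsic curvature even after smoothing. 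So you cannot conclude that $\Sigma_qY$ is a limit of closed $(k-1)$-manifolds with a uniform lower curvature bound, and the assertion that filling this in is merely ``the essential technical content'' of \cite{Kp:non-collapse} is not accurate: Kapovitch's argument does not attempt to realise $\Sigma_qY$ as a non-collapsing limit at all.

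What Kapovitch does instead is stay at the level of tangent cones. Having shown $K_pX$ is smoothable and nonnegatively curved, he combines Perelman's stability theorem with an induction on dimension. Iterating the blow-up at a point $\xi$ of the unit sphere of the cone yields a smoothable splitting $K_\xi(K_pX)\cong \R\times K_\xi(\Sigma_pX)$, which lowers the dimension by one; by induction the spaces of directions of $\Sigma_pX$ are spheres, so $\Sigma_pX$ is a closed topological manifold; stability then identifies $K_1(\Sigma_pX)$ with a contractible topological manifold bounded by $\Sigma_pX$, and the generalised Poincar\'e conjecture finishes the argument. The crucial point you should take away is that the inductive descent passes through cones and splittings rather than through hypersurfaces, precisely to avoid the missing second-fundamental-form control that blocks your approach.
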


We call a point $p_0$ of an Alexandrov space $X^n$ 
{\em topologically nice} if it
satisfies the conclusion of  Proposition
\ref{prop:smoothable} for any $p_1\in\Sigma_{p_0}X,
\ldots, p_i\in\Sigma_{p_{i-1}}\cdots\Sigma_{p_0}X$.
We also call $X^n$ 
{\em topologically nice} if any point of $X^n$ is topologically
nice. It is obvious that for $n\le 4$,
if $X^n$ is topologically
regular, then it is topologically nice.
When $m\ge 3$, $S^m(\Sigma^3)$ is not topologically nice but
topologically regular.
\par\medskip
\noindent 
{\bf 2. Gromov-Hausdorff convergence.} \quad
A (not necessarily continuous) map $\varphi:Y\to Z$ between metric spaces
is called an {\em  $\epsilon$-approximation} if 
\begin{enumerate}
\item  $|d(x,y)-d(\varphi(x),\varphi(y))|<\epsilon$ \quad for all $x,y\in Y$,
\item $\varphi(Y)$ is $\epsilon$-dense in $Z$.
\end{enumerate}
The Gromov-Hausdorff distance $d_{GH}(Y,Z)$ between $Y$ and $Z$ is defined
to be the infimum of such $\epsilon$ that there exist
$\epsilon$-approximations  $Y\to Z$ and $Z\to Y$.
The pointed Gromov-Hausdorff distance 
$d_{p.GH}((Y,y),(Z,z))$ between pointed metric spaces $(Y,y)$ and 
$(Z,z)$ is defined as 
the infimum of such $\epsilon$ that there exist
$\epsilon$-approximations  $B(y,1/\epsilon)\to B(z,1/\epsilon)$
and $B(z,1/\epsilon)\to B(y,1/\epsilon)$ sending
$y$ to $z$ and $z$ to $y$ respectively.
%
%We say that pointed spaces $(X_i,x_i)$  converge to $(X,x)$ 
%with respect to the 
%pointed Gromov-Hausdorff topology if there exist 
%$\epsilon_i$-approximations 
%$B(x_i, 1/\epsilon_i;X_i)$  to $B(x, 1/\epsilon_i;X)$  
%sending $x_i$ to $x$ for some $\epsilon_i\to 0$.

In the study of collapsing of three-dimensional 
Riemannian manifolds with a lower curvature bound,
the so-called fibration theorem (\cite{Ym:collapsing}, etc.) 
has played one of fundamental roles
(\cite{SY:3mfd}).
This theorem  shows that if a complete Riemannian 
manifold  $M$ with a definite lower bound on sectional curvature 
is Gromov-Hausdorff close to a Riemannian manifold  $X$ of 
lower dimension without boundary 
(or more generally an Alexandrov space with only weak singularities 
(\cite{Ym:conv})), then  $M$  fibers over  $X$ with 
almost nonnegatively curved fibre.   
We generalize this result by considering $X$ with nonempty boundary
as follows :

 Let $X$ be a $k$-dimensional complete Alexandrov space with curvature 
$\ge -1$. 
Now we assume that $X$ has nonempty boundary, and
denote by $D(X)$ the double of $X$, which is also 
an Alexandrov space with curvature $\ge -1$
(see \cite{Pr:alex2}). By definition, $D(X)=X\cup X^*$ glued
along their boundaries, where $X^*$ is another copy of $X$.

A $(k,\delta)$-strainer $\{ (a_i,b_i)\}$ of $D(X)$ at $p\in X$
is called {\it admissible} if 
$a_i\in X$, $b_j\in X$ for every $1\le i\le k$, $1\le j\le k-1$
(obviously, $b_k\in X^*$ if $p\in\partial X$ for instance).
Let  $R_{\delta}^D(X)$ denote the set of points of $X$ at which 
there are admissible $(k,\delta)$-strainers.
This has the structure of a Lipschitz $k$-manifold with boundary. 
Note that every point of $R_{\delta}^D(X)\cap \partial X$ 
has a small neighborhood in $X$ almost isometric to an open subset 
of the half space $\R^n_+$ for small $\delta$.

If $Y$ is a closed domain of $R_{\delta}^D(X)$, then
the $\delta_D$-{\it strain radius} of $Y$, denoted 
$\text{$\delta_D$-{\rm str.rad}$(Y)$}$, is 
defined as the infimum of positive numbers $\ell$ such that 
there exists an admissible $(k,\delta)$-strainer of length $\ge \ell$ 
at every point $p\in Y$. 

For a small $\nu>0$, we put
$$
   Y_{\nu} := \{ x\in Y\,|\, d(\partial X, x)\ge\nu\}.
$$
We use the following special notations:
$$
   \partial_0 Y_{\nu} := Y_{\nu}\cap \{ d_{\partial X}=\nu\},\quad 
             \interior_0 Y_{\nu}:=Y_{\nu}-\partial_0 Y_{\nu}.
$$
Let $M^n$ be another $n$-dimensional complete Alexandrov space 
with curvature $\ge -1$.
A surjective map $f:M\to X$ is called an 
$\epsilon$-{\it almost Lipschitz submersion} if \par
\begin{enumerate}
 \item  it is an $\epsilon$-approximation;
 \item  for every $p,q\in M$  
    $$
     \left|\frac{d(f(p), f(q))}{d(p, q)}-\sin\theta_{p,q}\right| < \epsilon.
   $$
  where $\theta_{p,q}$ denotes the infimum of $\angle qpx$ when 
  $x$ runs over $f^{-1}(f(p))$.
\end{enumerate}

We denote by $\tau(\epsilon_1,\ldots,\epsilon_k)$ a function depending on
a priori constants and $\epsilon_i$ satisfying 
$\lim_{\epsilon_i\to 0}\tau(\epsilon_1,\ldots,\epsilon_k)=0$. 

\begin{thm}[Fibration-Capping Theorem] \label{thm:orig-cap}
Given $k$ and $\mu>0$ there exist positive numbers $\delta=\delta_k$,
$\epsilon=\epsilon_{k}(\mu)$  and $\nu=\nu_k(\mu)$ satisfying 
the following $:$\,\, Let $X^k$ and $M^n$ be as above, and 
let  $Y\subset R_{\delta}^D(X)$ be a closed domain
such that $\text{$\delta_D$-{\rm str.rad}$(Y)$}\ge\mu$.
Suppose that $X$ has only weak singularities, i.e., 
$M^n=R_{\delta_n}(M^n)$ for some small $\delta_n>0$.
If $d_{GH}(M^n,X^k)<\epsilon$ for some $\epsilon\le\epsilon_{n}(\mu)$,
then there exists a closed domain $N\subset M^n$ and 
a decomposition 
$$
   N = N_{\rm int} \cup N_{\rm cap}
$$
of $N$ into two closed domains glued along their boundaries and a
Lipschitz map  $f:N \to Y_{\nu}$ such that 
\begin{enumerate}
 \item $N_{{\rm int}}$ is the closure of 
       $f^{-1}(\interior_0 Y_{\nu})$, and
        $N_{\rm cap} = f^{-1}(\partial_0 Y_{\nu})$;
 \item both the restrictions 
     $f_{\rm int}:=f|_{N_{\interior}}: N_{\interior} \to Y_{\nu}$ and
     $f_{\rm cap}:=f|_{N_{\rm cap}} : N_{\rm cap} \to \partial_0
     Y_{\nu}$ are 
   \begin{enumerate}
     \item  locally trivial fibre bundles;
     \item  $\tau(\delta,\nu,\epsilon/\nu)$-Lipschitz submersions.
   \end{enumerate}
\end{enumerate}
\end{thm}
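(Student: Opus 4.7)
The plan is to extend the center-of-mass construction behind the standard fibration theorem of \cite{Ym:collapsing} to admissible $(k,\delta)$-strainers---those with $b_k$ allowed to lie in the mirror copy $X^\ast\subset D(X)$---and then to analyze the resulting map separately on the interior of $Y_\nu$ and on a collar of $\partial_0Y_\nu$, where the last strainer direction is effectively reflected by $\partial X$.

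First, for each $p\in Y$ I would fix an admissible $(k,\delta)$-strainer $(a_i,b_i)_{i=1}^k$ of length $\ge\mu$ and lift it via an $\epsilon$-approximation $\varphi:X\to M^n$ to $(\tilde a_i,\tilde b_i)\subset M^n$; since $M^n=R_{\delta_n}(M^n)$, these lifts form a genuine $(k,\tau(\delta,\epsilon))$-strainer at $\tilde p:=\varphi(p)$. I would define a local map $f_p$ on a metric ball around $\tilde p$ by combining the distances $d(\tilde a_i,\cdot)$, $i=1,\dots,k$, with the almost-isometric identification of a strained neighborhood of $p$ in $X$ (resp.\ $D(X)$) with a subset of $\mathbb{R}^k$. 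When $p\in\interior_0 Y_\nu$, the admissible strainer can be chosen with $b_k\in X$ and $f_p$ lands in $X$; when $p$ lies in the collar $\{d_{\partial X}\le 2\nu\}$, admissibility forces $b_k\in X^\ast$, and one replaces the $k$-th distance by its composition with the reflection across $\partial X$ so that $f_p$ still takes values in $X$. A partition-of-unity patching over a $\nu$-net of $Y$ followed by a center-of-mass averaging in $X$ produces a global Lipschitz map $f$ on a neighborhood of $\varphi(Y)$.

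On the interior $N_{\mathrm{int}}:=\overline{f^{-1}(\interior_0 Y_\nu)}$, the construction reduces to the hypothesis of the standard fibration theorem, so this theorem applied to $f_{\mathrm{int}}$ yields a locally trivial fibre bundle with closed almost-nonnegatively curved fibre $F$, and $f_{\mathrm{int}}$ is automatically a $\tau(\delta,\nu,\epsilon/\nu)$-almost Lipschitz submersion. For the cap, I would analyze the semiconcave function $h:=d_{\partial X}\circ f$. Admissibility of the strainer in the collar forces $|\nabla h|$ to remain close to $1$ on $\{h\le 2\nu\}$, so Perelman's flow along a gradient-like vector field for $h$ provides a topological product structure on this collar, exhibiting $f^{-1}(\{h\le\nu\})\setminus N_{\mathrm{int}}$ as a disk-bundle cap whose boundary is an $F$-bundle over $\partial_0Y_\nu$. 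Restricting to the level $\{h=\nu\}$ gives the fibration $f_{\mathrm{cap}}:N_{\mathrm{cap}}\to\partial_0 Y_\nu$, and the gluing of $N_{\mathrm{int}}$ and $N_{\mathrm{cap}}$ along $\partial N_{\mathrm{int}}$ (the common $F$-bundle) reconstitutes $N$.

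The main obstacle will be producing the global disk-cap compatibly with the interior fibration. Because $M^n$ may contain weak singularities and the admissible strainer pairs rotate with $p$ along $\partial_0 Y_\nu$, one must verify that the gradient-like flow of $h$ commutes with the fibre projection of $f_{\mathrm{int}}$ up to error $\tau(\delta,\epsilon/\nu)$, and that the topological type of the cap fibre is independent of local strainer choices. I expect this to be handled by a Siebenmann--Perelman-type flow-straightening argument combined with the stability theorem of \cite{Pr:alex2}, trivializing the cap over contractible open patches of $\partial_0Y_\nu$ and then assembling these trivializations into a globally consistent bundle.
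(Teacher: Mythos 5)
You handle the interior part the same way the paper does in spirit --- averaging strainer-based coordinates --- though the paper implements this by embedding both $X$ and $M$ into $L^2(X)$ via $p\mapsto h(d(p,\cdot))$ and projecting along a tubular neighborhood, which is cleaner than a direct partition-of-unity patching but morally equivalent on $N_{\rm int}$. The genuine gap is in the cap.

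The theorem requires $N_{\rm cap}=f^{-1}(\partial_0 Y_\nu)$ to be a codimension-zero closed domain and $f_{\rm cap}:N_{\rm cap}\to\partial_0 Y_\nu$ a locally trivial bundle over the codimension-one set $\partial_0 Y_\nu$; in other words, $f$ must fold a full-dimensional piece of $M$ onto $\partial_0 Y_\nu$. Your map $f$, built from patched distance coordinates, is a submersion into $X$ wherever defined, so $f^{-1}(\partial_0 Y_\nu)$ would only be a hypersurface --- it does not give the codimension-zero $N_{\rm cap}$, and you never construct the required map from the cap piece onto $\partial_0 Y_\nu$. Your proposed remedy, a gradient-like flow for $h=d_{\partial X}\circ f$, breaks precisely in the region you need: since $M$ has no boundary and $h\ge 0$, the function $h$ has a nonempty minimum set near $f^{-1}(\partial X)$ where $|\nabla h|$ degenerates to $0$, not $1$, so the asserted product structure on $\{h\le 2\nu\}$ fails inside the fold. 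The paper avoids all of this by giving the tubular neighborhood $\cal W$ of $f_X(X_\nu)$ in $L^2(X)$ a \emph{half-space} fiber $H(x)$ at each boundary point $x\in\partial X_\nu$ (spanned by $N(x)$ and the direction ${\bf n}(x)$ toward $\partial X$, hence one dimension larger than the interior fiber $N(x)$); projecting along these half-space fibers automatically folds the neighborhood of $\varphi(\partial X_\nu)$ in $M$ onto $\partial_0 Y_\nu$, so $f_{\rm cap}$ and $N_{\rm cap}$ are produced by the single global projection and compatibility with $f_{\rm int}$ is built in. The ``main obstacle'' you flag --- compatibility of the cap with the interior fibration --- is the actual content of the theorem, and your outline does not contain a mechanism to resolve it.
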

\medskip
\begin{rem}
\begin{enumerate}
 \item The fibres of both $f_{\rm int}$ and $f_{\rm cap}$ 
    have almost nilpotent fundamental groups (\cite{FY:fundgp})
    and the first Betti numbers less than or equal to the dimensions 
    of them (\cite{Ym:collapsing}).
\item If $X$ has no boundary, then $N=N_{\rm int}$ and 
      $f=f_{\rm int}$. In this case, Theorem \ref{thm:orig-cap} is 
      stated in 
      \cite{BGP}, and will be called the fibration 
      theorem as usual. See \cite{Ym:conv} and \cite{SY:3mfd}
      in the case when $M$ is a Riemannian manifold.
\end{enumerate}
\end{rem}

For instance, if $X$ is a closed interval and if 
$d_{GH}(M, X)$ is sufficiently small, then we have an obvious 
decomposition $M=M_{\rm int}\cup M_{\rm cap}$, where
$M_{\rm int}\simeq F\times I$, $F$ is a general fibre
and $M_{\rm cap}$ consists of two components.
If $X$ is a Riemannian disk $D^k$, we have a
decomposition $M=M_{\rm int}\cup M_{\rm cap}$, where
$M_{\rm int}\simeq F\times D^k$ and $M_{\rm cap}$ is a fibre bundle over 
$S^{k-1}$  whose fibre has boundary homeomorphic to $F$.

Applying the results of the present paper together with 
\cite{FY:fundgp}, \cite{SY:3mfd}, we have the following 

\begin{cor} \label{cor:cap}
 Under the situation of Theorem \ref{thm:orig-cap},
 if the codimension  
 $m:=\dim M - \dim X$ is less than or equal to three, 
 the topology of the fibre $F_{\rm cap}$ of $f_{\rm cap}$ 
 is described as follows:
 \begin{enumerate}
  \item If $m=1$, $F_{\rm cap}$ is homeomorphic to $D^2;$
  \item If $m=2$, $F_{\rm cap}$ is homeomorphic to one of 
     $D^3$, $P^2\tilde\times I$, $S^1\times D^2$, and 
     $K^2\tilde\times I;$
  \item If $m=3$, $F_{\rm cap}$ is homeomorphic to either a disk-bundle
     or a gluing of two disk-bundles having the same topology as 
     $U_i^4$ described in Theorem \ref{thm:dim1}.
 \end{enumerate}
%
% \begin{table}[htbp]
%   \begin{center}
%     \begin{tabular}{|c|c|c|}
%      \hline
%       $\dim M$ & $f_{\rm int}^{-1}(pt)$ & $f_{\rm cap}^{-1}(pt)$ \\
%      \hline
%       $k+1$ & $S^1$ & $D^2$ \\
%      \hline
%       $k+2$ & $S^2$  &
%       $D^3$, $P^2\tilde\times I$ \\
%         &  $T^2$  &  $S^1\times D^2$, $K^2\tilde\times I$ \\
%      \hline
%       $k+3$ & 
%       $\sim S^3/\Gamma$, $S^1\times D^2$ &
%       $N^m\tilde\times D^{4-m}$, $(m\le 3)$ \\
%         &  $P^3 \# P^3$, Flat, Nil   & 
%       $N_1^{m_1}\tilde\times D^{4-m_1}\cup N_2^{m_2}\tilde\times D^{4-m_2}$,
%             $(m_j\le 2)$   \\
%     \hline
%     \end{tabular}
%     \medskip  
%     \caption{Fibre information}
%     \label{tab:fibre}
%   \end{center}
% \end{table}
%  Here $N_j^m$ has almost nilpotent fundamental group.
\end{cor}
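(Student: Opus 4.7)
My plan is to identify the cap fibre $F_{\rm cap}$ as an $(m{+}1)$-dimensional almost nonnegatively curved manifold that itself collapses to an interval, and then to invoke the appropriate collapsing classification in each codimension $m=1,2,3$.

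First I observe that, by conclusion~(1) of Theorem~\ref{thm:orig-cap}, $f_{\rm cap}$ is a locally trivial fibre bundle, so $F_{\rm cap}$ is a compact $(m{+}1)$-manifold; the gluing $N=N_{\rm int}\cup N_{\rm cap}$ along $f^{-1}(\partial_0 Y_\nu)$ identifies $\partial F_{\rm cap}$ with the interior fibre $F_{\rm int}$, a closed almost nonnegatively curved $m$-manifold whose topology is already known from the fibration theorem together with \cite{FY:fundgp}, \cite{Ym:collapsing} and Theorem~\ref{thm:circle}. The decisive point is that $F_{\rm cap}$ itself collapses: since $F_{\rm cap}=f_{\rm cap}^{-1}(p)$ is the preimage of a single base point $p\in\partial_0 Y_\nu$, its spread inside $M^n$ is purely radial, running from the preimage of $\partial X$ out to the common boundary with $N_{\rm int}$, a distance of order $\nu$. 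The $m$-dimensional fibre direction collapses while the normal direction survives; quantitatively, applying the almost-Lipschitz submersion property~(2b) of Theorem~\ref{thm:orig-cap} to $f$ restricted to $F_{\rm cap}$ shows that $F_{\rm cap}$ Gromov-Hausdorff converges to the interval $[0,\nu]$. After rescaling by $1/\nu$, the machinery of Section~\ref{sec:rescal} produces a pointed limit in which $F_{\rm cap}$ becomes a complete noncompact $(m{+}1)$-manifold collapsing to a ray, with generic fibre homeomorphic to $F_{\rm int}$.

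I then apply the collapsing classification dimension by dimension. For $m=1$, $F_{\rm cap}$ is an almost nonnegatively curved surface with circle boundary, hence must be $D^2$. For $m=2$, the classification of 3-manifolds collapsing to an interval from \cite{SY:3mfd}, combined with the constraint that $\partial F_{\rm cap}=F_{\rm int}$ is the boundary of a compact 3-manifold, yields exactly the four $I$-bundles $D^3$, $P^2\tilde\times I$, $S^1\times D^2$, $K^2\tilde\times I$. For $m=3$, I pass to the double $D(F_{\rm cap})=F_{\rm cap}\cup_{F_{\rm int}}F_{\rm cap}$, which by \cite{Pr:alex2} carries a natural Alexandrov metric of curvature $\ge -1$ and collapses to a symmetric closed interval obtained by doubling $[0,\nu]$; Theorem~\ref{thm:dim1} then describes $D(F_{\rm cap})$ as a gluing of two pieces of the form $U_i^4$, and the doubling involution interchanging the two copies of $F_{\rm cap}$ forces $F_{\rm cap}$ itself to be one such $U_i^4$.

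The main obstacle is the $m=3$ case: I have to verify that the doubling involution acts compatibly with the decomposition furnished by Theorem~\ref{thm:dim1}, so that the two pieces in that decomposition genuinely correspond to the two copies of $F_{\rm cap}$ under the involution. This is where the equivariant refinements of the fibration-capping theorem proved in Sections~\ref{sec:cap-state}--\ref{sec:cap-lip} will enter, together with the analysis of essential singular points along the boundary interval of the doubled limit space --- in particular their quasigeodesic character from Remark~\ref{rem:dim3infom} --- to ensure that the decomposition is invariant under the involution.
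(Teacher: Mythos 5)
There is a genuine gap, and it undermines every step except the identification of $\partial F_{\rm cap}$ with $F_{\rm int}$. The classification theorems you want to invoke (Theorem 0.5 of \cite{SY:3mfd} for $m=2$, Theorem \ref{thm:dim1} for $m=3$) apply to sequences of manifolds with a uniform lower bound on sectional curvature. But $F_{\rm cap}$ is merely a submanifold of $M$; it is not totally geodesic, and a fibre of a Riemannian submersion can have arbitrarily negative intrinsic curvature even when the total space has $K\ge -1$. So the induced metric on $F_{\rm cap}$ carries no curvature bound at all, and you cannot run the collapsing classification on $F_{\rm cap}$ intrinsically. This also vitiates the $m=1$ step --- "almost nonnegatively curved surface" is not something you know about $F_{\rm cap}$ --- and it makes the $m=3$ doubling argument doubly wrong: Perelman's theorem on doubles (\cite{Pr:alex2}) produces an Alexandrov space of curvature $\ge\kappa$ only when the input is already an Alexandrov space of curvature $\ge\kappa$ with boundary, which $F_{\rm cap}$ with the induced Riemannian metric need not be. A secondary slip: since $f$ is constant on the fibre, the $\tau(\delta,\nu,\epsilon/\nu)$-Lipschitz-submersion estimate of Theorem \ref{thm:orig-cap}(2b) is vacuous on $F_{\rm cap}$ and cannot be used to deduce Gromov--Hausdorff closeness of $F_{\rm cap}$ to $[0,\nu]$; one would have to argue via the distance to $\partial X$ (or to its preimage in $M$) instead.

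The paper's own (terse) proof signals the correct escape: it invokes \emph{parametrized} versions of the interval-collapsing theorems, and the generalized Margulis lemma for $m=1$. The point is to never leave $M$, where the curvature bound is available. Concretely: one works with $f_{\rm cap}^{-1}(B)$ for a small ball $B\subset\partial_0 Y_\nu$ --- a codimension-zero domain of $M$ --- and re-runs the rescaling argument of Section \ref{sec:rescal} (via the generalized Lemma \ref{lem:split-rescal}) in the ambient $M$, with the extra $k-1$ base directions kept as parameters, to identify the fibre over a point. All the geometric estimates (strainers, almost-Lipschitz submersions, rescaling limits) are then about $M$, not about the fibre with its induced metric. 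If you want to salvage your proposal, you must recast each application of a collapsing-to-an-interval theorem as a statement about a neighbourhood of $F_{\rm cap}$ inside $M$, and this is essentially what "parametrized version" means.
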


Note that when $m\le 3$, the topology of the fibre of  $f_{\rm int}$
is already determined in \cite{Ym:collapsing} and \cite{FY:fundgp}.

The proofs of Theorem \ref{thm:orig-cap} and Corollary \ref{cor:cap}
are differed to Part \ref{part:cap},
where we actually prove an equivariant version of Theorem \ref{thm:orig-cap}
(see Theorem \ref{thm:cap}).

  When no collapsing occurs, we have the following stability result
due to \cite{Pr:alex2}.\par

\begin{thm}[Stability Theorem \cite{Pr:alex2}]\label{thm:stability}
Let a sequence of compact $n$-dimensional Alexandrov spaces $X_i$ with
curvature $\ge -1$ converge to 
a compact Alexandrov space  $X$ of dimension $n$. Then 
$X_i$ is homeomorphic to $X$ for sufficiently large $i$.
\end{thm}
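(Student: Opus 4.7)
The plan is to follow Perelman's original strategy, which constructs a homeomorphism $h_i:X_i\to X$ for large $i$ by gluing local homeomorphisms using gradient-like deformations of admissible distance maps. Since $\dim X_i=\dim X=n$, the convergence is non-collapsing, so by Bishop--Gromov the volumes converge and the $\delta$-strained structure is preserved: given a Gromov--Hausdorff $\epsilon_i$-approximation $\varphi_i:X_i\to X$ with $\epsilon_i\to 0$, every point of $R_\delta(X)$ corresponds to a point of $R_{\delta'}(X_i)$ with nearly the same strainer length, for some $\delta'$ close to $\delta$.

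First I would establish local stability at strained points. For $p\in R_\delta(X)$ with an $(n,\delta)$-strainer $(a_j,b_j)_{j=1}^n$ of length $\ge\mu$, form the distance-coordinate map $\Phi_p(x)=(d(a_1,x),\ldots,d(a_n,x))$, which is an almost-isometric chart on a ball $B(p,r)$. Picking $a_j^{(i)}\in X_i$ with $\varphi_i(a_j^{(i)})$ close to $a_j$, the analogous map $\Phi_p^{(i)}$ is also a chart on $B(p_i,r)$, and the composition $\Phi_p^{-1}\circ\Phi_p^{(i)}$ is a homeomorphism of small balls close to $\varphi_i$. Near singular points of $X$, one instead uses Perelman's \emph{admissible} maps: tuples of distance functions to carefully chosen point-clusters, possibly composed with strictly concave averaging, whose level sets fiber the space locally and encode the cone-like local topology. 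The local stability lemma asserts that the admissible map on $X_i$ built from corresponding points has the same fibered topology as that on $X$; hence small balls on the two sides are homeomorphic by a map close to $\varphi_i$.

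Next I would patch these local homeomorphisms into a global one. Cover $X$ by finitely many small neighborhoods $U_\alpha$ each carrying a local homeomorphism $h_\alpha:\varphi_i^{-1}(U_\alpha)\to U_\alpha$. To reconcile $h_\alpha$ and $h_\beta$ on $U_\alpha\cap U_\beta$, one uses gradient-like flows of strictly concave averaged distance functions, which deform one local map to the other along paths of homeomorphisms; a finite inductive gluing produces the global $h_i$. Here one needs Siebenmann's local contractibility of the homeomorphism group of a compact ANR (or the relevant Chapman--Ferry $\alpha$-approximation theorem) to upgrade the approximate $\varphi_i$-agreement to an honest homeomorphism on overlaps.

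The main obstacle is local stability at essential singular points of $X$, where $\Sigma_p$ may be topologically complicated and the local model is the cone $K_p$. To show that a small ball in $X_i$ around $p_i$ is homeomorphic to $K_p$, one rescales by the inverse of a vanishing radius so that the pointed rescaled spaces $(X_i,p_i)$ converge to the tangent cone $K_p$, applies an induction on dimension to get a homeomorphism of the corresponding cross-sections $\Sigma_{p_i}\to\Sigma_p$, and then cones off using a radial gradient flow of the distance-to-$p$ function. Since the stability theorem is used only as a black box in the present paper, I would not reprove it and would simply cite \cite{Pr:alex2}.
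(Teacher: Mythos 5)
The paper does not prove this statement; it records it as a known result and cites Perelman's preprint \cite{Pr:alex2}, exactly as you conclude you would do. Your sketch of Perelman's strategy is accurate in its essentials: non-collapsing preservation of strainers, local stability via admissible distance maps and strictly concave averaging, gluing local homeomorphisms by gradient-like deformations together with Siebenmann-type local contractibility, and an induction on dimension at singular points using convergence of rescalings to the tangent cone. Since the paper itself treats the theorem as a black box, there is no discrepancy to report, and citing \cite{Pr:alex2} is the appropriate move here.
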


In the proof of the theorem above, the notion of regular 
maps is crucial. Here we recall the most basic case of it.

Consider a map $f=(f_1,\ldots,f_m):U\to\R^m$ on an open set $U$ of
an Alexandrov space $X$ defined by the distance functions  
$f_j(x)=d(A_j,x)$ from compact subsets $A_j\subset X$. 
The map $f$ is said to be  $(c,\epsilon)$-\emph{regular} at $p\in U$ 
if there is a point $w\in X$ such that 
\begin{enumerate}
 \item $\angle((A_j)_p^{\prime},(A_k)_p^{\prime})>\pi/2-\epsilon$;
 \item $\angle(w_p^{\prime},(A_j)_p^{\prime})>\pi/2+c$,
\end{enumerate}
for every $j\neq k$.
In \cite{Pr:alex2}, it was proved that if 
$f=(f_1,\ldots,f_m):U\to\R^m$ is $(c,\epsilon)$-regular
on $U$ for some $\epsilon>0$ sufficiently small compared to $c$,
then it is a topological submersion (see also \cite{Pr:morse}).
We simply say that $f$ is regular in this case.
Together with \cite{Si:stratified}, this implies that 
if $f:U\to\R^m$ is regular and proper, then it is a locally
trivial fibre bundle over its image.

Under the same assumption as Stability Theorem
\ref{thm:stability},
suppose in addition that a regular map $f:U\to\R^m$ is 
given as above on an open subset $U\subset X$. 
Using an approximation map $X\to X_i$,
we can define a regular map $f_i:U_i\to\R^m$ 
on an open subset $U_i\subset X_i$ for any sufficiently large $i$.

The following is the respectful version of Stability Theorem
\ref{thm:stability}.

\begin{thm}[\cite{Pr:alex2}] \label{thm:stability-respect}
Under the situation above, there exists a homeomorphism $h_i:X\to X_i$ 
such that $f_i\circ h_i=f$ holds on every compact set 
$K\subset U$ and for sufficiently large $i\ge i(f,K)$.
\end{thm}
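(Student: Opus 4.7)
The strategy is to mimic the proof of the unrestricted Stability Theorem \ref{thm:stability}, but to arrange the local coordinate systems on $X$ and $X_i$ so that $f$ and $f_i$ are automatically parts of them. The essential point is that, near any point of $K$, the regular $m$-tuple $f$ can be enlarged to a full regular $n$-tuple $\tilde f=(f_1,\dots,f_m,f_{m+1},\dots,f_n)$ of distance functions from compact subsets, which by \cite{Pr:alex2} is a bi-Lipschitz chart onto an open subset of $\R^n$. Performing the standard stability construction with $\tilde f$ in the role of the distance chart will produce local homeomorphisms preserving all $n$ coordinates, hence in particular the first $m$, which are $f$.

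Concretely, for each $p\in K$ I would choose additional compact sets $A_{m+1},\dots,A_n\subset X$ so that $\tilde f_p=(f_1,\dots,f_m,d(A_{m+1},\cdot),\dots,d(A_n,\cdot))$ is $(c,\epsilon)$-regular on a small neighborhood $U_p\ni p$; this is possible because $X$ is $n$-dimensional and $f$ already supplies $m$ of the needed coordinates. Then $\tilde f_p$ is a bi-Lipschitz embedding of $U_p$ onto an open set $W_p\subset\R^n$. Approximating each $A_j$ by $A_j^i\subset X_i$ gives a parallel chart $\tilde f_p^i:U_p^i\to W_p^i$ in $X_i$ which, for $i$ large, is also a bi-Lipschitz embedding with $W_p^i$ Hausdorff-close to $W_p$. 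Setting
\[
  h_p^i := (\tilde f_p^i)^{-1}\circ \tilde f_p : U_p \to U_p^i
\]
(after a slight shrink of $U_p$ so that $\tilde f_p(U_p)\subset W_p^i$) produces a local homeomorphism satisfying $\tilde f_p^i\circ h_p^i=\tilde f_p$ by construction, and in particular $f_i\circ h_p^i=f$.

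The final step, and the main obstacle, is to glue the local homeomorphisms $\{h_p^i\}$ indexed by a finite subcover of $K$ into a single global homeomorphism $h_i:X\to X_i$, extended outside a neighborhood of $K$ by an arbitrary unrestricted stability homeomorphism. On an overlap $U_p\cap U_q$ the homeomorphisms $h_p^i$ and $h_q^i$ need not agree, because the auxiliary components $(f_{m+1}^{(p)},\dots,f_n^{(p)})$ and $(f_{m+1}^{(q)},\dots,f_n^{(q)})$ differ; however both preserve $f$, so the transition $h_q^{i,-1}\circ h_p^i$ is a small self-homeomorphism of $U_p\cap U_q$ that is fibre-preserving for $f$. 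The plan is to interpolate this transition by a fibre-preserving isotopy constructed from a partition of unity on $f(U_p\cap U_q)\subset\R^m$ combined with Siebenmann's topological isotopy extension theorem \cite{Si:stratified} applied within each fibre of $f$, and then to iterate over the finite cover following Perelman's nested-open-set induction from the proof of Theorem \ref{thm:stability}. Because every interpolation occurs inside the fibres of $f$, the resulting $h_i$ automatically satisfies $f_i\circ h_i=f$ on $K$ for every $i\ge i(f,K)$.
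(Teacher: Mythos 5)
The paper itself offers no proof of Theorem \ref{thm:stability-respect}: it is stated as a result of Perelman, quoted from the unpublished preprint \cite{Pr:alex2}, so there is no argument in the text against which to compare yours.

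Evaluating your sketch on its own terms, the first step already fails. You claim that near any $p\in K$ the regular $m$-tuple $f$ can be enlarged to a $(c,\epsilon)$-regular $n$-tuple $\tilde f_p$ of distance functions giving a bi-Lipschitz chart, ``because $X$ is $n$-dimensional and $f$ already supplies $m$ of the needed coordinates.'' But regularity of $f$ at $p$ does not make $p$ a regular point of $X$, and at genuine metric singularities $\Sigma_p$ may simply be too small to hold $n$ pairwise almost-orthogonal directions $(A_j)'_p$ together with a direction $w$ obtuse to all of them. A concrete obstruction: let $X$ be an Alexandrov surface with a cone point $p$ of total angle $L=1.2\pi$, so $\Sigma_p$ is a circle of length $1.2\pi$. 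Any three points on this circle bound three arcs summing to $1.2\pi<3\pi/2$, so one arc is shorter than $\pi/2$, hence some pair of the three directions makes an angle $<\pi/2$. Therefore no $(c,\epsilon)$-regular $2$-tuple exists at $p$ when $\epsilon$ is small compared with $c$. Yet a regular $1$-tuple $f=d_q$ does exist there (since $L/2=0.6\pi>\pi/2+c$ for small $c$), so $p$ may lie in $U$. Your construction thus breaks down exactly where the theorem has content, namely along the singular locus lying inside $U$.

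Even at points where the extension is possible, the gluing step is not an argument. The fibres of $f$ through singular points are $(n-m)$-dimensional MCS spaces, not manifolds, and a ``small'' self-homeomorphism of such a fibre is not automatically isotopic to the identity by a partition-of-unity interpolation; controlling these transitions is precisely the technical heart of Perelman's proof, which proceeds by induction on the stratification of the fibres using Siebenmann's deformation theory for stratified sets, combined with a framed (``respectful'') version of the stability theorem. Your plan names the right ingredients but does not carry them through, and without the correct local model at singular points there is nothing to glue in the first place.
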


For any compact set $A$ of an Alexandrov space $X$, $d_A=d(A,\,\cdot\,)$
denotes the distance function from $A$.
In case $X$ is a Riemannian manifold, we denote
by $\tilde d_A=\tilde d(A,\,\cdot\,)$ a smooth approximation of 
$d_A$.

%                   \input{ideal}
 
% ideal.tex
\section{Preliminaries on complete Alexandrov spaces with
       nonnegative curvature} \label{sec:ideal}

Let $C$ be a complete nonnegatively curved Alexandrov space 
with nonempty boundary. In \cite{Pr:alex2}, it was proved 
that the function $d(\partial C,\,\cdot\,)$ is concave
on $C$. We first show the rigidity for this function.

\begin{prop} \label{prop:concave-rigid}
For a unit speed geodesic segment 
$\gamma:[\,0,a\,]\to C$ joining $p_0$ to $p_1$,
suppose that $d(\partial C, \gamma(t))$ is constant.
Then for any minimal geodesic $\gamma_0$ from $p_0$ 
to $\partial C$, there is a 
 minimal geodesic $\gamma_1$ from $p_1$ 
to $\partial C$, such that $\{ \gamma_0,\gamma,\gamma_1\}$ bounds a 
flat totally geodesic rectangle.
\end{prop}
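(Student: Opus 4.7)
The plan is to pass to the double $D(C)$, which by Perelman's doubling construction is an Alexandrov space with nonnegative curvature, and exploit the mirror symmetry. Write $x\mapsto x^{*}$ for the reflection $\sigma:D(C)\to D(C)$; this is an isometry fixing $\partial C$ pointwise and exchanging $C$ and $C^{*}$. Any minimal path from $x\in C$ to $x^{*}\in C^{*}$ must cross $\partial C$, so $d_{D(C)}(x,x^{*})=2\,d(\partial C,x)$. Thus the hypothesis translates into the statement that $d_{D(C)}(\gamma(t),\gamma^{*}(t))\equiv 2c$ is constant in $t$, where $\gamma^{*}$ is the mirror copy of $\gamma$ in $C^{*}$.

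The given minimal geodesic $\gamma_{0}$ from $p_{0}$ to $q_{0}\in\partial C$, concatenated with its mirror image $\gamma_{0}^{*}$, yields a path $\Gamma_{0}:=\gamma_{0}\ast\gamma_{0}^{*}$ of length $2c$ from $p_{0}$ to $p_{0}^{*}$, hence a minimal geodesic in $D(C)$. I then invoke the flat rectangle rigidity for a pair of unit-speed geodesics at constant distance in a nonnegatively curved Alexandrov space: $\gamma$ and $\gamma^{*}$ bound a flat, totally geodesic rectangle $R\subset D(C)$ whose two vertical sides may be prescribed as any minimal geodesics joining the corresponding endpoints. Take the left side of $R$ to be $\Gamma_{0}$, and let $\Gamma_{1}$ be the resulting right side, a minimal geodesic from $p_{1}$ to $p_{1}^{*}$ of length $2c$; it necessarily crosses $\partial C$ at some point $q_{1}$, subdividing it into halves $\gamma_{1}$ (from $p_{1}$ to $q_{1}$) and $\gamma_{1}^{*}$.

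The reflection $\sigma$ swaps $\gamma$ and $\gamma^{*}$ and preserves $\Gamma_{0}$, $\Gamma_{1}$ setwise, hence preserves $R$; acting nontrivially on the two-dimensional flat $R$, its fixed locus in $R$ is a one-dimensional totally geodesic curve of length $a$, which must lie in $\Fix(\sigma)=\partial C$ and connect $q_{0}$ to $q_{1}$. Cutting $R$ along this midline yields two congruent flat totally geodesic half-rectangles; the one contained in $C$ is bounded by $\gamma$ on top, $\gamma_{0}$ on the left, the arc $q_{0}q_{1}\subset\partial C$ on the bottom, and $\gamma_{1}$ on the right. Since $d(p_{1},q_{1})=c=d(\partial C,p_{1})$, this $\gamma_{1}$ is a minimal geodesic from $p_{1}$ to $\partial C$, so the claimed flat totally geodesic rectangle is obtained.

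The main obstacle is the flat rectangle rigidity invoked in the second paragraph: one must promote constancy of $d(\gamma(t),\gamma^{*}(t))$ in a curvature-${\ge}0$ Alexandrov space to the existence of a genuine flat totally geodesic rectangle whose left side is the prescribed $\Gamma_{0}$. This is proven by applying Alexandrov's rigidity: constancy of $d(\gamma(t),\gamma^{*}(t))$ forces equality in Toponogov comparison for every sub-triangle $\gamma(s)\gamma(t)\gamma^{*}(t)$ and $\gamma(s)\gamma^{*}(s)\gamma^{*}(t)$, so each such triangle admits a flat totally geodesic filling; these flat fillings are patched together using uniqueness of flat fillings in $K\ge 0$ Alexandrov spaces, and any choice of minimal geodesic of length $2c$ between $p_{0}$ and $p_{0}^{*}$ may serve as the left side of the resulting rectangle.
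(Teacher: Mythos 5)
Your overall plan of passing to the double $D(C)$ is reasonable and is related to what the paper does (the paper also invokes $D(C)$ and $\Sigma_{q_0}(D(C))$), but the crucial step — the ``flat rectangle rigidity'' that you rightly flag as the main obstacle — is not established by the argument you sketch, and this is a genuine gap rather than a routine detail.

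Your claim is that constancy of $d(\gamma(t),\gamma^{*}(t))=2c$ ``forces equality in Toponogov comparison.'' It does not. What you actually get from the available data is only a one-sided estimate. Fix $s$ and set $f(t)=d(\gamma(s),\gamma^{*}(t))$. The first variation and the minimality of $d(\gamma(s),\gamma^{*}(\cdot))$ at $t=s$ give $\angle\bigl(\dot\gamma^{*}(s),\,\gamma(s)'_{\gamma^{*}(s)}\bigr)=\pi/2$, and the Toponogov hinge comparison at $\gamma^{*}(s)$ then yields $f(t)\le\sqrt{(2c)^{2}+(t-s)^{2}}$. The complementary lower bound $f(t)\ge\sqrt{(2c)^{2}+(t-s)^{2}}$ — which is what actually forces the comparison angle to equal $\pi/2$ and triggers Alexandrov's rigidity — does not follow. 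Indeed, if you set $F(t)=f(t)^{2}-(t-s)^{2}-(2c)^{2}$, then by standard comparison $F$ is concave, $F\le 0$, $F(s)=0$ and $F'(s)=0$; but a concave function with these properties need not vanish identically (e.g.\ $F(t)=-\varepsilon(t-s)^{2}$). The lower bound $d(\gamma(s),\gamma^{*}(t))\ge 2c$ coming from the doubling only gives $F(t)\ge -(t-s)^{2}$, which is far too weak. So constancy along the diagonal plus perpendicularity plus Toponogov does not determine the off-diagonal distances, and without them you cannot conclude that each sub-triangle admits a flat filling.

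The missing ingredient is precisely what the paper uses and you never invoke: Perelman's theorem that $d(\partial C,\cdot)$ is \emph{concave} on $C$. The paper applies this to the geodesic $\sigma$ from $q_{0}$ (the foot of $p_{0}$) to $p_{1}$, obtaining a lower bound on $f'(0)=\sin\alpha$; combining this with the Toponogov upper bound on $d(q_{0},p_{1})$ pins down all the comparison quantities and yields $\tilde\angle q_{0}p_{0}p_{1}=\pi/2=\angle q_{0}p_{0}p_{1}$, at which point Alexandrov rigidity gives the flat triangle. You need this concavity (or an equivalent input) to close the gap. A secondary, smaller issue: the claim that one may prescribe \emph{any} minimal geodesic $\Gamma_{0}$ from $p_{0}$ to $p_{0}^{*}$ as the left side of the rectangle is also asserted without proof; the paper sidesteps this by working directly with the given $\gamma_{0}$ from the start.
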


\begin{proof}
This follows from a modification of Proposition 9.10 in
\cite{SY:3mfd}. For completeness, we give a proof below.

Let $q_0\in\partial C$ denote the end point of
$\gamma_0$. 
We show that
$\Sigma_{q_0}(D(C))$ is the spherical suspension over 
$\partial\Sigma_{q_0}(C)$, where
$D(C)$ is the double of $C$.
Let $r_0\in D(C)$ be another copy of $p_0$, and 
$\xi_+$ and $\xi_-$ the directions at $q_0$  represented 
by geodesics $q_0p_0$ and $q_0r_0$ respectively.
Obviously $\angle(\xi_+,\xi_-)=\pi$.
It follows that 
$\Sigma_{q_0}(D(C))$ is the spherical suspension over 
the hypersurface $S:=\{ v\,|\, \angle(\xi_{\pm},v)=\pi/2\}$.
Clearly  $\partial\Sigma_{q_0}(D(C))$ is a subset of 
$\{ v\,|\, \angle(\xi_{\pm},v) \ge \pi/2\}$ which coincides with
$S$. An obvious argument then implies that
$\partial\Sigma_{q_0}(D(C))=S$.
\par
  
Let $\sigma:[\,0,b\,]\to X$ be a unit speed minimal geodesic joining 
$q_0$ to $p_1$. 
We consider the concave function $f(u)=d(\sigma(u),\partial C)$. 
Put $\alpha=\angle(\dot\sigma(0),\partial\Sigma_{q_0}(C))$, 
where $\dot\sigma(0)$ denotes the direction at $\sigma(0)$ 
represented by $\sigma$.
From a standard argument,
$$
             f^{\prime}(0)=\sin \alpha.
$$
Consider the triangle 
$\triangle q_0^{\prime}p_0^{\prime}p_1^{\prime}$ on $\R^2$ 
such that
$d(p_0^{\prime},q_0^{\prime})=t$, $d(p_0^{\prime},p_1^{\prime})=a$ and 
$\angle q_0^{\prime}p_0^{\prime}p_1^{\prime}=\pi/2$,
where $t:=d(p_0,q_0)$.
Set
$b^{\prime}=d(q_0^{\prime},p_1^{\prime})$,
$\alpha^{\prime}=\angle q_0^{\prime}p_1^{\prime}p_0^{\prime}$, 
$\theta^{\prime}=\angle p_0^{\prime}q_0^{\prime}p_1^{\prime}$,
and  $\theta=\angle p_0 q_0 p_1=\pi/2-\alpha$. 
Since $\angle q_0 p_0 p_1=\pi/2$, we have
$b^{\prime}\ge b$. 
It follows from  the concavity of $f$ that 
$$
  f^{\prime}(0)\ge \frac tb\ge \frac t{b^{\prime}}.
$$
Thus we obtain that 
\begin{equation}
    \alpha\ge\alpha^{\prime} \qquad \text{and} \qquad
            \theta\le\theta^{\prime}.
\end{equation}
Consider now a
comparison triangle $\tilde\triangle q_0 p_0 p_1$ in $\R^2$
and put $\tilde\theta =\tilde\angle p_0 q_0 p_1$, 
$\tilde\alpha=\tilde\angle q_0 p_1 p_0$. 
Since we may assume for 
our purpose that $t>a$, it follows from an obvious consideration
with $b>t$ that 
$\alpha^{\prime}\le\tilde\alpha\le\pi/2$, 
$\theta^{\prime}\le\tilde\theta$ and hence 
\begin{equation}
  \theta^{\prime}=\theta=\tilde\theta, \quad
   \alpha^{\prime}=\tilde\alpha=\alpha,\quad 
   b=b^{\prime}\quad \text{and}\quad
   \tilde\angle q_0 p_0 p_1=\pi/2.  
\end{equation}
It follows from the rigidity argument(cf.\cite{Shm:alex}) that  
$\triangle q_0 p_0 p_1$ spans a 
totally geodesic flat triangle isometric to 
$\tilde\triangle q_0 p_0 p_1$.
Furthermore,
$f^{\prime}(0)=t/b$. It follows from the concavity of $f$ that
$f(u)=tu/b$ for all $u$. 
Let $x_u$ and $y_u$ be the points on $\partial C$ and $q_0 p_1$ 
respectively such that $f(u)=d(\sigma(u),x_u)$ and 
$d(p_0,y_u)=ua/b$. 
Then it follows together with the comparison argument that
$d(x_u,y_u)\le d(x_u,\sigma(u))+d(\sigma(u),y_u)\le t$.
Thus $\sigma$ lies on the minimal connections from  
the points of $\gamma$ to $\partial C$.   
\par

By repeating the argument above for $x_u,y_u,p_1$ in place of
$q_0,p_0,p_1$, we conclude that the set of minimal connections 
$x_uy_u$, $0\le u\le b$, provides a  
totally geodesic flat rectangle.
\end{proof}

Let $X$ be a complete noncompact   
Alexandrov space with nonnegative curvature.
Consider the Busemann function associated with a reference point
$p\in X$ defined by 
\begin{equation}
    b(x)=\sup_{\gamma}b_{\gamma}(x),
         \label{eq:Busemann}
\end{equation}
where 
$\gamma$ runs over all the geodesic rays emanating from $p$
and

$$
   b_{\gamma}(x)=\lim_{t\to\infty} t-d(x,\gamma(t)).
$$
Applying the Cheeger-Gromoll basic construction(\cite{CG:str},
\cite{Pr:alex2}),
we obtain a sequence of finitely many 
nonempty compact totally convex sets:
$$
  C(0)\supset C(1)\supset C(2)\supset\cdots\supset C(k),
$$
where $C(0)$ is the minimum set of $b$, $n>\dim C(0)$, 
$\dim C(i) > \dim C(i+1)$ and $C(k)$ has no boundary.
Then a soul $S$ of $X$ is defined as $S=C(k)$. 
It was proved in \cite{Pr:alex2} that $X$ is 
homotopy equivalent to $S$.

The compact totally convex set $C(0)$ depends on the reference 
point $p$ and is denoted by $C_p(0)$ for a moment.
Now consider the integer
$$
  m_X := \inf_{p\in X} \dim C_p(0).
$$

We denote by $X(\infty)$ the ideal boundary of $X$ equipped 
with the Tits distance. 

\begin{lem} \label{lem:C0codim1}
If $m_X=n-1$, then $X^n(\infty)$ 
consists of at most two elements.
\end{lem}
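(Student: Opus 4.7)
The plan is to pin down the local structure of $X$ at a regular interior point of the codimension-one totally convex set $C:=C_p(0)$ and use it to constrain the asymptote classes. Since $\dim C = n-1$ is the maximum possible dimension for a compact totally convex subset of a noncompact $X^n$, the space of directions at such a regular point is nearly rigid, with at most two ``pole'' directions perpendicular to $\Sigma_q C$; each class in $X(\infty)$ must then correspond to one of these poles.

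Fix $p$ with $\dim C_p(0) = n-1$ and write $C := C_p(0)$. Choose a point $q$ in the $(n-1)$-dimensional regular (manifold) part of the relative interior of $C$. Total convexity of $C$ makes $\Sigma_q C$ an isometrically embedded totally convex subset of $\Sigma_q X$, and at a regular $q$ it is a round $(n-2)$-sphere. Because $\Sigma_q X$ is an $(n-1)$-dimensional compact Alexandrov space of curvature $\ge 1$ containing a codimension-one round sphere, the standard positive-curvature rigidity (a spherical suspension/join theorem) forces $\Sigma_q X$ to be isometric either to the spherical suspension $\Sigma(S^{n-2}) = S^{n-1}$ (when $q\in\mathrm{int}\, X$), or to the closed hemisphere $\overline{S^{n-1}_+}$ (when $q\in\partial X$). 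In the first case there are exactly two antipodal pole directions $\xi^\pm \in \Sigma_q X$ with $\angle(\xi^\pm, \Sigma_q C) = \pi/2$ and $\angle(\xi^+, \xi^-) = \pi$; in the second a unique pole $\xi^+$.

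Next, for each $\zeta \in X^n(\infty)$, let $\tilde\gamma$ be a ray from $q$ in class $\zeta$, with initial direction $v_\zeta = \dot{\tilde\gamma}(0)$. Since $C$ is the minimum set of the convex function $b$, the directional derivative $\partial_\eta b(q)$ vanishes for every $\eta \in \Sigma_q C$. Combining this with the first variation formula $\partial_\eta b_\gamma(q) = \cos\angle(\eta, v_\gamma)$ applied to a representative ray $\gamma$ from $p$ in class $\zeta$, the convexity of each $b_\gamma$, and the constancy of $b$ on $C$, one deduces that $\angle(v_\zeta, \eta) \ge \pi/2$ for every $\eta \in \Sigma_q C$. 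In the suspension (resp.\ hemisphere) model of $\Sigma_q X$, the only directions with this property are the poles $\xi^\pm$ (resp.\ $\xi^+$). Since geodesics in an Alexandrov space do not branch, two rays from $q$ with the same initial direction coincide and hence represent the same asymptote class; the assignment $\zeta \mapsto v_\zeta$ is therefore injective into the (at most two-element) set of pole directions, yielding $|X^n(\infty)| \le 2$.

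The main technical obstacle is the rigidity step, which identifies $\Sigma_q X$ as a spherical suspension or hemisphere from the sole hypothesis that it contains an isometrically embedded codimension-one round sphere; this invokes the appropriate positive-curvature rigidity for Alexandrov spaces. The remaining first-variation argument requires some care to guarantee that the inequality $\cos\angle(\eta, v_\zeta) \le 0$ holds uniformly for every $\zeta \in X(\infty)$, not only for those classes whose representative Busemann functions attain the supremum in $b$ at $q$; this uses density of regular points in $C$ together with upper semicontinuity and compactness of the parametrizing space of rays from $p$.
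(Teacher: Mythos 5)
Your approach is genuinely different from the paper's. The paper exploits the fact that $m_X$ is an \emph{infimum}: starting from $p\in\interior C(0)$ with $\dim C_p(0)=n-1$, the rigidity of Proposition~\ref{prop:concave-rigid} supplies two normal rays $\gamma_\pm$; if a third ray $\sigma$ from $p$ existed, the Busemann function $b_\sigma$ would be locally non-constant on $C_p(0)$, its minimum set $C'$ on $C_p(0)$ would then have dimension $<n-1$, and since $C_q(0)\subset C'$ for any $q\in C'$, this contradicts $m_X=n-1$. No analysis of $\Sigma_q X$ at a regular point is required. Your spherical-join rigidity step, by contrast, is not invoked by the paper at all. (For what it is worth, that step is essentially sound: a round, convexly embedded $S^{n-2}$ in a curvature $\ge1$ Alexandrov $(n-1)$-space forces, by iterating the diameter-$\pi$ rigidity, a join decomposition $\Sigma_q X\simeq S^{n-2}*Y$ with $Y$ a $0$-dimensional Alexandrov space of curvature $\ge 1$; since $\Sigma_{\xi}^{(n-2)}\cdots\Sigma_{\xi_1}(\Sigma_q X) = S^0*Y$ must be a circle or arc, $|Y|\le 2$. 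Strictly, $\Sigma_q X$ need not be the round sphere or hemisphere --- $Y$ could be a two-point set at distance $<\pi$ --- but you still get at most two pole directions.)

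The genuine gap is in your first-variation step. You want $\cos\angle(\eta, v_\zeta)\le 0$ for all $\eta\in\Sigma_q C$, i.e.\ $\partial_\eta b_\sigma(q)\le 0$, where $\sigma$ is a ray from $q$ in class $\zeta$. What you actually have is: $b_\sigma$ is convex, $b_\sigma\le b_p$ up to an additive constant, and $b_p\equiv\min b_p$ on $C=C_p(0)$. This makes $b_\sigma|_C$ a convex function bounded above, but it does \emph{not} make $q$ a maximum point of $b_\sigma|_C$: a convex function on a compact convex set attains its maximum on the boundary, so for a fixed interior regular point $q$ and a generic class $\zeta$, $b_\sigma|_C$ has directions of increase at $q$ and $\partial_\eta b_\sigma(q)>0$ for some $\eta$. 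The desired perpendicularity only holds at a point where $b_\sigma|_C$ is critical --- but that point depends on $\sigma$, and if it is a max it lies on $\partial C$, not in the regular interior. Your proposed fix (density of regular points, upper semicontinuity, compactness of the ray space) does not touch this: the obstruction is not a limiting issue but the simple fact that you cannot choose one $q$ that is simultaneously a critical point of $b_\sigma|_C$ for every asymptote class. The paper's device of passing to a \emph{new} reference point in the minimum set of $b_\sigma|_{C_p(0)}$ and invoking $m_X=n-1$ is exactly what circumvents this, and there is no evident way to run your pointwise tangent-cone argument without something equivalent.
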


\begin{proof}
Consider $C(0)$ with an arbitrary reference point.
By Proposition \ref{prop:concave-rigid}, we have a map
$f:C(0)\times\R$ satisfying:
\begin{enumerate}
 \item $f(x,0)=x$ for any $x\in C(0);$
 \item Both $f(x\times\R_+)$ and $f(x\times\R_-)$ represent 
       geodesic rays from $x$ for any $x\in C(0)$, 
       where $\R_+=[0,\infty)$, $\R_-=(\,-\infty,0\,);$ 
 \item For any $x,y\in C(0)$,  $f(x\times\R_{\pm})$, a geodesic
       $\gamma_{x,y}$ joining $x$ and $y$ and  $f(y\times\R_{\pm})$
       span flat rectangles.
\end{enumerate}
For any $p\in\interior C(0)$, 
consider $C_p(0)$. Clearly $p\in C_p(0)$.
Let $\gamma_{\pm}$ denote
the geodesic rays $f(p\times\R_{\pm})$.
Suppose that there is a geodesic ray $\sigma$ starting from $p$ 
different from $\gamma_{\pm}$, and consider
the minimum set $C$ of $b_{\sigma}$ on the compact set $C_p(0)$.
Since $b_{\sigma}$ is locally nonconstant on $C_p(0)$, 
$\dim C<\dim C_p(0)=n-1$. Obviously
$C_q(0)\subset C$ for any $q\in C$, contradicting the assumption 
$m_X=n-1$.
\end{proof}

\begin{ex}
Let $X$ be the double of the closed domain
$\{ (x,y)\,|\, x,y\ge 0, \,y\ge x-1\,\}$ on the $(x,y)$-plane.
Note $\dim X(\infty)=1$.
For $p=(1/2,0)$, $C_p(0)$ coincides with the segment
$[0,1/2]\times 0$, and for
$q=(0,0)$, $C_q(0)=\{ q\}$. 
\end{ex}

The above example shows that $\dim C(0)=\dim X-1$ does not imply
$\dim X(\infty)=0$.

\begin{prop}\label{prop:ideal}
    $\dim X(\infty)\le \mbox{\rm codim\,\,} S-1$.
\end{prop}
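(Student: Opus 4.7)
Fix $p \in S$ and write $s = \dim S$, $n = \dim X$. The idea is to construct a Tits-isometric embedding
\[
\Phi : X(\infty) \longrightarrow \Sigma_p^{\perp}(S) \subset \Sigma_p(X), \qquad \xi \longmapsto \dot\gamma_\xi(0),
\]
where $\gamma_\xi$ is a ray from $p$ asymptotic to $\xi$ and $\Sigma_p^{\perp}(S):=\{v\in\Sigma_p(X):\angle(v,\Sigma_p(S))=\pi/2\}$, and then to observe via a spherical join decomposition $\Sigma_p(X)=\Sigma_p(S)\ast \Sigma_p^{\perp}(S)$ that $\dim\Sigma_p^{\perp}(S) = n-s-1$. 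Once $\Phi$ is seen to be well-defined and Tits-isometric we obtain $\dim X(\infty) \le \dim\Sigma_p^{\perp}(S) = n-s-1 = \operatorname{codim}S - 1$.

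Both the containment $\Phi(\xi)\in\Sigma_p^{\perp}(S)$ and the join decomposition will follow from iterating Proposition \ref{prop:concave-rigid} along the Cheeger--Gromoll filtration $C(0)\supset C(1)\supset\cdots\supset C(k)=S$. At the last stage, since $d_{\partial C(k-1)}$ attains its maximum on $S$, it is constant along every geodesic in $S$; Proposition \ref{prop:concave-rigid} applied with $C=C(k-1)$ then produces flat totally geodesic rectangles whose sides are geodesics in $S$ and minimal geodesics from $S$ to $\partial C(k-1)$, forcing the latter to emanate orthogonally to $\Sigma_p(S)$. Iterating one step at a time up the filtration shows that any minimal geodesic from $p$ to $\partial C(0)$ has initial direction in $\Sigma_p^{\perp}(S)$; a ray from $p$ eventually exits the compact set $C(0)$ and is a limit of such minimal geodesics to points escaping to infinity, whence $\Phi(\xi)\in\Sigma_p^{\perp}(S)$. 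Assembling the flat rectangles built on every level yields the join $\Sigma_p(X)=\Sigma_p(S)\ast\Sigma_p^{\perp}(S)$, so that $\dim\Sigma_p^{\perp}(S) = (n-1)-(s-1)-1 = n-s-1$.

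For the Tits isometry, the same flat-rectangle rigidity at $p$ forces Toponogov's comparison to be sharp along rays from $p$: the triangles $\triangle p\,\gamma_\xi(s)\,\gamma_\eta(t)$ lie in flat totally geodesic sectors emanating from $p$, so $\tilde\angle\gamma_\xi(s)p\gamma_\eta(t) = \angle(\dot\gamma_\xi(0),\dot\gamma_\eta(0))$ for every $s,t>0$, and passing to $s,t\to\infty$ gives $\angle_T(\xi,\eta) = \angle(\Phi(\xi),\Phi(\eta))$. In particular distinct ideal points have distinct initial directions at $p$, so $\Phi$ is an isometric embedding of $(X(\infty),\angle_T)$ into $\Sigma_p^{\perp}(S)$, yielding the desired dimension bound.

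The principal obstacle is propagating the flat rigidity beyond the compact set $C(0)$---Proposition \ref{prop:concave-rigid} is stated for geodesics inside the compact totally convex set $C$, whereas the Toponogov equality needed above must hold for $\triangle p\,\gamma_\xi(s)\,\gamma_\eta(t)$ with $s,t$ arbitrarily large, hence for triangles that leave $C(0)$. To handle this one shows that a pair of rays from a soul point whose initial segments in $C(0)$ span a flat sector continue to span a flat sector in $X\setminus C(0)$, essentially because the Busemann function at the base of the Cheeger--Gromoll construction is itself controlled by the rigidity (Proposition \ref{prop:concave-rigid} applied to the successive sublevel sets). The rest of the plan---injectivity of $\Phi$, the join decomposition on spaces of directions, and the final dimension count---is then routine once this rigidity is in hand.
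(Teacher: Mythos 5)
Your plan takes a genuinely different route from the paper, and it has real gaps.

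The paper's proof does not touch the Cheeger--Gromoll filtration at all. It fixes a regular point $p\in S$ and a ray $\gamma$ from $p$, takes the foot $x\in S$ of $\xi=\gamma(\infty)$ realized by $x_n\in S$, $y_n\to\xi$ with $d(y_n,x_n)=d(y_n,S)$, and applies Lemma~\ref{lem:perp} to $\Sigma_{x_n}(S)\subset\Sigma_{x_n}(X)$: the first-variation inequality gives $\angle qx_ny_n\ge\pi/2$ for all $q\in S$, and Lemma~\ref{lem:perp} upgrades this to equality, whence a comparison computation shows $b_\sigma$ (and hence $b_\gamma$) is constant on $S$, so $\gamma$ is orthogonal to $S$ at $p$. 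The dimension bound then follows from the non-contracting map $X(\infty)\to\Sigma_p^\perp(S)$.

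Against this, your plan has three problems. First, the passage from ``minimal geodesics from $p$ to $\partial C(0)$ are in $\Sigma_p^\perp(S)$'' to ``rays from $p$ are in $\Sigma_p^\perp(S)$'' is not justified. A ray from $p$ is the limit of minimal segments $pz_n$ with $z_n\to\infty$; where such a segment exits $C(0)$, say at $w_n\in\partial C(0)$, the subsegment $pw_n$ is in general \emph{not} a minimizer from $p$ to $\partial C(0)$, so Proposition~\ref{prop:concave-rigid} does not control its initial direction. Second, you explicitly name this as ``the principal obstacle'' and then only say ``one shows that\ldots'' --- the very step that needs a proof is deferred. Third, the Tits-isometry claim is both unnecessary and false: you assert that the triangles $\triangle p\,\gamma_\xi(s)\,\gamma_\eta(t)$ ``lie in flat totally geodesic sectors emanating from $p$'', but already for a paraboloid (soul a point) such triangles are strictly positively curved, not flat. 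What you actually need is only that $\Phi$ is non-contracting, and that is free from the monotonicity of comparison angles under nonnegative curvature --- no flat-rectangle rigidity at infinity is required. Note also that the join $\Sigma_p(X)=\Sigma_p(S)\ast\Sigma_p^\perp(S)$ used in your dimension count is not produced by the filtration argument; at a regular $p$ of $S$ it is the iterated spherical-suspension rigidity (since $\Sigma_p(S)\cong S^{s-1}$ has diameter $\pi$ inside a space of curvature $\ge1$) that furnishes it.

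In short: the approach is a different one, built on Proposition~\ref{prop:concave-rigid} rather than Lemma~\ref{lem:perp}, and the perpendicularity of rays (the crux) is not actually established because the segments controlled by Proposition~\ref{prop:concave-rigid} are not the segments that converge to a ray. The paper's foot-point plus Busemann argument avoids the filtration entirely and is substantially shorter.
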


Independently Shioya has obtained the above proposition 
in a similar way.

\begin{lem} \label{lem:perp}
Let $\Sigma$ be an compact Alexandrov space with curvature $\ge 1$, 
and $\Sigma_0\subset\Sigma$ be a closed locally convex set
of positive dimension and without boundary as an Alexandrov space.
Let $\xi\in\Sigma$ be such that $d(\xi,\Sigma_0)\ge\pi/2$.
Then $d(\xi,v)=\pi/2$ for every $v\in \Sigma_0$.
\end{lem}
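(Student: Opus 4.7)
The plan is to establish perpendicularity of the direction from a nearest point of $\Sigma_0$ toward $\xi$ against $\Sigma_{v_0}(\Sigma_0)$, and then to use Toponogov's hinge inequality to force both $d(\xi,\Sigma_0)=\pi/2$ and the global identity $d(\xi,\cdot)\equiv\pi/2$.

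After reducing to a single connected component of $\Sigma_0$, I would let $v_0\in\Sigma_0$ realize the minimum of $f(v):=d(\xi,v)$, set $a:=f(v_0)\ge\pi/2$, and write $\eta\in\Sigma_{v_0}$ for the direction of a minimal geodesic from $v_0$ to $\xi$. Since $\Sigma_0$ has no boundary at $v_0$ and is locally convex in $\Sigma$, every $w\in\Sigma_{v_0}(\Sigma_0)$ is the initial velocity of a geodesic in $\Sigma_0$ that is also minimizing in $\Sigma$ for short time; the first variation formula at the minimum then gives $\angle(\eta,w)\ge\pi/2$ for every such $w$.

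The key step is to upgrade this inequality to equality, which I would prove by induction on $\dim\Sigma_0$. In the base case $\dim\Sigma_0=1$, the subset $\Sigma_{v_0}(\Sigma_0)$ consists of two directions $w_\pm$ with $\angle(w_+,w_-)=\pi$. Setting $\alpha_\pm:=\angle(\eta,w_\pm)$ and letting $\gamma:=\angle w_+\eta w_-$, the Toponogov hinge inequality in $\Sigma_{v_0}$ applied to the triangle $\{w_+,w_-,\eta\}$, combined with $\cos\gamma\ge-1$ and $\sin\alpha_\pm\ge 0$, yields
$$
-1=\cos\pi\ge\cos\alpha_+\cos\alpha_-+\sin\alpha_+\sin\alpha_-\cos\gamma\ge\cos(\alpha_++\alpha_-),
$$
hence $\alpha_++\alpha_-=\pi$; together with $\alpha_\pm\ge\pi/2$ this forces $\alpha_\pm=\pi/2$. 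For the inductive step $\dim\Sigma_0\ge 2$, I would invoke the lemma itself applied to the compact curvature $\ge 1$ space $\Sigma':=\Sigma_{v_0}$, its closed, locally convex, boundary-free subset $\Sigma_0':=\Sigma_{v_0}(\Sigma_0)$ of dimension $\ge 1$, and the point $\xi':=\eta$, whose hypotheses are furnished by the first variation bound.

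With perpendicularity at $v_0$ in hand, for any $w\in\Sigma_{v_0}(\Sigma_0)$ I would take a geodesic $\sigma$ in $\Sigma_0$ with $\sigma'(0)=w$; local convexity makes $\sigma|_{[0,t]}$ minimizing in $\Sigma$ for small $t$, so the Toponogov hinge inequality at $v_0$ reduces to $\cos d(\xi,\sigma(t))\ge\cos a\cos t$. Since $d(\xi,\sigma(t))\ge a$ also forces $\cos d(\xi,\sigma(t))\le\cos a$, combining these gives $\cos a\cos t\le\cos a$; if $a>\pi/2$, so $\cos a<0$, dividing yields $\cos t\ge 1$, impossible for small $t>0$, and hence $a=\pi/2$. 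Then $\cos d(\xi,\sigma(t))\ge 0$ together with $d(\xi,\cdot)\ge\pi/2$ forces $d(\xi,\sigma(t))=\pi/2$ throughout a neighborhood of $v_0$ in $\Sigma_0$. A standard open-closed argument---any point where $d(\xi,\cdot)=\pi/2$ is itself a minimizer of $f$, so the same local argument applies---propagates the identity to the whole component. The main obstacle I anticipate is the base case of the induction, where Toponogov comparison must be applied in the limiting situation of a triangle having one side of length exactly $\pi$ in a curvature $\ge 1$ space.
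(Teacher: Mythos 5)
Your proof is correct and follows essentially the same route as the paper: choose a foot $v_0$ of $\xi$ on $\Sigma_0$, induct on $\dim\Sigma_0$ by passing to $\Sigma'=\Sigma_{v_0}$, $\Sigma_0'=\Sigma_{v_0}(\Sigma_0)$, $\xi'=\eta$, and close with the hinge (Alexandrov) comparison — you merely spell out the base case and the final open-closed propagation, which the paper compresses into ``Since $\angle\xi v_0 v=\pi/2$'' and ``The Alexandrov convexity then implies the conclusion.'' The obstacle you anticipate in the base case is not one: the curvature-$\ge 1$ hinge comparison at $\eta$ is perfectly valid when $d(w_+,w_-)=\pi$, and since comparison distances on $S^2(1)$ are at most $\pi$, the hypothesis $d(w_+,w_-)=\pi$ already forces the comparison hinge to degenerate, which is exactly what yields $\alpha_++\alpha_-=\pi$.
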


\begin{proof}
Let $v_0\in \Sigma_0$ be a foot from $\xi$ to $\Sigma_0$.
We proceed by induction on $\dim \Sigma_0$. 
If $\dim \Sigma_0=1$, then $\Sigma_0$ is isometric to a circle.
Since $\angle \xi v_0v=\pi/2$, the lemma follows from the Alexandrov convexity.
\par
  Suppose the lemma is true for $\dim \Sigma_0 -1$. 
Put $\Sigma^{\prime}=\Sigma_{v_0}(\Sigma)$,
$\Sigma^{\prime}_0=\Sigma_{v_0}(\Sigma_0)$. 
Let $\xi^{\prime}=\xi_{v_0}^{\prime}\in\Sigma^{\prime}$,
$v^{\prime}=v_{v_0}^{\prime}\in\Sigma_0^{\prime}$ for every 
$v\in\Sigma_0$.
It is easy to check that $\Sigma_0^{\prime}$ is a closed locally convex
subset of $\Sigma^{\prime}$. Applying the induction hypothesis, we see that
$\angle(\xi^{\prime},v^{\prime})=\pi/2$ for every $v\in\Sigma_0$.
The Alexandrov convexity then implies the conclusion.
\end{proof}

\begin{proof}[Proof of Proposition \ref{prop:ideal}]
It suffices to show that for any regular point $p$ of $S$ and any geodesic 
ray $\gamma$ from $p$, $\dot\gamma(0)$ is perpendicular to $S$.
Let $x\in S$ be a foot from $\xi=\gamma(\infty)\in X(\infty)$ to 
$S$, and $\sigma$ a geodesic ray from $x$ asymptotic to 
$\gamma$. Namely we have sequences $x_n\in S$, $y_n\in X$ satisfying
\begin{enumerate}
 \item $x_n\to x$, $y_n\to\xi$, $x_ny_n\to\sigma$;
 \item $d(y_n,x_n)=d(y_n,S)$.
\end{enumerate}
Note that $\Sigma_x(S)$ is a closed locally convex set of 
$\Sigma_x(X)$. 
By the previous lemma, $\lim \angle qx_ny_n=\pi/2$ for 
any $q\in S$, and hence $\angle qx\xi=\pi/2$. 
The Alexandrov convexity then implies that
\begin{equation}
  \tilde\angle xqy_n > \pi/2 + o_n
\end{equation}
with $\lim_{n\to\infty} o_n = 0$.
Thus we have 
$\lim_{n\to\infty}|d(y_n,q) - d(y_n,x)| = 0$,
and hence $b_{\sigma}(x)=b_{\sigma}(q)$,
where $b_{\sigma}$ denotes the Busemann function associated with 
the ray $\sigma$.
It follows that $b_{\gamma}(x)=b_{\gamma}(q)$  for any $q\in S$. 
Thus $\gamma$ must be perpendicular to $S$ at $p$.
\end{proof}

We denote by $B(S,\epsilon)$ the closed $\epsilon$-metric ball 
around $S$.

\begin{thm}[Generalized Soul Theorem] \label{thm:soul} 
Let $X$ be a $4$-dimensional complete open 
$($i.e, noncompact and boundaryless$)$
Alexandrov space with nonnegative curvature.
Suppose in addition that $X$ is topologically regular.
Then there exists a positive number $\epsilon$ such that 
\begin{enumerate}
 \item $X$ is homeomorphic to $\interior B(S,\epsilon);$ 
 \item $B(S,\epsilon)$ is homeomorphic to a disk-bundle 
       over $S$, called the normal bundle of $S$.
\end{enumerate}
\end{thm}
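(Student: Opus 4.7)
My plan is to build on the Cheeger--Gromoll--Perelman soul chain $C(0)\supset\cdots\supset C(k)=S$ recalled just above the statement and to promote the Sharafutdinov-type retraction $X\to S$ to a locally trivial disk bundle on a tubular neighborhood of $S$. The two main geometric ingredients I will need are (a) the absence of critical points of $d_S$ outside $S$, which delivers the homotopy identification in (1), and (b) a normal cone splitting at each $p\in S$ together with topological niceness, which delivers the disk bundle in (2).

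For (1) I will first prove that $d_S$ has no Perelman-critical points on $X-S$. Since $S$ is compact, totally convex and without boundary, a standard Alexandrov argument (using the concavity of $d_{\partial C}$-type functions as in Proposition~\ref{prop:concave-rigid} together with the total convexity of $S$) produces for each $x\notin S$ a direction $v\in\Sigma_x X$ with $\angle(v,\xi)>\pi/2$ for every $\xi\in S'_x$. Perelman's flow/isotopy lemma for regular distance functions then yields, for every $\epsilon>0$, an ambient homeomorphism $X\cong\interior B(S,\epsilon)$, which is (1).

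For (2) I will first identify the normal cones along $S$. By total convexity of $S$ the tangent cone splits isometrically as $K_p X = K_p S\times K_p^{\perp}S$, where $K_p^{\perp}S$ is the Euclidean cone over $\Sigma_p^{\perp}S=\{\xi\in\Sigma_p X\mid \angle(\xi,\Sigma_p S)=\pi/2\}$. Because $\dim X=4$ and $X$ is topologically regular it is topologically nice (as noted after Proposition~\ref{prop:smoothable}), so iterated spaces of directions are topological spheres; applied to $\Sigma_p^{\perp}S$ this makes $K_p^{\perp}S\cong\R^{\dim X-\dim S}$. Hence the fibers of the retraction $r:B(S,\epsilon)\to S$ are, pointwise, metric balls in $K_p^{\perp}S$, i.e.\ topological disks of the correct dimension. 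To upgrade this to a locally trivial disk bundle, I will rescale by $\epsilon^{-1}$ around each $p\in S$: the pointed spaces $(B(S,\epsilon),p,\epsilon^{-1}d)$ converge in the pointed Gromov--Hausdorff sense to $(K_p X,o_p)$ as $\epsilon\to 0$, and Perelman's respectful stability theorem (Theorem~\ref{thm:stability-respect}), applied to regular distance functions built from admissible strainers at $p$, furnishes a trivialization of $r$ over a small neighborhood of $p$ in $S$; the uniqueness statement in the same theorem ensures that these local trivializations match across overlaps, producing the global disk bundle structure in (2).

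The main obstacle will be precisely this final upgrade from the pointwise cone picture to a globally compatible disk bundle, particularly at points of $S$ whose ``singularity type'' in $\Sigma_p X$ varies along $S$. Topological niceness in dimension $4$ (and not merely topological regularity at the top level) is what guarantees that every fiber appearing in every iterated normal cone is a genuine topological cell, which is exactly the hypothesis under which Perelman's stability and fibration machinery produces honest locally trivial bundles over the, possibly singular, soul $S$.
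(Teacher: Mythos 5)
Your plan has two genuine gaps, and also diverges structurally from the paper in a way worth noting.

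First, you assert that the tangent cone splits isometrically as $K_pX=K_pS\times K_p^{\perp}S$ ``by total convexity of $S$.'' This splitting is exactly Proposition~\ref{prop:product} in the paper, and it is far from automatic. Proposition~\ref{prop:concave-rigid} only gives that every direction lies on a flat quarter-plane spanned by a pair $(\xi_0,\xi_1)\in\Sigma_p(\interior C(i))\times\Sigma_1$; to get the product one must also prove that the minimal geodesic in $\Sigma_p$ joining $\xi_0$ to $\xi_1$ is \emph{unique}. The paper's proof of this uniqueness is a genuinely four-dimensional argument (parallel transport around $\Sigma_p(\interior C(i))$ plus a knot-theoretic contradiction via the Wirtinger presentation of $\pi_1$ of a knot complement in $\Sigma_p\simeq S^3$), and in fact the paper explicitly flags the higher-dimensional version as an open conjecture and gives a counterexample at topological singular points. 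So this step cannot be cited as a consequence of convexity.

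Second, you claim that $d_S$ has no Perelman-critical points on $X-S$ and deduce (1) by flowing. Neither the concavity used in Proposition~\ref{prop:concave-rigid} nor total convexity of $S$ gives this: those control the superlevel sets $C(i)$ and $d_{\partial C}$, not $d_S$. (This is essentially the soul-conjecture issue; in the Riemannian case it requires Perelman's submetry theorem, which is not available in the Alexandrov setting.) The paper sidesteps this entirely: it works with $C=C(0)$ rather than $S$, using the function $f_\epsilon=d(C_\epsilon,\cdot)$, whose critical set is confined to $\partial C$ by construction, and then builds $f_\epsilon$-pseudo-gradient normal bundles over $\partial C$ (Proposition~\ref{prop:flow}, Theorem~\ref{thm:norm-nbd}) to show $X\simeq\{f_\epsilon<\epsilon/2\}$. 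The case analysis in Sections~\ref{sec:soulI}--\ref{sec:soulII} is by $\dim C$, not $\dim S$, precisely because $C$ is the object whose boundary structure can be controlled.

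Finally, the gluing step in your (2) is underspecified. You want the uniqueness clause of the respectful stability theorem to make local trivializations agree on overlaps, but that theorem only produces a homeomorphism respecting one fixed regular map on a compact set; it does not by itself coherently glue families of trivializations. The paper instead builds the bundle structure explicitly via $D^2$-bundle patching through isotopy of gluing maps in $\cal M_+(S^1\times S^1)$ (Lemma~\ref{lem:patch} and the argument following it) in the $\dim C=2$ case, and via the pseudo-gradient normal bundle over $\partial C$ in the $\dim C=3$ case, using the generalized Schoenflies theorem for the fibre. Your outline would need a comparable mechanism.
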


The proof of Theorem \ref{thm:soul} is given in 
Part \ref{part:alex}.

\begin{cor} \label{cor:soul}
Suppose that a sequence of 
$4$-dimensional pointed complete Riemannian
manifolds $(M_i^4,p_i)$ with $K \ge -1$ converges to a pointed
complete noncompact $4$-dimensional Alexandrov space $(Y^4,y_0)$ with 
nonnegative curvature. Then for a sufficiently large $R>0$,
$B(p_i,R)$ is homeomorphic to the normal closed disk-bundle
over the soul of $Y^4$.
\end{cor}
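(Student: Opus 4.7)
The plan is to combine Theorem \ref{thm:soul} applied to $Y^4$ with a pointed non-collapsing version of the stability theorem. The first step is to verify that $Y^4$ satisfies the topological regularity hypothesis of Theorem \ref{thm:soul}. Since $\dim M_i^4 = \dim Y^4 = 4$, the pointed convergence $(M_i^4,p_i)\to (Y^4,y_0)$ is non-collapsing, so $Y^4$ is smoothable. By Proposition \ref{prop:smoothable}, every iterated space of directions of $Y^4$ is a topological sphere; in particular $Y^4$ is topologically regular. Theorem \ref{thm:soul} then supplies a soul $S\subset Y^4$ and a radius $\epsilon>0$ for which $B(S,\epsilon)$ is homeomorphic to the normal closed disk-bundle $D\to S$, together with a homeomorphism $Y^4\cong \interior B(S,\epsilon)$.

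Next, I would upgrade this to the statement that any sufficiently large closed metric ball $\overline{B(y_0,R)}\subset Y^4$ is homeomorphic to $D$. Choose $R_0$ so large that $B(S,\epsilon)\subset B(y_0,R_0-1)$. The point is that on $Y^4\setminus S$ there is a gradient-like flow for $d_S$, produced either by the Sharafutdinov retraction onto $S$ arising in the proof of Theorem \ref{thm:soul}, or by Perelman--Petrunin critical-point theory applied to the concave function $d_{\partial B(S,\epsilon)}$ off $S$. Flowing $\partial \overline{B(y_0,R)}$ inward for $R\ge R_0$ produces an isotopy identifying $\overline{B(y_0,R)}$ with $B(S,\epsilon)$, and hence with $D$.

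Finally, the non-collapsing pointed stability theorem, obtained from Theorem \ref{thm:stability-respect} by framing a compact neighborhood of $\overline{B(y_0,R)}$ by distance functions $d(a_1,\cdot),\ldots,d(a_4,\cdot)$ giving a regular chart, yields for each such $R$ a homeomorphism $\overline{B(p_i,R)}\cong \overline{B(y_0,R)}$ for all sufficiently large $i$. Concatenating with the previous step gives the desired homeomorphism $B(p_i,R)\cong D$ for $R$ large and $i$ large. The main obstacle is the second step: although Theorem \ref{thm:soul} gives the disk-bundle structure on $B(S,\epsilon)$ and an abstract homeomorphism $Y^4\cong \interior B(S,\epsilon)$, matching a concrete metric ball $\overline{B(y_0,R)}$ with $B(S,\epsilon)$ requires the gradient-like flow to be well-behaved outside a compact neighborhood of $S$, which in turn rests on the absence of critical points of $d_S$ away from the soul.
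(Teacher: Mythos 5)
Your proposal is correct and follows essentially the same route as the paper: topological regularity of $Y^4$ via Kapovitch's theorem (Proposition \ref{prop:smoothable}), stability (Theorem \ref{thm:stability-respect}) to identify $B(p_i,R)$ with $B(y_0,R)$, and Theorem \ref{thm:soul} to identify the latter with the normal disk-bundle over the soul. The step you flag as the ``main obstacle'' --- passing from $B(S,\epsilon)$ to the concrete metric ball $B(y_0,R)$ --- is precisely what the paper's phrase ``because of the nonnegativity of the curvature'' is implicitly covering: the gradient flow of $d_{y_0}$ (or equivalently $d_S$) has no critical points outside a compact set in a complete noncompact nonnegatively curved Alexandrov space, so large metric spheres are isotopic to one another and to $\partial B(S,\epsilon)$.
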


\begin{proof}
Note first that $Y^4$ is topologically regular
(\cite{Kp:non-collapse}).
By Theorem \ref{thm:stability-respect},
$B(p_i, R)$ is homeomorphic to $B(y_0, R)$ any sufficiently large $R$
because of the nonnegativity of 
the curvature of $Y^4$.
By Theorem \ref{thm:soul}, $B(y_0, R)$ is homeomorphic to 
the normal closed disk-bundle over the soul of $Y^4$ 
\end{proof}

%                   \input{action}

%action.tex
\section{$S^1$-actions on oriented $4$-manifolds}\label{sec:action}

In this section, we present some results on 
$S^1$-actions on oriented $4$-manifolds which will be used 
in the subsequent sections.
\par

First we fix the notation and terminology about local 
$S^1$-actions. Suppose that an open covering $\{ U_{\alpha}\}$
of a manifold $N$ and an $S^1$-action $\psi_{\alpha}$
on $U_{\alpha}$ are given in such a way that
both the actions $\psi_{\alpha}$ and $\psi_{\beta}$
coincide up to orientation on the intersection
$U_{\alpha}\cap U_{\beta}$.
We say that $\{(U_{\alpha},\psi_{\alpha})\}$ defines a local 
$S^1$-action, denoted $\psi$, on $N$.
The local $S^1$-action $\psi$ is {\em locally smooth}
if each $x\in N$ has a slice $V_x$ which is a disk invariant under 
te action of the isotropy group $S^1_x$ at $x$  and if this
action is equivariant to an orthogonal action. 

Let $N^*=N/\psi$ be the orbit space and
$\pi:N\to N^*$ the orbit map.
An orbit is called {\em exceptional} if the isotropy group at 
a point on the orbit is non-trivial but finite.
Let $F=F(\psi)$ and $E=E(\psi)$ denote
the fixed point set and the union of the exceptional orbits respectively.
In the present paper, an orbit in $S=S(\psi):=F\cup E$ is called 
a singular orbit.
The images $F^*:=\pi(F(\psi))$, $E^*:=\pi(E(\psi))$ and 
$S^*:=\pi(S(\psi))$ are
called the fixed point locus, the exceptional locus and the singular locus
respectively.

\begin{lem} \label{lem:simply}
 \begin{enumerate}
  \item If $N$ is simply connected, then so is $N^*;$
  \item If both $N$ and $N^*-S^*$ are orientable, then $\psi$ is 
        actually an $S^1$-action.
 \end{enumerate}
\end{lem}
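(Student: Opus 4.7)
The plan is to handle the two parts separately.

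For (1), I would prove the stronger statement that the orbit map $\pi : N \to N^*$ induces a surjection $\pi_* : \pi_1(N) \to \pi_1(N^*)$, from which simple-connectedness of $N^*$ is immediate when that of $N$ is assumed. Given a loop $\gamma : [0,1] \to N^*$, I subdivide the interval so that each piece $\gamma([t_{i-1}, t_i])$ lies in some $\pi(U_{\alpha_i})$. Local smoothness furnishes a disk slice $V_x$ at each point, so $\pi|_{U_{\alpha_i}}$ admits continuous local sections; I use these to lift each piece to $U_{\alpha_i}$. Consecutive lifts end and begin in the same orbit, which is either a point or a circle, hence path-connected, so I close them up inside the orbit to obtain a genuine path in $N$ whose projection is homotopic to $\gamma$ by a homotopy supported in orbits.

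For (2), I would recast the obstruction to globalizing $\psi$ as a cohomology class. On each nonempty $U_\alpha \cap U_\beta$ one has $\psi_\alpha(e^{it}, \cdot) = \psi_\beta(e^{i\varepsilon_{\alpha\beta} t}, \cdot)$ with $\varepsilon_{\alpha\beta}\in\{\pm 1\}$, and $(\varepsilon_{\alpha\beta})$ is a $\check{\mathrm C}$ech $1$-cocycle representing a class $\omega\in H^1(N;\Z/2)$, which classifies a double cover $\widetilde N\to N$. The $\psi_\alpha$ can be rescaled to fit together into a single $S^1$-action exactly when $\omega=0$.

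The bulk of the argument is to verify $\omega=0$ under the orientability hypotheses. On the free locus $N-S$ the orbit map is locally a principal $S^1$-bundle over $N^*-S^*$, and the short exact sequence $0\to T(\text{fibre})\to T(N-S)\to \pi^*T(N^*-S^*)\to 0$ combined with the orientations of $N$ and of $N^*-S^*$ induces an orientation of each $S^1$-fibre. This singles out a canonical generator of each $\psi_\alpha$ over $U_\alpha\cap(N-S)$, so the sign cocycle is a coboundary there, i.e., $\omega|_{N-S}=0$. To pass from this to $\omega=0$, I would observe that by local smoothness each slice representation is orthogonal and effective, so $F$ and $E$ are unions of submanifolds of codimension at least two in $N$. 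Hence $\pi_1(N-S)\to\pi_1(N)$ is surjective and $H^1(N;\Z/2)\to H^1(N-S;\Z/2)$ is injective, so $\omega=0$ globally, and $\psi$ is a global $S^1$-action.

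The main obstacle I anticipate is the last step, namely extending the trivialization of the sign cocycle across $S$; this rests on the codimension count, which in turn comes from a short case analysis of the orthogonal $S^1$-representations that occur as slice representations. The orientation matching on $N-S$ and the path-lifting for (1) are standard.
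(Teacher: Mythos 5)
Your proposal is correct and follows essentially the same path as the paper for both parts. For (1), the paper's own proof is slightly more streamlined: it first perturbs a loop $\gamma^*$ in $N^*$ off the singular locus $S^*\cap\operatorname{int}N^*$ (which is of codimension $\ge 2$) into a loop $\sigma^*$ in $N^*-S^*$, then lifts it to the free $S^1$-bundle $N-S\to N^*-S^*$ and kills it using $\pi_1(N)=1$; your version lifts piece by piece via local sections from slices without perturbing first, which amounts to the same thing. For (2), the paper orients the slice $V_x$ at each $x\in N-S$ from the orientation of $N^*-S^*$, uses the orientation of $N$ to orient the orbit $S^1x$, and then extends these orbit orientations from $N-S$ to $N-F$ because $E^*$ has codimension $\ge 2$; your recasting of the sign ambiguity as a class $\omega\in H^1(N;\Z/2)$ trivialized on $N-S$ by the canonical orbit orientations, killed globally because the codimension-$\ge 2$ singular set makes $H^1(N;\Z/2)\to H^1(N-S;\Z/2)$ injective, is the same argument in cohomological dress and buys nothing extra here, though it is perhaps cleaner to state.
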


\begin{proof}
\,\,$(1)$. Let $\gamma^*$ be any loop at a point $p^*\in N^*-S^*$.
Since $S^*\cap\interior N^*$ is of codimension $\ge 2$,
one can slightly perturb $\gamma^*$ to a loop $\sigma^*$
in $N^* - S^*$. Let $\sigma$ be a lift of $\sigma^*$ to $N-S$.
From the assumption, $\sigma$ is homotopic to a path in 
$\pi^{-1}(p^*)$ keeping endpoints fixed. Thus
$\sigma^*$ and hence $\gamma^*$ is null-homotopic. \par

$(2)$. For each $x\in N-S$, we have the orientation on the 
slice $V_x$ at $x$ induced from the orientation of $N^*-S^*$.
Then the orientation of $N$ induces the orientation of 
the orbit $S^1 x$ in such a way that the orientation 
of $V_x$ followed by the orientation of $S^1 x$ coincides with
the original orientation of $N$. Since $E^*$ is of codimension 
$\ge 2$, those orientations of the orbits in $N-S$ extend to 
the orientations of the orbits in $N-F$.
\end{proof}

\begin{defn} Let $D^4(1)=\{ (z_1,z_2)\in \C^2\,|\,|z_1|^2+|z_2|^2=1\}$, and 
let $a, b$ be relatively prime integers. The $S^1$-action $\psi_{a,b}$ on 
$D^4(1)$ defined by
$$
    z\cdot(z_1,z_2)  =  (z^az_1,z^bz_2),
$$
is called the {\it canonical $S^1$-action of type $(a,b)$}.
For any $0<t\le 1$, the restriction of $\psi_{a,b}$ to 
$S^3(t)=\{ (z_1,z_2)\in D^4(1)\,|\,|z_1|^2+|z_2|^2 = t^2\}$
gives a Seifert bundle $S^3(t)\to S^3(t)/\psi_{a,b}\simeq S^2$. 
\end{defn}

  Note that the restriction  $\psi_{1,1}|_{S^3(t)}$ is the Hopf-fibration.
Note also that the orbit space $D^4(1)/\psi_{a,b}$ with the standard 
quotient metric is an Alexandrov space with nonnegative curvature 
whose space of directions $\Sigma_o$ at the origin
$o\in D^4(1)/\psi_{a,b}$ has at most two singular points, 
say $\xi_1$ and $\xi_2$, where 
$$
   \{ |a|, |b|\} = \{ 2\pi/L(\Sigma_{\xi_1}(\Sigma_o)), 
                       2\pi/L(\Sigma_{\xi_2}(\Sigma_o))\}.
$$

 We describe the equivariant classification of 
$S^1$-actions on oriented $4$-manifolds $M$
slightly extending the treatment in \cite{Fn:circleI}.  
In \cite{Fn:circleI}, $M$ is assumed to be simply connected 
and to have no boundary.
In our later use however, we need the more general case when
$M$ has nonempty boundary and only the orbit space $M^*$ 
is assumed to be simply connected.
For convenience, we describe the equivariant classification 
in our general setting.

Let $M$ be an oriented $4$-manifold (possibly with boundary) with 
a locally smooth $S^1$-action.
We assume that there are no fixed points on $\partial M$.
Let $\pi:M\to M^*$ be the orbit map. The proof of the 
following Proposition is similar to Proposition (3.1) in 
\cite{Fn:circleI}, and hence omitted.

\begin{prop}
Under the situation above, $M^*$ is a $3$-manifold for
which the general properties of the singular locus are 
described as follows$:$
\begin{enumerate}
 \item $\partial M^* - \pi(\partial M)\subset F^*;$
 \item $F^*-\partial M^*$ is discrete$;$
 \item The closure $\bar{E}^*$ of $E^*$ is a disjoint union of arcs and 
       simply closed curves in $M^*-(\partial M^* -\pi(\partial M))$.
       Each connected component of $E^*$ is one of the following$;$
  \begin{enumerate}
   \item A simple closed curve in $\interior M^*;$
   \item An arc whose closure joins two points of 
         $(F^*\cap\interior M^*)\cup\pi(\partial M)$.
  \end{enumerate}
\end{enumerate}
\end{prop}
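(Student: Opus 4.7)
The plan is to reduce all three assertions to a case analysis by orbit type. Local smoothness gives, at each orbit $S^1\cdot x$, an equivariant tube of the form $S^1\times_{S^1_x}V_x$ with $V_x$ an orthogonal disk slice, so the topology of $M$ and of $M^*$ near $S^1\cdot x$ is determined by the pair $(S^1_x,V_x)$. Since $\dim M=4$, $M$ is oriented, and no fixed point lies on $\partial M$, only four slice representations can occur, which I enumerate below.

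\textbf{(A)} $S^1_x$ trivial: $V_x\cong D^3$, local quotient $\cong D^3$ with no singular locus. \textbf{(B)} $S^1_x=\Z_m$ with $m\ge 2$: $V_x\cong D^3$ and $\Z_m$ acts by an orientation-preserving rotation about an axis (the only faithful orientation-preserving orthogonal cyclic action on $\R^3$); the local quotient is homeomorphic to $D^3$ with the exceptional locus a single arc (the image of the rotation axis). \textbf{(C)} $S^1_x=S^1$ with $x$ isolated: $V_x\cong D^4$ and the action is a canonical $\psi_{a,b}$ for coprime $a,b$ with $ab\ne 0$; the local quotient is the cone on $S^3/\psi_{a,b}\cong S^2$, hence a $3$-disk whose cone-point is an interior point of $M^*$, with one or two arcs of $E^*$ emanating from it according as $|a|,|b|\ge 2$. \textbf{(D)} $S^1_x=S^1$ with $x$ on a $2$-dimensional fixed component $F^2$: $V_x\cong D^2$ with $S^1$ rotating it; the equivariant tube is $F^2\times D^2$ and the local quotient is $F^2\times[0,1)$, so $F^2$ sits inside $\partial M^*$.

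From these local models the three conclusions are essentially read off. Patching the local $3$-disk quotients shows that $M^*$ is a topological $3$-manifold, possibly with boundary. For (1), boundary points of $M^*$ can arise only from $\pi(\partial M)$ or, via Case (D), from fixed $2$-components, so $\partial M^*-\pi(\partial M)\subset F^*$. For (2), Case (D) places its fixed points in $\partial M^*$ while Case (C) contributes only isolated interior points, so $F^*-\partial M^*$ is discrete. For (3), Cases (A) and (D) give $\bar E^*$ empty locally, (B) gives a line segment, and (C) gives at most two arcs meeting at a single interior point; globalizing, each connected component of $\bar E^*$ is a $1$-manifold whose closure either is a simple closed curve in $\interior M^*$ or is an arc with endpoints in $(F^*\cap\interior M^*)\cup\pi(\partial M)$.

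The main non-routine point is the local identification in Case (C): one needs $D^4/\psi_{a,b}\cong D^3$, equivalently $S^3/\psi_{a,b}\cong S^2$, which is precisely the canonical Seifert structure on $S^3$ already recorded at the start of this section. A secondary check is that two exceptional arcs emanating from a single Case (C) point carry distinct isotropy groups $\Z_{|a|}$ and $\Z_{|b|}$, so they belong to different components of $E^*$; this is what permits the clean dichotomy between simple closed curves and arcs in (3) and forbids any branching of $\bar E^*$ at an interior fixed point.
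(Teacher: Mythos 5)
Your case analysis by slice representation is the standard argument (it is essentially Fintushel's Proposition~(3.1), to which the paper simply defers without reproducing the proof), and the logic is sound. A few small points are worth tidying. In Case~(D) the slice at a fixed point $x$ lying on a $2$-dimensional fixed component is $4$-dimensional, since the orbit through $x$ is $\{x\}$: one has $V_x\cong D^4=D^2\times D^2$ with $S^1$ fixing the first factor (tangent to $F^2$) and rotating the second; the $D^2$ you describe is the normal slice to $F^2$, and the conclusion $V_x/S^1\cong D^2\times[0,1)$, putting $F^2$ into $\partial M^*$, is the right one. (Effectiveness on a connected $M$ forces the rotation to be of weight one, so the nearby orbits away from $F^2$ are principal; this is what makes $\bar E^*$ disjoint from $\partial M^*-\pi(\partial M)$.) In your ``secondary check,'' the two exceptional arcs at a Case~(C) point lie in different components of $E^*$ simply because the fixed point is excluded from $E^*$; the distinct isotropies $\Z_{|a|}\neq\Z_{|b|}$, which follow from $\gcd(a,b)=1$ once both exceed $1$, combine with the local model to rule out branch points of $\bar E^*$, which is the substantive ingredient for~(3). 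Finally, it would round out~(3)(b) to say explicitly that an exceptional orbit through $x\in\partial M$ has slice $D^3_+$ with the rotation axis meeting $\partial D^3_+$, so that the corresponding arc of $\bar E^*$ acquires an endpoint on $\pi(\partial M)$ as claimed.
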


Note that the orbit type is constant on each component of 
$E^*$. From now on we assume that $M^*$ is simply connected.

\begin{lem}
If $M^*$ is simply connected, then each component of
$F^*\cap\partial M^*$ is homeomorphic to either
$S^2$ or $D^2$.
\end{lem}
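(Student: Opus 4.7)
My plan is to exploit the simply-connectedness of the orbit space $M^*$ via Poincar\'e-Lefschetz duality in order to pin down the topology of $\partial M^*$, and then to identify each component of $F^* \cap \partial M^*$ inside that structure.

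First, I would identify what a component $C$ of $F^* \cap \partial M^*$ really is. By the preceding proposition together with the hypothesis $F(\psi) \cap \partial M = \emptyset$, one has
$$
F^* \cap \partial M^* = \partial M^* - \pi(\partial M),
$$
an open subset of $\partial M^*$. Each connected component $C$ of it is the image $\pi(F_0)$ of a unique $2$-dimensional connected component $F_0$ of the fixed set $F(\psi)$, with $\pi|_{F_0}:F_0 \to C$ a homeomorphism; local smoothness of $\psi$ together with the hypothesis place $F_0 \subset \interior M$ as a connected $2$-submanifold without boundary.

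Second, I would orient $M^*$ using the orientation of $M$ and the canonical orientation of the principal $S^1$-orbits (the extension across the singular locus $S^*$ is automatic, since $S^*$ has codimension $\geq 2$ in $M^*$), and then apply Poincar\'e-Lefschetz duality for the compact oriented topological $3$-manifold with boundary $M^*$:
$$
H_2(M^*,\partial M^*;\Z) \;\cong\; H^1(M^*;\Z) \;\cong\; \operatorname{Hom}(H_1(M^*;\Z),\Z) \;=\; 0,
$$
the final equality following from $\pi_1(M^*)=0$. The long exact sequence of the pair $(M^*,\partial M^*)$ then gives $H_1(\partial M^*;\Z)=0$, so every connected component of the closed surface $\partial M^*$ is homeomorphic to $S^2$. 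Since $C$ is a connected $2$-dimensional subsurface of $\partial M^*$, it sits inside one such $S^2$-component. If $C$ is both open and closed in $\partial M^*$ (the case when $F_0$ is closed as a topological surface), it is a whole $S^2$-component, so $C \cong S^2$. Otherwise $C$ is a proper compact connected subsurface of some $S^2$-component whose boundary lies in $\pi(\partial M)$; applying the same $H_1$-vanishing to $C$ together with the Jordan curve theorem in $S^2$ identifies $C$ with the disk $D^2$.

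The main obstacle will be the careful handling of the interface between $\pi(\partial M)$ and $F^* \cap \partial M^*$ at corners of $\partial M^*$: one must either smooth these corners to reduce to duality on a bona fide compact topological $3$-manifold with boundary, or verify that the duality argument transfers to the stratified setting. A secondary technical point is verifying the orientability of $M^*$ as a topological $3$-manifold in the presence of singular orbits, which follows from the orientability of $M$ together with the codimension bound on $S^*$ supplied by the preceding proposition.
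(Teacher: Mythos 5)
Your duality argument for $\partial M^*$ is sound and in fact \emph{proves} the assertion the paper leaves to the reader at the outset of its proof, namely that every component of $\partial M^*$ is a sphere: $H_2(M^*,\partial M^*)\cong H^1(M^*)=0$ forces $H_1(\partial M^*)=0$. But the final step has a real gap. Once you know $\bar C$ is a compact connected subsurface of an $S^2$-component with nonempty boundary, all you get is that $\bar C$ is \emph{planar}, i.e., a sphere with $k\ge 1$ open disks removed. Nothing in what you have written forces $k=1$. The phrase ``applying the same $H_1$-vanishing to $C$'' is where the argument breaks: $\bar C$ is a surface with boundary, not a $3$-manifold with boundary, so Poincar\'e--Lefschetz duality does not give $H_1(\bar C)=0$, and indeed a pair of pants ($k=3$) has $H_1\cong\Z^2$ and sits happily inside $S^2$. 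The Jordan curve theorem only tells you each boundary circle of $\bar C$ bounds a disk in $S^2$; it does not tell you there is only one such circle.

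The paper closes exactly this gap with a doubling trick. It passes to the double $D(M)$, which carries the induced $S^1$-action; by Van Kampen $D(M)^*$ is again simply connected, so its boundary components are spheres by the same reasoning. If $F_0^*$ were a planar surface with $k\ge 2$ boundary circles, then $D(F_0^*)$ would be a closed orientable surface of genus $k-1\ge 1$ forming a component of $\partial(D(M)^*)$, a contradiction. (If $F_0^*$ had genus $\ge 1$ the double would too, but your duality argument already rules that out.) So you need to either incorporate the doubling step, or replace it with some other argument that a priori bounds the number of boundary components of $F_0^*$ by one; the duality on $M^*$ alone does not do it.

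A secondary remark: you write ``$C$ is a proper compact connected subsurface'' of $S^2$, but $C$ is an open subset of $\partial M^*$ (being $\partial M^* - \pi(\partial M)$), so what you actually work with is $\bar C$; and one needs to check that $\bar C$ is a subsurface with tame boundary, which is where your acknowledged ``corners'' issue lives. That part is fixable, but the $k=1$ step is the substantive omission.
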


\begin{proof}
Note that each component of $\partial M^*$ is a sphere.
On the double $D(M)$ we have an $S^1$-action induced from the original 
$S^1$-action. By Van Kampen's theorem, the orbit space $D(M)^*$ is 
simply connected. Suppose that there is a component $F^*_0$ of
$F^*\cap\partial M^*$ which is neither a disk nor a sphere.
Then it turns out the double $D(F_0^*)$ is a component of
$\partial (D(M)^*)$ and a surface with genus $\ge 1$, 
a contradiction to the simple connectivity of $D(M)^*$.
\end{proof}

The orientation of $M$ induces an orientation of $M^*$ in such a way
that the orientation of $M^*$ followed by the natural orientation
of the orbit coincides with the original orientation of $M$.
For a given oriented submanifold $X^*$ of $M^*$ we also use this
convention to orient $\pi^{-1}(X^*)$.

We assign the following orbit data to $M^*$:
\begin{enumerate}
 \item For a given component $F_i^*$ of $F^*\cap\partial M^*$
       homeomorphic to $S^2$, take a collar neighborhood 
       $F^*_i\times [0,1]$ of $F_i$, and orient 
       $F^*_i\times \{ 1\}$ by the outward normal. We assign
       to $F_i^*$ the Euler number of the $S^1$-bundle 
       $\pi^{-1}(F_i^*\times \{ 1\})\to F_i^*\times \{ 1\};$
 \item For each $x^*\in F^*\cap\interior M^*-\bar{E}^*$, take a small disk
       neighborhood $B^*$ of $x^*$. We assign to $x^*$ the 
       Euler number $+1$ or $-1$ of the $S^1$-bundle
       $\pi^{-1}(\partial B^*)\simeq S^3\to\partial B^*\simeq S^2;$
 \item Let $L^*$ be a simple closed curve in 
       $E^*\cup(F^*\cap\interior M^*)$ and fix an orientation 
       of $L^*$. For each component $J^*$ of $L^*-F^*$, let
       $y^*$ be the end point of $J^*$, choose a sufficiently small
       disk domain $B^*$ of $y^*$ and orient $\partial B^*$ by a
       normal with direction of $J^*$. We assign to $J^*$ the 
       Seifert invariant $(\alpha,\beta)$ of the orbit in 
       $\pi^{-1}(\partial B^*)$ with image $J^*\cap\partial B^*$.
       The weights assigned to $L^*$ consists of the orientation 
       and a finite system of the Seifert invariants, abbreviated by
       $\{ (\alpha_1,\beta_1),\ldots,(\alpha_n,\beta_n)\}$. If 
       the orientation of $L^*$ is reversed, the finite system
       changes to 
       $\{ (\alpha_n,\alpha_n-\beta_n),\ldots,
       (\alpha_1,\alpha_1-\beta_1)\}$. We regards these two weights 
       to be equivalent$;$
 \item Let $A^*$ be an arc component of 
       $E^*\cup(F^*\cap\interior M^*)$. Choosing an orientation of 
       $A^*$ , we define a finite system of Seifert invariants as in 
       $(3)$. Let $y^*$ be either the initial point or the end point
       of $A^*$, and choose a sufficiently small disk domain $B^*$
       of $y^*$ and orient $\partial B^*$ as in $(3)$. We assign
       to $y^*$ the obstruction number $b$ for the Seifert bundle 
       $\pi^{-1}(\partial B^*)\to\partial B^*$ with exactly
       one singular orbit. The weights assigned to $A^*$ consists 
       of the orientation, a finite system of the Seifert invariants
       and the two obstruction numbers at the end points, 
       abbreviated by
       $\{ b';(\alpha_1,\beta_1),\ldots,(\alpha_n,\beta_n);b''\}$.
       If the orientation of $A^*$ is reversed, the weights change
       to 
       $\{-1- b''';(\alpha_n,\alpha_n-\beta_n),\ldots,
       (\alpha_1,\alpha_1-\beta_1);-1-b'\}$.
       We regards these two weights to be equivalent.
\end{enumerate}
$M^*$ together with the above collection of weights is called a 
{\it weighted orbit space}.

\begin{lem}[\cite{Fn:circleI}]\label{lem:adj-inv}
The weights defined above have some restriction stated below$:$
\begin{enumerate}
 \item If $(\alpha_i,\beta_i)$ and $(\alpha_{i+1},\beta_{i+1})$
       denotes the Seifert invariants assigned to adjacent arcs 
       in some weighted arc or circle, then 
   \begin{equation*}
      \det \begin{pmatrix}
             \alpha_i & \beta_i \\
             \alpha_{i+1} & \beta_{i+1}
            \end{pmatrix}
          =\pm 1.
   \end{equation*}
 \item If $\{ b';(\alpha_1,\beta_1),\ldots,(\alpha_n,\beta_n);b''\}$
       denotes the weights of a weighted arc, then 
       $$
         b'\alpha_1 + \beta_1=\pm 1,\qquad
            b''\alpha_n + \beta_n=\pm 1.
       $$
\end{enumerate}
\end{lem}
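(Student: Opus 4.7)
Both parts of the lemma are local assertions: their truth depends only on the structure of the $S^1$-action in a small invariant neighborhood of a single fixed point where the relevant arcs abut. My plan is to reduce each assertion to a calculation in the canonical slice model. By local smoothness of the action, such a neighborhood is equivariantly diffeomorphic to $D^4(1)$ carrying the canonical $S^1$-action $\psi_{a,b}$ for some relatively prime integers $a,b$. The singular orbits in this slice are precisely the coordinate axes $\{z_2=0\}$ and $\{z_1=0\}$, with isotropy $\Z_{|a|}$ and $\Z_{|b|}$ respectively.

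For assertion (1), two adjacent Seifert invariants $(\alpha_i,\beta_i)$ and $(\alpha_{i+1},\beta_{i+1})$ arise from two components of $L^*-F^*$ meeting at an interior fixed point $p^*$. In the slice through $p^*$ they correspond to the two coordinate axes of some $\psi_{a,b}$, so $\alpha_i=|a|$ and $\alpha_{i+1}=|b|$. The second entries $\beta_i,\beta_{i+1}$ are read off the induced $\Z_{|a|}$- and $\Z_{|b|}$-rotation on normal $2$-disks to each axis, using convention (3) of the weighted orbit space. Once the orientations of $L^*$ and $\partial B^*$ are tracked, the pairs $(\alpha_i,\beta_i)$ and $(\alpha_{i+1},\beta_{i+1})$ coincide with the two primitive weight vectors of the linearized $S^1$-action at $p$; these vectors span $\Z^2$ precisely when $\gcd(a,b)=1$, so their determinant equals $\pm 1$.

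For assertion (2), the obstruction number $b'$ is, by definition, the Euler number of the Seifert bundle $\pi^{-1}(\partial B^*)\to\partial B^*\simeq S^2$, which carries a single singular fiber of invariant $(\alpha_1,\beta_1)$. The endpoint $y^*$ of the arc is either (a) an isolated fixed point from which exactly one exceptional arc emerges, forcing the slice at $y^*$ to be of type $\psi_{\pm 1,b}$ with $|b|=\alpha_1$, or (b) a point of $\pi(\partial M)$. In case (a), $\pi^{-1}(\partial B^*)$ is the link of an isolated fixed point of the $S^1$-action on $M$, hence homeomorphic to $S^3$; in case (b) it is a solid torus whose meridian is determined by the trivialization of the regular $S^1$-bundle along $\partial M$ near $y^*$. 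In either case, the classical formula for the Seifert fibration of $S^3$ (respectively of a standard solid torus) with a single exceptional fiber gives $b'\alpha_1+\beta_1=\pm 1$, and similarly $b''\alpha_n+\beta_n=\pm 1$ at the other endpoint.

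The main obstacle is the careful bookkeeping of orientations: each weight depends on simultaneous choices of orientation for $L^*$ (or $A^*$), for $M$, and for the boundary sphere $\partial B^*$, and the ambiguities $(\alpha,\beta)\leftrightarrow(\alpha,\alpha-\beta)$ and $b\leftrightarrow -1-b$ under orientation-reversal of $L^*$ or $A^*$ must match the $\pm$ on the right-hand sides of (1) and (2). Once these conventions are fixed in agreement with those of \cite{Fn:circleI}, both identities reduce to the arithmetic fact that the two weights of an orthogonal $S^1$-action on a complex-$2$-dimensional slice are coprime, together with the classical Seifert-invariant calculation for a simply connected Seifert space with one exceptional fiber.
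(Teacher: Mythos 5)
Note first that the paper supplies no proof here: the lemma is stated purely as a citation to Fintushel's paper on circle actions on simply connected $4$-manifolds, so there is no internal argument to compare against. Your strategy --- reduce to the local slice model $\psi_{a,b}$ at a fixed point and read the constraint off the resulting Seifert fibration of $S^3$ --- is indeed how Fintushel establishes these relations, so your overall plan is right. For assertion (1) the correct mechanism is in place: the link $\pi^{-1}(\partial B^*)\simeq S^3$ of the interior fixed point is Seifert fibered over $S^2$ with two exceptional fibers $(\alpha_i,\beta_i)$ and $(\alpha_{i+1},\beta_{i+1})$, and the determinant relation is exactly the condition that this fibration realizes $S^3$ rather than a nontrivial lens space. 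However, the sentence asserting that the Seifert pairs ``coincide with the two primitive weight vectors of the linearized action'' is not a correct identification. The $\beta_i$ are not the integers $a$ and $b$; they are determined by the slice representations of $\Z_{|a|}$ and $\Z_{|b|}$ on the respective normal disks, and passing from $\gcd(a,b)=1$ to the determinant identity requires actually carrying out that computation (or, more cleanly, invoking the classification of Seifert fibrations of $S^3$ with two exceptional fibers directly).

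For assertion (2), case (a) (an isolated interior fixed point from which exactly one exceptional arc emanates, hence slice type $\psi_{\pm 1,b}$) works as you describe, via the one-exceptional-fiber Seifert presentation of $S^3$. Case (b), $y^*\in\pi(\partial M)$, is stated too tersely to stand on its own: you need to say that $\partial B^*$ is here a hemispherical disk abutting $\partial M^*$, that its preimage is a fibered solid torus, and that the obstruction number $b'$ is measured against the canonical trivialization of the free $S^1$-bundle over $\pi(\partial M)$, so that $b'\alpha_1+\beta_1=\pm 1$ is the statement that this fibered solid torus is standard. You are right that what remains is orientation bookkeeping, and your observation that the $\pm$ signs absorb the conventions $(\alpha,\beta)\leftrightarrow(\alpha,\alpha-\beta)$ and $b\leftrightarrow -1-b$ is sound; that part of the discussion matches Fintushel's setup.
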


\begin{prop}[\cite{Fn:circleI}] \label{prop:circ-class}
Let $M_1$ and $M_2$ be compact oriented $4$-manifolds
$($possibly with boundary$)$ with $S^1$-action such that 
their weighted orbit spaces are isomorphic. Then $M_1$ is 
orientation-preservingly equivariantly homeomorphic 
to $M_2$.
\end{prop}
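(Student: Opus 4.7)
The plan is to follow Fintushel's equivariant classification argument in \cite{Fn:circleI}, adapted to permit nonempty boundary and to rely on simple connectivity of the orbit space $M^*$ rather than of $M$. First I would construct orientation-preserving equivariant homeomorphisms on small equivariant tubular neighborhoods of the singular orbits. By the slice theorem for locally smooth $S^1$-actions, such a neighborhood is determined up to orientation-preserving equivariant homeomorphism by its slice representation: at an isolated interior fixed point the slice is a linear $(1,\pm 1)$-action, captured by the $\pm 1$ weight; at a sphere or disk component of $F^*\cap\partial M^*$ the tubular neighborhood is a $D^2$-bundle captured by its Euler number; at an exceptional orbit of isotropy $\Z_\alpha$ the slice is the canonical $\psi_{\alpha,\beta}$-action, captured by the Seifert invariant on the adjacent arc. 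In each case the weights pin down the slice representation up to the orientation-reversal equivalence already built into the definition.

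Next I would patch these local models along each $1$-dimensional component of $\bar{E}^*\cup(F^*\cap\interior M^*)$ by traversing the arc or circle and gluing consecutive slice models. The determinant condition and endpoint conditions of Lemma~\ref{lem:adj-inv} are precisely the compatibility conditions needed for consecutive slice representations to glue to a well-defined equivariant bundle neighborhood of the stratum. Doing this for $M_1$ and $M_2$ simultaneously yields an orientation-preserving equivariant homeomorphism $\Phi:N_1\to N_2$ between closed equivariant tubular neighborhoods of the full singular sets, covering a chosen homeomorphism between the weighted orbit spaces.

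Finally I would extend $\Phi$ across the free part $M_j-\interior N_j$, where the orbit map is a principal $S^1$-bundle over the $3$-manifold $M_j^*-\interior N_j^*$. Two such $S^1$-bundles with a prescribed identification on $\partial N_j^*$ differ by a class in $H^2(M_j^*-\interior N_j^*,\partial N_j^*;\Z)$, and two trivializations agreeing on the boundary differ by a gauge transformation classified by $H^1(M_j^*-\interior N_j^*,\partial N_j^*;\Z)$. Using Lemma~\ref{lem:simply}, the simple connectivity of $M^*$, and Poincar\'e--Lefschetz duality, both groups vanish in the relevant range, so $\Phi$ extends to an orientation-preserving equivariant homeomorphism $M_1\to M_2$. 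Orientation preservation is automatic because the orientation of each orbit and each slice is reconstructed from the weighted data.

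The main obstacle is the cohomological computation in the last step in the presence of boundary: one must verify that the contributions from $\pi(\partial M)$ and from the $2$-sphere and disk components of $F^*\cap\partial M^*$ do not introduce additional obstructions, using only simple connectivity of $M^*$ rather than of $M$. This is the one place where Fintushel's closed-manifold argument must be genuinely extended, via a Mayer--Vietoris analysis splitting $M^*$ into $N^*$ and its complement; once this vanishing is established, the remainder of the proof follows along the lines of \cite{Fn:circleI}.
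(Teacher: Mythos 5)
The paper's own ``proof'' is simply a citation to Fintushel's Propositions (3.6), (3.9), Lemma (6.1) and Theorem (6.2) in \cite{Fn:circleI} together with Proposition (9.1) in \cite{Fn:circleII}, and your proposal outlines essentially the same assembly strategy Fintushel uses: build equivariant local models near singular orbits from the slice data, patch them along the singular strata using the adjacency conditions (\ref{lem:adj-inv}), and then extend over the free part, which is a principal $S^1$-bundle over the complement. So the overall shape is right.

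The one step that, as written, would fail is your final cohomological vanishing claim. It is not true that $H^2(Y,\partial N^*;\Z)$ vanishes in general: already in Fintushel's closed simply connected case, if $E^*$ contains a simple closed curve $L^*$ disjoint from $F^*$, then $Y=M^*-\interior N^*$ is the exterior of a knot in $S^3$, so $H_1(Y)\cong\Z$ and by Lefschetz duality $H^2(Y,\partial N^*;\Z)\cong\Z$. The same phenomenon persists with boundary, and it is not a ``boundary contribution'' but comes from the interior exceptional circles, so your Mayer--Vietoris split does not get rid of it. The correct resolution, and the actual content of Fintushel's Theorem (6.2), is that the weighted orbit data (the Euler numbers on the $F^*$-components and the Seifert/obstruction invariants along arcs and circles) encode precisely the element of this cohomology group needed to determine the bundle; matching weights forces the obstruction class to vanish. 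So the extension over the free part is not a consequence of a vanishing theorem but of the fact that the nontrivial cohomology has already been recorded in the weights. In your write-up you have the weights already ``used up'' in constructing $\Phi$ on $N$, which leaves nothing to kill the residual $H^2$; you need to keep some of that information in reserve for the free-part extension (Fintushel does this via the obstruction terms in the weighted circles and arcs, and via Proposition (9.1) of \cite{Fn:circleII} for the boundary case). You were right to flag this as the place where the closed argument must be genuinely extended, but the issue is not a vanishing computation — it is correctly identifying the weights as a complete set of invariants for the bundle extension.
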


\begin{proof}
This can be proved by the same methods as \cite{Fn:circleI}
by using Propositions (3.6), (3.9), Lemma (6.1) and Theorem (6.2)
in \cite{Fn:circleI} together with Proposition (9.1) in
\cite{Fn:circleII}.
\end{proof}

Next we discuss $S^1$-actions on $D^2$-bundles over $S^2$
(see \cite{Or:Seif} for details),
which will be used in the subsequent sections to construct local 
structures of collapsing in several cases.

Let $S^2=B_1\cup B_2$ be the union of its upper and lower 
hemispheres, and $D_j^2=D^2(1)$ the unit disks, $j=1,2$. 
Put polar coordinates on $B_j\times D_j^2$.
For an arbitrary integer $\omega$, we consider the 
$D^2$-bundle over $S^2$ defined as 
$$
   S^2\tilde\times_{\omega} D^2 :=
           B_1\times D_1^2\bigcup_{f_{\omega}} B_2\times D_2^2,
$$
where $f_{\omega}:\partial B_1\times D_1^2\to \partial B_2\times D_2^2$
is the gluing homeomorphism defined by
$$
  f_{\omega}(e^{i\varphi}, se^{i\phi})=
         (e^{-i\varphi}, se^{i(-\omega\varphi+\phi)}).
$$
This bundle has Euler number $\omega$, which coincides with 
the self-intersection number of the zero section. Note that
the boundary of $S^2\tilde\times_{\omega} D^2$ is homeomorphic to 
the lens space $L(|\omega|,1)$.
Conversely any $D^2$-bundle over $S^2$ with the self-intersection
number $\omega$ of its zero section is homeomorphic to 
$S^2\tilde\times_{\omega} D^2$.

For relatively prime integers $a_j$ and $b_j$, define an 
$S^1$-action $\psi_j$ on $B_j\times D_j^2$ by
$$
  e^{i\theta}(re^{i\varphi},se^{i\phi}) =
       (re^{i(\varphi+a_j\theta)},se^{i(\phi+b_j\theta)}).
$$
Those actions $\psi_j$ define an $S^1$-action, denoted
$\hat \psi(a_1,b_1)$, on the bundle $S^2\tilde\times_{\omega} D^2$
if and only if $a_2= -a_1$ and $b_2=-\omega a_1+b_1$.

We show that some $S^1$-actions with simple orbit spaces 
essentially come from 
$S^1$-actions on $S^2\tilde\times_{\omega} D^2$ with suitable
$\omega:$

\begin{prop} \label{prop:plumbing}
Let $M$ be a compact oriented $4$-manifold with boundary and with
a locally smooth $S^1$-action $\psi_0$ whose orbit space $M^*$
is homeomorphic to $D^3$. Suppose that 
\begin{enumerate}
 \item the closure of any component of the exceptional locus $E^*$
       of $\psi_0$ meets  $F^*\cap\interior M^*;$
 \item $F^*$ consists of $m$ points of $\interior M^*$ and $n$
       pieces of disjoint disks on $\partial M^*$ with 
       $m+n=2$, $m,n\ge 0$.
\end{enumerate}
Then there is an $S^1$-action $\psi$ on 
$S^2\tilde\times_{\omega}D^2$ as described above which is
equivariantly homeomorphic to $(M,\psi_0)$, where $\omega$
is determined by the weighted orbit invariants of $\psi_0$ as
follows$:$
\begin{enumerate}
 \item[(a)] If $m=2$ and $E^*$ is empty, then $\omega\in\{ 1,2\};$
 \item[(b)] If $m=2$ and $E^*$ is an oriented arc with Seifert 
            invariants $(\alpha'',\beta'')$ joining a point of $F^*$
            to a point of $\partial M^*$, then 
            $|\omega|=\alpha''\pm 1;$
 \item[(c)] Suppose that $m=2$ and $E^*$ consists of two oriented 
            arcs $J'$ and $J''$  with Seifert invariants 
            $(\alpha',\beta')$ and $(\alpha'',\beta'')$ respectively,
            where $J'$ joins a point of $\partial M^*$
            to a point of $F^*$ and $J''$ joins the other point of
            $F^*$ to a point of $\partial M^*$. Then 
            $|\omega|=|\alpha'\pm\alpha''|;$
 \item[(d)] If $m=2$ and $E^*$ is an arc joining the two points of
            $F^*$, then $\omega=0;$
 \item[(e)] If $m=2$ and $E^*$ consists of an oriented arc $J$ joining
            the two points of $F^*$ with Seifert invariants 
            $(\alpha,\beta)$, and an oriented arc from the end point
            of $J$ to a point of $\partial M^*$ with Seifert
            invariants $(\alpha'',\beta'')$, then 
            $|\omega|=(\alpha''\pm 1)/\alpha;$
 \item[(f)] Suppose that $m=2$ and $E^*$ consists of three oriented
            arcs $J'$, $J$ and $J''$ with Seifert invariants 
            $(\alpha',\beta')$, $(\alpha,\beta)$  and 
            $(\alpha'',\beta'')$ respectively, where $J'$ 
            joins a point of $\partial M^*$
            to a point of $F^*$, $J$ joins the end point of $J'$ 
            to the other point of $F^*$ and $J''$ joins the end point of
            $J$ to a point of $\partial M^*$. Then 
            $|\omega|=|\alpha'\pm \alpha''|/\alpha;$
 \item[(g)] If $m=n=1$ and $E^*$ is empty, then $|\omega|=1;$
 \item[(h)] If $m=n=1$ and $E^*$ is an arc from a point of 
            $\partial M^*$ to $F^*\cap\interior M^*$ with 
            Seifert invariants  $(\alpha',\beta')$, then 
            $|\omega|=\alpha';$
 \item[(i)] If $n=2$, then $\omega=0$.
\end{enumerate}
\end{prop}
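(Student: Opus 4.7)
The plan is to reduce the proposition to an invariant calculation by invoking Proposition \ref{prop:circ-class}: two oriented $S^1$-manifolds with isomorphic weighted orbit spaces are orientation-preservingly equivariantly homeomorphic. Thus it suffices, for each of the orbit configurations (a)--(i), to exhibit an explicit $S^1$-action $\hat\psi(a_1,b_1)$ on $S^2\tilde\times_\omega D^2$ whose weighted orbit data matches that of $(M,\psi_0)$, and then to show that the matching determines $|\omega|$ by the stated formula.

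\textbf{Model weights.} First I compute the weighted orbit data of the model action $\hat\psi(a_1,b_1)$ on $S^2\tilde\times_\omega D^2$, where by definition $a_2=-a_1$ and $b_2=-\omega a_1+b_1$. The orbit space is $D^3$, and the singular locus lies in the image of the two zero-section disks $B_j\times\{0\}$ and the two central-fiber disks $\{\mathrm{center}(B_j)\}\times D_j^2$; these project to (at most) four arcs meeting at the images of the two poles and of the boundary points. An orbit through a point of $B_j\times\{0\}$ has isotropy of order $|a_j|$ with Seifert invariant $(|a_j|,\beta_j)$ where $\beta_j$ is determined by $b_j$ mod $a_j$, and analogously on the central-fiber side. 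When $a_j=0$ (forcing $b_j=\pm 1$ by coprimality), the entire hemisphere disk becomes a fixed boundary $2$-disk of $F^*$, and by the very construction of $S^2\tilde\times_\omega D^2$ the Euler number of the $S^1$-bundle over $\partial B_j$ equals $\pm\omega$. When $b_j=0$, the $j$-th central fiber becomes a fixed arc contributing an obstruction number $\pm\omega$. In this way all weights of the model are expressed in terms of $(a_1,b_1,\omega)$.

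\textbf{Case analysis.} For each configuration I choose the model parameters according to which of the four canonical arcs are present and then apply Lemma \ref{lem:adj-inv}. In case (a), both hemispheres contribute an isolated interior fixed point with no exceptional arc, so $|a_1|=|a_2|=1$ and $|b_j|=1$; the endpoint conditions $b\alpha+\beta=\pm 1$ at both fixed points then leave $|\omega|\in\{1,2\}$ (the value $\omega=0$ is exactly case (d)). In cases (b), (c), (e), (f) with $m=2$, the determinantal adjacency relations $\det\begin{pmatrix}\alpha_i&\beta_i\\\alpha_{i+1}&\beta_{i+1}\end{pmatrix}=\pm 1$ coupled with the obstruction condition $b\alpha+\beta=\pm 1$ at the end of each arc in $\partial M^*$ force $(a_1,b_1,\omega)$ uniquely up to equivalence, and a short linear-algebra computation with the $2\times 2$ integer matrices yields $|\omega|=\alpha''\pm 1$, $|\alpha'\pm\alpha''|$, $(\alpha''\pm 1)/\alpha$, and $|\alpha'\pm\alpha''|/\alpha$ respectively. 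Cases (g) and (h) with $n=1$ correspond to one hemisphere having $a_j=0$: its boundary disk is fixed and the Euler number of the associated $S^1$-bundle is $\omega$, which through the adjacency to a possible exceptional arc of multiplicity $\alpha'$ gives $|\omega|=1$ or $\alpha'$. Finally case (i) is when both hemispheres have $a_j=0$: then $\omega a_1=0$ forces $\omega=0$ and the total space is $S^2\times D^2$.

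\textbf{Expected obstacle.} The substantive difficulty is not any single case but the bookkeeping of signs and orientations: each exceptional arc weight is only defined up to the equivalence $(\alpha,\beta)\leftrightarrow(\alpha,\alpha-\beta)$, and a reversal of an arc also flips the two endpoint obstruction numbers; this is exactly why the conclusion is stated only in terms of $|\omega|$ and with $\pm$ ambiguities in the formulas. Once each chain of weights on $M^*$ has been matched to a model chain up to this equivalence, Proposition \ref{prop:circ-class} supplies the desired equivariant homeomorphism $M\cong S^2\tilde\times_\omega D^2$, completing the proof.
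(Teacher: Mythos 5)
Your strategy coincides with the paper's: the paper's proof is simply an appeal to Proposition \ref{prop:circ-class} together with the explicit $S^1$-actions on $D^2$-bundles over $S^2$ constructed in Section 4 of \cite{Fn:circleI}, and you are reconstructing those models in-house via $\hat\psi(a_1,b_1)$ and then invoking the same classification theorem. So the overall plan is the right one and the same as the paper's.

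There is, however, a concrete error in your dictionary between the parameters $(a_j,b_j)$ and the orbit data, and it breaks the analysis of cases (g), (h), (i). You assert that ``when $a_j=0$ the entire hemisphere disk becomes a fixed boundary $2$-disk of $F^*$'' and then use $a_j=0$ as the model for the boundary-disk cases. This cannot be correct, for two reasons. First, the compatibility condition in the definition of $\hat\psi$ forces $a_2=-a_1$, so $a_1=0$ implies $a_2=0$; then the entire zero section $S^2\times 0$ is fixed, not a single disk. Second, with $a_1=a_2=0$ the $S^1$-action is nontrivial only on the $D^2$-fibers, so the orbit space is $S^2\times[0,1]$ rather than $D^3$, contradicting the hypothesis $M^*\simeq D^3$. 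The boundary fixed disks required in cases (g), (h), (i) instead arise from $b_j=0$, which fixes the central fiber $\{\mathrm{pole}(B_j)\}\times D_j^2\simeq D^2$; for instance, in case (i) one must take $a_1=\pm 1$ and $b_1=b_2=0$, and then the relation $0=b_2=-\omega a_1+b_1$ with $a_1\ne 0$ is what actually forces $\omega=0$. As written, your ``$\omega a_1=0$ forces $\omega=0$'' uses $a_1=0$, which is vacuous. You should swap the roles of ``$a_j=0$'' and ``$b_j=0$'' throughout that paragraph, and re-derive (g), (h), (i) accordingly (in (g), (h) one should have $b_j=0$ for exactly one $j$).

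Beyond this, the heart of the proposition is the arithmetic in cases (b)--(f) giving $|\omega|=\alpha''\pm 1$, $|\alpha'\pm\alpha''|$, $(\alpha''\pm 1)/\alpha$, $|\alpha'\pm\alpha''|/\alpha$; you describe this as ``a short linear-algebra computation'' without carrying it out. Since these formulas are precisely what the proposition asserts, the proof is incomplete until those computations (via Lemma \ref{lem:adj-inv} applied along each weighted chain in $M^*$) are actually exhibited and shown to single out the stated $\omega$.
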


%%%%%%%%%%%%%%%%%%%%
\begin{center}
\begin{tikzpicture}
[scale = 0.5]
\draw(0,0) circle[x radius=1.3,y radius=1];
\fill (-0.5,0)  circle (2pt);
\fill (0.5,0) circle (2pt);
\fill (0,-1) circle (0pt) node [below]{$(a)$};
\draw(4,0) circle[x radius=1.3,y radius=1];
\fill (3.5,0)  circle (2pt);
\fill (4.5,0) circle (2pt);
\draw(4.5,0)--(5.35,0);
\draw[->](4.5,0)--(5,0);
\fill (4,-1) circle (0pt) node [below]{$(b)$};
\draw(8,0) circle[x radius=1.3,y radius=1];
\fill (7.5,0)  circle (2pt);
\fill (8.5,0) circle (2pt);
\draw(8.5,0)--(9.35,0);
\draw[->](8.5,0)--(9,0);
\draw(7.5,0)--(6.65,0);
\draw[->](6.65,0)--(7.15,0);
\fill (8,-1) circle (0pt) node [below]{$(c)$};
\draw(12,0) circle[x radius=1.3,y radius=1];
\fill (11.5,0)  circle (2pt);
\fill (12.5,0) circle (2pt);
\draw(11.5,0)--(12.5,0);
\draw[->](11.5,0)--(12.15,0);
\fill (12,-1) circle (0pt) node [below]{$(d)$};
%%%%%%%%%%%%
\draw(0,-4) circle[x radius=1.3,y radius=1];
\fill (-0.5,-4)  circle (2pt);
\fill (0.5,-4) circle (2pt);
\draw(0.5,-4)--(1.35,-4);
\draw[->](0.5,-4)--(1.0,-4);
\draw(-0.5,-4)--(0.5,-4);
\draw[->](-0.5,-4)--(0.1,-4);
\fill (0,-5) circle (0pt) node [below]{$(e)$};
\draw(4,-4) circle[x radius=1.3,y radius=1];
\fill (3.5,-4)  circle (2pt);
\fill (4.5,-4) circle (2pt);
\draw(4.5,-4)--(5.35,-4);
\draw[->](4.5,-4)--(5.05,-4);
\fill (4,-5) circle (0pt) node [below]{$(f)$};
\draw(6.6,-3.3) -- (9,-3.3);
\draw(6.6,-4.8) -- (9,-4.8);
\draw [thick] (6.6,-3.3) to [out=225, in=135] (6.6,-4.8);
\draw [thick,dotted] (6.6,-3.3) to [out=-45, in=45] (6.6,-4.8);
\filldraw [fill=lightgray,thick] (9,-3.3) to [out=225, in=135] (9,-4.8);
\filldraw [fill=lightgray,thick] (9,-3.3) to [out=-45, in=45] (9,-4.8);
\fill (7.8,-4) circle (2pt);
\fill (9.2,-4) circle (0pt)  node [right]{$F^*$};
\fill (8,-5) circle (0pt) node [below]{$(g)$};
\draw(11.5,-3.3) -- (13.9,-3.3);
\draw(11.5,-4.8) -- (13.9,-4.8);
\draw [thick] (11.5,-3.3) to [out=225, in=135] (11.5,-4.8);
\draw [thick,dotted] (11.5,-3.3) to [out=-45, in=45] (11.5,-4.8);
\filldraw [fill=lightgray,thick] (13.9,-3.3) to [out=225, in=135] (13.9,-4.8);
\filldraw [fill=lightgray,thick] (13.9,-3.3) to [out=-45, in=45] (13.9,-4.8);
\fill (12.6,-4) circle (2pt);
\draw(11.2,-4)--(12.6,-4);
\draw[->](11.2,-4)--(12,-4);
\fill (14.1,-4) circle (0pt)  node [right]{$F^*$};
\fill (12.6,-5) circle (0pt) node [below]{$(h)$};
\fill (6,-7) circle (0pt) node[below]{Figure 1};
%%%%
\end{tikzpicture}
\end{center}

%
%\begin{figure}[h]
%\includegraphics[width=0.7\textwidth]{p21_0.eps}
%\caption{}
%%\label{fig_corner}
%\end{figure}

%\begin{figure}[h]
%  \begin{center}
%    \includegraphics[with=10cm]{p21_a.eps}
%{../figs/p21_a.eps}{figs/p21_b.eps}
% \end{center}
%\end{figure}
%
\begin{proof}
This follows from Proposition \ref{prop:plumbing} and the 
examples of $S^1$-actions on 
the $D^2$-bundles over $S^2$ given in Section 4 of \cite{Fn:circleI}.
\end{proof}

\begin{rem}
In Proposition \ref{prop:plumbing}, if $m+n=1$, then $M$ is 
homeomorphic to $D^4$
and $(M,\psi_0)$ is equivariantly homeomorphic to either 
$(D^4,\psi_{a,b})$ for some relatively prime integers $a$ and $b$
$(m=1)$ or $(D^2\times D^2, 1\times \text{rotation})$
$(n=1)$.
\end{rem}

%    \input{rescal}

% rescal.tex
\part{Reproducing collapsed  $4$-manifolds} \label{part:collapse}

 In this Part \ref{part:collapse}, we use some results from 
Parts \ref{part:alex} and \ref{part:cap}, for instance
Theorem \ref{thm:soul} and Theorem \ref{thm:cap}.

\section{Rescaling argument} \label{sec:rescal}

\bigskip

 We denote by  $\tau(a_1,\ldots,a_k|\epsilon)$ a function depending on
a priori constants, $a_1,\ldots,a_k$ and $\epsilon$ satisfying 
$\lim_{\epsilon\to 0}\tau(a_1,\ldots,a_k|\epsilon)=0$
for each fixed $a_1,\ldots,a_k$.

 For a closed set $S$ in a metric space, we denote by
$A(S;r_1,r_2)$ the annulus
$B(S,r_2)-\interior B(S,r_1)$.

 Let a sequence of pointed complete $n$-dimensional Riemannian
manifolds $(M_i^n,p_i)$ with $K \ge -1$ converge to a pointed
$k$-dimensional Alexandrov space $(X^k,p)$ with respect to the
pointed Gromov-Hausdorff convergence, where $k \le n-1$.  
In this section, we establish a basic result to study the topology 
of the metric ball $B(p_i,r)$ for any sufficiently large 
$i$ compared to a fixed small $r > 0$.
If $p$ is a regular point of $X^k$, then $B(p_i,r)$ fibers over 
$B(p,r)$ with almost nonnegatively curved fibre (\cite{Ym:conv}).
Hence from now on, we assume that $p$ is a singular point.
\par
  The purpose of this section is to prove the following
result.

\begin{thm}\label{thm:rescal}
Suppose $n=4$ and that $B(p_i,r)$ is not homeomorphic to $D^4$
for each $r>0$ and for any sufficiently large $i$. Then 
there exist an $r=r_p>0$ and sequences $\delta_i\to 0$ and 
$\hat p_i\in B(p_i,r)$ such that 
\begin{enumerate}
  \item $\hat p_i\to p$ under the convergence 
       $B(p_i,r)\to B(p,r)$;
  \item  $B(p_i,r)$ is homeomorphic to $B(\hat p_i,R\delta_i)$ for every 
      $R\ge 1$ and large $i$ compared to $R$;
  \item  for any limit $(Y,y_0)$ of $(\frac{1}{\delta_i}M_i^4, \hat p_i)$, 
     we have  $\dim Y\ge k+1$.
 \end{enumerate}
\end{thm}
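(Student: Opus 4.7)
The plan is to adapt the extremal-scale rescaling argument of \cite{SY:3mfd} to dimension four. The guiding principle is that the hypothesis $B(p_i,r)\not\cong D^4$ forces a genuine topological feature to appear at a definite scale $\delta_i\to 0$ around $p_i$; after rescaling by $1/\delta_i$, this feature should open up as an extra dimension in the Gromov--Hausdorff limit, producing $\dim Y\ge k+1$.

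To make this precise, I would fix $r=r_p>0$ small enough that $B(p,r)\subset X$ meets no singular point besides $p$ and that $B(p_i,r)\not\cong D^4$ for all large $i$. For each such $i$, define, roughly,
$$
\delta_i := \inf\bigl\{\,\delta\in(0,r] : \exists\,q\in \overline{B}(p_i,r)\text{ with } B(q,\delta)\cong B(p_i,r)\,\bigr\},
$$
and pick $\hat p_i$ essentially realizing this infimum. Smoothness of $M_i^4$ makes very small balls into $4$-disks, so $\delta_i>0$; and the collapse $(M_i,p_i)\to(X,p)$ with $k\le 3$ forces $\delta_i\to 0$, because the collapsed directions shrink uniformly. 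After passing to a subsequence, the rescaled spaces $(\tfrac{1}{\delta_i}M_i^4,\hat p_i)$ converge to a pointed complete Alexandrov space $(Y,y_0)$ of nonnegative curvature (the lower bound $-1$ rescales to $-\delta_i^2\to 0$). Item (1) then follows because a point $q$ capturing the full topology of $B(p_i,r)$ at a shrinking scale $\delta_i$ must lie arbitrarily close to $p_i$, hence $\hat p_i\to p$. Item (2) is a direct consequence of the minimality in the definition, combined with a collar/fibration argument on the annulus $B(\hat p_i,R\delta_i)-B(\hat p_i,\delta_i)$, which lies over the regular part of $Y$ once $R\ge 1$ and $i$ is large compared to $R$.

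The main obstacle is item (3), $\dim Y\ge k+1$, which I would establish by contradiction. If $\dim Y=k$, then $(Y,y_0)$ is canonically identified with (a neighborhood in) the tangent cone $K_p(X)$, and the collapse $M_i^4\to Y$ at scale $\delta_i$ realizes $B(\hat p_i,\delta_i)$ over the regular part of $Y$ as a locally trivial fibre bundle with almost flat fibres via Theorem \ref{thm:orig-cap}. The singular apex $y_0$ would then be handled by the four-dimensional generalized soul theorem (Theorem \ref{thm:soul}) together with the topological niceness of smoothable Alexandrov limits (Proposition \ref{prop:smoothable}). Combining the fibration outside the singular core with the cap provided by the soul theorem, the topology of $B(\hat p_i,\delta_i)$ is determined by a fibred description that persists at slightly smaller scales, producing a point $\hat q_i$ and a scale $\delta_i'<\delta_i$ with $B(\hat q_i,\delta_i')\cong B(p_i,r)$, contradicting the minimality defining $\delta_i$. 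The most delicate step is precisely matching this fibration-plus-cap description of $B(\hat p_i,\delta_i)$ with the original topology of $B(p_i,r)$, and this is exactly where the four-dimensional soul theorem becomes essential.
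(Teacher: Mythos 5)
The paper's proof is genuinely different from what you propose, and your argument has several gaps that I don't see how to close.

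The paper's argument does not define $\delta_i$ as a ``minimal topological scale.'' Instead, it constructs an auxiliary function $f_i = \frac{1}{\beta}\sum_j \frac{1}{r}d(x_i^j,\cdot)$ by averaging distances from an $\epsilon$-net on $\partial B(p,r)$ chosen to approximate directions in $\Sigma_p$; the blow-up limit of these averages is the function $\bar f(\cdot)=\int_{\Sigma_p}d(\xi,\cdot)\,dm_p$ on $K_p$. When $\bar f$ has a strict local maximum at the cone point $o_p$ (which the paper verifies separately according to whether $\diam(\Sigma_p)\le\pi/2$, $=\pi$, or lies strictly between $\pi/2$ and $\pi$), the point $\hat p_i$ is taken to be a nearby local maximum of $f_i$, and $\delta_i$ is the \emph{maximal distance from $\hat p_i$ to a critical point of $d_{\hat p_i}$ inside $B(p,r)$}. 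Both (1) and (2) then follow at once: $\hat p_i\to o_p=p$ because it is a local maximum tracking a strict local maximum of $\bar f$, and $B(\hat p_i,R\delta_i)\simeq B(p_i,r)$ for $R\ge 1$ by critical point theory, since there are no critical points of $d_{\hat p_i}$ in the annulus $A(\hat p_i;\delta_i,r)$. The heart of (3) is a direct angle estimate: the critical point $\hat q_i$ at distance $\delta_i$ converges to a critical point $z_0$ of $d_{y_0}$ in $Y$, giving $\theta_j\ge\pi/2$ for the angles to the directions $\gamma_j$; meanwhile the fact that $y_0$ is a maximum of $g=\frac{1}{\beta}\sum_j b_{x_\infty^j}$ gives $0\ge g_{y_0}'(v)=-\frac{1}{\beta}\sum_j\cos\theta_j\ge 0$, forcing every $\theta_j=\pi/2$. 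The $\beta_{\Sigma_p}(\epsilon)\sim\epsilon^{-(k-1)}$ points then must lie on an equatorial sphere in $\Sigma_{y_0}$, giving $\dim Y\ge k+1$. (For $\pi/2<\diam(\Sigma_p)<\pi$ the paper separately establishes $B(p_i,r)\simeq S^1\times D^3$ and runs the argument on the universal cover.)

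Your proposal suffers from three concrete problems. First, with your definition of $\delta_i$ as a topological infimum, $\delta_i\to 0$ and $\hat p_i\to p$ are both unjustified: a small ball homeomorphic to $B(p_i,r)$, if one exists, need not be near $p_i$, and the infimum need not go to zero; the paper's definition makes both assertions immediate from the choice of $\hat p_i$ as a local maximum tracking a strict local maximum. Second, your contradiction scheme for (3) is close to circular: since $\delta_i$ is an infimum, producing a scale $<\delta_i+\epsilon$ that captures the topology contradicts nothing, and you would need an actual strict improvement, which your fibration-plus-cap argument does not supply. Third, the claim that $\dim Y=k$ would make $Y$ ``canonically identified with (a neighborhood in) $K_p(X)$'' is false in general --- $Y$ is a complete noncompact Alexandrov space whose relationship to $K_p$ depends entirely on the rate $\delta_i\to 0$, which you have not controlled. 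The key idea you are missing is the averaged distance function and its blow-up $\bar f$: the strict local maximum of $\bar f$ at $o_p$ is what simultaneously controls the location of $\hat p_i$, the scale $\delta_i$, and the angle condition that forces the extra dimension.
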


\begin{rem}
\begin{enumerate}
 \item The homeomorphism in Theorem \ref{thm:rescal} $(2)$ is given 
       by the flow curves of a gradient-like vector field of 
       the distance function $d_{\hat p_i}=d(\hat p_i,\,\cdot\,);$
 \item The rescaling constant $\delta_i$ can be thought of as 
       the size of the (singular) fibre which is not visible yet.
 \item The above theorem has been proved in \cite{SY:3mfd} under the
       hypothesis that $k=2$ and $\diam(\Sigma_p)<\pi;$
%  \item The assumption $B(p_i,r)\ncong D^4$ is quite natural.
%     If one drops this assumption, it is no longer possible to 
%     find the rescaling constant $\delta_i$ together with the reference 
%     point $\hat p_i$. For instance, consider a sequence 
%     $(\interior D^2,o, g_i)$ collapsing to $([0,1), 0)$.
%     Then we have a collapsing fibre structurte over $\interior D^2$
%     whose general fibre is a circle and the one point singular fibre 
%     $\{ o\}$. It is possible to take such a metric $g_i$ 
%     that for a sufficiently small $\epsilon_i>0$,
%     the length of the fibre $F_i$ of $(\interior D^2,g_i)$
%     having distance $\epsilon_i$ from $o$ is $\delta_i$ with 
%     $\delta_i\ll \epsilon_i$.
%     In this case, the singular fibre has size $0$. 
%     If we take the rescaling constant $\delta_i$ with 
%     a reference point on the fibre $F_i$, then 
%     we have no controll of the topology through the limit as in 
%     $(2)$ of Theorem \ref{thm:rescal}
%     while $(3)$ of Theorem \ref{thm:rescal} is satisfied.
  \item A generalization of Theorem \ref{thm:rescal} actually 
     holds in the general dimension. This will appear in a 
     forthcoming paper.
\end{enumerate}
\end{rem}

Let $(Y,y_0)$ be a complete noncompact Alexandrov space with
nonnegative curvature given in Theorem \ref{thm:rescal},
and $Y(\infty)$ denote the ideal boundary of $Y$ with 
the Tits metric.

The following lemma has been proved in Lemma 3.7 of \cite{SY:3mfd}.

\begin{lem} \label{lem:expand}
  There is an expanding  map $\Sigma_p\to Y(\infty)$.
  In particular, $\dim Y(\infty) \ge \dim \Sigma_p$.
\end{lem}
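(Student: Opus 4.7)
The plan is to construct $\Phi\colon\Sigma_p\to Y(\infty)$ by lifting each direction at $p$ to the asymptotic class of a ray in $Y$ obtained from rescaled Gromov--Hausdorff limits of minimal geodesics in $M_i^4$, and to establish the expanding property by comparing the divergence of two such rays with the separation of their prescribed directions. For $\xi\in\Sigma_p$, I first pick $q\in X$ at small positive distance from $p$ with $\uparrow_p^q=\xi$, lift to $q_i\in M_i^4$ with $q_i\to q$, and take a unit-speed minimal geodesic $\sigma_i$ in $M_i^4$ from $\hat p_i$ to $q_i$. In the rescaled metric on $(1/\delta_i)M_i^4$, the length of $\sigma_i$ equals $d_{M_i}(\hat p_i,q_i)/\delta_i\to\infty$, since $\delta_i\to 0$ while $d_{M_i}(\hat p_i,q_i)\to d(p,q)>0$. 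A subsequence of the rescaled $\sigma_i$ then converges uniformly on compact sets to a ray $\gamma_\xi\colon[0,\infty)\to Y$ from $y_0$, and I set $\Phi(\xi):=\gamma_\xi(\infty)\in Y(\infty)$; a routine check shows this is independent of the choice of $q$ in the direction $\xi$ and of the subsequence.

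For $\xi,\eta\in\Sigma_p$ I pick representatives $q,q'$ at equal small distance from $p$, with corresponding rescaled geodesics $\sigma_i^\xi,\sigma_i^\eta$, and denote by $\alpha_i$ the angle at $p_i$ between their initial tangent vectors, namely the angle of the triangle $p_iq_iq_i'$ at $p_i$ in $M_i^4$. Lower semicontinuity of angles under Gromov--Hausdorff convergence, combined with convergence of minimal geodesics, yields
\begin{equation*}
   \liminf_{i\to\infty}\alpha_i\;\ge\;\angle qpq'\;=\;d_{\Sigma_p}(\xi,\eta).
\end{equation*}
Toponogov comparison in $M_i^4$ with $K\ge -1$, applied to the isosceles hyperbolic comparison triangle with two sides $s\delta_i$ and included angle $\alpha_i$, together with the elementary inequality $\sinh(s\delta_i)\sin(\alpha_i/2)\ge\sinh(s\delta_i\sin(\alpha_i/2))$ (which follows from convexity of $\sinh$ and $\sinh(0)=0$), gives for every fixed $s>0$ and large $i$,
\begin{equation*}
   d_{M_i}(\sigma_i^\xi(s\delta_i),\sigma_i^\eta(s\delta_i))\;\ge\;2\,s\delta_i\sin(\alpha_i/2).
\end{equation*}
Dividing by $\delta_i$ and letting $i\to\infty$, I obtain
\begin{equation*}
   d_Y(\gamma_\xi(s),\gamma_\eta(s))\;\ge\;2\,s\sin\!\bigl(d_{\Sigma_p}(\xi,\eta)/2\bigr)
\end{equation*}
for every $s>0$.

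The standard Tits-distance formula on the ideal boundary of the nonnegatively curved $Y$ now reads
\begin{equation*}
   2\sin\!\bigl(d_T(\Phi(\xi),\Phi(\eta))/2\bigr)\;=\;\lim_{s\to\infty}\frac{d_Y(\gamma_\xi(s),\gamma_\eta(s))}{s}\;\ge\;2\sin\!\bigl(d_{\Sigma_p}(\xi,\eta)/2\bigr),
\end{equation*}
and monotonicity of $\sin$ on $[0,\pi/2]$ gives $d_T(\Phi(\xi),\Phi(\eta))\ge d_{\Sigma_p}(\xi,\eta)$, i.e.\ $\Phi$ is expanding; the dimension inequality $\dim Y(\infty)\ge\dim\Sigma_p$ is then immediate. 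The most delicate point will be justifying the angle lower bound $\liminf\alpha_i\ge d_{\Sigma_p}(\xi,\eta)$ when $p$ is a singular point of $X$ at which several minimal geodesics from $p$ to $q$ may coexist; this requires arranging the lifts $q_i$ and the initial directions of the geodesics $p_iq_i$ to converge, after passing to subsequences, to the prescribed $\xi\in\Sigma_p$, which is a standard but non-trivial application of the convergence theory for geodesics in GH-converging Alexandrov spaces.
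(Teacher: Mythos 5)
Your overall strategy (lift each direction $\xi\in\Sigma_p$ to a ray $\gamma_\xi$ in $Y$ via limits of rescaled minimal geodesics $\hat p_iq_i$, then estimate the Tits distance between the endpoints at infinity) is the same basic mechanism that is behind the cited Lemma 3.7 of \cite{SY:3mfd}, which the paper quotes without reproving. However, your key quantitative step is wrong in direction and the argument does not go through as written.

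The inequality you claim from Toponogov,
\begin{equation*}
   d_{M_i}\bigl(\sigma_i^\xi(s\delta_i),\,\sigma_i^\eta(s\delta_i)\bigr)\;\ge\;2\,s\delta_i\sin(\alpha_i/2),
\end{equation*}
does \emph{not} follow from $K\ge -1$. For a hinge with vertex angle $\alpha$ and two sides of length $t$, Toponogov with a \emph{lower} curvature bound $K\ge\kappa$ gives an \emph{upper} bound on the third side: the opposite distance in $M_i$ is at most the corresponding distance in $M_\kappa^2$. (Equivalently: the comparison angle $\tilde\angle$ built from the three side lengths is at most the actual vertex angle $\alpha_i$, and since the opposite side is increasing in $\tilde\angle$, one gets an upper, not lower, bound.) Your auxiliary inequality $\sinh(t)\sin(\alpha/2)\ge\sinh(t\sin(\alpha/2))$ only shows the hyperbolic third side is larger than the Euclidean one; combined with the correct Toponogov inequality this gives $d_{M_i}\le$ (hyperbolic), which is useless for the lower bound you need. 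In general two geodesics of equal short length issuing from a point with a large angle in a space of $K\ge -1$ can have arbitrarily close endpoints, so no lower bound of this form can hold purely from the vertex angle.

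The correct estimate uses the comparison angle at the far endpoints rather than the vertex angle $\alpha_i$, together with monotonicity of comparison angles. Fix $r_0>0$ small, let $q(\xi),q(\eta)\in X$ be at distance $r_0$ from $p$ in the directions $\xi,\eta$, lift to $q_i(\xi),q_i(\eta)\in M_i$, and let $\sigma_i^\xi,\sigma_i^\eta$ be minimal geodesics from $\hat p_i$ to these. By the Alexandrov monotonicity of $\tilde\angle$, for $s\delta_i\le r_0$ one has
\begin{equation*}
  \tilde\angle\bigl(\sigma_i^\xi(s\delta_i),\hat p_i,\sigma_i^\eta(s\delta_i)\bigr)\;\ge\;\tilde\angle\bigl(q_i(\xi),\hat p_i,q_i(\eta)\bigr),
\end{equation*}
and the right-hand side converges, as $i\to\infty$ and then $r_0\to 0$, to $\tilde\angle q(\xi)pq(\eta)\to d_{\Sigma_p}(\xi,\eta)$. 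Since at scale $\delta_i\to 0$ the hyperbolic comparison becomes Euclidean, this translates into the lower bound $d_Y(\gamma_\xi(s),\gamma_\eta(s))\ge 2s\sin(\tilde\angle q(\xi)pq(\eta)/2)$ that your Tits-distance computation requires. This route also avoids the angle lower semicontinuity lemma you flag as delicate at the end — the vertex angle $\alpha_i$ plays no role at all. Separately, your angle $\alpha_i$ should be measured at $\hat p_i$, not at $p_i$: the rescaled geodesics issue from $\hat p_i$, while the blow-down basepoint $p_i$ is only Gromov--Hausdorff close to it.
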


Actually we have the following lemma in more general setting.
The proof is similar to Lemma 3.7 of \cite{SY:3mfd}, and 
hence omitted.

\begin{lem} \label{lem:expanding2}
Suppose that a sequence $(X_i^n,x_i)$ of $n$-dimensional 
complete noncompact pointed Alexandrov spaces with 
curvature $\ge -\epsilon_i$, $\lim_{i\to\infty}\epsilon_i=0$,
converges to a pointed space $(Z,z_0)$. For $\mu_i>0$ converging 
to $0$, let $(W,w_0)$ be any limit of $(\frac{1}{\mu_i}X_i^n, x_i)$.
Then we have an expanding map $\Sigma_{z_0}\to W(\infty)$.
\par
  In particular, $\dim W(\infty) \ge \dim Z - 1$.
\end{lem}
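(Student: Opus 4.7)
The approach parallels Lemma \ref{lem:expand}: for each $\xi \in \Sigma_{z_0}$ I will construct a ray $\sigma_\xi : [0,\infty) \to W$ from $w_0$ and then verify that the assignment $\Phi : \xi \mapsto \sigma_\xi(\infty) \in W(\infty)$ satisfies $d_{\mathrm{Tits}}(\Phi(\xi), \Phi(\eta)) \ge \angle(\xi,\eta)$; the dimension estimate is then immediate since expanding maps cannot decrease Hausdorff dimension. Note that $Z$ is nonnegatively curved as a Gromov--Hausdorff limit of the $X_i$ with curvatures $\ge -\epsilon_i \to 0$, and $W$ is nonnegatively curved because the rescaled spaces $(1/\mu_i)X_i$ have curvature $\ge -\mu_i^2\epsilon_i \to 0$.

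\textbf{Constructing $\sigma_\xi$.} It suffices to treat $\xi$ realized by an actual geodesic $\gamma_\xi : [0,\ell] \to Z$ from $z_0$, since such directions are dense in $\Sigma_{z_0}$ and the expanding inequality extends to the completion. Using $X_i \to Z$, pick $q_i \in X_i$ with $q_i \to \gamma_\xi(\ell)$ and $d(x_i,q_i) \to \ell$, and let $\gamma_i : [0,\ell_i] \to X_i$ be a minimizing geodesic from $x_i$ to $q_i$. Viewed in $(1/\mu_i)X_i$, this is a minimizing geodesic of length $\ell_i/\mu_i \to \infty$. Ascoli--Arzel\`a together with the pointed Gromov--Hausdorff convergence $(1/\mu_i)X_i \to W$ yields, after a subsequence, a uniform-on-compacta limit $\sigma_\xi : [0,\infty) \to W$ starting at $w_0$, which is a ray as the limit of minimizing geodesics.

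\textbf{Expanding estimate.} Put $\theta := \angle(\xi,\eta)$ and fix $\delta>0$. By nonnegative curvature of $Z$ choose $t_0>0$ small enough that the Euclidean comparison angle satisfies $\tilde\angle\,\gamma_\xi(t_0)\,z_0\,\gamma_\eta(t_0) \ge \theta-\delta$. Convergence of the corresponding triangle in $X_i$ to the one in $Z$, together with $\epsilon_i \to 0$, yields $\tilde\angle\,\gamma_i^\xi(t_0)\,x_i\,\gamma_i^\eta(t_0) \ge \theta-2\delta$ (comparison in $M_{-\epsilon_i}^2$) for all large $i$. Alexandrov angle-monotonicity inside $X_i$ then gives, for every $s>0$ with $s\mu_i \le t_0$,
\[
   \tilde\angle\,\gamma_i^\xi(s\mu_i)\,x_i\,\gamma_i^\eta(s\mu_i) \;\ge\; \tilde\angle\,\gamma_i^\xi(t_0)\,x_i\,\gamma_i^\eta(t_0) \;\ge\; \theta-2\delta,
\]
and replacing the hyperbolic comparison by the (larger) Euclidean one preserves this bound. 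Since Euclidean comparison angles depend only on ratios of side lengths, the same inequality holds at rescaled distance $s$ from $x_i$ in $(1/\mu_i)X_i$; letting $i \to \infty$ gives
\[
   \tilde\angle\,\sigma_\xi(s)\,w_0\,\sigma_\eta(s) \;\ge\; \theta-2\delta \qquad \text{for every fixed } s>0.
\]
Because $W$ has nonnegative curvature, the left side is monotone non-increasing in $s$ and its limit as $s \to \infty$ equals $d_{\mathrm{Tits}}(\sigma_\xi(\infty),\sigma_\eta(\infty))$; hence $d_{\mathrm{Tits}} \ge \theta-2\delta$, and $\delta \to 0$ finishes the expanding property.

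\textbf{Main obstacle.} The delicate point is coordinating the three interrelated scales $\epsilon_i \to 0$, $\mu_i \to 0$, and the auxiliary $s\mu_i \to 0$. One cannot directly pass to a limit of the microscopic triangles with vertex $x_i$ and side length $s\mu_i$, since those shrink below the GH-scale on which $X_i \to Z$ is controlled. The trick is to invoke angle-monotonicity at the fixed coarse scale $t_0$ in $X_i$, where $X_i \to Z$ supplies a usable comparison estimate, and only afterwards transport the bound to $(1/\mu_i)X_i$ and hence to $W$ via the scale-invariance of the Euclidean comparison angle, and finally to read off $d_{\mathrm{Tits}}$ as the monotone limit.
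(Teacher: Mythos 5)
Your plan follows the same route the paper intends (it simply defers to Lemma 3.7 of [SY:3mfd], which is Lemma \ref{lem:expand} here): realize directions of $\Sigma_{z_0}$ by geodesics, carry those geodesics into $X_i$, rescale to rays in $W$, and squeeze the Tits angle between the Euclidean comparison angle in $(1/\mu_i)X_i$ (after rescaling) and the almost--nonnegative comparison angle in $X_i$ at a fixed coarse scale. The curvature bookkeeping ($X_i$ of curvature $\ge -\epsilon_i$, $(1/\mu_i)X_i$ of curvature $\ge -\mu_i^2\epsilon_i$, monotonicity of $\tilde\angle_{-\epsilon_i}$, $\tilde\angle_0 \ge \tilde\angle_{-\epsilon_i}$, scale invariance of $\tilde\angle_0$, and $d_{\mathrm{Tits}} = \lim_{s\to\infty}\tilde\angle_0$) is all correct.

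There is, however, a gap in the step ``Convergence of the corresponding triangle in $X_i$ to the one in $Z$ \ldots yields $\tilde\angle\,\gamma_i^\xi(t_0)\,x_i\,\gamma_i^\eta(t_0) \ge \theta-2\delta$.'' You chose $\gamma_i^\xi$ as a minimizing geodesic from $x_i$ to a point $q_i$ approximating the endpoint $\gamma_\xi(\ell)$, and then want to compare at an intermediate scale $t_0 < \ell$; but this requires $\gamma_i^\xi(t_0)\to\gamma_\xi(t_0)$, i.e.\ that $\gamma_i^\xi$ converges to $\gamma_\xi$ itself rather than to some other minimizing geodesic from $z_0$ to $\gamma_\xi(\ell)$. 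Nothing in your construction prevents the $\gamma_i^\xi$ from latching onto a different branch when geodesics from $z_0$ to $\gamma_\xi(\ell)$ are non-unique (think of antipodal points on a sphere), in which case the comparison angle at scale $t_0$ can drop below $\theta-2\delta$. The fix is standard: take the approximating target to be $\gamma_\xi(\ell')$ for some $0<\ell'<\ell$, so that $\gamma_\xi(\ell')$ is an \emph{interior} point of $\gamma_\xi$; by the interior-point uniqueness of minimizing geodesics in Alexandrov spaces, $\gamma_\xi|_{[0,\ell']}$ is the unique minimizer to $\gamma_\xi(\ell')$, and then every subsequential limit of $\gamma_i^\xi$ is forced to be $\gamma_\xi|_{[0,\ell']}$, giving the required convergence of the intermediate point. (Alternatively, a diagonal choice $\ell_i\to 0$ with $\ell_i/\mu_i\to\infty$ works, at the cost of more bookkeeping.) A secondary point you wave at rather than verify: passing from the dense set of geodesic directions to all of $\Sigma_{z_0}$ is not automatic for an expanding map (it need not be uniformly continuous); one should use the compactness of $W(\infty)$ (an Alexandrov space of curvature $\ge 1$) and a diagonal extraction over a countable dense subset, and note that for the dimension estimate it also suffices that the geodesic directions have full Hausdorff measure in $\Sigma_{z_0}$.
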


We go back to the situation of Theorem \ref{thm:rescal}.
Let  $S$ be a soul of $Y$.  

\begin{prop}\label{prop:ideal-dims}
   $\dim S\le \dim Y - \dim X$.
\end{prop}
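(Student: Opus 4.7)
The proof is essentially a direct combination of the two preceding results (Lemma \ref{lem:expand} and Proposition \ref{prop:ideal}), together with the standard fact that the space of directions at any point of a $k$-dimensional Alexandrov space has dimension $k-1$.

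The plan is as follows. First I would observe that $Y$ is a complete noncompact Alexandrov space with nonnegative curvature: nonnegativity follows because the rescaled manifolds $\frac{1}{\delta_i}M_i^4$ have curvature $\ge -\delta_i^2 \to 0$, and noncompactness follows because the reference point $\hat p_i$ lies in a fixed-radius ball around $p_i$ while the rescaling factor $1/\delta_i$ tends to infinity (so bounded subsets of $Y$ come from arbitrarily small balls in $M_i^4$, forcing $Y$ to extend to infinity). Hence the soul theory and the results of Section \ref{sec:ideal} apply to $Y$.

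Next, I would apply Lemma \ref{lem:expand} to obtain an expanding (hence dimension-nondecreasing) map $\Sigma_p \to Y(\infty)$, giving
\[
\dim Y(\infty) \ge \dim \Sigma_p.
\]
Since $X^k$ is a $k$-dimensional Alexandrov space and $\Sigma_p$ is its space of directions at $p$, we have $\dim \Sigma_p = k - 1 = \dim X - 1$. Therefore $\dim Y(\infty) \ge \dim X - 1$.

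Finally I would invoke Proposition \ref{prop:ideal}, which yields
\[
\operatorname{codim} S = \dim Y - \dim S \ge \dim Y(\infty) + 1 \ge \dim X,
\]
and rearranging gives $\dim S \le \dim Y - \dim X$, as desired. There is no genuine obstacle here; the only care needed is in verifying that $Y$ really is a complete noncompact nonnegatively curved Alexandrov space so that Proposition \ref{prop:ideal} is applicable, but this is built into the conclusion of Theorem \ref{thm:rescal} together with the standard rescaling limit of a lower curvature bound.
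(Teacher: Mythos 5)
Your proof is correct and is essentially the same as the paper's, which states only that the proposition "immediately follows from Proposition \ref{prop:ideal} and Lemma \ref{lem:expand}." You have simply filled in the routine details (the noncompactness and nonnegative curvature of $Y$, and the dimension count $\dim\Sigma_p = \dim X - 1$) that the paper leaves implicit.
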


This immediately follows from Proposition \ref{prop:ideal}
and Lemma \ref{lem:expand}.
\par

As a preliminary to the proof of Theorem \ref{thm:rescal},
we recall an argument in Section 3 of \cite{SY:3mfd}.  

For a compact set $A\subset X^k$ and $\epsilon>0$, let
$\beta_A(\epsilon)$ denote the maximal number of points in $A$
having distance $\ge\epsilon$. 

Take  a $\mu_i$-approximation $\phi_i : B(p,1/\mu_i)\to B(p_i,1/\mu_i)$ 
with $\phi_i(p)=p_i$, where $\mu_i\to 0$ as $i \to \infty$.  
For any $\epsilon > 0$, we take an 
$\epsilon$-net
$\{\xi^j\}_{j=1,\dots,\beta_{\Sigma_p}(\epsilon)}$ of $\Sigma_p$.  For
a small enough $r > 0$ compared to $p$ and $\epsilon$, take $x^j
\in \bdy B(p,r)$, $j = 1, \dots, \beta_{\Sigma_p}(\epsilon)$, such
that the direction $\eta^j$ at $p$ of a minimal segment from $p$ to
$x^j$ satisfies 
$\angle(\xi^j,\eta^j) < \tau(r) < \epsilon^2$.  
Set $x^j_i = \phi_i(x^j)$ and
\begin{equation}
  f_i = f_{\epsilon,r,i} = 
  \frac{1}{\beta_{\Sigma_p}(\epsilon)}\sum_j\frac{1}{r}d(x^j_i,\,\cdot\,)
     \colon M_i^n \to \R. \label{eq:f_i}
\end{equation}
Consider the measure 
$m_{\epsilon}=\frac{1}{\beta_{\Sigma_p}(\epsilon)} \sum_j \delta_{\xi^j}$,
where $\delta_x$ is the Dirac $\delta$-measure.
We have a sequence $\epsilon_\ell \to 0$ such that the measure
$m_{\epsilon_{\ell}}$ 
converges to some Borel measure $m_p$ on
$\Sigma_p$ as $\ell \to\infty$ in the weak${}^*$ topology.
Note that 
\begin{enumerate}
  \item $m_p$ coincides with the normalized Hausdorff measure 
        over $\Sigma_p$ if $k \le 2$;
  \item $m_p$ is regular in the sense that  $m_p(U)>0$ 
        for any non-empty open subset $U$ of $\Sigma_p$.
\end{enumerate}
The latter property comes from the Bishop-Gromov volume
comparison theorem (cf.\cite{Ym:conv}) for $\Sigma_p$.\par

We make the identification $\Sigma_p=\Sigma_p\times \{ 1\}\subset K_p$.
Let 
$\psi_r:B(o_p,1/\nu_r)\to B(p,\nu_r;\frac{1}{r}X^k)$ be a 
$\nu_r$-approximation such that $\psi_r(o_p)=p$ and
$\psi_r(\xi^j)=x^j$, where
$\lim_{r \to 0} \nu_r = 0$, and let
\begin{equation*}
 \bar f = \int_{\xi \in \Sigma_p} d(\xi,\,\cdot\,) \; dm_p  : K_p
\to \R. 
\end{equation*}
We then have
\begin{equation}
  |f_i \circ  \phi_i \circ \psi_r - \bar f|
    < \tau(\ell,r|1/i)+\tau(\ell| r)+\tau(1/\ell), \label{eq:aver}
\end{equation}
on any fixed compact set of $K_p$, simply denoted as
$$
  \lim_{\ell\to\infty}\lim_{r\to 0}\lim_{i\to\infty} 
       \bar f_i\circ\phi_i\circ\psi_r = \bar f.
$$

\begin{lem}[\cite{SY:3mfd}, Lemmas 3.4, 3.5] \label{lem:locmax-general}
If $\bar f$ takes a strictly local maximum 
at the vertex $o_p\in K_p$, then we can find $\delta_i\to 0$ and 
$\hat p_i\in B(p_i,r)$ satisfying the conclusion of 
Theorem \ref{thm:rescal}.
\end{lem}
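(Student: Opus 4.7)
The plan is to exploit the strict local maximum of $\bar f$ at $o_p\in K_p$ in three stages: first locate an approximate maximum $\hat p_i$ of $f_i$ inside $B(p_i,r)$ (giving (1)); then use the topological-change scale of $\hat p_i$ to define $\delta_i$ and deform $B(p_i,r)$ onto $B(\hat p_i,R\delta_i)$ via the gradient-like flow of $f_i$ (giving (2)); and finally argue by contradiction that the rescaled limit must gain a new direction (giving (3)).

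Concretely, I would first fix $r_0>0$ and $c_0>0$ so that $\bar f(o_p)-\bar f(x)\ge c_0$ for all $x\in K_p$ with $d(o_p,x)=r_0$, which exists because $\bar f$ is $1$-Lipschitz and has a strict local maximum at $o_p$. Taking $r\le r_0$, $\ell$ large and $\epsilon$ small in \eqref{eq:aver}, the function $f_i$ on $\overline{B(p_i,r)}$ attains its maximum at some interior point $\hat p_i$; under the approximation $\phi_i$ we have $\phi_i^{-1}(\hat p_i)\to o_p$, so $\hat p_i\to p$. This gives (1). Next I would define $\delta_i$ as the supremum of those $s\in (0,r]$ such that $B(\hat p_i,s)$ is homeomorphic to $D^4$. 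The hypothesis that $B(p_i,r)$ is not $D^4$ forces $\delta_i<r$, and smoothness of $M_i^4$ forces $\delta_i>0$; I would check $\delta_i\to 0$ by a contradiction argument in which a definite-scale subsequential limit again carries a strict local maximum of an averaged distance function at its base point, and the construction applied to the limit produces a disk ball of comparable size, contradicting the maximality in the definition of $\delta_i$.

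For (2), I would construct a gradient-like vector field $V_i$ for $f_i$ on the closed annulus $\overline{A(\hat p_i;R\delta_i,r)}$ for each fixed $R\ge 1$ and large $i=i(R)$. The crucial point is that, on this annulus, $f_i$ has no Perelman-critical point: away from $\hat p_i$ on scales comparable to $r$, the limiting function $\bar f$ on $K_p$ has a definite angular gradient by the strict local max property, which transfers to $f_i$ via \eqref{eq:aver}; on intermediate scales between $\delta_i$ and a fixed fraction of $r$, any failure of regularity would generate a topology change on a scale $<\delta_i$, contradicting the choice of $\delta_i$. Flowing along $-V_i$ pushes $\partial B(p_i,r)$ inward onto $\partial B(\hat p_i,R\delta_i)$ and produces the asserted homeomorphism $B(p_i,r)\cong B(\hat p_i,R\delta_i)$.

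Finally, for (3), suppose to the contrary that some subsequential limit $(Y,y_0)$ of $(\frac{1}{\delta_i}M_i^4,\hat p_i)$ has $\dim Y=k$. By Lemma \ref{lem:expand} applied to the rescaling $\delta_i\to 0$, one has $\dim Y(\infty)\ge \dim\Sigma_p=k-1$, which combined with $\dim Y=k$ says that $Y$ is asymptotically modeled on $K_p$; the rescaled averaged functions would then converge to a function on $Y$ still enjoying a strict local maximum at $y_0$. Applying the same rescaling machinery to $(Y,y_0)$ identifies large balls $B(y_0,R)\subset Y$ as topological $4$-disks for suitable $R$, and Theorem \ref{thm:stability-respect} then yields that $B(\hat p_i,R\delta_i)\cong D^4$ for $R$ and $i$ large, contradicting the definition of $\delta_i$. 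The main obstacle, as in \cite{SY:3mfd}, is the Perelman-regularity step of (2): ruling out spurious critical points of $f_i$ on the intermediate scales between $\delta_i$ and $r$ requires careful quantitative control of the averaging construction and of $\grad\bar f$ on the cone $K_p$, and it is precisely here that the \emph{strict} (rather than non-strict) local maximum hypothesis on $\bar f$ is indispensable.
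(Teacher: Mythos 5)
Your step (1) matches the paper's construction of $\hat p_i$, but steps (2) and (3) depart from the paper in ways that leave genuine gaps.

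For (2), the paper defines $\delta_i$ to be the maximum distance from $\hat p_i$ to the critical point set of the distance function $d_{\hat p_i}$ inside $B(p_i,r)$, and then the homeomorphism $B(p_i,r)\simeq B(\hat p_i,R\delta_i)$ is immediate from Grove--Shiohama--Gromov critical point theory: the gradient-like flow of $d_{\hat p_i}$ (not of $f_i$) has no critical points in the annulus $A(\hat p_i;\delta_i,r)$. You instead define $\delta_i$ as a topological disk scale and try to build a gradient-like field for $f_i$; the claim that a Perelman-critical point of $f_i$ at an intermediate scale ``would generate a topology change on a scale $<\delta_i$'' is unsupported, since there is no a priori link between criticality of the averaged function $f_i$ at radius $\sim s$ and the topology of $B(\hat p_i,s')$ at a smaller radius $s'$. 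The strict local maximum of $\bar f$ at $o_p$ gives you control near $\partial B(p_i,r)$, but not on the full annulus, so your regularity claim is not justified.

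The more serious problem is (3). You propose: assume $\dim Y=k$, conclude $B(y_0,R)\subset Y$ is a ``$4$-disk'' and invoke Theorem~\ref{thm:stability-respect} to get $B(\hat p_i,R\delta_i)\simeq D^4$. This cannot work: if $\dim Y=k\le 3$, then the convergence $(\frac{1}{\delta_i}M_i^4,\hat p_i)\to (Y,y_0)$ is still a \emph{collapsing} convergence from dimension $4$ to dimension $k$, and the stability theorem applies only when there is no dimension drop. Also, balls in a $k$-dimensional $Y$ are not $4$-disks. The paper's actual argument for $\dim Y\ge k+1$ is direct and does not run through any topological contradiction: take $\hat q_i$ a critical point of $d_{\hat p_i}$ realizing $\delta_i$, pass to its limit $z_0\in Y$, which is a critical point of $d_{y_0}$; the renormalized $f_i$ converges to a sum of generalized Busemann functions $g=\frac{1}{\beta}\sum_j b_{x_\infty^j}$ attaining its maximum at $y_0$; the critical-point inequality gives $\theta_j=\angle(v,\gamma_j)\ge\pi/2$ for $v$ pointing toward $z_0$, while maximality of $g$ at $y_0$ forces $\sum_j\cos\theta_j\ge 0$, so all $\theta_j=\pi/2$. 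The $\sim\epsilon^{-(k-1)}$ pairwise $\epsilon$-separated directions $w_j\in\Sigma_{y_0}$ thus all lie in the metric sphere $\partial B(v,\pi/2;\Sigma_{y_0})$, a set of dimension $\dim Y-2$, which forces $k-1\le\dim Y-2$. This dimension-counting via the angle rigidity is the core of the lemma and is entirely absent from your proposal.
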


We give a sketch of the essential idea of the proof of
Lemma \ref{lem:locmax-general}.
\par

From the assumption,
one can take as the point $\hat p_i$ as a local maximum point of
$f_i$ converging to $o_p$, and as $\delta_i$
to be the maximum distance between $\hat p_i$ and
the critical point set (\cite{GS:sphere}, \cite{Gr:betti}) 
of $d_{\hat p_i}$ within  $B(p,r)$.
Let $\hat q_i$ be a critical point of $d_{\hat p_i}$ within
$B(p,r)$ realizing $\delta_i$, and let $z_0\in Y$ be the 
limit of $\hat q_i$ under the convergence
$(\frac{1}{\delta_i} M_i, \hat p_i)\to (Y,y_0)$.
Let $x_{\infty}^j\in Y(\infty)$ denote the point at infinity 
defined by the limit ray, say $\gamma_j$ from $y_0$ 
of the geodesic $\hat p_ix_i^j$
under the convergence 
$(\frac{1}{\delta_i} M_i, \hat p_i)\to (Y,y_0)$. 
For fixed $\epsilon=\epsilon_{\ell}$ and $r$, $f_i$ 
after some normalization converges to the
function 
$g=\frac{1}{\beta_{\Sigma_p}(\epsilon)}\sum_j b_{x_{\infty}^j}$,
where $b_{x_{\infty}^j}$ can be thought of as 
a generalized Busemann function associated with $\gamma_j$.
Let $v\in\Sigma_{y_0}$ be a direction of a minimal geodesic from 
$y_0$ to $z_0$, and let 
$\theta_j$ denote the angle between $v$ and $\gamma_j$.
Since $z_0$ is a critical point of $d_{y_0}$ and since
$Y$ has nonnegaitve curvature, 
we obtain that $\theta_j\ge\pi/2$.
On the other hand, since $y_0$ is a maximum point of $g$,
we have
$$
0\ge g_{y_0}'(v)= - \frac{1}{\beta_{\Sigma_p}(\epsilon)}\sum_j 
                      \cos\,\theta_j \ge 0.
$$
It follows that $\theta_j=\pi/2$.
Therefore $\{ w_j\}_{1\le j\le\beta_{\Sigma_p}(\epsilon)}$,
which have pairwise distance $\ge\epsilon$,
must be contained in a metric sphere
$\partial B(v,\pi/2;\Sigma_{y_0})$. 
Since this holds for any sufficiently small $\epsilon$ with 
$\beta_{\Sigma_p}(\epsilon)\sim\text{\rm const}\epsilon^{\dim X - 1}$,
we can conclude $\dim Y\ge k+1$.
\medskip

\begin{lem} \label{lem:loc-max}  
Suppose one of the following two cases.
\begin{enumerate}
 \item $k=2$ and $\diam(\Sigma_p)<\pi;$
 \item  $\diam (\Sigma_p)\le \pi/2$.
\end{enumerate}
Then the function $\bar f$ takes a strictly
local maximum at  $o_p\in K_p$. In particular, 
Theorem \ref{thm:rescal} holds in those cases.
\end{lem}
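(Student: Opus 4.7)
My plan is to show $o_p$ is a strict local maximum of $\bar f$ by a first-order analysis along radial rays from the vertex, and then appeal to Lemma \ref{lem:locmax-general}. In the cone $K_p$ one has the explicit formula
$$ d(\xi, t\eta) = \sqrt{1 + t^2 - 2t\cos\angle(\xi,\eta)} $$
for $\xi \in \Sigma_p$ and $t\eta \in K_p$ with $\eta \in \Sigma_p$. Since $m_p$ is a probability measure, $\bar f(o_p) = 1$, and an elementary calculation based on the identity $(g - (1-t\cos\theta))(g + (1-t\cos\theta)) = t^2 \sin^2\theta$ (with $g = d(\xi,t\eta)$, $\theta=\angle(\xi,\eta)$) yields
$$ \bar f(t\eta) \le 1 - t\, I(\eta) + C t^2, \qquad I(\eta) := \int_{\Sigma_p} \cos\angle(\xi,\eta)\,dm_p(\xi), $$
valid for $t \in [\,0, 1/2\,]$ with $C$ an absolute constant. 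It therefore suffices to show that $I(\eta) > 0$ for every $\eta \in \Sigma_p$; since $I$ is continuous (by dominated convergence) and $\Sigma_p$ is compact, this automatically upgrades to a uniform bound $I \ge c > 0$, so $\bar f(t\eta) < 1 = \bar f(o_p)$ for all sufficiently small $t > 0$ and all $\eta$.

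Case $(2)$ is immediate. When $\diam(\Sigma_p) \le \pi/2$, the integrand $\cos\angle(\xi,\eta)$ is nonnegative on all of $\Sigma_p$ and bounded below by $\cos(\pi/4)$ on the nonempty open set $B(\eta,\pi/4)$, which has positive $m_p$-mass by the regularity of $m_p$ recalled in the excerpt.

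In case $(1)$ the excerpt gives that $m_p$ is normalized $1$-dimensional Hausdorff measure. The classification of compact $1$-dimensional Alexandrov spaces with curvature $\ge 1$ forces $\Sigma_p$ to be either a circle of circumference $L = 2\diam(\Sigma_p) < 2\pi$ or a closed interval of length $L = \diam(\Sigma_p) < \pi$. Parameterizing by arc length and integrating directly gives
$$ I(\eta) = \frac{2\sin(L/2)}{L} \quad \text{(circle case),} \qquad I(\eta) = \frac{\sin s_0 + \sin(L - s_0)}{L} \quad \text{(interval case, $\eta$ at parameter $s_0$),} $$
and both are strictly positive: $L/2 \in (0, \pi)$ in the first, and in the second $s_0$ and $L - s_0$ lie in $[\,0, L\,] \subset [\,0, \pi)$, so $\sin s_0, \sin(L-s_0) \ge 0$ and cannot vanish simultaneously.

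There is no real obstacle here: the lemma reduces to a first-order calculation in the tangent cone plus a short case analysis. The only technical point is the uniformity of the $O(t^2)$ remainder in $\eta$, which is secured by the algebraic identity above. With $o_p$ established as a strict local maximum of $\bar f$, Lemma \ref{lem:locmax-general} then produces the sequences $\delta_i$ and $\hat p_i$ required by Theorem \ref{thm:rescal} in each of the two cases.
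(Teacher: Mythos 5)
Your proof is correct. The key estimate $\bar f(t\eta) \le 1 - tI(\eta) + Ct^2$ follows from the algebraic identity you give: with $g = d(\xi,t\eta)$ one has $g^2 - (1-t\cos\theta)^2 = t^2\sin^2\theta$, and for $t\le 1/2$ the factor $g + (1-t\cos\theta) \ge 1/2$, giving $g - (1-t\cos\theta) \le 2t^2$ uniformly; integrating against the probability measure $m_p$ yields the claim with $C=2$. The positivity of $I(\eta)$ in case (2) via $\cos\angle \ge 0$ together with the regularity of $m_p$ and the compactness argument is sound, and your explicit arc-length computations in case (1) are also correct.

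Your route differs somewhat from the paper's. For case (2) the paper argues by contradiction: it supposes $\bar f(x_i) \ge \bar f(o_p)$ for $x_i = r_i\xi_i \to o_p$, then splits the integral into $B(\xi_i,\delta)$ (where $d_\xi(x_i) \le 1 - r_i/2$) and its complement (where $d_\eta(x_i) \le 1 + O(r_i^2)$, using $\diam(\Sigma_p) \le \pi/2$), and derives a strict inequality. This is the same first-order vs.\ $O(r_i^2)$ trade-off, but organized as a two-term split rather than as a single integral $I(\eta)$; your approach buys a cleaner, quantitative statement ($\bar f(t\eta) \le 1 - ct + 2t^2$) at the price of needing the continuity-plus-compactness step to make the lower bound on $I$ uniform. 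For case (1) the paper simply cites \cite{SY:3mfd}, whereas you give a short self-contained computation using the explicit form of $m_p$ as normalized $1$-dimensional Hausdorff measure and the classification of compact $1$-dimensional Alexandrov spaces of curvature $\ge 1$; that makes your treatment more uniform across the two cases and, for a reader, more transparent.
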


\begin{proof}
The case (1) is proved in \cite{SY:3mfd}, and 
Suppose the case (2) and 
that there exists a sequence $x_i\to o_p$ satisfying 
$\bar f(x_i)\ge\bar f(o_p)$ and take $\xi_i\in \Sigma_p$ with
$1=d(o_p,\xi_i)=d(o_p,x_i)+d(x_i, \xi_i)$.
Putting  $r_i=d(o_p,x_i)$, we have 
$d_{\xi}(x_i)\le 1-r_i/2$ for every 
$\xi\in \Sigma_p$ with $d(\xi, \xi_i)\le\delta$,
where $\delta$ is a small positive constant independent of $i$.
Now by the assumption, 
$d_{\eta}(x_i)\le 1+O(r_i^2)$ for all $\eta\in\Sigma_p$.
It follows  that 
  \begin{align*}
   \bar f(x_i) &= \int_{B(\xi,\delta;\Sigma_p)}d(\xi, x_i)\,dm_p
                 + \int_{B(\xi,\delta;\Sigma_p)^c} d(\xi, x_i)\,dm_p \\
         &\le (1-r_i/2)m_p(B(\xi,\delta;\Sigma_p)) + 
              (1+O(r_i^2))m_p(B(\xi,\delta;\Sigma_p)^c) \\
 &\le (1+O(r_i^2))m_p(\Sigma_p) - \frac{r_i}{2} m_p(B(\xi,\delta;\Sigma_p))\\
     & < m_p(\Sigma_p)=\bar f(o_p),
  \end{align*}
for large $i$. This is a contradiction.
\end{proof}

\begin{lem} \label{lem:split-rescal}
Suppose $k\le 3$. 
If  $\diam(\Sigma_p)=\pi$, then  Theorem \ref{thm:rescal} holds.
\end{lem}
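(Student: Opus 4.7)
My plan is to exploit the spherical splitting forced by $\diam(\Sigma_p)=\pi$ and then to apply the argument of Lemma \ref{lem:locmax-general} with a suitably modified auxiliary function. Since $\Sigma_p$ is a compact Alexandrov space of curvature $\ge 1$ and dimension $\le k-1\le 2$ with $\diam=\pi$, the Alexandrov splitting theorem yields a spherical suspension decomposition $\Sigma_p = S(\Sigma_p')$ over a compact Alexandrov space $\Sigma_p'$ of curvature $\ge 1$ and dimension $\le k-2\le 1$; equivalently $K_p = \R\times K(\Sigma_p')$. Denote the two suspension points by $\xi_\pm\in\Sigma_p$, so that $d(\xi_+,\xi_-)=\pi$. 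Since $p$ is singular and $k\le 3$, I first note that $\diam(\Sigma_p')<\pi$: otherwise $\Sigma_p'$ would be either $S^0$ (for $k=2$) or a round $S^1$ of length $2\pi$ (for $k=3$), forcing $K_p=\R^k$ and contradicting the singularity of $p$.

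In place of the averaging function $\bar f$ of \eqref{eq:aver}, I build a function that is strictly locally maximal at $o_p$ despite the line in $K_p$. Fix $r$ small and pick points $x_i^\pm\in\partial B(p_i,r)$ along the pole directions $\xi_\pm$, and points $y_i^j\in\partial B(p_i,r)$, $j=1,\dots,\beta_{\Sigma_p'}(\epsilon)$, along an $\epsilon$-net $\{\eta^j\}$ of the equator $\Sigma_p'$. Set
\[
  F_i(x)=\frac{1}{\beta_{\Sigma_p'}(\epsilon)}\sum_j \frac{d(y_i^j,x)}{r}
         -\frac{c}{r^2}\bigl(d(x_i^+,x)-d(x_i^-,x)\bigr)^2
\]
for a constant $c>0$ to be fixed. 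In the cone picture $K_p=\R\times K(\Sigma_p')$, the averaging term is a cone-average on $K(\Sigma_p')$ which is strictly locally maximal on the transverse slice $\{0\}\times K(\Sigma_p')$ at its vertex by Lemma \ref{lem:loc-max} applied inside $K(\Sigma_p')$ (dimension $\le 2$, $\diam(\Sigma_p')<\pi$); along the $\R$-factor $t$ it is convex with $O(t^2)$ growth because $\Sigma_p'\perp\xi_\pm$. The Busemann-squared correction vanishes to second order in the transverse variables while contributing a strictly negative $-4ct^2$ along $\R$. Choosing $c$ larger than the $\R$-directional second-order coefficient of the first term makes $F_i$ strictly locally maximal at $p_i$ in $M_i^4$, modulo $\tau(\epsilon,r|1/i)$-errors.

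Following the strategy of Lemma \ref{lem:locmax-general}, take $\hat p_i$ to be a local maximum of $F_i$ converging to $p$ and let $\delta_i$ be the distance from $\hat p_i$ to the closest critical point of $d_{\hat p_i}$ inside $B(p,r)$; the gradient-like flow of $d_{\hat p_i}$ in the annulus $A(\hat p_i;R\delta_i,r)$ then yields assertions (1) and (2) of Theorem \ref{thm:rescal}. For (3), let $(Y,y_0)$ be a pointed limit of $(\delta_i^{-1}M_i^4,\hat p_i)$. The rescaled points $x_i^\pm$ escape to infinity in Tits-antipodal directions by Lemma \ref{lem:expand} applied to $\xi_\pm$, so $Y$ contains an asymptotic line through $y_0$ and splits as $Y=\R\times Y''$ by the Toponogov splitting theorem. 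Under this splitting, the Busemann-squared term of $F_i$ becomes, up to a constant, the squared $\R$-coordinate, so the critical configuration for $F_i$ at $y_0$ reduces to the setup of Lemma \ref{lem:locmax-general} on $Y''$ with the equator net $\{\eta^j\}\subset\Sigma_p'$. That argument now yields $\dim\partial B(v'',\pi/2;\Sigma_{y_0''}(Y''))\ge\dim\Sigma_p'=k-2$, hence $\dim\Sigma_{y_0''}(Y'')\ge k-1$, so $\dim Y''\ge k$ and therefore $\dim Y\ge k+1$.

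The principal obstacle is the second paragraph: verifying that the second-order expansion of $F_i$ near $p_i$ is strictly negative definite for an appropriate $c$ despite the mixed $t\cdot s$ terms, and then transferring the cone-level estimate back to $M_i^4$ uniformly in $i$. This is a delicate but routine extension of the bookkeeping in Lemma \ref{lem:loc-max}, now with the quadratic Busemann-type correction interacting with the equator averaging.
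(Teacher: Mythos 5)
Your reduction to $\diam(\Sigma_p')<\pi$ has a gap, and it is exactly the case the paper finds necessary to treat separately. For $k=3$ the equator $\Sigma_p'$ is a compact $1$-dimensional Alexandrov space of curvature $\ge 1$ with $\diam(\Sigma_p')=\pi$; the two possibilities are the round circle $S^1(2\pi)$ \emph{and} the segment $[0,\pi]$. You only eliminate the circle. The segment gives $\Sigma_p=S([0,\pi])$, the closed hemisphere, so $K_p=\R\times\R^2_+=\R^3_+$, which is \emph{not} $\R^k$ and does \emph{not} contradict the hypothesis: it is the case $p\in\partial X^3$ with $K_p=\R^3_+$. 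Your intended application of Lemma~\ref{lem:loc-max} to $K(\Sigma_p')$ fails here because $K(\Sigma_p')=\R^2_+$ still contains a half-line (the suspension axis of $\Sigma_p'$), so the equatorial average is not strictly locally maximal transversally. The paper handles this sub-case $\Sigma=[0,\pi]$ with a separate construction using \emph{two} auxiliary points $v_{1,r},v_{2,r}\in\R^2\times 0$ and the $2$-dimensional constraint surface $S_r^i$ carved out by both level sets — one penalty (or one constraint) is not enough. You would need either a second quadratic penalty in the extra split direction, or else an independent argument (not circular through Corollary~\ref{cor:dim3top}, which rests on Theorem~\ref{thm:rescal}) that this configuration does not occur under the hypothesis.

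Aside from that gap, your route is genuinely different from the paper's in form. You replace the paper's \emph{constrained} maximization — taking the local maximum of the equatorial average $\tilde f_i$ restricted to the codimension-one level set $S_r^i=\{d_{w_r^i}=d_{w_r^i}(p_i)\}\cap\{d_{p_i}\le r\}$ — by an \emph{unconstrained} maximization of a penalized functional with the quadratic Busemann-difference term $-\frac{c}{r^2}(d(x_i^+,\cdot)-d(x_i^-,\cdot))^2$. These are Lagrangian duals of one another, and both are workable in principle, but the paper's constraint approach is cleaner in two respects you would still need to address: (i) because the penalty decays quadratically in the $\R$-direction while the average decays only linearly transversally, the local maximum $\hat p_i$ of $F_i$ a priori wanders $O(\sqrt{\tau})$ along the line versus $O(\tau)$ transversally, and the penalty's contribution vanishes to second order under the $1/\delta_i$-rescaling, so the conclusion that $g_\infty$ has a maximum on the slice $Y_0\times\{\pi_2(y_0)\}$ requires an explicit argument rather than being immediate; and (ii) you must still invoke the criticality fact $\pi_2(y_0)=\pi_2(z_0)$ (as the paper does) to know that the direction $v$ lies in $\Sigma_{y_0}(Y_0)$ before the dimension count on the equator net can proceed — this step is implicit in ``the critical configuration... reduces to the setup of Lemma~\ref{lem:locmax-general} on $Y''$'' but is not established. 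The constraint formulation absorbs both of these by design: the restriction to $S_r^i$ eliminates the $\R$-direction at the outset, and the resulting picture is literally the codimension-one version of Lemma~\ref{lem:locmax-general}.
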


\begin{proof}
We first assume that $\dim X=3$ and $p$ is not a boundary point of $X^3$. 
Note that $K_p=K(\Sigma)\times \R$, where $\Sigma$ is a circle of
length $<2\pi$. 
To apply the previous argument to $K(\Sigma)$, 
let us consider the function
$$
   \tilde f = \int_{\xi\in\Sigma} d(\xi,\,\cdot\,) d\cal H 
           : K(\Sigma) \to \R,
$$
where 
$d\cal H$ denotes the normalized Hausdorff measure of $\Sigma$.
By Lemma \ref{lem:loc-max}, 
$\tilde f$ takes a strictly local maximum at $o_p$.
Take $\tau_r>0$ with $\lim_{r\to 0}\frac{\tau_r}{\nu_r}=\infty$,
and for a point $v_r$  on the line
$o_p\times \R\subset K(\Sigma)\times\R$ 
with $d(v,o_p)=1/\tau_r$,
let  $w_r:=\psi_{r}(v_r)$ and 
$w^i_r:=\phi_i(w_r)$.
Let $S_r^i$ be the three-dimensional submanifold defined as 
$$
   S_r^i := \{ d_{w_r^i}=d_{w_r^i}(p_i)\}\cap \{ d_{p_i}\le r\}.
$$
We construct a function $\tilde f_i$ on $M_i^4$  by using an 
$\epsilon_{\ell}$-net $\{\xi^j\}$ of $\Sigma$, 
$\{ x^j\}\subset\partial B(p,r)$ and  $\{ x_i^j=\phi_i(x^j)\}$
in a way similar to the above construction of $f_i$ in 
\eqref{eq:f_i}:
\begin{equation*}
 \tilde f_i = \tilde f_{\epsilon_{\ell},r,i} = 
  \frac{1}{\beta_{\Sigma_p}(\epsilon_{\ell})}\sum_j 
    \frac{1}{r} d(x^j_i,\,\cdot\,)\colon M_i^4 \to \R. 
\end{equation*}
Similarly to \eqref{eq:aver}, we have
$$
  \lim_{\ell \to \infty} \lim_{r\to 0} \lim_{i\to\infty} \tilde f_i \circ
  \phi_i \circ \psi_r = \tilde f.
$$
Now take a local maximum point $\hat p_i\in S_r^i$ of
the function $\tilde f_i$ restricted to $S_r^i$  such that 
$\hat p_i\to p$. This is possible because $\tilde f$ 
takes a strictly local maximum at $o_p$ and 
$(\frac{1}{r}S_r^i,\tilde p_i)\to (K_1(\Sigma), o_p)$ as $i\to\infty$,
$r\to 0$, where $K_1(\Sigma)$ denotes the unit subcone
of $K(\Sigma)$. 
Let 
$\delta_i$ be the maximum distance from $\hat p_i$ to 
the critical point set of $d_{\hat p_i}$ on $B(p_i,r)$.
Note that $\delta_i\to 0$ since $\hat p_i\to p$. 
Let $(Y,y_0)$ be any limit of 
$(\frac{1}{\delta_i}M_i^4, \hat p_i)$. 
By the splitting theorem, $Y$   is isometric to 
a product $Y_0\times\R$.
Let  $q_i\in B(p_i,r)$ be a critical point of $d_{\hat p_i}$ with 
$d(\hat p_i,q_i)=\delta_i$.
We may assume that $q_i$ converges to a point $ z_0\in Y$ 
under the convergence 
$(\frac{1}{\delta_i}M_i^4, \hat p_i) \to (Y,y_0)$.
The fact that $z_0$ is a critical point of $d_{y_0}$ yields 
$\pi_2(y_0)=\pi_2(z_0)$, where $\pi_2:Y_0\times\R\to\R$ is the projection.
For  $a\gg 1$, take $b_i\to\infty$ such that 
$g_i(\hat p_i)=a$ for  $g_i:=\tilde f_i - b_i$.
We may assume that $g_i$ converges to a function $g_{\infty}$
on $Y$. Since $g_i$ also takes a local maximum on $S_r^i$ at $\hat p_i$,
it follows from 
construction that  $g_{\infty}$ takes a maximum on 
$Y_0\times \{ \pi_2(y_0)\}$ at $y_0$.
Now the argument of Lemma 3.5 in \cite{SY:3mfd} yields   
$\dim Y\ge 4$, and the theorem holds.  
\par

The proof for the case that $\Sigma=[0,\ell]$, $\ell<\pi$ or 
the case that $\dim X=2$, $\Sigma_p=[0,\pi]$ 
is similar to the above argument. \par
  
Finally let us consider the case that $\Sigma=[0,\pi]$, that is,
$(K_p,o_p)=(\R^2\times [0,\infty), 0)$. 
Take  $v_{1,r},v_{2,r}\in\R^2\times 0$ 
with $d(o_p,v_{\alpha,r})=\tau_r$, 
and $\angle(v_{1,r} o_p v_{2,r})=\pi/2$.
Set  $w_{\alpha,r}^i=\phi_i\circ\psi_r(v_{\alpha,r})$  
as before, and 
consider the two-dimensional submanifold
$$
   S_r^i := \{ d_{w_{1,r}^i}=d_{w_{1,r}^i}(p_i)\}\cap
               \{ d_{w_{2,r}^i}=d_{w_{2,r}^i}(p_i)\}
                \cap \{ d_{p_i}\le r\}.
$$
Then we complete the proof by a similar argument.
\end{proof}

 In the rest of this section, we consider the remaining case 
that $\pi>\diam(\Sigma_p)>\pi/2$. From now on, we assume $n=4$.
In view of Lemma \ref{lem:split-rescal} and \cite{SY:3mfd}, we may assume 
$\dim X=3$.
For $\xi,\eta\in \Sigma_p$ with $d(\xi,\eta)=\diam(\Sigma_p)$,
we put $x=\exp r\xi^{\prime}$, $y=\exp r\eta^{\prime}$,
where $\xi^{\prime}$, $\eta^{\prime}$ are sufficiently close to 
$\xi$ and $\eta$ respectively, and $r$ is sufficiently small.
Take $0<\epsilon_1\ll \epsilon\ll r\le \mbox{const}_p$,
and put $w:=\exp(\epsilon \xi^{\prime})\in px$,
$$
  U_{\epsilon_1}(p,w) := B(w,\epsilon_1)\cap \partial B(p,\epsilon).
$$
We may assume that for some fixed constant $\delta < \diam(\Sigma_p)-\pi/2$
 
\begin{enumerate}
  \item $\tilde\angle xzy >\pi/2 + \delta$ \quad for any $z\in B(p,\epsilon)$;
  \item $\angle pzy - \tilde\angle pzy < \tau(r)$
         \quad for any $z\in B(p,\epsilon)$;
  \item $\angle pzy\le \pi/2 - \delta$ \quad  for all 
     $z\in  U_{\epsilon_1}(p,w)$.
\end{enumerate}

For $x_i, y_i\in M_i^4$ with $x_i\to x$, $y_i\to y$, 
we  have $\tilde\angle x_iz_iy_i >\pi/2 + \delta$ for every 
$z_i\in B(p_i,\epsilon)$ and large $i$.
Let $w_i\in M_i^4$ be such that $w_i\to w$, and 
$$
     U_{\epsilon_1}(p_i,w_i) := \partial B(p_i,\epsilon)\cap B(w_i,\epsilon_1).
$$
Applying  Lemma \ref{lem:split-rescal}
to the convergence $(M_i^4, w_i)\to (X^3,w)$,
we have sequences $\delta_i\to 0$ and $\hat w_i\to w$ satisfying 
\begin{enumerate}
  \item[(a)] $B(w_i,\epsilon_1)\simeq B(\hat w_i,R\delta_i)$ for every 
     $R\ge 1$ and large $i$;
  \item[(b)] for any limit $(Y,y_0)$ of $\frac{1}{\delta_i}(M_i^4,\hat w_i)$, 
     we have $\dim Y=4$.
\end{enumerate}
Note that $Y$ is isometric to a product $\R\times Y_0$.
By Lemma \ref{lem:expand},  $\dim Y(\infty)\ge 2$.
It follows from Proposition \ref{prop:ideal}
that the dimension of the soul $S_0$  of $Y_0$ is $0$ or $1$.
Since $Y_0$ has no boundary, the generalized soul theorem 
in \cite{SY:3mfd} implies that
$$
   B(S_0,R;Y_0)\simeq D^3 \quad \mbox{or} 
    \quad S^1\times D^2
$$
for large $R>0$.

\begin{lem} \label{lem:u_1} We have 
 \begin{align*}
  U_{\epsilon_1}(p_i,w_i)& \simeq B(S_0,R;Y_0) \\
                   & \simeq D^3 \quad \mbox{or} \quad
           S^1\times D^2.  
 \end{align*}
\end{lem}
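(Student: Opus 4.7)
The plan is to use the rescaling $\frac{1}{\delta_i}(M_i^4,\hat w_i)\to (Y,y_0)=(\R\times Y_0,(0,s_0))$ to identify $U_{\epsilon_1}(p_i,w_i)$ with a large metric ball in $Y_0$, and then invoke the generalized soul theorem in dimension $3$.

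First I would identify the splitting factor. The minimal geodesic segment $\overline{p_ix_i}$ passes through a point close to $\hat w_i$, and has length approximately $r$ in the original metric and hence $r/\delta_i\to\infty$ in the rescaled metric. Passing to a subsequence, this segment converges to a bi-infinite geodesic line through $y_0$ in $Y$, and by the splitting theorem (together with $\dim Y=4$) we get $Y=\R\times Y_0$ with the line being $\R\times\{s_0\}$. Moreover the function $\frac{1}{\delta_i}(d_{p_i}-\epsilon)$ restricted to $B(\hat w_i,R\delta_i)$ is $1$-Lipschitz, and since $p_i$ lies along the negative end of the splitting line (with $d(w_i,p_i)=\epsilon\gg\delta_i$), it converges to the Busemann function of that ray, namely the first-coordinate projection $\pi_1:\R\times Y_0\to\R$.

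Next I would transfer the topology. The angle conditions (1)--(3) preceding the lemma imply that on a neighborhood of $U_{\epsilon_1}(p_i,w_i)$ the distance function $d_{p_i}$ is a regular map (in Perelman's sense): the direction to $p_i$ and the direction to a fixed far point $y_i$ make angles uniformly bounded away from $\pi/2$. Thus in the rescaled metric $d_{p_i}$ is uniformly regular on $B(\hat w_i,R\delta_i)/\delta_i$, and its limit $\pi_1$ is a regular function on $B(y_0,R;Y)$. Applying Theorem~\ref{thm:stability-respect} respectfully to $d_{p_i}$, I obtain a homeomorphism $B(\hat w_i,R\delta_i)/\delta_i\to B(y_0,R;Y)$ carrying $\{d_{p_i}=\epsilon\}\cap B(\hat w_i,R\delta_i)/\delta_i$ onto $\{0\}\times B(s_0,R;Y_0)$. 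Combining with the homeomorphism $B(\hat w_i,R\delta_i)\simeq B(w_i,\epsilon_1)$ from item (a), this yields
$$U_{\epsilon_1}(p_i,w_i)\;\simeq\;B(s_0,R;Y_0).$$

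Finally, since $Y_0$ is a $3$-dimensional complete noncompact nonnegatively curved Alexandrov space without boundary whose soul $S_0$ has dimension $0$ or $1$, the generalized soul theorem of \cite{SY:3mfd} gives $B(S_0,R;Y_0)\simeq D^3$ or $S^1\times D^2$ for all sufficiently large $R$. Since for such $R$ both $B(s_0,R;Y_0)$ and $B(S_0,R;Y_0)$ are large compact domains containing $S_0$ whose topology is determined by the normal bundle structure of the soul, an isotopy along the flow of a gradient-like vector field for $d_{S_0}$ identifies them, completing the proof.

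The main obstacle is step two: ensuring that the stability theorem can be applied respectfully to the distance function $d_{p_i}$ on the specific subset $U_{\epsilon_1}$ rather than just the ambient ball. This requires the angle estimates (1)--(3) to upgrade $d_{p_i}$ (and implicitly $d_{y_i}$, used to rule out critical points) to a genuinely regular map in a uniform way, so that the limit $\pi_1$ captures the topology of the slice and not just of the ball.
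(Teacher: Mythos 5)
Your proposal captures the right overall strategy---use the rescaled limit $(Y,y_0)=(\R\times Y_0,(0,s_0))$ plus the stability theorem to identify $U_{\epsilon_1}(p_i,w_i)$ with a large ball in $Y_0$, then invoke the $3$-dimensional soul theorem---but there is a genuine gap in how you invoke Theorem~\ref{thm:stability-respect}. That theorem applies to maps $f=(d_{A_1},\ldots,d_{A_m})$ built from distance functions to \emph{compact subsets of the limit space}, with the $f_i$ on $X_i$ produced by pushing the $A_j$ through an approximation map. In your argument the function on the rescaled manifolds is $\frac{1}{\delta_i}(d_{p_i}-\epsilon)$, whose limit is the projection $\pi_1$ (a Busemann function). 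The basepoint $p_i$ lies at distance $\epsilon/\delta_i\to\infty$ from $\hat w_i$ in the rescaled metric, so it does not converge to any point (or compact set) of $Y$; $\pi_1$ is therefore not a distance function to a compact subset of $Y$, and the respectful stability theorem as stated does not apply to it. You flag exactly this as ``the main obstacle,'' but the obstacle is not just a matter of upgrading regularity estimates---it is that the theorem's hypotheses are not met.

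The paper resolves this with an extra reduction that your proposal omits: it introduces the point $q_i$ on the geodesic $p_i\hat w_i$ at distance $\delta_iR_1$ from $\hat w_i$ (so $q_i$ converges to a genuine point $q\in Y$ with $d(y_0,q)=R_1$), and then uses Morse theory for distance functions to run the chain
$U_{\epsilon_1}(p_i,w_i)\simeq U_{\epsilon_1}(p_i,\hat w_i)\simeq U_{\delta_iR}(p_i,\hat w_i)\simeq U_{\delta_iR}(q_i,\hat w_i)$.
Only after this scale-reduction and replacement of $p_i$ by $q_i$ does it apply Theorem~\ref{thm:stability-respect}, now to the legitimate regular pair $(d_{y_0},d_q)$ near $(d_{y_0},d_q)^{-1}(R,R_1)$, to obtain $U_{\delta_iR}(q_i,\hat w_i)\simeq U_R(q,y_0)\simeq B(y_0,R;Y_0)\simeq B(S_0,R;Y_0)$. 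A related issue in your write-up is the attempt to ``combine with the homeomorphism from item~(a)'': item~(a) gives $B(w_i,\epsilon_1)\simeq B(\hat w_i,R\delta_i)$ via $d_{\hat w_i}$-flow curves, but nothing guarantees that this homeomorphism carries $\{d_{p_i}=\epsilon\}$-slices to $\{d_{p_i}=\epsilon\}$-slices, which is what you would need. The paper's direct Morse-theoretic chain, which keeps $d_{p_i}$ (or $d_{q_i}$) as one of the controlling functions throughout, is exactly what makes the slice respected.
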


\begin{proof}
Let $R_1\gg R$ be  large numbers,
and let $q_i$ be a point on $p_i\hat w_i$ with $d(q_i,\hat w_i)=\delta_iR_1$.
Let $q\in Y$ be the limit of $q_i$ under the convergence 
$(\frac{1}{\delta_i} M_i^4,\hat w_i)\to (Y,y_0)$.
From the standard Morse theory for distance functions together with 
the proof of Lemma \ref{lem:split-rescal},
 \begin{align*}
   U_{\epsilon_1}(p_i,w_i) & \simeq U_{\epsilon_1}(p_i,\hat w_i) \\
             & \simeq U_{\delta_i R}(p_i,\hat w_i)\\
             & \simeq U_{\delta_i R}(q_i,\hat w_i).
 \end{align*}
It should be noted here that to show the second $\simeq$ above, we actually
need to take a smooth approximation $\tilde d_{p_i}$ of $d_{p_i}$. 
Since this is only a technical point, we omit the detail of 
this argument.\par

Now consider the convergence $(\frac{1}{\delta_i} M_i^4,\hat w_i)\to
(Y,y_0)$ and  note that  $(d_{y_0},d_q)$ is regular near 
$(d_{y_0},d_q)^{-1}(R, R_1)$.
It follows from Theorem \ref{thm:stability-respect} that 
 \begin{align*}
   U_{\delta_i R}(q_i,\hat w_i) & \simeq U_{R}(q, y_0) \\
             & \simeq B(y_0,R;Y_0) \\
             & \simeq B(S_0,R;Y_0).
 \end{align*}
This completes the proof.
\end{proof}

To complete the proof of Theorem \ref{thm:rescal}, we need to 
determine the topology of $B(p_i,r)$. This is based on 
the following lemma.

\begin{lem}\label{lem:flow-rescal}
There exist a positive number $\epsilon$ independent of $i$ and 
a unit vector field $V_i$ on $B(p_i,\epsilon)$
satisfying:
 \begin{enumerate}
   \item  The flow curves of $V_i$ are gradient-like for $d_{y_i}$;
   \item  Let $h_{z_i}^i(t)$ denote the flow curve of $V_i$ starting from 
      a point $z_i$. 
      Then we have 
    \begin{enumerate}
      \item $h_{z_i}^i(t)$ is transversal to $\partial B(p_i,\epsilon)$
         for every $z_i\in U_{\epsilon_1}(p_i,w_i)$;
      \item $$
    |\angle(V_i(h^i_{z_i}(t),(p_i)_{h^i_{z_i}(t)}^{\prime})-
          \angle(y_{\phi_i(z_i)}^{\prime},p_{\phi_i(z_i)}^{\prime})|
                     < \tau(\epsilon,1/i),
            $$
         for every $z_i\in U_{\epsilon_1}(p_i,w_i)-U_{\epsilon_1/2}(p_i,w_i)$. 
    \end{enumerate}
 \end{enumerate}
\end{lem}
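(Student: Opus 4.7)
The plan is to construct $V_i$ in two stages: a baseline gradient-like vector field for $d_{y_i}$ on $B(p_i,\epsilon)$ using the strict angle hypothesis (1) stated just before the lemma, and a refinement on the annular region $U_{\epsilon_1}(p_i,w_i)\setminus U_{\epsilon_1/2}(p_i,w_i)$ to enforce the prescribed angle with $(p_i)'$ required by condition (2)(b). Throughout, the $4$-dimensionality of $M_i$ — equivalently the fact that $\Sigma_{z_i}(M_i^4)\cong S^3$ has one more dimension than $\Sigma_z(X^3)$ — supplies the extra room needed for the construction.

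The first step is to lift the three pre-lemma hypotheses to $M_i^4$. By Gromov--Hausdorff continuity of comparison angles applied to $\phi_i$, for all sufficiently large $i$ and every $z_i\in B(p_i,\epsilon)$,
\[
\tilde\angle x_iz_iy_i>\pi/2+\delta/2,\qquad |\angle p_iz_iy_i-\tilde\angle p_iz_iy_i|<\tau(r,1/i),
\]
and additionally $\angle p_iz_iy_i\le \pi/2-\delta/2$ for $z_i\in U_{\epsilon_1}(p_i,w_i)$. Combined with Alexandrov convexity, the first inequality gives $\angle((x_i)'_{z_i},(y_i)'_{z_i})>\pi/2+\delta/2$, so $(x_i)'_{z_i}$ is a natural gradient-like direction for $d_{y_i}$ with uniform quantitative bound.

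Next I would define $V_i$ by cases. Away from the annulus, set $V_i(z_i):=(x_i)'_{z_i}$, smoothed by replacing $d_{x_i}$ with $\tilde d_{x_i}$ to avoid the $x_i$-cut locus. On the annulus $U_{\epsilon_1}(p_i,w_i)\setminus U_{\epsilon_1/2}(p_i,w_i)$, pick $V_i(z_i)\in\Sigma_{z_i}(M_i^4)\cong S^3$ simultaneously satisfying
\[
\angle(V_i(z_i),(y_i)'_{z_i})>\pi/2+c \quad\text{and}\quad |\angle(V_i(z_i),(p_i)'_{z_i})-\angle((y_i)'_{z_i},(p_i)'_{z_i})|<\tau(\epsilon,1/i);
\]
existence of such a direction follows from a spherical-geometry analysis in $S^3$ using the lifted hypotheses, and continuous dependence on $z_i$ is arranged by an implicit-function-theorem argument. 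A partition of unity subordinate to a suitable covering of $B(p_i,\epsilon)$ patches the two local definitions into a smooth unit vector field $V_i$, taking care that the convex combination of two directions, both making angle $>\pi/2+c'$ with $(y_i)'$, again has this property.

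All three conclusions then follow from the first variation formula: (1) from $(d_{y_i}\circ h^i_{z_i})'(t)=-\cos\angle(V_i,(y_i)')>0$, which is uniform by construction; (2)(a) because the angle of $V_i$ with $(p_i)'$ is bounded away from $\pi/2$ by at least $\delta/2$ on $U_{\epsilon_1}$ (using the transferred bound $\angle p_iz_iy_i\le \pi/2-\delta/2$ together with (2)(b)), which forces transversality to $\partial B(p_i,\epsilon)$; and (2)(b) by construction combined with Gromov--Hausdorff convergence of the angles of minimal geodesics. The main obstacle is the pointwise construction of $V_i$ on the annulus: one must verify the simultaneous realizability in $\Sigma_{z_i}\cong S^3$ of the gradient-like condition and the prescribed angle with $(p_i)'$, and the continuity of the chosen direction as $z_i$ varies. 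This is where the $4$-dimensionality of $M_i$ (so that $\Sigma_{z_i}(M_i^4)$ has dimension $3$ rather than the dimension $2$ of $\Sigma_z(X^3)$) is essential — without the extra degree of freedom, the two constraints could not be enforced together.
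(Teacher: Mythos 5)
Your proposal takes a genuinely different and more convoluted route than the paper, and one of your structural claims is wrong. The paper's construction is simpler: away from a small neighborhood $A_i$ of the broken geodesic $w_ip_i\cup p_iy_i$ and away from $B(p_i,\epsilon_2)$, the vector field is simply $v_{q_i}=(y_i)'_{q_i}$, the direction towards $y_i$. This single choice secures both conclusions at once: it is trivially gradient-like for $d_{y_i}$ (with a decreasing-flow convention), and the angle estimate (2)(b) is immediate because $\angle(v_{q_i},(p_i)'_{q_i})=\angle p_i q_i y_i$, which the transferred hypothesis $|\angle pzy-\tilde\angle pzy|<\tau(r)$ controls directly. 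By contrast you choose $(x_i)'_{z_i}$ as your bulk vector field — which satisfies the monotonicity condition via the obtuse angle with $(y_i)'$, but does \emph{not} give the angle estimate (2)(b) — and you therefore have to bolt on a separate annular-region construction where you realize (2)(b) by hand. That annulus construction is entirely self-inflicted: the paper never needs to juggle the two constraints because its baseline field already satisfies both.

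More seriously, your concluding claim that the $4$-dimensionality of $M_i$ ``is essential — without the extra degree of freedom, the two constraints could not be enforced together'' is incorrect. Even within your own framework, the direction satisfying both constraints can be taken to lie in the $2$-plane of $\Sigma_{z_i}$ spanned by $(y_i)'_{z_i}$ and $(p_i)'_{z_i}$ (e.g.\ the reflection of $(y_i)'$ across $(p_i)'$ gives $\angle(V_i,(p_i)')=\theta$ and $\angle(V_i,(y_i)')=2\theta>\pi/2+c$ for $\theta=\pi/2-\delta$), so the construction goes through already in three dimensions and does not use the fourth at all. The paper's proof is likewise dimension-agnostic at this step; the $4$-dimensionality of $M_i$ is exploited later (in Lemmas \ref{lem:u_1} and \ref{lem:dim3balltop}), not here. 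Finally, your ``spherical-geometry analysis in $S^3$'' and ``implicit-function-theorem argument'' are asserted rather than carried out, and you do not address how the partition of unity between the annulus field and the bulk $(x_i)'$ preserves (2)(b) on the overlap region — both of which become non-issues if you adopt the paper's simpler choice of vector field.
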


\begin{proof}
We construct a local gradient-like vector field for $d_{y_i}$ on a small 
neighborhood of each point 
of $B(p_i,\epsilon)$ and use a partition of unity to get a 
global one.\par

It is easy to construct a vector field $W_i$ on a small neighborhood $A_i$
of a broken geodesic  $w_ip_i\cup p_iy_i$ so as to satisfy 
$(1)$, $(2)$-(a) and that the flow curve of $V_i$ through $w_i$ passes $p_i$.
Extend $W_i$ on $B(p_i,\epsilon_2)$, $\epsilon_2=10^{-10}\epsilon_1$
so as to satisfy $(1)$.
For each $q_i\in B(p_i,\epsilon)-A_i-B(p_i,\epsilon_2)$, 
let $v_{q_i}$ denote a unit vector at 
$q_i$ tangent to a minimal geodesic from $q_i$ to $y_i$.
The property $(2)$ stated in the preceding paragraph of Lemma \ref{lem:u_1}
yields that
\begin{equation} 
    |\angle(v_{q_i},(p_i)_{q_i}^{\prime})-\angle(y_q^{\prime},p_{q}^{\prime})|
       < \tau(\epsilon,1/i), \label{eq:vf-alex}
  \end{equation}
where $q$ is a limit point of $q_i$.
Let $V_{q_i}$ be a smooth extension of $v_{q_i}$ to a small 
neighborhood of $q_i$.
By patching $\{ V_q\}$ with a partition of unity, 
we construct a vector field $\tilde W_i$ 
on $B(p_i,\epsilon)- A_i-B(p_i,\epsilon_2)$ satisfying \eqref{eq:vf-alex}
in  place of $v_{q_i}$. 
To obtain the  required vector field, it suffices to patch 
$W_i$ and $\tilde W_i$.
\end{proof}

\begin{lem} \label{lem:dim3balltop}
$$
     B(p_i,\epsilon)\simeq U_{\epsilon_1}(p_i,w_i)\times I.
$$
\end{lem}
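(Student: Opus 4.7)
The plan is to construct an explicit homeomorphism $\Psi : U_{\epsilon_1}(p_i,w_i)\times[0,1]\to B(p_i,\epsilon)$ by sweeping the cap $U_{\epsilon_1}(p_i,w_i)$ across the ball along flow lines of the vector field $V_i$ provided by Lemma \ref{lem:flow-rescal}. The underlying picture is that the flow foliates $B(p_i,\epsilon)$ by arcs, each one starting transversally on $U_{\epsilon_1}(p_i,w_i)$ and exiting through the complementary part of $\partial B(p_i,\epsilon)$.

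First I would fix the direction. By Lemma \ref{lem:flow-rescal}(2)(b), for $z_i$ near $\partial U_{\epsilon_1}(p_i,w_i)\subset\partial B(p_i,\epsilon)$, the angle $\angle(V_i,(p_i)'_{z_i})$ is within $\tau(\epsilon,1/i)$ of $\angle(y'_{\phi_i(z_i)},p'_{\phi_i(z_i)})$, which by condition (3) of the setup is bounded by $\pi/2-\delta$. Thus $V_i$ makes a uniformly acute angle with the direction toward $p_i$ at points of $U_{\epsilon_1}(p_i,w_i)$, so $V_i$ points into $B(p_i,\epsilon)$ there and the flow curves $h^i_{z_i}(t)$ enter the interior for small $t>0$ (this is compatible with (2)(a)). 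Next, since $V_i$ is gradient-like for $d_{y_i}$ (Lemma \ref{lem:flow-rescal}(1)) and $y_i\notin B(p_i,\epsilon)$ because $d(p_i,y_i)\approx r\gg\epsilon$, the function $d_{y_i}$ is strictly monotone along flow curves and admits no critical points in $B(p_i,\epsilon)$; hence each flow curve exits the ball in finite positive time. Define $T(z_i)$ to be this first exit time, which depends continuously on $z_i$ by standard ODE theory, and set
\begin{equation*}
\Psi(z_i,t) := h^i_{z_i}\bigl(t\,T(z_i)\bigr).
\end{equation*}

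The map $\Psi$ is continuous; injectivity along each flow curve follows from the strict monotonicity of $d_{y_i}$, and injectivity across distinct flow curves is automatic since $V_i$ generates a flow. To show surjectivity, pick any $q_i\in B(p_i,\epsilon)$ and follow the flow curve of $V_i$ through $q_i$ backward in time; by the same monotonicity and absence of critical points, this curve exits $B(p_i,\epsilon)$ at some $z_i\in\partial B(p_i,\epsilon)$. The crucial claim is that $z_i\in U_{\epsilon_1}(p_i,w_i)$. For this one uses the fact that the construction of $V_i$ in the proof of Lemma \ref{lem:flow-rescal}, combined with the hypothesis $\diam(\Sigma_p)>\pi/2$ and conditions (1)--(3) at the limit (ensuring $\angle(y',p')$ is obtuse on the portion of $\partial B(p,\epsilon)$ away from $U_{\epsilon_1}(p,w)$), forces $V_i$ to have a strictly outward component on $\partial B(p_i,\epsilon)\setminus U_{\epsilon_1}(p_i,w_i)$; backward flow from an interior point therefore cannot exit through this complement. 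Finally continuity of $\Psi^{-1}$ follows from the flow-box theorem together with the continuity of $T$.

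The main obstacle is the verification that $V_i$ points strictly outward on the complement $\partial B(p_i,\epsilon)\setminus U_{\epsilon_1}(p_i,w_i)$, i.e., that the backward flow through an arbitrary interior point necessarily exits through the cap. Lemma \ref{lem:flow-rescal}(2)(b) only controls $V_i$ in an annular region near $\partial U_{\epsilon_1}(p_i,w_i)$, so one must either extract analogous estimates at every boundary point (using that at points of $\partial B(p,\epsilon)\setminus U_{\epsilon_1}(p,w)$ the limit angle $\angle(y',p')$ exceeds $\pi/2+\delta$ by near-antipodality of $x$ and $y$ as encoded in condition (1) of the setup, $\tilde\angle xzy>\pi/2+\delta$), or else use the splitting-of-the-flow argument from \cite{SY:3mfd} to reduce to a direct geometric inspection.
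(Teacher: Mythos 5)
Your plan — sweeping the cap $U_{\epsilon_1}(p_i,w_i)$ across the whole ball along the flow of $V_i$ — founders on exactly the point you flag at the end. The claim that $V_i$ points strictly outward on all of $\partial B(p_i,\epsilon)\setminus U_{\epsilon_1}(p_i,w_i)$ is not supported by the hypotheses and is in fact generally false. The radial component of $V_i$ at a boundary point $z$ is controlled (via Lemma \ref{lem:flow-rescal}(2)(b)) by $\angle(y'_{\phi_i(z)}, p'_{\phi_i(z)})$, i.e., by $\angle pzy$ in the limit space, and a comparison-geometry computation shows that for $z \in \partial B(p,\epsilon)$ with $z'_p$ making angle $\theta$ with $\eta'$ one has $\angle pzy \approx \pi - \theta$. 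Thus the sign of the radial component of $V_i$ changes across the ``equator'' $\{\theta \approx \pi/2\}$ of $\partial B(p,\epsilon)$, not across $\partial U_{\epsilon_1}(p,w)$: since $\epsilon_1 \ll \epsilon$, the cap $U_{\epsilon_1}(p,w)$ (angular radius roughly $\epsilon_1/\epsilon$) is in general a strictly smaller subset of the inflow region $\{\theta > \pi/2\}$. So the backward flow from an interior point need not land in $U_{\epsilon_1}(p_i,w_i)$, and your map $\Psi$ is not surjective onto $B(p_i,\epsilon)$. Also, the invocation of condition (1), $\tilde\angle xzy > \pi/2 + \delta$, does not give what you want: that inequality concerns the angle between $x'_z$ and $y'_z$, not the angle $\angle pzy$ that governs the radial component of $V_i$.

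The paper circumvents this by never flowing the cap across the whole ball. It introduces the broken geodesic $\ell_i := p_iw_i \cup p_iy_i$ and the tube $B_i := \{ f_i \le \epsilon_1\} \cap B(p_i,\epsilon)$ where $f_i$ is a smooth approximation of $d(\ell_i,\cdot)$. Inside $B_i$, the tangential component $W_i$ of $V_i$ along $S_i = \{f_i = \epsilon_1\} \cap B(p_i,\epsilon)$ is a gradient-like vector field for $d_{y_i}$ that stays in $B_i$, and its flow yields the product structure $B_i \simeq U_{\epsilon_1}(p_i,w_i)\times I$ — here the ``inflow'' boundary piece of $B_i$ really is $U_{\epsilon_1}(p_i,w_i)$ because $\ell_i$ meets $\partial B(p_i,\epsilon)$ at $w_i$ and near $y_i$. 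Then a \emph{separate} argument — regularity of $f_i$ on $B(p_i,\epsilon) - \interior B_i$ and transversality of its gradient flow to the boundary — gives the collar structure that shows $B(p_i,\epsilon)\simeq B_i$. The two key ingredients missing from your proof are the tube $B_i$ (which restricts the sweep to a region where the flow behaves as desired) and the second flow argument using the regularity of $f_i$ to absorb the complement.
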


\begin{proof}
Let $\ell_i := p_iw_i\cup p_iy_i$ and consider a smooth approximation 
$f_i$ of the function $d(\ell_i, \cdot )$, which is regular near 
$f_i^{-1}(\epsilon_1)$ for some small $\epsilon_1$ independent of $i$.
Let $S_i=B(p_i,\epsilon)\cap \{ f_i=\epsilon_1\}$.
By using  Lemma \ref{lem:flow-rescal}, one can construct 
a gradient-like vector field $W_i$ for $d_{y_i}$ on 
$B_i:= \{ f_i \le \epsilon_1\}\cap B(p_i,\epsilon)$ such that
$W_i$ is tangent to $S_i$. To do this, it suffices to take the tangential 
component of the vector field $V_i$ to $S_i$ in the orthogonal decomposition 
of $V_i$, where $V_i$ is the vector field given in 
Lemma \ref{lem:flow-rescal}.
Thus we have $B_i\simeq U_{\epsilon_1}(p_i,w_i)\times I$.
Since $f_i$ is regular on $B(p_i,\epsilon)-\interior B_i$
and the gradient flow of it is transversal to 
$\partial (B(p_i,\epsilon)-\interior B_i)$, 
we conclude that $B(p_i,\epsilon)\simeq B_i$. 
\end{proof}

\begin{proof}[Proof of Theorem \ref{thm:rescal} in the case of 
               $\pi/2 < \diam(\Sigma_p) < \pi$]

By the previous lemma together with the topological assumption on 
$B(p_i,r)$,  $B(p_i,\epsilon)$ is homeomorphic to $S^1\times D^3$.
Let $\Gamma_i=\pi_1(B(p_i,\epsilon))$, and 
$\tilde B(p_i,\epsilon)$ the universal cover of $B(p_i,\epsilon)$. 
For a point $\tilde p_i\in\tilde B(p_i,\epsilon)$ over $p_i$ ,
put  
$$
  \delta_i := \min_{\gamma\in\Gamma_i-\{1\}} d(\gamma\tilde p_i,\tilde p_i),
$$
and choose a sequence  $\tau_i\to 0$  satisfying 
$$
\lim_{i\to\infty}\frac{\tau_i}
   {\max\{ \delta_i, d_{GH}(B(p_i,\epsilon),B(p,\epsilon)\}}
   =\infty.
$$
Let $(Z,z_o,G)$ be any limit of 
$(\frac{1}{\tau_i}\tilde B(p_i,\epsilon), \tilde p_i,\Gamma_i)$ 
with respect to the pointed equivariant Gromov-Hausdorff
topology (see \cite{FY:fundgp} for instance for 
the definition). Note that $G$ is a lie group (\cite{FY:isomgp}).
By the choice of $\tau_i$, $\Gamma_i$ collapses, that is,
$\dim G>0$. Since $\dim Z/G=3$, it follows that $\dim Z=4$.
Since $Z$ is simply connected and $\dim Z(\infty)\ge 2$,
the soul of $Z$ must be a point, and therefore 
Generalized Soul Theorem \ref{thm:soul} implies that $Z$ is 
homeomorphic to $\R^4$.

Let $(W,w_0,\Gamma_{\infty})$ be any limit of 
$(\frac{1}{\delta_i}\tilde B(p_i,\epsilon), \tilde p_i,\Gamma_i))$ 
with respect to the pointed equivariant Gromov-Hausdorff
topology. The previous argument shows $\dim Y=4$.
Moreover $\Gamma_i$ does not collapse under this convergence,
and therefore $\Gamma_{\infty}\simeq \Gamma_i\simeq \Z$.
As before, since $Y$ is simply connected and $\dim Y(\infty)\ge 2$,
Generalized Soul Theorem \ref{thm:soul} and Proposition 
\ref{prop:ideal} imply $Y\simeq \R^4$. 
Hence $Y/\Gamma_{\infty}$, the limit of $(\frac{1}{\delta_i} M_i^4,p_i)$,
has a soul $\simeq S^1$. Corollary \ref{cor:soul} then yields that 
$B(p_i,R\delta_i)\simeq S^1\times D^3$ for large $R>0$ and $i$.
This completes  the proof of Theorem \ref{thm:rescal}.
\end{proof}

\begin{cor} \label{cor:dim3top}
If a sequence of pointed $4$-dimensional complete Riemannian
manifolds $(M_i^4,p_i)$ with $K \ge -1$ converges to a pointed 
$3$-dimensional complete Alexandrov space $(X^3,p)$, then 
there exists a positive number $r=r_p$ depending only on $p$ such that
$B(p_i,r)$ is homeomorphic to either $D^4$ or $S^1\times D^3$ for
sufficiently large $i$.  
\end{cor}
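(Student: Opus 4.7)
The corollary amounts to packaging Theorem \ref{thm:rescal} with the soul machinery. I would first handle the case where $p$ is a regular point of $X^3$: the Fibration--Capping Theorem (Theorem \ref{thm:orig-cap}) applied on a small metric disk $B(p,r) \simeq D^3$ produces an almost nonnegatively curved fibre bundle $B(p_i, r') \to B(p, r')$ for some $r' < r$. The fibre is a closed connected $1$-manifold, hence $S^1$; since the base $D^3$ is contractible, the bundle is trivial, so $B(p_i, r') \simeq S^1 \times D^3$. From now on assume $p$ is a singular point of $X^3$, and (taking care of the trivial case separately) also assume that $B(p_i, r) \not\simeq D^4$ for arbitrarily small $r > 0$ and infinitely many $i$.

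The hypothesis of Theorem \ref{thm:rescal} is then met for $k = 3$, producing $r = r_p > 0$, $\delta_i \to 0$, and $\hat p_i \to p$ such that $B(p_i, r) \simeq B(\hat p_i, R\delta_i)$ for every $R \ge 1$ and large $i$, and such that any Gromov--Hausdorff limit $(Y, y_0)$ of $(\tfrac{1}{\delta_i} M_i^4, \hat p_i)$ satisfies $\dim Y \ge 4$, forcing $\dim Y = 4$. By construction $Y$ is a complete noncompact Alexandrov space with nonnegative curvature. Corollary \ref{cor:soul} therefore applies: for all sufficiently large $R$, $B(\hat p_i, R\delta_i)$ is homeomorphic to the normal closed disk-bundle over a soul $S$ of $Y$.

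It remains to identify $S$. Proposition \ref{prop:ideal-dims} gives
\[
\dim S \,\le\, \dim Y - \dim X \,=\, 4-3 \,=\, 1,
\]
so $S$ is either a point or $S^1$ (the only closed connected nonnegatively curved manifolds of dimension $\le 1$). If $S$ is a point, the disk-bundle is $D^4$, contradicting the standing hypothesis. If $S \simeq S^1$, we obtain an orientable $D^3$-bundle over $S^1$ (orientable because $M_i^4$ is), which is necessarily trivial; hence $B(p_i, r) \simeq S^1 \times D^3$, as desired.

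The heavy lifting is entirely contained in Theorem \ref{thm:rescal} and Corollary \ref{cor:soul}. The main conceptual point in deducing the corollary from them is the dimension bound of Proposition \ref{prop:ideal-dims}, which is precisely what collapses the list of possible souls down to the two items $\{\text{pt}, S^1\}$ and thereby produces the clean dichotomy $D^4$ versus $S^1 \times D^3$.
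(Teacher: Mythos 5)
Your proof follows essentially the same route as the paper's: apply Theorem \ref{thm:rescal} to pass to the rescaled limit $(Y,y_0)$, observe $\dim Y = 4$, bound $\dim S \le 1$ via the ideal-boundary estimate, and then invoke Corollary \ref{cor:soul}. The only cosmetic differences are that you cite Proposition \ref{prop:ideal-dims} directly (the paper cites Lemma \ref{lem:expand} together with Proposition \ref{prop:ideal}, of which Proposition \ref{prop:ideal-dims} is the stated consequence), and that you spell out the case split over $B(p_i,r)\simeq D^4$ and the triviality of the $D^3$-bundle, which the paper leaves implicit.
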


\begin{proof}
Applying  Theorem \ref{thm:rescal} to the convergence 
$(M_i^4,p_i)\to (X^3,p)$, we have sequences 
$\delta_i\to 0$ and $\hat p_i\in M_i$ such that 
\begin{enumerate}
  \item $\hat p_i\to p$ under the convergence 
       $M_i^4\to X^3$;
  \item  $B(p_i,r)$ is homeomorphic to $B(\hat p_i,R\delta_i)$ for every 
      $R\ge 1$ and large $i\ge i(R)$;
  \item  for a limit $(Y,y_0)$ of $(\frac{1}{\delta_i}M_i, \hat p_i)$, 
     we have  $\dim Y = 4$.
\end{enumerate}
Since $Y(\infty)$ has dimension at least two (Lemma \ref{lem:expand}),
the soul of $Y$ has dimension
at most one (Proposition \ref{prop:ideal}). 
Thus Corollary \ref{cor:soul} implies the conclusion.
\end{proof}

%    \input{dim3alex}

%dim3alex.tex
\section{Geometry of Alexandrov three-spaces}\label{sec:dim3alex}

From Theorem \ref{thm:rescal}, it is significant in the first step
to analyze
collapsing of $4$-manifolds to $3$-dimensional Alexandrov 
spaces, which will be discussed in Sections 
\ref{sec:dim3nobdyloc}, \ref{sec:dim3nobdyglob},
\ref{sec:dim3wbdy} and \ref{sec:dim3positive}.
In this section, we investigate the structure near singular points
of $3$-dimensional Alexandrov spaces with curvature bounded below, 
and obtain two results:
One is about the structure of the essential singular point set of a 
$3$-dimensional Alexandrov space in terms of quasigeodesics
(Proposition \ref{prop:graph}), 
and the other is about the existence of a collar neighborhood 
of $3$-dimensional Alexandrov spaces with nonempty boundary
(Theorem \ref{thm:collar}).

We first prepare some material on extremal subsets (see
\cite{PrPt:extremal} for the details).
By definition, a closed subset $F$ in an Alexandrov space $X$ 
with curvature bounded below is called {\em extremal}
if the following holds: 
For any $p\in X-F$, consider the distance function 
$d_p=d(p,\cdot)$, and let $q\in F$ be a local minimum point 
for the restriction $d_p|_{F}$. Then $q$ is infinitesimally a local
minimum of $d_p$, namely 
$$
    \limsup_{q_i\in X\to q} \frac{d_p(q_i)-d_p(q)}{d(q_i,q)} \le 0.
$$
or equivalently, 
$$
      \max_{\eta\in \Sigma_q} \min_{\xi\in p_q^{\prime}}
                    \angle(\xi,\eta) \le \pi/2.
$$
If $F$ is a point, $F$ is extremal if and only if 
it is an extremal point in the sense of Section \ref{sec:cap}.

\begin{ex} \label{ex:extI}
 \begin{enumerate}
  \item $\partial X$ is an extremal subset of $X;$
  \item $\R\times o$ is an extremal subset of $\R\times K(S^1_{\ell})$,
        where $S^1_{\ell}$ denotes the circle of length $\ell\le\pi$
        and $o$ is the vertex of the cone $K(S^1_{\ell})$.
 \end{enumerate}
\end{ex}

Now consider an Alexandrov space $\Sigma$ with curvature $\ge 1$.
A closed set $\Omega\subset\Sigma$ is called {\em extremal} if 
it satisfies the above condition together with 
\begin{enumerate}
 \item $\Sigma=B(\Omega,\pi/2)$;
 \item $\diam(\Sigma)\le\pi/2$ \, if $\Omega$ is either a point
       or empty.
\end{enumerate}

\begin{ex}   \label{ex:extII}
 \begin{enumerate}
  \item Let $\Sigma$ be the spherical suspension over a circle of 
        length $\le\pi$. Then the set consisting of the two vertices 
        of $\Sigma$ is extremal;
  \item  Let $\xi_1$, $\xi_2$ and $\xi_3$ be three points on the unit
         sphere $S^2(1)$ with $d(\xi_1,\xi_i)=\pi/2$, $(i=2,3)$, and 
         $d(\xi_2,\xi_3)\le \pi/2$. Let $D(\Delta)$ denote the double of
         the triangular region bounded by $\triangle \xi_1\xi_2\xi_3$.
         Then any nonempty subset of
         $\{ \xi_1,\xi_2,\xi_3\}$ is extremal in $D(\Delta)$.
 \end{enumerate}
\end{ex}

\begin{lem}[\cite{PrPt:extremal}] \label{lem:reduct}
$F\subset X$ is extremal if and only if 
$\Sigma_p(F)\subset \Sigma_p$ is extremal
for any $p\in F$, where the space of directions $\Sigma_p(F)$ of $F$ 
at $p$ is defined as the set of directions $\xi\in\Sigma_p$
such that for some 
sequence $x_n\in F$ with $x_n\to p$, $(x_n)'_p$ converges 
to $\xi$. 
\end{lem}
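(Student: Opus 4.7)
The plan is to prove both implications by transferring extremality between $F$ and its infinitesimal trace $\Sigma_p(F)$, using a blow-up (rescaling) argument in one direction and a first-variation argument in the other.

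For the forward direction, fix $p\in F$ and assume $F$ is extremal. I first treat the case that $\Sigma_p(F)$ is nontrivial. To verify the angle condition, take $\xi\in\Sigma_p-\Sigma_p(F)$ and a local minimum $\zeta\in\Sigma_p(F)$ of $d_\xi|_{\Sigma_p(F)}$; set $x_t=\exp_p(t\xi)$ and pick $q_t\in F$ with $q_t\to p$ and $(q_t)'_p\to\zeta$. By the first variation formula together with the local minimality of $\zeta$ in $\Sigma_p(F)$, we can arrange that $q_t$ is a local minimum of $d_{x_t}|_F$ for all sufficiently small $t$. Applying extremality of $F$ at $q_t$, rescaling by $1/t$, and passing to the limit in the tangent cone $K_p=K(\Sigma_p)$, the angle inequality at $q_t$ converges to the required inequality at $\zeta$ in $\Sigma_p$. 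For the ball condition $\Sigma_p=B(\Sigma_p(F),\pi/2)$, a direction $\eta\in\Sigma_p$ with $d(\eta,\Sigma_p(F))>\pi/2$ would, after exponentiating, give a point $x_t=\exp_p(t\eta)$ for which any nearby $q\in F$ has $\angle pqx_t>\pi/2$, forcing a point of $F$ strictly closer to $x_t$ than $p$ is; this violates extremality at the resulting local minimum. The degenerate cases ($\Sigma_p(F)$ empty or a single direction $\zeta_0$) are treated the same way, with the extra diameter bound $\diam(\Sigma_p)\le\pi/2$ obtained by observing that $p$ itself is a local minimum of $d_{x_t}|_F$ for $x_t=\exp_p(t\eta_1)$, so extremality of $F$ at $p$ produces, for every $\eta_2\in\Sigma_p$, a direction $\xi_t\in(x_t)'_p$ with $\angle(\xi_t,\eta_2)\le\pi/2$; since $\xi_t\to\eta_1$ as $t\to 0$, this yields $\angle(\eta_1,\eta_2)\le\pi/2$.

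For the backward direction, assume $\Sigma_p(F)$ is extremal in $\Sigma_p$ for every $p\in F$. Given $p\in X-F$ and a local minimum $q\in F$ of $d_p|_F$, I must show
\[
\max_{\eta\in\Sigma_q}\,\min_{\xi\in p'_q}\angle(\xi,\eta)\le\pi/2.
\]
Fix $\xi_0\in p'_q$. The first variation formula together with the local minimality of $q$ forces $\angle(\xi_0,\zeta)\ge\pi/2$ for every $\zeta\in\Sigma_q(F)$. Combined with $\Sigma_q=B(\Sigma_q(F),\pi/2)$, furnished by the extremality of $\Sigma_q(F)$, this gives $d(\xi_0,\Sigma_q(F))=\pi/2$ exactly; let $\zeta_0\in\Sigma_q(F)$ be a foot of $\xi_0$. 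Now take any $\eta\in\Sigma_q$ and choose $\zeta\in\Sigma_q(F)$ with $\angle(\zeta,\eta)\le\pi/2$. Applying the angle condition for extremality of $\Sigma_q(F)$ at the appropriate foot (using $\xi_0$ as the source), one obtains a direction in $(\xi_0)'_{\zeta_0}\subset\Sigma_{\zeta_0}(\Sigma_q)$ making angle $\le\pi/2$ with the direction at $\zeta_0$ pointing toward $\eta$; unwinding this via the hinge comparison on the sphere $\Sigma_q$ (curvature $\ge 1$) yields $\angle(\xi_0,\eta)\le\pi/2$, completing the estimate with $\xi=\xi_0$. The degenerate cases reduce to the diameter bound $\diam(\Sigma_q)\le\pi/2$, which immediately gives $\angle(\xi_0,\eta)\le\pi/2$ for every $\eta$.

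The main obstacle I foresee is the inductive bookkeeping on dimension: the extremality condition is stated in terms of iterated spaces of directions, and both the blow-up in the forward direction and the transfer of an angle bound from $\Sigma_{\zeta_0}(\Sigma_q)$ back to $\Sigma_q$ in the backward direction require care to make sure the hypothesis applies at each level. Formally I would set up a joint induction on $\dim X$, using that $\dim\Sigma_p(F)<\dim X$ so that the induction hypothesis can be invoked when identifying extremality of $\Sigma_p(F)$ with extremality of its own iterated directions. The remaining technicalities (existence of $q_t$ with $(q_t)'_p\to\zeta$; continuity of $(x_t)'_p$ at $\exp_p(t\eta_1)$; the spherical hinge estimate combining the two angle bounds) are routine within the framework developed in \cite{BGP} and \cite{PrPt:extremal}.
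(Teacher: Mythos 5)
The paper does not prove this lemma; it is quoted from \cite{PrPt:extremal} and used as a black box (see the proof of Lemma~\ref{lem:essential}), so there is no in-paper argument to compare your proposal against. Your overall strategy — blow-up at $p$ for the forward direction, first variation plus hinge comparison in $\Sigma_q$ for the backward direction, organized as an induction on dimension — is the natural one, and the backward direction as you sketch it can in fact be made to work: $\tilde\angle\xi_0\zeta_0\eta$ depends only on side lengths, the spherical law of cosines with one side equal to $\pi/2$ and comparison angle $\le\pi/2$ gives $\cos d(\xi_0,\eta)=\sin d(\zeta_0,\eta)\cos\tilde\angle\ge 0$, and the degenerate cases follow from $\diam\Sigma_q\le\pi/2$.

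The forward direction, however, has two genuine gaps. First, you \emph{choose} $q_t\in F$ so that $(q_t)'_p\to\zeta$ and then claim you can ``arrange'' for $q_t$ to be a local minimum of $d_{x_t}|_F$; but that minimality is exactly what you need to invoke extremality, and nothing forces a point approaching $p$ in the prescribed direction $\zeta$ to be a local minimizer — one must instead \emph{start} from local minima of $d_{x_t}|_F$, extract a limit direction, and then identify which local minima of $d_\xi|_{\Sigma_p(F)}$ are actually reached, which is precisely the delicate part of the Perelman–Petrunin argument that you have skipped. Second, the ball-condition step is stated backwards: for $q\in F$ near $p$, $q\ne p$, the direction $q'_p$ lies $\tau$-close to $\Sigma_p(F)$, so with $d(\eta,\Sigma_p(F))>\pi/2$ one gets $\angle qpx_t=\angle(q'_p,\eta)>\pi/2$ (angle at $p$, not at $q$), and then the hinge estimate shows such $q$ is \emph{farther} from $x_t$ than $p$ is, not closer. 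The correct conclusion is therefore that $p$ \emph{is} a local minimum of $d_{x_t}|_F$, so extremality at $p$ yields $\Sigma_p\subset B((x_t)'_p,\pi/2)$ and, letting $t\to 0$, $\Sigma_p\subset B(\eta,\pi/2)$, which contradicts $d(\eta,\Sigma_p(F))>\pi/2$ since $\Sigma_p(F)\subset\Sigma_p$. As written, your version both has the wrong vertex for the angle and draws the opposite inequality, and the phrase ``violates extremality at the resulting local minimum'' does not identify any actual contradiction.
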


Here we present some information on the essential singular point set in
a $3$-dimensional Alexandrov space.  First we need

\begin{lem} \label{lem:1vertex}
Let $\Sigma$ be a compact Alexandrov space with curvature $\ge 1$ and with 
boundary. Then there exists at most one extremal point of
$\interior\Sigma$. \par
 If $\interior\Sigma$ has the unique extremal point,
then it must be the unique maximum point of the distance function 
from the boundary $\partial \Sigma$.
\end{lem}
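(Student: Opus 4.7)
The plan is to show that an extremal point $p\in\interior\Sigma$ must be the unique global maximum of $d_{\bdy\Sigma}:=d(\bdy\Sigma,\,\cdot\,)$; both assertions of the lemma follow immediately from this, since any second extremal interior point would yield the same maximum property by identical reasoning.

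First I would establish the a priori bound $d_{\bdy\Sigma}\le\pi/2$ on $\Sigma$ by doubling. The closed Alexandrov space $D(\Sigma)$ also has curvature $\ge 1$ and no boundary, hence $\diam(D(\Sigma))\le\pi$; for $x\in\Sigma$ with foot $q\in\bdy\Sigma$, the concatenation of a minimal segment from $x$ to $q$ with its reflected copy from $q$ to the mirror point $x^*$ gives a geodesic in $D(\Sigma)$ of length $2\,d_{\bdy\Sigma}(x)$. Next I would verify that $p$ is a local maximum of $d_{\bdy\Sigma}$ via the first variation formula: for any $\eta\in\Sigma_p$ and any foot direction $\xi\in(\text{feet}(p))_p'$, extremality of $p$ yields $\angle(\xi,\eta)\le\pi/2$, hence $(d_{\bdy\Sigma})'_p(\eta)=-\cos\bigl(\min_\xi\angle(\xi,\eta)\bigr)\le 0$, and $d_{\bdy\Sigma}$ is nonincreasing along every geodesic leaving $p$.

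The decisive step is to promote this local maximum to a unique global maximum. The function $h:=\sin d_{\bdy\Sigma}$ satisfies the distributional inequality $(h\circ\sigma)''(t)+(h\circ\sigma)(t)\le 0$ along every unit-speed geodesic $\sigma$ in $\Sigma$ --- this is the $\kappa=1$ analog of Petrunin's concavity of the distance to an extremal subset in a nonnegatively curved Alexandrov space, with $\bdy\Sigma$ playing the role of the extremal subset. Combining this $\sin$-concavity with $h\ge 0$ (from the bound $d_{\bdy\Sigma}\le\pi/2$) and $h'(0^+)\le 0$ at $p$ (from the local maximum), Sturm comparison with the ODE $u''+u=0$, $u(0)=\sin L$, $u'(0)=0$ gives $h(\sigma(t))\le\sin L\cdot\cos t$ along every unit-speed geodesic $\sigma$ from $p$, where $L=d_{\bdy\Sigma}(p)>0$. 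If a second point $p'\ne p$ had $d_{\bdy\Sigma}(p')=L$, taking $\sigma$ to be a minimal segment from $p$ to $p'$ of length $L_g>0$ would force $\sin L\le\sin L\cdot\cos L_g$, hence $\cos L_g\ge 1$, contradicting $L_g>0$. Therefore $p$ is the unique global maximum, and any other extremal interior point --- which is automatically a local hence global maximum --- must coincide with $p$.

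The main obstacle is the $\sin$-concavity of $\sin d_{\bdy\Sigma}$ invoked in the third step. In the setting where the lemma is actually used (namely $\Sigma$ two-dimensional, arising as a space of directions in a three-dimensional Alexandrov space), this can be circumvented by a direct Gauss--Bonnet argument: two distinct interior cone points with cone angle $\le\pi$ would contribute at least $2\pi$ to the Gauss--Bonnet identity, but the nonempty boundary forces $2\pi\chi(\Sigma)\le 2\pi$, so the smooth curvature contribution (which is $\ge\text{area}(\Sigma)$ for curvature $\ge 1$) together with the nonnegative boundary geodesic curvature would have to vanish --- a contradiction. A residual comparison argument then identifies this unique interior extremal point with the maximum of $d_{\bdy\Sigma}$.
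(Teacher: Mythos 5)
Your proof is correct and follows essentially the same route as the paper's. The paper disposes of the lemma in two sentences: it cites Perelman \cite{Pr:alex2} for the strict concavity of $d(\partial\Sigma,\,\cdot\,)$ in curvature $\ge 1$ --- which is precisely the sin-concavity you flag as your ``main obstacle,'' so that step is a known result and not an obstacle --- deduces uniqueness of the maximum, and then observes via first variation that any non-maximum interior point $p$ admits a direction making angle $>\pi/2$ with every foot direction in $(\partial\Sigma)_p'$, so $\diam(\Sigma_p)>\pi/2$ and $p$ is not extremal. Your version (extremal $\Rightarrow$ every directional derivative $\le 0$ $\Rightarrow$ local max $\Rightarrow$ unique global max by Sturm comparison) is the contrapositive packaging of the same reasoning; the Gauss--Bonnet fallback is therefore unnecessary, and would in any case be incomplete, since it settles uniqueness only in dimension two and does not by itself identify the extremal point with the maximizer of $d_{\partial\Sigma}$.
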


\begin{proof}
Note that $f=d(\partial\Sigma,\,\cdot\,)$ is a strictly concave function on 
$\Sigma$ (\cite{Pr:alex2}), and therefore $f$ has a unique maximum point.
From the concavity, it is obvious to see that any non-maximum point
of $\interior\Sigma$ has the space of directions 
whose diameter is greater than $\pi/2$.
\end{proof}

\begin{lem} \label{lem:essential}
Let $\cal C$ be a closed subset of the essential singular point set 
$ES(X^3)$ of a $3$-dimensional Alexandrov space $X^3$ 
satisfying the following:
\begin{enumerate}
 \item For any $p\in \cal C$, either $\Sigma_p(\cal C)$ contains 
       at least two elements, or it consists of only a point.
       In the latter case, $p$ must be an extremal point of $X^3;$     
 \item $\cal C\cap\partial X^3$ is either empty or consists of
       components of $\partial X^3$.
\end{enumerate}
Then $\cal C$ is extremal. 
\end{lem}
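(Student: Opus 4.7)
My plan is to apply Lemma \ref{lem:reduct}: $\cal C$ will be extremal in $X^3$ provided that $\Sigma_p(\cal C)\subset \Sigma_p$ is extremal for every $p\in\cal C$. Since $\Sigma_p$ is a two-dimensional compact Alexandrov space of curvature $\ge 1$, I must verify three conditions from the definition of extremal subset in the curvature $\ge 1$ setting: the infinitesimal local-minimum property, the covering identity $\Sigma_p=B(\Sigma_p(\cal C),\pi/2)$, and, when $\Sigma_p(\cal C)$ is a single point, the diameter bound $\diam(\Sigma_p)\le \pi/2$. Note that hypothesis (1) rules out $\Sigma_p(\cal C)=\emptyset$, so I do not have to deal with that possibility.

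The single-point case is the easiest. If $\Sigma_p(\cal C)=\{\xi\}$, hypothesis (1) forces $p\in\Ext(X^3)$, so $\diam(\Sigma_p)\le\pi/2$, which simultaneously establishes the diameter and covering conditions. The infinitesimal minimum condition at $\xi$ then amounts to $\xi$ being an extremal point of $\Sigma_p$, which I would derive by transporting the extremality of $p$ in $X^3$ to the tangent cone $K_p$ and then restricting to $\Sigma_p\subset K_p$: a failure at $\xi$ would produce a direction at distance $>\pi/2$ from $\xi$ inside $\Sigma_p$, contradicting $\diam(\Sigma_p)\le\pi/2$.

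When $\Sigma_p(\cal C)$ has at least two elements, the diameter condition is vacuous and I concentrate on the other two. For the covering condition I would argue by contradiction: suppose some $\eta\in\Sigma_p$ satisfies $d(\eta,\Sigma_p(\cal C))>\pi/2$. Choosing $x_t\to p$ in direction $\eta$ and letting $q_t\in \cal C$ be a foot from $x_t$ to $\cal C$, the directions at $p$ of $p q_t$ accumulate at some $\zeta\in\Sigma_p(\cal C)$ with $\angle(\eta,\zeta)>\pi/2$, while a first-variation argument at $q_t$ together with hypothesis (1) applied at $q_t$ (giving either two distinct elements of $\Sigma_{q_t}(\cal C)$ or extremality of $q_t$) forces the reverse inequality in the limit, a contradiction. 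When $p\in\partial X^3$, hypothesis (2) puts the entire boundary component through $p$ inside $\cal C$, so $\partial\Sigma_p\subset\Sigma_p(\cal C)$, and combining $\rad(\Sigma_p)\le\pi/2$ from $p\in ES(X^3)$ with Lemma \ref{lem:1vertex} completes the covering.

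Finally, the infinitesimal local-minimum property follows by iterating the argument inside $\Sigma_p$: at $\xi\in\Sigma_p(\cal C)$ which is a local minimum of $d_\zeta|_{\Sigma_p(\cal C)}$, I would pass to $\Sigma_\xi(\Sigma_p)$, a one-dimensional compact Alexandrov space of curvature $\ge 1$ and hence a circle or a closed arc of length $\le\pi$. On such a simple space the required angle inequality can be verified directly, using hypothesis (1) at $\xi$, which ensures $\Sigma_\xi(\Sigma_p(\cal C))$ is either a pair of directions controlling both sides or a single extremal direction. The main obstacle will be the covering condition in the interior subcase: only the combined force of essential singularity of $p$ and the nondegeneracy of $\Sigma_q(\cal C)$ at nearby $q\in\cal C$ suppresses the existence of a direction escaping the $\pi/2$-neighborhood of $\Sigma_p(\cal C)$, and making this first-variation argument rigorous against the possibly singular structure of $\cal C$ is the technical heart of the proof.
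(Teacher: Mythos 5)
Your proposal has a genuine gap, and it is worth isolating precisely because the missing fact is the engine of the paper's proof.

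The paper opens by observing that for any $p\in\cal C$, every direction $\xi\in\Sigma_p(\cal C)$ is itself an essential singular point of $\Sigma_p(X^3)$. Since $\Sigma_p$ is a two-dimensional Alexandrov surface of curvature $\ge 1$ and $\Sigma_\xi(\Sigma_p)$ is a circle, this is the statement $L(\Sigma_\xi(\Sigma_p))\le\pi$ (equation \eqref{eq:ext}). It is this length bound, not the diameter bound on $\Sigma_p$, that makes the extremality conditions go through: it gives $\diam(\Sigma_\xi(\Sigma_p))\le\pi/2$ and hence the infinitesimal minimum condition for free, and it is the hypothesis that makes the Alexandrov convexity argument for the covering condition work. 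Nowhere in your proposal do you establish this; you try to substitute $\diam(\Sigma_p)\le\pi/2$ for it, and that substitution fails.

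Concretely, in your single-point case you write that a failure of extremality at $\xi$ ``would produce a direction at distance $>\pi/2$ from $\xi$ inside $\Sigma_p$, contradicting $\diam(\Sigma_p)\le\pi/2$.'' That inference is false. Take $\Sigma$ a round sphere of constant curvature $4$: then $\diam(\Sigma)=\pi/2$, yet at every $\xi\in\Sigma$ the link $\Sigma_\xi(\Sigma)$ is a circle of length $2\pi$, so no singleton $\{\xi\}$ satisfies the infinitesimal minimum condition. The diameter bound on $\Sigma_p$ constrains distances in $\Sigma_p$, not angles in $\Sigma_\xi(\Sigma_p)$, and only the essential-singularity of $\xi$ controls the latter. (Of course in the actual lemma this $\Sigma_p$ cannot arise as a link of a point of $\cal C$ with $\Sigma_p(\cal C)\neq\emptyset$ precisely because essential singularity propagates to the blowup; but that is the fact you need to state and use, not bypass.) The same gap recurs in your final paragraph, where you assert that $\Sigma_\xi(\Sigma_p)$ is ``a circle or a closed arc of length $\le\pi$'': for a closed arc, yes, but a circle in a space of curvature $\ge 1$ has length bounded only by $2\pi$; the sharper bound $\le\pi$ again requires $\xi\in ES(\Sigma_p)$, which you have not supplied. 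Once you add the blowup observation, the paper's route is also more economical than your first-variation argument for the covering: \eqref{eq:ext} combined with Alexandrov convexity shows directly that $B(\xi_1,\pi/2)\cup B(\xi_2,\pi/2)=\Sigma_p$ when $\diam(\Sigma_p)>\pi/2$, and $\xi_1,\xi_2$ must realize the diameter since any third point violating \eqref{eq:ext} is excluded.
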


\begin{proof}
Obviously for any $p\in \cal C$, every direction 
$\xi\in\Sigma_p(\cal C)$ is an essential singular point of 
$\Sigma_p(X^3)$. Therefore by Example \ref{ex:extI} and 
Lemma \ref{lem:1vertex}, we may assume that 
$\cal C\subset \interior X^3$.
By Lemma \ref{lem:reduct}, it suffices to show
that for any $p\in\cal C$, 
$\Sigma_p(\cal C)$ is an extremal subset of $\Sigma_p(X^3)$.
From assumption, it suffices to consider the case when 
$\Sigma_p(\cal C)$ contains distinct elements  $\xi_1$ and $\xi_2$.
Then we have
\begin{equation}
   L(\Sigma_{\xi_i}(\Sigma_p(X^3)))\le\pi, \quad i=1,2.\label{eq:ext}
\end{equation}
If $\diam(\Sigma_p)\le\pi/2$, clearly 
$\Sigma_p(\cal C)$ is extremal. If 
$\diam(\Sigma_p) > \pi/2$, then $\xi_1$ and $\xi_2$ realize the 
diameter of $\Sigma_p$, because any other point $\xi_3$ from
$\xi_1$, $\xi_2$ does not satisfy \eqref{eq:ext}.  
In view of \eqref{eq:ext}, the Alexandrov convexity implies that 
$B(\xi_1,\pi/2)$ and $B(\xi_2,\pi/2)$ cover $\Sigma_p$, and hence 
$\Sigma_p(\cal C)$ is extremal.
%
% If $p$ is  an endpoint of $\cal S$, it must be a {\bf vertex} of $Y$,
% and $\Sigma_p(\cal S)$ consists of just one direction
% which also satisfies \eqref{eq:ext}. Therefore 
% $\Sigma_p(\cal S)$ is extremal.
\end{proof}

\begin{rem}
In Lemma \ref{lem:essential}, the two conditions on $\cal C$ are 
essential. For instance, let $\Sigma^2$ be a compact Alexandrov 
surface with curvature $\ge 1$ such that 
\begin{enumerate}
 \item the radius of $\Sigma^2$ is less than or equal to $\pi/2;$
 \item $\Sigma^2$ contains at most one essential singular point$;$
  \begin{enumerate}
   \item if $\Sigma^2$ has no essential singular point, then 
         $\diam(\Sigma^2)>\pi/2;$
   \item if $\Sigma^2$ has an essential singular point $\xi$,
         then there is an $\eta$ with $d(\xi,\eta) > \pi/2$.
  \end{enumerate}
\end{enumerate}
For $X^3:=K(\Sigma^2)$, $ES(X^3)$ is not extremal because
the extremal condition does not hold at the vertex of the cone
$K(\Sigma^2)$.
\end{rem}

Let $X$ be a complete Alexandrov space space with 
curvature $\ge \kappa$. For a curve $\gamma:[a,b]\to X$
and a point $p\in X$, a curve $\tilde\gamma$
on the $\kappa$-plane $M^2_{\kappa}$ is called  
a {\it development} of $\gamma$  from $p$ if 
$d(\tilde\gamma(t),\tilde p)=d(\gamma(t), p)$
for some point $\tilde p$.
The curve $\gamma:[a,b]\to X$
is called a {\it quasigeodesic} (see \cite{PrPt:extremal})
if for any point $p\in X$,
the development $\tilde \gamma$  of $\gamma$ from $p$ 
defines a convex subset 
bounded by $\tilde p\tilde\gamma(t_1)$, $\tilde p\tilde\gamma(t_2)$ 
and $\tilde\gamma([t_1,t_2])$ for any $a\le t_1<t_2\le b$.

\begin{prop} \label{prop:graph}
Let $X^3$ be a $3$-dimensional complete Alexandrov space with 
curvature bounded below, and let $\cal C$ be a subset of 
$ES(\interior X^3)$ satisfying the 
condition $(1)$ of Lemma \ref{lem:essential}.  
Then $\cal C$ 
has the structure of a finite metric graph such that
\begin{enumerate}
 \item every vertex of $\cal C$ as a graph except the endpoints
       has order three$;$
 \item every subarc of $\cal C$ is a quasigeodesic.
\end{enumerate}
\end{prop}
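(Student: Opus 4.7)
First, I would show that $\mathcal{C}$ is an extremal subset of $X^3$. Since $\mathcal{C}\subset ES(\interior X^3)$ we have $\mathcal{C}\cap\partial X^3=\emptyset$, so hypothesis~(2) of Lemma~\ref{lem:essential} is vacuous, while hypothesis~(1) is the assumption of the proposition. Hence Lemma~\ref{lem:essential} yields extremality of $\mathcal{C}$. By Lemma~\ref{lem:reduct}, for each $p\in\mathcal{C}$ the subset $\Sigma_p(\mathcal{C})$ is extremal in the $2$-dimensional Alexandrov space $\Sigma_p(X^3)$ of curvature $\geq 1$; moreover, as observed in the proof of Lemma~\ref{lem:essential}, each $\xi\in\Sigma_p(\mathcal{C})$ is itself an essential singular point of $\Sigma_p(X^3)$, i.e.\ has cone angle $\alpha_\xi:=L(\Sigma_\xi(\Sigma_p(X^3)))\leq\pi$.

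The decisive step is the local bound $|\Sigma_p(\mathcal{C})|\leq 3$. For this I would apply the Gauss--Bonnet formula for $2$-dimensional Alexandrov spaces to $\Sigma_p(X^3)$:
\[
 \int_{\Sigma_p(X^3)} K\,dA + \sum_{j}(2\pi-\alpha_{\xi_j}) = 2\pi\chi(\Sigma_p(X^3)),
\]
summed over all essential singular points $\xi_j$. Since $\Sigma_p(X^3)$ is closed of curvature $\geq 1$, it is homeomorphic to $S^2$ or $\R P^2$ and so $\chi\leq 2$, while $\int K\,dA\geq\operatorname{area}(\Sigma_p(X^3))>0$ from the curvature bound on a set of positive area. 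Each term $2\pi-\alpha_{\xi_j}\geq\pi$, so the number $n$ of essential singular points of $\Sigma_p(X^3)$ obeys $n\pi<4\pi$, giving $n\leq 3$, and a fortiori $|\Sigma_p(\mathcal{C})|\leq 3$.

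Combining this local bound with the conical local structure of extremal subsets (Perelman--Petrunin), near each $p\in\mathcal{C}$ the set $\mathcal{C}$ is homeomorphic to the cone on the finite discrete set $\Sigma_p(\mathcal{C})$, so $\mathcal{C}$ carries the structure of a $1$-dimensional metric graph. By the standing hypothesis $|\Sigma_p(\mathcal{C})|\geq 1$; together with the upper bound, the only possible values are $1$ (endpoint, forcing $p$ to be extremal in $X^3$), $2$ (interior of an edge), and $3$ (order-$3$ vertex), which proves~(1). Claim~(2) then follows from Petrunin's theorem that shortest arcs of an extremal subset, taken with respect to the induced length metric, are quasigeodesics in the ambient Alexandrov space: each subarc of the $1$-dimensional graph $\mathcal{C}$ is such a shortest arc and so is a quasigeodesic in $X^3$.

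The main technical obstacle is justifying the Gauss--Bonnet step in the Alexandrov category, where $K$ must be understood as a curvature measure and one needs the standard Alexandrov-surface inequality $\int K\,dA\geq\operatorname{area}$ for surfaces of curvature $\geq 1$. A secondary point is that \emph{finiteness} of the graph (as opposed to mere local finiteness) requires compactness of $\mathcal{C}$, which is automatic when $X^3$ is compact and otherwise gives only local finiteness.
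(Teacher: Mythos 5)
Your overall architecture matches the paper's: reduce to extremality via Lemma~\ref{lem:essential}, localize via Lemma~\ref{lem:reduct}, bound $|\Sigma_p(\mathcal{C})|$ by $3$, and invoke the Perelman--Petrunin machinery to get the graph and quasigeodesic structure. Where you diverge is the local bound: the paper simply cites the appendix of \cite{SY:3mfd} for the statement that an Alexandrov surface of curvature $\geq 1$ without boundary has at most three essential singular points, whereas you give a self-contained Gauss--Bonnet proof of it. That is a legitimate and arguably more transparent route, provided one grants the Alexandrov--Reshetnyak Gauss--Bonnet theorem and the decomposition of the curvature measure with non-atomic part bounded below by $\mathcal{H}^2$ (which you correctly flag as the technical load-bearing step). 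The other divergence is less in your favor: where the paper controls the local picture explicitly, proving that the quasigeodesic emanating from $p$ in a direction $\xi\in\Sigma_p(\mathcal{C})$ is \emph{unique} by reusing the rescaling argument of Assertion~\ref{ass:inj}, you lean on ``conical local structure of extremal subsets'' as a black box. This does the same work, but the paper's approach is more elementary and makes explicit what is needed (uniqueness, not just existence, of the quasigeodesic ray in each direction).

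There is also a small gap in your argument for~(2). You write that every subarc of $\mathcal{C}$ is a shortest arc of the extremal subset and hence a quasigeodesic by Petrunin's theorem; but a subarc need not be a shortest arc in the intrinsic metric of $\mathcal{C}$ (e.g.\ a long arc of a circle component, or any arc passing through a degree-$3$ vertex). What is true is that every subarc is \emph{locally} shortest, and one then needs to invoke that being a quasigeodesic is a local condition to conclude globally. Alternatively one can, as the paper implicitly does, produce for each $p$ and each $\xi\in\Sigma_p(\mathcal{C})$ a unique quasigeodesic ray in $\mathcal{C}$ and observe that these rays concatenate across each interior point because $|\Sigma_q(\mathcal{C})|\geq 2$ there by hypothesis~(1); this gives directly that the edges of the graph are quasigeodesics. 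Either repair works, but as written the deduction of~(2) is not quite correct.
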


\begin{proof}
 By Lemma \ref{lem:essential},
$\cal C$ is extremal. It follows that $\cal C$ is locally connected 
by quasigeodesics (\cite{PrPt:extremal}) and that  
there exists a quasigeodesic starting at $p$ in a  
direction $\xi\in\Sigma_p(\cal C)$. The uniqueness of such a 
quasigeodesic follows from 
the  argument in the proof of Assertion \ref{ass:inj}.
Note that $\Sigma_p$ has at most three essential singular 
points (see Appendix in \cite{SY:3mfd}). 
This completes the proof.
\end{proof}

\begin{ex}
Let $\Sigma$ be a compact Alexandrov surface with curvature $\ge 1$ and 
with exactly three essential singular points, say $p_1$, $p_2$ and $p_3$. 
We consider the spherical suspension $X^3=\Sigma\times_{\sin t} [0,\pi]$ 
of $\Sigma$,  where we make an identification 
$\Sigma=\Sigma\times \{\pi/2\}$.  $X$ has the essential 
singular point set consisting of three minimal geodesic segments
joining the two poles of $X^3$ through $p_i$.
\end{ex}

\begin{ex}\label{ex:convex}
We construct a $3$-dimensional complete open Alexandrov space $X^3$  
with nonnegative curvature whose essential singular point set consists of 
a countable set with a limit as follows:
First we construct a noncompact convex body $E$ in the $(x,y,z)$-space
as follows.
Let $C$ be a compact convex ``polygon'' on the $(x,y)$-plane
with countable edges such that 
a sequence $p_i$ of consecutive vertices of $\partial C$ converges 
to a point $p\in\partial C$.
We denote by $\{ e_i\}$ the line segment from $p_{i-1}$ to $p_i$ in 
$\partial C$. 
For a point $q_i\in\R^2-C$ sufficiently close to the midpoint of $e_i$,
let $\ell_i$ be the geodesic ray in $\R^3$ 
starting from $(q_i,1)$ in the direction to the
positive $z$-axis.
Let $E_i$ be the minimal convex set containing 
$C\times [0,\infty)$ and $\ell_1\cup \ell_2 \cup\cdots$.
If $d(q_i,e_i)$ is sufficiently small, then 
$p_i\times [0,\infty)$ is contained in $\partial E$.
Finally we take the double $X^3:=D(E)$.
$X^3$ has nonnegative curvature, and $ES(X^3)$ consists of the sequence 
$p_i$, $i=1,2,\ldots$ with the limit $p$.
\end{ex}

Example \ref{ex:convex} shows that in Proposition \ref{prop:graph} 
one cannot drop the condition $(1)$ in Lemma \ref{lem:essential}.
\par\medskip

Next we turn to the other subject of this section, 
the existence of a collar neighborhood
of the boundary of an Alexandrov space with curvature bounded below.

\begin{prop} \label{prop:collar-general}
Let $X^n$ be an Alexandrov space with 
curvature bounded below and with nonempty boundary.
Then $\partial X^n$ has a collar neighborhood,i.e., an 
open neighborhood of $\partial X^n$ homeomorphic to 
$\partial X^n\times [0,1)$.
\end{prop}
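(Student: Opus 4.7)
The plan is to construct the collar by flowing backwards along a gradient-like vector field for the semiconcave function $f := d_{\partial X}$. I would carry this out in three steps.

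\textbf{Step 1 (Existence of a non-critical direction).} Fix a small $\epsilon_0 > 0$. For every $x \in X$ with $0 < f(x) < \epsilon_0$, I would produce a direction $\xi_x \in \Sigma_x$ with $\angle(\xi_x, v) > \pi/2 + c$ for every $v \in (\partial X)'_x$ and some definite constant $c > 0$. The idea is to work in the double $D(X)$, which is Alexandrov with the same curvature bound: for each $p \in \partial X$, fix a nearby reference point $q_p^* \in X^* \setminus \partial X$ with $d(p, q_p^*)$ bounded below, and consider the distance function $h_p := d(q_p^*, \cdot)|_X$. A minimizing geodesic from $x$ to $q_p^*$ in $D(X)$ must cross $\partial X$ near a foot of $x$, so its initial direction at $x$ clusters near $(\partial X)'_x$. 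One then checks using Alexandrov comparison that $h_p$ is $(c,\delta)$-regular near $p$ in the sense recalled in Section~\ref{sec:cap}, and takes $\xi_x$ to be an ascent direction for $h_p$ at $x$; by construction $\xi_x$ makes angle $> \pi/2 + c$ with every foot direction.

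\textbf{Step 2 (Gradient-like vector field).} Cover a neighborhood of $\partial X$ in $X$ by finitely many (locally finitely many, if $\partial X$ is noncompact) such charts $U_p$, equipped with the ascent direction fields from Step~1, and patch them via a partition of unity into a single continuous unit vector field $V$ on the punctured one-sided neighborhood $U_0 := \{0 < f < \epsilon_0\}$. By construction the integral curves $\Phi_s$ of $V$ satisfy
\[
   \tfrac{d}{ds} f(\Phi_s(x)) \;\ge\; \sin c \;>\; 0,
\]
so each orbit crosses every level set of $f$ transversally at a definite rate. This is the same partition-of-unity technique used in the proofs of the fibration theorem and in the construction carried out in Lemma~\ref{lem:flow-rescal} above.

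\textbf{Step 3 (Collar homeomorphism).} For $x \in U_0$ with $f(x) < \epsilon_1 \le \epsilon_0$, the reverse flow $\sigma_x(s) := \Phi_{-s}(x)$ drives $f$ downwards at rate $\ge \sin c$ and hence exits $U_0$ at some first time $s_x \le f(x)/\sin c$; since $V$ has unit norm and $U_0$ has the single other boundary component $\partial X$, the exit necessarily happens through $\partial X$. Define $\pi(x) := \lim_{s \nearrow s_x} \sigma_x(s) \in \partial X$, extended by the identity on $\partial X$, and set
\[
   \Psi \colon \partial X \times [\,0, \epsilon_1/\sin c\,) \;\longrightarrow\; U_0 \cup \partial X,
   \qquad
   \Psi(p, t) := \Phi_t(p).
\]
Then $\Psi$ is the desired collar.

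The main obstacle is Step~3: verifying continuity of $\pi$ at $\partial X$ and bijectivity of $\Psi$. Both reduce to a quantitative form of Step~1 --- namely, that the ascent directions $\xi_x^{(p)}$ vary continuously in $x$ with a modulus of continuity that stays uniform as $x$ approaches $\partial X$. This in turn follows from applying the stability of regular maps (Theorem~\ref{thm:stability-respect}) to the function $h_p$ near $p$, since $h_p$ is $(c,\delta)$-regular in a genuine neighborhood of $p$ in $X$ (not only on $\interior X$). Once this uniform continuity is in hand, injectivity of $\Psi$ follows from transversality of $V$ to level sets, and surjectivity onto a neighborhood of $\partial X$ follows from the definite lower bound on the flow rate along $f$.
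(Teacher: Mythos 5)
Your Step~1 contains a gap that the rest of the argument depends on: the ascent direction for $h_p = d(q_p^*,\cdot)$ need not make angle $>\pi/2 + c$ with \emph{every} foot direction $v\in(\partial X)'_x$. The issue is that $(\partial X)'_x$ can have diameter close to $\pi$ in $\Sigma_x$, and the direction toward $q_p^*$ need only be close to \emph{one} of the foot directions, not all of them. To see this concretely, take $X$ to be a flat wedge of small opening angle $\alpha$ (so $\Sigma_p \simeq [0,\alpha]$ at the vertex $p$), $q_p^*$ a point of $X^*$ on the anti-bisector, and $x$ a point on the bisector: then $(\partial X)'_x = \{v_1, v_2\}$ consists of the two perpendicular-foot directions at angular distance $\pi - \alpha$, the direction $(q_p^*)'_x$ coincides with one of them, say $v_2$, and the ascent direction $\xi_x \approx -v_2$ makes angle only about $\alpha$ with $v_1$. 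So $\xi_x$ actually points \emph{toward} the boundary on one side, and the flow of your vector field would immediately exit through $\partial X$ rather than give a product structure. What \emph{would} work at this particular $x$ is the direction away from the vertex itself, but that observation does not generalize, and indeed the statement you need in Step~1 — that $-d_{\partial X}$ is regular with a uniform constant on a deleted one-sided neighborhood of $\partial X$ — is essentially Conjecture~\ref{conj:collar} (the metric collar), which the paper explicitly records as open in general and proves only for $n=3$ in Theorem~\ref{thm:collar}. Establishing regularity of $d_{\partial X}$ is \emph{not} a consequence of regularity of $h_p$; these are different statements, and the former does not follow from the latter even locally.

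The paper's proof of Proposition~\ref{prop:collar-general} deliberately sidesteps this metric difficulty. It invokes Brown's theorem from \cite{Br:flat}, which reduces the global collar to the purely local statement that $\partial X$ is locally collared, and then proves the local statement by induction on $n$: the inductive hypothesis applied to $\Sigma_p$ gives a collar for $\partial\Sigma_p$, hence for $\partial K_p$, and the tangent-cone convergence transfers this to a collar of $\partial X$ near $p$. No gradient flow of $d_{\partial X}$ and no uniform regularity constant is needed. If you want a flow-based proof, you would have to either (a) prove the uniform regularity of $d_{\partial X}$ near $\partial X$ directly, which appears to be as hard as the metric collar conjecture, or (b) work with a more carefully chosen function whose regularity you can actually establish — the paper's Observation~\ref{obs:collar} gives such a candidate by gluing on $\partial X \times [0,\infty)$, but this needs Conjecture~\ref{conj:bdyalex} ($\partial X$ is itself Alexandrov) to make the glued space Alexandrov, and that is also open.
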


\begin{proof}
This is done by induction on $n$. The case of $n=1$ is 
clear. By \cite{Br:flat}, it suffices to show that 
$\partial X$ is locally collared.
For any $p\in\partial X$, $\partial\Sigma_p$ has a collar
neighborhood. This implies that $\partial K_p$ has a collar
neighborhood. Thus there is an $\epsilon>0$ such that 
$B(p,\epsilon)\cap\partial X$ has a collar neighborhood.
\end{proof}

Since the collar neighborhood given in 
Proposition \ref{prop:collar-general} is only topological,
it is not enough for 
our purpose. We need a metric collar neighborhood at least
when $\partial X$ is compact.

We put for $\epsilon>0$
$$
   X_{\epsilon}=\{ x\in X\,|\,d(x,\partial X)\ge\epsilon \}.
$$

\begin{conj} \label{conj:collar}
Let $n$ be any positive integer, and let $X^n$ be an $n$-dimensional 
Alexandrov space with curvature bounded below 
and with nonempty compact boundary.
Then there is a positive number $\epsilon$ such that 
$X^n-X_{\epsilon}^n$ provides a collar neighborhood of $\partial X^n$.
\end{conj}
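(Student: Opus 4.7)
The plan is to build the metric collar using the gradient flow of the function $f = d(\partial X, \cdot\,)$, exploiting the (semi)concavity of $f$ established in \cite{Pr:alex2}. Since $f$ is concave on a one-sided neighborhood of $\partial X$, it admits a Perelman--Petrunin gradient flow $\Phi_t$ satisfying $f(\Phi_t(y)) \ge t$ for every $y \in \partial X$. I would define
$$
  \phi \colon \partial X \times [\,0, \epsilon\,) \to X, \qquad \phi(y,t) = \Phi_t(y),
$$
and aim to show that for some $\epsilon > 0$ depending on $X$ this is a homeomorphism onto $X \setminus X_\epsilon$. Continuity of $\phi$ is standard for the gradient flow of a semiconcave function on an Alexandrov space.

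For surjectivity, given $x$ with $f(x) < \epsilon$, pick a footpoint $y \in \partial X$ realizing $d(x,\partial X)$. The minimal segment $yx$ leaves $\partial X$ perpendicularly, and along it $f$ increases at unit rate, so by the rigidity underlying Proposition \ref{prop:concave-rigid} the segment $yx$ should coincide with the gradient trajectory $t \mapsto \Phi_t(y)$ on $[\,0, f(x)\,]$, yielding $x = \phi(y, f(x))$. The content here is the local version of Proposition \ref{prop:concave-rigid}: along a curve on which $f$ increases at its maximal rate, the curve is forced into a flat rectangle with some minimizing segment to $\partial X$, hence is itself a gradient curve.

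The main obstacle, and the reason this is phrased as a conjecture, is the injectivity of $\phi$ uniformly in $y$. Two gradient curves from distinct footpoints are initially disjoint by perpendicularity, but they could in principle meet in the interior as $t$ grows. The natural strategy is a blow-up argument: if injectivity failed for a sequence $\epsilon_i \to 0$ with collision points $x_i$ at footpoints $y_i^{(1)} \neq y_i^{(2)}$, one rescales by $1/\epsilon_i$ and passes to a tangent-cone limit $(Y, o)$, which is a complete nonnegatively curved Alexandrov space with boundary. The two colliding curves would limit to two distinct rays from $o$ emanating perpendicularly from $\partial Y$ along which $d(\partial Y, \cdot\,)$ increases at unit rate; Proposition \ref{prop:concave-rigid} applied in $Y$ would then force two distinct flat half-strips sharing a boundary ray, which one expects to contradict the compactness of $\partial X$ through a uniform bound on the number of such strips. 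In dimension $n=3$, required for Theorem \ref{thm:collar}, the spaces of directions admit an explicit classification (see Section \ref{sec:dim3alex}), and this blow-up reduction becomes effective case by case; for general $n$ the proliferation of possible tangent cones is precisely the reason the statement is left as a conjecture.
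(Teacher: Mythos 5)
This statement is labeled a \emph{Conjecture} precisely because the paper does not prove it in general. The paper's own partial results are Observation \ref{obs:collar}, which derives the conjecture from Conjecture \ref{conj:bdyalex} by gluing $\partial X\times[0,\infty)$ onto $X$ and applying regularity of the distance function $d(\partial X\times\{1\},\,\cdot\,)$ on $\partial X$, and Theorem \ref{thm:collar}, which settles $n=3$ by building the collar over the skeleta of a Lipschitz triangulation of $\partial X^3$ (geodesic segments over vertices, annuli over edges, $3$-balls over $2$-simplices via the generalized Schoenflies theorem). Your gradient-flow strategy is genuinely different from both of these, and you are right to identify injectivity of $\phi$ as the crux.

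However, the blow-up argument you offer for injectivity has a concrete gap. If $x_i$ are collision points with $t_i:=d(x_i,\partial X)\to 0$ and two distinct footpoints $y_i^{(1)}\neq y_i^{(2)}$, rescaling by $1/t_i$ gives a tangent-cone limit, but nothing in your setup prevents the angle $\angle\, y_i^{(1)}x_i\,y_i^{(2)}$ from also tending to $0$; in that case the two foot segments coalesce into a single ray in the limit, no two-half-strip configuration survives, and Proposition \ref{prop:concave-rigid} (which does apply in the nonnegatively curved tangent cone) has nothing to contradict. Making this work requires a lower bound, uniform near $\partial X$, on the angle between distinct foot segments, and it is not clear that the curvature bound alone provides it; this is essentially the obstruction that leads the paper to leave the statement as a conjecture. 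Your $n=3$ claim that the blow-up ``becomes effective case by case'' is likewise unsubstantiated and is not what the paper does for $n=3$. Separately, your surjectivity step misattributes the needed fact to Proposition \ref{prop:concave-rigid}: that proposition requires nonnegative curvature and concerns geodesics along which $d(\partial C,\,\cdot\,)$ is \emph{constant}, whereas you want it for unit-rate increase; what you actually need --- that a minimal foot segment is a gradient curve up to the first cut point --- follows directly from $|\nabla d_{\partial X}|\le 1$ and semiconcavity, so the conclusion stands but the citation is both wrong and unnecessary.
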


The author is not certain if the method in \cite{Br:flat} can be 
applied to solve Conjecture \ref{conj:collar} affirmatively.
Here is a conjecture related with Conjecture \ref{conj:collar}.
\par

\begin{conj}[\cite{PrPt:extremal}] \label{conj:bdyalex}
If $X^n$ is an Alexandrov space with curvature $\ge\kappa$, then 
so is the boundary $\partial X^n$ (if it is nonempty) 
with respect to the length metric induced 
from $X^n$. 
\end{conj}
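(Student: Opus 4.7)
The plan is to proceed by induction on $n=\dim X$. The base cases are easy: when $n=1$ the boundary is a $0$-dimensional set, and when $n=2$ the conjecture follows from the classical theory of surfaces of bounded curvature ($\partial X$ is locally a geodesic polygon and the induced intrinsic distance gives a $1$-dimensional Alexandrov space). So fix $n\ge 3$ and assume the conjecture in all lower dimensions.

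For the inductive step, I would first perform the local analysis at an arbitrary $p\in\partial X$. Here $\Sigma_p$ is an $(n-1)$-dimensional Alexandrov space of curvature $\ge 1$ with nonempty boundary, and there is a canonical identification $\partial\Sigma_p(X)=\Sigma_p(\partial X)$ where the right-hand side uses the intrinsic length metric on $\partial X$. By the inductive hypothesis applied to $\Sigma_p$, the space $\partial\Sigma_p$ is an Alexandrov space of curvature $\ge 1$. Thus the would-be tangent cone $K_p(\partial X)$ equals the Euclidean cone $K(\partial\Sigma_p)$, which is nonnegatively curved, and infinitesimal angles, first-variation formulas, and short geodesic existence on $\partial X$ all work as expected.

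The main step is to pass from infinitesimal curvature bounds to global triangle comparison for the intrinsic metric on $\partial X$. My approach would be to combine two tools: (a) the doubling $D(X)$, which by Perelman's gluing theorem is Alexandrov of curvature $\ge\kappa$ and contains $\partial X$ as the fixed locus of an isometric involution $\sigma$; and (b) the extremal-subset machinery of \cite{PrPt:extremal} applied to the extremal subset $\partial X\subset X$. The idea is to show that any intrinsic geodesic $\gamma$ in $\partial X$ is in fact a quasigeodesic of $D(X)$. Granting this, for any third point $z\in\partial X$ the development of $\gamma$ in $M_\kappa^2$ based at $z$ is convex; unrolling the endpoint data and comparing with the intrinsic distance on $\partial X$ then yields the desired $\kappa$-comparison for triangles in $\partial X$ of sufficiently small perimeter, which by a standard gluing argument suffices to conclude Alexandrov curvature $\ge\kappa$ globally.

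The main obstacle is precisely the quasigeodesic claim: why should a curve that is geodesic for the \emph{intrinsic} metric on $\partial X$ satisfy the quasigeodesic convexity condition in $D(X)$? A priori such a curve is not locally minimizing in $D(X)$ and could branch or fail first-variation in the ambient direction. To handle this I would use the rigidity for $d(\partial C,\cdot)$ in Proposition \ref{prop:concave-rigid} together with its analogue for $D(X)$ and $\sigma$-invariance: any variation of $\gamma$ that leaves $\partial X$ can, by $\sigma$-symmetrization, be replaced by a competitor lying on $\partial X$ of no greater length, and the second-order deficit in the comparison inequality is controlled by the inductively established curvature bound on $\partial\Sigma_{\gamma(t)}$. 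Making this symmetrization argument rigorous, especially at points where $\gamma$ passes through essential singularities of $\partial X$ (e.g., corners, where even the doubled tangent cone is not a half-space), is the step I expect to be genuinely hard and where a purely formal induction is not sufficient.
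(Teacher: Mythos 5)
The statement you were asked to prove is labeled as a \emph{conjecture} in the paper (it is the boundary conjecture of Perelman--Petrunin), and the paper does not prove it; it was an open problem when the paper was written. The paper only records, in Observation \ref{obs:collar}, that if Conjecture \ref{conj:bdyalex} held it would imply the metric-collar Conjecture \ref{conj:collar}, and then establishes the collar result in dimension three by a direct argument (Theorem \ref{thm:collar}) that bypasses the boundary conjecture entirely. So there is no proof in the paper to compare yours against, and your write-up should not be read as a proof either.

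On the substance of your attempt: you correctly identify the crux, namely whether an intrinsic geodesic $\gamma$ of $\partial X$ is a quasigeodesic of $D(X)$, and you are right that this is where a purely formal induction breaks down. But you do not close the gap, and your proposed fix does not work. The quasigeodesic condition is a development-convexity statement for $\gamma$ relative to \emph{arbitrary} base points $z\in D(X)$; there is no variational characterization of quasigeodesics for a symmetrization trick to act on. In particular, near a boundary point $p$ where $\partial\Sigma_p$ has small diameter (a ``corner''), there are ambient directions in $K_p(D(X))$ making angle $>\pi/2$ with every direction tangent to $\partial X$, and the first-variation defect of $\gamma$ against such a base point can go the wrong way; $\sigma$-symmetrization of a variation that leaves $\partial X$ neither preserves the base point nor controls the sign of the defect. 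Your inductive infrastructure at the level of tangent cones (curvature $\ge 1$ for $\partial\Sigma_p$, hence $\ge 0$ for $K(\partial\Sigma_p)$) is fine and is exactly the sort of information everyone has; the unresolved and genuinely hard step is promoting the infinitesimal bound to a local triangle-comparison bound for the induced intrinsic metric, and this is not a routine gluing step.
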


Conjecture \ref{conj:collar} will be true  if we come to know 
Conjecture \ref{conj:bdyalex} to be true: \par

\begin{obs} \label{obs:collar}
If Conjecture \ref{conj:bdyalex} is true, then 
Conjecture \ref{conj:collar} is also true. \par
\end{obs}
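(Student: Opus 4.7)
The plan is to use Conjecture \ref{conj:bdyalex} to equip $\partial X^n$ with an intrinsic Alexandrov structure and then build the collar as the image of canonical ``inward normal'' quasigeodesics issuing from $\partial X^n$. Under the conjecture, for each $p \in \partial X^n$ the boundary $\partial\Sigma_p(X^n)$ of the space of directions coincides with $\Sigma_p(\partial X^n)$ and is itself a compact Alexandrov space of curvature $\ge 1$. Applying Lemma \ref{lem:1vertex} to the strictly concave function $d(\partial\Sigma_p(X^n),\cdot)$ on $\Sigma_p(X^n)$ yields a unique maximum point $\nu_p$ in the interior of $\Sigma_p(X^n)$, which will play the role of the inward unit normal at $p$. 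Let $\gamma_p$ denote a quasigeodesic in $X^n$ starting at $p$ with initial direction $\nu_p$, and define
\begin{equation*}
  \Phi : \partial X^n \times [0,\epsilon) \to X^n, \qquad \Phi(p,t) = \gamma_p(t).
\end{equation*}

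First I would verify that $p \mapsto \nu_p$ is continuous, interpreted via Gromov--Hausdorff convergence of tangent cones $K_{p_k}(X^n) \to K_p(X^n)$ whenever $p_k \to p$ in $\partial X^n$. The uniqueness in Lemma \ref{lem:1vertex}, together with the coherent convergence $\Sigma_{p_k}(\partial X^n) \to \Sigma_p(\partial X^n)$ supplied by the assumed Alexandrov structure on $\partial X^n$, forces any subsequential limit of $\nu_{p_k}$ to be $\nu_p$. Combined with Lipschitz dependence of quasigeodesics on their initial data, this gives continuity of $\Phi$, and the first-variation formula yields
\begin{equation*}
  \frac{d}{dt}d(\partial X^n,\gamma_p(t))\Big|_{t=0^+} = \sin d(\nu_p,\partial\Sigma_p(X^n)) \ge c > 0
\end{equation*}
uniformly in $p$ by compactness of $\partial X^n$. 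After reparametrizing in $t$ one may arrange $d(\partial X^n,\Phi(p,t)) = t$ on $[0,\epsilon)$. The concavity of $d_{\partial X^n}$ on the double $D(X^n)$ (the generalization to curvature $\ge\kappa$ of Proposition \ref{prop:concave-rigid}) combined with the uniform lower bound above forces $\Phi$ to be injective on $\partial X^n \times [0,\epsilon_0)$ for some $\epsilon_0 > 0$: a collision $\Phi(p,t) = \Phi(q,t)$ with $p \ne q$ would produce a geodesic along which $d_{\partial X^n}$ is constant, contradicting the rigidity part of Proposition \ref{prop:concave-rigid} at small scales.

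Surjectivity of $\Phi$ onto $X^n \setminus X^n_\epsilon$ is then obtained from a first-variation argument: for $x$ close enough to $\partial X^n$, every minimizing segment from $x$ to its foot $p$ must arrive in a direction whose reverse is $\nu_p$, since any other direction at $p$ would lie at distance $<d(\nu_p,\partial\Sigma_p(X^n))$ from $\partial\Sigma_p(X^n)$ and so fail to minimize. Hence $x = \Phi(p, d(x,\partial X^n))$, and invariance of domain, together with the topological collar already provided by Proposition \ref{prop:collar-general}, upgrades the continuous injective surjection $\Phi$ to a homeomorphism. The principal obstacle is the continuity of $p \mapsto \nu_p$ and the uniform positivity of the constant $c$ above; both require that the pairs $(\Sigma_{p_k}(X^n),\Sigma_{p_k}(\partial X^n))$ converge coherently as $p_k \to p$, which is exactly what Conjecture \ref{conj:bdyalex} secures through Stability Theorem \ref{thm:stability} and its respectful version \ref{thm:stability-respect}. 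Without the conjecture the boundary factor $\Sigma_p(\partial X^n)$ could degenerate or jump with $p$, breaking both the continuity of $\nu_p$ and the uniform lower bound on $\sin d(\nu_p,\partial\Sigma_p(X^n))$.
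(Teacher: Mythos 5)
Your approach is genuinely different from the paper's, and it has a gap that is hard to repair.

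The paper's proof is short and slick: it glues a half-infinite cylinder $\partial X^n\times[0,\infty)$ onto $X^n$ along the boundary. Conjecture \ref{conj:bdyalex} is used precisely to ensure that $\partial X^n\times[0,\infty)$ is Alexandrov of curvature bounded below, so that Petrunin's gluing theorem \cite{Pt:apply} makes the resulting space $Y^n$ Alexandrov as well. Then $f=d(\partial X^n\times\{1\},\cdot)$ is regular near $\partial X^n\times\{0\}\subset Y^n$, and Perelman's theory of regular maps (\cite{Pr:alex2}, combined with \cite{Si:stratified}) produces the collar essentially for free as the preimage of a small interval under a proper regular function. There is no need to choose normal directions, to parametrize quasigeodesics, or to prove continuity of anything pointwise.

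Your proposal instead tries to build the collar by flowing along quasigeodesics in the normal direction $\nu_p$, and this is where the difficulty lies. Two steps do not go through as stated. First, the map $p\mapsto\nu_p$ is not obviously continuous: spaces of directions $\Sigma_p(X^n)$ do \emph{not} vary continuously in $p$ in any sense stronger than Gromov--Hausdorff (they can jump to smaller spaces), and Gromov--Hausdorff convergence of $(\Sigma_{p_k}(X^n),\partial\Sigma_{p_k}(X^n))$ is not by itself a ``coherent'' convergence that would allow you to identify limits of $\nu_{p_k}$ with $\nu_p$. Conjecture \ref{conj:bdyalex} gives $\partial X^n$ an intrinsic lower curvature bound but says nothing that fixes this pointwise jumping of directions. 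Second, and more seriously, even with a continuous choice of $\nu_p$ the claimed ``Lipschitz dependence of quasigeodesics on their initial data'' is simply false in Alexandrov spaces: quasigeodesics need not be unique in a given direction, and they can branch and fail to depend continuously on the starting direction (this is one of the fundamental differences from the Riemannian exponential map). Consequently $\Phi$ is not well-defined as a single-valued continuous map without further work, and the injectivity argument built on Proposition \ref{prop:concave-rigid} also presupposes a genuine geodesic between colliding points rather than two merging quasigeodesics. The gluing argument in the paper is designed exactly to avoid having to control quasigeodesics; any repair of your approach would have to replace the quasigeodesic flow by the gradient flow of a regular distance function, which essentially reproduces the paper's argument.
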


\begin{proof}
Let $Y^n$ denote the gluing
$$
   Y^n := X^n\cup_{\partial X^n} \partial X^n\times [0,\infty).
$$
By the assumption,  $\partial X^n\times [0,\infty)$
is an Alexandrov space with curvature bounded below.
It follows from \cite{Pt:apply} that $Y^n$ is also an 
Alexandrov space with curvature bounded below.
Put $Z:=\partial X^n\times 1\subset Y^n$ and consider 
$$
  f = d(Z,\, \cdot\,) : Y^n\to \R.
$$
Obviously $f$ is regular on $\partial X^n\times 0$. 
Observation \ref{obs:collar} immediately follows from 
\cite{Pr:alex2}.
\end{proof}

\begin{thm} \label{thm:collar}
Conjecture \ref{conj:collar} is true for $n=3$.
\end{thm}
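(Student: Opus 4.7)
The strategy is to reduce Theorem~\ref{thm:collar} to Conjecture~\ref{conj:bdyalex} in dimension~$3$ and then apply Observation~\ref{obs:collar}. So the essential task is to prove: if $X^3$ is a 3-dimensional Alexandrov space with curvature $\ge \kappa$, then $\partial X^3$ equipped with the induced length metric is a 2-dimensional Alexandrov space of curvature $\ge \kappa$. Granted this, the gluing $Y^3 := X^3 \cup_{\partial X^3} \partial X^3 \times [0,\infty)$ is Alexandrov by Petrunin's gluing theorem, the distance function from $Z := \partial X^3 \times \{1\}$ is regular near $\partial X^3 \times \{0\}$ in the sense of \cite{Pr:alex2}, and the regular-fibration argument there yields the desired metric collar.

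The infinitesimal picture is already in hand. At every $p \in \partial X^3$, the space of directions $\Sigma_p$ is a 2-dimensional Alexandrov space of curvature $\ge 1$, and its boundary $\partial \Sigma_p$ is a disjoint union of arcs and circles of total length $\le 2\pi$, which is automatically a 1-dimensional Alexandrov space of curvature $\ge 1$. The tangent cone of $\partial X^3$ at $p$ in the intrinsic metric is therefore $K(\partial \Sigma_p)$, a nonnegatively curved 2-dimensional cone. In particular $\partial X^3$ is a topological 2-manifold, and the infinitesimal $\kappa$-triangle comparison holds at every point.

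To upgrade this to a global $\kappa$-triangle comparison on $\partial X^3$, I would approximate $\partial X^3$ by the equidistant hypersurfaces $\Sigma_t := \{d_{\partial X^3} = t\}$ for small $t > 0$. Since $d_{\partial X^3}$ is concave on $X^3$ by \cite{Pr:alex2}, each superlevel set $\{d_{\partial X^3} \ge t\}$ is totally convex and hence an Alexandrov space of curvature $\ge \kappa$; its boundary $\Sigma_t$ inherits a 2-dimensional intrinsic Alexandrov structure with curvature $\ge \kappa(t)$, where $\kappa(t) \to \kappa$ as $t \to 0^+$. The gradient-like flow of $d_{\partial X^3}$ furnishes bi-Lipschitz maps $\Phi_t : \Sigma_t \to \partial X^3$ whose bi-Lipschitz constants tend to $1$, and passing to the Gromov-Hausdorff limit as $t \to 0^+$ realizes $\partial X^3$ as a 2-dimensional Alexandrov space of curvature $\ge \kappa$.

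The main obstacle is the uniform quantitative control of the curvature bound $\kappa(t) \to \kappa$ on the family $\{\Sigma_t\}$, particularly near essential singular points of $\partial X^3$ where the level hypersurfaces may degenerate in shape. Controlling this requires combining the second-variation inequalities coming from the concavity of $d_{\partial X^3}$ with the infinitesimal cone comparison supplied by $K(\partial \Sigma_p)$, and is the place where the assumption $n = 3$ (so that $\Sigma_p$ is a 2-dimensional Alexandrov surface and $\partial \Sigma_p$ is 1-dimensional, hence Alexandrov without further hypotheses) enters most critically.
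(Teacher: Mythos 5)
Your proposal and the paper's proof take genuinely different routes, and yours contains a circularity at its central step.

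The paper's proof of Theorem~\ref{thm:collar} is a direct, purely topological construction. It first observes that for each $p\in\partial X^3$ the rescaling convergence $(\tfrac{1}{r}X^3,p)\to(K_p,o_p)$ together with the regularity of $(d_{\partial K_p},d_{o_p})$ yields a \emph{local} metric collar $B(p,r)-X^3_\epsilon$ of $\partial X^3\cap B(p,r)$. It then covers $\partial X^3$ by finitely many such balls, triangulates $\partial X^3$ by Lipschitz curves, builds the collar skeleton-by-skeleton (minimal geodesics over the $0$-skeleton, Lipschitz $2$-disks $D_e$ over each edge $e$ of the $1$-skeleton), and finally fills in each $2$-simplex by observing that $\Delta^2\cup D_{\partial\Delta^2}\cup\hat\Delta^2$ is a locally flat embedded $S^2$ and invoking the generalized Schoenflies theorem in $\R^3$ to produce a $D^3$ extending the collar. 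Nothing in the argument establishes or uses an intrinsic lower curvature bound on $\partial X^3$; indeed, the paper explicitly flags Conjecture~\ref{conj:bdyalex} as open and records Observation~\ref{obs:collar} only as a conditional alternative.

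Your proposal, by contrast, tries to establish Conjecture~\ref{conj:bdyalex} in dimension $3$ and then invoke Observation~\ref{obs:collar}. The gap is at the step where you assert that for small $t>0$ the level hypersurface $\Sigma_t=\partial\{d_{\partial X^3}\ge t\}$ ``inherits a $2$-dimensional intrinsic Alexandrov structure with curvature $\ge\kappa(t)$.'' The superlevel set $X^t:=\{d_{\partial X^3}\ge t\}$ is indeed totally convex and hence itself an Alexandrov space of curvature $\ge\kappa$, but concluding from this that $\partial X^t=\Sigma_t$ is an Alexandrov space is \emph{precisely} Conjecture~\ref{conj:bdyalex} applied to $X^t$ --- which is no easier than the statement you set out to prove for $X^3$ itself. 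The Riemannian intuition (nonnegative second fundamental form from concavity of $d_{\partial X}$, then Gauss equation) does not transfer, because in the Alexandrov setting $\Sigma_t$ carries no smooth structure, no second fundamental form, and no Gauss equation; there is no known ``second-variation inequality'' that would output an intrinsic lower curvature bound for a level set of a concave function. The infinitesimal picture you describe (tangent cone of $\partial X^3$ at $p$ is $K(\partial\Sigma_p)$) is correct and is part of what the paper uses for the local collar, but it does not by itself yield a global triangle comparison on $\partial X^3$. As written, the proposal does not give a proof; it reduces the theorem to an open problem whose resolution the paper deliberately circumvents by working topologically.
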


\begin{proof}
First note that each point $p$ of $\partial X^3$ has a small 
spherical neighborhood homeomorphic to $\R^3_+$.
Consider the convergence
$(\frac{1}{r}X^3,p)\to (K_p,o_p)$ as $r\to 0$.
Since $\Sigma_p-(\Sigma_p)_{\epsilon}$ provides a collar
neighborhood of $\partial\Sigma_p$ for small $\epsilon$,
it follows that $K_p-(K_p)_{\epsilon}$ also provides a
collar neighborhood of $\partial K_p$.
Note that $d_{\partial K_p}$ and $(d_{\partial K_p},d_{o_p})$
are regular on $\{ 0<d_{\partial K_p}\le\epsilon\}$ and 
$\{ 0<d_{\partial K_p}\le\epsilon, d_{o_p}\ge 1/2\}$ 
respectively. This ensures the existence of a positive number
$r$ such that $B(p,r)-X^3_{\epsilon}$ provides a collar neighborhood
of $\partial X^3\cap B(p,r)$ for $\epsilon\ll r$.
We take a finite covering 
$\{ B(p_j,r_j/2)\}_{1\le j\le N}$ of $\partial X^3$, 
where $\epsilon_j\ll r_j$ are chosen for $p_j$ as above.
Let $K$ be a triangulation of $\partial X^3$ by Lipschitz curves,
and take sufficiently small $\epsilon$ compared with 
$\min \{\epsilon_j\}$ and the sizes of the simplices of $K$.
Let $K^i$ denote the $i$-skeleton of $K$.
For each $p\in K^0$, choose a minimal geodesic $\gamma_p$
from $p$ to $\partial X_{\epsilon}$.
The disjoint union of $\{ \gamma_p\}_{p\in K^0}$ provides 
a collar of $K_0$. 
For each edge $e\in K^1$, choose a Lipschitz curve $e_1$ on 
$\partial X_{\epsilon}$ such that 
\begin{enumerate}
 \item the union of $e$, $e_1$, $\gamma_p$ and $\gamma_q$
   bounds a $2$-disk in $X^3-\interior X^3_{\epsilon}$, 
   denoted $D_{e}$, giving a collar of $e$, satisfying
   $\interior D_e\subset\interior X^3-X^3_{\epsilon}$,
   where $p$ and $q$ are the endpoints of $e;$
 \item if two edges $e$ and $e'$ of $K$ meet at a vertex $p$, 
   then $D_{e}\cap D_{e'}=\gamma_p$.
\end{enumerate}
This is possible because of the local collar structure 
mentioned above.
Now for every $2$-simplex $\Delta^2$ of $K$,
let $D_{\partial \Delta^2}$ be the collar of $\partial \Delta^2$
constructed above. 
Let $\hat\Delta^2$ denote a disk domain of $\partial X^3_{\epsilon}$
bounded by $D_{\partial \Delta^2}\cap \partial X_{\epsilon}$.
Since the union of $\Delta^2$, $D_{\partial \Delta^2}$ and 
$\hat\Delta^2$ is  homeomorphic to $S^2$ and locally flat,
by the generalized Schoenflies theorem, 
it bounds a closed domain homeomorphic to $D^3$,
which gives a collar of $\Delta^2$ extending 
the collar structure given by $D_{\partial \Delta^2}$.
Thus we obtain a collar structure on $X^3-X^3_{\epsilon}$.
\end{proof}

%    \input{dim3nobdyloc}

%dim3nobdyloc.tex
\section{Collapsing to three-spaces without boundary- \\
local construction} 
\label{sec:dim3nobdyloc}

 Let a sequence of complete $4$-dimensional pointed Riemannian
manifolds $(M_i^4,p_i)$ with $K \ge -1$ converge to a 
$3$-dimensional pointed Alexandrov space $(X^3,p)$. 
Throughout this section, we assume that 
$p$ is an interior point of $X^3$. The purpose of this section 
is to construct an $S^1$-action on a small perturbation of $B(p_i,r)$,
for a sufficiently small $r>0$ depending only on $p$.

By Fibration Theorem \ref{thm:orig-cap}, we have a locally 
trivial $S^1$-bundle 
$f_i:M_i^{\prime}\to \interior B(p,1)-S_{\delta_3}(X^3)$ which is 
$\epsilon_i$-approximation, $\lim_{i\to\infty}\epsilon_i=0$,
where $M_i^{\prime}$ is an open subset of $M_i^4$ and 
$\delta_3 >0$ is sufficiently small.

Our first step is to prove that $\partial B(p_i,r)$ is a Seifert 
fibred space over $\partial B(p,r)$.

Let $\delta\ll r$ and consider $B(q,\delta)$ for every $q\in \partial B(p,r)$.
We take a point $q_i\in \partial B(p_i,r)$ Gromov-Hausdorff 
close to $q\in\partial B(p,r)$.

\begin{lem}  \label{lem:bdytorus}
There exist positive numbers $r_p$ and $c_q$ such that
if $r\le r_p$  and $\delta/r\le c_q$, then 
for some  $r_1<r<r_2$ and $\delta_1<\delta<\delta_2$ sufficiently 
close to $r$ and $\delta$ respectively, there exists
a homeomorphism 
$h_i:(d_{p_i},d_{q_i})^{-1}([r_1,r_2]\times [\delta_1,\delta_2])\to
T^2 \times [r_1,r_2]\times [\delta_1,\delta_2]$ 
which respects $(d_{p_i},d_{q_i})$, that is, 
$p_r\circ h_i=(d_{p_i}, d_{q_i})$, where $p_r$ is the projection 
to the factor $[r_1, r_2]\times [\delta_1, \delta_2]$.
In particular, $\partial B(p_i,r)\cap \partial B(q_i,\delta)$ is 
homeomorphic to $T^2$.
\end{lem}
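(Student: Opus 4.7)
The plan is to establish uniform Perelman-regularity of $(d_{p_i},d_{q_i}):M_i^4\to\R^2$ near the level set $\{d_{p_i}=r,\,d_{q_i}=\delta\}$, to identify the fibre of the resulting bundle with $T^2$ via Fibration Theorem~\ref{thm:orig-cap}, and to trivialize the bundle over the contractible rectangle.

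First I would check regularity of $(d_p,d_q):X^3\to\R^2$. Choosing $r_p$ so small that $\frac{1}{r}B(p,10r)$ is $\tau(r)$-close to the unit ball in the tangent cone $K_p$, the cone picture gives $\tilde\angle pxq=\pi/2\pm\tau(r)\pm\tau(\delta/r)$ at any $x$ with $d(p,x)\approx r$, $d(q,x)\approx\delta$. A witness point $w$ with $\tilde\angle wxp,\tilde\angle wxq>\pi/2+c_0$ is produced by moving from $x$ inside $K_p$ opposite to the angular bisector of $p'_x,q'_x$. Combined with the Alexandrov inequality $\angle\ge\tilde\angle$, this yields $(c_0,\epsilon)$-regularity of $(d_p,d_q)$ on a neighborhood of $(d_p,d_q)^{-1}(r,\delta)$, uniformly over $r\le r_p$ and $\delta/r\le c_q$.

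Next I would transfer this to $M_i^4$. By Gromov--Hausdorff convergence of the four distances $d(p_i,q_i),d(p_i,x_i),d(q_i,x_i),d(w_i,x_i)$ to the corresponding distances in $X^3$, the comparison angles $\tilde\angle p_ix_iq_i,\tilde\angle w_ix_ip_i,\tilde\angle w_ix_iq_i$ in $M_i^4$ approach those in $X^3$, and Toponogov in $M_i^4$ promotes the witness condition to the actual angles $\angle$. Thus for large $i$ the regularity conditions hold with gap $c_0/2$ and error $2\epsilon$. By \cite{Pr:alex2, Si:stratified}, $(d_{p_i},d_{q_i})$ is then a locally trivial fibre bundle over its regular image; choosing $r_1<r<r_2$ and $\delta_1<\delta<\delta_2$ so that $[r_1,r_2]\times[\delta_1,\delta_2]$ lies inside this image and using contractibility of the box, the bundle is trivial, giving the homeomorphism $h_i$ respecting $(d_{p_i},d_{q_i})$.

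It remains to identify the fibre $F_i=(d_{p_i},d_{q_i})^{-1}(r,\delta)$ with $T^2$. For $r_p$ chosen so that $\partial B(p,r)$ avoids $S_{\delta_3}(X^3)$, Theorem~\ref{thm:orig-cap} provides an $S^1$-bundle $f_i:M_i'\to\interior B(p,1)-S_{\delta_3}(X^3)$, whose restriction presents $F_i$ as an $S^1$-bundle over $(d_p,d_q)^{-1}(r,\delta)\subset X^3$. The cone picture identifies the base with a small geodesic circle in $\Sigma_p$, hence with $S^1$. Therefore $F_i$ is a closed $S^1$-bundle over $S^1$, i.e.\ either $T^2$ or the Klein bottle; orientability of $M_i^4$, inherited by the trivialized neighborhood $F_i\times\text{box}$, forces $F_i\cong T^2$.

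The main obstacle is the transfer of regularity: Gromov--Hausdorff convergence controls only the comparison angles, so Toponogov must be invoked both in $X^3$ and in $M_i^4$ to pass between actual and comparison angles at the witness, and the positive gap $c_0$ must be shown to survive the collapse. Once this is in place, the cone model at $p$, the application of the fibration theorem, and the orientability argument are routine.
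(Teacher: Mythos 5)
Your proposal is essentially the paper's argument: establish Perelman-regularity of $(d_p,d_q)$ near the level set, transfer it to $(d_{p_i},d_{q_i})$, and identify the fibre as the total space of an $S^1$-bundle over $S^1$ coming from Fibration Theorem~\ref{thm:orig-cap}, with orientability of $M_i^4$ ruling out the Klein bottle. The paper states the regularity without spelling out the witness construction, orders the two steps (Fibration Theorem first, regularity of $(d_{p_i},d_{q_i})$ second), but the ingredients are the same.

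One point needs correcting. You write ``For $r_p$ chosen so that $\partial B(p,r)$ avoids $S_{\delta_3}(X^3)$''; this hypothesis is unsatisfiable whenever $p$ is a singular point of $X^3$, since $S_{\delta_3}(X^3)\cap B(p,r)$ then consists (in the cone picture) of curves running roughly radially from $p$, and these necessarily meet every sphere $\partial B(p,r)$. What is actually needed, and what the constant $c_q$ in the lemma is there to guarantee, is only that the thin annular region $(d_p,d_q)^{-1}([r_1,r_2]\times[\delta_1,\delta_2])$ — a small band around a circle transverse to any singular quasigeodesic through or near $q$ — lies in $R_{\delta_3}(X^3)$, so that the Fibration Theorem applies there. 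With this substitution the rest of your argument goes through; in particular the witness direction should be sought in $\Sigma_x$ with the spherical-suspension structure $\Sigma_x\approx S(\Sigma_{\xi_x}(\Sigma_p))$ coming from Proposition~\ref{prop:product}, where the gap constant $c_0$ may degrade as $\Sigma_{\xi_x}(\Sigma_p)$ gets short, which is another reason $c_q$ must depend on $q$.
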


\begin{proof}
Note that $(d_p,d_q)$ is regular near $(d_p,d_q)^{-1}(r,\delta)$
for $\delta\ll r\le\mbox{const}_p$. By \cite{Pr:alex2}, 
$(d_p,d_q)^{-1}(r,\delta)\simeq S^1$ and  we have a homeomorphism 
$h:(d_p,d_q)^{-1}([r_1,r_2]\times [\delta_1,\delta_2])\to
S^1\times [r_1,r_2]\times [\delta_1,\delta_2]$ which respects $(d_p,d_q)$
for some  $r_1<r<r_2$ and $\delta_1<\delta<\delta_2$ 
sufficiently close to $r$ and $\delta$ respectively.
Since $(d_p,d_q)^{-1}([r_1,r_2]\times [\delta_1,\delta_2])$
does not meet $S_{\delta_3}(X^3)$, Fibration Theorem \ref{thm:orig-cap}
together with a standard Morse theory for distance functions yields
\begin{align*}
(d_{p_i},d_{q_i})^{-1}&([r_1,r_2] \times [\delta_1,\delta_2]) \\
    & \simeq \mbox{$S^1$-bundle over 
      $(d_p,d_q)^{-1}([r_1,r_2]\times [\delta_1,\delta_2])$}, \\
    & \simeq \mbox{($S^1$-trivial bundle over 
      $S^1)\times [r_1,r_2]\times [\delta_1,\delta_2]$}.
\end{align*}
From the regularity of $(d_{p_i},d_{q_i})$, we have the required 
homeomorphism. 
\end{proof}

Recall that 
$$
     U_{\delta}(p_i,q_i):=\partial B(p_i,r)\cap B(q_i,\delta),
$$
for $q_i\in\partial B(p_i,r)$.

\begin{lem}\label{lem:bdysolid}
$U_{\delta}(p_i,q_i)\simeq S^1\times D^2$ for sufficiently small
$r$, $\delta\ll r$ and large $i$.
\end{lem}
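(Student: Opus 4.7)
The plan is to identify $U_\delta(p_i, q_i) = \partial B(p_i, r) \cap B(q_i, \delta)$ as an $S^1$-bundle over a topological $2$-disk via the Fibration Theorem, hence as $S^1 \times D^2$. I would first arrange that $q \in \partial B(p, r)$ lies in the regular part $X^3 - S_{\delta_3}(X^3)$, which is possible for a generic choice since $S_{\delta_3}$ has codimension at least one in $X^3$. For $r \le r_p$ and $\delta \ll r$ sufficiently small, $d_p$ is regular on a neighborhood of $A := \partial B(p, r) \cap B(q, \delta)$ in $X^3$, so $A$ is a topological $2$-disk centered at $q$. The Fibration Theorem \ref{thm:orig-cap} supplies an $S^1$-bundle $f_i: M_i^{\prime} \to \interior B(p, 1) - S_{\delta_3}(X^3)$ which is an almost Lipschitz submersion; restricting over the contractible base $A$ yields $f_i^{-1}(A) \simeq S^1 \times D^2$.

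To conclude, I must identify $f_i^{-1}(A)$ homeomorphically with $U_\delta(p_i, q_i)$. Because $f_i$ is an almost Lipschitz submersion, both $|d_{p_i} - d_p \circ f_i|$ and $|d_{q_i} - d_q \circ f_i|$ are bounded by some $\epsilon_i^{\prime} \to 0$ on the region in question, so these two subsets are Hausdorff-close in $M_i^4$. To upgrade this closeness to a homeomorphism I would split $U_\delta(p_i, q_i)$ into an outer annular region $\partial B(p_i, r) \cap \{d_{q_i} \in [\delta_0, \delta]\}$ with $\epsilon_i \ll \delta_0 \ll \delta$, on which Lemma \ref{lem:bdytorus} provides a respectful product structure $T^2 \times [\delta_0, \delta]$, and an inner piece $U_{\delta_0}(p_i, q_i)$, which contains the collapsing fiber $f_i^{-1}(q)$ and is again homeomorphic to $S^1 \times D^2$ by the preceding $S^1$-bundle argument applied to the smaller disk $A_0 := \partial B(p, r) \cap B(q, \delta_0)$. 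Gluing these pieces along their common $T^2$ boundary via the identity yields $S^1 \times D^2$, completing the proof.

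The main obstacle is the identification of $f_i^{-1}(A_0)$ with the inner piece $U_{\delta_0}(p_i, q_i)$: the function $d_{q_i}$ fails to be regular on the fiber $f_i^{-1}(q)$ through $q_i$ (where it takes values near zero), so neither Theorem \ref{thm:stability-respect} nor standard Morse-theoretic arguments apply uniformly on $U_\delta(p_i, q_i)$. The splitting above is precisely what sidesteps this: the outer annulus is handled by the regularity of $(d_{p_i}, d_{q_i})$ there via Lemma \ref{lem:bdytorus}, and for the inner piece one uses the $S^1$-bundle structure of $f_i$ together with a gradient-like vector field in the style of Lemma \ref{lem:flow-rescal} to flow $f_i^{-1}(A_0)$ onto $U_{\delta_0}(p_i, q_i)$ along approximately radial directions of $d_{q_i}$.
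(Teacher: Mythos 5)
Your approach reduces to the case where $q$ is a regular point of $X^3$, and for that case the argument is essentially sound: the fibration theorem over the $2$-disk $A = \partial B(p,r)\cap B(q,\delta)$ gives the $S^1\times D^2$ structure, and the identification with $U_\delta(p_i,q_i)$ can indeed be handled by the two-region decomposition you describe. However, the opening move --- ``I would first arrange that $q$ lies in the regular part, which is possible for a generic choice'' --- is a genuine gap. The lemma is stated for every $q\in\partial B(p,r)$, and the later applications (notably Lemma \ref{lem:af} and, via Lemma \ref{lem:singslice}, the whole construction of the local $S^1$-action) invoke it precisely at points $q=q^j$ chosen to lie in $S_{\delta_3}(X^3)$. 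When $q$ is singular, your base disk $A$ contains the singularity, the fibration $f_i$ is undefined over $q$, and the inner piece $U_{\delta_0}(p_i,q_i)$ is exactly the object whose topology is in question; you cannot flow $f_i^{-1}(A_0)$ onto it because $f_i^{-1}(A_0)$ is not even defined. Proving the lemma for generic $q$ and then hoping to pass to a singular $q$ by density also fails: there is no a priori reason $U_\delta(p_i,q_i)$ should depend continuously on $q$ across the singular set --- that stability is equivalent to what is being proved.

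The paper's proof takes a different route precisely to handle the singular case uniformly: it invokes Lemma \ref{lem:u_1} (the rescaling argument around $q_i$, using $\diam(\Sigma_q)=\pi$ along the geodesic $pq$ and the generalized soul theorem in dimension $3$) to conclude $U_\delta(p_i,q_i)\simeq D^3$ or $S^1\times D^2$, and then Lemma \ref{lem:bdytorus} to pin down $T^2$ as the boundary, which rules out $D^3$. That argument places no regularity hypothesis on $q$. If you want to keep a fibration-theoretic proof, you would need to supplement it with a separate analysis of the collapsed fiber over a singular $q$, which is exactly what Lemma \ref{lem:u_1} supplies; as written, your proposal does not close that case.
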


\begin{proof}
It follows from Lemma \ref{lem:u_1} that 
$U_{\delta}(p_i,q_i)$ is homeomorphic to either $S^1\times D^2$
or $D^3$ for any sufficiently large $i$.
Hence the result follows from  Lemma \ref{lem:bdytorus}.
\end{proof}

\begin{prop} \label{prop:bigdiam}
If $p$ is an interior point of $X^3$ and if
$\diam(\Sigma_p)>\pi/2$, then $B(p_i,r)\simeq S^1\times D^3$.
\end{prop}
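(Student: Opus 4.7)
The plan is to run the endgame of the proof of Theorem \ref{thm:rescal} in the range $\pi/2 < \diam(\Sigma_p) < \pi$, but to replace the topological hypothesis on $B(p_i,r)$ (not being $D^4$) that was invoked there by the unconditional identification of the transverse slice provided by Lemma \ref{lem:bdysolid}. In other words, the diameter assumption will furnish the geometry needed for a cylindrical decomposition of $B(p_i,r)$ over a slice, and the slice will be recognized as a solid torus by Section~\ref{sec:dim3nobdyloc}'s boundary-torus computation.

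Concretely, since $\diam(\Sigma_p) > \pi/2$, I would first pick $\xi,\eta\in\Sigma_p$ realizing the diameter and then select $x=\exp(r\xi')$, $y=\exp(r\eta')$ with $\xi',\eta'$ close to $\xi,\eta$ and $r = \epsilon = r_p$ sufficiently small; take $w=\exp(\epsilon\xi')\in px$ and Gromov-Hausdorff approximants $x_i,y_i,w_i\in M_i^4$. The key comparison output is the existence of a constant $\delta>0$ with $\tilde\angle xzy>\pi/2+\delta$ for every $z\in B(p,\epsilon)$, which is exactly the input set up before Lemma \ref{lem:flow-rescal}. That lemma then produces a gradient-like vector field $V_i$ for $d_{y_i}$ on $B(p_i,\epsilon)$, and Lemma \ref{lem:dim3balltop}, whose flow argument combines $V_i$ with a smooth approximation of the distance from the broken geodesic $\ell_i=p_iw_i\cup p_iy_i$, yields the product decomposition
\begin{equation*}
    B(p_i,\epsilon)\;\simeq\;U_{\epsilon_1}(p_i,w_i)\times I.
\end{equation*}

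Finally, since $w\in\partial B(p,\epsilon)$, Lemma \ref{lem:bdysolid} applies (with $\epsilon$ playing the role of $r$ in Section \ref{sec:dim3nobdyloc}) and gives $U_{\epsilon_1}(p_i,w_i)\simeq S^1\times D^2$; the point is that the torus cross-section from Lemma \ref{lem:bdytorus} rules out the $D^3$ alternative in Lemma \ref{lem:u_1} unconditionally, without needing any a priori constraint on the topology of $B(p_i,r)$. Composing, $B(p_i,r)\simeq (S^1\times D^2)\times I\simeq S^1\times D^3$, as required.

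I do not expect a substantial new obstacle: all the genuinely hard work --- the rescaling used in Lemma \ref{lem:u_1}, the construction of the gradient-like field in Lemma \ref{lem:flow-rescal}, the Morse/stability argument in Lemma \ref{lem:dim3balltop}, and the torus identification in Lemma \ref{lem:bdytorus} --- has already been carried out in the preceding sections. The only subtlety worth flagging is the dual role of the diameter hypothesis: it is \emph{not} used to pin down the topology of the transverse slice (which comes for free from Lemma \ref{lem:bdysolid}), but rather to guarantee the existence of two anchor points $x,y$ seen from $p$ at angular distance strictly greater than $\pi/2$, which is precisely what makes Lemma \ref{lem:flow-rescal}, and hence the cylinder decomposition of $B(p_i,r)$, available.
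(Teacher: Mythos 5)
Your proposal matches the paper's proof, which simply cites Lemmas \ref{lem:bdysolid} and \ref{lem:dim3balltop}: the cylinder decomposition $B(p_i,\epsilon)\simeq U_{\epsilon_1}(p_i,w_i)\times I$ from Lemma \ref{lem:dim3balltop} together with the slice identification $U_{\epsilon_1}(p_i,w_i)\simeq S^1\times D^2$ from Lemma \ref{lem:bdysolid} (which, via the torus boundary of Lemma \ref{lem:bdytorus}, rules out the $D^3$ alternative without any a priori topological hypothesis on $B(p_i,r)$) immediately give $B(p_i,r)\simeq S^1\times D^3$. One small point to note: the proposition is stated for $\diam(\Sigma_p)>\pi/2$, which includes $\diam(\Sigma_p)=\pi$, while you phrase the reduction as running the argument in the range $\pi/2<\diam(\Sigma_p)<\pi$; since the only requirement from the diameter hypothesis is the existence of $x,y$ with $\tilde\angle xzy>\pi/2+\delta$ uniformly on $B(p,\epsilon)$, the cylinder decomposition of Lemma \ref{lem:dim3balltop} in fact applies verbatim when $\diam(\Sigma_p)=\pi$, so the argument covers the full range claimed.
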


\begin{proof}
This follows from Lemmas \ref{lem:bdysolid} and \ref{lem:dim3balltop}.
\end{proof}

Since $S_{\delta}(\Sigma_p)$ is finite for every $\delta>0$,
$S_{\delta}(K_p)$ is the union of finitely many geodesic rays 
starting from $o_p$. In view of the convergence
$(\frac{1}{r} X^3,p) \to (K_p,o_p)$ as $r\to 0$,
it is possible to take a small positive number $\epsilon=\epsilon_p$ 
and a finite subset $\{ q^j \}_{j=1,\ldots,k(p)}$ of
$\partial B(p,r)\cap S_{\delta_3}(X^3)$ for any sufficiently small
$r=r(p,\epsilon)>0$ such that 
\begin{enumerate}
 \item $\{ U_{\epsilon r}(p,q^j) \}_{j}$ covers  
   $\partial B(p,r)\cap S_{\delta_3}(X^3);$
 \item $(U_{\epsilon r}(p,q^j)-U_{\epsilon^{10}r}(p,q^j))$ 
       does not meet $S_{\delta}(X^3);$
 \item $\{ U_{\epsilon r}(p,q^j)\}_j$ is disjoint.
\end{enumerate}
Since $\Sigma_p$ is topologically regular, 
$U_{\epsilon r}(p,q^j)$  is homeomorphic to $D^2$
for a small $\epsilon$.

Let $\xi_j\in S_{\delta_3}(\Sigma_p)$ be the direction with 
$\angle(\xi_j,(q^j)_p^{\prime})<\tau(r)$, and 
take $q^j_i\in \partial B(p_i,r)$ with $q^j_i\to q^j$.

\begin{lem} \label{lem:singslice}
Let $\epsilon=\epsilon_p$ be as above. Then there exists a positive 
number $r_p$ such that 
for every $0<r\le r_p$  we have a three-dimensional submanifold 
$U_{j,i}$ of $M_i^4$, a small perturbation of 
$U_{\epsilon r}(p_i,q^j_i)$, satisfying the  following:
\begin{enumerate}
 \item $d_{GH}(U_{j,i}, U_{\epsilon r}(p,q^j))\to 0$ as $i\to\infty;$
 \item  $U_{j,i}$ is a fibred solid torus over 
    $U_{\epsilon r}(p,q^j)$ such that
   \begin{enumerate}
     \item $\partial U_{j,i} = 
               f_i^{-1}(\partial U_{\epsilon r}(p, q^j))$
       and the fibre structure on $\partial U_{j, i}$ induced from
       that of $U_{j,i}$ is compatible to the $S^1$-bundle structure 
       on $\partial U_{j,i}$  defined by $f_i;$ 
     \item the Seifert invariants of the singular fibre in $U_{j,i}$
       $($if it exists$)$ do not exceed 
       $$
           2\pi/L(\Sigma_{\xi_j}(\Sigma_p));
       $$
   \end{enumerate}
 \item Let
       $$
           \partial B^{f_i}(p_i,r) := 
   f_i^{-1}(\partial B(p,r)-\cup_{j} U_{\epsilon r}(p,q^j))\bigcup 
               (\cup_{j} U_{j,i}),
       $$
   and let $B^{f_i}(p_i,r)$ denote the closed domain bounded by 
   $\partial B^{f_i}(p_i,r)$ containing $p_i$. 
   Then $B^{f_i}(p_i,r)\simeq B(p_i,r)$. \par
\end{enumerate}
In particular, $\partial B^{f_i}(p_i,r)$ is a Seifert fibred space 
over $\partial B(p,r)$. 
\end{lem}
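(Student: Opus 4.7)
The plan is to produce $U_{j,i}$ by constructing a local $S^1$-action near $q^j_i$ via rescaling, intersecting with an invariant perturbation of $\{d_{p_i}=r\}$, and checking the ball equivalence $(3)$ with a gradient-like flow.

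First, I rescale at $q^j_i$. Because $r\le r_p$ is small, a neighborhood of $q^j\in X^3$ is Gromov--Hausdorff close to a neighborhood of $(r,\xi_j)\in K_p=K(\Sigma_p)$; at $(r,\xi_j)$ the tangent cone of $K_p$ splits isometrically as $\R\times K(S^1_{L_j})$, where $L_j:=L(\Sigma_{\xi_j}(\Sigma_p))\le 2\pi$. Theorem~\ref{thm:rescal} and Corollary~\ref{cor:dim3top} applied at $q^j_i$ give $\hat q^j_i\to q^j$ and $\delta^j_i\to 0$ with $B(q^j_i,\epsilon r)\simeq B(\hat q^j_i,R\delta^j_i)$ for any large $R$, and a $4$-dimensional nonnegatively curved limit $(Y_j,y_0^j)$ of $(\tfrac{1}{\delta^j_i}M_i^4,\hat q^j_i)$. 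Lemma~\ref{lem:expanding2} gives $\dim Y_j(\infty)\ge\dim\Sigma_{q^j}=2$, and Proposition~\ref{prop:ideal} forces the soul of $Y_j$ to have dimension at most $1$; the $\R$-splitting induced by the two antipodal suspension poles of $\Sigma_{q^j}$, together with a further splitting off the collapsing direction, identifies a definite-radius invariant ball in $Y_j$ with the metric product $\R^2\times K(S^1_{L_j})$, which supports a canonical $S^1$-action of type $\psi_{a,b}$ (Section~\ref{sec:action}) with $|a|\le 2\pi/L_j$.

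Second, I pull this action back via the equivariant version of Theorem~\ref{thm:stability-respect}, getting an equivariant homeomorphism $B(\hat q^j_i,R\delta^j_i)\simeq(D^4,\psi_{a,b})$; on the boundary annulus the $S^1$-orbits agree, up to $S^1$-bundle isotopy, with the $f_i$-fibres, because both encode the same collapsing direction. Using a $\psi_{a,b}$-invariant smoothing $\tilde d_{p_i}$ of $d_{p_i}$ inside $B(\hat q^j_i,R\delta^j_i)$ glued to $d_{p_i}$ outside, I define $U_{j,i}:=\{\tilde d_{p_i}=r\}\cap B(q^j_i,\epsilon r)$. Transversality of $\tilde d_{p_i}$ to the $S^1$-orbits makes $U_{j,i}$ an $S^1$-invariant $3$-manifold whose quotient is $U_{\epsilon r}(p,q^j)\simeq D^2$, so $U_{j,i}$ is a Seifert fibred solid torus with at most one exceptional fibre of multiplicity $|a|\le 2\pi/L_j$, and its fibration agrees with $f_i$ on $\partial U_{j,i}$. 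This settles $(1)$, $(2\mathrm{a})$, and $(2\mathrm{b})$.

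For $(3)$ I build a gradient-like vector field $V_i$ for $d_{p_i}$ on $B(p_i,r+\eta)-\interior B^{f_i}(p_i,r)$ in the style of Lemma~\ref{lem:flow-rescal}: $\psi_{a,b}$-invariant inside each modified ball $B(\hat q^j_i,R\delta^j_i)$, of Perelman-type regular form outside, and patched by a partition of unity. Its flow deforms $\partial B^{f_i}(p_i,r)$ onto $\partial B(p_i,r)$ and yields $B^{f_i}(p_i,r)\simeq B(p_i,r)$. The main obstacle lies in step two: rigorously upgrading Theorem~\ref{thm:stability-respect} to the equivariant setting for the non-smooth limit $Y_j=\R^2\times K(S^1_{L_j})$, and pinning the cone-length parameter to exactly $L_j$ rather than a proper divisor so that the Seifert multiplicity bound in $(2\mathrm{b})$ emerges as stated; once this equivariant identification is set up, the remaining transversality, gluing, and flow arguments are essentially bookkeeping.
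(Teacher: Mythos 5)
Your plan diverges from the paper's proof in a way that opens a genuine gap. The paper never rescales $M_i^4$ at $q^j_i$ to a non-collapsed four-dimensional limit. Instead, to extract the Seifert invariant bound in $(2\mathrm{b})$, it forms the compact annular region $V^r_{i,j}\subset M_i^4$ swept out by $d_{p_i}$-flow curves through $U_{\epsilon r}(p_i,q^j_i)$, passes to its universal cover $\tilde V^r_{i,j}$ with deck group $\Gamma_{i,j}$, and studies the \emph{equivariant} pointed Gromov--Hausdorff limit of $\bigl(\tfrac{1}{\nu_\alpha r}\tilde V^r_{i,j},\,\Gamma_{i,j}\bigr)$ at the \emph{intermediate} scale $\nu_\alpha r$ (chosen so that the base still degenerates to $K(S^1_{\ell_0})\times\R$ with $\ell_0=L(\Sigma_{\xi_j}(\Sigma_p))$, while the short deck translations survive as a continuous $\R$-factor of the limit group). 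This yields $Z\cong K(S^1_\ell)\times\R^2$ and $G\cong\Z_\mu\times\R$ with $Z/G\cong K(S^1_{\ell_0})\times\R$; the relation $\ell=\mu\ell_0\le 2\pi$ then gives $\mu\le 2\pi/\ell_0$ directly, and the $\Z_\mu\times\R\subset SO(2)\times\R$ action simultaneously hands you the fibred solid torus structure compatible with $f_i$ on the boundary. This is the standard Shioya--Yamaguchi discrete-to-continuous argument.

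Your step two is the problem. You rescale at the non-collapsing scale $\delta^j_i$ from Theorem~\ref{thm:rescal}, obtaining a four-dimensional nonnegatively curved limit $Y_j$ of $\tfrac{1}{\delta^j_i}M_i^4$, and then assert that a definite-radius ball in $Y_j$ is isometric to $\R^2\times K(S^1_{L_j})$. That is not correct. In the model case where $M_i^4$ near $q^j_i$ is a flat fibred solid torus $(S^1(\delta_i)\times D^2\times D^1)/\Z_\mu$, the rescaled limit at scale $\delta_i$ is a \emph{flat manifold} $(S^1(1)\times\R^3)/\Z_\mu$, not a conical product: the cone $K(S^1_{L_j})$ lives at the intermediate (collapsed) scale as the tangent cone of $X^3$ at $q^j$, not at the non-collapsing scale. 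The only thing your setup does give is $\dim Y_j(\infty)\ge 2$ and $\dim S_j\le 1$; neither forces $Y_j$ to split off $\R^2$, let alone with a $K(S^1_{L_j})$-factor. Compounding this, $(D^4,\psi_{a,b})$ is not the right local equivariant model near $q^j_i$ (where $B(q^j_i,\epsilon r)\simeq S^1\times D^3$, so there is no fixed point nearby), and the ``equivariant Theorem~\ref{thm:stability-respect}'' you invoke would have to be proved to hold against a singular limit on which no ambient $S^1$-action has yet been produced, which is exactly the content one is trying to establish. So the constant $L_j$ cannot be extracted this way; it enters in the paper precisely because the \emph{quotient} limit $Z/G$ has cone length $\ell_0=L_j$, and one then bounds the covering multiplicity by $\ell\le 2\pi$. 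Your steps one and three (using $d_{q_i}$- and $d_{p_i}$-flow curves to define $U_{j,i}$ and to prove $B^{f_i}(p_i,r)\simeq B(p_i,r)$) are in the spirit of the paper's proof and would go through once $(2\mathrm{b})$ is resolved, but the crux of the lemma is the multiplicity estimate, and there the argument as written does not close.
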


\begin{proof}
First note that both $d_{p_i}$-flow curves and $d_{q_i}$-flow curves
are transversal to $f_i^{-1}(\partial U_{\epsilon r}(p,q^j))$.
Hence using some $d_{q_i}$-flow curves starting from 
$f_i^{-1}(\partial U_{\epsilon r}(p,q^j))$, we can 
extend $f_i^{-1}(\partial U_{\epsilon r}(p,q^j))$ to a 
three-dimensional submanifold $\hat U_{j,i}$ such that 
\begin{enumerate}
 \item $\partial\hat U_{j,i}$ is the disjoint union of  
       $f_i^{-1}(\partial U_{\epsilon r}(p,q^j))$ and 
       $\partial U_{\epsilon r/2}(p_i,q^j_i);$
 \item $\hat U_{j,i}\simeq f_i^{-1}(\partial U_{\epsilon r}(p,q^j))
       \times I$ through $d_{q_i}$-flow curves $;$ 
 \item $d_{p_i}$-flow curves are transversal to $\hat U_{j,i};$
 \item The Hausdorff distance between $\hat U_{j,i}$ and 
       $U_{\epsilon r}(p_i,q^j_i)-U_{\epsilon r/2}(p_i,q^j_i)$ is 
       less than $\tau_i r$, where $\lim_{i\to\infty} \tau_i=0$.
\end{enumerate}
We let $U_{j,i}$ denote the union of $\hat U_{j,i}$ and
$U_{\epsilon r/2}(p_i,q^j_i)$.
It follows from $(3)$ that 
$B^{f_i}(p_i,r)\simeq  B(p_i,r)$, where 
$B^{f_i}(p_i,r)$ is defined as above. 
By $(1)$ and $(2)$, $U_{j,i}\simeq D^2\times S^1$.
Therefore from construction, we only have to prove the conclusion (2)-(b) 
for $U_{j,i}$ chosen in this way.
Let $V_{i,j}^r$ denote the compact domain consisting of 
the flow curves of $d_{p_i}$ through $U_{\epsilon r}(p_i,q_i^j)$
and contained in $A(p_i;r_1,r_2)$ for $r_1=0.9r$ and
$r_2=1.1r$.
We may assume that $V_{i,j}^r$ converges to a compact 
domain $V_j^r$ containing $U_{\epsilon r/2}(p,q^j)$.
Since $(\frac{1}{r}V_j^r,q^j)$ converges to 
$\Omega_j:=K(U_{\epsilon/2}(o_p,\xi_j))\cap\{ 0.9\le d_{o_p}\le 1.1\}$, 
we have 
$$
  d_{p.GH}\left((\frac{1}{r}V_{i,j}^r,q_i^j),(\Omega_j,\xi_j)\right)
          < \mu_i^r/r + \tau(r),
$$
where $\lim_{i\to\infty} \mu_i^r=0$ (for any fixed $r$).
Therefore for a sequence $\nu_{\alpha}\to 0$ as $\alpha\to\infty$,
we see
$$
  d_{p.GH}\left((\frac{1}{\nu_{\alpha} r}V_{i,j}^r,q_i^j), 
    (K_{\xi_j}(\Sigma_p)\times\R, (o_{\xi_j},0))\right)
      < \mu_i^r/\nu_{\alpha} r + \tau(r)/\nu_{\alpha}+\tau({1/\alpha}),
$$
which converges to zero if we take a sequence 
depending on $(\alpha,r,i)$ in such a way
that $\alpha\gg 1$ and $\tau(r)/\nu_{\alpha}\ll 1$ and
$\mu_i^r/\nu_{\alpha} r\ll 1$.
Let $\tilde V_{i,j}^r \to V_{i,j}^r$ be the universal cover and 
$\Gamma_{i,j}$ the deck transformation group of it.
Passing to a subsequence, we may assume that 
$(\frac{1}{\nu_{\alpha} r}V_{i,j}^r,q_i^j,\Gamma_{i,j})$ 
converges to a triplet $(Z, z_0, G)$with respect to the
pointed equivariant Gromov-Hausdorff convergence,
where $Z$ is a complete noncompact Alexandrov space
with nonnegative curvature such that $Z/G$ is isometric
to $K(S^1_{\ell_0})\times\R$ with 
$\ell_0 = L(\Sigma_{\xi_j}(\Sigma_p))$.
In a way similar to Section 4 of \cite{SY:3mfd}, we have
\begin{enumerate}
 \item $Z$ is isometric to a product $K(S^1_{\ell})\times\R\times\R$;
 \item $G$ is isomorphic to $\Z_{\mu}\times\R$,
    where $\mu\le 2\pi/\ell_0$ and 
    $$
     \Z_{\mu}\times\R\subset\isom(K(S^1_{\ell}))\times\isom(\R)
     \subset SO(2)\times\R,
    $$
\end{enumerate}
from which we obtain a fibred solid torus structure on 
$U_{\epsilon r}(p_i,q_i^j)$ of type $(\mu,\nu)$ for some $\nu$, 
compatible to the fibres of $f_i$ on the boundary.
From the construction of $U_{j,i}$, the fibre structure on 
$U_{\epsilon r}(p_i,q_i^j)$ defines a fibre structure on 
$U_{j,i}$ of the same type as $U_{\epsilon r}(p_i,q_i^j)$,
compatible to the fibres of $f_i$ on the boundary.
\end{proof}

Let $\tilde f_i:\partial B^{f_i}(p_i,r)\to \partial B(p,r)$ 
be the Seifert fibration given in Lemma \ref{lem:singslice}, 
\par

\begin{prop}\label{prop:bdyseif}
There exists a positive number $r_0=r_0(p)$ such that 
for any $0<r\le r_0$ and sufficiently large $i$ 
the Seifert fibration  
$\tilde f_i:\partial B^{f_i}(p_i,r)\to \partial B(p,r)$ 
satisfies that 
 \begin{enumerate}
   \item the number of singular fibres is at most two;
   \item for any singular fibre over a point $q\in\partial B(p,r)$, 
      there is an essential singular point $\xi$ of $\Sigma_p$ with 
      $\angle(\xi,q_p^{\prime})<\tau(r)$ such that 
      the Seifert invariants of the singular fibre do not exceed 
      $$
         \frac{2\pi}{L(\Sigma_{\xi}(\Sigma_p))};
      $$
   \item $\tilde f_i$ comes from some $S^1$-action on
      $\partial B^{f_i}(p_i,r)$.
 \end{enumerate}
\end{prop}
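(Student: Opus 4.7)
The plan is to reduce Proposition~\ref{prop:bdyseif} to an assembly of the local Seifert data from Lemma~\ref{lem:singslice} together with the topological type of $\partial B^{f_i}(p_i,r)$ furnished by Corollary~\ref{cor:dim3top}.

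First, I will pin down the topology of base and total space. For $r\le r_p$ small enough, $\partial B(p,r)$ is a topological $2$-sphere: since $p\in\interior X^3$, $\Sigma_p$ is a $2$-dimensional Alexandrov space of curvature $\ge 1$ with empty boundary, hence homeomorphic to $S^2$, and Perelman's conical neighbourhood theorem gives $\partial B(p,r)\simeq\Sigma_p$. Corollary~\ref{cor:dim3top} then forces $B(p_i,r)\simeq D^4$ or $S^1\times D^3$, and since $B^{f_i}(p_i,r)\simeq B(p_i,r)$ by Lemma~\ref{lem:singslice}(3), the boundary $\partial B^{f_i}(p_i,r)$ is homeomorphic to $S^3$ or $S^1\times S^2$.

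Next I turn to (2). Lemma~\ref{lem:singslice}(2)(b) already bounds the Seifert invariant of any singular fibre inside $U_{j,i}$ by $2\pi/L(\Sigma_{\xi_j}(\Sigma_p))$. If $\xi_j\in S_{\delta_3}(\Sigma_p)$ is \emph{not} an essential singular point of $\Sigma_p$, then $L(\Sigma_{\xi_j}(\Sigma_p))>\pi$, so the bound is strictly less than $2$ and forces the multiplicity to be $1$; such a $U_{j,i}$ is then a regular fibred solid torus that can be absorbed into the $S^1$-bundle structure given by $f_i$. Consequently every genuine singular fibre of $\tilde f_i$ lies over some $q\in\partial B(p,r)$ with $q_p^{\prime}$ within $\tau(r)$ of an essential singular direction $\xi\in ES(\Sigma_p)$, and the Seifert-invariant bound in (2) is immediate from the lemma.

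For (1), I will appeal to the classical Seifert--Orlik classification of orientable Seifert fibrations with orientable base $S^2$: the fibrations of $S^3$ are the weighted Hopf fibrations with at most two exceptional orbits, while those of $S^1\times S^2$ reduce to the trivial product with none. Combined with the analysis above this yields at most two singular fibres. Finally, (3) follows from the same classification, which exhibits each model as the orbit map of a locally smooth $S^1$-action (the weighted Hopf action $(z_1,z_2)\mapsto (e^{ia\theta}z_1,e^{ib\theta}z_2)$ on $S^3$, and rotation on the circle factor of $S^1\times S^2$); Proposition~\ref{prop:circ-class} will upgrade a fibre-preserving homeomorphism onto the model into an equivariant one, producing the desired $S^1$-action realising $\tilde f_i$. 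The main obstacle has already been overcome in Lemma~\ref{lem:singslice} by the equivariant rescaling of Section~\ref{sec:rescal}; the remaining work in the present proposition is the topological bookkeeping just outlined.
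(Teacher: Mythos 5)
Your overall strategy matches the paper's: reduce $(1)$ and $(3)$ to the Seifert--Orlik classification of Seifert fibrations of $S^3$ and $S^1\times S^2$ over $S^2$, taking $(2)$ as already settled by Lemma~\ref{lem:singslice}(2)(b). However there are two genuine gaps.

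\textbf{Gap 1: you do not rule out $\Sigma_p\simeq P^2$.} You assert that a compact $2$-dimensional Alexandrov space with curvature $\ge 1$ and empty boundary ``is homeomorphic to $S^2$,'' but $P^2$ is also such a space (e.g.\ the round $\R P^2$), so this step needs an argument. The paper handles this by interposing Proposition~\ref{prop:topmfd}, whose proof is not trivial: it uses the orientability of $M_i^4$, the Seifert fibration $\tilde f_i:\partial B^{f_i}(p_i,r)\to\Sigma_p$ from Lemma~\ref{lem:singslice}, and the fact that $\tilde\Sigma_p$ has curvature $\ge 1$ (so at most one exceptional fibre can sit over $P^2$), to deduce that $\partial B^{f_i}(p_i,r)$ would be $P^2\tilde\times S^1$ or a prism manifold---contradicting Lemma~\ref{lem:dim3balltop}, which forces $\partial B(p_i,\epsilon)$ to be $S^3$ or $S^1\times S^2$. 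Without this, neither $\partial B(p,r)\simeq S^2$ nor the subsequent application of Corollary~\ref{cor:dim3top} to the Seifert data is justified.

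\textbf{Gap 2: your classification of Seifert fibrations of $S^1\times S^2$ is wrong.} You claim these ``reduce to the trivial product with none,'' but $S^1\times S^2$ admits Seifert fibrations over $S^2$ with exactly two exceptional fibres having invariants $\{(\alpha,\beta),(\alpha,\alpha-\beta)\}$; the paper cites Scott, p.~459, that the count is $m_i=0$ or $2$, and indeed Lemma~\ref{lem:extend} in this very paper records the nontrivial two-exceptional-fibre case explicitly. The error happens not to falsify conclusion $(1)$ here (both $0$ and $2$ are $\le 2$), but it is a factual misstatement that would undermine your justification of $(3)$ in the $m_i=2$ subcase, since ``rotation on the circle factor'' realises only the trivial fibration; you would need the standard fact that the fibrations with two exceptional orbits also arise as $S^1$-actions on $S^1\times S^2$.
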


In view of Lemma \ref{lem:singslice}, for the proof of 
Proposition \ref{prop:bdyseif}, it suffices to check $(1)$ and $(3)$. 
We need the following 

\begin{prop}\label{prop:topmfd}
Suppose that a sequence  of pointed complete 
orientable Riemannian $4$-manifolds $(M_i^4,p_i)$ collapses to a pointed 
Alexandrov $3$-space $(X^3,p)$ under $K\ge -1$.
Then $X^3$ is a topological manifold.
\end{prop}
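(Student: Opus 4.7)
The strategy is to show that the space of directions $\Sigma_p$ at every $p\in X^3$ is homeomorphic to $S^2$ (when $p\in\interior X^3$) or to $D^2$ (when $p\in\partial X^3$). Once this is done, the conclusion follows by the local conicality theorem \cite{Pr:alex2}: a small metric ball $B(p,r)$ is homeomorphic to the open tangent cone $K_p=K(\Sigma_p)$, and the cone over $S^2$ (resp.\ $D^2$) is $\R^3$ (resp.\ $\R^3_+$).

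Since $\Sigma_p$ is a two-dimensional compact Alexandrov space with curvature $\ge 1$, a Bonnet--Myers argument applied to its universal cover shows that its fundamental group is finite, so the topological possibilities for $\Sigma_p$ are exactly $S^2$, $\R P^2$, $D^2$ or the M\"obius band. The task therefore reduces to excluding the two non-orientable possibilities.

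The exclusion of the non-orientable cases rests on the orientability of $M_i^4$. By Fibration Theorem \ref{thm:orig-cap}, for sufficiently large $i$ there is a locally trivial $S^1$-bundle $f_i\colon M'_i\to U$ over an open neighborhood $U\subset X^3-S_{\delta_3}(X^3)$ of the regular points near $p$, and by the equivariant version established in Part \ref{part:cap} this bundle arises from a locally defined $S^1$-action, so in particular has structure group $SO(2)$. Orientability of $M'_i\subset M_i^4$ combined with the preferred orientation of the $S^1$-fibres then forces $U$ to be an orientable topological $3$-manifold. Hence a small punctured neighborhood $B(p,r)-(S_{\delta_3}(X^3)\cup\{p\})$ of $p$ is orientable, and by local conicality it is an open dense subset of a topological product of the form $(\Sigma_p-S_{\delta_3}(\Sigma_p))\times(0,r)$. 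Since $S_{\delta_3}(\Sigma_p)$ is a finite subset of the surface $\Sigma_p$, this forces $\Sigma_p$ itself to be orientable; thus $\Sigma_p\cong S^2$ in the interior case and $\Sigma_p\cong D^2$ in the boundary case, as required.

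The principal technical point is the assertion that the $S^1$-bundle produced by the Fibration Theorem genuinely has structure group $SO(2)$ rather than merely $O(2)$. This is precisely what the equivariant refinement of the fibration theorem provides: the fibres are realised as orbits of a locally defined $S^1$-action, and hence carry a coherent orientation induced from that of $M_i^4$, so that orientability passes to the base.
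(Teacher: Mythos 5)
Your overall strategy (reduce to showing $\Sigma_p\cong S^2$ or $D^2$, then invoke local conicality and Stability) matches the paper's opening move, and your handling of the boundary case is fine modulo a small inaccuracy: a compact Alexandrov surface with curvature $\ge 1$ and nonempty boundary is \emph{automatically} homeomorphic to $D^2$ (its double is a boundaryless Alexandrov surface with curvature $\ge 1$, hence $S^2$ or $P^2$, and the Euler characteristic rules out $P^2$), so the M\"obius band was never a candidate. The real issue is your exclusion of $\Sigma_p\cong P^2$.

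Your argument hinges on the claim that the $S^1$-bundle $f_i\colon M_i'\to U$ furnished by the fibration theorem has structure group $SO(2)$, so that orientability descends from the total space to the base. This is not what the fibration theorem — equivariant or not — delivers, and in fact it is precisely what cannot be assumed here. The paper's notion of a local $S^1$-action (Section \ref{sec:action}) explicitly allows the local actions to agree only \emph{up to orientation} on overlaps, i.e.\ the fibre orientation may undergo monodromy; that is an $O(2)$-structure, not an $SO(2)$-structure. Indeed Lemma \ref{lem:simply}(2) — the paper's tool for upgrading a local $S^1$-action to a genuine one — requires as a \emph{hypothesis} that the orbit space $N^*-S^*$ be orientable, which is exactly what you are trying to deduce; so your argument is circular. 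And the topological fact is against you: an $S^1$-bundle over a non-orientable surface can very well have orientable total space (precisely when $\pi^*w_1(\text{base})=w_1(\text{vertical line bundle})$, which is achievable with nontrivial monodromy), so orientability of $M_i'$ alone does not force orientability of $U$, nor of $\Sigma_p-S_{\delta_3}(\Sigma_p)$.

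The paper's actual proof avoids this entirely. It uses Lemma \ref{lem:singslice} to produce a Seifert fibration $\tilde f_i\colon\partial B^{f_i}(p_i,r)\to\Sigma_p$, observes via a curvature count (at most three essential singular points on $\tilde\Sigma_p$) that there is at most one singular fibre, and notes that any Seifert fibred space over $P^2$ with $\le 1$ singular fibre is $P^2\tilde\times S^1$ or a prism manifold. Meanwhile the rescaling/soul machinery (Corollary \ref{cor:dim3top}, via Lemmas \ref{lem:u_1} and \ref{lem:dim3balltop}) pins down $\partial B(p_i,r)\cong S^3$ or $S^1\times S^2$. These two constraints are incompatible, killing $P^2$. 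In short: the exclusion is a concrete $3$-manifold topology argument about the boundary sphere, not a bundle-orientability argument, and you would need some comparable input — the known topology of $\partial B(p_i,r)$ — to close the gap.
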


\begin{proof}
In view of Stability Theorem \ref{thm:stability}, it suffices to show that 
$\Sigma_p$ is homeomorphic to $S^2$ or $D^2$.
If $p\in\partial X^3$, then $\Sigma_p$ is an Alexandrov surface with
curvature $\ge 1$ and with nonempty boundary,  and hence it is 
homeomorphic to $D^2$. If $p\in\interior X^3$, then $\Sigma_p$ is 
homeomorphic to either $S^2$ or $P^2$. Suppose $\Sigma_p\simeq P^2$.
By lemma \ref{lem:singslice}, we have a Seifert fibration
$\tilde f_i:\partial B(p_i,r)\to \Sigma_p$. Note that the number 
$m_i$ of the singular orbits of $\tilde f_i$ satisfies $m_i\le 1$. 
Otherwise,
the universal cover $\tilde\Sigma_p$ of $\Sigma_p$ 
would contain more than three essential singular points.
This is a contradiction to the curvature condition that 
$\tilde \Sigma_p$ has curvature $\ge 1$ (see for instance 
Appendix in \cite{SY:3mfd}).
If $m_i=0$, then $\partial B^{f_i}(p_i,r)$ is an $S^1$-bundle
over $P^2$, denoted $P^2\tilde\times S^1$.
This is a contradiction to Lemma \ref{lem:dim3balltop}.
If $m_i=1$, then $\partial B^{f_i}(p_i,r)$ is homeomorphic to 
$P^2\tilde\times S^1$ or a prism manifold 
(see \cite{Or:Seif} for instance). 
This is also a contradiction to Lemma \ref{lem:dim3balltop}.
\end{proof}

\begin{proof}[Proof of Proposition \ref{prop:bdyseif}].
By Corollary \ref{cor:dim3top}, $B(p_i,r)$ is homeomorphic to 
either $S^1\times D^3$ or $D^4$. 
If $B(p_i,r)\simeq S^1\times D^3$, 
\cite{Sc:geometry}, p.459, shows that the number $m_i$ of singular fibres of 
$\tilde f_i:\partial B^{f_i}(p_i,r)\simeq S^1\times S^2 \to 
         \partial B(p,r)\simeq S^2$
satisfies that $m_i=0$ or $2$. 
If $B(p_i,r)\simeq D^4$, then the Seifert bundle 
$\tilde f_i:\partial B^{f_i}(p_i,r)\simeq S^3\to\partial B(p,r)\simeq S^2$
has at most two singular fibres (see \cite{Or:Seif}).
In either case, $\tilde f_i$ comes from an $S^1$-action on 
$\partial B^{f_i}(p_i,r)$ (see \cite{Or:Seif}).
This completes the proof of Proposition \ref{prop:bdyseif}.
\end{proof}

Our next step is to construct an $S^1$-action on $B^{f_i}(p_i,r)$ 
extending $\tilde f_i$.

Let us begin with the following lemma, which
easily follows from the convergence 
$(\frac{1}{r} X^3,p)\to (K_p,o_p)$ and the finiteness of 
$S_{\delta}(\Sigma_p)$ for any $\delta>0$.

\begin{lem}  \label{lem:coverS}
For any $p\in X^3$, let $\epsilon=\epsilon_p>0$ be 
as in Lemma \ref{lem:singslice}. Then there exist 
a positive integer $k=k(p)$ and $r=r(p,\epsilon)>0$ such that
for some $x^1,\ldots,x^k\in\partial B(p,10r)$,
\begin{enumerate}
 \item $B(p,10r)\cap S_{\delta_3}(X^3)\subset 
       \bigcup_{j=1}^k B(px^j,\epsilon r);$
 \item $(B(px^j,\epsilon r)-B(px^j,\epsilon^{10} r))\cap
          A(p;r/100,10r)$ does not meet $S_{\delta_3}(X^3);$
 \item $\{ B(px^j,\epsilon r)\cap A(p;r/100,10r)\}_j$ 
       is disjoint.
\end{enumerate}
\end{lem}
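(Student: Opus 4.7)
The plan is to show that the singularities of $X^3$ inside $B(p,10r)$ concentrate, after rescaling by $1/r$, along the finitely many rays $\cone(\xi^j)$ in $K_p=K(\Sigma_p)$, where $\{\xi^1,\dots,\xi^k\}=S_{\delta_3'}(\Sigma_p)$ for some auxiliary threshold $\delta_3'\ge\delta_3$. I need two inputs. First, $\Sigma_p$ is a compact $2$-dimensional Alexandrov space with curvature $\ge 1$, so $S_\delta(\Sigma_p)$ is a finite set for every $\delta>0$; this yields $k=k(p)$. Second, for $\delta_3'$ chosen slightly larger than $\delta_3$, one has the cone inclusion
\[
   S_{\delta_3}(K_p)\setminus\{o_p\}\subset \bigcup_{j=1}^{k}\cone(\xi^j),
\]
because at any point $t\xi\in K_p$ with $t>0$ and $\xi\not\in S_{\delta_3'}(\Sigma_p)$, a $(2,\delta_3')$-strainer of $\Sigma_p$ at $\xi$ completes with the radial direction to a $(3,\delta_3)$-strainer of $K_p$ at $t\xi$. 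I may enlarge $\delta_3'$ further so that the distinct $\xi^j$'s are pairwise separated on $\Sigma_p$ by an angle much larger than $\epsilon$ and each admits a neighborhood disjoint from the others.

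For each small $r>0$, I pick $x^j\in \partial B(p,10r)$ with $\angle(\xi^j,(x^j)'_p)<\tau(r)$; existence follows directly from the convergence $(\tfrac1r X^3,p)\to(K_p,o_p)$. The three conclusions are then verified by standard contradiction arguments. For (1): if along some $r_i\to 0$ there were $y_i\in S_{\delta_3}(X^3)\cap B(p,10r_i)$ outside every $B(px^j,\epsilon r_i)$, then after rescaling by $1/\max(d(p,y_i),r_i)$ a subsequential Gromov--Hausdorff limit would produce $y_\infty\in K_p\setminus\{o_p\}$ at distance at least $\epsilon/2$ from each ray $\cone(\xi^j)$. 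Upper semi-continuity of strainers (passing from threshold $\delta_3$ on $\tfrac1{r_i}X^3$ to $\delta_3'$ on $K_p$) forces $y_\infty\in S_{\delta_3'}(K_p)\setminus\{o_p\}\subset \bigcup_j\cone(\xi^j)$, a contradiction. For (2): the same argument localized to the tube $B(px^j,\epsilon r)\cap A(p;r/100,10r)$ shows that any rescaled accumulation point inside the $\epsilon$-tube of $\cone(\xi^j)$ restricted to $A(o_p;1/100,10)$ must lie on $\cone(\xi^j)$ itself, which translates back to the assertion that $\delta_3$-singular points in the tube are within $\epsilon^{10}r$ of $px^j$ once $r$ is small enough. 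For (3): disjointness of the tubes in $A(p;r/100,10r)$ is immediate from the angular separation of the $\xi^j$'s together with $(x^j)'_p\to \xi^j$.

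The one delicate point is the semi-continuity step for strainers: I must carry two thresholds $\delta_3<\delta_3'$ so that $(3,\delta_3)$-strainers on the rescaled manifolds persist to $(3,\delta_3')$-strainers on $K_p$. This is routine provided $\delta_3'-\delta_3$ is chosen at the outset to dominate the Gromov--Hausdorff error of $(\tfrac1r X^3,p)\to (K_p,o_p)$ on a fixed ball. With that bookkeeping settled, everything else is a direct consequence of the cone structure of $K_p$ and the finiteness of $S_{\delta_3'}(\Sigma_p)$, which is why the lemma is described as ``easily following'' from the rescaling convergence.
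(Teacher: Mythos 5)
Your approach is the intended one (the paper only remarks that the lemma ``easily follows'' from the rescaling convergence and the finiteness of $S_\delta(\Sigma_p)$, so your effort to supply details is welcome), and the overall shape of the argument is right; but the ``one delicate point'' you flag — the threshold bookkeeping — is exactly where the proof goes wrong, and in both of the places where you compare strainer errors you have the inequality reversed.

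Concretely, your contradiction argument for (1) needs the implication: if $y_i\in S_{\delta_3}(\tfrac{1}{r_i}X^3)$ and $y_i\to y_\infty\in K_p$, then $y_\infty$ is singular in $K_p$. The stability of strainers under Gromov--Hausdorff convergence says: a $(3,\delta)$-strainer at $y_\infty$ in $K_p$ of a fixed length transfers to a $(3,\delta+\tau)$-strainer at $y_i$ for large $i$, with $\tau$ small compared to the strainer length over the approximation error. Thus $y_\infty\in R_{\delta''}(K_p)$ with $\delta''+\tau\le\delta_3$ forces $y_i\in R_{\delta_3}$, a contradiction; so you get $y_\infty\in S_{\delta''}(K_p)$ for $\delta''<\delta_3$, a weaker conclusion than $y_\infty\in S_{\delta_3'}(K_p)$ with $\delta_3'>\delta_3$. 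Your stated direction — ``$(3,\delta_3)$-strainers on the rescaled manifolds persist to $(3,\delta_3')$-strainers on $K_p$'' — is a true statement, but its contrapositive gives $y_\infty\in S_{\delta_3'}\Rightarrow y_i\in S_{\delta_3}$, which is the reverse of what you need. Similarly, completing a $(2,\delta)$-strainer at $\xi\in\Sigma_p$ with the radial direction produces a $(3,\delta+\tau)$-strainer at $t\xi\in K_p$ (for $t$ in a fixed annulus); the error goes \emph{up}, not down, so the claim that a $(2,\delta_3')$-strainer yields a $(3,\delta_3)$-strainer with $\delta_3<\delta_3'$ is also backwards. The net effect is that your chain $S_{\delta_3}(\text{rescaled})\Rightarrow S_{\delta_3'}(K_p)\subset\bigcup_j\cone(\xi^j)$ with $\{\xi^j\}=S_{\delta_3'}(\Sigma_p)$, $\delta_3'>\delta_3$, does not close.

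The fix is simple and preserves the structure of your argument: pick $\delta''<\delta_3$ with enough slack to absorb the GH approximation error, then pick $\delta_0$ (which may be much smaller than $\delta_3$) so that $\xi\in R_{\delta_0}(\Sigma_p)$ implies $t\xi\in R_{\delta''}(K_p)$ for $t\in[1/100,10]$. Set $\{\xi^1,\dots,\xi^k\}:=S_{\delta_0}(\Sigma_p)$, which is finite for any $\delta_0>0$ since $\Sigma_p$ is a compact Alexandrov surface with curvature $\ge 1$. Then $S_{\delta''}(K_p)\cap A(o_p;1/100,10)\subset\bigcup_j\cone(\xi^j)$, and the rescaling contradiction runs as you describe; the same corrected bookkeeping handles (2) and (3).
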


For each $1\le j\le k$ and for $\epsilon_1\in (0,2\epsilon]$, 
$r_2\le r_1\le 2r$, we consider 

\begin{gather*}
    A_{j}(p;\epsilon):=B(px^j,\epsilon r)\cap B(p,2r),\\
    A_{j}(p;\epsilon_1;r_1) := A_{j}(p;\epsilon_1)\cap B(p, r_1) \\
 A_{j}(p;\epsilon_1;r_2, r_1):= A_{j}(p;\epsilon_1)\cap A(p;r_2,r_1)\\
 A^{f_i}(p_i;r_1,r) := 
          \{ x\in B^{f_i}(p_i,r) \,|\, d(p_i,x)\ge r_1\}.  
\end{gather*}
Fibration Theorem \ref{thm:orig-cap} enables us to take a closed 
domain  $A^{f_i}_{j}(p_i;\epsilon)\subset B(p_i,2r)$   
such that 
\begin{enumerate}
  \item $A_{{j}}^{f_i}(p_i;\epsilon)$ converges to $A_j(p;\epsilon)$ 
        under the convergence $M_i^4\to X^3$;
  \item $\partial A_{j}^{f_i}(p_i;\epsilon)\cap
         \interior A(p_i;r/100,2r)$ coincides with 
        $f_i^{-1}(\partial A_{j}(p;\epsilon)\cap
          \interior A(p_i;r/100,2r))$.
\end{enumerate}
Take $x_i^j\in\partial B(p_i,10r)$ close to 
$x^j\in\partial B(p,10r)$, and
let $\tilde d_{p_ix_i^j}$ be a smooth approximation of the distance
function $d_{p_ix_i^j}$.  We consider 
\begin{gather*}
  A_{j}^i(p_i;\epsilon) := 
             \{ \tilde d_{p_ix_i^j}\le\epsilon r\}\cap B(p_i,2r),\\
 A_{j}^i(p_i;\epsilon_1;r_2,r_1):=A_{j}^i(p_i;\epsilon_1)\cap
                  A(p_i; r_2, r_1) \\
 A^{f_i}_{j}(p_i;\epsilon;r_1,r) := 
          A^{f_i}(p_i;r_1,r)\cap A_{j}^{f_i}(p_i;\epsilon).  
\end{gather*}

\begin{lem} \label{lem:af}
$A_{j}^{f_i}(p_i;\epsilon;r/100,r)\simeq (S^1\times D^2)\times I$.
\end{lem}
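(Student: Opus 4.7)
The plan is to realize $A_j^{f_i}(p_i;\epsilon;r/100,r)$ as a trivial solid-torus bundle over the radial interval $[r/100,r]$, with the trivialization supplied by a gradient-like flow of $d_{p_i}$. I would proceed in three steps.

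First, I would identify the limit picture. By Proposition \ref{prop:topmfd}, $X^3$ is a topological $3$-manifold near $p$, and by Lemma \ref{lem:coverS}(2) together with the regularity of $(d_{px^j},d_p)$ on the annular region $A(p;r/100,r)$, the set $A_j(p;\epsilon;r/100,r)$ is homeomorphic to $D^2\times[r/100,r]$, where the $I$-factor comes from the level sets of $d_p$. Any singular points of $X^3$ in $A_j(p;\epsilon;r/100,r)$ lie in the thin core tube $B(px^j,\epsilon^{10}r)$ along the geodesic arc $px^j$.

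Second, at each radial level $t\in[r/100,r]$ I would apply the cap construction of Lemma \ref{lem:singslice} at scale $t$ in place of $r$. This produces a fibred solid torus $S_{t,i}\simeq S^1\times D^2$ realizing the slice $\partial B^{f_i}(p_i,t)\cap A_j^{f_i}(p_i;\epsilon)$, whose singular fibre (if present) sits over the arc $p_ix_i^j$ with Seifert invariants bounded by $2\pi/L(\Sigma_{\xi_j}(\Sigma_p))$. The rescaling argument used in the proof of Lemma \ref{lem:singslice} shows that the blow-up limit along the singular direction splits off an $\R$-factor corresponding to $d_{p_i}$; consequently the Seifert data of $S_{t,i}$ is independent of $t$, and the fibre structures on the $S_{t,i}$ agree with the $S^1$-bundle $f_i$ on the regular annular part $A_j^{f_i}(p_i;\epsilon;r/100,r)\setminus f_i^{-1}(B(px^j,\epsilon^{10}r))$.

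Third, I would construct a gradient-like vector field $V_i$ for $d_{p_i}$ on $A_j^{f_i}(p_i;\epsilon;r/100,r)$ that is tangent to the side boundary $f_i^{-1}(\partial B(px^j,\epsilon r))\cap A(p_i;r/100,r)$ and transverse to each $S_{t,i}$. Outside the thin tube this follows from joint regularity of $(d_{p_i},\tilde d_{p_ix_i^j})$, as in Lemma \ref{lem:flow-rescal}; inside the thin tube the extra $\R$-factor in the rescaled limit provides a natural radial direction, and a partition-of-unity patching merges the two constructions. Integrating $V_i$ yields the desired homeomorphism $A_j^{f_i}(p_i;\epsilon;r/100,r)\simeq S_{r,i}\times[r/100,r]\simeq (S^1\times D^2)\times I$.

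The main obstacle is coherence along the radial direction: the Seifert solid-torus structures on different slices $S_{t,i}$ must be isotopic through solid-torus structures as $t$ varies, and the singular-fibre arc must project onto the full interval $[r/100,r]$ without any jump in its Seifert invariants. This is exactly what the product splitting in the blow-up limit of the singular tube furnishes, since the $\R$-factor produced there corresponds precisely to the radial direction from $p_i$.
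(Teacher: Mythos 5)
Your proof is correct and follows essentially the same route as the paper: identify a slice $\partial B(p_i,t)\cap A_j^{f_i}(p_i;\epsilon)$ with $S^1\times D^2$ (you via Lemma \ref{lem:singslice}, the paper directly via Lemma \ref{lem:bdysolid}), then flow a gradient-like field for $d_{p_i}$ radially to produce the $I$-factor. The one genuine difference in bookkeeping is how the lateral boundary is handled: the paper first uses the flow of $\tilde d_{p_ix_i^j}$ to replace $A_j^{f_i}(p_i;\epsilon;r/100,r)$ by $A_j^i(p_i;\epsilon)\cap A^{f_i}(p_i;r/100,r)$, whose lateral boundary is a genuine level set of a regular function, and only then applies the $d_{p_i}$-flow; you instead build a single gradient-like field that is simultaneously transverse to the spheres $\partial B(p_i,t)$ and tangent to the $f_i$-fiber lateral boundary. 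Both work, but the paper's two-step swap is a bit cleaner because it never has to arrange tangency to the $f_i$-preimage, only transversality to level sets of smooth approximations to distance functions. Two minor remarks: (i) your appeal to Lemma \ref{lem:singslice} and the constancy of the Seifert data in $t$ is more than is needed — only the homeomorphism type $S^1\times D^2$ of a single slice (Lemma \ref{lem:bdysolid}) enters, and the $d_{p_i}$-flow automatically transports one slice to another without any isotopy argument on Seifert structures; (ii) your "main obstacle" paragraph is therefore addressing a non-obstacle, and could be dropped without loss.
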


\begin{proof}
By using the flow curves of a gradient vector field 
for $\tilde d_{p_ix_i^j}$, we have 
$$
   A_{j}^{f_i}(p_i;\epsilon;r/100,r) 
        \simeq   A_{j}^i(p_i;\epsilon) \cap A^{f_i}(p_i;r/100,r).
$$
By  Lemma \ref{lem:bdysolid} together with 
the flow curves of a gradient-like vector field for $d_{p_i}$,
$$
   A_{j}^i(p_i;\epsilon) \cap A^{f_i}(p_i;r/100,r) 
        \simeq (S^1\times D^2)\times I,
$$
and hence 
$A_{j}^{f_i}(p_i;\epsilon;r/100,r)\simeq (S^1\times D^2)\times I$.
\end{proof}

Now we construct an $S^1$-action on $B^{f_i}(p_i,r)$ extending 
$\tilde f_i$.
Let us first consider 

\begin{proclaim}{\emph{Case} I.}
    $B(p_i,r)\simeq S^1\times D^3$.
\end{proclaim}

In this case, the number  $m_i$ of singular fibres of 
$\tilde f_i:\partial B^{f_i}(p_i,r)\simeq S^1\times S^2\to 
             \partial B(p,r)\simeq S^2$
satisfies that $m_i=0$ or $2$. 
If $m_i=0$, $\tilde f_i$ is a trivial bundle and 
extends to a trivial bundle $B^{f_i}(p_i,r)\to B(p,r)$
(compare Lemma \ref{lem:extend} below).
Thus we obtain a free $S^1$-action on $B^{f_i}(p_i,r)$ 
whose orbit space is homeomorphic to $B(p,r)$, 
as the collapsing structure on $B^{f_i}(p_i,r)$.
\par

Before considering the essential case $m_i=2$, we need the following 
elementary lemma on extending $S^1$-actions.  
Probably this is obvious for specialist, but for the lack of
references we shall give a proof.

Let $\phi_{\mu,\nu}$ denote the $S^1$-action 
on the solid torus $S^1\times D^2$ given by the 
canonical fibred solid torus structure of type $(\mu,\nu)$.
Namely, $\phi_{\mu,\nu}$ comes from the 
standard $\Z_{\mu}$-action on $S^1\times D^2$ generated by
$$
   \tau_{\mu\nu}(e^{i\theta_1}, re^{i\theta_2}) = 
  (e^{i \, (\theta_1+\frac{2\pi}{\mu})},
  r e^{i \, (\theta_2+\frac{2\nu}{\mu}\pi)}).
$$
We consider the canonical orientation of $S^1\times D^2\times I$,
and the orientation on $\partial(S^1\times D^2\times I)$
as boundary.
             
\begin{lem} \label{lem:extend}
Consider 
an $S^1$-action $\varphi$ on $(S^1\times\partial D^2\times I)\cup
(S^1\times D^2\times \{ 0\})$ such that
\begin{itemize}
 \item $\varphi$ defines an $S^1$-action, denoted $\phi_0$, on 
       $S^1\times D^2\times \{ 0\}$,
       equivariant to $\phi_{\mu,\nu};$
 \item $\varphi$ defines a free $S^1$-action on 
       $S^1\times\partial D^2\times I$.
\end{itemize}
Then the following holds:
\begin{enumerate}
 \item $\varphi$ extends to a locally smooth $S^1$-action $\psi$ on 
       $S^1\times D^2\times I;$
 \item Any such an extension $\psi$ is equivariant to the product 
       action $\psi_{\mu,\nu}\times \text{\rm id};$
 \item If we are also given an $S^1$-action, denoted $\phi_1$, on 
       $S^1\times D^2\times \{ 1\}$ equivariant to 
       $\phi_{\mu,\mu-\nu}$ and compatible to $\varphi$ on
       the boundary, then $\varphi\cup\phi_1$ extends to
       a locally smooth $S^1$-action on $S^1\times D^2\times I$.
\end{enumerate}
\end{lem}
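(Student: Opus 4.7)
The plan is to establish (1), (2), (3) by reducing each extension to the model action $\phi_{\mu,\nu}\times\mathrm{id}_I$ on $S^1\times D^2\times I$, combining a direct straightening argument on the lateral boundary with the equivariant classification of Proposition \ref{prop:circ-class}.

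For (1), I first observe that the restriction of $\varphi$ to $S^1\times \partial D^2\times I\cong T^2\times I$ is a free $S^1$-action extending the rotation action $\phi_{\mu,\nu}|_{T^2\times 0}$. Any such free action is equivariantly equivalent to the product $\phi_{\mu,\nu}|_{T^2}\times\mathrm{id}_I$: averaging the vertical vector field $\partial/\partial t$ over the $S^1$-action produces a nowhere-vanishing, $S^1$-invariant vector field transverse to the $T^2$-slices, whose flow yields an equivariant trivialization of $T^2\times I$ that fixes $T^2\times 0$ pointwise and conjugates $\varphi|_{T^2\times I}$ to the product. Extending this straightening by the identity on $S^1\times D^2\times 0$, one reduces without loss of generality to the case that $\varphi$ agrees with the restriction of $\phi_{\mu,\nu}\times\mathrm{id}$ on its whole domain. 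One then defines $\psi:=\phi_{\mu,\nu}\times\mathrm{id}$ on the interior, which is locally smooth because the model action is.

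For (2), let $\psi$ be an arbitrary locally smooth extension with $M=S^1\times D^2\times I$. The orbit space $M^*$ is a compact $3$-manifold with boundary, whose boundary decomposes as a disk (the orbit space of $\phi_{\mu,\nu}$ on the bottom, carrying one marked point with isotropy $\Z_\mu$), an annulus (the free-orbit space of the lateral piece), and a disk (the orbit space of the induced top action); thus $\partial M^*\cong S^2$. By Proposition \ref{prop:circ-class} via Lemma \ref{lem:adj-inv} and the classification of singular loci, the singular locus of $\psi$ is a single arc of exceptional orbits with constant Seifert invariants $(\mu,\nu)$, emanating from the marked point of the bottom disk; since $\varphi$ is free on the lateral boundary, this arc must end on the image of the top boundary, producing a corresponding exceptional orbit in $S^1\times D^2\times 1$. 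This arc generates $\pi_1(M)\cong\Z$, so collapsing it to an arc in $M^*$ makes $M^*$ simply connected, hence homeomorphic to $D^3$. The weighted orbit data (the single Seifert-invariant arc together with the obstruction numbers at the two endpoints, the latter forced by Lemma \ref{lem:adj-inv}) then agree exactly with those of the model $\phi_{\mu,\nu}\times\mathrm{id}$, and Proposition \ref{prop:circ-class} delivers an orientation-preserving equivariant homeomorphism.

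For (3), the hypothesis that $\phi_1$ is of type $(\mu,\mu-\nu)$ encodes the orientation reversal of the top boundary of $S^1\times D^2\times I$ and corresponds precisely to the equivalence of weight conventions introduced in Section \ref{sec:action}. With both ends prescribed, the straightening of (1) is applied to collars near both $t=0$ and $t=1$ separately, reducing to the product model on each collar; these model identifications then glue across a middle slice because any two free $S^1$-actions on $T^2$ with matching orbits are equivariantly identified. The main technical obstacle is the verification of simple connectivity of $M^*$ in (2): this requires a careful argument that the unique exceptional orbit arc carries the generator of $\pi_1(M)=\Z$, so that collapsing it kills the fundamental group; without this step one cannot invoke Proposition \ref{prop:circ-class}. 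The straightening of free $S^1$-actions on $T^2\times I$ is conceptually clean but must be carried out within the locally smooth category, which is automatic from the slice hypothesis.
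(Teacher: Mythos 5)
The fundamental structural difference is that you prove (1), (2), (3) in that order, whereas the paper proves (3) first and derives (1) as a corollary, then proves (2). This reversal is not merely cosmetic: the paper's proof of (3) is the technical heart of the lemma, and your proof of (3) has a genuine gap at precisely the point that the paper's argument is carefully constructed to handle.

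In your treatment of (3), you straighten the action on collars near $t=0$ and $t=1$ to the model products $\phi_{\mu,\nu}\times\mathrm{id}$ and $\phi_{\mu,\mu-\nu}\times\mathrm{id}$, and then assert that ``these model identifications glue across a middle slice because any two free $S^1$-actions on $T^2$ with matching orbits are equivariantly identified.'' This only addresses compatibility on the lateral boundary tori. It says nothing about how to extend the orbit structure coherently to the interior of the middle region $S^1\times D^2\times[\epsilon,1-\epsilon]$ so that the exceptional arc from the bottom collar continues into, and matches, the exceptional arc from the top collar. Two solid-torus fillings over the same boundary fibration of $T^2$ need not be isotopic rel boundary, and the model identifications $f_0$ and $f_1$ at the two ends a priori disagree by some self-homeomorphism of $S^1\times D^2$. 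The paper's proof deals with this exactly: it constructs a nested family $V_0\subset W\subset S^1\times D^2\times I$, identifies the discrepancy $g_0=f_1^{-1}\circ\rho_1^{-1}\circ\Pi_{0,1}\circ\rho_0\circ f_0$, proves from the compatibility hypothesis that $g_0$ is \emph{isotopic to the identity}, and uses that isotopy $g_t$ to interpolate the orbit structure from one end to the other. This isotopy is the content you are missing; without it the two collar extensions do not automatically cohere.

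Your proof of (1) via a straightening/averaging of the free $S^1$-action on $T^2\times I$ is a genuinely different and more elementary route than the paper's (the paper gets (1) for free from (3)). It is defensible in spirit --- and could be made rigorous without averaging via the triviality of principal $S^1$-bundles over $S^1\times I$ and a rel-boundary trivialization --- but as written the averaging is a smooth technique, and the claim that it is ``automatic from the slice hypothesis'' in the locally smooth category needs more than an assertion. Your proof of (2) is close to the paper's: both reduce to the known classification of Seifert circle bundles $S^2\times S^1\to S^2$ and invoke Proposition~\ref{prop:circ-class}; your $\pi_1$-argument for simple connectivity of $M^*$ is a reasonable supplement, though the paper does not need it spelled out. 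Finally, you identify the simple connectivity of $M^*$ as ``the main technical obstacle,'' but this misreads the difficulty: the hard step is the interpolation in (3), which is exactly the step your proof omits.
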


\begin{proof}
We first show (3). 
Let $V_j$, $j=0,1$, be a subsolid torus of 
$S^1\times\interior D^2\times\{ j\}$ invariant under the action of 
$\phi_j$. Choose a closed domain $W$ of 
$S^1\times\interior D^2\times I$
homeomorphic to $S^1\times D^2\times I$ such that
$W\cap (S^1\times D^2\times\{ j\})=V_j$.
Let $\rho:W\to S^1\times D^2\times I$ be a homeomorphism,
and $V_t:=\rho^{-1}(S^1\times D^2\times \{ t\})$, $t\in I$.
Let 
$f_0:(S^1\times D^2,\phi_{\mu,\nu})\to (V_0,\phi_0)$
and 
$f_1:(S^1\times D^2,\phi_{\mu,\mu-\nu})\to (V_1,\phi_1)$
be equivariant homeomorphisms. 
From the assumption, 
$g_0:=f_1^{-1}\circ\rho_1^{-1}\circ\Pi_{0,1}\circ\rho_0\circ f_0$
is isotopic the identity,  where 
$\rho_t:=\rho|_{V_t}:V_t\to S^1\times D^2\times \{ t\}$ and 
$\Pi_{s,t}:S^1\times D^2\times \{ s\}\to S^1\times D^2\times \{ t\}$
is the natural identification.
Let $g_t$ be an isotopy between $g_0$ and 
the identity ($=g_1$).
Then 
$f_t:=\rho_t^{-1}\circ\Pi_{1,t}\circ\rho_1\circ f_1\circ g_t$
gives a continuous family of homeomorphisms 
$:S^1\times D^2\to V_t$ joining $f_0$ and $f_1$.
Using $f_t$, we can extend the orbit structure on $V_0\cup V_1$ 
to that on $W$.
Thus we have an $S^1$-action $\bar\phi$ on $W$ extending
$(V_0,\phi_0)$ and $(V_1,\phi_1)$. 
Note $S^1\times D^2\times I-\interior W\simeq S^1\times A^2\times I$,
where $A^2$ is an annulus of $D$ such that 
$D-A^2$ is an open disk.
From now on we identify $S^1\times D^2\times I-\interior W$
with $S^1\times A^2\times I$.
Now we have a free $S^1$-action $\phi$ on 
$S^1\times\partial(A^2\times I)\simeq T^3$ given by $\varphi$, 
$\phi_1$ and $\bar\phi$. Since $\phi$ gives rise to a trivial 
bundle $S^1\to T^3\to T^2$, it provides an imbedding 
$g:S^1\times\partial(A^2\times I)\to S^1\times A^2\times I$
such that 
\begin{enumerate}
 \item $g$ leaves $S^1\times\partial A^2\times\{ t\}$ invariant for 
       any $t\in I;$
 \item $g(S^1\times (x,t))$ gives the $\phi$-orbit for any 
       $(x,t)\in\partial(A^2\times I);$
 \item $g:\{ 1\}\times\partial(A^2\times I):
       \{ 1\}\times\partial(A^2\times I)\to 
                   S^1\times\partial(A^2\times I)$
       is a section to $\phi$.
\end{enumerate}
Therefore we have a homeomorphism 
$$
h:S^1\times\partial(A^2\times I)-g(\{ 1\}\times\partial(A^2\times I))
   \simeq J\times\partial(A^2\times I),
$$
where $J$ is an open interval.
Extend the section $g$ to an embedding 
$G:\{ 1\}\times A^2\times I\to S^1\times A^2\times I$
so that the $\phi$-orbits meet the $G$-image only with the 
$g$-image. Then 
$h$ extends to a homeomorphism
$$
H:S^1\times A^2\times I-G(\{ 1\}\times A^2\times I)
   \simeq J\times A^2\times I.
$$
Since $H$ extends to a homeomorphism
$\bar H:S^1\times A^2\times I\simeq S^1\times A^2\times I$
we can extend $\phi$ to an $S^1$-action on 
$S^1\times A^2\times I$, proving (3).

(1) is immediate from (3) if one extend 
$\varphi|_{S^1\times\partial D^2\times \{ 1\}}$ to an 
$S^1$-action $\phi_1$ on $S^1\times D^2\times \{ 1\}$
which is equivariant to $\phi_{\mu,\mu-\nu}$.

Note that a nontrivial Seifert bundle 
$S^2\times S^1=\partial (S^1\times D^2\times I) \to
S^2=\partial (D^2\times I)$ is essentially unique.
It has two singular orbits, and 
the Seifert invariants are $(\alpha,\beta)$ and 
$(\alpha,\alpha-\beta)$ for some $\alpha>\beta$
(see \cite{Sc:geometry},p.459).
Let $\psi$ be any extension of $\varphi$ to an 
$S^1$-action on $S^1\times D^2\times I$. From the above argument,
$E^*(\psi)$ consists of a segment along which Seifert invariants
are $(\mu,\nu)$. Obviously $F^*(\psi)$ is empty.
Therefore Proposition \ref{prop:circ-class} yields that 
$\psi$ must be equivariant to 
$\phi_{\mu,\nu}\times\text{\rm id}$.
\end{proof}

\begin{prop} \label{prop:locseif}
Suppose that $B(p_i,r)\simeq S^1\times D^3$ and $m_i=2$. Then
there exists a Seifert fibration 
$\hat f_i:B^{f_i}(p_i,r)\to B(p,r)$ such that 
\begin{enumerate}
 \item $\hat f_i = f_i$ on 
 $A^{f_i}(p_i;r/100,r)-\bigcup_{j=1}^k A_{j}^{f_i}(p_i;\epsilon)$;
 \item $\hat f_i = \tilde f_i$ on 
        $\partial B^{f_i}(p_i,r)$;
 \item the singular fibres of $\hat f_i$ are contained in the union
   of two of $\{ A_{j}^i(p_i;\epsilon)\}$, say, 
   $A_1^i(p_i;\epsilon)\cup A_2^i(p_i;\epsilon)$;
 \item the singular locus $\cal{C}_i$ of $\hat f_i$ is a connected 
    quasi-geodesic containing $p$ in its interior and consisting of 
    essential singular points of $X^3$;
 \item the Seifert invariants of the singular fibre of $\hat f_i$ 
       do not exceed 
       $$
\frac{2\pi}{\max\{ L(\Sigma_{\xi_1}(\Sigma_p)), L(\Sigma_{\xi_2}(\Sigma_p))\}},
       $$
\end{enumerate}
where $\xi_i$, $i=1,2$, are the directions at $p$ represented by $\cal{C}_i$
\end{prop}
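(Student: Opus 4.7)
The plan is to construct $\hat f_i$ by combining the $S^1$-bundle $f_i$ on the annular regular region with local fibred solid torus structures around the two essential directions supplied by Lemma \ref{lem:singslice}, then extend globally using the topological constraint $B(p_i,r)\simeq S^1\times D^3$ (Corollary \ref{cor:dim3top}) together with the equivariant classification of $S^1$-actions (Proposition \ref{prop:circ-class}). The rescaling argument of Theorem \ref{thm:rescal} will then identify the singular locus as a quasigeodesic through $p$.

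First I would locate the two singular slices. Since $m_i=2$, the two singular fibres of $\tilde f_i$ sit over points of $\partial B(p,r)$ close to two essential singular directions $\xi_1,\xi_2\in\Sigma_p$, by Proposition \ref{prop:bdyseif}(2). Relabelling, the slices $A_1^{f_i}(p_i;\epsilon)$ and $A_2^{f_i}(p_i;\epsilon)$ contain these singular fibres on their boundaries. Lemma \ref{lem:singslice}(2) then provides fibred solid torus structures $U_{1,i}$, $U_{2,i}$ compatible with $f_i$ on the boundary, with Seifert invariants bounded by $2\pi/L(\Sigma_{\xi_j}(\Sigma_p))$. For the remaining slices $j\ge 3$, which do not see any essential singular direction of $\Sigma_p$, the boundary $S^1$-bundle extends trivially over $A_j^{f_i}\simeq D^2\times S^1$ as a regular $S^1$-bundle.

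Next I would assemble the pieces and extend inward. The local solid torus actions together with $f_i$ on the complement glue to yield an $S^1$-action on the annular region $A^{f_i}(p_i;r/100,r)$ whose weighted orbit data on the inner sphere is compatible with $\tilde f_i$ on the outer sphere. The admissibility relations of Lemma \ref{lem:adj-inv} combined with the vanishing Euler number of $\partial B^{f_i}(p_i,r)\simeq S^1\times S^2\to S^2$ force the two Seifert invariants to pair as $(\alpha,\beta)$ and $(\alpha,\alpha-\beta)$. Under this constraint, Fintushel's classification (Proposition \ref{prop:circ-class}) constructs a compact oriented $S^1$-manifold realising the prescribed weighted orbit data whose total space, by Corollary \ref{cor:dim3top}, must be $S^1\times D^3$; the equivariant homeomorphism with $B(p_i,r)$ provided by Proposition \ref{prop:circ-class} transports this action onto $B^{f_i}(p_i,r)$ and is unique. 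By construction, $\hat f_i$ agrees with $f_i$ on the regular part and with $\tilde f_i$ on the boundary, and its singular locus is an arc of exceptional orbits of type $(\alpha,\beta)$ joining the two boundary singular fibres.

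Finally, to identify $\cal{C}_i$ as a quasigeodesic of essential singular points containing $p$ in its interior, I apply Theorem \ref{thm:rescal} at $p_i$: the rescaling produces a limit $Y/\Gamma_\infty$ with soul $S^1$, and $B(p_i,R\delta_i)\simeq S^1\times D^3$ by Corollary \ref{cor:soul}. The soul circle is the fixed circle of a limiting $S^1$-action induced by the rescaled $\hat f_i$; tracing back through the Gromov--Hausdorff approximations, its image in $B(p,r)$ is $\cal{C}_i$, containing $p$ in its interior since the rescaling is centred at $p_i$. Each point of $\cal{C}_i$ carries nontrivial isotropy for $\hat f_i$, hence lies in the essential singular point set of $X^3$, and Proposition \ref{prop:graph} then shows that $\cal{C}_i$ is a quasigeodesic. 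The main obstacle is the middle step: verifying that the separate local Seifert structures, the boundary Seifert fibration and the $S^1$-bundle $f_i$ on the complement assemble into a \emph{single coherent} global $S^1$-action rather than merely a collection of local ones. Matching the Seifert invariants across the two singular slices by means of the Euler-number constraint on $\partial B^{f_i}(p_i,r)$ is precisely what reduces this gluing to a direct application of Fintushel's classification.
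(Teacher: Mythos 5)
Your plan recognizes the right ingredients --- the fibred solid tori from Lemma~\ref{lem:singslice}, the constraint that the Seifert invariants on the inner sphere pair as $(\mu,\nu)$ and $(\mu,\mu-\nu)$ because $\partial B^{f_i}(p_i,r)\simeq S^1\times S^2$, and the fact that $\cal{C}_i$ must end up a quasigeodesic --- but the middle step, which you yourself flag as the main obstacle, is not actually carried out, and the argument for $(4)$ and $(5)$ does not establish what is claimed.

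The gap in the gluing step is this: Fintushel's classification (Proposition~\ref{prop:circ-class}) is an \emph{a posteriori} uniqueness statement --- two compact oriented $S^1$-$4$-manifolds with isomorphic weighted orbit data are equivariantly homeomorphic. It does not construct an $S^1$-action on $B^{f_i}(p_i,r)$ whose restriction to the annular regular region \emph{equals} the pre-existing $S^1$-bundle $f_i$ and whose restriction to $\partial B^{f_i}(p_i,r)$ \emph{equals} $\tilde f_i$, which is what conclusions $(1)$ and $(2)$ require. The solid-torus structures $U_{j,i}$ from Lemma~\ref{lem:singslice} live in the $3$-manifold $\partial B^{f_i}(p_i,r)$; what is needed is an $S^1$-action on the $4$-dimensional tubes $A_j^{f_i}(p_i;\epsilon;r/100,r)\simeq(S^1\times D^2)\times I$ restricting to $\tilde f_i$ on $S^1\times D^2\times\{1\}$ and to the free $f_i$-action on $(S^1\times\partial D^2)\times I$. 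This is exactly the content of Lemma~\ref{lem:extend}, which shows such a partial action extends to a locally smooth $S^1$-action on the whole tube equivariant to a product $\phi_{\mu,\nu}\times\mathrm{id}$; nothing in Proposition~\ref{prop:circ-class} substitutes for this existence statement. The paper's proof applies Lemma~\ref{lem:extend} tube by tube to obtain a Seifert bundle $\bar f_i$ on the annulus, and only then fills in $B(p_i,r/100)$ by the model $V_{\mu_i,\nu_i}\times I$, using the Seifert-invariant pairing you observed. Your matching of Seifert invariants is necessary, but it does not by itself produce an action compatible with $f_i$.

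The argument for $(4)$ and $(5)$ is also off track. The statement that $\cal{C}_i$ consists of essential singular points of $X^3$ is a geometric claim about the limit space, not a topological claim about $M_i^4$; the implication ``point has nontrivial isotropy under $\hat f_i$ $\Rightarrow$ the corresponding point of $X^3$ is essential singular'' requires proof, and your rescaling-to-$Y/\Gamma_\infty$ argument does not supply one (the soul circle of $Y/\Gamma_\infty$ lives in a different limit space and need not be a fixed set of any $S^1$-action, so ``tracing it back'' to a curve in $B(p,r)\subset X^3$ is unjustified). The paper instead proves Assertion~\ref{ass:surj} --- each slice $\partial B(p,s)\cap A_j(p;\epsilon)$ must contain an essential singular point, else Lemma~\ref{lem:extend} would force a trivial $S^1$-bundle structure there, contradicting the singular fibre --- and Assertion~\ref{ass:inj} (a blowup argument giving approximate radiality of the essential singular set). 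Together these show $ES(X^3)\cap(A_1(p;\epsilon)\cup A_2(p;\epsilon))$ is a curve $\cal{C}_p$ through $p$ with $\Sigma_x(\cal{C}_p)$ containing at least two elements at each $x$; Proposition~\ref{prop:graph} then makes it a quasigeodesic, and one arranges $\hat f_i$ so that its singular locus is $\cal{C}_p$. Without the two assertions, conclusion $(4)$ is not established.
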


\begin{proof}
Identify 
$A_{j}^{f_i}(p_i;\epsilon;r/100,r) \simeq (S^1\times D^2)\times I$.
Now we have the  $S^1$-action, say $\phi_i$, on 
$A_{j}^{f_i}(p_i;\epsilon)\cap \partial B^{f_i}(p_i,r) = 
                 S^1\times D^2\times \{ 1\}$
given by Proposition \ref{prop:bdyseif}.
We also have the $S^1$-action on 
$(S^1\times\partial D^2)\times I$ coming from the $S^1$-bundle structure.
Note that those two actions are compatible on the intersection
$S^1\times\partial D^2\times \{ 1\}$.
%
%By using the flow curves of a gradient-like vector field for 
%$d_{p_ix_i^j}$, we can extend those $S^1$-actions to one
%on $S^1$-action on 
%$(A_{j}^i-\tilde A_j^i(\frac{1}{2}))\cap A(p_i;r/100,10r)$.
%
Using Lemma \ref{lem:extend}, we extend those  
$S^1$-actions to one on $S^1\times D^2\times I$  
which is equivariant to the product action of 
the action $\phi_i$ on $S^1\times D^2\times \{ 1\}$ and 
the trivial action on $I$.
Thus we have extended the $S^1$-bundle 
$\tilde f_i:A^{f_i}(p_i;r/100,r)-\bigcup_{j=1}^k  
 A_{j}^{f_i}(p_i;\epsilon) \to 
   A(p;r/100,r)- \bigcup_{j=1}^k A_{j}(p;\epsilon)$
to a Seifert bundle $\bar f_i:A^{f_i}(p_i;r/100,r)\to A(p;r/100,r)$.
Next,  extend it to a Seifert bundle 
$\hat  f_i:B^{f_i}(p_i,r)\to B(p,r)$ as follows. 
Since $m_i=2$, the Seifert invariants of the Seifert bundle
$:\partial B(p_i,r/100)\to \partial B(p,r/100)$, 
the restriction of $\bar f_i$,  are expressed as
$(\mu_i,\nu_i)$ and $(\mu_i, \mu_i-\nu_i)$.
Therefore this Seifert bundle is isomorphic as a fibred space 
to the one on $\partial(V_{\mu_i,\nu_i}\times I)$ induced from the 
product action $\phi_{\mu,\nu}\times \text{\rm id}$ on 
$V_{\mu_i,\nu_i}\times I$, 
where $V_{\mu_i,\nu_i}$ denotes the fibred solid
torus of type $(\mu_i,\nu_i)$.
This provides a Seifert bundle 
$B(p_i,r/100)\to B(p,r/100)$ such that 
\begin{enumerate}
 \item it is equivalent to the product $V_{\mu_i,\nu_i}\times I$;
 \item the Seifert bundle
  structures on $A^{f_i}(p_i;r/100,r)$ and $B(p_i,$ $r/100)$ are compatible
  on their boundaries;
\end{enumerate}
The properties $(1)$, $(2)$ and $(3)$ are now obvious.\par
   To prove $(4)$ and $(5)$,  we first show

\begin{ass} \label{ass:surj}
There exist $r=r(p)>0$ and $\epsilon=\epsilon(p)>0$ such that 
$\partial B(p,s)\cap A_{j}(p;\epsilon)$ contains an essential 
singular point for each $0<s\le r$ and $j\in\{ 1,2\}$.
\end{ass}

\begin{proof}
Suppose that $A(p;s_1,s_2)\cap A_{j}(p;\epsilon)$
does not meet $ES(X^3)$ for some $s_1<s_2\le r$.
Then $A_j^{f_i}(p_i;\epsilon;s_1,s_2)$ has an 
$S^1$-bundle structure compatible with $f_i$. 
This is a contradiction to Lemma \ref{lem:extend}.
\end{proof}

\begin{ass} \label{ass:inj}
There exist $r=r(p)>0$ and $\epsilon=\epsilon(p)>0$ such that 
$$
  \frac{|d_p(y)-d_p(x)|}{d(x,y)}\ge 1-\tau(r),
$$
for any $x,y\in ES(X^3)\cap A_{j}(p;\epsilon)$ with 
$y$ sufficiently close to $x$ and  for 
each $j\in\{ 1,2\}$.
\end{ass}

\begin{proof}
Suppose the assertion does not hold. Then
there would exist sequences 
$x_i$, $y_i$ of $ES(X^3)\cap A_{j}(p;\epsilon_i)$
with $r=r_i\to 0$ and $\epsilon_i\to 0$ such that
\begin{enumerate}
 \item \[
         \frac{|d_p(y_i)-d_p(x_i)|}{d(x_i,y_i)} \le 1-\mu,
       \]
       for some $\mu>0;$
 \item $s_i:=d(p,x_i)\to 0$ and $d(x_i,y_i)/s_i\to 0$.
\end{enumerate}
We may assume that $(\frac{1}{r_i}X^3,x_i)$ converges to 
a space $(Y^3, x_{\infty})$, where $Y^3$ is isometric to 
the tangent cone $K_p$. Putting 
$\epsilon_i=d(x_i,y_i)$, we consider any limit 
$(\hat Y^3,\hat x_{\infty})$ of another rescaling 
$(\frac{1}{\epsilon_i}X^3,x_i)$,
where $\hat Y^3$ is isometric to a product $\R\times Y_0$. 
Let $\hat y_{\infty}$ denote the 
limit of $y_i$ under this convergence. 
Since $\hat x_{\infty},\hat y_{\infty}\in ES(\hat Y^3)$,
and since $\pi(\hat x_{\infty}) \neq \pi(\hat y_{\infty})$, where 
$\pi:\hat Y^3\to Y_0$ is the projection, 
$Y_0$ contains two essential singular 
points with distance, say $a<1$. Thus  
$Y_0$ must be isometric to the double $D([0,a]\times [0,\infty))$.
In particular, $\dim\hat Y(\infty)=1$ while
$\dim Y(\infty)=2$. This contradicts Lemma \ref{lem:expanding2}.
\end{proof}

From Assertions \ref{ass:surj} and \ref{ass:inj}, 
$ES(X^3)\cap (A_{1}(p;\epsilon)\cup A_{2}(p;\epsilon))$ defines 
a continuous curve, denoted $\cal{C}_p$, through $p$,
such that $\Sigma_x(\cal{C}_p)$ contains at least two elements 
for any $x\in\cal{C}_p$. Therefore
$\cal{C}_p$ is a quasigeodesic by Proposition \ref{prop:graph}.
Obviously we can arrange the fibre structure defined by
$\hat f_i$ so that the singular locus $\cal{C}_i$ of 
$\hat f_i$ coincides with $\cal{C}_p$.
$(5)$ follows from Proposition \ref{prop:bdyseif}.
This completes the proof of Proposition 
\ref{prop:locseif}.
\end{proof}

Now we consider the other case.

\begin{proclaim}{\emph{Case} II.}
  $B(p_i,r)\simeq D^4$.
\end{proclaim}

In this case, by \cite{Or:Seif}, the Seifert bundle 
$\tilde f_i:\partial B^{f_i}(p_i,r)\simeq S^3\to\partial B(p,r)\simeq S^2$
is actually given by an $S^1$-action equivariant to the restriction of 
a canonical action $\psi_{a_ib_i}$ on $D^4(1)$ to 
$S^3(1)=\partial D^4(1)$ for suitable relatively prime integers 
$a_i,b_i$. From Lemma \ref{lem:adj-inv}, the Seifert invariants of 
$\tilde f_i:S^3\to S^2$ have forms $(a_i,a_i')$ and $(b_i,b_i')$ with
\begin{equation*}
  \begin{vmatrix}
     a_i  &  b_i \\
     a_i' &  b_i'
  \end{vmatrix}
        = \pm 1.
\end{equation*}

\begin{prop}  \label{prop:lochopf}
Suppose $B(p_i,r)\simeq D^4$. Then 
 \begin{enumerate}
  \item  $p$ is an extremal point of $X^3$;
  \item  there is an $i_0$ such that 
     for every $i\ge i_0$ we have 
     a locally smooth $S^1$-action $\psi_i$ on $B^{f_i}(p_i,r)$ 
     extending $\tilde f_i$ such that the Seifert bundle
     $\hat f_i:A^{f_i}(p_i;r/100,r)\to A(p;r/100,r)$ 
     given by $\psi_i$ satisfies the following:
    \begin{enumerate}
     \item $\hat f_i=f_i$ on 
      $A^{f_i}(p_i;r/100,r)-\bigcup_{j=1}^k 
      A_{j}^{f_i}(p_i;\epsilon;r/100,r)$;
     \item the singular locus of $\hat f_i$ $($if it exists$)$
      extends to a continuous quasi-geodesic $\cal{C}_i$ containing $p$.
      $\cal{C}_i$ is contained in the union of at most two of 
      $A_{j}(p;\epsilon)$'s,
      and consisting of essential singular points of $X^3$;
     \item $\psi_i$ is equivariant to the canonical action $\psi_{a_i,b_i}$ 
      for some relatively 
      prime integers $a_i$ and $b_i$, where 
       $$
          |a_i|\le \frac{2\pi}{L(\Sigma_{\xi_1}(\Sigma_p))}, \quad  
          |b_i|\le \frac{2\pi}{L(\Sigma_{\xi_2}(\Sigma_p))},
       $$
      for some $\xi_1\neq \xi_2$ and at least one of them is a direction of 
      $\cal{C}_i$ at $p$, and the other is another direction 
      of $\cal{C}_i$ if it exists.
     \end{enumerate}
 \end{enumerate}
\end{prop}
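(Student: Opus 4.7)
The plan is to follow the template of Proposition \ref{prop:locseif} but substitute the linear $S^1$-action on $D^4$ for the product $S^1$-action on $S^1\times D^3$. First I would settle (1) by contraposition: Proposition \ref{prop:bigdiam} asserts that $\diam(\Sigma_p)>\pi/2$ forces $B(p_i,r)\simeq S^1\times D^3$, so our hypothesis $B(p_i,r)\simeq D^4$ immediately gives $\diam(\Sigma_p)\le\pi/2$, which is precisely the extremality of $p$.

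For (2), the first step is the boundary model. By Proposition \ref{prop:bdyseif}(3), the Seifert fibration $\tilde f_i\colon\partial B^{f_i}(p_i,r)\simeq S^3\to\partial B(p,r)\simeq S^2$ arises from an $S^1$-action $\bar\psi_i$, and since any Seifert fibration of $S^3$ over $S^2$ with at most two singular fibres is realized as the orbit map of a linear $\psi_{a_i,b_i}|_{S^3}$ for some relatively prime integers $a_i,b_i$ (with invariants constrained by Lemma \ref{lem:adj-inv}), $\bar\psi_i$ is equivariantly homeomorphic to $\psi_{a_i,b_i}|_{S^3}$. I then extend $\bar\psi_i$ to the interior piecewise as in Proposition \ref{prop:locseif}: on each $A_j^{f_i}(p_i;\epsilon;r/100,r)\simeq S^1\times D^2\times I$ the combination of the free $S^1$-bundle action on $S^1\times\partial D^2\times I$ and the fibred-solid-torus boundary action at the outer end is extended via Lemma \ref{lem:extend}, yielding an action equivariant to $\phi_{\mu_i,\nu_i}\times\mathrm{id}$; on the inner cap $B(p_i,r/100)\cap A_j^{f_i}$ I attach a product fibred solid torus $V_{\mu_i,\nu_i}\times I$ exactly as in the Seifert-extension argument for Proposition \ref{prop:locseif}; and on the complement of $\bigcup_j A_j^{f_i}$ the free $S^1$-bundle $f_i$ itself is the extension. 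This produces a locally smooth $S^1$-action $\psi_i$ on $B^{f_i}(p_i,r)$ extending $\tilde f_i$; to conclude equivariance with the canonical $\psi_{a_i,b_i}$ on $D^4(1)$, I will match the weighted orbit data (a single fixed point at the center, at most two arcs of exceptional orbits with the Seifert weights forced by $(a_i,b_i)$) and invoke Proposition \ref{prop:circ-class}.

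The descriptive clauses (2)(a) and (2)(b) for $\hat f_i$ are then proved verbatim as in Proposition \ref{prop:locseif}: equality with $f_i$ outside $\bigcup_j A_j^{f_i}$ follows from the uniqueness clause in Lemma \ref{lem:extend}, and the singular locus $\mathcal{C}_i$ is identified with the essential singular points of $X^3$ in $A_1(p;\epsilon)\cup A_2(p;\epsilon)$ via Assertions \ref{ass:surj} and \ref{ass:inj}, whose proofs depend only on the local convergence $(\tfrac{1}{r}X^3,p)\to(K_p,o_p)$ and the nonexistence of a free $S^1$-bundle over a collapsing cylinder without essential singular points, independently of the global topology of $B(p_i,r)$; Proposition \ref{prop:graph} promotes this set to a quasi-geodesic. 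The bounds on $|a_i|$ and $|b_i|$ in (2)(c) are exactly Lemma \ref{lem:singslice}(2)(b) applied to each singular fibre. The main obstacle I expect is the global equivariant matching step at the end of paragraph two: the local extensions produced by Lemma \ref{lem:extend} are determined only up to isotopy, and to apply Proposition \ref{prop:circ-class} cleanly one must verify that the Seifert invariants along $\mathcal{C}_i$, together with the cap datum at $p$, actually realize those of $\psi_{a_i,b_i}$ on $D^4$. This tracking is an instance of the case analysis recorded in Proposition \ref{prop:plumbing}, and the hypothesis $B^{f_i}(p_i,r)\simeq D^4$ is essential for pinning down the Euler-number/obstruction data uniquely.
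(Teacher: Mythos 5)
Your high-level strategy matches the paper's intent exactly: the paper's proof of this proposition is a one-line pointer, namely that $(1)$ follows from Proposition \ref{prop:bigdiam}, $(2)$-$(a)$,$(b)$ are ``similar to Proposition \ref{prop:locseif}'', and $(2)$-$(c)$ is ``similar to Proposition \ref{prop:bdyseif}''. You correctly identify the boundary model via Proposition \ref{prop:bdyseif}$(3)$, the extension mechanism via Lemma \ref{lem:extend}, the quasigeodesic identification via Assertions \ref{ass:surj}, \ref{ass:inj} and Proposition \ref{prop:graph}, and the numerical bounds via Lemma \ref{lem:singslice}$(2)$-$(b)$. Your flagged obstacle --- tracking the weighted orbit data into Fintushel's classification --- is exactly what the remark following Proposition \ref{prop:plumbing} ($m+n=1$, $m=1$) disposes of.

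There is, however, a concrete error in your inner filling step that you should correct. You write that on $B(p_i,r/100)$ you ``attach a product fibred solid torus $V_{\mu_i,\nu_i}\times I$ exactly as in the Seifert-extension argument for Proposition \ref{prop:locseif}.'' That filling is correct only in the $S^1\times D^3$ case, where the Seifert invariants on $\partial B(p_i,r/100)\simeq S^2\times S^1$ have the matched form $(\mu_i,\nu_i)$ and $(\mu_i,\mu_i-\nu_i)$, so that $\partial(V_{\mu_i,\nu_i}\times I)$ realizes them. Here the inner boundary is $\partial B(p_i,r/100)\simeq S^3$ with Seifert invariants $(a_i,a_i')$, $(b_i,b_i')$ satisfying $a_ib_i'-b_ia_i'=\pm1$ by Lemma \ref{lem:adj-inv}; those are the invariants of $\psi_{a_i,b_i}|_{S^3}$, not of $\partial(V_{\mu,\nu}\times I)$. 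The correct cap is therefore $\bigl(D^4(r/100),\psi_{a_i,b_i}\bigr)$, introducing a single fixed point, which is what produces the $D^4$ topology. You in fact state the correct orbit data (``a single fixed point at the center, at most two arcs of exceptional orbits'') two sentences later when invoking Proposition \ref{prop:circ-class}, so the conclusion you reach is right, but the intermediate $V_{\mu_i,\nu_i}\times I$ attachment contradicts it and, taken literally, would yield $S^1\times D^3$ rather than $D^4$. Replace that filling with the canonical $D^4$ cap and the proof aligns with the paper's argument.
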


\begin{proof}
$(1)$ follows from Proposition \ref{prop:bigdiam}.
$(2)$-(a), (b) are similar to Proposition \ref{prop:locseif}.
$(2)$-(c) follows from an argument similar to Proposition \ref{prop:bdyseif}.
\end{proof}

%    \input{dim3nobdyglob}

%dim3nobdyglob.tex
\section{Collapsing to three-spaces without boundary- \\
          global construction}  \label{sec:dim3nobdyglob}

 Let a sequence of closed orientable $4$-dimensional Riemannian
manifolds $M_i^4$ with $K \ge -1$ converge to a 
$3$-dimensional compact Alexandrov space $X^3$ without boundary. 
By Fibration Theorem \ref{thm:orig-cap}, we have a locally 
trivial $S^1$-bundle 
$f_i:M_i^{\prime}\to X^3-S_{\delta_3}(X^3)$ which is an
$\epsilon_i$-approximation, $\lim_{i\to\infty}\epsilon_i=0$,
where $M_i^{\prime}$ is an open subset of $M_i^4$.
The purpose of this section is to construct a globally defined 
locally smooth, local $S^1$-action on $M_i^4$.

\begin{thm} \label{thm:patch}
Suppose that $M_i^4$ collapses to a $3$-dimensional  compact Alexandrov
space $X^3$ without boundary under $K\ge -1$. 
Then there exists a locally smooth, local $S^1$-action 
$\psi_i$ on $M_i^4$ satisfying the following$:$
 \begin{enumerate}
  \item $M_i^4/\psi_i\simeq X^3;$
  \item We have an inclusion $F^*(\psi_i)\subset\Ext(X^3);$
  \item For each $x_{\alpha}\in F^*(\psi_i)$, there are 
    $p_i^{\alpha}\in M_i^4$ close to $x_{\alpha}$ and  
    $r_{\alpha} > 0$ independent of $i$ such that
    $B^{f_i}(p_i^{\alpha}, r_{\alpha})$ is $\psi_i$-invariant and 
    $(B^{f_i}(p_i^{\alpha}, r_{\alpha}), \psi_i)$ is equivariantly 
    homeomorphic to $(D^4(1),\psi_{a_ib_i})$ for some relatively 
    prime integers $a_i,b_i$ satisfying
       $$
          |a_i|\le \frac{2\pi}{\ell_1}, \quad  |b_i|\le \frac{2\pi}{\ell_2},
       $$
    where $\ell_1$ and $\ell_2$ are described as in 
    Proposition \ref{prop:lochopf} (2)$;$
  \item Each connected component $\cal{C}_i\subset X^3$ of the singular  
    locus $S^*(\psi_i)$ 
    is either a periodic quasi-geodesic in $X^3-F^*(\psi_i)$ or 
    a quasi-geodesic path or loop in $X^3$ connecting some points of 
    $F^*(\psi_i);$
  \item The order of the isotropy subgroup along every component 
    $\hat{\cal C}_i$ of the exceptional locus $E^*(\psi)$ does not exceed 
    $$
          \inf_{x\in\hat{\cal C}_i} \frac{2\pi}{L_{\xi_x}(\Sigma_x)},
    $$
    where $\xi_x$ is a direction at $x$ defined by $\hat{\cal C}_i$.
 \end{enumerate}
\end{thm}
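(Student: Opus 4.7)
The strategy is to assemble the local $S^1$-actions constructed in Section \ref{sec:dim3nobdyloc} together with the global $S^1$-bundle $f_i$ produced by the Fibration Theorem \ref{thm:orig-cap} into a single locally smooth local $S^1$-action on $M_i^4$. Since $X^3$ is a topological manifold (Proposition \ref{prop:topmfd}) and by Proposition \ref{prop:graph} the essential singular point set $ES(X^3)$ is a finite metric graph of quasi-geodesics, one can select finitely many points $p^\alpha \in X^3$ with associated radii $r_\alpha > 0$ such that (i) the topologically singular locus of $X^3$ is contained in $\bigcup_\alpha B(p^\alpha, r_\alpha/100)$, (ii) at each $p^\alpha$ the local construction of Section \ref{sec:dim3nobdyloc} applies at all scales $\le r_\alpha$, and (iii) the balls of radius $r_\alpha$ pairwise overlap only inside the regular part $X^3 \setminus S_{\delta_3}(X^3)$. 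For each such $\alpha$ take the corresponding local action $\psi_i^\alpha$ on $B^{f_i}(p_i^\alpha, r_\alpha)$: a trivial bundle extension if $B(p_i^\alpha, r_\alpha) \simeq S^1\times D^3$ with $m_i=0$, the Seifert extension of Proposition \ref{prop:locseif} if $m_i=2$, and the canonical extension of Proposition \ref{prop:lochopf} if $B(p_i^\alpha, r_\alpha) \simeq D^4$.

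The patching rests on the observation that each $\psi_i^\alpha$ is constructed as an extension of the global bundle $f_i$, the $S^1$-foliation induced by $\psi_i^\alpha$ on the annular region $A^{f_i}(p_i^\alpha; r_\alpha/100, r_\alpha) \setminus \bigcup_j A^{f_i}_j(p_i^\alpha; \epsilon)$ being equal to that of $f_i$. Hence on any nonempty overlap $B^{f_i}(p_i^\alpha, r_\alpha) \cap B^{f_i}(p_i^\beta, r_\beta)$, which by (iii) lies inside the common regular part, the two actions $\psi_i^\alpha$ and $\psi_i^\beta$ induce the same unoriented $S^1$-foliation and therefore coincide up to orientation of each orbit. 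Together with $f_i$ on the complement $M_i^4 \setminus \bigcup_\alpha B^{f_i}(p_i^\alpha, r_\alpha/2)$, the collection $\{(B^{f_i}(p_i^\alpha, r_\alpha), \psi_i^\alpha)\}$ meets the compatibility condition in the definition of a locally smooth local $S^1$-action $\psi_i$.

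Properties (1)--(5) are then read off from the local construction and the classification results of Section \ref{sec:action}. Property (2) is immediate from Proposition \ref{prop:lochopf}(1), which forces every fixed point to sit over an extremal point of $X^3$. Property (3) is the content of Proposition \ref{prop:lochopf}(2)(c), combined with the equivariant classification in Proposition \ref{prop:circ-class}. Property (5) follows from the Seifert invariant bounds in Propositions \ref{prop:locseif}(5) and \ref{prop:lochopf}(2)(c), using the fact that the isotropy type is constant along each arc of $E^*(\psi_i)$. For property (1), the local orbit maps $B^{f_i}(p_i^\alpha, r_\alpha)/\psi_i^\alpha \to B(p^\alpha, r_\alpha)$ are homeomorphisms by construction (the orbit structure is unaffected by orbit orientation), and they agree on overlaps, giving a global homeomorphism $M_i^4/\psi_i \to X^3$.

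The main obstacle is property (4): showing that the union of local singular loci assembles into a disjoint family of periodic quasi-geodesics in $X^3 \setminus F^*(\psi_i)$ together with quasi-geodesic paths or loops joining fixed points. Each local singular locus is already a quasi-geodesic arc of essential singular points (Propositions \ref{prop:locseif}(4) and \ref{prop:lochopf}(2)(b)), so $S^*(\psi_i) \subset ES(X^3)$ is a closed subset satisfying the two hypotheses of Lemma \ref{lem:essential}, hence extremal. Proposition \ref{prop:graph} then exhibits $S^*(\psi_i)$ as a finite metric graph of quasi-geodesics whose interior vertices have valence three. What remains is to verify that the local arcs match coherently at shared non-extremal interior endpoints: two local quasi-geodesics $\mathcal{C}_i^\alpha$ and $\mathcal{C}_i^\beta$ meeting at such a point $p$ must share the pair of directions $\xi_1,\xi_2 \in \Sigma_p$ governing the respective $S^1$-actions, and by the uniqueness of a quasi-geodesic emanating from $p$ in a prescribed essential singular direction (the uniqueness argument cited in the proof of Proposition \ref{prop:graph}), they concatenate into a single quasi-geodesic through $p$. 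Hence the components of $S^*(\psi_i)$ assume precisely the form required by (4), completing the proof.
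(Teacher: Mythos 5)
The argument breaks down at your assumption (iii), that the covering balls $B(p^\alpha,r_\alpha)$ can be chosen so that their pairwise intersections avoid $S_{\delta_3}(X^3)$. For a $3$-dimensional limit, $S_{\delta_3}(X^3)$ is not discrete: by Proposition \ref{prop:graph} it contains quasi-geodesic arcs of positive length (and possibly closed loops) consisting of essential singular points. If $\gamma$ is a connected such arc and $\{B(p^\alpha,r_\alpha)\}$ is a finite cover of $\gamma$ by open balls, then the sets $B(p^\alpha,r_\alpha)\cap\gamma$ are open in $\gamma$ and cover it, so by connectedness at least two of them must intersect; that intersection lies in $S_{\delta_3}(X^3)$, contradicting (iii). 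One simply cannot arrange that overlaps happen only in the regular part, and this is exactly the nontrivial situation the paper's proof is forced to confront. Once (iii) fails, your reduction ``on the overlap both $\psi_i^\alpha$ and $\psi_i^\beta$ restrict to the $S^1$-bundle $f_i$, hence agree up to orientation'' no longer applies on the crucial overlaps, since there the actions are two independent applications of Lemma \ref{lem:extend}, which only pins down the extension up to equivariant homeomorphism, not on the nose; a priori they need not coincide.

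The paper handles this head-on. After choosing a cover with controlled combinatorics (so that $\{B(p^\alpha,r_\alpha/6)\}$ is disjoint and no $p^\beta$ lies in $B(p^\alpha,r_\alpha/3)$ for $\alpha<\beta$), it distinguishes two geometric configurations for an overlap containing a singular point (Cases I and II, depending on $\angle x p^\alpha p^\beta$). In Case II, where the singular tubes $A^{f_i}_{j_\alpha}$ and $A^{f_i}_{j_\beta}$ genuinely overlap along the singular set, it does not expect the two local actions to agree; instead it explicitly \emph{perturbs} $\mathcal{S}_i^\alpha$ to a new action $\tilde{\mathcal{S}}_i^\alpha$ that matches $\mathcal{S}_i^\beta$ on the common singular tube and matches $\mathcal{F}_i$ off it, and then uses Lemma \ref{lem:extend}(3) to interpolate the two given boundary actions across a product region $(S^1\times D^2)\times I$. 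This interpolation step, which your proposal omits, is the heart of the global patching argument. Your treatment of properties (2)--(5) is in the right spirit and largely tracks the paper, but without the actual patching mechanism the existence of the global local $S^1$-action $\psi_i$ (and hence property (1)) is not established.
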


\begin{rem}
 Theorem \ref{thm:patch} $(2)$ means that for any fixed point 
 $p_i^{\alpha}\in {\rm Fix}(\psi_i)$,
 there corresponds a unique extremal point $x_{\alpha}\in \Ext(X^3)$ 
 which is Gromov-Hausdorff close to $p_i^{\alpha}$.
\end{rem}

\begin{ex} \label{ex:S^4}
For relatively prime integers $a$, $b$, let us consider 
the restriction of the canonical action $\psi_{ab}$ 
on $D^4(1)$ to $S^3(1)$.
This naturally extends to an isometric $S^1$-action 
$\tilde\psi_{a,b}$ on $S^4(1)$,
the spherical suspension over $S^3(1)$, with two fixed points,
say $x_1$ and $x_2$. 
Let $X^3$ be the quotient space
$S^4(1)/\tilde\psi_{a,b}$, which is the spherical suspension over 
$S^3(1)/\psi_{a,b}\simeq S^2$.
By \cite{Ym:collapsing}, we have a sequence of metrics 
$g_i$ on $S^4$ such that 
\begin{enumerate}
 \item the sectional curvature $K_{g_i}$ has a uniform lower bound;
 \item $(S^4,g_i)$ collapses to $X^3$.
\end{enumerate}
Note that 
\begin{enumerate}
 \item[$($3$)$] the diameters of the spaces (say $\Sigma$) of 
   directions at the 
   poles of $X^3$ are both equal to $\pi/2$;
 \item[$($4$)$] $\Sigma$ has at most two singular points, 
   say $\xi$, $\eta$, realizing the diameter of it,  where 
   $$
       L(\Sigma_{\xi}(\Sigma) = 2\pi/|a|,\quad
       L(\Sigma_{\eta}(\Sigma) = 2\pi/|b|.
   $$ 
\end{enumerate}
The singular locus $S^*(\tilde\psi_{a,b})$ of $\tilde\psi_{a,b}$ is
a simple closed loop (resp. a simple arc) joining the two poles 
if $|a|\ge 2$ and $|b|\ge 2$ (resp. if just one of $|a|$ and $|b|$ 
is greater than $1$).
\end{ex}

\begin{ex} \label{ex:CP^2}
Consider the $S^1$-action $\psi$ on $\C P^2$ given by
$$
    z\cdot [z_0:z_1:z_2] = [z^{a_0}z_0:z^{a_1}z_1:z^{a_2}z_2],
$$
for suitably chosen integers $a_i$, $0\le i\le 2$.
We assume that $a_{i+1}-a_{i}$ and $a_{i+2}-a_{i}$
are relatively prime, where $i$ is understood in $mod\,\, 3$. 
Note that $F^*(\psi)$  consists of $x_0=[1:0:0]$,
$x_1=[0:1:0]$, and $x_2=[0:0:1]$.
The action $\psi$ induces actions equivariant to 
the canonical actions of type $(a_{i+1} - a_{i}, a_{i+2} - a_{i})$
on a neighborhood of $x_i$.
For  an invariant metric  $g$ on $\C P^2$,
take a sequence of metrics $g_i$ on $\C P^2$
such that 
\begin{enumerate}
 \item the sectional curvature $K_{g_i}$ has a uniform lower bound;
 \item $(\C P^2,g_i)$ converges to the quotient space 
   $X^3=(\C P^2,g)/S^1$.
\end{enumerate}
Note that 
\begin{enumerate}
 \item[$($3$)$] $\diam(\Sigma_{\bar x_i})=\pi/2$, where 
   $\bar x_i\in X^3$ is 
   the image of $x_i$ under the projection $\C P^2\to X^3$;
 \item[$($4$)$] $\Sigma_{\bar x_i}$ has exactly two singular points, 
   say $\xi_{i,i+1}$, $\xi_{i,i+2}$ realizing the diameter of 
   $\Sigma_{\bar x_i}$, where 
   $$
     L(\Sigma_{\xi_{i,i+1}}(\Sigma_{\bar x_i})) = 2\pi/|a_{i+1}-a_i|,\quad
     L(\Sigma_{\xi_{i,i+2}}(\Sigma_{\bar x_i})) = 2\pi/|a_{i+2}-a_i|.
   $$
\end{enumerate}
Now suppose $a_0=0$ for instance. Then
the singular locus  $S^*(\psi)$ is
a simple arc joining $x_1$ and $x_2$ through $x_0$. 
If $a_{i+1}-a_{i}$ and $a_{i+2}-a_{i}$
are relatively prime for every $i$  in $mod\,\, 3$,
then $\cal{C}$ is a simple closed loop joining $x_0$, $x_1$ and $x_2$.
\end{ex}

\begin{ex} \label{ex:S^2S^2}
Let $S^2(1)\subset \C\times\R$ be the unit sphere and 
consider the $S^1$-action $\psi$ on $S^2(1)\times S^2(1)$ given by
$$
    z\cdot (z_1,t_1,z_2,t_2) = (z^{a}z_1,t_1,z^{b}z_2,t_2),
$$
for relatively prime integers $a$ and $b$,
where $z_i\in\C$, $t_i\in\R$, $i=1,2$.
The action $\psi$ induces actions equivariant to 
the canonical actions of type $(a, b)$
on a neighborhood of each of the four fixed point of $\psi$, up to 
orientation.
We can take a sequence of metrics $g_i$ on $S^2(1)\times S^2(1)$
such that 
\begin{enumerate}
 \item the sectional curvature $K_{g_i}$ has a uniform lower bound;
 \item $(S^2(1)\times S^2(1),g_i)$ converges to the quotient space 
   $X^3=(S^2(1)\times S^2(1))/S^1$.
\end{enumerate}
Note that $X^3$ is an Alexandrov space with nonnegative curvature
having no boundary.
The singular locus  $S^*(\psi)$ consists of 
$(\rm i)$ a simple loop joining those four fixed points
if $|a|, |b| \neq 1$, $({\rm ii})$ two segments
joining those four fixed points
if just one of $|a|$ and $|b|$ is equal to $1$,
$({\rm iii})$ those four fixed points
if both $|a|$ and $|b|$ are equal to $1$.
\end{ex}

In view of Examples \ref{ex:S^4}, \ref{ex:CP^2} and \ref{ex:S^2S^2},
taking connected sum around fixed points, 
we can construct an $S^1$-action on every connected sum
of $S^4$, $\pm \C P^2$ and $S^2\times S^2$ whose
orbit space $X^3$ has no boundary.
\par\medskip

\begin{proof}[Proof of Theorem \ref{thm:patch}]
Recall that for each $p\in S_{\delta_3}(X^3)$, there are
sufficiently small $\epsilon=\epsilon_p>0$ and $r=r(p,\epsilon)>0$
for which we have a locally smooth $S^1$-action 
$\psi_{p,i}$ on $B^{f_i}(p_i,r)$
for sufficiently large $i$, where $p_i\in M_i^4$ is close to $p$.
To prove Theorem \ref{thm:patch}, we patch those $S^1$-actions 
and a local $S^1$-action defined by $f_i$ to obtain a globally 
defined locally smooth, local $S^1$-action.
Note that if $p$ is an isolated point of $S_{\delta_3}(X^3)$, we 
have nothing to do for the patching. 

For any non-isolated point $p$ of $S_{\delta_3}(X^3)$, 
let $\epsilon=\epsilon_p$ and $r=r(p,\epsilon )$ be so small that 
\begin{enumerate}
  \item $\{ A_{j}(p;\epsilon)\}_{j=1,\ldots, k(p)}$ covers 
        $B(p,2r)\cap S_{\delta_3}(X^3)$, where 
        $A_j(p;\epsilon)=B(px^j,\epsilon r)\cap B(p,2r)$ is defined as 
        in the previous section;
  \item $(A_{j}(p;\epsilon)-A_{j}(p;\epsilon^{10}))\cap
         A(p;r/100, 2r)$ does not meet $S_{\delta_3}(X^3);$
  \item $\{ A_j(p;\epsilon)-B(p,\epsilon/100)\}_j$ is disjoint.
\end{enumerate}
From the compactness of $S_{\delta_3}(X^3)$, take $p^1,\ldots,p^N$ 
such that 
$S_{\delta_3}(X^3)\subset \bigcup_{\alpha=1}^N B(p^{\alpha},r_{\alpha}/3)$,
where $r_{\alpha}=r(p^{\alpha},\epsilon_{\alpha})$ and $\epsilon_{\alpha}$
are defined as above. 
We assume that 
$r_1\ge r_2\ge\cdots\ge r_N$.
Note that if $d(p^{\alpha},p^{\beta})\le r_{\alpha}/3$ with 
$\alpha <\beta$, then 
$B(p^{\alpha},r_{\alpha})\supset B(p^{\beta},r_{\beta}/3)$.
Hence considering the covering  
$\{ B(p^{\alpha},r_{\alpha})\}$  
of $S_{\delta_3}(X^3)$ in stead of $\{ B(p^{\alpha},r_{\alpha}/3)\}$,
and removing some of $\{ r_2,\ldots, r_N\}$ if necessary, 
we may assume that 
$B(p^{\alpha}, r_{\alpha}/3)$ does not contain $p^{\beta}$
for every $1\le\alpha<\beta\le N-1$. In particular, 
$\{B(p^{\alpha},r_{\alpha}/6)\}$ is disjoint. \par

 Let $p_i^{\alpha}\in M_i^4$ be a point close to $p^{\alpha}$. 
By Fibration Theorem \ref{thm:orig-cap}, we have an 
$S^1$-bundle structure $\cal{F}_i$ on 
$f_i^{-1}(X^3 - S_{\delta_3}(X^3))\subset M_i^4$.
On each $B^{f_i}(p_i^{\alpha},r_{\alpha})$, we have the
structure $\cal{S}_i^{\alpha}$ of locally smooth  $S^1$-action 
$\psi_{p^{\alpha},i}$
given by Proposition \ref{prop:locseif}
or \ref{prop:lochopf}. We have to patch all of those structures
$\cal{F}_i$ and  $\cal{S}_i^{\alpha}$ together to obtain a globally 
defined structure on $M_i^4$ of locally smooth, local $S^1$-action as 
stated in Theorem \ref{thm:patch}.
\par 

% Put $s_n=d(p_{\alpha_n},x_n)$, $t_n=d(p_{\beta_n},x_n)$  and 
% assume $s_n\ge t_n$.

% Case I.  \quad $\lim\sup s_n/t_n < \infty$. \par

% In view of \eqref{eq:narrowangle}, this would give a contradiction 
% to (3) for sufficiently small $\epsilon_n$. \par

% Case II. \quad $\lim\sup s_n/t_n = \infty$. \par

% Take $y_{nj}\in S_{\delta_3}(X^3)\cap A(p_{\beta_n};\epsilon_n;r_n/j)$
% converging to $p_{\beta_n}$.

% Passing to a subsequence if necessary, we assume that
% $(\frac{1}{t_n} X^3,x_n)$ converges to a pointed Alexandrov space 
% $(Y^3,y_{\infty})$ without boundary. The splitting theorem implies that 
% $Y$ is isometric to a product $\R\times Y_0$.
% Let $z_{\infty}\in Y$ be the limit of $p_{\beta_n}$ 
% under this convergence.
% Then the segment $y_{\infty}z_{\infty}$ is contained 
% in $S_{\delta_3}(Y)$  and transversal to $\R$-factors
% of $\R\times Y_0$. It turns out that
% $\dim_H(S_{\delta_3}(Y))\ge 2$, yielding a contaradiction.
% \end{proof}

   Let $A_{j_{\alpha}}(p^{\alpha};\epsilon_{\alpha})$, 
$A_{j_{\alpha}}(p^{\alpha};\epsilon_1;r_1), \ldots, 
1\le j_{\alpha}\le k(p^{\alpha})$, be as defined right after 
Lemma \ref{lem:coverS} for $p^{\alpha}$.  
%
%
% \begin{lem}
% Suppose that 
% for $\alpha<\beta<\gamma$  
% there is a point 
% $x\in A_{\alpha,j_{\alpha}}(\epsilon,r_{\alpha})
%   \cap A_{\beta,j_{\beta}}(\epsilon,r_{\beta})
%     \cap A_{\gamma,j_{\gamma}}(\epsilon,r_{\gamma})$
% satisfying (\ref{eq:dense}). Then  
% $A_{\gamma,j_{\gamma}}(\epsilon,r_{\gamma})\subset 
%    A_{\alpha,j_{\alpha}}(2\epsilon,r_{\alpha})
%        \cup A_{\beta,j_{\beta}}(2\epsilon,r_{\beta})$.
% \end{lem}
%
%
%
% \begin{lem} \label{lem:reduct}
% Let $\alpha < \beta < \gamma$. If 
% $A_{\alpha, j_{\alpha}}(\epsilon,r_{\alpha}/2)\cap
%        A_{\beta, j_{\beta}}(\epsilon,r_{\beta}/2)\cap
%          A_{\gamma, j_{\gamma}}(\epsilon,r_{\gamma}/2)$
% contains a point $x$ satisfying 
% (\ref{eq:dense}) for $\gamma$ instead of $\beta$, 
% then $B(p_{\gamma},r_{\gamma}/2)$
% is included in 
% $B^{f_i}(p^{\alpha},r_{\alpha})\cup B^{f_i}(p^{\beta},r_{\beta})$.
% \end{lem}
%
% Thus if the assumption of Lemma \ref{lem:reduct} is satisfied
% with $\alpha<\beta<\gamma$, then we can 
% remove  $B^{f_i}(p_{\gamma},r_{\gamma})$ in the given covering 
% $\{ B^{f_i}(p^{\alpha},r_{\alpha})\}$.
% Hence from now on, we assume that 
% $A_{\alpha, j_{\alpha}}(\epsilon,r_{\alpha}/2)\cap
%        A_{\beta, j_{\beta}}(\epsilon,r_{\beta}/2)\cap
%          A_{\gamma, j_{\gamma}}(\epsilon,r_{\gamma}/2)$
% is empty for any  $\alpha<\beta<\gamma$. \par
%
%
%
Now we are going to patch 
$\cal{F}_i$, $\cal{S}_i^{\alpha}$ and $\cal{S}_i^{\beta}$ assuming
$A_{j_{\alpha}}(p^{\alpha};\epsilon_{\alpha};r_{\alpha})\cap 
A_{j_{\beta}}(p^{\beta};\epsilon_{\beta};r_{\beta})$ to be nonempty 
with $\alpha<\beta$.

If $A_{j_{\alpha}}(p^{\alpha};\epsilon_{\alpha};r_{\alpha})\cap 
A_{j_{\beta}}(p^{\beta};\epsilon_{\beta};r_{\beta})$
does not meet $S_{\delta_3}(X^3)$, there is no need to 
patch $\psi_{p^{\alpha},i}$ and $\psi_{p^{\beta},i}$ 
on the intersection $B^{f_i}(p_i^{\alpha},r_{\alpha})\cap
B^{f_i}(p_i^{\beta},r_{\beta})$.
Hence assume that there is a point $x$ in the intersection of
$A_{j_{\alpha}}(p^{\alpha};\epsilon_{\alpha};r_{\alpha})$, 
$A_{j_{\beta}}(p^{\beta};\epsilon_{\beta};r_{\beta})$
and $S_{\delta_3}(X^3)$.

We consider the following two cases depending on 
the value $\angle x p^{\alpha}p^{\beta}$.

\begin{proclaim}{\emph{Case} I.}
 $\angle x p^{\alpha}p^{\beta} >\epsilon_{\alpha}/2$.
\end{proclaim}

Then $p^{\beta}$ must be contained in some  
$A_{k_{\alpha}}(p^{\alpha};\epsilon_{\alpha};r_{\alpha})$
with $k_{\alpha}\neq j_{\alpha}$.
Extending the $S^1$-action on $B^{f_i}(p^{\alpha},r_{\alpha})$
to one on $B^{f_i}(p^{\alpha},2r_{\alpha})$ in a similar way, 
we can borrow the $S^1$-action structure on
$B^{f_i}(p^{\alpha},2r_{\alpha})$ as a compatible $S^1$-action 
structure on 
$A_{j_{\beta}}(p^{\beta};\epsilon_{\beta};r_{\beta})$.
See Figure 2.
\vspace{0.5cm}
%%%%%%%%%%%%%%%%%%%%%%%%%%%
\begin{center}
\begin{tikzpicture}
[scale = 0.5]
\draw [-,thick] (-4,0) -- (4.5,0);
\draw [-,thick] (-4,-2) -- (4.5,-2);
\fill (-4,-1) circle (2pt) node [left]{$p_\alpha$};
\draw [-, thick] (4.5,0) to [out=290, in=70] (4.5,-2);
\draw [-,thick] (4.5,-4.6) -- (2.5,-1.2);
\draw [-,thick] (6,-4.0) -- (4,-0.6);
\fill (3.8,-1.2) circle (2pt) node [below]{$x$};
\draw [-, thick] (2.5,-1.2) to [out=45, in=175] (4,-0.6);
\fill (5.6,-4.5) circle (2pt) node [below]{$p_\beta$};
\fill (0, -6)  circle (0pt) node [below] {Figure 2};
%\fill (4.8,-2.3) coordinate (A) circle (0pt) node [right] {$\gamma_k$};
\end{tikzpicture}
\end{center}
\vspace{0.5cm}

%\begin{figure}[h]
%\includegraphics[width=0.7\textwidth]{p46_1.eps}
%\caption{}
%%\label{fig_corner}
%\end{figure}

\begin{proclaim}{\emph{Case} II.}
 $\angle x p^{\alpha}p^{\beta} \le\epsilon_{\alpha}/2$.
\end{proclaim}

In this case, we actually have $p^{\beta}\in
A_{j_{\alpha}}(p^{\alpha};\epsilon_{\alpha}^{10};r_{\alpha})$
if $d(x,p^{\beta})\le\epsilon_{\alpha} r_{\alpha}$, then 
taking $r_{\alpha}$ slightly larger if necessary,
we may assume that 
$B^{f_i}(p_i^{\beta},r_{\beta})\subset 
          B^{f_i}(p_i^{\alpha},r_{\alpha})$.
Therefore we can remove $\beta$ from $\{ \alpha\}$, and 
we may assume $d(x,p^{\beta}) >\epsilon_{\alpha} r_{\alpha}$.
It follows that $\tilde\angle p^{\alpha}xp^{\beta}$ is sufficiently 
small or sufficiently close to $\pi$. In the former case,
taking $r_{\alpha}$ slightly larger if necessary,
we may assume that 
$B^{f_i}(p_i^{\beta},r_{\beta})\subset 
          B^{f_i}(p_i^{\alpha},r_{\alpha})$.
Therefore we can remove $\beta$ from $\{ \alpha\}$ as before.
In the latter case, 
assuming $\epsilon_{\alpha}r_{\alpha}\ge\epsilon_{\beta}r_{\beta}$
(the other case is proved similarly),
we have
$$
A^{f_i}_{j_{\beta}}(p_i^{\beta};\epsilon_{\beta};r_{\beta}/100,r_{\beta})
  \cap \interior B^{f_i}(p_i^{\alpha},r_{\alpha})
  \subset \interior 
   A^{f_i}_{j_{\alpha}}(p_i^{\alpha};
     2\epsilon_{\alpha};r_{\alpha}/100,r_{\alpha}).
$$
 See Figure 3. \par

%%%%%%%%%%%%%%%%%%%%%%%%%%%
\begin{center}
\begin{tikzpicture}
[scale = 0.5]
\draw [-,thick] (-4,0) -- (4.5,0);
\draw [-,thick] (-4,-2) -- (4.5,-2);
\fill (-4,-1) circle (2pt) node [left]{$p_\alpha$};
\draw [-, thick] (4.5,0) to [out=290, in=70] (4.5,-2);
\draw [-,thick] (2.5,-0.5) -- (8.5,-0.5);
\draw [-,thick] (2.5,-1.5) -- (8.5,-1.5);
\fill (8.7,-1) circle (2pt) node [right]{$p_\beta$};
\draw [-, thick] (2.5,-0.5) to [out=225, in=135] (2.5,-1.5);
\fill (3.5,-1) circle (2pt) node [right]{$x$};
\fill (2, -4)  circle (0pt) node [below] {Figure 3};

\end{tikzpicture}
\end{center}

%\begin{figure}[h]
%\includegraphics[width=0.7\textwidth]{p46_2.eps}
%\caption{}
%%\label{fig_corner}
%\end{figure}

\noindent
We perturb $\cal{S}_i^{\alpha}$ to a new locally smooth $S^1$-action 
$\tilde{\cal{S}}_i^{\alpha}$ on 
$A^{f_i}_{j_{\alpha}}(p_i^{\alpha};2\epsilon_{\alpha};
r_{\alpha}/100,r_{\alpha})\cap B^{f_i}(p_i^{\beta},r_{\beta})$ 
as follows: 
we let $\tilde{\cal{S}}_i^{\alpha}$ coincide 

\begin{itemize}
 \item  with $\cal{S}_i^{\beta}$ on
        $A^{f_i}_{j_{\alpha}}(p_i^{\alpha}; 
        2\epsilon_{\alpha};r_{\alpha}/100,r_{\alpha}) \cap       
   A^{f_i}_{j_{\beta}}(p_i^{\beta};\epsilon_{\beta};r_{\beta}/100,r_{\beta})$
   and $;$ 
 \item with $\cal{F}_i$ on the complement of
   $A^{f_i}_{j_{\beta}}(p_i^{\beta};\epsilon_{\beta};
    r_{\beta}/100,r_{\beta})$ in 
   $A^{f_i}_{j_{\alpha}}(p_i^{\alpha};$ $2\epsilon_{\alpha}
   ;r_{\alpha}/100,r_{\alpha})\cap B^{f_i}(p_i^{\beta},r_{\beta})$.
\end{itemize}
Note that this is well-defined. Using 
$$ 
A^{f_i}_{j_{\alpha}}(p_i^{\alpha};2\epsilon_{\alpha};r_{\alpha}/100,r_{\alpha})
- \interior B^{f_i}(p^{\beta},r_{\beta}) \simeq (S^1\times D^2)\times
I,   
$$
 together with lemma \ref{lem:extend}, 
 we can extend $\tilde{\cal{S}}_i^{\alpha}$ to a locally smooth
 $S^1$-action 
 on the complement of $B^{f_i}(p_i^{\beta},r_{\beta})$ in 
 $A^{f_i}_{j_{\alpha}}(p_i^{\alpha};\epsilon_{\alpha};
 r_{\alpha}/100,r_{\alpha})$, 
 which is compatible with $\cal{S}_i^{\alpha}$ on 
 $\{ d_{p_i^{\alpha}}=r_{\alpha}/100\}\cap
 A^{f_i}_{j_{\alpha}}(p_i^{\alpha};\epsilon_{\alpha};
 r_{\alpha}/100,r_{\alpha})$.
 Thus we obtain the required locally smooth
 $S^1$-action on the union of 
 $A^{f_i}_{j_{\alpha}}(p_i^{\alpha};2\epsilon_{\alpha};
 r_{\alpha}/100,r_{\alpha})$  and 
 $A^{f_i}_{j_{\beta}}(p_i^{\beta};\epsilon_{\beta};
 r_{\beta}/100,r_{\beta})$,
 which is compatible with $\tilde{\cal{S}}_i^{\alpha}$, 
 $\cal{F}_i$ and $\cal{S}_i^{\beta}$.
 Repeating this local patching procedure finitely many times, 
 we obtain the global collapsing structure and complete the proof of 
 Theorem \ref{thm:patch}.
\end{proof}

%     \input{dim3wbdy}

%dim3wbdy.tex
\section{Collapsing to three-spaces with boundary} 
\label{sec:dim3wbdy}

In this section, we consider the situation that a sequence of closed 
orientable $4$-dimensional Riemannian
manifolds $M_i^4$ with $K \ge -1$ converge to a 
$3$-dimensional compact Alexandrov space $X^3$ with boundary. 
We construct a globally defined locally smooth, local 
$S^1$-action on $M_i^4$ to complete the proof of Theorem 
\ref{thm:dim3}.
\par

\begin{thm} \label{thm:patchwbdy}
Suppose that $M_i^4$ collapses to a $3$-dimensional  compact Alexandrov
space $X^3$ with boundary under $K\ge -1$. 
Then there exists a locally smooth, local $S^1$-action 
$\psi_i$ on $M_i^4$ satisfying the following:
 \begin{enumerate}
  \item $M_i^4/\psi_i\simeq X^3$;
  \item $F^*(\psi_i)$ coincides with the union of $\partial X^3$ 
    and a subset of 
    ${\rm Ext}(\interior X^3)$,
    say $\{ x_{\alpha}\}_{\alpha=1,\ldots,k};$
  \item For each $1\le\alpha\le k$, there is an $r_{\alpha} > 0$ 
    independent of $i$ such that if $p^{\alpha}_i\in M_i^4$ denotes the 
    $\psi_i$-fixed point corresponding to $x_{\alpha}$, then 
    $B^{f_i}(p_i^{\alpha}, r_{\alpha})$ is $\psi_i$-invariant and 
    $(B^{f_i}(p_i^{\alpha}, r_{\alpha}), \psi_i)$ is equivariantly 
    homeomorphic to $(D^4(1), \psi_{a_i,b_i})$ for some relatively 
    prime integers $a_i,b_i$ satisfying
    \[
          |a_i|\le \frac{2\pi}{\ell_1}, \quad  |b_i|\le \frac{2\pi}{\ell_2},
    \]
    where $\ell_1$ and $\ell_2$ are described as in 
    Proposition \ref{prop:lochopf}$;$
  \item Each connected component ${\cal C}_i$ of the singular locus  
    of $\psi_i$ in $\interior X^3$ 
    is either a periodic quasi-geodesic in 
    $\interior X^3-\{ x_{\alpha}\}$ or 
    a quasi-geodesic path or loop in $\interior X^3$ 
    connecting some of $\{ x_{\alpha}\};$
 \item The order of the isotropy subgroup along every component 
    $\hat{\cal C}_i$ of the exceptional locus $E^*(\psi)$ does not exceed 
    \[
     \inf_{x\in\hat{\cal C}_i}\frac{2\pi}{L(\Sigma_{\xi_x}(\Sigma_x))},
    \]
    where $\xi_x$ is a direction at $x$ defined by $\hat{\cal C}_i$.
  \end{enumerate}
\end{thm}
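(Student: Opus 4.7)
The proof will closely parallel that of Theorem \ref{thm:patch}, with the novelty concentrated near $\partial X^3$. My plan is to use the metric collar theorem (Theorem \ref{thm:collar}) to split $X^3$ into a collar $X^3-X^3_\epsilon\simeq \partial X^3\times[0,\epsilon]$ and the interior part $X^3_\epsilon$, build locally smooth $S^1$-actions separately on the two corresponding portions of $M_i^4$, and then patch them along their common overlap.

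First, on the part of $M_i^4$ projecting into $X^3_\epsilon$, I will repeat verbatim the construction of Sections \ref{sec:dim3nobdyloc} and \ref{sec:dim3nobdyglob}. Around each essential singular point $p\in\interior X^3$ inside $X^3_\epsilon$, Propositions \ref{prop:locseif} and \ref{prop:lochopf} produce a locally smooth $S^1$-action on $B^{f_i}(p_i,r)$; away from $S_{\delta_3}(X^3)$ the Fibration Theorem \ref{thm:orig-cap} supplies a principal $S^1$-bundle structure; and the patching procedure of Theorem \ref{thm:patch} (via Lemma \ref{lem:extend}) assembles these into a locally smooth local $S^1$-action $\psi_i^{\circ}$. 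The interior isolated fixed points then occur exactly at $\Ext(\interior X^3)$, and the quasigeodesic structure of the singular locus follows from Proposition \ref{prop:graph}, yielding items (4), (5) and the interior half of (2)--(3).

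Second, on a neighborhood of $\partial X^3$, I will apply the equivariant Fibration-Capping Theorem (Theorem \ref{thm:cap} of Part \ref{part:cap}) to a closed domain $Y\subset R_\delta^D(X^3)$ containing the collar. In codimension one, Corollary \ref{cor:cap} forces the cap fibre to be homeomorphic to $D^2$, and the equivariant version of the theorem endows $N_{i,\mathrm{cap}}$ with the standard fibrewise $S^1$-rotation action on each $D^2$-fibre, whose fixed-point set (the zero-section of the $D^2$-bundle over $\partial_0 Y_\nu$) topologically coincides with $\partial X^3$. On the interior cap part $N_{i,\mathrm{int}}$ the same theorem produces a principal $S^1$-bundle compatible with $f_i$ on the overlap. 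This yields a locally smooth $S^1$-action $\psi_i^{\partial}$ realizing both the fixed-point locus $\partial X^3$ and the isotropy bounds required along $\partial X^3$.

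Finally, I will patch $\psi_i^{\circ}$ and $\psi_i^{\partial}$ along their common overlap sitting over $\partial_0 X^3_\epsilon$: on this annular region both actions restrict to the same principal $S^1$-bundle structure up to a small isotopy, so Lemma \ref{lem:extend} supplies the compatible extension, producing the global action $\psi_i$. The main technical obstacle is the boundary construction in the second step: one must verify that the equivariant capping theorem genuinely produces a fibrewise $S^1$-rotation on each $D^2$-fibre whose fixed-point locus equals the zero-section (rather than a free or exotic $S^1$-action), and that essential singular quasigeodesics $\mathcal{C}_i$ of $\interior X^3$ meeting $\partial X^3$ do so in a way compatible with the $D^2$-bundle cap. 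The first point is content of Theorem \ref{thm:cap}; the second uses the explicit local collar structure produced in the proof of Theorem \ref{thm:collar} together with the quasigeodesic behavior of $\mathcal{C}_i$ to place the endpoints on $\partial X^3$ in generic position relative to the cap.
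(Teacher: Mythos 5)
Your high-level plan --- interior local $S^1$-action via the closed-boundary case, a $D^2$-bundle structure near $\partial X^3$, and a final patching --- matches the architecture of the paper's Section \ref{sec:dim3wbdy}, but the boundary construction has genuine gaps.

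The central one is your use of Theorem \ref{thm:cap}. The Equivariant Fibration-Capping Theorem hypothesizes an isometric $G$-action on \emph{both} $M$ and $X$ (with $X/G$ compact) and produces a $G$-equivariant fibration; it cannot manufacture an $S^1$-action where none was given. In the present situation there is no a priori $S^1$-action on $M_i^4$, and the action on $X^3$ is trivial, so invoking the equivariant form buys nothing beyond the non-equivariant Theorem \ref{thm:orig-cap}. What you can legitimately extract (plus Corollary \ref{cor:cap}(1)) is a locally trivial $D^2$-bundle $f_{i,\mathrm{cap}}$ over $\partial_0 Y_\nu$; the fibrewise rotation must then be \emph{put on} this bundle by hand, and this is only possible after checking compatibility with the interior action $\psi_i^\circ$.

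That compatibility is the serious part. For the boundary circles of the $D^2$-fibres to be matched with $\psi_i^\circ$-orbits, the restriction of $\psi_i^\circ$ to $\partial M_{\nu i}^4$ must be a \emph{free} $S^1$-action, i.e.\ there must be no singular orbits over $\partial X_\nu^3$. This is exactly Lemma \ref{lem:bdyball}(2), which the paper proves by first showing $B(p_i,r)\simeq D^4$ for $p\in\partial X^3$ via the topological analysis in Sublemmas \ref{slem:sph-int}--\ref{slem:sph-bdy}, and then reading off from the gluing $\partial B(p_i,r)\simeq (S^1\times D^2)\cup S_i(D^2)\simeq S^3$ that the Seifert piece carries no singular fibre. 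Your proposal omits this entirely; in fact your final paragraph runs in the wrong direction --- you worry about placing intersections of $\cal C_i$ with $\partial X^3$ in generic position, whereas the fact to be proved is that $\cal C_i$ does \emph{not} reach $\partial X_\nu^3$ at all.

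Finally, Theorem \ref{thm:orig-cap} furnishes the $D^2$-bundle only over a closed domain $Y\subset R_\delta^D(X^3)$, hence away from essential singular points of $\partial X^3$. Extending the bundle compatibly across those points --- which is needed to obtain the bundle over all of $\partial X_\nu^3$, not just over $\partial_0 Y_\nu$ --- is the content of the patching argument in the proof of Theorem \ref{thm:D2along}, using Lemma \ref{lem:coverSD} and the Generalized Schoenflies Theorem \ref{thm:shoen}. This extension step is absent from your plan; without it, the overlap patching in your third step has nothing to attach to near $\partial X^3\cap S^D_{\delta_3}(X^3)$.
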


\begin{ex} \label{ex:S^4wbdy}
Let us consider 
the $S^1$-action $\psi$ 
on $S^4=D^3\times S^1\cup S^2\times D^2$, 
where $S^1$ acts canonically only on 
$S^1$-factors and $D^2$-factors of 
$D^3\times S^1$ and $S^2\times D^2$ respectively.
Then $S^4/\psi=D^3$ and ${\rm Fix}(\psi)=\partial D^3$.
Fixing a $\psi$-invariant metric $g$ on $S^4$, 
by \cite{Ym:collapsing}, we have a sequence of metrics 
$g_i$ on $S^4$ such that 
$(S^4,g_i)$ collapses to $(S^4,g)/\psi$ under a lower sectional
curvature bound.
\end{ex}

\begin{ex} \label{ex:CP^2wbdy}
Consider the $D^2$-bundle $S^2\tilde\times_{-1} D^2$ over
$S^2$ with the Euler number $-1$ and choose a fibre metric on 
$S^2\tilde\times_{-1} D^2$
whose fibre is isometric to the unit disk $D^2(1)$. 
Let us consider the $S^1$-action $\psi$ on 
$\C P^2=D^4(1)\cup_{S^3}S^2\tilde\times_{-1} D^2 $ 
such that the action of $\psi$ coincides with the 
canonical action $\psi_{1,1}$ 
on $D^4(1)$ and $\psi$ acts on each 
$S^2\tilde\times_{-1} S^1(r)\subset S^2\tilde\times_{-1} D^2$, 
$0<r\le 1$, as the Hopf fibration. 
Then $F(\psi)$ is the disjoint union of the origin of $D^4(1)$ and 
the zero-section of $S^2\tilde\times_{-1} D^2$,
and $\C P^2/\psi\simeq D^3$. 
For  an invariant metric  $g$ on $\C P^2$,
we obtain a sequence of metrics $g_i$ on $\C P^2$
such that $(\C P^2,g_i)$ collapses to the quotient space 
$X^3=(\C P^2,g)/\psi$ under a lower sectional
curvature bound.
\end{ex}

\begin{ex} \label{ex:S^2S^2wbdy}
Let us consider the $S^1$-action on $S^2\times S^2$
such that $S^1$ acts only on one $S^2$-factor.
Then $S^2\times S^2/S^1=I\times S^2$ and 
the fixed point set is the disjoint union of 
two copies of $S^2$.
For  an invariant metric  $g$ on $S^2\times S^2$,
we obtain a sequence of metrics $g_i$ on $S^2\times S^2$
such that the sectional curvature $K_{g_i}$ has a uniform lower bound
and $(S^2\times S^2,g_i)$ collapses to the quotient space 
$X^3=(S^2\times S^2,g)/S^1$.
\end{ex}

In Section \ref{sec:dim3nobdyglob}, we have constructed
$S^1$-action on every connected sum
of $S^4$, $\pm \C P^2$ and $S^2\times S^2$ whose
orbit space $X^3$ has no boundary.
In view of Examples \ref{ex:S^4wbdy}, \ref{ex:CP^2wbdy} 
and \ref{ex:S^2S^2wbdy} together with the construction 
in the previous section, taking connected sum around 
fixed points, 
we can construct an $S^1$-action on every connected sum
of $S^4$, $\pm \C P^2$ and $S^2\times S^2$ whose
orbit space $X^3$ has nonempty boundary.\par
\medskip

Let us go back to the situation of Theorem \ref{thm:patchwbdy}.
Take a small $\nu>0$ such that $X^3-X_{\nu}^3$ provides a collar
neighborhood of $\partial X^3$ (Theorem \ref{thm:collar}). 
By Theorem \ref{thm:patch}, we obtain 
a closed domain $M^4_{\nu i}$ of $M_i^4$ such that 
\begin{enumerate}
 \item it collapses to $X_{\nu}^3$;
 \item there exists a locally smooth, local $S^1$-action on $M^4_{\nu i}$
    such that $M^4_{\nu i}/S^1 \simeq X_{\nu}^3$.
\end{enumerate}

 Let $M_i^{\partial}$ denote the closure of $M_i^4 - M_{\nu i}^4$.
The rest of this section is almost devoted to prove 

\begin{thm}  \label{thm:D2along}
$M_i^{\partial}$ is homeomorphic to a $D^2$-bundle over 
$\partial X^3_{\nu}$ compatible with the $S^1$-bundle structure 
of $\partial M_{\nu i}^4$ induced from the local $S^1$-action on 
$M^4_{\nu i}$.
\end{thm}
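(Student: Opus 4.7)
The plan is to identify $M_i^{\partial}$ as the cap portion of an equivariant fibration--capping decomposition of the collapse $M_i^4 \to X^3$ in a neighborhood of $\partial X^3$. By Theorem \ref{thm:collar}, the closure of $X^3 - X^3_{\nu}$ is homeomorphic to $\partial X^3\times [0,\nu]$, and by Proposition \ref{prop:topmfd} the boundary $\partial X^3$ (hence its parallel copy $\partial X^3_{\nu}=\{d_{\partial X^3}=\nu\}$) is a closed topological $2$-manifold. The task is to produce a $D^2$-bundle structure on $M_i^{\partial}$ over $\partial X^3_{\nu}$ that restricts on $\partial M_{\nu i}^4$ to the Seifert $S^1$-bundle furnished by Theorem \ref{thm:patchwbdy}.

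First I would cover $\partial X^3$ by small admissible $(3,\delta)$-strainer neighborhoods $\{U_{\alpha}\}$ of the double $D(X^3)$ and, for each $\alpha$, apply the equivariant capping theorem (Theorem \ref{thm:cap} of Part \ref{part:cap}) to a closed domain $Y_{\alpha}\subset R^D_{\delta}(X^3)$ containing $U_{\alpha}$ in its interior. Since the codimension is $m = 4-3 = 1$, Corollary \ref{cor:cap}(1) identifies the cap-fibre with $D^2$. The equivariance of the capping theorem supplies this $D^2$-bundle structure compatibly with a local $S^1$-action whose rotation on each $D^2$-fibre has boundary circle equal to the $S^1$-orbit over the corresponding point of $\partial X^3_{\nu}\cap \overline{U_{\alpha}}$; this matches, by uniqueness of the equivariant piece produced by Theorem \ref{thm:patchwbdy}, the $S^1$-bundle structure already present on $\partial M_{\nu i}^4$.

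Next I would patch these local $D^2$-bundle structures over the overlaps $U_\alpha \cap U_\beta$. On each overlap, both local bundles share the same restriction to $\partial M_{\nu i}^4$ (namely the globally-defined Seifert fibration of $\partial M_{\nu i}^4$ over $\partial X^3_{\nu}$), and each bundle is equivariantly equivalent to the standard rotation $D^2$-bundle of the same boundary Euler data by Proposition \ref{prop:circ-class} applied fibrewise. Since an orientable $D^2$-bundle over a surface is determined by its boundary $S^1$-bundle, these local models glue unambiguously, after an isotopy of the transition maps rel $\partial D^2$, to a global $D^2$-bundle $M_i^{\partial}\to \partial X^3_{\nu}$ whose boundary $S^1$-bundle is precisely $\partial M_{\nu i}^4$.

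The principal obstacle is the presence of essential singular points on $\partial X^3$, at which the admissibility radius degenerates and the fibration--capping theorem does not apply directly. I would handle such points by the rescaling argument of Section \ref{sec:rescal}: rescaling the metrics of $M_i^4$ around preimages of a singular boundary point produces a complete noncompact $4$-dimensional nonnegatively curved Alexandrov limit with ideal boundary of dimension $\ge 2$, so by Proposition \ref{prop:ideal} its soul has dimension $\le 1$, and Corollary \ref{cor:soul} together with Generalized Soul Theorem \ref{thm:soul} identifies large metric balls with disk-bundles over that soul. A local equivariant analysis analogous to that of Sections \ref{sec:dim3nobdyloc}--\ref{sec:dim3nobdyglob}, now adapted to the half-space model, then shows that the corresponding part of $M_i^{\partial}$ is still a $D^2$-bundle over a disk neighborhood in $\partial X^3_{\nu}$, equivariantly compatible with the already-constructed Seifert structure on $\partial M_{\nu i}^4$; these local models are absorbed into the patching of the previous paragraph to yield the asserted global $D^2$-bundle.
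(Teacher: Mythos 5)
Your overall strategy — decompose near $\partial X^3$, produce local $D^2$-bundle structures from the capping theorem, then patch — is in the right spirit, but there is a substantive gap in how you handle the essential singular points, which is precisely where the real work of the theorem lies.

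The paper's proof relies heavily on Lemma~\ref{lem:bdyball}, which is established immediately before the theorem and gives two facts that your argument does not deliver: (a) for $p\in\partial X^3$, $B(p_i,r)\simeq D^4$; (b) there are \emph{no} singular orbits over $\partial X^3_{\nu}$. Your replacement is to rescale, obtain a nonnegatively curved $4$-dimensional limit $Y$, note via Proposition~\ref{prop:ideal} that its soul has dimension $\le 1$, and invoke Corollary~\ref{cor:soul}. But that only pins $B(p_i,r)$ down to $D^4$ \emph{or} $S^1\times D^3$ (as already recorded in Corollary~\ref{cor:dim3top}), and nothing in your argument rules out the $S^1\times D^3$ case; nor does it rule out singular $S^1$-orbits near $\partial X^3_{\nu}$. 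The paper excludes $S^1\times D^3$ by a rather delicate computation (Sublemmas~\ref{slem:sph-int} and~\ref{slem:sph-bdy}) showing $\partial B(p_i,r)$ is a gluing $S^1\times D^2\cup S_i(D^2)$, hence $S^3$ or a lens space with $\mu\neq 0$, which forces $S^3$. Without this, the fibre near boundary-singular points could fail to be $D^2$.

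A second, related gap is the step ``a local equivariant analysis analogous to Sections~\ref{sec:dim3nobdyloc}--\ref{sec:dim3nobdyglob}, adapted to the half-space model'' — this is where the actual $D^2$-bundle structure near a singular boundary point must be produced compatibly with $f_{i,\rm cap}$ and the $S^1$-structure $\mathfrak h_i$. The paper does this with the Generalized Schoenflies Theorem~\ref{thm:shoen}: one shows the relevant boundary $3$-manifold and then $3$-sphere are locally flatly embedded, whence they bound $D^3$'s and $D^4$'s carrying the required trivial $D^2$-bundle structures, and the global patching replaces Lemma~\ref{lem:extend} by Theorem~\ref{thm:shoen}. Your patching instead appeals to global Euler-class uniqueness of $D^2$-bundles over a surface plus ``isotopy of the transition maps rel $\partial D^2$'' and to Proposition~\ref{prop:circ-class} ``applied fibrewise''; Proposition~\ref{prop:circ-class} classifies equivariant homeomorphism types of $4$-manifolds with $S^1$-action, and it is not clear how to apply it fibrewise in a way that yields the needed local isotopies. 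The Schoenflies input is what makes the local-to-global step actually go through, and it is missing from your argument.Your overall strategy — decompose near $\partial X^3$, produce local $D^2$-bundle structures from the capping theorem, then patch — is in the right spirit, but there is a substantive gap in how you handle the essential singular points of $\partial X^3$, which is precisely where the real work of the theorem lies.

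The paper's proof relies on Lemma~\ref{lem:bdyball}, established immediately before the theorem, which gives two facts your argument does not deliver: (a) for $p\in\partial X^3$, $B(p_i,r)\simeq D^4$; (b) there are \emph{no} singular orbits over $\partial X^3_{\nu}$. Your replacement is to rescale, obtain a noncompact nonnegatively curved $4$-dimensional limit $Y$, note via Proposition~\ref{prop:ideal} that its soul has dimension $\le 1$, and invoke Corollary~\ref{cor:soul}. But this only pins $B(p_i,r)$ down to $D^4$ \emph{or} $S^1\times D^3$ (as already recorded in Corollary~\ref{cor:dim3top}); nothing in your argument rules out the second case or the presence of singular orbits near $\partial X^3_{\nu}$. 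The paper excludes $S^1\times D^3$ by showing (Sublemmas~\ref{slem:sph-int} and~\ref{slem:sph-bdy}) that $\partial B(p_i,r)$ is a gluing $S^1\times D^2\cup S_i(D^2)$, hence $S^3$ or a lens space $L(\mu_i,\nu_i)$ with $\mu_i\neq 0$, which in view of \eqref{eq:D4S1D3} forces $\partial B(p_i,r)\simeq S^3$ and kills the singular orbit. Without that computation the cap fibre near boundary-singular points could fail to be $D^2$.

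A second, related gap is the step ``a local equivariant analysis analogous to Sections~\ref{sec:dim3nobdyloc}--\ref{sec:dim3nobdyglob}, adapted to the half-space model'' — this is exactly where the $D^2$-bundle structure near a singular boundary point must be produced compatibly with $f_{i,\rm cap}$ and with $\mathfrak h_i$. The paper does this with the Generalized Schoenflies Theorem~\ref{thm:shoen}: locally flat embedded spheres $S^2$ and $S^3$ are produced, and Schoenflies provides the $D^3$'s and $D^2\times D^2$'s that carry the trivial $D^2$-bundle structures; the subsequent global patching explicitly substitutes Theorem~\ref{thm:shoen} for Lemma~\ref{lem:extend}. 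Your patching instead appeals to global Euler-class uniqueness of $D^2$-bundles over a surface plus ``isotopy of the transition maps rel $\partial D^2$'' and to Proposition~\ref{prop:circ-class} ``applied fibrewise''; Proposition~\ref{prop:circ-class} classifies equivariant homeomorphism types of $S^1$-manifolds, and it is not clear how to apply it fibrewise to produce the required local isotopies of $D^2$-bundle trivializations. The Schoenflies input is what actually makes the local-to-global step go through, and it is absent from your argument.
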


From Theorem \ref{thm:D2along}, 
$M_i^4$ is a gluing of $M^4_{\nu i}$ equipped with the local 
$S^1$-action  and a $D^2$-bundle over $\partial X^3_{\nu}$,
where the $S^1$-orbit over a point $x\in \partial X^3_{\nu}$ 
is identified with the boundary of the $D^2$-fibre over
$x$. Therefore it is straightforward to construct a desired 
globally defined, 
locally smooth, local $S^1$-action on $M_i^4$ satisfying the conclusions 
of Theorem \ref{thm:patchwbdy}.

A key in the proof of Theorem \ref{thm:D2along} is the following
lemma.

\begin{lem}  \label{lem:bdyball}
\begin{enumerate}
 \item For a point $p\in \partial X^3$, take $p_i\in M_i^4$ with $p_i\to p$. 
       Then there exists a positive number $r_0$ such that 
       $B(p_i, r)$ is homeomorphic to $D^4$ for every $r\le r_0$
       and each sufficiently large $i\ge i_0$, where  $i_0=i_0(r);$
 \item There are no singular orbits over $\partial X_{\nu}^3$,
       namely  $\partial M_{\nu i}^4$ is an $S^1$-bundle over 
       $\partial X_{\nu}^3$.
\end{enumerate}
\end{lem}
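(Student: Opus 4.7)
For part~(1), I would argue by contradiction via the rescaling Theorem~\ref{thm:rescal}. Suppose that for arbitrarily small $r>0$ and infinitely many $i$, $B(p_i,r)$ is not homeomorphic to $D^4$. The theorem then supplies $\hat p_i\to p$ and $\delta_i\to 0$ with $B(p_i,r)\simeq B(\hat p_i,R\delta_i)$ for every $R\ge 1$, and a $4$-dimensional complete nonnegatively curved Alexandrov space $(Y,y_0)$ arising as a limit of $(\tfrac{1}{\delta_i}M_i^4,\hat p_i)$. Since $p$ is a $3$-dimensional point of $X^3$ we have $\dim\Sigma_p=2$, so Lemma~\ref{lem:expand} gives $\dim Y(\infty)\ge 2$, and Proposition~\ref{prop:ideal} forces the soul $S$ of $Y$ to satisfy $\dim S\le 1$. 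Corollary~\ref{cor:soul} then identifies $B(\hat p_i,R\delta_i)$ with the normal disk-bundle of $S$, so by the orientability of $M_i^4$ the only possibilities are $D^4$ (when $S$ is a point) or $S^1\times D^3$ (when $S\simeq S^1$); the contradiction hypothesis forces the latter.

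To rule out this remaining case I would exploit $p\in\partial X^3$. The boundary $\partial X^3$ is extremal in $X^3$ (Example~\ref{ex:extI}(1)), and the rescaling $(\tfrac{1}{\delta_i}X^3,p)\to(K_p,o_p)$ carries $\partial X^3$ to $\partial K_p=K(\partial\Sigma_p)$, which has dimension~$2$. Through the natural distance-nonincreasing map $Y\to K_p$ arising from the underlying convergence $M_i^4\to X^3$ after rescaling, the extremal subset $\partial K_p$ lifts to an extremal subset $F\subset Y$ of dimension at least~$2$. However, a $4$-dimensional complete nonnegatively curved Alexandrov space whose soul is $S^1$ is topologically an $\R^3$-bundle over $S^1$, and a direct examination of its extremal subsets shows that it cannot contain a $2$-dimensional extremal subset whose image under the map to $K_p$ is $\partial K_p$. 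This contradiction completes part~(1).

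For part~(2) I would choose $\nu$ generically so that the level surface $\partial X^3_\nu=\{d(\cdot,\partial X^3)=\nu\}$ avoids the singular locus of the local $S^1$-action $\psi_i$ on $M^4_{\nu i}$. That singular locus consists of the discrete fixed-point set, corresponding to extremal points of $\interior X^3$, together with $1$-dimensional quasigeodesic arcs of exceptional orbits (Theorem~\ref{thm:patch}). Both lie in $\interior X^3$, and a Sard-type argument for the concave function $d_{\partial X^3}$---with Proposition~\ref{prop:concave-rigid} ruling out the pathological case in which a quasigeodesic lies entirely in a level set of $d_{\partial X^3}$---shows that for almost every small $\nu>0$ the hypersurface $\partial X^3_\nu$ is disjoint from this singular locus. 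Hence every orbit of $\psi_i$ above $\partial X^3_\nu$ is principal, and $\partial M^4_{\nu i}\to\partial X^3_\nu$ is a genuine $S^1$-bundle.

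The main obstacle is the extremal-subset step in part~(1): one must confirm that the extremality of $\partial X^3$ truly passes to the rescaled limit $Y$ in a form strong enough to exclude $Y$ being a nontrivial $\R^3$-bundle over $S^1$, and that a $2$-dimensional extremal subset with the prescribed image in $K_p$ is indeed obstructed. This requires a careful analysis of how extremal subsets behave under the iterated Gromov--Hausdorff limits defined by the rescaling procedure, and in practice may lean on the soul and extremal-set theory developed in Part~\ref{part:alex} rather than purely on the results already collected in Sections~\ref{sec:cap}--\ref{sec:dim3alex}.
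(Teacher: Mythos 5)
Your proposal for part~(1) starts from the same dichotomy $B(p_i,r)\simeq D^4$ or $S^1\times D^3$ (Corollary~\ref{cor:dim3top}), but then diverges sharply from the paper. The paper rules out $S^1\times D^3$ by a direct \emph{topological} computation of $\partial B(p_i,r)$: Sublemmas~\ref{slem:sph-int} and~\ref{slem:sph-bdy} decompose $\partial B(p_i,r)$ as a gluing $S^1\times D^2\cup S_i(D^2)$, where $S_i(D^2)$ is a Seifert bundle over $D^2$ with at most one singular fibre and the gluing sends $\{x\}\times\partial D^2$ to a regular fibre over $x$. Such a Dehn filling is necessarily $S^3$ or a lens space $L(\mu,\cdot)$ with $\mu\neq 0$, and in particular is never $S^1\times S^2$; combined with \eqref{eq:D4S1D3} this pins down $\partial B(p_i,r)\simeq S^3$ and $B(p_i,r)\simeq D^4$. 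You instead propose to use extremality of $\partial X^3$ and its behaviour under the rescaling, arguing that a nontrivial $\R^3$-bundle over $S^1$ cannot carry a suitable $2$-dimensional extremal subset. You are right to flag this as the weak point: it is not merely a technicality. The ``natural distance-nonincreasing map $Y\to K_p$'' is not constructed, and even granting it, $Y$ with soul $S^1$ \emph{can} contain $2$-dimensional extremal subsets (e.g.\ $S^1\times E$ when $Y$ splits off a circle and $E$ is a $1$-dimensional extremal subset of the cross-section), so the claimed obstruction needs a substantially sharper statement about the relationship between $F$ and the fibres of the collapse than anything you have set up. As written this step would not go through.

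Part~(2) is also argued differently from the paper, and here there is a definite gap. The singular locus of $\psi_i$ in $\interior X^3$ is a one-dimensional quasigeodesic graph (Proposition~\ref{prop:graph}); a Sard-type statement for $d_{\partial X^3}$ restricted to this graph yields, for almost every $\nu$, that $\nu$ is a regular value---but a regular value of a function on a one-dimensional set has \emph{discrete, not empty}, preimage. Generic $\nu$ therefore gives transversal crossings of the singular locus with $\partial X_\nu^3$, which is exactly the situation you need to exclude, not the one you get to assume. What is actually true---and what the paper proves---is that the Seifert piece $S_i(D^2)$ in the decomposition of $\partial B(p_i,r)$ has \emph{no} singular fibre, because $\partial B(p_i,r)\simeq S^3$ by part~(1); since $S_i(D^2)$ sits over $\partial B(p,r)\cap X_\nu^3$ and the $d_{p_i}$-flow product structure of Sublemma~\ref{slem:sph-int}(3) propagates this regularity across $A(p;r/10,2r)\cap X_\nu^3$, one concludes there are no singular orbits over $\partial X_\nu^3$. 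In the paper, (2) is a \emph{byproduct} of the proof of~(1), not an independent genericity statement. You should replace the Sard argument with this derivation (or prove directly that essential singular points of $X^3$ stay a definite distance from $\partial X^3$, which is a nontrivial geometric fact you have not established).
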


By Theorem \ref{cor:dim3top}, 
\begin{align}
  B(p_i,r) \simeq D^4 \quad \mbox{or} \quad
           S^1\times D^3.  \label{eq:D4S1D3}
\end{align}
We have to exclude the second possibility in \eqref{eq:D4S1D3}. \par

Choose a $\delta_1 r$-net $N(\delta_1)$ of
$A(p;r/10,2r)\cap\partial X$
and take a finite subset 
$N_i(\delta_1)\subset M_i^4$ converging to $N(\delta_1)$, and  
suppose that  $d_{GH}(M_i^4,X^3)<\epsilon_i$ and 
$\epsilon_i \ll \delta_1 r$.

Consider a smooth approximation 
  $$
      f_i = \tilde d(N_i(\delta_1),\,\cdot\,\,).
  $$

Since $f :=d(N(\delta_1),\,\cdot\,\,)$ is regular on 
$f^{-1}([\,\delta_2 r/100,\delta_2 r\,])$,
where $\delta_1\ll \delta_2\ll 1$, $f_i$ is also regular on 
$f_i^{-1}([\,\delta_2 r/100,\delta_2 r\,])$.

\begin{slem} \label{slem:sph-int}
There exist positive numbers $\nu$ and $r$ with $\nu\ll r$
 such that for any sufficiently large $i\ge i(\nu, r)$ 
we have a closed domain $M_i(p_i,r,\nu)$ satisfying 
\begin{enumerate}
  \item $M_i(p_i,r,\nu)$ converges to $A(p;r/10, 2r)\cap X_{\nu}^3$
      under the convergence $M_i^4\to X^3$;
  \item  $M_i(p_i,r,\nu)\cap\partial B(p_i,r)$ is homeomorphic to
      a Seifert bundle over $\partial B(p,r)\cap X_{\nu}^3\simeq D^2$
      having at most one singular orbit;
  \item  $M_i(p_i,r,\nu)$ is homeomorphic to 
      $(M_i(p_i,r,\nu)\cap\partial B(p_i,r))\times I$.
\end{enumerate}
\end{slem}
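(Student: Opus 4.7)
First I would fix constants $\nu\ll r\ll 1$ so that the following hold simultaneously: the collar Theorem \ref{thm:collar} gives $X^3-X_\nu^3\simeq \partial X^3\times[0,\nu)$; the rescaled ball $(\tfrac{1}{r})B(p,10r)$ is $\tau(r)$-close to the tangent cone $K_p=K(\Sigma_p)$, where $\Sigma_p\simeq D^2$ by Proposition \ref{prop:topmfd} (applied to $p\in\partial X^3$); and the pair $(d_p,d_{\partial X^3})$ is regular on $A(p;r/10,2r)\cap X_\nu^3$, which follows from the explicit geometry of $K_p=K(D^2)$ once $\nu\ll r$. In particular $\partial B(p,r)\cap X_\nu^3$ is a topological disk, and the analysis of essential singular points of a half-space-like cone shows that at most one essential-singular quasigeodesic arc of $X^3$ enters $\partial B(p,r)\cap X_\nu^3$.

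Next I would define
\[
M_i(p_i,r,\nu) := \bigl\{\, x\in M_i^4 \bigm| r/10\le \tilde d_{p_i}(x)\le 2r,\ f_i(x)\ge \nu \,\bigr\},
\]
using the smooth approximations $\tilde d_{p_i}$ of $d_{p_i}$ and $f_i=\tilde d(N_i(\delta_1),\cdot)$ already at hand, with $\delta_1\ll\nu$. Then $f_i$ converges uniformly on the region of interest to $d(N(\delta_1),\cdot)$, which is within $\delta_1 r$ of $d_{\partial X^3}$, so $\{f_i\ge\nu\}$ Gromov-Hausdorff approximates $X_\nu^3\cap A(p;r/10,2r)$; combined with the convergence of the $\tilde d_{p_i}$-levels this gives property (1).

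For (2), I would apply the local constructions of Section \ref{sec:dim3nobdyloc} (Propositions \ref{prop:locseif} and \ref{prop:lochopf}, together with Lemma \ref{lem:singslice}) to the three-dimensional submanifold $M_i(p_i,r,\nu)\cap\partial B(p_i,r)$: the Fibration Theorem \ref{thm:orig-cap} supplies an $S^1$-bundle away from $S_{\delta_3}(X^3)$, and near the (at most one) quasigeodesic arc of essential singular points entering the disk $\partial B(p,r)\cap X_\nu^3$ the model of Lemma \ref{lem:singslice} extends this to a fibred solid torus with invariants controlled by $2\pi/L(\Sigma_\xi(\Sigma_p))$, yielding the asserted Seifert bundle over $D^2$ with at most one singular orbit. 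For (3), I would build a gradient-like vector field $V_i$ for $d_{p_i}$ on $M_i(p_i,r,\nu)$ in the spirit of Lemma \ref{lem:flow-rescal}, adjusted so that its tangential component along $\{f_i=\nu\}$ preserves this hypersurface; the flow then identifies $M_i(p_i,r,\nu)$ with $(M_i(p_i,r,\nu)\cap\partial B(p_i,r))\times I$.

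The main obstacle is securing the quantitative angle estimates underlying both the regularity of $(d_p,d_{\partial X^3})$ and the tangential decomposition used in (3): both reduce to a uniform lower bound on $\angle((\partial X^3)_q',p_q')$ for $q\in A(p;r/10,2r)\cap X_\nu^3$. I would derive this from the tangent-cone convergence $(\tfrac{1}{r})X^3\to K_p$ together with the concavity rigidity of Proposition \ref{prop:concave-rigid}---any sequence along which this angle degenerates would, after rescaling, force a flat totally geodesic rectangle in $X^3$ incompatible with $q$ lying in $A(p;r/10,2r)\cap X_\nu^3$.
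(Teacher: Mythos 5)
Your proposal is essentially correct and follows the same overall route as the paper: choose $\nu\ll r$ via the collar theorem and the tangent-cone convergence, establish the Seifert bundle structure on $\partial B(p_i,r)\cap X_\nu^3$ using the Section \ref{sec:dim3nobdyloc} local constructions on the disk $\partial B(p,r)\cap X_\nu^3\subset\interior X^3$, and produce the product structure in (3) via flow curves. Two small remarks on the differences. First, the paper packages your step (2) by invoking Theorem \ref{thm:patch} applied to $A(o_p;1/10,2)\cap(K_p)_\nu$ rather than re-running the local constructions, and it obtains the crucial ``at most one singular orbit'' count by appealing directly to Lemma \ref{lem:1vertex}: since $p\in\partial X^3$, the space of directions $\Sigma_p$ is a compact Alexandrov surface of curvature $\ge 1$ with nonempty boundary and therefore has at most one extremal interior point; this is cleaner than your informal analysis of $K(D^2)$, though the conclusion is the same. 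Second, and more substantively, your concluding appeal to the rigidity of Proposition \ref{prop:concave-rigid} to supply the angle bound for regularity of $(d_p,d_{\partial X^3})$ is unnecessary and somewhat misdirected: that proposition concerns geodesics along which $d(\partial C,\gamma(t))$ is constant, which is not the right hypothesis for excluding a degenerate angle between $p_q'$ and $(\partial X^3)_q'$. The needed lower bound on this angle for $q\in A(p;r/10,2r)\cap X_\nu^3$ comes directly from elementary geometry of the Euclidean cone $K_p=K(\Sigma_p)$ with $\Sigma_p\simeq D^2$ (the annular region stays a definite relative distance from both the vertex and the boundary of the cone), transported back to $X^3$ by the $\tau(r)$-approximation; this is exactly the regularity used in the paper's proof of (3) when deforming the domain by $f_i$-flow curves so it becomes $\tilde d_{p_i}$-flow-invariant, which is dual to your strategy of adjusting the vector field to be tangent to $\{f_i=\nu\}$.
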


\begin{proof}
Take $\nu>0$ such that $\interior \Sigma_p - (\Sigma_p)_{\nu}$ 
does not meet $S_{\delta_3}(\Sigma_p)$. Then in view of the convergence
$(\frac{1}{r} X^3,p) \to (K_p,o_p)$ as $r\to 0$, we have such an
$r_0>0$  that $A(p;r/10,2r)\cap (X^3_{\nu^{10} r} - X^3_{\nu r})$ 
does not meet $S_{\delta_4}(X^3)$ for any $r\le r_0$. 
Note that 
$d_{p.GH}((\frac{1}{r} M_i^4,p_i), (K_p,o_p)) < 
\epsilon_r' + \epsilon_i/r$ with 
$\lim_{r\to 0}\epsilon_r' = 0$, $\lim_{i\to\infty}\epsilon_i=0$, 
and that  $\Sigma_p$ satisfies the assumption of Lemma \ref{lem:1vertex}.
Hence if $r$ is sufficiently small and $i$ is sufficiently large
$i\ge i(r)$,  Theorem \ref{thm:patch} applied to 
$A(o_p;1/10, 2)\cap (K_p)_{\nu}$ gives a 
closed domain $M_i(p_i,r,\nu)$ satisfying $(1)$ and $(2)$.

Using $f_i$-flow curves, we can slightly deform 
$M_i(p_i,r,\nu)$ to a closed domain  $M_i'(p_i,r,\nu)$
in such a way that it is $\tilde d_{p_i}$-flow-invariant.
Therefore 
$M_i'(p_i,r,\nu)$ is homeomorphic to
$(M_i'(p_i,r,\nu)\cap\partial B(p_i,r))\times I$,
yielding $(3)$.
\end{proof}

Putting $M_i^{\partial}(p_i,r,\nu)$ to be the closure of 
$A(p_i;r/10,2r) - M_i(p_i,r,\nu)$,
we next investigate the topology of 
$M_i^{\partial}(p_i,r,\nu)\cap\partial B(p_i,r)$.

In a way similar to Sublemma \ref{slem:sph-int} (3), we obtain 
$$
 M_i^{\partial}(p_i,r,\nu) \cap A(p_i;r/5,r) \simeq 
    (M_i^{\partial}(p_i,r,\nu)\cap\partial B(p_i,r)) \times I.
$$

\begin{slem} \label{slem:sph-bdy}
$M_i^{\partial}(p_i,r,\nu)\cap\partial B(p_i,r)$ is a 
$D^2$-bundle over $\partial X_{\nu}\cap \partial B(p,r)\simeq S^1$
which is compatible with the $S^1$-bundle over 
$\partial X_{\nu}\cap \partial B(p,r)$ induced from the fibre structure 
on $M_i(p_i,r,\nu)\cap\partial B(p_i,r)$.

In particular 
$M_i^{\partial}(p_i,r,\nu)\cap\partial B(p_i,r)$ is homeomorphic 
to $D^2\times S^1$.
\end{slem}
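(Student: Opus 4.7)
The plan is to apply the Fibration-Capping Theorem \ref{thm:orig-cap} in a neighborhood of $\partial X^3$ near $p$ to obtain a $D^2$-bundle structure on the cap region $M_i^\partial(p_i,r,\nu)$, and then restrict it to the metric sphere $\partial B(p_i,r)$. First, since $X^3$ is a topological $3$-manifold (Proposition \ref{prop:topmfd}) and $p \in \partial X^3$, for suitably small $\nu \ll r \ll 1$ one can select a closed domain $Y \subset R_\delta^D(X^3)$ near $p$ whose $\delta_D$-str.rad$(Y)$ is uniformly bounded below and such that $\partial X^3 \cap A(p;r/10, 2r) \subset \partial Y$. Applying Theorem \ref{thm:orig-cap} to the convergence $M_i^4 \to X^3$ together with Corollary \ref{cor:cap} in codimension $m = 1$, for large $i$ we obtain a closed domain $N \subset M_i^4$ decomposing as $N = N_{\mathrm{int}} \cup N_{\mathrm{cap}}$, together with a Lipschitz map $f : N \to Y_\nu$ such that $f_{\mathrm{cap}} : N_{\mathrm{cap}} \to \partial_0 Y_\nu$ is a locally trivial $D^2$-bundle compatible with the $S^1$-bundle $f_{\mathrm{int}}$ along their common boundary.

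Next, I would identify $M_i^\partial(p_i,r,\nu) \cap \partial B(p_i,r)$ with the restriction of $N_{\mathrm{cap}}$ over $\partial_0 Y_\nu \cap \partial B(p,r)$, up to homeomorphism. This matching is obtained by combining the flow of a gradient-like vector field for $\tilde d_{p_i}$ (which, after a suitable perturbation as in Sublemma \ref{slem:sph-int} $(3)$, is transverse to $\partial N_{\mathrm{cap}} \cap \partial N_{\mathrm{int}}$) with the flow of $\tilde d_{N_i(\delta_1)}$ used to define $\partial M_i^\partial(p_i,r,\nu)$. The base $\partial_0 Y_\nu \cap \partial B(p,r)$ is homeomorphic to $S^1$, so this exhibits $M_i^\partial(p_i,r,\nu) \cap \partial B(p_i,r)$ as a $D^2$-bundle over $S^1$. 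By the compatibility $(2)$-$(a)$ in Theorem \ref{thm:orig-cap}, each boundary circle $\partial D^2$ of a fibre is identified with an $S^1$-orbit of the Seifert fibration on $M_i(p_i,r,\nu) \cap \partial B(p_i,r)$, which is precisely the asserted compatibility.

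To conclude triviality, I would compute the boundary of this $D^2$-bundle over $S^1$. The singular orbit (if any) of the Seifert fibration on $M_i(p_i,r,\nu) \cap \partial B(p_i,r) \to \partial B(p,r) \cap X_\nu^3 \simeq D^2$ lies over an interior point of the base disk (by the construction in Sublemma \ref{slem:sph-int} $(2)$), so its restriction to the boundary circle $\partial X_\nu \cap \partial B(p,r) \simeq S^1$ is a principal $S^1$-bundle over $S^1$, hence a torus $T^2$. Of the two $D^2$-bundles over $S^1$, classified by $\pi_0 \mathrm{Homeo}(D^2) = \Z/2$, only the trivial one has torus boundary (the twisted bundle has Klein bottle boundary). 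Therefore $M_i^\partial(p_i,r,\nu) \cap \partial B(p_i,r) \simeq D^2 \times S^1$.

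The main obstacle I anticipate is the bookkeeping to match the Fibration-Capping domain $N$ with the metric annular region $M_i^\partial(p_i, r, \nu)$ and to ensure that this matching is compatible with the restriction to the metric sphere $\partial B(p_i, r)$. Both the choice of $Y$ and the perturbations via gradient-like vector fields must be orchestrated so that $f_{\mathrm{cap}}$ can be adjusted to be transverse to $\partial B(p_i, r)$ while its restriction to the common boundary with $N_{\mathrm{int}}$ agrees with the already-constructed Seifert structure on $M_i(p_i,r,\nu) \cap \partial B(p_i,r)$.
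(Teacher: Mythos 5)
Your approach substitutes a direct application of the Fibration-Capping Theorem (together with Corollary \ref{cor:cap}) for the paper's hands-on construction, and it has a genuine gap at exactly the point where the hard work is required: near singular boundary directions.

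The problem is in the very first step. You propose to choose a closed domain $Y \subset R_\delta^D(X^3)$ with $\partial X^3 \cap A(p;r/10, 2r) \subset \partial Y$. But the sublemma must hold for an \emph{arbitrary} boundary point $p \in \partial X^3$, including singular ones (this is precisely the situation Lemma \ref{lem:bdyball} and Theorem \ref{thm:D2along} need). When $p$ is a singular boundary point — say, when $\partial\Sigma_p$ contains directions $\xi_j$ with $L(\Sigma_{\xi_j}(\Sigma_p)) \le \pi - \epsilon_1$ — the corresponding arcs of $\partial X^3$ near the points $x^j$ on the annulus are not admissibly strained, hence not in $R_\delta^D(X^3)$. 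No closed domain $Y \subset R_\delta^D(X^3)$ can contain them. Consequently $\partial_0 Y_\nu$ can only cover the circle $\partial X_\nu \cap \partial B(p,r)$ with arcs removed near each singular direction, so the Fibration-Capping Theorem by itself does \emph{not} produce a $D^2$-bundle over the full circle. This is precisely where the paper's proof does the real work: it builds the cap over each regular arc $\Delta_j$ by the explicit estimate $C_i \simeq D^2\times I\times I$, separately shows $B_i^j\cap \partial B(p_i,r) \simeq D^3$ for the singular directions via the rescaling argument of Lemma \ref{lem:u_1}, and then uses the Schoenflies theorem to extend the bundle structure across the singular slices $B_i^j$. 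Your proposal contains nothing that could replace this gluing across singular directions.

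There is a secondary concern about citing Corollary \ref{cor:cap}(1) for $F_{\rm cap}\simeq D^2$ in codimension one: the paper's stated justification (the generalized Margulis lemma) does not by itself exclude the Möbius band, whose fundamental group is also nilpotent; the identification of the cap fibre as $D^2$ in this paper effectively comes out of the same construction you would be trying to sidestep (Theorem \ref{thm:D2along}, whose proof rests on Sublemma \ref{slem:sph-bdy}). So even leaving aside the issue above, invoking Corollary \ref{cor:cap}(1) here risks circularity. By contrast, the boundary-torus argument you give in the last paragraph — ruling out the twisted $D^2$-bundle over $S^1$ by computing the boundary — is sound and is a small streamlining of the paper's conclusion once the bundle structure is in hand; but it cannot stand in for the bundle construction itself.
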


\begin{proof}
For a small $\epsilon_1 >0$ we take finitely many points 
$\xi_1,\ldots,\xi_N$ of $\partial\Sigma_p$ satisfying 

\begin{enumerate}
 \item $\xi_j$ is adjacent to $\xi_{j-1}$;
 \item $L(\Sigma_{\xi}(\Sigma_p)) > \pi-\epsilon_1$ for any element
    $\xi$ of the complement of $\{ \xi_1,\ldots,\xi_N\}$ in 
    $\partial\Sigma_p$;
 \item there exist positive numbers $\delta_2 \ll \delta_3\ll 1$ such that
  \begin{enumerate}
      \renewcommand{\labelenumi}{(\alpha{enumi})}
    \item for every $\xi\in B(\xi_j,\delta_3)$, there exists 
      some $\eta\in \Sigma_p$ with 
      $\tilde\angle \xi_j \xi \eta  > \pi-\epsilon_1;$ 
    \item $\tilde\angle \xi_j \xi \xi_{j+1} > \pi-\epsilon_1$ for every 
      $\xi\in B(\widehat{\xi_j \xi_{j+1}},\delta_2) \setminus B(\xi_j,\delta_3)
      \setminus B(\xi_{j+1},\delta_3)$, where $\widehat{\xi_j \xi_{j+1}}$ 
       is the minimizing arc joining $\xi_j$ and $\xi_{j+1}$ 
       in $\partial\Sigma_p;$
   \end{enumerate}
  \end{enumerate}
In view of the convergence
$(\frac{1}{r} X^3,p)\to (K_p,o_p)$, $r\to 0$, we find a small $r$ 
and a finite set 
$\{x^1,\ldots,x^N\}\subset \partial B(p,r)\cap\partial X^3\simeq S^1$
such that 

\begin{enumerate}
 \item $x^j$ is adjacent to $x^{j-1}$;
 \item there exists a sufficiently small positive number 
    $\delta_4 \ll \delta_3$ such that
  \begin{enumerate}
     \renewcommand{\labelenumi}{(\alpha{enumi})}
   \item $d_{GH}(\Sigma_x(X^3), D^2_{+}(1)) < \tau(\epsilon_1)$ 
      for any $x\in\partial B(p,r)\cap\partial X^3$ 
      in the complement of $\delta_4 r$-neighborhood of 
      $\{x^1,\ldots,x^N\}$, where $D^2_{+}(1)\subset S^2(1)$ denotes 
      a closed hemisphere;
   \item for every $y\in A(x^j;\delta_4 r,\delta_3 r)\cap\partial B(p,r)$, 
       there exists
       $z\in X^3$ with $\tilde\angle x^jy z > \pi-\tau(\epsilon_1);$ 
   \item $\tilde\angle x^jxx^{j+1} > \pi-\tau(\epsilon_1)$ for every point
      $x\in B(\widehat{x^jx^{j+1}},\delta_2 r) \setminus B(x^j,\delta_3 r)
      \setminus B(x^{j+1},\delta_3 r)$, where $\widehat{x^jx^{j+1}}$
      is a minimizing arc in $\partial X^3\cap \partial B(p,r)$
      joining $x^j$ and $x^{j+1}$.
  \end{enumerate}
\end{enumerate}

  Take the points $\{ y^1,\ldots,y^N\}\subset \partial B(p,r/2)$
with $y^j\in px^j$. Let us assume $\delta_1\ll \delta_4$.
Taking $x_i^j, y_i^j\in M_i^4$ with $x_i^j\to x^j$ and $y_i^j\to y^j$
under the convergence $M_i^4\to X^3$,  
consider the closed domains of $M_i^4$
$$
    B_i^j = \{ \tilde d_{x^j_i y^j_i}\le\delta_2 r \}\cap 
            \{ r/2 \le \tilde d_{p_i} \le r \}.
$$
Let $\Delta_j$ denote the domain on $\partial X^3$ bounded by
$\widehat{x^jx^{j+1}}$, $x^{j+1}y^{j+1}$, $\widehat{y^jy^{j+1}}$
and $y^jx^j$.
  Let $C_i'$ be a compact
  domain of $M_i^4$ which converges to $B(\Delta_j,\delta_2 r)$, $C_i$
  the closure of $C_i' \setminus B_i^j \setminus B_i^{j+1}$, and $N_i$ the
  closure of $\partial C_i \setminus B_i^j \setminus B_i^{j+1}$.  Applying
  Fibration Theorem \ref{thm:orig-cap} to a neighborhood of 
  $\{ d(\Delta_j, \,\cdot\,) = \delta_2 r \}$, 
  we can take such $C_i'$ that for
  every $x\in (B_i^j\cup B_i^{j+1})\cap N_i$
  $$
  |\,\angle(\xi_1(x), \xi_2(x)) - \pi/2\,| <
     \tau(r)+\tau(\delta_3)+\tau(\delta_3|\delta_2) + 
            \tau(\delta_2,\delta_3 |\epsilon_i),
  $$
  where $\xi_1$ and $\xi_2$ denote the unit normal vector fields to
  $\partial(B_i^j\cup B_i^{j+1})$ and $N_i$ respectively.
  Thus both
  $\partial B_i^j$ and $\partial B_i^{j+1}$ meet $N_i$ transversally,
  and therefore 
  $B_i^j\cap N_i\cap\partial B(p_i,r)\simeq S^1$, 
  $B_i^{j+1}\cap N_i\cap\partial B(p_i,r)\simeq S^1$.  
  It follows that
  $$
      N_i\simeq S^1\times I\times I,\quad   N_i\cap B_i^j\simeq S^1\times I,\quad 
        N_i\cap B_i^{j+1} \simeq S^1\times I. 
  $$
  We next show  $C_i\simeq D^4$.  
  Consider the functions $f_i$,
  $$
      g_i = \tilde d(x_i^jy_i^j,\,\cdot\,) - 
                  \tilde d(x_i^{j+1}y_i^{j+1},\,\cdot\,),
  $$
  and $\tilde d_{p_i}$.
  Note that the gradient of $f_i$ is almost perpendicular to $N_i$
  and that $g_i$ is regular on $C_i$.  Set
  $F_i = f_i^{-1}([\,0,\delta_2 r/2\,])\cap g_i^{-1}(0)$ and denote by $H_i$
  the set consisting of all flow curves of the gradient of $g_i$
  contained in $C_i$ through $F_i$. Clearly,
  $$
       H_i\simeq F_i\times I.
  $$
  Since the gradient of $f_i$ is almost perpendicular to that
  of $g_i$ on $\{ f_i\ge\delta_2 r/100\}\cap C_i$, it follows
  that $\partial (F_i\cap\partial B(p_i,r))\simeq S^1$.  
  Note also that $F_i$ is topologically a product 
  $F_i\simeq (F_i\cap\partial B(p_i,r))\times I$. 
  It follows from the generalized Margulis lemma
  (\cite{FY:fundgp}) that  $\pi_1(F_i)\simeq\pi_1(H_i)$ is almost
  nilpotent, and therefore by the orientability, $F_i\simeq D^2\times I$.
  \par
  It is easy to construct a smooth vector field $V_i$ on a
  neighborhood of $C_i \setminus H_i$ such that
  \begin{enumerate}
  \item $V_i = \grad f_i$ outside a small neighborhood of $\partial
    B_i^j\cup \partial B_i^{j+1}$,
  \item $V_i$ is tangent to $\partial B_i^j\cup \partial B_i^{j+1}$,
  \item $f_i$ is strictly decreasing along the flow curves of $V_i$.
  \end{enumerate}
  Thus we have
  
  \begin{equation}
            C_i\simeq H_i\simeq D^2\times I\times I.
                    \label{eq:CH}
  \end{equation}
  
  Next we show that $B_i^j\simeq D^3\times I$. 
  For the points $x_i^j$ and $x_i^{j-1}$ we
  construct a compact domain $\hat{C}_i$ in the same way as the
  construction of $C_i$.  From  Fibration Theorem \ref{thm:orig-cap}, 
  $(\partial B_i^j \setminus C_i \setminus \hat{C}_i)\cap\partial B(p_i,r)
  \simeq S^1\times I$, which together with \eqref{eq:CH} determines 
  the topological type of the boundary of the $3$-manifold 
  $B_i^j\cap \partial B(p_i,r)$ as:
  \begin{equation}
    \partial (B_i^j\cap \partial B(p_i,r)) \simeq
      D^2\cup (S^1\times I)\cup D^2  \simeq S^2.
  \end{equation}
 It follows from Lemma \ref{lem:u_1} that 

\begin{equation}
  B_i^j\cap \partial B(p_i,r)\simeq U_{\delta_3 r}(p_i, x_i^j)  
                     \simeq D^3. \label{eq:sliced3}
\end{equation}
%
%Therefore the standard critical point theory yields that 
%$B_i^j\simeq D^3\times I$.   
From the standard critical point theory for $d_{x_i^j}$,
$$
  M_i^{\partial}(p_i,r,\delta_2 r)\cap U_{\delta_3 r}(p_i, x_i^j)
               \simeq D^3.
$$
Now we have a locally flat embedding
$\varphi_i:S^2=D^2\cup(S^1\times I)\cup D^2\to M_i^4$
such that 
$\varphi_i(S^2)=\partial(M_i^{\partial}(p_i,r,\delta_2 r)\cap 
U_{\delta_3 r}(p_i, x_i^j))$ which lies in the interior of 
some 3-dimensional submanifold homeomorphic to $D^3$ and containing
$M_i^{\partial}(p_i,r,\delta_2 r)\cap U_{\delta_3 r}(p_i, x_i^j)$.
By the Schoenflies theorem, it extends to an 
embedding $\Phi_i:D^2\times I\to M_i^4$ such that
$F_i(D^2\times I)=M_i^{\partial}(p_i,r,\delta_2 r)\cap 
U_{\delta_3 r}(p_i, x_i^j)$.
This provides a compatible $D^2$-bundle structure 
on 
$M_i^{\partial}(p_i,r,\delta_2 r)\cap U_{\delta_3 r}(p_i, x_i^j)$
over $\partial X\cap U_{\delta_3 r}(p, x^j)$.
Thus 
$M_i^{\partial}(p_i,r,\delta_2 r)\cap \partial B(p_i,r)$
is a compatible $D^2$-bundle over $\partial X^3\cap\partial B(p,r)\simeq S^1$,
and we conclude that 
$$
  M_i^{\partial}(p_i,r,\delta_2 r)\cap \partial B(p_i,r)\simeq S^1\times D^2.
$$
\end{proof}

\begin{proof}[Proof of Lemma \ref{lem:bdyball}]
From Sublemmas \ref{slem:sph-int} and \ref{slem:sph-bdy},
$\partial B(p_i,r)$ is homeomorphic to a gluing 
$S^1\times D^2 \cup S_i(D^2)$ along their boundaries, 
where  $ S_i(D^2)$ denotes a Seifert bundle over $D^2$ 
having at most one singular orbit and 
$\{ x\}\times \partial D^2 \subset S^1\times D^2$ 
is glued with the regular fibre of $S_i(D^2)$ over 
$x\in \partial D^2$.
Thus $\partial B(p_i,r)$ is homeomorphic to either $S^3$ or 
a lens space $L(\mu_i,\nu_i)$, where $\mu_i\neq 0$, or equivalently
$L(\mu_i,\nu_i)$ is not homeomorphic to $S^1\times S^2$. 
Therefore in view of \eqref{eq:D4S1D3},
$\partial B(p_i,r)$ must be homeomorphic to $S^3$ and 
$B(p_i,r)\simeq D^4$. 
In particular $ S_i(D^2)$ has no singular orbits.
This completes the proof of 
Lemma \ref{lem:bdyball}.
\end{proof}

\begin{proof}[Proof of Theorem \ref{thm:D2along}]
We proceed in a way similar to
that of Theorem \ref{thm:patch} as follows.
Let $\nu$ be a small positive number and $\delta_3>0$ be 
as in Fibration-Capping Theorem \ref{thm:orig-cap}.
Let $Y$ be a closed domain of $R^D_{\delta_3}(X^3)$ which 
approximates $R^D_{\delta_3}(X^3)$ in the sense that
$B(Y,\mu)\supset R^D_{\delta_3}(X^3)$ with $\mu\ll\nu$.
Applying Theorem \ref{thm:orig-cap} to $Y$, we have 
a closed domain $M_{\nu i}^{\rm cap}\subset M_i^4$ 
and a map 
$f_{i,{\rm cap}} : M_{\nu i}^{\rm cap}\to \partial_0 Y_{\nu}$ 
such that
\begin{enumerate}
 \item $f_{i,{\rm cap}}$ is a $\tau_i$-approximation with 
       $\lim \tau_i=0;$
 \item $f_{i,{\rm cap}}$ is a locally trivial $D^2$-bundle compatible
   with the $S^1$-bundle structure, say $\mathfrak h_i$, 
   on $\partial M_{\nu i}^4$, in the sense that the boundary 
   of $f_{i,{\rm cap}}^{-1}(x)$ coincides with the orbit 
   coming from $\mathfrak h_i$ for each $x\in \partial_0 Y_{\nu}$.
\end{enumerate}

Next we construct a local $D^2$-bundle structure
at each point of $\partial X_{\nu}$
extending $f_{i,{\rm cap}}$.
The following lemma follows from the convergence 
$(\frac{1}{r} X^3,p)\to (K_p,o_p)$ as $r\to 0$ and the finiteness of 
$S_{\delta}(D(\Sigma_p))$ for any $\delta>0$.
We put $S^D_{\delta_3}(X^3):=X^3-R^D_{\delta_3}(X^3)$.

\begin{lem}  \label{lem:coverSD}
For any $p\in\partial X^3 \cap S^D_{\delta_3}(X^3)$, 
there exist a positive integer $k=k(p)$, positive numbers
$\epsilon=\epsilon_p>0$ and $r=r(p,\epsilon)>0$ such that
for some $x^1,\ldots,x^k\in\partial B(p,10r)\cap\partial X^3$
and for sufficiently small $\nu$
\begin{enumerate}
 \item $B(p,10r)\cap (\partial X_{\nu}^3-R^D_{\delta_3}(X^3))
      \subset \bigcup_{j=1}^k B(px^j,\epsilon r);$
 \item $(B(px^j,\epsilon r)-B(px^j,\epsilon^{10} r))\cap
       A(p;r/100,10r)$ does not meet 
       $S^D_{\delta_3}(X^3)$
       for each $1\le j\le k;$
 \item $\{ B(px^j,\epsilon r)\cap A(p;r/100,10r)\}_j$ 
       is disjoint$;$
 \item $B(px^j,\epsilon r)\cap A(p;r/100,10r)\cap\partial X^3_{\nu}$
       bounds a topological disk, say $\Delta_j(p)$,
       for each $1\le j\le k;$
 \item $A(p;r/100,10r)\cap\partial X^3_{\nu}$ does not meet 
       $S_{\delta_3}(X^3)$
\end{enumerate}
\end{lem}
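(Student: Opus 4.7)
The plan is to reduce the lemma to the finite combinatorial structure of the $\delta_3$-singular set of the double $D(\Sigma_p)$ and then transfer the data to $X^3$ via the rescaling convergence $(\tfrac{1}{r}X^3,p)\to (K_p,o_p)$. Since $\Sigma_p$ is a two-dimensional Alexandrov space with curvature $\ge 1$ and with nonempty boundary (as $p\in\partial X^3$), the double $D(\Sigma_p)$ is a closed Alexandrov surface with curvature $\ge 1$, and the set $S_{\delta_3}(D(\Sigma_p))$ is finite. First I would enumerate the points of this set that lie in $\Sigma_p$ as $\xi^1,\ldots,\xi^k$; these are precisely the directions in $\Sigma_p$ at which $D(\Sigma_p)$ fails to be $(2,\delta_3)$-strained by admissible strainers.

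Next I would choose $\epsilon=\epsilon_p>0$ so small that the balls $B(\xi^j,2\epsilon)\subset D(\Sigma_p)$ are pairwise disjoint and so that every point of $D(\Sigma_p)-\bigcup_j B(\xi^j,\epsilon^{10})$ admits an admissible $(2,\delta_3)$-strainer in the double. Using that the rescaled pairs $(\tfrac{1}{r}X^3,p)$ and $(\tfrac{1}{r}D(X^3),p)$ converge to $(K(\Sigma_p),o_p)$ and $(K(D(\Sigma_p)),o_p)$ respectively, together with the fact that admissible strainers are preserved (with negligible loss of $\delta$) under pointed Gromov-Hausdorff convergence, there exists $r=r(p,\epsilon)>0$ such that for every $q\in A(p;r/100,10r)$ whose direction $q_p'$ stays $\epsilon^{10}/2$-away from the cone $\{\xi^j\}$ one has admissible $(3,\delta_3)$-strainers in $D(X^3)$ at $q$. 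Choose $x^j\in\partial B(p,10r)\cap\partial X^3$ with $(x^j)'_p$ arbitrarily close to $\xi^j$. Conditions (1), (2), (3) then follow directly from the disjointness established in the cone together with the fact that the $\epsilon r$-tubes around $px^j$ project, in the limit cone, onto the $\epsilon$-balls around $\xi^j$.

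For (5), pick $\nu\ll r$. By the collar neighborhood theorem (Theorem \ref{thm:collar}), $X^3-X^3_\nu\simeq\partial X^3\times[0,\nu)$, so every point of $\partial X^3_\nu\cap A(p;r/100,10r)$ lies at bounded distance $\nu$ from a corresponding boundary point, and the admissible strainer system at the latter (modified by adding the direction normal to $\partial X^3$ with length $\gg\nu$) gives an ordinary $(3,\delta_3)$-strainer at the former; thus no $\delta_3$-singular point of $X^3$ lies on this region, provided $\nu$ is small enough. Claim (4) is the main point. Using the collar identification, $\partial X^3_\nu$ is homeomorphic to $\partial X^3$, and under this identification $B(px^j,\epsilon r)\cap A(p;r/100,10r)\cap\partial X^3_\nu$ maps onto a region whose pre-image in the tangent cone $K_p$ is the model set $K(B(\xi^j,\epsilon)\cap\partial\Sigma_p)\cap A(o_p;1/100,10)$. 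The latter is a topological disk since $B(\xi^j,\epsilon)\cap\partial\Sigma_p$ is an arc in the $1$-manifold $\partial\Sigma_p$ and the model set is then homeomorphic to arc$\times$interval. By Theorem \ref{thm:stability-respect} applied to the regular distance functions $d_p$ and $d_{px^j}$ away from $\{\xi^j\}$, the corresponding piece in $X^3$ is homeomorphic to the model set, yielding the disk $\Delta_j(p)$.

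The main obstacle will be Step (4): justifying that the ``model disk'' in $K_p$ is transferred to a genuine disk in $\partial X^3_\nu$. This requires that the distance functions $d_p$ and $d_{px^j}$ are jointly regular on the relevant region of $\partial X^3_\nu$ — which in turn depends on the choice of $\xi^j$ (so that no essential singular directions interfere inside $B(px^j,\epsilon r)-B(px^j,\epsilon^{10}r)$) and on a careful coordination of the parameters $\nu\ll\epsilon^{10}r\ll\epsilon r\ll r$ so that stability of the configuration is uniform.
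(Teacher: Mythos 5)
Your approach is exactly what the paper intends: the lemma is asserted there with no written proof, preceded only by the remark that it ``follows from the convergence $(\frac{1}{r}X^3,p)\to(K_p,o_p)$ as $r\to 0$ and the finiteness of $S_\delta(D(\Sigma_p))$,'' and your reconstruction fills in precisely those details --- tangent-cone rescaling, finiteness of the $\delta_3$-singular set of $D(\Sigma_p)$, the collar theorem for item (5), and stability/regularity of the relevant distance functions for item (4). One small imprecision: the directions $\xi^j$ should be enumerated from $\partial\Sigma_p\cap S_{\delta_3}(D(\Sigma_p))$ rather than from all of $\Sigma_p$, since otherwise your later choice of $x^j\in\partial B(p,10r)\cap\partial X^3$ with $(x^j)'_p$ close to $\xi^j$ is impossible when $\xi^j$ lies in the interior of $\Sigma_p$; this restriction is harmless because item (1) only asks to cover the admissibly singular part of $\partial X^3_\nu$, which for small $\nu$ lives near $\partial X^3$ and hence near $\partial\Sigma_p$ in the tangent cone.
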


Here we need 

\begin{thm}[Generalized Sh\"oenflies Theorem\cite{Br:schoenflies}] 
                     \label{thm:shoen}
Let $f:S^3\to \R^4$ be a locally flat topological embedding 
in the sense that 
there is a topological embedding $F: S^3\times(-1,1)\to \R^4$ with
$F(x,0)=f(x)$. Let $E\subset \R^4$ denote the compact domain 
bounded by $f(S^3)$, and let 
$S^3 = D^2\times S^1\cup S^1\times D^2$ be the canonical identification.
Then there exists a homeomorphism $H:D^2\times D^2 \to E$ 
extending $f$.
\end{thm}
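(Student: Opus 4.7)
The plan is to combine M.~Brown's classical bicollared Schoenflies theorem with the Alexander trick. The hypothesis provides a bicollar $F : S^3 \times (-1,1) \to \R^4$ of $f(S^3)$, which is exactly the assumption needed in \cite{Br:schoenflies}. Viewing $\R^4 \subset S^4$ (one-point compactification), Brown's theorem asserts that any bicollared embedding of $S^{n-1}$ into $S^n$ separates $S^n$ into two components whose closures are each homeomorphic to $D^n$. Applied here, this gives in particular a homeomorphism $g : D^4 \to E$, so that $E$ is a closed topological $4$-disk.

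To upgrade $g$ to a homeomorphism that \emph{extends the given} $f$, I would consider the self-homeomorphism
$$
\phi := \bigl( g|_{S^3}\bigr)^{-1} \circ f \,:\, S^3 \to S^3,
$$
and apply the Alexander trick: the coning extension
$$
\tilde\phi(tx) = t\,\phi(x), \quad x \in S^3,\ t \in [0,1], \qquad \tilde\phi(0)=0,
$$
is a self-homeomorphism of $D^4$ whose boundary restriction is $\phi$. Then $H := g \circ \tilde\phi : D^4 \to E$ is a homeomorphism with $H|_{S^3} = g\circ\phi = f$. Finally, fix once for all the canonical identification $D^4 \cong D^2 \times D^2$ under which $\partial D^4 \cong (D^2 \times S^1)\cup(S^1 \times D^2)$; pulling $H$ back through this identification yields the required homeomorphism $H : D^2 \times D^2 \to E$ extending $f$.

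The only substantial topological content lies in Brown's theorem itself: one uses the bicollar to build, on each side of $f(S^3)$, an expanding tower of topological collars whose union exhausts the interior, and patches the resulting product structures into a global homeomorphism with $D^n$. This is the main obstacle, and I would simply cite it. Once granted, the passage from Brown's abstract homeomorphism to an extension of the prescribed embedding $f$ is formal, since the Alexander trick shows that every self-homeomorphism of $S^{n-1}$ extends radially to $D^n$, giving complete freedom to adjust the boundary identification.
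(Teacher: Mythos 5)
Your proof is correct. The paper itself gives no argument for this statement — it simply attributes it to Brown's generalized Schoenflies theorem — so there is no internal proof to compare against; you have correctly supplied the standard two-step derivation that the citation silently assumes. Brown's theorem turns the bicollar $F$ into a homeomorphism $g : D^4 \to E$ (after passing to $S^4$, applying the theorem to both complementary domains, and picking the bounded one), and the Alexander trick then repairs the boundary identification so that it agrees with the prescribed embedding $f$. The final re-parametrization $D^4 \cong D^2 \times D^2$ with $\partial(D^2\times D^2) = (D^2\times S^1)\cup(S^1\times D^2)$ is exactly the ``canonical identification'' the statement refers to. Nothing is missing; the only point worth being explicit about, which you handled, is that the extension of $f$ is not literally Brown's conclusion but follows because every self-homeomorphism of $S^3$ extends by coning to $D^4$.
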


Put $I_r:=\Delta_j(p)\cap\partial B(p,r)$.
We have a $D^2$-bundle structure over $\partial I_r$ given by 
$f_{i,{\rm cap}}$, and an $S^1$-bundle structure over $I_r$ 
given by $\mathfrak h_i$.
By \eqref{eq:sliced3} and the 
Generalized Sh\"oenflies Theorem for $\R^3$, those two structures
extends to a trivial $D^2$-bundle structure over $I_r$, on 
some three-dimensional compact submanifold, which is Gromov-Hausdorff
close to a compact domain of $\partial B(p_i,r)$.
Similarly we have a compatible $D^2$-bundle structure over
$I_{r/100}$,  on 
some three-dimensional compact submanifold, which is Gromov-Hausdorff
close to a compact domain of $\partial B(p_i,r/100)$.
On the other hand, 
we have a $D^2$-bundle structure over 
$\partial(\Delta_j(p)\cap B(p,r))-\partial B(p,r)-\partial B(p,r/100)$, 
given by $f_{i,{\rm cap}}$,
and we have an $S^1$-bundle structure over 
$\Delta_j(p)\cap B(p,r)$ given by $\mathfrak h_i$.
Since these two structure are compatible on the intersection, 
by Generalized Sh\"oenflies Theorem \ref{thm:shoen}, these  
extend to a trivial $D^2$-bundle structure over 
$\Delta_j(p)\cap B(p,r)$ for each $1\le j\le k$.
This also provides a $D^2$-bundle structure over 
$\partial B(p,r/100)\cap \partial X_{\nu}^3$, which extends to 
a trivial $D^2$-bundle structure over 
$B(p,r/100)\cap \partial X_{\nu}^3$ again by  
Generalized Sh\"oenflies Theorem \ref{thm:shoen}.
Thus we have constructed a $D^2$-bundle structure 
over $\partial X_{\nu}^3\cap B(p,r)$ which is compatible
with $f_{i,{\rm cap}}$.

In a way similar to the global patching argument constructing
a globally defined local $S^1$-action in 
Section \ref{sec:dim3nobdyglob}, one can patch those 
local $D^2$-bundle structures compatible with $f_{i,{\rm cap}}$ 
and $\mathfrak h_i$ 
to obtain a globally defined $D^2$-bundle structure 
over $\partial X_{\nu}^3$ 
compatible with $f_{i,{\rm cap}}$ and $\mathfrak h_i$.
Note that here we use Theorem \ref{thm:shoen} in place of 
Lemma \ref{lem:extend}. Since the patching 
argument is almost parallel, the detail is omitted.
This completes the proof of Theorem \ref{thm:patchwbdy}.
\end{proof}

\begin{proof}[Proof of Corollary \ref{cor:sum}]
Let $M_i^4$ be as in Corollary \ref{cor:sum}.
By Theorem \ref{thm:dim3}, we have a locally smooth, local $S^1$-action 
on $M_i^4$. Since $M_i^4$ is simply connected, this local $S^1$-action
is actually a global $S^1$-action on $M_i^4$.
According to Fintushel \cite{Fn:circleI} together with Freedman
(see \cite{FQ:4mfd}),
$M_i^4$ is homeomorphic to a connected sum
\begin{equation}
   S^4\,\#\, k_i \C P^2\, \#\, \ell_i(-\C P^2)\,\#\, m_i(S^2\times S^2).
         \label{eq:factor}
\end{equation}
Note that $X^3$ is simply connected, and hence each connected 
component of $\partial X^3$ is a sphere.
It follows from the formula 
$\chi (F(\psi_i))=\chi(M_i^4)$ (see Theorem 10.9 in \cite{Brd:intro})
that 
$$
    k_i + \ell_i + 2m_i + 2 \le 
         \# \text{\rm Ext}(\interior X^3) + 2\alpha(\partial X^3).
$$
\end{proof}

\begin{rem}\label{rem:conn-sum}
Let $M_i^4$ be as in Corollary \ref{cor:sum}.
By \eqref{eq:factor}, there exists a locally smooth $T^2$-action 
on $M_i^4$.
By \cite{OrRy:torusI}, this action can be 
reduced to a smooth 
action, and again \cite{OrRy:torusI} implies that $M_i^4$ is 
diffeomorphic to the above connected sum.
Furthermore we have a sequence $g_{i_j}$, $j=1,2,\ldots$ of metrics on $M_i^4$
such that $(M_i^4, g_{i_j})$ collapses to the quotient space 
$(M_i^4,g_i)/T^2$ under a uniform lower sectional curvature bound,
where $g_i$ is a $T^2$-invariant metric on $M_i^4$.
\end{rem}

%            \input{dim3positive}

%dim3positive.tex
\section{Classification of collapsing to noncompact
          three-spaces with nonnegative curvature}
                     \label{sec:dim3positive}

Let a sequence of pointed complete $4$-dimensional orientable Riemannian
manifolds $(M_i^4,p_i)$ with $K \ge -1$ collapses to a pointed
complete noncompact $3$-dimensional Alexandrov space $(Y^3,y_0)$ with 
nonnegative curvature. In this section, using the results of 
Sections \ref{sec:dim3nobdyglob} and \ref{sec:dim3wbdy},
we classify the topology of a large metric ball 
$B(p_i,R)$ in terms of geometric properties of $Y^3$.
The classification result will be used in the subsequent sections
of Part \ref{part:collapse} to describe the phenomena
of orientable $4$-manifolds collapsing to spaces of dimension $\le 2$.

Let 
$$
 Y\supset C(0)\supset C(1)\supset \cdots\supset C(k),
$$
be a sequence of 
nonempty compact totally convex subsets of $Y$
as in Section \ref{sec:ideal}.
Applying Theorem \ref{thm:dim3} to the convergence
$(M_i^4, p_i) \to (Y,y_0)$,
we have a locally smooth, local $S^1$-action $\psi_i$ 
on $B(p_i,R)$ whose orbit
space is homeomorphic to $B(y_0, R)$, where $R$
is a large positive number.
Actually, we have such a local $S^1$-action  
on a small perturbation of $B(p_i,R)$, which is homeomorphic to 
$B(p_i,R)$.
Let $F_i^*:=F^*(\psi_i)$, $E_i^*:=E^*(\psi_i)$, $S_i^*:=S^*(\psi_i)$
and $\cal C_i:=S^*_i - \partial Y$.
We denote by $\pi_i:B(p_i,R)\to B(y_0,R)$ the orbit map.

Let $\ell_i$ denote the number of components of $E_i^*$ homeomorphic 
to a circle, and $m_i$ the number of components of $E_i^*$
homeomorphic to an interval
whose closure does not meet $F_i^*$, and
$n_i$ the number of elements of $F_i^*\cap\interior Y$.

\begin{thm} \label{thm:dim3class}
We have the following classification of the topology of
$B(p_i, R)$ in terms of $\ell_i$, $m_i$, $n_i$ and the geometric 
properties of $Y$:
\begin{proclaim}{\emph{Case} A.}
      $Y$ has no boundary.
\end{proclaim}
\begin{enumerate}
 \item[I.] Suppose $\dim S=2$, yielding $\dim Y(\infty)=0$. Then
     $B(p_i, R)$ is homeomorphic to an $I$-bundle 
     $($resp. a trivial $I$-bundle$)$ over $S_i(S)$,
     a Seifert bundle over $S$ $($resp. if $S\simeq S^2$ $)$.
 \item[II.] Suppose $\dim S=1$.
  \begin{enumerate}
   \item[$(1)$] If $\dim Y(\infty)\ge 1$, then $B(p_i, R)$ is 
      homeomorphic to a $D^2$-bundle over $T^2$ or $K^2;$
   \item[$(2)$] If $\dim Y(\infty)=0$, then $B(p_i, R)$ is 
      homeomorphic to either one of the spaces in $II$-$(1)$  or 
      an $I$-bundle over $S^1\tilde\times K^2$, a $K^2$-bundle over $S^1;$
  \end{enumerate}
 \item[III.] Suppose $\dim S=0$. 
  \begin{enumerate}
   \item[$(1)$] If $\dim C(0)=0$, then $n_i\le 1:$
      $B(p_i, R)$ is 
      homeomorphic to  $D^4$  if $n_i=1$ or to 
      $S^1\times D^3$  if $n_i=0;$ 
   \item[$(2)$] If $\dim C(0)=1$, then $m_i+n_i\le 2:$
      $B(p_i, R)$ is homeomorphic to either one of the spaces in 
      $III$-$(1)$ if $m_i+n_i\le 1$, or  
      \begin{align*}
        (K^2\tilde\times I)\times I \qquad & \text{if}\quad m_i=2,\\
         S^2\tilde\times_{\omega} D^2 
            \qquad & \text{if}\quad (m_i,n_i)=(0,2),
      \end{align*}
      where $\omega$ is explicitly 
      estimated in terms of the singularities of the spaces of 
      directions at the endpoints $\partial C(0);$
%       In the case of $(m_i,n_i)=(1,1)$,
%        \begin{equation*}
%           \partial B(p_i,R)
%             \simeq \begin{cases}
%                 P^3 & \qquad \text{if}\quad k_i=0 \\
%                 \text{a prism manifold} & \qquad \text{if}\quad k_i=1,
%             \end{cases}
%        \end{equation*}
%       where $k_i\le 1$ denote the number of component of $E_i^*$
%       whose closure meets $F_i^*;$
   \item[$(3)$] Suppose $\dim C(0)=2$.
      Then $\ell_i\le 1$, $m_i\le 2$, 
      $\max\{ 4\ell_i, 2m_i\}+n_i\le 4$ and 
      $B(p_i, R)$ is homeomorphic to one of the spaces in
      $A$-$III$-$(1)$ if $(m_i,n_i)=(1,0)$, or $m_i=0$, $n_i\le 1$,
      or one in the following list:
      \begin{align*}
%      \item[$(i)$]
       T^2\times D^2\bigcup_{T^2\times I} S^1\times D^3,       
            \hspace{0.2cm} \quad & \text{if} 
                      \quad \ell_i=1, m_i\le 1, \\
    T^2\times D^2\bigcup_{T^2\times I} (K^2\tilde\times I)\times I,      
            \hspace{0.2cm} \quad & \text{if} 
                      \quad (\ell_i,m_i)=(1,2), \\
% %      \item[$(ii)$]
        (K^2\tilde\times I)\times I,\hspace{1.2cm}\quad & \text{if} \quad
                (m_i,n_i)=(2,0), \\
% %      \item[$(iii)$]
           S^1\times D^3\bigcup_{S^1\times D^2} D^4,\hspace{0.6cm} \quad
                            & \text{if} \quad   (m_i,n_i)=(1,1), \\
%          S^1\times D^3\bigcup_{S^1\times D^2}
%           S^2\tilde\times_{\omega_1} D^2, \hspace{0.1cm}\quad 
%              & \text{if}  \quad (m_i,n_i)=(1,2),\\
% %      \item[$(iv)$]
        S^2\tilde\times_{\omega_2} D^2,\hspace{1.5cm} \quad 
             & \text{if}\quad  (m_i,n_i)=(0,2),\\
            S^2\tilde\times_{\omega_2} D^2\bigcup_{S^1\times D^2} D^4,
         \hspace{0.6cm}\quad & if\quad (m_i,n_i)=(0,3), \\
% %      \item[$(v)$]
         S^2\tilde\times_{\omega_3} D^2 \bigcup_{S^1\times D^2}
             S^2\tilde\times_{\omega_4} D^2,
          \quad & \text{if}  \quad (m_i,n_i)=(0,4). 
      \end{align*}
     \end{enumerate}
   where $|\omega_1|\in\{ 0,2\}$, $0\le |\omega_3|, |\omega_4| \le 4$
   and 
   $|\omega_2|$ can be explicitly estimated in terms of 
   the ratio of $\pi$ and the angles of $C(0)$
   at the extremal points on $\partial C(0)$.
  \end{enumerate}
%  \end{enumerate}
\begin{proclaim}{\emph{Case} B.}
      $Y$ has nonempty connected boundary.
\end{proclaim}
\begin{enumerate}
 \item[I.] Suppose $\dim S=2$, yielding $\dim Y(\infty)=0$. Then
   $B(p_i,R)$ is homeomorphic to a $D^2$-bundle over $S;$
 \item[II.] Suppose $\dim S=1$.
  \begin{enumerate}
   \item[$(1)$] If $\dim Y(\infty)\ge 1$, then $B(p_i, R)$ is 
      homeomorphic to  $S^1\times D^3;$
   \item[$(2)$] If $\dim Y(\infty)=0$, then $B(p_i, R)$ is 
      homeomorphic to the space in $B$-$II$-$(1)$,
      $S^1\times (P^2\tilde\times I)$, $S^1\times S^2\times I$ or 
      an $I$-bundle over $S^1\tilde\times S^2$, the nontrivial 
      $S^2$-bundle over $S^1$.
  \end{enumerate}
 \item[III.] Suppose $\dim S=0$. If $Y$ has two ends, then 
      $\ell_i=n_i=0$, $m_i\le 2$ and 
    \begin{equation*}
      B(p_i,r)\simeq 
       \begin{cases}
         \text{a lens space}\times I & \qquad \text{if}\quad m_i\le 1\\
         (P^3\,\#\,P^3)\times I & \qquad \text{if}\quad m_i=2.
       \end{cases}
     \end{equation*}
     Suppose that $Y$ has exactly one end, and consider the maximum set
     $C^*$ $($possibly empty$)$ of $d_{\partial Y}$.
     Then $\cal C_i\subset C^*$.
     If $F^*_i\cap\interior Y$ is empty and $E^*_i$ is nonempty, 
     then $m_i=1$ and
     $B(p_i, R)$ is homeomorphic to
       \begin{equation*} 
           (P^2\tilde\times I)\times I.
%         S^1\times D^3\bigcup_{S^1\times D^2} D^4,
%            \label{eq:conn-bdy}
       \end{equation*} 
%     with boundary $L(\alpha_i,\beta_i)\,\#\,L(\alpha_i,\alpha-\beta_i)$,
%     where $(\alpha_i,\beta_i)$ is the Seifert invariants of $\psi_i$
%     along $E_i^*$.
     In the other cases, we have the following$:$
  \begin{enumerate}
   \item[$(1)$] If $C^*$ is empty, then $B(p_i, R)$ is 
      homeomorphic to $D^4;$
   \item[$(2)$] Suppose $C^*$ is nonempty.
    \begin{enumerate}
    \item[$(i)$] If $\dim Y(\infty)\ge 1$, then 
      $B(p_i, R)$ is homeomorphic to either $D^4$ or
      $S^2\tilde\times_{\omega} D^2$, where $|\omega|\in\{ 1,2\};$
    \item[$(ii)$] If $\dim Y(\infty)=0$, then 
      $B(p_i, R)$ is homeomorphic to one of the spaces in 
      $B$-$III$-$(2)$-$(i)$, $S^2\tilde\times_{\omega_1} D^2$ if
      $C^*$ is a ray,
      or $S^2\tilde\times_{\omega_2} D^2\bigcup_{S^1\times D^2} D^4$
      if $\dim C^*=2$, 
      where $\omega_1$ can be estimated in terms of 
      singularities at an interior point of $C^*$ and 
      $0\le |\omega_2|\le 4$.
    \end{enumerate}
  \end{enumerate}
\end{enumerate}
\begin{proclaim}{\emph{Case} C.}
      $Y$ has disconnected boundary.
\end{proclaim}
\begin{enumerate}
 \item[I.] If $\dim Y(\infty)\ge 1$, then
   $B(p_i,R)$ is homeomorphic to a $D^2\times S^2;$
 \item[II.] If $\dim Y(\infty)=0$, then
   $B(p_i,R)$ is homeomorphic to either $S^1\times S^2\times I$
   or an $I$-bundle over the nontrivial $S^2$-bundle 
   $S^1\tilde\times S^2$.
\end{enumerate}
\end{thm}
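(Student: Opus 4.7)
The plan is to start by applying Theorems \ref{thm:patch} and \ref{thm:patchwbdy} (in the absence or presence of boundary respectively) to the convergence $(M_i^4,p_i) \to (Y^3,y_0)$ restricted to a large metric ball $B(y_0,R)$. This produces a locally smooth, local $S^1$-action $\psi_i$ on a perturbation of $B(p_i,R)$, homeomorphic to $B(p_i,R)$, whose orbit map $\pi_i:B(p_i,R)\to B(y_0,R)$ realizes the Gromov--Hausdorff approximation; moreover the fixed point set $F_i^*$, the exceptional locus $E_i^*$, and the singular locus $\cal C_i$ are constrained to lie over extremal points of $\interior Y^3$, components of $\partial Y^3$, and quasigeodesic arcs of essential singular points of $Y^3$, respectively. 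By the Cheeger--Gromoll basic construction together with the classification of complete noncompact nonnegatively curved $3$-dimensional Alexandrov spaces (given in Section \ref{sec:alexwbdy}), for $R$ large enough the orbit space $B(y_0,R)$ absorbs all the essential topology and singular structure of $Y^3$: the soul $S$, the extremal and essential singular points, the ideal boundary $Y(\infty)$, and the sets $C(j)$ all lie inside.

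The main body of the argument is then a case-by-case determination organized by (a) the presence and number of boundary components of $Y^3$, (b) $\dim S$, and (c) subsidiary invariants ($\dim Y(\infty)$, $\dim C(0)$, the configuration of $C^*=$ maximum set of $d_{\partial Y}$). In each case one first identifies $B(y_0,R)$ topologically using the classification from Section \ref{sec:alexwbdy} (so e.g.\ for $\dim S=2$ case A it is an $I$-bundle over $S$; for $\dim S=0$ case A-III-(3) it is a $3$-disk with two or three extremal boundary points, etc.). Next, one uses Proposition \ref{prop:ideal} to bound $\dim S$ by $\operatorname{codim} S -1 \ge \dim Y(\infty)$, and Proposition \ref{prop:graph} to force $\cal C_i$ to be a quasigeodesic graph; this bounds $\ell_i,m_i,n_i$ in each case. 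Then the local models around fixed points are identified with canonical actions $\psi_{a,b}$ on $D^4$ via Theorem \ref{thm:patchwbdy}(3), with multiplicity bounds controlled by the lengths of the spaces of directions at the corresponding extremal/boundary points. Finally the global $4$-manifold is assembled from these local pieces: along arcs of $E_i^*$ one has fibred solid tori that link adjacent pieces, and the equivariant classification Proposition \ref{prop:circ-class} together with Proposition \ref{prop:plumbing} identifies each orbit-space-over-$D^3$-piece with a specific $S^2\tilde\times_{\omega}D^2$ whose Euler number $\omega$ is read off from the Seifert invariants $(\alpha,\beta)$ via Lemma \ref{lem:adj-inv}.

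For the cases producing $I$-bundles over $3$-manifolds (e.g.\ A-I, A-II-(2), B-II-(2), C-II), the local $S^1$-action degenerates to a free action over a $2$-dimensional slice; here one observes that as $R\to\infty$ the orbit space is a line bundle over the soul $S_i(S)$ or $S^1\tilde\times K^2$, and the $4$-manifold becomes the associated $I$-bundle (trivial precisely when $S$ is simply connected). For the cases with $\dim S=1$ and $\dim Y(\infty)\ge 1$, Lemma \ref{lem:expand} forces the soul to be a circle and Corollary \ref{cor:soul} identifies $B(p_i,R)$ directly with the normal disk bundle, which orientability then restricts to $D^2$-bundles over $T^2$ or $K^2$. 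The cases with disconnected boundary (Case C) are handled using Lemma \ref{lem:C0codim1} to see that $Y^3$ must split isometrically as $N^2\times\R$ for a compact surface $N^2$, after which the $4$-manifold is an $I$-bundle over an $S^2$- or $S^1\tilde\times S^2$-fibration.

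The hardest part will be case A-III-(3), where $\dim S=0$ and $\dim C(0)=2$: here $C(0)$ is a compact nonnegatively curved surface in $Y^3$ with up to four extremal boundary points, and $B(y_0,R)$ retracts onto a $3$-disk carrying up to four fixed points, up to two exceptional arcs, and one exceptional circle. The difficulty is twofold: first, one must reduce the myriad configurations of $(F_i^*,E_i^*,\cal C_i)$ to those compatible with simultaneously bounding in $Y^3$ and carrying an oriented $S^1$-action (this is where Lemma \ref{lem:adj-inv} and the extremal-quasigeodesic structure combine to rule out impossible combinatorial types), and second, one must compute the Euler number bounds $|\omega_2|,|\omega_3|,|\omega_4|$ in Proposition \ref{prop:plumbing} by solving the adjacent-invariant relations along the entire arc in $\cal C_i$, which couples the angle data at all extremal points on $\partial C(0)$. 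Once those are in hand, the remaining cases reduce by the soul theorem to pieces already analyzed, and the theorem follows by assembling along the boundaries of the subpieces using Lemma \ref{lem:extend} and Generalized Schoenflies (Theorem \ref{thm:shoen}).
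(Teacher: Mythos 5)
Your overall framework matches the paper closely: apply Theorems~\ref{thm:patch} and~\ref{thm:patchwbdy} to get the local $S^1$-action $\psi_i$ with orbit map $\pi_i:B(p_i,R)\to B(y_0,R)$, then organize by boundary structure of $Y$, $\dim S$, $\dim Y(\infty)$, $\dim C(0)$ and $C^*$; constrain $(\ell_i,m_i,n_i)$ via the extremal/quasigeodesic structure and $\dim S\le\mathrm{codim}-1-\dim Y(\infty)$; cut $B(y_0,R)\simeq D^3$ along proper $2$-disks or segments and use Proposition~\ref{prop:plumbing} with Lemma~\ref{lem:adj-inv} to identify each piece with a $D^2$-bundle over $S^2$; and identify A-III-(3) as the combinatorially hardest case. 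Two concrete problems, though.

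First, your treatment of Case C (disconnected $\partial Y$) uses the wrong splitting. You cite Lemma~\ref{lem:C0codim1} (which concerns $m_X=n-1$ and $X(\infty)$, and is unrelated to boundary components) and claim $Y\cong N^2\times\R$ with $N^2$ compact. The paper instead invokes Theorem~\ref{thm:cptcomp}: when $\partial Y$ is disconnected, $Y$ is isometric to $Z\times I$ where $Z$ is a \emph{noncompact} boundary component and $I$ is a \emph{compact} interval. You have the compact and noncompact factors reversed. This matters: with your splitting $\partial Y=\partial N^2\times\R$ would consist of cylinders, not two copies of $Z$, and the subsequent identification with $D^2\times S^2$, $S^1\times S^2\times I$ or the $I$-bundle over $S^1\tilde\times S^2$ would not go through as you describe.

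Second, the combinatorial heart of case A-III (where you correctly locate the difficulty) is not really covered by Lemma~\ref{lem:adj-inv} plus ``extremal-quasigeodesic structure.'' The paper's bounds on $(\ell_i,m_i,n_i)$ require a chain of purpose-built lemmas: Lemma~\ref{lem:locmax} (forces $\cal C_i$ to meet $C(0)$, since $b$ has no local maximum on $\cal C_i-C(0)$), Lemma~\ref{lem:dimC(0)=1} and Sublemma~\ref{slem:rigid-glue} (constrain how exceptional arcs can intersect a one-dimensional $C(0)$ and force Seifert invariants $(2,1)$), Lemma~\ref{lem:m=n=1} (rules out the $(m_i,n_i)=(1,1)$ configuration by an orientability obstruction on a $\Z_2$-cover), Lemma~\ref{lem:class-dir} (geometry of one-normal/two-normal points on $\partial C(0)$), Lemma~\ref{lem:excep-inv} (pins $\cal C_i$ to $\partial C(0)$, perpendicular quasigeodesics, or normal rays over $\mathrm{Ext}(\interior C(0))$), Lemma~\ref{lem:totalbdy} (bounds $\ell_i\le 1$), and Lemma~\ref{lem:class-singsurf} (the Gauss--Bonnet bound $2m+n\le 4$ with rigidity). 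Without these, you can record the target list of homeomorphism types but cannot eliminate the many spurious configurations of fixed points and exceptional arcs, nor derive the explicit Euler-number bounds. The step-by-step exclusion is the actual content of the proof, not a formality that $\chi$-counting or adjacency invariants alone supply.
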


Later we need to clarify the position of the singular locus 
$\cal C_i=S_i^* - \partial Y$. 
The following lemma will give a general 
information about it.

Let $b$ denote the Busemann function on $Y$ used for the 
construction of $S$ (i.e., $C(0)$ is the minimum set of $b$), 
and let $\mu_0$ be the minimum of $b$.

\begin{lem} \label{lem:locmax}
\begin{enumerate}
 \item $b$ is monotone on each component of $\cal C_i-C(0);$
 \item For any $x\in Y$, $d_x$ has no local minimum
      on $\cal C_i-\{ b\le b(x)\}$. 
 \item If $\cal C_i$ is nonempty, it must meet
      $C(0)$. If $A_i$ denotes a component of $E_i^*$ 
      which is not contained in $\partial C(0)$, then 
      $\bar A_i$ perpendicularly meet $C(0)$.
\end{enumerate}
\end{lem}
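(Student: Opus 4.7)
The plan is to exploit two standard principles: first, that $\cal C_i$ is an extremal subset of $Y$ (being a union of quasigeodesic arcs by Proposition \ref{prop:graph} applied via the description in Theorems \ref{thm:patch} and \ref{thm:patchwbdy}), and $C(0)$ is extremal by total convexity; second, that the gradient flow of a semiconcave function preserves every extremal subset. The function $b$ arising in the Cheeger--Gromoll construction has no critical points in $Y\setminus C(0)$ (its level sets retract properly onto $C(0)$), and $b$ is constant on $C(0)$.

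For part (1), I would look at an interior point $q$ of a component $A\subset \cal C_i - C(0)$. Locally $A$ is a quasigeodesic, so $\Sigma_q(\cal C_i)=\{\pm\tau\}$ is a single tangent line. Preservation of $\cal C_i$ by the gradient flow of $b$ forces $\nabla b(q)\in T_q\cal C_i$, hence $\nabla b(q)=\lambda\tau$ for some scalar $\lambda$; but $q\notin C(0)$ so $\lambda\ne 0$. Thus the directional derivative of $b$ along $A$ is $\lambda\ne 0$ in a whole neighborhood, giving strict monotonicity.

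For part (2), I would argue by contradiction. If $d_x$ had a local minimum on $\cal C_i$ at some $q$ with $b(q)>b(x)$, extremality of $\cal C_i$ would make $q$ a critical point of $d_x$ in $Y$: $\max_\eta\min_{\xi\in x'_q}\angle(\eta,\xi)\le\pi/2$. Choose $\eta$ to be the direction at $q$ of a ray $\sigma$ realizing the Busemann growth, so $b(\sigma(t))=b(q)+t$ for all $t\ge 0$. Toponogov in nonnegative curvature applied to the comparison triangle of $x,q,\sigma(t)$, whose angle at $q$ is $\le\pi/2$, gives
\begin{equation*}
  d(x,\sigma(t))^2 \le d(x,q)^2+t^2.
\end{equation*}
On the other hand, $b$ is $1$-Lipschitz, so $d(x,\sigma(t))\ge b(\sigma(t))-b(x)=c+t$ with $c:=b(q)-b(x)>0$. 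Combining gives $(c+t)^2\le d(x,q)^2+t^2$, i.e. $2ct+c^2\le d(x,q)^2$ for every $t\ge 0$, a contradiction.

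For part (3), the nonemptiness of $\cal C_i\cap C(0)$ follows from (1): the gradient flow of $-b$ preserves $\cal C_i$, decreases $b$, and reaches $C(0)$ in finite time (since $|\nabla b|$ is bounded below away from $C(0)$ on the compact graph $\cal C_i$), so its terminal point lies in $\cal C_i\cap C(0)$. For perpendicularity, take a component $A_i\not\subset \partial C(0)$ meeting $C(0)$ at $p_0$, with outward tangent $\tau$. By the same preservation argument, $\nabla b(p_0)$ must point into $A_i$, hence $\nabla b(p_0)=|\nabla b(p_0)|\,\tau$; since $b$ is constant on $C(0)$, $d_{p_0}b(\nu)=0$ for every $\nu\in\Sigma_{p_0}(C(0))$, forcing $\angle(\nabla b(p_0),\nu)=\pi/2$ and hence $\angle(\tau,\nu)=\pi/2$ for all $\nu\in\Sigma_{p_0}(C(0))$. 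The main obstacle I anticipate is justifying rigorously that the gradient flow preserves $\cal C_i$ at the graph's vertex points (where two or more arcs meet) and that, with the paper's sign convention for $b$, the asymptotic ray used in step (2) truly realizes the unit rate of $b$-growth at $q$; once these standard facts from Petrunin's gradient flow theory are in hand, the Toponogov versus Lipschitz contradiction in (2) is the decisive and most robust step.
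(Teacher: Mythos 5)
Your part~(2) is essentially the paper's argument. The paper says ``the proof of (2) is similar'' to its proof that $b$ has no local maximum on $\mathcal C_i-C(0)$, and that proof proceeds exactly as you do: extremality of $\mathcal C_i$ at a local minimum point $p$ of a distance function forces the critical-point inequality $\min_{\xi\in x'_p}\angle(\xi,\eta)\le\pi/2$ for every $\eta$; one then tests $\eta$ against points $q$ with $b(q)$ large and $d(p,q)=b(q)-b(p)$ (the paper picks $q\in b^{-1}(t)$; you parametrize the whole ray $\sigma$) and gets a contradiction from $1$-Lipschitzness of $b$ plus the comparison inequality. Both versions rely on the same co-ray fact, namely that from any $p$ one can find $q$ in arbitrarily high $b$-superlevel sets at distance exactly $b(q)-b(p)$, so your flag about that point applies equally to the paper's proof and is not a special weakness of yours.

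For parts~(1) and~(3) you take a genuinely different route: you invoke Perelman--Petrunin gradient-flow theory (gradient curves of semiconcave functions stay inside extremal subsets) to force $\nabla(-b)$ to be tangent to $\mathcal C_i$, whereas the paper simply applies the extremality/Toponogov mechanism again at a hypothetical local extremum of $b|_{\mathcal C_i}$, and then derives~(1) and~(3) as corollaries of~(2). Your route buys a conceptually cleaner picture of~(1) (monotonicity as a direct consequence of nonvanishing tangential gradient), at the cost of importing the nontrivial extremal-invariance theorem and of tracking the sign convention: the paper's $b=\sup_\gamma b_\gamma$ has $C(0)$ as its \emph{minimum} set, so $b$ is convex and it is $-b$ whose gradient flow is defined; worth saying explicitly. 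The genuine gap is in your perpendicularity argument in~(3): at the endpoint $p_0\in C(0)\cap\bar A_i$ the gradient $\nabla(-b)(p_0)$ \emph{vanishes} (every point of $C(0)$ is a maximum of $-b$), so you cannot read the tangent direction of $A_i$ off it, and the deduction ``$\nabla b(p_0)=|\nabla b(p_0)|\tau$'' collapses. The paper's way out is to deduce~(3) directly from~(2): for $x\in C(0)$ the minimum of $d_x$ on $\bar A_i$ must be attained at a point of $\bar A_i\cap C(0)$, because~(2) rules out any local minimum of $d_x$ on $A_i\setminus C(0)\subset\mathcal C_i\setminus\{b\le b(x)\}$; this is precisely the definition of $\bar A_i$ meeting $C(0)$ perpendicularly. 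Your nonemptiness argument via the flow is fine once the semiconcavity bookkeeping is done, but it can also be recovered more cheaply from~(2) in the same spirit.
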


Here we say that a curve $c:[a,b]\to Y$ is {\it perpendicular} to 
a closed set $C$ if $c(a)\in C$ and for any $x\in C$, 
the distance $d(x, c([a,b]))$ is realized only at $c(a)$.

\begin{proof}
First we show that $b$ has no local maximum on $\cal C_i-C(0)$.
Suppose that $b$ takes a local maximum $t_0>\mu_0$
at a point $p\in\cal C_i-C(0)$, and 
let $\{ \xi_1,\xi_2\}:=\Sigma_p(\cal C_i)$.
For any $t>t_0$ take a point $q\in b^{-1}(t)$ with 
$d(p,q)=t-t_0$.
Then $d_q$ restricted to $\cal C_i$ takes a local minimum at
$p$. However for any $x$ with $b(x)< t_0$, 
we have $\angle(q_p', x_p')>\pi/2$,
which contradicts the fact that $\cal C_i$ is an 
extremal subset. 

The proof of $(2)$ is similar, and hence omitted.
$(2)$ implies that $b$ has no global minimum on 
$\cal C_i-C(0)$. $(1)$ follows from the previous argument.
(3) follows from (2).
\end{proof}

First we  consider the case when $Y$ has no boundary
and begin with the simplest

\begin{proclaim}{\emph{Case} A-I.}
$Y$ has no boundary and $\dim S=2$.    
\end{proclaim}

In this case, $Y$ is isometric to the normal bundle $N(S)$ of 
$S$, which is flat
(hence if $S\simeq S^2$, then $Y$ is isometric to $S\times \R$).
Let $S_0$ be the zero section of $N(S)$, and $S_i(S_0)$
be the preimage of $S_0$ by $\pi_i$,
which is a Seifert bundle over $S_0$.
Now $B(p_i, R)$ is homeomorphic to 
an $I$-bundle over $S_i(S_0)$.

% Note that $S_i(S_0)$ is homeomorphic to
% a lens space if $S\simeq S^2$, to $P^2\times S^1$,
% the non-orientable $S^1$-bundle $S^2\tilde\times S^1$, $P^3\# P^3$ 
% or a prism manifold if $S\simeq P^2$
% and to $S^1$-bundle over $S$ if $S$ is isometric to 
% a flat torus or a flat Klain bottle.

\begin{proclaim}{\emph{Case} A-II.}
$Y$ has no boundary and $\dim S=1$.    
\end{proclaim}

In this case $Y$ is isometric to a quotient $(\R\times N^2)/\Lambda$
which is homeomorphic to 
$S^1\times N^2$ (resp. the nontrivial bundle $S^1\tilde\times N^2$)
if $Y$ is orientable (resp. if $Y$ is non-orientable),
where $\Lambda\simeq\Z$, $N^2\simeq \R^2$ and the number $k$ of 
essential singular points of $N^2$ is at most two.
If $\dim Y(\infty)=\dim N^2(\infty)\ge 1$,
$k$ is at most 1. 
If $\dim Y(\infty)=0$ and $k=2$, then 
$N^2$ would be isometric to the double of a product $[a,b]\times
[0,\infty)$.

Let $k_i\le 2$ denote the number of the singular orbits of
$\psi_i$ over the $N^2$-factor.
If $k_i=0$, it follows that
$B(p_i,R)$ is an $S^1$-bundle over $B(y_0,R)$,
which is homeomorphic to either $S^1\times D^2$ or the 
twisted product $S^1\tilde\times D^2$. 
Thus $B(p_i,R)$ is a $D^2$-bundle over $T^2$ or $K^2$.

If $k_i=1$, the preimage of the $N^2$-factor by $\pi_i$,
denoted $S_i(N^2)$, is a fibred solid torus with one 
singular fibre. Therefore
$B(p_i,R)$ is an $S_i(N^2)$-bundle over $S^1$, and
$$
 B(p_i,R)\simeq S^1\times D^2\times I\left/(x,0)\sim (f(x),1)\right.,
$$
where $f$ is a gluing homeomorphism of $S^1\times D^2$.
Consider the mapping class group $\cal M_+(S^1\times D^2)$
of orientation-preserving homeomorphisms of $S^1\times D^2$,
which can be thought of as a subgroup of 
$\cal M_+(S^1\times S^1)=SL(2,\Z)$.
Since 
\begin{equation*}
 \cal M_+(S^1\times D^2)=\left\{ \left.
          \begin{pmatrix} 
              \pm 1\,\,\, \omega \\
              \, 0 \,\,\,    \pm 1
           \end{pmatrix}  
        \,\right|\,\omega\in\Z  \right\},
\end{equation*} 
we may assume that $f$ preserves the $D^2$-factors.
Thus  $B(p_i, R)$ is a $D^2$-bundle over $T^2$ or $K^2$.

Suppose $k_i=2$. In a similar way to the above case of $k_i=1$,
$B(p_i,R)$ is an $S(D^2:(2,1), (2,1))$-bundle over $S^1$,
where $S(D^2:(2,1), (2,1))$ is the Seifert bundle over $D^2$
with two singular orbits with Seifert invariants $(2,1)$.
Note that $S(D^2:(2,1), (2,1))\simeq K^2\tilde\times I$.
Since 
$\cal M_+(K^2\tilde\times I)\simeq 
\cal M(K^2)\simeq\Z_2\oplus\Z_2$ ( \cite{Mcc:virt}, 
\cite{Lc:homeo}), in the same way as before,
$B(p_i,R)$ is homeomorphic to an $I$-bundle over 
a $K^2$-bundle over $S^1$.

\begin{proclaim}{\emph{Case} A-III.}
$Y$ has no boundary and $\dim S=0$.    
\end{proclaim}

Since $B(y_0,R)\simeq D^3$ for large $R$ (\cite{SY:3mfd}),
$\psi_i$ is an $S^1$-action (Lemma \ref{lem:simply}).
Suppose first $\dim C(0)\le 1$. By Lemma \ref{lem:C0codim1},
this is the case if $\dim Y(\infty)\ge 1$.
If $\dim C(0)=0$, or equivalently if $C(0)=S$,
then $\psi_i$ has at most one fixed point.
Therefore 
$B(p_i,R)$ is homeomorphic to either $D^4$ or $S^1\times D^3$.

We prepair

\begin{lem} \label{lem:m=n=1}
Let $\psi$  be an $S^1$-action on a compact $4$-manifold
$N$ with boundary such that 
\begin{enumerate}
 \item the orbit space $N^*$ is homeomorphic to $D^3$
       and $F^*(\psi)\subset\interior N^*;$
 \item there is exactly one component $A^*$ of $E^*(\psi)$ 
       whose closure $\bar A^*$ does not meet $F^*(\psi)$,
       and the Seifert invariants along $A^*$ are $(2,1);$
 \item there is a point $x^*\in F^*(\psi)\cap\interior N^*$ 
       such that the number
       of the components of $E^*(\psi)$ whose closure touch $x^*$
       is at most $1$.
\end{enumerate}
Then $N$ is nonorientable.
\end{lem}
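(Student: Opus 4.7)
Argue by contradiction: suppose $N$ is orientable. Since both $N$ and the regular orbit space $N^*-S^*\subset\interior(N^*)$ are orientable (the latter being an open subset of the disk $D^3$), Lemma \ref{lem:simply}(2) upgrades the local $S^1$-action $\psi$ to a genuine (global) $S^1$-action on $N$. Proposition \ref{prop:circ-class} then determines $(N,\psi)$ uniquely, up to orientation-preserving equivariant homeomorphism, from its weighted orbit data, and this data must satisfy the adjacency constraints recorded in Lemma \ref{lem:adj-inv}.

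Next, extract what conditions (1)--(3) force on this data. Since the number of components of $E^*$ whose closure touches a fixed point equals the number of slice weights of absolute value $>1$, condition (3) forces the slice representation at $x^*$ to be of the form $\psi_{\pm 1,\pm\alpha}$ for some $\alpha\ge 1$: either (a) a Hopf-type fixed point with no emanating exceptional arc, or (b) a fixed point with exactly one emanating exceptional arc $J$ of Seifert type $(\alpha,\beta)$ subject to $b\alpha+\beta=\pm 1$ by Lemma \ref{lem:adj-inv}(2). Similarly, since $\bar A^*\cap F^*=\emptyset$ and $F^*\subset\interior N^*$, condition (2) forces $A^*$ to be either a simple closed curve in $\interior N^*$ or an arc with both endpoints on $\partial N^*\simeq S^2$, carrying constant Seifert invariants $(2,1)$; in the arc case the obstruction numbers at its endpoints are constrained to lie in $\{0,-1\}$.

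The plan is then to build $N$ explicitly as a plumbing of local models dictated by the weighted data: the local $D^4$ at $x^*$ with its constrained slice action, the tubular neighborhood of $\pi^{-1}(A^*)$ (a fibered-solid-torus-of-type-$(2,1)$ bundle over $A^*$), and a principal $S^1$-bundle over the complement in $N^*$. The key observation is that the $\mathbb{Z}_2$-isotropy monodromy carried by $A^*$, which is disjoint from every fixed point by condition (2), contributes a nontrivial class to $w_1(N)\in H^1(N;\mathbb{Z}_2)$ which cannot be neutralized by the highly restricted slice data at $x^*$ permitted by condition (3). Concretely, tracing a loop in $N$ that links the exceptional orbit $\pi^{-1}(A^*)$ once produces an orientation-reversing cycle, contradicting the assumption that $N$ is orientable.

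The hard step will be the last one: pinning down the orientation obstruction rigorously across all sub-configurations (the two slice subcases (a) and (b); whether $A^*$ is a closed curve or an arc; whether additional exceptional arcs attached to other fixed points are present) and showing that in each, the $(2,1)$-contribution of $A^*$ to $w_1(N)$ survives. This will require a careful bookkeeping of the local orientation data assigned at each piece of the plumbing and an explicit use of the classification of oriented $S^1$-actions on fibered solid tori together with the endpoint adjacency relations of Lemma \ref{lem:adj-inv}, in the spirit of \cite{Fn:circleI}, to show that the parity obstruction carried by $A^*$ cannot be absorbed elsewhere in the orbit structure.
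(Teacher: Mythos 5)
Your proposal is a genuinely different route from the paper's, but it has a gap at exactly the step you flag as hard, and I do not think it can be repaired without essentially converting it into the paper's argument.

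You attempt to detect nonorientability directly by tracing $w_1(N)$ through a plumbing decomposition, claiming that ``tracing a loop in $N$ that links the exceptional orbit $\pi^{-1}(A^*)$ once produces an orientation-reversing cycle.'' This is not correct as a local statement: a fibered solid torus of type $(2,1)$ is orientable, so a meridian linking the core exceptional orbit does not reverse orientation within the model $V_{2,1}$. Whatever orientation obstruction exists must therefore be a global phenomenon, forced by how the various pieces of your plumbing fit together, and not a parity class ``carried by $A^*$'' in any intrinsic sense. Your sketch acknowledges this by deferring to a bookkeeping argument over subcases, but the assertion that the $(2,1)$ data cannot be ``neutralized elsewhere'' is precisely what needs a proof and is not supplied by the classification results you cite (Proposition~\ref{prop:circ-class} classifies \emph{oriented} $S^1$-manifolds and so is agnostic to the question of when an abstract weighted orbit datum is realizable orientably). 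Also a minor point: invoking Lemma~\ref{lem:simply}(2) is superfluous here, since the hypotheses already grant a genuine $S^1$-action, not merely a local one.

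The paper's proof goes a different way. It first computes $\pi_1(N)\simeq\Z_2$ by Van Kampen (the $\Z_2$ is generated exactly by a meridian of $A^*$). It then passes to the universal double cover $\tilde N$, on which the lifted action $\tilde\psi$ has \emph{no} exceptional component whose closure misses the fixed set (the $(2,1)$-arc unwinds). Using a $\sigma$-invariant dividing 2-disk in $\tilde N^*$ that isolates the two lifts $x_1,x_2$ of $x^*$, it identifies the piece $U$ of $\tilde N$ over that side with $(S^2\tilde\times_\omega D^2,\hat\psi)$ via Proposition~\ref{prop:plumbing}. Equivariance forces $\sigma$ to preserve the zero-section and the ``equator'' $\partial B_1\times 0$; since the image of the equator in $N$ is exceptional, $\sigma$ must reverse orientation on the zero-section while preserving orientation on the $D^2$-fiber, hence is orientation-reversing overall. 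The crucial move that your sketch is missing is this lift: the orientation constraint becomes visible only after passing to the double cover, where the $(2,1)$ isotropy disappears and the deck transformation's effect on the soul sphere can be read off.
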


\begin{proof}
By Van Kampen's theorem, $\pi_1(N)=\Z_2$.
Let $p:\tilde N\to N$ be the universal cover and 
$\sigma$ the nontrivial deck transformation.
$\tilde N$ has an $S^1$-action $\tilde\psi$
induced from $\psi$.
Note that there is no component of $E^*(\tilde\psi)$ whose closure
does not meet $F^*(\tilde\psi)$.
Let $x_1, x_2\in \tilde N$ be the points over $x^*$, and 
let $\tilde\pi:\tilde N\to\tilde N^*$ be the orbit map.
Divide $\tilde N^*$ by a proper $\sigma$-invariant $2$-disk in 
$\tilde N^*-F^*(\tilde\psi)-E^*(\tilde\psi)$ into two 
$3$-disks $U^*$ and $V^*$ in such a way that
$x_1,x_2\in U^*$ and the other elements of $F^*(\tilde\psi)$
are contained in $V^*$. Put $U:=\tilde\pi^{-1}(U^*)$.
By Proposition \ref{prop:plumbing}, 
$(U,\tilde\psi)$ is equivariatly homeomorphic to 
$S^2\tilde\times_{\omega} D^2$ for some $\omega\in\Z$ 
equipped with a canonical action, denoted $\hat\psi$,
 provided in Section \ref{sec:action}.
Hence we may identify $(U,\tilde\psi)$ with 
$(S^2\tilde\times_{\omega} D^2,\hat\psi)$.
Let 
$$
 S^2\tilde\times_{\omega} D^2 = B_1\times D^2_1\bigcup_{f_{\omega}}
                                        B_2\times D^2_2,
$$
be the gluing as in Section \ref{sec:action}.
Since the $\Z_2$-action defined by $\sigma$ is $S^1$-equivariant,
$\sigma$ preserves $\partial B_1\times D^2$ and the zero-section
$(B_1\cup B_2)\times 0$. Since 
$p(\partial B_1\times 0)$ lies in $E(\psi)$, 
the action of $\sigma$ on $(B_1\cup B_2)\times 0$
is orientation-reversing.  On the other hand,
since $\sigma$ is $S^1$-equivariant, the 
$\sigma$-action on the $D^2$-factor induced from that on
$\partial B_1\times D^2$ must be orientation-preserving.
Therefore $\sigma$ is orientation-reversing and 
$N$ must be nonorientable.
\end{proof}

Suppose $\dim C(0)=1$, that is, $C(0)$ is a geodesic segment.
Since ${\rm Ext}(Y)$ is included in $\partial C(0)$,
so is $F_i^*$.

\begin{lem} \label{lem:dimC(0)=1}
Let $\dim C(0)=1$ and let $A_i$ be a component of $E_i^*$.
\begin{enumerate}
 \item If $A_i$ is not contained in $C(0)$,
    $\bar A_i$ perpendicularly intersects $C(0)$ with
    an endpoint of $\partial C(0);$
 \item If $\bar A_i$ does not meet $F_i^*$,
    one of the following holds$:$
  \begin{enumerate}
   \item If $A_i\cap C(0)$ contains more than one element,
    then $C(0)$ is a subarc of $A_i;$
   \item $A_i$ intersects $C(0)$ with exactly one endpoint of
      $C(0)$ making an angle equal to $\pi/2$. In this case, 
      the Seifert invariants of $\psi_i$ along $A_i$ are
      $(2,1)$.
  \end{enumerate}
\end{enumerate}
\end{lem}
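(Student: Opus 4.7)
For (1), the perpendicularity of $\bar A_i$ at the intersection with $C(0)$ is immediate from Lemma \ref{lem:locmax}(3), so the task reduces to ruling out an interior intersection point. The plan is to analyze $\Sigma_p$ at an interior point $p$ of the 1-dimensional totally convex set $C(0)$. By the rigidity Proposition \ref{prop:concave-rigid} applied along $C(0)$ (the minimum set of $b$), a neighborhood of $p$ splits flatly, forcing $\Sigma_p$ to be isometric to the spherical suspension $\Sigma(S^1_\ell)$ for some $\ell\le 2\pi$, with the two tangent directions to $C(0)$ as the poles. A direct warped-product computation will then show that at any equatorial direction $v\in S^1_\ell\subset\Sigma_p$, the iterated space of directions $\Sigma_v(\Sigma_p)$ is a round circle of length $2\pi$; hence no equatorial direction is essential singular in $\Sigma_p$. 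Since $A_i\subset ES(Y)$, any direction in $\Sigma_p(\bar A_i)$ must itself be an essential singular direction of $\Sigma_p$ (cf.\ the reduction in the proof of Lemma \ref{lem:essential}), so the perpendicular direction cannot belong to $\Sigma_p(\bar A_i)$. This forces the perpendicular intersection to occur at $\partial C(0)$.

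For (2)(a), assume $|A_i\cap C(0)|\ge 2$. The first step is to show $A_i\cap C(0)$ is a connected subarc of $C(0)$: given distinct $p,q\in A_i\cap C(0)$, any component of the $A_i$-subarc joining $p$ to $q$ that lies off $C(0)$ would have $b$ monotone by Lemma \ref{lem:locmax}(1), yet would begin and end on $\{b=\mu_0\}$ while assuming strictly larger values in its interior, forcing all such components to be empty. Thus the joining subarc is entirely contained in $C(0)$, yielding path-connectivity. Second, if $A_i\cap C(0)$ were a proper subarc of $C(0)$ with an endpoint $x$ in the interior of $C(0)$, then $A_i$ would have to exit $C(0)$ perpendicularly at $x$ by Lemma \ref{lem:locmax}(3), which contradicts the direction analysis carried out for (1). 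Hence $A_i\cap C(0)=C(0)$.

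For (2)(b), combining the assumption $\bar A_i\cap F_i^*=\emptyset$ with the complementary case $|A_i\cap C(0)|\le 1$ of (2)(a), and using Lemma \ref{lem:locmax}(3) to exclude the empty intersection (as $A_i\not\subset\partial C(0)$), gives $|A_i\cap C(0)|=1$; part (1) then locates this single intersection at an endpoint $p_0\in\partial C(0)$, perpendicularly. The substantive assertion is the Seifert multiplicity. Because $A_i$ is an $m_i$-type component (an interval whose closure avoids $F_i^*$), the point $p_0$ lies in the interior of $A_i$ with two continuation directions $\eta_+,\eta_-\in\Sigma_{p_0}$, both perpendicular to the $C(0)$-tangent direction $\xi_C$. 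Since $p_0\in\partial C(0)$ we have $\rad(\Sigma_{p_0})\le\pi/2$ and in particular $\angle(\eta_+,\eta_-)\le\pi/2$. The plan is to combine this angular restriction with the quasigeodesic property of $A_i$ at $p_0$ to conclude that the two half-arcs of $A_i$ emanating from $p_0$ coincide as subsets of $Y$, so that $A_i$ geometrically reflects at $p_0$ (a $\Z_2$-folding). This folding structure translates into a $\Z_2$-isotropy of $\psi_i$ along the exceptional orbit over $A_i$, and applying the local classification of canonical $S^1$-actions (Propositions \ref{prop:lochopf} and \ref{prop:plumbing}) to a tubular neighborhood of this orbit, together with its $\Z_2$-branched cover, will yield the Seifert invariants $(\alpha,\beta)=(2,1)$. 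The hard part will be making the folding argument rigorous and extracting the exact multiplicity $2$; I expect to handle this by passing to a local branched double cover of a neighborhood of the orbit above $p_0$, identifying the lifted $S^1$-action as a canonical action in the sense of Proposition \ref{prop:lochopf}, and descending to read off the Seifert data.
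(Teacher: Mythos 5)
Parts (1) and (2)(a) of your proposal are essentially correct and run close to the paper's route. The paper obtains (1) from the one-line observation (valid by the 3-dimensional version of Proposition~\ref{prop:product}) that for $x\in\interior C(0)$ a small punctured neighborhood $U_x-C(0)$ has no essential singular point; your warped-product computation on $\Sigma_p\cong\Sigma(S^1_\ell)$, showing that only the two pole directions are essential singular, says the same thing with more detail. For (2)(a), the paper's phrasing is that one of the $A_i$-directions at the intersection point must coincide with the $C(0)$-direction, while you argue via the monotonicity of $b$ from Lemma~\ref{lem:locmax}(1) that $A_i\cap C(0)$ is a connected subarc and then rule out an interior endpoint using the analysis from (1). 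Both are sound; yours is a slightly different decomposition of the argument.

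Part (2)(b) has a genuine gap. Your plan hinges on the assertion that ``the two half-arcs of $A_i$ emanating from $p_0$ coincide as subsets of $Y$, so that $A_i$ geometrically reflects at $p_0$.'' This is not supported by what you have established and is, in general, false: the angular constraint $\rad(\Sigma_{p_0})\le\pi/2$ together with the perpendicularity of $\eta_\pm$ to the $C(0)$-direction does not force $\angle(\eta_+,\eta_-)=0$, only $\angle(\eta_+,\eta_-)\le\pi/2$; and indeed the paper's own rigidity analysis allows $d(\xi_1,\xi_2)$ to be any value in $(0,\pi)$. Consequently the ``$\Z_2$-folding of the curve $A_i$'' you want to leverage does not exist as a coincidence of point sets. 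What the paper actually proves (Sublemma~\ref{slem:rigid-glue}) is that $\Sigma_{p_0}$ is rigidly isometric to the double of a spherical triangle with two sides of length $\pi/2$ and the third of length $d(\xi_1,\xi_2)<\pi$. From that rigid form, $L(\Sigma_{\xi_j}(\Sigma_{p_0}))=\pi$ for $j=1,2$, and since the Seifert multiplicity along $A_i$ is a nontrivial integer bounded above by $2\pi/L(\Sigma_{\xi_j}(\Sigma_{p_0}))=2$, it must equal $2$, giving $(\alpha,\beta)=(2,1)$. Your plan of passing to a branched double cover and invoking Propositions~\ref{prop:lochopf} and~\ref{prop:plumbing} is also off target (those results concern collapsing near boundary points and $S^1$-actions on $D^2$-bundles over $S^2$, not the local Seifert data of a single exceptional orbit). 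The missing ingredient is precisely the metric rigidity of $\Sigma_{p_0}$: you need to prove a version of Sublemma~\ref{slem:rigid-glue}, combining the Alexandrov convexity and the farthest-point argument with the exclusion of $d(\xi_1,\xi_2)=\pi$ via the Busemann-level-set connectedness argument, and then deduce the multiplicity from the bound $2\pi/L(\Sigma_{\xi_j}(\Sigma_{p_0}))$.
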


\begin{proof}
Any point $x\in\interior C(0)$ has a small neighborhood 
$U_x$ such that $U_x-C(0)$ has no essential singular
point. Therefore $A_i$ must intersect $C(0)$ with an endpoint.
Suppose $A_i\cap C(0)$ is a point, say $x$.
Let $\xi_1$,$\xi_2$  and $\xi$ denote the directions at $x$ 
determined by $A_i$ and $C(0)$ respectively.
Since $A_i$ is an extremal subset, it follows that
$\angle(\xi_j,\xi)=\pi/2$, $j=1,2$.

\begin{slem} \label{slem:rigid-glue}
 \begin{enumerate}
  \item $d(\xi_1,\xi_2) < \pi;$
  \item $\Sigma_x$ is isometric to the double of a geodesic 
    triangle on $S^2(1)$ with sidelengths
    $\pi/2$, $\pi/2$ and $d(\xi_1,\xi_2)$.
 \end{enumerate}
\end{slem}

\begin{proof}
Suppose (1) first. Note that $L(\Sigma_{\xi_i}(\Sigma_x))\le\pi$,
$i=1,2$ since $A_i\subset E_i^*$.
Let $\blacktriangle$ denote a closed domain bounded by 
$\triangle\xi\xi_1\xi_2$ such that the inner angle, say 
$\alpha$, of $\blacktriangle$ at $\xi$ is at most $\pi$.
Let $\alpha_i$ be the inner angle of $\blacktriangle$ at $\xi_i$.
From the Alexandrov convexity applied to $\blacktriangle$,
\begin{equation}
  \alpha_i\ge\pi/2.  \label{eq:inner-conv}
\end{equation}
Let $\blacktriangle':=\Sigma_x - \interior\blacktriangle$,
and $\alpha'$ and $\alpha_i'$ denote the inner angles of 
$\blacktriangle'$ at $\xi$ and $\xi_i$ respectively.
From \eqref{eq:inner-conv}, $\alpha_i'\le\pi-\alpha_i\le\pi/2$.
If $\alpha'\le\pi$, using the same argument as above,
we conclude $\alpha_i=\alpha_i'=\pi/2$.
The conclusion (2) follows from the rigidity (cf. \cite{Shm:alex}).

For any $\eta\in\xi_1\xi_2$, 
\[
  d^{\blacktriangle'}(\xi,\eta)\ge d(\xi,\eta)\ge \pi/2,
\]
where $d^{\blacktriangle'}(\xi,\eta)$ denotes the inner distance in 
$\blacktriangle'$. Let $\eta_0$ be a farthest point of 
$\xi_1\xi_2$ from $\xi$ with respect to $d^{\blacktriangle'}$.
Divide $\blacktriangle'$ by a minimal segment $\xi\eta$ in 
$\blacktriangle'$ into two triangle regions
$\blacktriangle'_1$ and $\blacktriangle'_2$ with 
$\xi_i\in\blacktriangle'_i$. We may assume that 
the inner angle of $\blacktriangle'_1$ at $\xi$ is at most
$\pi$. 
The Alexandrov convexity applied to $\blacktriangle'_1$
shows
\[
  0\ge \cos d^{\blacktriangle'}(\xi,\eta_0) =
         \sin d(\xi_1,\eta_0) \cos \tilde\alpha_1' \ge 0,
\]
where $\tilde\alpha_1'\le\pi/2$ denotes the comparison angle 
for $\alpha_1'$. It follows that 
$d^{\blacktriangle'}(\xi,\eta_0)=\pi/2$ and 
$\tilde\alpha_1'=\alpha_1'=\alpha_1=\pi/2$.
Thus $\blacktriangle'_1$ is isometric to a triangle in 
$S^2(1)$ with sidelengths $\pi/2$, $\pi/2$ and 
$d(\xi_1,\eta_0)$.

Continue the above argument for $\blacktriangle'_2$
by taking a farthest point of $\eta_0\xi_2$ from 
$\xi$, finally  to conclude that 
$\blacktriangle'$  and $\blacktriangle$ are isometric to 
a triangle in 
$S^2(1)$ with sidelengths $\pi/2$, $\pi/2$ and 
$d(\xi_1,\xi_2)$.

Next we show $(1)$. Suppose $d(\xi_1,\xi_2)=\pi$.
Then $K_p$ is isometric to a product
$\R\times K(S^1_{\ell})$, where the length $\ell$ 
of the circle $S^1_{\ell}$ is at most $\pi$ because of 
$A_i\subset E_i^*$.
Now consider the Busemann function $b$ such that
$C(0)$ coincides with the minimum set of $b$, where we assume
$C(0)=\{ b=0\}$.
Under the convergence $(\frac{1}{r}X,p)\to (K_p,o_p)$,
$\frac{1}{r} b$ converges to a convex function 
$b_{\infty}$ on $K_p$ with Lipschitz constant $1$.
Let $C_{\infty}$ be the limit of $C(0)$ under the above 
convergence, which is a geodesic ray in 
$0\times K(S^1_{\ell})$ from $o_p$ and 
is contained in $\{ b_{\infty}=0\}$.
Since $\{ b_{\infty}=0\}$ is a totally convex set of dimension 
$\le 2$, it follows that 
$\{ b_{\infty}=0\}=0\times K(S^1_{\ell})$.
Obviously $\{ b_{\infty}=1\}=\{\pm 1\}\times K(S^1_{\ell})$
is disconnected while $\{ b=r\}\simeq S^2$
(see Theorem \ref{thm:soul2}) is connected.
In a obvious way, we get a contradiction.
\end{proof}

By Sublemma \ref{slem:rigid-glue},
the Seifert invariants of $\psi_i$ along $A_i$ 
are $(2,1)$.
If  $A_i\cap C(0)$ is more than a point, then 
one of $\xi_j$  must coincide with $\xi$ 
and $C(0)$ is a subarc of $\bar A_i$. 
\end{proof}

By Lemma \ref{lem:dimC(0)=1}, we have $m_i+n_i\le 2$.
Lemmas \ref{lem:m=n=1} and \ref{lem:dimC(0)=1} together with
Lemma \ref{lem:adj-inv}
imply that the case $(m_i,n_i)=(1,1)$ does not occur.

 If $m_i=2$, then $B(y_0,R)\simeq D^3$ consists of continuous 
 one-parameter family $D^2_t$ of mutually disjoint $2$-disks
 each of which transversally intersects $E_i^*$ at 
 exactly two points. 
 Since 
 $$
 \pi_i^{-1}(D^2_t)\simeq S(D^2;(2,1),(2,1))\simeq K^2\tilde\times I,
 $$
 $B(p_i,R)$ is homeomorphic to 
 $(K^2\tilde\times I)\times I$,
 a $D^2$-bundle over $K^2$ whose boundary 
 is homeomorphic to the double $D(K^2\tilde\times I)$.

 If $m_i=1$, then $B(p_i,R)$ is homeomorphic to 
 $S^1\times D^3$.

% Let $k_i\le 1$ denote the number of component 
% of $E_i^*$ whose closure meets $F_i^*$.
% If $(m_i,n_i)=(1,1)$, then 
% $\partial B(p_i,R)$ is a Seifert fibred space over 
% $\partial B(p,R)$ with at least two and at most three singular orbits
% of Seifert invariants $(2,1)$, $(2,1)$ and $(\alpha_i,\beta_i)$,
% possibly $\alpha_i=1$ (Lemma \ref{lem:dimC(0)=1}).
% Now it is straightforward to see that
% $\partial B(p_i,R)$ is homeomorphic to 
% $P^3$ (resp. a prism manifold) if $k_i=0$ (resp.
% if $k_i=1$).

If $m_i=0$, by the above argument in the case of
$\dim C(0)=0$, 
we may assume  $F_i^*=\partial C(0)$.
In view of Lemma \ref{lem:adj-inv}, it follows from 
Proposition \ref{prop:plumbing} that 
$B(p_i,R)$ is homeomorphic to  $S^2\tilde\times_{\omega} D^2$.
Let $z_j$, $j=0,1$, be the endpoints of $\partial C(0)$.
Then $|\omega|$ does not exceed 
$$
  \max\left\{\frac{2\pi}{L(\Sigma_{\xi_0}(\Sigma_{z_0}(Y)))},1\right\}
 +\max\left\{\frac{2\pi}{L(\Sigma_{\xi_1}(\Sigma_{z_1}(Y)))},1\right\}
$$
for some essential singular point $\xi_j$ (if it exists) of
$\Sigma_{z_j}(Y)$ with distance $\pi/2$ to the direction
determined by $C(0)$.
\par
\medskip
Next suppose  $\dim C(0)=2$.
Since there are exactly two geodesic rays starting from 
each point of the interior of $C(0)$ with directions normal to
$C$, we have a locally isometric imbedding
$f:\interior C(0)\times\R\to Y$ (see Section \ref{sec:local}). 
For any subset $B\subset C(0)$, let 
$\cal N(B)$ denote the union of geodesic rays from the points of $B$
appearing as the limits of geodesic rays 
in $f(\interior C(0))$ normal to $C(0)$.

For the definition of one-normal or two normal 
points in $\partial C(0)$, see the definition 
right before Proposition \ref{prop:codim1-reg}.

We need some geometry of the space of directions at a 
given one-normal point of $\partial C(0)$, 
which is described in the following lemma.

\begin{lem} \label{lem:class-dir}
Suppose $\dim C(0)=2$ and a point $x\in\partial C(0)$ be given.
\begin{enumerate}
 \item If $x$ is an essential singular point of $Y$, then 
    it is a one-normal point $;$
 \item If  a point in $\partial \Sigma_x(C(0))$ is an essential 
    singular point of $\Sigma_x$, then 
    $x$ is a one-normal point $;$
 \item Suppose that $x$ is a one-normal point and there 
    are two essential singular points of $\Sigma_x;$
    one is in $\partial \Sigma_x(C(0))$ and the other is not.
    Let $v$ be the essential singular point in 
    $\Sigma_x-\partial \Sigma_x(C(0))$.
  \begin{enumerate}   
   \item If $L(\Sigma_x(C(0)))\le\pi/2$, then $v$ satisfies
     $$
       L(\Sigma_{v}(\Sigma_x))\ge 2L(\Sigma_x(C(0)));
     $$
   \item If $L(\Sigma_x(C(0)))>\pi/2$, then $v$ lies on a 
     geodesic extension of the geodesic through 
     $\partial \Sigma_x(C(0))$, and $\Sigma_x$ is isometric to
     the double of a geodesic triangle in $S^2(1)$ of
     sidelengths $\pi/2$, $\pi/2$ and 
     $\max\{ d(v,\eta)\,;\,\eta\in\partial\Sigma_x(C(0))\}$.
  \end{enumerate}
 \item If $x$ is an extremal point of $Y$ and if $v$ is an essential
     singular point of $\Sigma_x$ satisfying $d(v,\eta)=\pi/2$ 
     for every $\eta\in\partial\Sigma_x(C(0))$, then  
     $v$ is the normal direction to $C(0)$.
\end{enumerate}
\end{lem}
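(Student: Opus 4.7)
The whole lemma reduces to spherical geometry on $\Sigma_x$, which is a $2$-dimensional Alexandrov space of curvature $\ge 1$, inside which $\Sigma_x(C(0))$ is a convex arc (a shortest geodesic segment) of length $L := L(\Sigma_x(C(0))) \le \pi$. A direction $v \in \Sigma_x$ is a normal direction to $C(0)$ at $x$ (in the limit-of-normal-rays sense that defines one- versus two-normal points) exactly when $d(v,\eta) \ge \pi/2$ for every $\eta \in \Sigma_x(C(0))$; the point $x$ is two-normal if and only if this normal set contains at least two elements. All four parts are proved by analyzing this set together with the positions of the essential singular points of $\Sigma_x$ and of $\Sigma_x(C(0))$.

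For (1) and (2), the plan is proof by contradiction against two-normality. In case (1), $\operatorname{rad}(\Sigma_x) \le \pi/2$; if there were two distinct normal directions $v_\pm$, then Alexandrov convexity applied to $\tilde\triangle v_\pm\eta_1\eta_2$ (with $\eta_1,\eta_2$ the endpoints of $\Sigma_x(C(0))$) forces the configuration $\{v_+,v_-\} \cup \Sigma_x(C(0))$ to lie in the pattern of a spherical suspension over $\Sigma_x(C(0))$; any third direction $\xi^* \in \Sigma_x$ then has $\max_\eta d(\xi^*,\eta) > \pi/2$ at the far endpoint, contradicting the radius bound. For (2), an essential singular point $\eta_0 \in \partial\Sigma_x(C(0))$ has $L(\Sigma_{\eta_0}(\Sigma_x)) \le \pi$, so the half-line of directions at $\eta_0$ making angle $\pi/2$ with $\Sigma_x(C(0))$ gives only one candidate $\pi/2$-perpendicular geodesic starting into $\Sigma_x$; the associated Toponogov triangle against the other endpoint forces uniqueness of $v$.

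For (3), we are given a one-normal point $x$ and an essential singular point $v \in \Sigma_x \setminus \partial\Sigma_x(C(0))$. In case (a), when $L \le \pi/2$, the plan is to apply Alexandrov convexity to the two hinges at $v$ in the configuration $\{v,\eta_1,\eta_2\}$: since $d(v,\eta_i) \ge \pi/2$ and the comparison triangle in $S^2(1)$ has vertex angle at $v$ equal to $L$, developing the doubled triangle along the rigidity kernel of Sublemma \ref{slem:rigid-glue} gives $L(\Sigma_v(\Sigma_x)) \ge 2L$. In case (b), when $L > \pi/2$, the angle of the comparison triangle at each $\eta_i$ is strictly less than $\pi/2$, so any geodesic prolongation of $\Sigma_x(C(0))$ past an endpoint must continue into the polar cap containing $v$; the equality case of the Alexandrov convexity applied as in Sublemma \ref{slem:rigid-glue} then rigidifies $\Sigma_x$ as the double of the spherical triangle with sidelengths $\pi/2,\pi/2,m$ where $m = \max\{d(v,\eta) : \eta \in \partial\Sigma_x(C(0))\}$. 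Part (4) is the direct rigidity analogue: the equality $d(v,\eta) = \pi/2$ at both endpoints, combined with the extremality $\operatorname{rad}(\Sigma_x) \le \pi/2$, puts $v$ on the polar locus of $\Sigma_x(C(0))$; that polar locus is precisely the normal direction to $C(0)$ at $x$.

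The main obstacle will be the rigidity step in part (3)(b) and part (4): one has to show that, under the one-normal and essential-singular hypotheses, the configuration $\{v,\eta_1,\eta_2\}$ really saturates the Alexandrov convexity inequality on every subtriangle of $\Sigma_x$, not just on a single pair of hinges. The argument of Sublemma \ref{slem:rigid-glue} handles exactly this by choosing a farthest point along $\eta_1\eta_2$ for the inner distance, so the plan is to adapt that inductive farthest-point argument verbatim once the correct triangle has been set up; the non-trivial input is keeping track of the essential-singular-point constraint $L(\Sigma_v(\Sigma_x)) \le \pi$ in (3)(a), which provides the quantitative lower bound $2L$ from the doubled-triangle model.
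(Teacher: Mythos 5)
Your handling of parts (1), (2), and (4) is in the same spirit as the paper's: the paper reads off from the four-triangle (degenerating to two-triangle) decomposition of $\Sigma_x$ that a two-normal point forces $\rad(\Sigma_x)>\pi/2$ and $L(\Sigma_{\eta_j}(\Sigma_x))>\pi$, and concludes (1) and (2) by contraposition, while (4) falls out of Alexandrov convexity; your contrapositive-via-radius argument is essentially this.

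For part (3), however, you have proposed a genuinely different route, and it has a gap. The paper proves (3) by introducing a fresh Sublemma \ref{slem:surf-sing}, whose proof is not a hinge-comparison at $v$ at all: it tracks the angle $\theta_t$ between the geodesic $\gamma=\xi v$ and the geodesics from $\gamma(t)$ to $\eta_1$, uses the concavity of $d_{\eta_1}$ on $\Sigma_x - B(\eta_1,\pi/2)$ to show $\theta_t$ is monotone decreasing, and chains this against the angle $\angle\eta_1\xi\eta_2 = L$ read off from the constant-curvature-$1$ triangle to get $\angle\eta_1 v\xi\ge L$. The case split in that sublemma uses the key fact that, since $v$ is essential singular and $d(\xi,\eta_1)=\pi/2$, one of $d(\eta_1,v)$ and $d(\xi,v)$ is $\le\pi/2$ and the other is $\ge\pi/2$; this is what drives both (3)(a) and (3)(b) and you do not invoke it. Your substitute argument — apply Alexandrov convexity to the hinge at $v$ in $\triangle v\eta_1\eta_2$ and then ``double'' — does not close: when $d(v,\eta_i)>\pi/2$ the comparison angle $\tilde\angle\eta_1 v\eta_2$ is strictly \emph{less} than $L$, so Alexandrov convexity ($\angle\ge\tilde\angle$) gives you a lower bound that falls short of $L$; and even if you had $\angle\eta_1 v\eta_2\ge L$ on one side, passing to $L(\Sigma_v(\Sigma_x))\ge 2L$ needs $v$ to lie on the symmetry locus between the two spherical triangles, which you assert but do not argue. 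Citing the farthest-point rigidity of Sublemma \ref{slem:rigid-glue} does not supply this either: that sublemma is about identifying $\Sigma_x$ with a doubled spherical triangle once the quarter-plane angles $\pi/2$ are already pinned down, not about obtaining the quantitative bound $\ge 2L$ at an unknown point $v$. To fix (3)(a) you need the monotonicity/concavity argument of the paper's Sublemma \ref{slem:surf-sing}, or an equivalent that first localizes $v$ (either to the interior of a constant-curvature triangle, where $L(\Sigma_v)=2\pi$ trivially, or to the common edge $\xi\eta_i$, where the two-sided angle estimate becomes available). Similarly your description of (3)(b) via ``polar caps'' does not pin down the rigidity; the paper gets $\angle\xi v\eta_1=\pi/2$ and the collinearity of $v$ with $\eta_1\eta_2$ from the same monotonicity argument run in the other regime $d(\eta_1,v)\le\pi/2\le d(\xi,v)$.
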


\begin{proof}
 Let  $\xi_1,\xi_2\in\Sigma_x$ be the normal directions
 to $C(0)$,  and  $\eta_1$, $\eta_2$ be the boundary points of
 $\Sigma_x(C(0))$. 
 Note that $\Sigma_x$ has a geodesic triangulation 
 which consists of four geodesic triangles with vertices
 $\xi_1$, $\xi_2$, $\eta_1$ and $\eta_2$:
 two of which are isometric to a triangle
 in $S^2(1)$ with sidelengths $\pi/2$, $\pi/2$ and 
 $L(\Sigma_x(C(0)))$. Note that the other two triangles
 may be degenerate biangles if $x$ is a one-normal point.
 In view of this geodesic triangulation,
 a direct calculation with 
 the Alexandrov convexity shows that
 if $x$ is a two-normal point ($\xi_1\neq\xi_2$), then the radius of
 $\Sigma_x$ is greater than $\pi/2$, and (1) follows.
 Furthermore, since $L(\Sigma_{\eta_j}(\Sigma_x)) > \pi$ in this case,
 both $\eta_1$ and $\eta_2$ are not essential singular
 points of $\Sigma_x$ yielding (2).

 $(4)$ is obvious from the Alexandrov convexity.

 For a one-normal point $x$, let $\xi$ be a direction at $x$
 normal to $C(0)$. Then $\xi$, $\eta_1$ and $\eta_2$ 
 span  two geodesic triangles of constant curvature $1$.
 Hence  $(3)$ follows from the following sublemma.
\end{proof}

\begin{slem} \label{slem:surf-sing}
Let $\Sigma$ be a compact Alexandrov surface with curvature $\ge 1$
and with no boundary.
Suppose that there are two geodesic triangle regions
of constant curvature
$1$ in $\Sigma$ spanned by three points 
$\xi$, $\eta_1$ and $\eta_2$ such that
$d(\xi,\eta_i)=\pi/2$, $i=1,2$. 
Let $v (\neq \eta_i)$  be an essential singular point 
in $\Sigma-\{ \eta_1,\eta_2\}$.
\begin{enumerate}
 \item  If $\min d(v,\eta_i)\ge\pi/2$, then 
        $L(\Sigma_v(\Sigma))\ge 2d(\eta_1,\eta_2);$
 \item  If $\min d(v,\eta_i) < \pi/2$, then
        $v$ lies on a geodesic extension of a geodesic joining
        $\eta_1$ and $\eta_2$, and the geodesics $v\xi$ is 
        the common edge of the two geodesic triangles spanned 
        by $v$, $\xi$, $\eta_i$ of constant curvature $1$ 
        and of sidelengths $\pi/2$, $\pi/2$ and $\max d(v,\eta_i)$.
\end{enumerate}
\end{slem}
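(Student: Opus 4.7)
The plan is to first reduce to the cases $v=\xi$ and $v\in\Sigma\setminus(T_1\cup T_2)$, then apply Toponogov comparison on the triangle $\eta_1v\eta_2$ together with a rigidity argument using the constant-curvature-$1$ structure of $T_1,T_2$.

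I would begin with the following structural reductions. The spherical law of cosines applied to each $T_j$ shows the angle equals $\ell:=d(\eta_1,\eta_2)$ at $\xi$ and $\pi/2$ at each $\eta_i$. Points in the relative interior of $T_j$ are locally isometric to an open subset of $S^2$, so they are topologically regular; any point in the relative interior of an edge of $T_j$ has $L(\Sigma_p)\ge\pi$, with equality only in degenerate configurations that force the edge to be shared and the surrounding structure to be locally flat (i.e.\ $L=2\pi$). Consequently an essential singular $v\in\Sigma\setminus\{\eta_1,\eta_2\}$ is either the vertex $\xi$ or lies in the complement $\Sigma\setminus(T_1\cup T_2)$. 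The case $v=\xi$ automatically satisfies the hypothesis of part~(1) since $d(\xi,\eta_i)=\pi/2$; two distinct constant-curvature-$1$ triangle regions sharing the vertex $\xi$ cannot overlap angularly by unique analytic continuation of the developing map, so the two sectors cut out in $\Sigma_\xi$ are disjoint and together have measure $2\ell$, giving $L(\Sigma_\xi)\ge 2\ell$.

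For case~(1) with $v\ne\xi$ I would first derive $\ell\le\pi/2$ from essential singularity: Toponogov on $\eta_1v\eta_2$ (valid because $\ell\le\pi$ by Bonnet--Myers and $d(v,\eta_i)<\pi$ since a point at distance $\pi$ from $v$ would force $\Sigma$ to be a spherical suspension with $L(\Sigma_v)=2\pi$) shows that for $\ell>\pi/2$ the comparison angle $\tilde\theta$ at $v$ satisfies $\tilde\theta\ge\ell>\pi/2$, contradicting $\theta\le L(\Sigma_v)/2\le\pi/2$. Given $\ell\le\pi/2$, the desired $L(\Sigma_v)\ge 2\ell$ is equivalent to $\theta\ge\ell$. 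When $d(v,\eta_1)=d(v,\eta_2)$ this follows directly from the spherical cosine law applied to the comparison triangle, so I would reduce the asymmetric case to this by bringing $\xi$ into play: Toponogov applied to the triangles $v\xi\eta_i$ together with the angle $\ell$ contributed by each $T_j$ at $\xi$ bounds $\angle v\xi\eta_i$ from below, and combining these with the triangle inequality in $\Sigma_\xi$ yields $\theta\ge\ell$. I expect the main technical obstacle in case~(1) to be this asymmetric subcase, and the route through $\xi$ (using both triangles $T_1$ and $T_2$ rather than just one) appears essential.

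For case~(2) I would assume $d(v,\eta_1)<\pi/2$ and analyze directions in $\Sigma_{\eta_1}$. The two triangles $T_1,T_2$ determine angular sectors of measure $\pi/2$ at $\eta_1$ spanned by the directions to $\xi$ and to $\eta_2$, and the direction $v'_{\eta_1}$ of a minimizing geodesic $\eta_1 v$ must sit outside these sectors since $v\notin T_1\cup T_2$. A rigidity argument in the spirit of Sublemma~\ref{slem:rigid-glue} (Alexandrov convexity plus its equality case) combined with the essential singularity at $v$ and the angular budget $L(\Sigma_{\eta_1})\le 2\pi$ forces $v'_{\eta_1}$ to be the direction of the geodesic extension of an edge $\eta_1\eta_2$ of some $T_j$ past $\eta_1$, so $v$ lies on that extension. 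Reflection-unfolding the triangles $T_j$ across the edge $\eta_1\eta_2$ then extends them to spherical triangle regions $v\xi\eta_i$ sharing the edge $v\xi$, and unique analytic continuation of the constant-curvature-$1$ developing maps identifies these as genuine geodesic triangle regions of constant curvature $1$ in $\Sigma$ with sidelengths $\pi/2,\pi/2$ and $\max_id(v,\eta_i)$.
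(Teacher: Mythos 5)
Your approach is genuinely different from the paper's. The paper does not attempt to bound $\angle\eta_1 v\eta_2$ at all. Instead it first notes that essential singularity of $v$ forces $\cos d(v,\xi)\cos d(v,\eta_1)\le 0$ (Alexandrov comparison at $v$ with the $\pi/2$ side $\xi\eta_1$ and $\angle\xi v\eta_1\le L(\Sigma_v)/2\le\pi/2$), so in case (1) one has $d(v,\xi)\le\pi/2\le d(v,\eta_1)$; it then exploits the concavity of $d(\eta_1,\cdot)$ on $\Sigma\setminus B(\eta_1,\pi/2)$ along the geodesic $\gamma$ from $\xi$ to $v$ to show that the angle $\theta_t$ between $\dot\gamma(t)$ and $(\eta_1)'_{\gamma(t)}$ is monotone non-increasing, which gives $\angle\eta_1 v\xi\ge\pi-\theta_0\ge\angle\eta_1\xi\eta_2=d(\eta_1,\eta_2)$ and hence $L(\Sigma_v)\ge 2\angle\eta_1 v\xi\ge 2d(\eta_1,\eta_2)$. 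Case (2) is the same monotonicity argument with the rigidity/equality case.

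The asymmetric subcase of your part (1) is a genuine gap, and it is precisely what the paper's concavity argument is built to close. Toponogov on $\eta_1 v\eta_2$ alone cannot give $\tilde\theta\ge\ell$: with $d_i=d(v,\eta_i)$ one has $\cos\tilde\theta=(\cos\ell-\cos d_1\cos d_2)/(\sin d_1\sin d_2)$, and $\tilde\theta\ge\ell$ is equivalent to $\cos\ell(1-\sin d_1\sin d_2)\le\cos d_1\cos d_2$, which fails when $d_1,d_2\ge\pi/2$ are unbalanced (e.g.\ $d_1\downarrow\pi/2$, $d_2\uparrow\pi$ sends the right side to $0$ while the left stays positive for $\ell<\pi/2$). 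You flag this yourself, but the proposed repair -- a lower bound on $\angle v\xi\eta_i$ plus the triangle inequality in $\Sigma_\xi$ -- cannot work as stated: bounding angles at $\xi$ gives no control over the angle at $v$. The key structural input you never use is the dichotomy $d(v,\xi)\le\pi/2\le d(v,\eta_1)$, which redirects the target to $\angle\eta_1 v\xi$ where monotonicity of $\theta_t$ does all the work. Two smaller issues: your derivation that $d(v,\eta_i)<\pi$ is incorrect as stated (a spherical suspension over a circle of length $L_0$ has $L(\Sigma_{\text{pole}})=L_0$, not $2\pi$, so diameter $\pi$ alone does not contradict $L(\Sigma_v)\le\pi$); and in case (2) the claim that $v'_{\eta_1}$ must be the extension of $\eta_1\eta_2$, and the subsequent ``reflection-unfolding,'' are asserted rather than proved, whereas the paper's monotonicity plus its equality case delivers both the location of $v$ and $\angle\xi v\eta_1=\pi/2$ in one stroke.
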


\begin{proof}
If $v=\xi$, then cleary (1) holds.
We assume $v\neq\xi$ and $d(v,\eta_1)\le d(v,\eta_2)$.
Since $v$ is an essential singular point,
one of $d(\eta_1,v)$ and $d(\xi,v)$ is not greater than 
$\pi/2$ and the other is not smaller than $\pi/2$.
We first assume $d(\eta_1,v)\ge \pi/2$ and $d(\xi,v)\le\pi/2$.
Let $\gamma(t)$, $0\le t\le \delta$, be a unit speed 
geodesic from $\xi$ to $v$, and let 
$\theta_t$ denote the minimal angle between $\gamma|_{[t,\delta]}$
and the minimal geodesics from $\gamma(t)$ to $\eta_1$.
Since $d(\eta_1,\,\cdot\,)$ is concave on 
$\Sigma-B(\eta_1,\pi/2)$, it follows that
$\theta_t$ is decreasing. It follows from the assumption on 
the existence of two geodesic triangles spanned by
$\xi$, $\eta_1$ and $\eta_2$ that 
$$
 \angle \eta_1 v\xi \ge \pi-\theta_0\ge 
     \angle \eta_1\xi\eta_2 \ge d(\eta_1,\eta_2),
$$
which implies $(1)$.

If $d(\eta_1,v)\le \pi/2$ and $d(\xi,v)\ge\pi/2 $,
the above argument shows that 
$v$ is on a geodesic through $\eta_1$ starting from $\eta_2$
and $\angle \xi v\eta_1 =\pi/2$, and the conclusion $(2)$ follows.
\end{proof}

\begin{ex} \label{ex:sing-Y}
In $\R^3$ with the usual coordinates $(x,y,z)$,
first consider the sector $C$  on the $(x,y)$-plane
defined by $C=\{ 0\le r\le a, 0\le \theta\le\alpha\}$,
where $(r,\theta)$ denotes the polar coordinates 
and $a>0,\pi/2>\alpha>0$.
Let $\eta_1$ and $\eta_2$  be the directions of the
$(x,y)$-plane at the origin defined by $\partial C$.
\par
(1)\,\,
Let $\xi=(0,0,1)$ be the direction at the origin,
and take a direction $v\neq \xi$, which is sufficiently 
close to $\xi$, such that 
$\pi/2<\angle(v,\eta_1)<\angle(v,\eta_2)$.
Let $A$ be the union of the 
segment $\{ tv\,|\,0\le t\le 1\}$ and 
the ray $\{ v+ (0,0,t)\,|\,t\ge 0\}$.
Then the double $Y$ of the convex hull of 
the union of $C\times [0,\infty)$ and $A$ 
is a complete
open $3$-dimensional Alexandrov space with 
nonnegative curvature such that $C(0)=C$.
Note that the 
space of directions at the origin $(0,0,0)\in Y$ 
is the double of the geodesic quadrangle on $S^2(1)$
spanned by $\eta_1$, $\eta_2$, $\xi$ and $v$.
Note that the union of $A$ and the segment
of $\partial C$ in the direction of $\eta_j$ 
provides a quasigeodesic of $Y$
consisting of essential singular points of $Y$.
\par
(2)\,\,
Take any $\beta$ with $\alpha<\beta<\pi$,
and let $v$ be a direction of the $(x,y)$-plane
at the origin defined by $\theta=\beta$.
Let $A$ be the union of the piece  of parabola,
$\{ (tv,t^2)\,|\,0\le t\le 1\}$, and 
the ray $\{ v+ (0,0,t)\,|\,t\ge 1\}$.
Then the double $Y$ of the convex hull of 
the union of $C\times [0,\infty)$ and $A$ 
is a complete
open $3$-dimensional Alexandrov space with 
nonnegative curvature such that $C(0)=C$.
Note that the 
space of directions at the origin $(0,0,0)\in Y$ 
is the double of the geodesic triangle on $S^2(1)$
of sidelengths $\pi/2$, $\pi/2$ and $\beta$.
Note that the union of $A$ and the segment
of $\partial C$ in the direction of $\theta=0$ 
provides a quasigeodesic of $Y$
consisting of essential singular points of $Y$.
\end{ex}

(1) and (2) of Example \ref{ex:sing-Y}  correspond to
$(a)$ and $(b)$ of Lemma \ref{lem:class-dir} (3) respectively.

\begin{lem} \label{lem:excep-inv}
Suppose $\dim C(0)=2$, and let $A_i$ be an oriented component 
of $E_i^*$
along which the Seifert invariants of $\psi_i$ are
given by $(\alpha_i,\beta_i)$.
\begin{enumerate}
 \item Unless $A_i=\partial C(0)$, $A_i$ can meet $\partial C(0)$ 
   only with an arc of positive length$;$
 \item If  $A_i$ meets $\partial C(0)$, then 
   $(\alpha_i,\beta_i)=(2,1);$
 \item If $A_i$ has no intersection with $\partial C(0)$, 
   then the closure $\bar A_i$ meets $C(0)$ with exactly 
   one point, say $x$, in ${\rm Ext}(C(0))$, and 
   $\bar A_i\subset\cal N(x)$.
  \begin{enumerate}
   \item If $x\in\interior C(0)$, then 
     $x\in A_i\bigcap{\rm Ext}(\interior C(0))$ and 
     $\alpha_i\le\frac{2\pi}{L(\Sigma_x(C(0)))};$
   \item If $x\in\partial C(0)$, then $x\in F_i^*$ and 
     $\alpha_i\le\frac{\pi}{L(\Sigma_x(C(0)))}$.
  \end{enumerate}
\end{enumerate}
Therefore $\cal C_i$ is included in the union of $\partial C(0)$,
quasigeodesics starting from $\rm Ext(C(0))\bigcap\partial C(0)$
which are perpendicular to $C(0)$, 
and the geodesic segments in $\cal N({\rm Ext}(\interior C(0)));$
\end{lem}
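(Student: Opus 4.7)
The plan is to combine three ingredients throughout: (a) the extremality and perpendicularity of components of $\cal C_i$ with respect to $C(0)$ provided by Lemma \ref{lem:locmax}, (b) the fine classification of essential singular directions at points of $\partial C(0)$ in Lemma \ref{lem:class-dir}, and (c) the local slice representation of the $S^1$-action $\psi_i$ along a singular orbit, together with the cone structure at limit points. Throughout I work at the link $\Sigma_x(Y)$ of a point $x$ on $A_i$ and translate constraints between the $4$-manifold and $Y$ using the collapse, in the same spirit as the proof of Proposition \ref{prop:locseif}.

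For (1), I argue by contradiction: assume $A_i \cap \partial C(0)$ contains an isolated point $x$ and $A_i \neq \partial C(0)$. At $x$, the tangent directions $\Sigma_x(A_i)$ are essential singular points of $\Sigma_x$, and by extremality any $\zeta \in \Sigma_x(A_i)$ satisfies $\angle(\zeta,\eta) \ge \pi/2$ for every $\eta \in \partial \Sigma_x(C(0))$. Lemma \ref{lem:class-dir}(1)–(2) then forces $x$ to be a one-normal point and restricts the essential singular directions at $x$ to lie either along $\partial \Sigma_x(C(0))$ or in the normal direction; part (3) pins down the isometric model of $\Sigma_x$ in the admissible range. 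In either case the directions of $A_i$ at $x$ must be those determined by $\partial \Sigma_x(C(0))$, so $A_i$ follows $\partial C(0)$ on both sides of $x$, contradicting that $x$ is isolated in $A_i \cap \partial C(0)$.

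For (2), at an interior point $x$ of an arc of $A_i \cap \partial C(0)$, the previous step shows that $x$ is one-normal and that the tangent direction of $A_i$ lies in $\partial \Sigma_x(C(0))$. Lemma \ref{lem:class-dir}(3)(b) gives that $\Sigma_x$ is isometric to the double of a spherical triangle with two right angles, so the tangent cone $K_x(Y)$ is the $\Z_2$-quotient of a half-space by a reflection in a plane containing the $A_i$-direction. Lifting the collapse $(M_i^4, x_i) \to (Y, x)$ through this doubling and applying the local structure of $\psi_i$ near the singular orbit (as in the proof of Proposition \ref{prop:locseif}), the isotropy group of the orbit must be $\Z_2$; the compatibility of orientations then forces the Seifert invariants to be $(2,1)$.

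For (3), I apply Lemma \ref{lem:locmax}(3) to conclude that $\bar A_i$ meets $C(0)$ perpendicularly in a single endpoint $x$, so the approaching direction $v$ of $A_i$ lies in the set of normal directions to $C(0)$ at $x$ and, as before, is an essential singular direction of $\Sigma_x$. Lemma \ref{lem:class-dir}(4) together with the fact that a non-extremal point of $C(0)$ would admit a small neighborhood in $\interior C(0)$ free of essential singular directions in $\cal N(\,\cdot\,)$ forces $x \in \text{Ext}(C(0))$, and the whole $\bar A_i$ lies in $\cal N(x)$. For (3)(a), $x \in \interior C(0)$ makes $\Sigma_x(Y)$ the spherical suspension of the circle $\Sigma_x(C(0))$ over the two normal directions, and the Seifert orbit model gives $\alpha_i \le 2\pi/L(\Sigma_x(C(0)))$, via the same length-of-regular-circle argument used in Proposition \ref{prop:locseif}(2)-(b). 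For (3)(b), $x \in \partial C(0)$ makes $\Sigma_x(C(0))$ a half-circle of length $L(\Sigma_x(C(0))) \le \pi/2$ (after the doubling analysis), and the ratio of the full angle to this half-circle produces the factor $\pi$ in place of $2\pi$.

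The main obstacle I anticipate is the careful exclusion in step (1) of a ``transverse'' crossing of $\partial C(0)$ by $A_i$ at a one-normal point $x$ where $\Sigma_x$ contains two essential singular points, one coinciding with a point of $\partial \Sigma_x(C(0))$; here the angular bound from Lemma \ref{lem:class-dir}(3)(a) and its translation via Sublemma \ref{slem:surf-sing} must be invoked to confine the essential singular directions of $\Sigma_x$ and rule out a genuine transversal. Once (1) is established, steps (2) and (3) follow by local slice analysis and the perpendicularity statement of Lemma \ref{lem:locmax}.
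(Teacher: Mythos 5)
Your treatment of part (3) is essentially the paper's: Lemma~\ref{lem:locmax} for perpendicularity, Lemma~\ref{lem:class-dir}(4) to place the direction of $\bar A_i$, and the suspension model $\Sigma_x=S(S^1_\ell)$ to bound $\alpha_i$.  However, parts (1) and (2) have genuine gaps.

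For (1), the clean route is not via a case analysis of $\Sigma_x$'s isometry type.  If $A_i\cap\partial C(0)=\{x\}$ and $x$ is not an endpoint of $A_i$, then $A_i$ splits at $x$ into two subarcs; Lemma~\ref{lem:locmax} (applied as in the paper) together with extremality of $A_i$ shows each subarc is perpendicular to $C(0)$, so \emph{both} $\xi_1$ and $\xi_2$ are at distance $\ge\pi/2$ from all of $\Sigma_x(C(0))$ --- hence both are normal directions.  That makes $x$ a two-normal point, which directly contradicts Lemma~\ref{lem:class-dir}(1), since $x\in E_i^*\subset ES(Y)$.  Your version replaces this with: extremality gives $\angle(\zeta,\eta)\ge\pi/2$ only for $\eta\in\partial\Sigma_x(C(0))$, then one-normality from Lemma~\ref{lem:class-dir}(1)--(2), then Lemma~\ref{lem:class-dir}(3) to pin down the model, then the conclusion that the $A_i$-directions lie in $\partial\Sigma_x(C(0))$.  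Two problems: first, the hypothesis of Lemma~\ref{lem:class-dir}(3) (exactly two essential singular points, one in $\partial\Sigma_x(C(0))$, one not) need not hold at $x$; second, the final conclusion that $\Sigma_x(A_i)\subset\partial\Sigma_x(C(0))$ is not derivable from that lemma, and in fact is inconsistent with the very extremality inequality you started with (a point of $\partial\Sigma_x(C(0))$ is not at distance $\ge\pi/2$ from itself).  The two-normality contradiction is the missing idea.

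For (2), you invoke Lemma~\ref{lem:class-dir}(3)(b), but that case concerns an essential singular direction $v\notin\partial\Sigma_x(C(0))$; at an interior point of the arc $A_i'\subset\partial C(0)$ the relevant singular directions \emph{are} the two endpoints of $\Sigma_x(C(0))$, so (3)(b) does not apply.  The paper's argument is simpler and more robust: pick a \emph{regular} boundary point $y\in A_i'$ of $\partial C(0)$, so $L(\Sigma_y(C(0)))=\pi$ and, since $y$ is one-normal, $\Sigma_y$ is isometric to the spherical suspension over $S^1_\pi$.  Each of the two directions of $A_i$ at $y$ is therefore a cone point of cone angle $\pi$, forcing $\alpha_i\le 2\pi/\pi=2$, hence $(\alpha_i,\beta_i)=(2,1)$.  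Your $\Z_2$-quotient-of-a-half-space picture of $K_y(Y)$ is a heavier detour that is not needed, and it is not what (3)(b) provides anyway.
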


\begin{proof}
Suppose that $A_i$ meets $\partial C(0)$ with a point $x$.
Let $\xi_1$ and $\xi_2$ be directions at $x$ 
determined by $A_i$.
Suppose $A_i\cap\partial C(0)=\{ x\}$.
Since $A_i$ is an extremal subset, 
in view of Lemma \ref{lem:locmax}, one can verify that 
both $\xi_1$ and $\xi_2$ are directions normal to $C(0)$,
yielding $x$ being a two-normal point, 
a contradiction to Lemma \ref{lem:class-dir}.
This argument implies that a sufficiently short subarc 
$A_i'$ of $A_i$ starting from 
$x$ is contained in $\partial C(0)$ and $(1)$ holds true.
Taking a boundary regular point $y\in A_i'$ of $\partial C(0)$,
we see that $\Sigma_y$ is isometric to the spherical 
suspension over $S^1_{\pi}$, which yields $(2)$.

Suppose that $A_i$ does not meet $\partial C(0)$.
From Lemma \ref{lem:locmax}, the closure $\bar A_i$ must meet
$C(0)$, say at $x$.
If $x\in\interior C(0)$, 
$\Sigma_x$ is isometric to the spherical 
suspension over $S^1_{\ell}$ with $\ell=L(\Sigma_x(C(0)))$.
It follows that $x$ is not contained in $F_i^*$. Therefore $x$ is an 
element of $A_i\bigcap {\rm Ext}(\interior C(0))$ and  we obtain
$A_i\subset \cal N(x)$ yielding 
$\alpha_i\le 2\pi/\ell$.
If $x\in\partial C(0)$, then it must be contained in $F_i^*$
and hence is an extremal point of $Y$. 
Let $v$ be the direction at 
$x$ determined by $A_i$. 
By Lemma \ref{lem:locmax}, $A_i$ is perpendicular to $C(0)$.
It follows from Lemma \ref{lem:class-dir} (4) that
$v$ must be the normal direction to $C$ at $x$.
From the definition of quasigeodesics, one can see that
$\bar A_i$ must be contained in $\cal N(x)$,
by developing $A_i$ on the Euclidean plane from a point of
$\cal N(x)$.
\end{proof}

\begin{lem} \label{lem:totalbdy}
Suppose $\dim C(0)=2$. Then $\ell_i\le 1$.
 \begin{enumerate}
  \item If $\ell_i=1$, then
     the circle component of $E_i^*$ coincides with $\partial C(0)$, 
     and $Y=D(C(0)\times [0,\infty));$
  \item Every component of $\bar E_i^*\cap\partial C(0)$ contains 
     at most one component of $E_i^*;$
 \end{enumerate}
\end{lem}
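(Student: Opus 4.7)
The plan is to combine Lemma \ref{lem:excep-inv} on where components of $E_i^*$ can sit, Lemma \ref{lem:locmax} on strict monotonicity of the Busemann function $b$ along $\cal C_i \setminus C(0)$, and Proposition \ref{prop:concave-rigid} on rigidity of level sets of $d(\partial C(0),\,\cdot\,)$. For part (1), first I would rule out circle components of $E_i^*$ not entirely contained in $\partial C(0)$: by Lemma \ref{lem:excep-inv}(3), any component of $E_i^*$ whose closure avoids $\partial C(0)$ sits in $\cal N(x)$ for a single $x \in \Ext(\interior C(0))$, which is a union of at most two geodesic rays from $x$ and so cannot close up; by Lemma \ref{lem:locmax}(1), $b$ is strictly monotone along each component of $\cal C_i \setminus C(0)$, so a quasigeodesic leaving $\partial C(0)$ into $Y \setminus C(0)$ never returns. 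Hence any circle component $A_i$ of $E_i^*$ must lie entirely in $\partial C(0)$ and must coincide with one of its boundary circles.

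If such an $A_i$ exists, Lemma \ref{lem:excep-inv}(2) gives Seifert invariants $(2,1)$ along $A_i$, so $\psi_i$ carries $\Z_2$-isotropy everywhere along the preimage of $A_i$; on the $Y$-side this furnishes a full reflection-type structure along $A_i$. Combined with the concavity of $d(\partial C(0),\,\cdot\,)$ from \cite{Pr:alex2}, Proposition \ref{prop:concave-rigid} applied to arcs of $A_i$ (where the distance from $\partial C(0)$ vanishes) yields flat totally geodesic rectangles with normal geodesic rays on both sides, and a Cheeger-Gromoll-style soul argument globalizes this to the isometric identification $Y = D(C(0) \times [0,\infty))$. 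The main obstacle is deducing that $\partial C(0)$ itself must then be connected, giving $\ell_i \le 1$. I expect this to follow because, in the double above, a second boundary circle $A_i' \subset \partial C(0)$ would, via Proposition \ref{prop:concave-rigid} applied to minimal segments from $A_i$ to $A_i'$ inside $C(0)$, span a flat totally geodesic strip in $C(0)$; iterating the rigidity would split off an $\R$-factor from $C(0)$, contradicting the minimality of $\dim C(0) = 2 = m_Y$ in the Cheeger-Gromoll construction.

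For part (2), suppose a component $K$ of $\bar E_i^* \cap \partial C(0)$ contained two distinct components $A'$ and $A''$ of $E_i^*$; then $\bar A' \cap \bar A''$ would contain a point $x \in \partial C(0)$, necessarily in $F_i^* \cap \partial C(0)$ and hence an extremal point of $Y$ lying on $\partial C(0)$. Analyzing $\Sigma_x(Y)$ via Lemma \ref{lem:class-dir}(2)--(3), the quasigeodesic directions at $x$ determined by $A'$ and $A''$ must both lie in $\partial\Sigma_x(C(0))$, but the structural constraints in Lemma \ref{lem:class-dir} accommodate at most one incoming quasigeodesic direction along $\partial C(0)$---any further one would be forced into the normal cone to $C(0)$, contradicting the assumption $A', A'' \subset \partial C(0)$.
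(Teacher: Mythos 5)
Your argument for part (2) has a genuine gap. You claim that Lemma \ref{lem:class-dir} ``accommodates at most one incoming quasigeodesic direction along $\partial C(0)$,'' forcing the second direction into the normal cone. But Lemma \ref{lem:class-dir} contains no such statement: part (2) of that lemma only says that if \emph{some} point of $\partial\Sigma_x(C(0))$ is an essential singular point, then $x$ is one-normal; part (3) treats the situation where one essential singular point lies in $\partial\Sigma_x(C(0))$ and another does \emph{not}, and gives no constraint forbidding both $\eta_1,\eta_2\in\partial\Sigma_x(C(0))$ from being essential singular. Indeed, if $x$ is one-normal with normal direction $\xi$ and $\Sigma_x$ is the double of a spherical triangle $\triangle\xi\eta_1\eta_2$ with sidelengths $\pi/2,\pi/2,L$, then the angles at $\eta_1$ and $\eta_2$ are each $\pi/2$, so $L(\Sigma_{\eta_j}(\Sigma_x))=\pi$ for both $j=1,2$, and both boundary directions are (borderline) essential singular points. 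So Lemma \ref{lem:class-dir} alone cannot give you part (2). The paper's proof uses an entirely different and cleaner mechanism: by Lemma \ref{lem:excep-inv}(2), any component of $E_i^*$ meeting $\partial C(0)$ has Seifert invariants $(2,1)$; so if two such arcs $A'$ and $A''$ met at a common point $x\in F_i^*$, they would be adjacent with Seifert invariant pair $\{(2,1),(2,1)\}$, whose determinant is $0$, contradicting Lemma \ref{lem:adj-inv}(1). This is the step you are missing.

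For $\ell_i\le1$ and part (1), your route is workable but heavier than the paper's. The paper derives the one-normality of every point of $\partial C(0)$ directly from Lemma \ref{lem:class-dir}(1), since $\partial C(0)=E_i^*$ consists of essential singular points of $Y$; one-normality along the whole boundary then gives $Y=D(C(0)\times[0,\infty))$ at once, without your detour through the $(2,1)$ isotropy and Proposition \ref{prop:concave-rigid}. Also, your justification that $\partial C(0)$ is connected (``contradicting the minimality of $\dim C(0)=2=m_Y$'') is imprecise; the relevant fact is simply that in Case A-III one has $\dim S=0$, so $C(0)$ deformation retracts to a point, forcing $C(0)\simeq D^2$ and $\partial C(0)$ to be a single circle; the paper takes this for granted.
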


\begin{proof}
Lemma \ref{lem:locmax} yields that there is no choice for 
the circle component of $E_i^*$ but $\partial C(0)$,  and hence
$\ell_i\le 1$.
If $\ell_i=1$, Lemma \ref{lem:class-dir} implies that 
each point of $\partial C(0)$ is a one-normal point, from which
$(1)$ follows. Suppose that there are adjacent components $A_i$ and 
$A_i'$ of $\cal C_i$ with nonempty 
intersection $\bar A_i\cap\bar A_i'$ in $F_i^*$.
It turns out that both $A_i$ and $A_i'$ have Seifert invariants
$(2,1)$, a contradiction to Lemma \ref{lem:adj-inv}.
\end{proof}

Suppose $\ell_i=1$, and 
let $p:C(0)\times\R\to Y$ be the map naturally 
extending $f:\interior C(0)\times\R\to Y$. Let 
$C_1$ be a $2$-disk domain in $\interior C(0)$
containing $\interior C(0)\cap E_i^*$, and
$C_2$ the closure of $C(0)-C_1$.
Put $W_i^j=\pi_i^{-1}(p(C_j\times\R))$, $j=1,2$.
Since $\pi_i^{-1}(p(\partial C_1\times\R))\simeq T^2\times I$,
we have
$$
 B(p_i,R)\simeq W_i^1\bigcup_{T^2\times I} W_i^2.
$$
Since $p(C_2\times\R)\cap B(y_0,R)$ is homeomorphic to 
a tubular neighborhood of $\partial C(0)$,
it follows from (3.8) in \cite{Fn:circleI} that
$W_i^2\simeq T^2\times D^2$.
Obviously  $W_i^1\simeq S^1\times D^3$ if $m_i\le 1$.

If $m_i=2$, then as before
$B(p_i,R)$ is homeomorphic to $(K^2\tilde\times I)\times I$.

Next consider the case of $\ell_i=0$.
We need

\begin{lem} \label{lem:class-singsurf}
Let $Z^2$ be a nonnegatively curved compact Alexandrov surface 
with boundary. Let $m$ and $n$ denote the numbers of 
the extremal points in $\interior Z^2$ and 
in $\partial Z^2$ respectively.
Then $2m+n\le 4$, where the equality holds if and only if
$Z^2$ is isometric to either a form 
$D(I\times\{ x\ge 0\})\cap \{ x\le a\}$, the result of cutting 
of the double $D(I^2)$ of a square $I^2$ along the diagonals,
or a rectangle $[a,b]\times [c,d]$.
\end{lem}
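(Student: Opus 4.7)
The plan is to apply the Gauss--Bonnet formula for Alexandrov surfaces to $Z^2$. Since $Z^2$ has nonnegative curvature, the interior curvature measure $\omega$ is nonnegative on $\interior Z^2$; since $d_{\partial Z^2}$ is concave by \cite{Pr:alex2}, the boundary turning measure $\tau$ is also nonnegative, and $L(\Sigma_p)\le\pi$ at every $p\in\partial Z^2$. Gauss--Bonnet reads
\[
\omega(\interior Z^2)+\tau(\partial Z^2)=2\pi\chi(Z^2).
\]
At an interior extremal point $p$ one has $\omega(\{p\})=2\pi-L(\Sigma_p)\ge\pi$, and at a boundary extremal point $p$ one has $\tau(\{p\})=\pi-L(\Sigma_p)\ge\pi/2$. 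Summing these contributions yields
\[
m\pi+n\cdot \pi/2 \;\le\; 2\pi\chi(Z^2)\;\le\;2\pi,
\]
so $2m+n\le 4$; when $\chi(Z^2)\le 0$ the same estimate forces $m=n=0$.

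In the equality case $2m+n=4$, the above chain of inequalities forces $\chi(Z^2)=1$ (so $Z^2$ is a topological disk), $\omega$ is supported exactly at the $m$ interior extremal points with cone angle exactly $\pi$, and $\tau$ is supported exactly at the $n$ boundary extremal points with interior angle exactly $\pi/2$. Hence $Z^2$ is flat away from its interior cone points and $\partial Z^2$ consists of geodesic arcs meeting at $n$ right-angle corners; only $(m,n)\in\{(0,4),(1,2),(2,0)\}$ can occur. In the case $(0,4)$, the developing map embeds $Z^2$ isometrically into $\R^2$ as a convex Euclidean quadrilateral with four right angles, hence a rectangle $[a,b]\times[c,d]$.

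For the two remaining cases I would pass to the intrinsic double $D(Z^2)$, a closed flat Alexandrov surface homeomorphic to $S^2$ whose curvature is concentrated at four interior cone points each of angle exactly $\pi$ (in case $(1,2)$, two from the doubled interior cone point and two from the doubled $\pi/2$-corners, each contributing $2\cdot \pi/2=\pi$; in case $(2,0)$, two copies of each interior cone point). Any such pillowcase is isometric to the double $D(I^2)$ of a rectangle, with the cone points at its four vertices. The boundary $\partial Z^2$ is then recovered as the fixed locus of the doubling involution of $D(Z^2)$: in case $(2,0)$ it is a closed geodesic avoiding all cone points, which up to isometry is a level set of one Euclidean coordinate and gives exactly the description $D(I\times\{x\ge 0\})\cap\{x\le a\}$; in case $(1,2)$ it is a closed curve through two of the cone points whose geodesic pieces inside each copy of the rectangle must be the diagonals.

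The technical heart of the equality classification lies in case $(1,2)$: I would compute the two sector angles at a cone point of $D(I^2)$ cut by the diagonal of an $a\times b$ rectangle (they are $2\arctan(b/a)$ and $2\arctan(a/b)$) and observe that requiring one of them to equal $\pi/2$, namely the prescribed interior angle of $Z^2$ at the corresponding corner, forces $a=b$, so $I^2$ must be a square. Together with the rectangle in $(0,4)$ and the slab description in $(2,0)$, this exhausts the equality case.
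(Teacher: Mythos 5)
Your argument is correct and follows the same route the paper takes: apply Gauss--Bonnet to get $m\pi + n\cdot\pi/2 \le 2\pi\chi(Z^2)\le 2\pi$ and read off the rigidity conditions in the equality case. The paper simply records the flatness/angle conditions and declares "the conclusion easily follows," whereas you fill in the developing-map and doubling argument; your write-up is therefore a more detailed version of the paper's terse proof, not a different method.
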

 
\begin{proof}
By the Gauss-Bonnet formula, we have the inequality
$2m+n\le 4$, where the equality holds if and only if
\begin{itemize}
 \item  $X^2$ is smooth and flat except those essential singular 
   points of $Z^2;$
 \item the space of directions at an essential singular point 
   $x$ of $Z^2$ has length $\pi$ $($resp. $\pi/2$$)$
   if $x\in\interior Z^2$ (resp. if $x\in\partial Z^2$)$;$ 
 \item $\partial Z^2$ is a broken geodesic with 
   those boundary essential singular points as brerak points.
\end{itemize}
The conclusion easily follows.
\end{proof}

By Lemmas \ref{lem:excep-inv} and \ref{lem:class-singsurf},
$2m_i+n_i\le 4$, where 
the equality holds if and only if $C(0)$ is isometric to 
one of the three types described in Lemma \ref{lem:class-singsurf}.

If $m_i=2$,
$B(p_i,R)$ is homeomorphic to $(K^2\tilde\times I)\times I$
as above.

Suppose $m_i=1$. If $n_i=1$, let $A_i$ denote the component of $E_i^*$ 
whose closure does not touch $F_i^*$. Divide
$B(y_0,R)$ with a proper $2$-disk into two $3$-disk domains 
$B_1$ and $B_2$
in such a way that $A_i\subset B_1$ and 
the other components of $\cal C_i$
is contained in $B_2$. Clearly 
$\pi_i^{-1}(B_1)\simeq S^1\times D^3$ and 
$\pi_i^{-1}(B_2)\simeq D^4$.

If $(m_i,n_i)=(1,2)$, in view of Lemmas \ref{lem:excep-inv} and 
\ref{lem:class-singsurf} together with 
Lemma \ref{lem:adj-inv}, we come to the situation of Lemma
\ref{lem:m=n=1} contradicting to the orientability.
Hence the case $(m_i,n_i)=(1,2)$ does not occur.
\par

Suppose $m_i=0$. 
In view of the argument in the case of $\dim C(0)=1$, 
we may assume that $n_i\ge 2$.
If $n_i=2$, Proposition \ref{prop:plumbing} and Lemma
\ref{lem:excep-inv} imply that 
$B(p_i, R)\simeq S^2\tilde\times_{\omega} D^2$,
where 
\begin{equation}
  |\omega|\le \frac{\pi}{L(\Sigma_{x}(C(0)))}
         +  \frac{\pi}{L(\Sigma_{y}(C(0)))}, \label{eq:omega-est}
\end{equation}
and $\{ x,y\}=F^*_i$.

If $n_i=3$, then $\cal C_i$ is disconnected.
Cut
$B(y_0,R)$ with a $2$-disk into two 
$3$-disk domains $B_1$ and $B_2$ 
in such a way that a connected component of 
$\cal C_i$ containing only one element of $F^*_i$ 
is contained in $B_1$ and the other components are  contained 
in $B_2$.
Then $\pi_i^{-1}(B_1)\simeq D^4$ and 
$\pi_i^{-1}(B_2)\simeq S^2\tilde\times_{\omega} D^2$,
where $|\omega|$ is estimated as in \eqref{eq:omega-est}.

If $n_i=4$, then $\cal C_i$ is disconnected.
Cut
$B(y_0,R)$ with a $2$-disk into two 
$3$-disk domains $B_1$ and $B_2$ 
in such a way that connected components of 
$\cal C_i$ containing exactly two elements of $F^*_i$ 
are contained in $B_1$ and the other components are  contained 
in $B_2$.
Then by Proposition \ref{prop:plumbing}, both $\pi_i^{-1}(B_1)$ and 
$\pi_i^{-1}(B_2)$ are homeomorphic to $S^2\tilde\times_{\omega} D^2$,
where $0\le |\omega|\le 4$.

This completes the proof of Theorem \ref{thm:dim3class}
in the case when $Y$ has no boundary.
\par
\medskip

Next we consider the case when $Y$ has nonempty connected boundary.
We begin with 

\begin{proclaim}{\emph{Case} B-I }
$\partial Y$ is connected and $\dim S=2$.
\end{proclaim}

By Proposition \ref{prop:codim1},
$Y$ is isometric to the product $S\times [0,\infty)$.
Therefore  $Y(\infty)$ is a point, and 
Theorem \ref{thm:patchwbdy} implies that
$B(p_i,R)$ is homeomorphic to a $D^2$-bundle over $S$. 

\begin{proclaim}{\emph{Case} B-II }
$\partial Y$ is connected and $\dim S=1$.
\end{proclaim}

In this case, $Y$ is isometric to a product 
$(\R\times N^2)/\Lambda$, where $N^2$ is either homeomorphic 
to $R^2_+$ or isometric to $I\times \R$, 
and $\Lambda\simeq \Z$. 
Let $H_i$ denote the preimage of an $N^2$-factor by $\pi_i$.
\par

Suppose $N^2\simeq \R^2_+$.
Let $k (\le 1)$ denote the number of the essential singular points of 
$\interior N^2$.
If $k=0$, by Theorem \ref{thm:patchwbdy},
$H_i\simeq D^3$, yielding $B(p_i,R)\simeq S^1\times D^3$.
Suppose $k=1$. Then $N^2$ is isometric to a form 
$D([0,\infty)\times [0,\infty))\bigcap \{ (x,y)\,|\,y\le a\}$,
which implies that $\dim Y(\infty)=0$ and 
$H_i\simeq P^2\tilde\times I$ if $H_i$ has a singular orbit.
Since $B(p_i,R)$ is homeomorphic to an $H_i$-bundle over $S^1$,
it follows from the following lemma that 
$B(p_i,R)\simeq S^1\times(P^2\tilde\times I)$.

\begin{lem}
The mapping class group 
$\cal M_+(P^2\tilde\times I)$ of orientation preserving homeomorphisms
of $P^2\tilde\times I$ is trivial.
\end{lem}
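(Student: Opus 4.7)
The plan is to reduce the statement to the mapping class group of $P^3$ via a capping-off argument. First I will identify $P^2\tilde\times I$ with $P^3\setminus\interior(D^3)$: it is the orientable twisted $I$-bundle over $P^2$, its boundary equals the orientation double cover $S^2\to P^2$, and since the zero section $P^2$ is a deformation retract one has $\pi_1(P^2\tilde\times I)=\Z_2$. Capping off the $S^2$-boundary with a $3$-ball thus produces a closed orientable $3$-manifold with $\pi_1=\Z_2$, which is $P^3$ by the classification of spherical space forms.

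Given an orientation-preserving self-homeomorphism $h$ of $P^2\tilde\times I$, its restriction $h|_{\partial}$ is an orientation-preserving self-homeomorphism of $S^2$, hence isotopic to $\text{id}_{S^2}$ because $\cal M_+(S^2)=1$ ($\text{Homeo}_+(S^2)$ deformation retracts onto $SO(3)$). Using a collar of $\partial$ together with the isotopy extension theorem, I can isotope $h$ to a homeomorphism $h'$ that equals the identity on an open collar of $\partial(P^2\tilde\times I)$. Extending $h'$ by the identity across the capping $3$-ball $B$ then yields an orientation-preserving self-homeomorphism $\bar h$ of $P^3$.

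At this point I invoke the deep fact $\cal M_+(P^3)=1$, which follows from Hatcher's Smale conjecture for $P^3$ (or, at the $\pi_0$ level, from the earlier observation that every orientation-preserving self-homeomorphism of $P^3$ is isotopic to an isometry, together with the connectedness of $\isom_+(P^3)=SO(4)/\{\pm I\}$). Consequently $\bar h$ is isotopic to $\text{id}_{P^3}$ through an ambient isotopy $H_t$. To finish, I must arrange $H_t$ so that it restricts to an isotopy of $h'$ to the identity on $P^2\tilde\times I$; this amounts to making the isotopy preserve $B$ pointwise. This is achieved in two steps: the family of embedded $3$-balls $H_t(B)\subset P^3$ is a continuous $1$-parameter family, and since locally flat $3$-balls in a connected $3$-manifold are ambient-isotopic, one may precompose $H_t$ by a further ambient isotopy to arrange $H_t(B)=B$ setwise; then an Alexander trick applied inside $B$ makes $H_t|_B$ the identity. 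The restriction of the resulting isotopy to $P^3\setminus\interior B=P^2\tilde\times I$ provides the desired isotopy of $h'$ (and hence of $h$) to the identity.

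The principal obstacle in this argument is the invocation of $\cal M_+(P^3)=1$; the capping construction and the ball-fixing manipulations are routine once this input is available, and no further geometric content particular to the Alexandrov setting is required here.
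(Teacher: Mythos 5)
Your capping-off strategy runs parallel to the paper's reduction to $\cal M_D(\R P^3)$, but the final step is not correct as written and this is precisely where the real content lies. After you have arranged $H_t(B)=B$ setwise with $H_0=H_1=\mathrm{id}_{P^3}$, the family $t\mapsto H_t|_B$ is a loop in $\mathrm{Homeo}_+(B)$ based at $\mathrm{id}_B$ that is \emph{not} assumed to fix $\partial B$ pointwise, so the Alexander trick does not apply directly: $\mathrm{Homeo}_\partial(B)$ is contractible, but $\mathrm{Homeo}_+(B)\simeq \mathrm{Homeo}_+(S^2)\simeq SO(3)$ has $\pi_1=\Z_2$. If the loop $t\mapsto H_t|_{\partial B}$ represents the nontrivial class, you cannot deform $H_t$ to be the identity on $B$ rel $t\in\{0,1\}$; what you obtain instead is that $h'$ is isotopic rel boundary to the ``sphere twist'' along $\partial B$ (insert a full $\pi_1(SO(3))$-loop of rotations in a collar). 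Showing that this twist is trivial is exactly the extra content beyond $\cal M_+(P^3)=1$, and it is why the paper invokes $\cal M_D(\R P^3)$ and cites Bonahon, Hodgson--Rubinstein, and Friedman--Witt rather than merely $\cal M_+(P^3)$.

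The gap can be closed, but it needs an explicit argument you have not given. One route: the $SO(3)$-action on $P^2\tilde\times I\cong S^2\times[-1,1]/(x,t)\sim(-x,-t)$ restricts to the standard action on the boundary $S^2$, so the restriction map $\mathrm{Homeo}_+(P^2\tilde\times I)\to \mathrm{Homeo}_+(S^2)\simeq SO(3)$ admits a section up to homotopy; in the fibration $\mathrm{Homeo}_\partial\to\mathrm{Homeo}_+\to\mathrm{Homeo}_+(S^2)$ the boundary map $\pi_1(SO(3))\to\pi_0(\mathrm{Homeo}_\partial)$ therefore vanishes, i.e. the twist is isotopically trivial. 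Alternatively, invoking the full Smale conjecture for $P^3$ (not just the $\pi_0$ statement, which is all your proposal actually uses) would also supply the needed $\pi_1$-level information. As written, however, your argument conflates ``$\bar h$ isotopic to $\mathrm{id}_{P^3}$'' with ``$h'$ isotopic to $\mathrm{id}$ rel boundary,'' and these are separated by exactly the sphere-twist question.
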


\begin{proof}
Let $\cal M_D(\R P^3)$ be the mapping class group of homeomorphisms of 
$\R P^3$ fixing a disk.
Since any orientation preserving homeomorphism $f$ of $P^2\tilde\times I$
is isotopic to the identity on the boundary sphere,
we way assume $f=1$ on $\partial P^2\tilde\times I$.
Therefor $\cal M_+(P^2\tilde\times I)\simeq \cal M_D(\R P^3)$.
The result follows from \cite{Bnh:diffeotopy},
\cite{HR:involution} and \cite{FW:homotopy}.
\end{proof}

Suppose $N^2$ is isometric to $I\times\R$.
Since $H_i\simeq S^2\times I$, $B(p_i,R)$ is an 
$(S^2\times I)$-bundle over $S^1$.
Since $\cal M_+(S^2\times I)=\Z_2$, 
$B(p_i,R)$ is homeomorphic to either $S^1\times S^2\times I$ or 
an $I$-bundle over $S^1\tilde\times S^2$, the nontrivial $S^2$-bundle 
over $S^1$.

\begin{proclaim}{\emph{Case} B-III }
$\partial Y$ is connected and $\dim S=0$.
\end{proclaim}

If $Y$ has two ends, it is isometric to 
a product $\R\times Y_0^2$, where $Y_0^2\simeq D^2$.
Let $K_i$ denote the preimage of an $Y_0^2$-factor by $\pi_i$.
Then $B(p_i,R)\simeq K_i\times I$.
Note that $Y_0^2$ has either at most one 
essential singular point, say $q$, in its interior, or 
isometric to $D(I\times \{ x\ge 0\})\bigcap \{ x\le a\}$
for some $I$ and $a>0$. In either case, $\ell_i=n_i=0$ and $m_i\le 2$.
In the former case, $K_i$ is homeomorphic 
to $L(\mu_i,\nu_i)$, where $\mu_i\le 2\pi/L(\Sigma_q(Y_0^2))$.
In the latter case, $K_i$ is homeomorphic 
to either $P^3$ ($m_i=1$) or $P^3\,\#\, P^3$ ($m_i=2$).
\par\medskip

In what follows, we assume that $Y$ has exactly one end.
In the argument below about the geometry of $Y$,
we show that $\partial Y$ is homeomorphic to $\R^2$.

Since we have a collar neighborhood of $\partial Y$ 
(Theorem \ref{thm:collar}), we can 
apply the method of \cite{SY:3mfd} to $Y_{\epsilon}$ and obtain that
$B(S,R)$ is homeomorphic to $D^3$ for a large $R>0$.

Note that $(\frac{1}{R} Y,S)$ converges to $(K(Y(\infty),o)$ as 
$R\to\infty$, and denote by $B_R$ the closure of 
$\partial B(S,R) - B(S, R) \cap \partial Y$.
We put $C_R:=\partial Y \cap B(S, R)$.

\begin{ass} \label{ass:B(S,R)capY}
Assume that $Y$ has exactly one end.
Then for any sufficiently large $R$, 
both $B_R$ and $C_R$ are homeomorphic to $D^2$.\par
In particular, $Y$ is homeomorphic to $\R^3_+$.
\end{ass}

\begin{proof}
By using the gradient flows for the distance function $d_S$
from $S$, we see that for sufficiently large $R$,
\begin{enumerate}
 \item $Y - B(S,R) \simeq B_R\times (0,\infty);$
 \item $\partial Y-C_R\simeq \partial B_R\times (0,\infty)$,
\end{enumerate}
where $\partial B_R=\partial C_R$ is the boundary as a topological 
$2$-manifold.
Since $Y$ has exactly one end, $B_R$ must be connected.
We assert that $\partial B_R$ is connected.
Applying the method of \cite{SY:3mfd}, we also 
have a pseudo-gradient flow for 
$d_S$ on $B(S,R) - \{ S\}$ (see Section 10 of \cite{SY:3mfd}
for the definition of pseudo-gradient flows).
For a small $\epsilon$ with $B(S,\epsilon)\subset \interior Y$ and 
$\partial B(S,\epsilon)\simeq S^2$,
let $h:B(S,R) - \interior B(S,\epsilon)\to \partial B(S,\epsilon)$
be the projection along the flow curves.
Since $h(B_R)\subset S^2$ is connected, 
if $\partial B_R$ was disconnected, $h(C_R)$ and hence
$C_R$ would be disconnected, yielding
the disconnectivity of $\partial Y$, a contradiction.
The former conclusion of the assertion then follows from the 
connectivity of both $h(B_R)$ and $h(C_R)$. The latter 
follows easily.
\end{proof}

To determine the topology  of $B(p_i, R)$,
consider the distance function $d_{\partial Y}$ from $\partial Y$.
Let $C^*$ be the maximum set of $d_{\partial Y}$  (possibly empty).

\begin{lem}  \label{lem:sing-incl}
$\cal C_i\subset C^*$ for sufficiently large $i$.
\end{lem}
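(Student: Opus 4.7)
The plan is to combine the extremality of $\cal C_i$ with the concavity of $d_{\partial Y}$ on $Y$. By Lemma \ref{lem:essential} and Proposition \ref{prop:graph}, $\cal C_i$ is an extremal subset of $Y$ with the structure of a finite metric graph of quasigeodesics consisting of essential singular points; since it lies in the compact ball $B(y_0,R)$, it is itself compact. By \cite{Pr:alex2}, the function $f := d_{\partial Y}$ is concave on $Y$. I would fix a connected component $F$ of $\cal C_i$ (itself compact and extremal), pick a point $q_0 \in F$ minimizing $f|_F$, and choose a foot $p_0 \in \partial Y$ of $q_0$, so that $d(p_0,q_0)=f(q_0)$.

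For every $q' \in F$,
\[
 d_{p_0}(q') \;\ge\; d(q',\partial Y) \;=\; f(q') \;\ge\; f(q_0) \;=\; d_{p_0}(q_0),
\]
so $q_0$ is a global, and hence local, minimum of $d_{p_0}|_F$. Since $p_0 \in \partial Y \subset Y - F$, the extremal condition at $q_0$ applies and yields
\[
 \max_{\eta \in \Sigma_{q_0}} \min_{\xi \in (p_0)_{q_0}^{\prime}} \angle(\xi,\eta) \;\le\; \pi/2.
\]
Because $f = d_{p_0}$ at $q_0$ and $f \le d_{p_0}$ on a neighborhood of $q_0$, the directional derivatives of the concave function $f$ at $q_0$ are dominated by those of $d_{p_0}$, and the first variation formula then gives
\[
 f^{\prime}_{q_0}(\eta) \;\le\; (d_{p_0})^{\prime}_{q_0}(\eta) \;=\; -\max_{v \in (p_0)_{q_0}^{\prime}} \cos\angle(\eta,v) \;\le\; 0
\]
for every $\eta \in \Sigma_{q_0}$, where the last inequality uses the $\pi/2$-net property just derived. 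Hence $q_0$ is a local maximum of the concave function $f$ on $Y$, and concavity upgrades this to a global maximum, giving $q_0 \in C^*$ with $f(q_0) = c := \sup_Y f$. Since $f(q_0)$ is also the minimum of $f|_F$, we conclude $f \equiv c$ on $F$, i.e.\ $F \subset C^*$; applying this to each connected component produces $\cal C_i \subset C^*$.

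The main technical point I anticipate is the passage from the extremal angle inequality to the sign of the directional derivative of $f$, namely the super-differential comparison $f^{\prime} \le (d_{p_0})^{\prime}$ at $q_0$ in the Alexandrov sense; once this is justified, the rest of the argument is a direct reading of the extremal condition at $q_0$ together with the concavity of $d_{\partial Y}$.
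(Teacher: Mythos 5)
Your argument hinges on applying the extremal-subset inequality at $q_0$, and this in turn rests on the claim (drawn from Lemma \ref{lem:essential}) that $\cal C_i$ is an extremal subset of $Y$. That claim is not justified here. $\cal C_i$ is the singular locus of the local $S^1$-action $\psi_i$ on the \emph{fixed} ball $B(p_i,R)$; some of its arc components (precisely the ones counted by $m_i$ in Theorem \ref{thm:dim3class}) terminate on the sphere $\partial B(y_0,R)$ rather than at fixed points of $\psi_i$. At such an endpoint $p$ the set $\Sigma_p(\cal C_i)$ has a single element while $p$ is, generically, not an extremal point of $Y$, so condition $(1)$ of Lemma \ref{lem:essential} fails and the extremal inequality you invoke is unavailable at $p$. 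This failure is not incidental: the paper's proof begins by observing that $d_{\partial Y}$ is monotone on each component of $\cal C_i-C^*$, so a component $F$ that misses $C^*$ has its minimizer $q_0$ of $f|_F$ sitting exactly at such a boundary-sphere endpoint --- the one place where you need the extremal inequality and do not have it.

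The mismatch also shows in what you would prove: your argument, were it complete, would give $\cal C_i\subset C^*$ for \emph{every} $i$, whereas the statement is asymptotic. The paper's proof is genuinely so. Assuming $\cal C_{i_j}\not\subset C^*$ along a subsequence, it extends the $S^1$-actions to balls $B(p_i,R_i)$ with $R_i\to\infty$, extracts a subarc $A_i\subset\cal C_i$ of length $\ell_i\to\infty$ on which $d_{\partial Y}$ decreases and which exits through $\partial B(y_0,R_i)$ without touching $\partial Y$, and then derives a contradiction from a quantitative estimate: since $d_{\partial Y}$ is bounded while $\ell_i\to\infty$, at some parameter $s$ the derivative $(d_{\partial Y}\circ A_i)'(s)$ is nearly zero, so $A_i'(s)$ is nearly tangent to the level $\{d_{\partial Y}=d_s\}$, and a nearby point $x$ on that level makes $d_x$ attain a local minimum on $\cal C_i-\{d_{\partial Y}\ge d_s\}$, contradicting the second monotonicity property. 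This escape through ever-longer extended arcs is precisely the mechanism that handles the boundary-sphere endpoints defeating your extremality approach; without it, the endpoint case is left open.
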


\begin{proof}
The proof is by contradiction.
In a way similar to Lemma \ref{lem:locmax}, 
we have
\begin{itemize}
 \item $d_{\partial Y}$ is monotone on each component of 
       $\cal C_i - C^*;$
 \item for any $t>0$ less than the maximum of $d_{\partial Y}$,
   and for any $x$ with  $d_{\partial Y}(x)\ge t$,
   $d_x$ has no local minimum on $\cal C_i - \{ d_{\partial Y}\ge
   t\}$.
\end{itemize}
Suppose that $\cal C_i$ is not contained in $C^*$ 
for any sufficiently large $i$.
Then we have a sequence $R_i\to\infty$ and $S^1$-actions 
$\psi_i$ on $B(p_i, R_i)$ extending the original actions.
Since $d_{C^*}$ has no local minimum on $\cal C_i - C^*$,
there is a subarc $A_i(t)$ of $\cal C_i$ with unit speed
parameter, $0\le t\le\ell_i$, $\ell_i\to\infty$,
such that 
$(i)$ \,\,$d_{\partial Y}(A_i(t))$ is monotone decreasing,
$(ii)$\,\,$A_i(\ell_i)\in\partial B(y_0,R_i)-\partial Y$, 
$(iii)$\,\,$A_i(t)$ does not reach $\partial Y$.
Therefore for any $\epsilon>0$ there is an $i$ and 
$s\in(0,\ell_i)$ such that 
$$
 0 \ge (d_{\partial Y}\circ A_i)'(s)=
   - \cos\angle(A_i'(s),(\partial Y)_{A_i(s)}') > -\epsilon.
$$
It follows that 
$$
\angle(A_i'(s),\partial\Sigma_{A_i(s)}(\{ d_{\partial Y}\ge d_s\}))
  <\tau(\epsilon),
$$
where $d_s:=d_{\partial Y}(A_i(s))$.
Take $x$ with $d_{\partial Y}(x)=d_s$ sufficiently close to $A_i(s)$
such that $\angle(A_i'(s),x_{A_i(s)}')<\tau(\epsilon)$.
Now it is easy to see that $d_x$ has a local minimum
on $\cal C_i - \{ d_{\partial Y}\ge d_s\}$,
a contradiction.
\end{proof}

\begin{proclaim}{\emph{Case} (i).}
   $d_{\partial Y}$ has no maximum.
\end{proclaim}

By Lemma \ref{lem:sing-incl}, $\cal C_i$ is empty, and therefore
$B(p_i, r)\simeq D^2\times D^2$.

\begin{proclaim}{\emph{Case} (ii).}
    $d_{\partial Y}$ has a maximum.
\end{proclaim}

From now on we assume that $\cal C_i$  is nonempty.
By the concavity of $d_{\partial Y}$, every geodesic ray of 
$Y$ stating from
any point of $C^*$ is contained in $C^*$.
It follows that $Y(\infty)$ is isometric to $C^*(\infty)$.
Obviously $S$ is isometric to a soul of $C^*$.

If $\dim C^*=2$, as in the case of $\dim C(0)=2$,
for each $x\in\interior C^*$ there are two minimal geodesics 
from $x$ to $\partial Y$. Thus we have a 
locally isometric imbedding 
$f:\interior C^*\times [-a,a]\to Y$, where $a$ is the maximum
of $d_{\partial Y}$.

Suppose that $F^*_i\cap\interior Y$ is empty and $E^*_i$ is 
nonempty. We show $m_i=1$.
If $m_i\ge 2$, then $C^*$ is isometric to $I\times\R$ and
$E_i^*=\partial C^*$.
Therefore it is easy to verify that
$Y$ is isometric to $\R\times
(D(I\times \{ x\ge 0\})\cap\{ x\le a\})$ for some $a>0$,
and thus $Y$ would have two ends, a contradiction to the 
hypothesis. Therefore  $m_i=1$.

Suppose first the special case that $E^*_i$ is a minimal geodesic  
extending to a line. Then $Y$ is isometric to a product
$\R\times Y_0^2$, where $Y_0^2\simeq \R^2_+$.
Since $\interior Y_0^2$ contains an essential singular
point, $Y_0^2$ is isometric to 
$D(\{ x,y\ge 0\})\bigcap \{ y\le a\}$ for some $a>0$.
Since the preimage of a $Y_0^2$-factor by $\pi_i$
is homeomorphic to $P^2\tilde\times I$,
$B(p_i, R)$ is homeomorphic to $(P^2\tilde\times I)\times I$.
Note that the Seifert invariants along $E^*_i$  are
$(2,1)$ and that $C^*$ is isometric to 
$\R^2_+$.

Next consider the general case. 
By contradiction together with the argument above,
we may assume that $\dim C^*=2$ and $E_i^*=\partial C^*$.
It follows that the Seifert invariants along $E^*_i$  are
$(2,1)$. Therefore Proposition \ref{prop:circ-class}
combined with the above argument implies that 
$B(p_i, R)\simeq (P^2\tilde\times I)\times I$.

% It follows from  Theorem \ref{thm:patchwbdy} that 
% a closed domain of $B(p_i,R)$ collapsing to 
% $B(y_0, R) \cap \partial Y$ is homeomorphic to 
% $(B(y_0, R) \cap \partial Y)\times D^2\simeq D^4$.
% Thus $B(p_i, R)$ is homeomorphic to
% $S^1\times D^3\bigcup_{S^1\times D^2} D^4$.
% It also follows that 
% $\partial B(p_i,R)=\pi_i^{-1}(B(y_0,R))$ is a gluing 
% of $\partial B_R\times D^2$ and 
% a Seifert bundle $S_i(B_R)$ over $B_R$,
% where the fibre over each $x\in\partial B_R$
% is glued with $\{ x\}\times \partial D^2$.
% It follows from Proposition 5.2 in \cite{SY:3mfd} that 
% \begin{align*}
%   \partial B(p_i,R) 
%       &\simeq  S_i(B_R) \cup \partial B_R \times D^2 \\
%       &\simeq L(\alpha_i, \beta_i)\,\#\, L(\alpha_i,\alpha_i-\beta_i),
% \end{align*}
% where $(\alpha_i,\beta_i)$ are the Seifert invariants of
% $\psi_i$ along $E_i^*$.

Suppose  $\dim Y(\infty)\ge 1$.
Then $C^*$  has at most one extremal point on $C^* $.
Since ${\rm Ext}(Y)={\rm Ext}(C^*)$, we have
$\ell_i=0$, $m_i\le 1$ and $n_i\le 1$.
It follows from the same reason as Lemma \ref{lem:totalbdy}(2)
that every component of $\bar E_i^*\cap\partial C^*$ contains 
at most one component of $E_i^*$.
Proposition \ref{prop:plumbing} then 
implies that $B(p_i,R)$ is homeomorphic to either 
$D^4$ or a $S^2\tilde\times_{\omega} D^2$ with 
$|\omega|\in \{ 1, 2\}$.

Next suppose $\dim Y(\infty)=0$.
If $C^*$ was a line, then $Y$ would split as $Y=H^2\times \R$
with $H^2\simeq D^2$, a contradiction to the hypothesis for 
$Y$ having exactly one end.
Thus $C^*$ must be a geodesic ray if 
$\dim C^*=1$.
Let us assume that $C^*$ is a geodesic ray.
Let $\Z_{\mu_i}$, $0\le \mu_i < \infty$, 
be the isotropy group at an interior point $x$ of $C^*$. 
Letting $K_x=\R\times K(S^1_{\ell})$, we obtain
$\mu_i\le 2\pi/\ell$.
In a similar way as above, Proposition \ref{prop:plumbing}
implies that $B(p_i,R)\simeq S^2\tilde\times_{\mu_i} D^2$.

Suppose next that $\dim C^*=2$.
Since we may assume that $C^*$ has two extremal points,
it is isometric to either a product $I\times [0, \infty)$
or the double $D(I\times [0,\infty))$. But in the argument below,
we may assume the former case.
Let $v_1, v_2\in\partial C^*$ be the extremal points of $C^*$.
Here we have the following four cases for $\cal C_i$.
\begin{enumerate}
 \item $\cal C_i$ coincides with $\{ v_1, v_2\};$
 \item $\cal C_i$ is the geodesic segment joining $v_1$ and $v_2;$
 \item $\cal C_i$ consists of $\{ v_1, v_2\}$ and a geodesic segment from
       $v_1$ to $\partial B(y_0, R);$
 \item $\cal C_i$ consists of two geodesic segments from
       $v_1$ and $v_2$  to $\partial B(y_0, R)$.
\end{enumerate}
Let $U$ be a small regular closed neighborhood of $\partial Y$.
Then $U_i := \pi_i^{-1}(U\cap B(y_0,R))$ is homeomorphic to 
$D^4$. Let $V_i$ denote the preimage of the closure of
$B(y_0, R)-U$ by $\pi_i$.
Then $B(p_i, R)\simeq U_i\bigcup_{D^2\times S^1} V_i$. 
Proposition \ref{prop:plumbing} implies that 
$V_i\simeq S^2\tilde\times_{\omega} D^2$, where 
$|\omega|\in \{ 0,2,4\}$ in the cases of $(1)$, $(2)$ and $(4)$,
$|\omega|\in \{ 1,3\}$ in the case of $(3)$.
\par\medskip

Finally we consider

\begin{proclaim}{\emph{Case} C.}
$Y$ has disconnected boundary.
\end{proclaim}

In this case by Theorem \ref{thm:cptcomp}, 
$\partial Y$  consists of two connected components and 
$Y$ is isometric to a product $Z\times I$, where $I$ is a 
closed interval and  $Z$ is a component of $\partial Y$. Note
$\dim Y(\infty)=\dim Z(\infty)$.

Suppose $\dim Y(\infty)\ge 1$. Then $Z$ is  
homeomorphic to $\R^2$.
Let $k$ be the number 
of the essential singular points of $Z$, which is at most $1$.
\par
  We now apply Theorem \ref{thm:patchwbdy}.
If $k=0$,  $B(p_i,R)$ is homeomorphic to 
the gluing 
$$
 (D^2\times I)\times S^1\bigcup (D^2\times \partial I)\times D^2
    \simeq D^2\times S^2.
$$
Note that even if $k=1$ there are no singular loci
of the $S^1$-action $\psi_i$ in $\interior B(y_0, R)$.
For if there were, they would 
correspond to 
$\{ x_0\}\times I$, where $x_0$ is the essential singular point 
of $Z$. This contradicts Theorem \ref{thm:patchwbdy} (4).
Then Theorem \ref{thm:patchwbdy} again implies that 
$B(p_i,R)\simeq D^2\times S^2$.

Next suppose  $\dim Y(\infty)=0$.
In view of the argument above, we may assume that
$Z$ is isometric to a flat cylinder or a flat M\"obius
strip.
By Theorem \ref{thm:patchwbdy},
in the former case,  $B(p_i, R)$ is homeomorphic to
an $S^2$-bundle over $S^1\times I$, which is
homeomorphic to $S^1\times S^2\times I$.
In the latter case,  $B(p_i, R)$ is homeomorphic to
an $S^2$-bundle over the twisted product $S^1\tilde\times I$, which is
homeomorphic to $I$-bundle over the nontrivial bundle 
$S^1\tilde\times S^2$.

This completes the proof of Theorem \ref{thm:dim3class}.
\par\medskip

When a $4$-manifold $W^4$ has boundary homeomorphic to 
$S^3$, we put 
$\CCap W := W\cup_{S^3} D^4$

\begin{prop} \label{prop:find-top}
Under the same assumption of B-III in Theorem \ref{thm:dim3class},
suppose that $n_i = 2$ and $E_i^*$ does not meet 
$\partial B(y_0,R)$. 
Then $\partial B(p_i,R)\simeq S^3$ and 
$\CCap B(p_i, R)$ is homeomorphic to 
either $\C P^2\# (\pm \C P^2)$ or $S^2\times S^2$.
\end{prop}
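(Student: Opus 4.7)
The plan is to extend the $S^1$-action on $B(p_i,R)$ furnished by Theorem \ref{thm:dim3class} (Case B-III) to a locally smooth $S^1$-action on $\CCap B(p_i,R)$, and then to invoke Fintushel's classification.

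Under our hypothesis, $n_i=2$ forces $\dim C^*=2$ with extremal points $v_1,v_2\in\interior Y$, and the assumption that $E_i^*$ is disjoint from $\partial B(y_0,R)$ restricts us to cases $(1)$ or $(2)$ of the B-III analysis, so $\cal C_i\subset\interior B(y_0,R)$. By Theorem \ref{thm:patchwbdy}(2), the fixed-point set of $\psi_i$ on $B(p_i,R)$ consists of $\{v_1,v_2\}$ together with the $2$-disk $D_Y^2:=\pi_i^{-1}(\partial Y\cap B(y_0,R))$. Consequently, the restriction of $\psi_i$ to $\partial B(p_i,R)$ has orbit space $\partial B(y_0,R)$, which by Assertion \ref{ass:B(S,R)capY} is the closed $2$-disk $B_R$, with fixed locus the boundary circle $\partial B_R=\partial Y\cap\partial B(y_0,R)$ and with free action on $\interior B_R$. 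Any closed orientable $3$-manifold carrying a locally smooth $S^1$-action with orbit space $D^2$, fixed set $\partial D^2$, and no exceptional orbits is equivariantly homeomorphic to $S^3\subset\C^2$ with the standard action $e^{i\theta}(z_1,z_2)=(z_1,e^{i\theta}z_2)$: the slice representation forces the tubular neighborhood of the fixed circle to be a standard solid torus $S^1\times D^2$, and its complement is another solid torus glued so that orbits of one are meridians of the other. Hence $\partial B(p_i,R)\simeq S^3$.

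The standard action on $S^3$ extends canonically to $D^4=\{|z_1|^2+|z_2|^2\le 1\}$ fixing the disk $\{z_2=0\}\cap D^4$, so capping yields a locally smooth $S^1$-action $\hat\psi_i$ on $\CCap B(p_i,R)$ whose fixed-point set is $\{v_1,v_2\}\sqcup S^2$, the $S^2$ arising from $D_Y^2$ and the fixed disk of the capping $D^4$ glued along their common boundary circle. In particular $\chi(\CCap B(p_i,R))=\chi(\Fix(\hat\psi_i))=\chi(S^2)+2=4$. A Van Kampen computation on $B(p_i,R)=U_i\cup_{D^2\times S^1}V_i$, with both $U_i\simeq D^4$ and $V_i\simeq S^2\tilde\times_\omega D^2$ simply connected as furnished by the proof of Case B-III, gives $\pi_1(B(p_i,R))=1$, so $\CCap B(p_i,R)$ is also simply connected. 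Fintushel's classification \cite{Fn:circleI} of locally smooth $S^1$-actions on simply connected closed $4$-manifolds, combined with Freedman's homeomorphism classification (cf.\ \cite{FQ:4mfd}), yields that $\CCap B(p_i,R)$ is homeomorphic to $S^4\#k\C P^2\#\ell(-\C P^2)\#m(S^2\times S^2)$ for some nonnegative integers $k,\ell,m$; the constraint $\chi=4$ translates to $k+\ell+2m=2$, leaving exactly $\C P^2\#(\pm\C P^2)$ and $S^2\times S^2$.

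The main potential obstacle is the equivariant identification of $\partial B(p_i,R)$ with the standard $S^3$, since this step underlies both the extension of $\psi_i$ across the capping $D^4$ and the determination of $\Fix(\hat\psi_i)$; once that is in place, Van Kampen and Euler-characteristic bookkeeping combined with Fintushel's theorem conclude the argument in a routine manner.
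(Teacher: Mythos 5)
Your proof is correct and takes essentially the same route as the paper's: both identify $\partial B(p_i,R)\simeq S^3$ via the structure of the $S^1$-action (orbit space $B_R\simeq D^2$ with fixed circle over $\partial B_R$ and no exceptional orbits), extend the action over the capping $D^4$ so that the orbit space of $\CCap B(p_i,R)$ is $B(y_0,R)$ with fixed locus $\partial B(y_0,R)\cup F_i^*$, and then finish with $\chi(\CCap B(p_i,R))=\chi(F)=4$ together with Fintushel's classification and Freedman's work. Your version usefully spells out the simple-connectivity step via Van~Kampen on the decomposition $U_i\cup_{D^2\times S^1}V_i$ and appeals to Fintushel--Freedman directly rather than routing through Corollary~\ref{cor:sum}, but these are the same ingredients.
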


\begin{proof}
Note that 
$$
\partial B(p_i,R)\simeq D^2\times \partial B_R \bigcup S^1\times B_R
\simeq S^3,
$$
and that 
$\CCap B(p_i,R)$ is simply connected
and admits a local smooth $S^1$-action $\tilde\psi_i$ whose orbit 
space is homeomorphic to $B(y_0,R)$ with 
$F^*(\tilde\psi_i)=\partial B(y_0,R)\cup F_i^*$.
It follows from Corollary \ref{cor:sum} together with the fact
$\chi(\CCap B(p_i,R))=\chi(F(\tilde\psi_i))$
that $\CCap B(p_i,R)$ has a required topological type.
\end{proof}

\begin{rem}
\begin{enumerate}
 \item One can conclude that in Proposition \ref{prop:find-top}, 
   if $\bar E_i^*$ is a
   segment joining the two interior fixed points, then 
   $\CCap B(p_i, R)\simeq S^2\times S^2;$
 \item Proposition \ref{prop:find-top} shows that 
   $B(p_i,R)$ cannot be homeomorphic to a disk-bundle 
   under the situation of B-III if $n_i=2$.
\end{enumerate}
\end{rem}

%        \input{dim2nobdysph}

%dim2nobdysph.tex
\section{Collapsing to two-spaces without boundary \\
      (sphere fibre case)} 
\label{sec:dim2nobdysph}

 Let a sequence of pointed complete $4$-dimensional orientable Riemannian
manifolds $(M_i^4,p_i)$ with $K \ge -1$ converge to a pointed
two-dimensional Alexandrov space $(X^2,p)$. 
Throughout this section, we assume that 
$p$ is an interior point of $X^2$.
\par

Now consider the local convergence $B(p_i,2r)\to B(p,2r)$
for a  sufficiently small positive number $r$.
By Fibration Theorem \ref{thm:orig-cap},
$A(p_i;r,2r)$ is homeomorphic to an $F_i$-bundle over 
$A(p;r,2r)\simeq S^1\times I$,
where $F_i$ is either $S^2$ or $T^2$.\par

By Theorem \ref{thm:rescal}, we have sequences $\delta_i\to 0$ and 
$\hat p_i\to p$ such that
\begin{enumerate}
  \item  for any limit $(Y,y_0)$ of $(\frac{1}{\delta_i}M_i^4, \hat p_i)$, 
     we have  $\dim Y\ge 3$;
  \item  $B(p_i,r)$ is homeomorphic to $B(\hat p_i,R\delta_i)$ for every 
      $R\ge 1$ and large $i$ compared to $R$.
\end{enumerate}

Let $S$ be a soul of $Y$.  It follows 
from Lemma \ref{lem:expand} and Proposition \ref{prop:ideal} that 
 
 \begin{align}
   & \dim Y(\infty)\ge 1,\label{eq:dimyinfty}  \\
   & \dim S\le \dim Y - 2.        \label{eq:dims}
 \end{align}

In this section, we consider the case of the general fibre $F_i=S^2$.
Then we have 

\begin{equation}
    \partial B(p_i,r)\simeq S^1\times S^2. \label{eq:s^1s^2}
\end{equation}

We apply Theorem \ref{thm:dim3class} to the 
convergence $(\frac{1}{\delta_i}M_i^4, \hat p_i)\to (Y,y_0)$
under the conditions \eqref{eq:dimyinfty}, \eqref{eq:dims}
and \eqref{eq:s^1s^2}.
The main purpose of this section is to prove

\begin{thm} \label{thm:patchs^2}  There is a positive number $r_p$
such that for any $r\le r_p$ and any sufficiently large $i$ 
compared with $r$,
\begin{enumerate}
  \item $B(p_i,r)\simeq D^2\times S^2;$ 
  \item there exists an $S^2$-bundle structure on $B(p_i,2r)$  
        compatible to the $S^2$-bundle structure on $A(p_i;r,2r)$.
\end{enumerate}
\end{thm}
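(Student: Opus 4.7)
The plan is to combine Theorem \ref{thm:rescal} with the classification of collapsing to higher-dimensional nonnegatively curved limits, and then to build a global $S^2$-fibration on $B(p_i,2r)$ by extending $f_i$ across the interior. By Theorem \ref{thm:rescal} choose $\delta_i\to 0$ and $\hat p_i\to p$ so that $B(p_i,r)\simeq B(\hat p_i,R\delta_i)$ for every $R\ge 1$ and $i\gg R$, and so that any pointed limit $(Y,y_0)$ of $(\frac{1}{\delta_i}M_i^4,\hat p_i)$ satisfies $\dim Y\ge 3$. Since $\dim\Sigma_p=1$, Lemma \ref{lem:expand} gives $\dim Y(\infty)\ge 1$, whence $\dim S\le\dim Y-2$ for a soul $S$ of $Y$ by Proposition \ref{prop:ideal}.

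I would then split into the cases $\dim Y=4$ and $\dim Y=3$: the former via Corollary \ref{cor:soul} (identifying $B(p_i,r)$ with the normal disk-bundle over $S$), and the latter via Theorem \ref{thm:dim3class}. In either case, the boundary constraint \eqref{eq:s^1s^2}, together with the fact that the twisted $S^2$-bundle over $S^1$ has vanishing $H_2$ (by the Wang sequence, since the antipodal map acts as $-1$ on $H_2(S^2)$) and hence is not homeomorphic to $S^1\times S^2$, combined with the orientability of $M_i^4$, reduces the possibilities for $B(p_i,r)$ to either $D^2\times S^2$ (when $S\simeq S^2$ with trivial normal bundle) or $S^1\times D^3$ (when $S\simeq S^1$ with trivial normal bundle).

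To complete the proof I would construct the $S^2$-bundle on $B(p_i,2r)$ extending $f_i$. The restriction of $f_i$ to $\partial B(p_i,r)\simeq S^1\times S^2$ is the trivial bundle by \eqref{eq:s^1s^2}, and since $\Sigma_p$ is a circle of length $\le 2\pi$ the disk $B(p,r)\simeq D^2$ admits only the trivial $S^2$-bundle extension, with total space $D^2\times S^2$. I would realize this geometrically by picking a model fiber $F_i^0\subset A(p_i;r,2r)$, invoking the generalized Schoenflies theorem \ref{thm:shoen} to bound $F_i^0$ by a locally flat $3$-disk $D_i^0\subset B(p_i,2r)$, and propagating $D_i^0$ along a gradient-like vector field for $d_{p_i}$ to sweep out the bundle. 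The $S^1\times D^3$ alternative is thereby excluded, since $S^1\times D^3$ admits no $S^2$-bundle structure over $D^2$ (uniqueness of such an extension would force the total space to equal $D^2\times S^2$, contradicting $\pi_1$), so conclusions (1) and (2) follow simultaneously. The main obstacle is producing the bounding $3$-disk: one must show $F_i^0$ is null-homologous in $B(p_i,2r)$ and choose the disk to avoid the singular locus, which I expect to handle---in parallel with Sublemma \ref{slem:sph-bdy}---by combining Fibration Theorem \ref{thm:orig-cap} with gradient-flow arguments for $d_{p_i}$.
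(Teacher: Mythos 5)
Your reduction via Theorem~\ref{thm:rescal}, Lemma~\ref{lem:expand}, and Proposition~\ref{prop:ideal}, and the case split on $\dim Y\in\{3,4\}$ using Corollary~\ref{cor:soul} and Theorem~\ref{thm:dim3class}, matches the paper's strategy and correctly narrows $B(p_i,r)$ down to $D^2\times S^2$ or $S^1\times D^3$. However, there is a genuine gap in how you exclude $S^1\times D^3$. You propose to do it by \emph{first} constructing the $S^2$-bundle over $B(p,2r)\simeq D^2$ and \emph{then} observing that $S^1\times D^3$ has nontrivial $\pi_1$ while any $S^2$-bundle over a contractible base must be $D^2\times S^2$. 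But your construction of the bundle (Schoenflies disk $D_i^0$ propagated along the $d_{p_i}$-gradient flow) presupposes that the flow sweeps out a product structure inside $B(p_i,r)$; if $B(p_i,r)\simeq S^1\times D^3$ the gradient flow necessarily has a critical point in the interior and no such sweeping is possible. So you cannot run the construction and \emph{then} derive a contradiction --- the construction simply fails to produce a bundle, which is inconclusive. What you are missing is the paper's Lemma~\ref{lem:notd^3s^1}: rescaling the universal cover of $B(p_i,r)$ by $1/r_i$, taking an equivariant limit $(Z,z_0,G)$ with $Z/G\simeq K_p$, and using the $S^2$-fibration on $A(p_i;r,2r)$ over a contractible ball in $K_p-\{o_p\}$ to see that any putatively short element of $\Gamma_i$ is null-homotopic, hence $G$ is discrete and $\Gamma_i$ is finite cyclic. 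This kills $\pi_1\simeq\Z$ cleanly and must come before, not after, the bundle construction.

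For conclusion (2) there is a second, smaller gap. Once you know $B(p_i,r)\simeq D^2\times S^2$, the remaining issue is the \emph{monodromy} of the $S^2$-bundle on $A(p_i;r,2r)$: the trivializing homeomorphism $\varphi_i:\partial B(p_i,r)\to S^1\times S^2$ coming from the annulus bundle and the one $\varphi_i'$ coming from $D^2\times S^2$ need not agree, and a single Schoenflies disk propagated along a gradient field does not obviously reconcile the two after going once around $S^1$. The paper resolves this by recognizing $B(p_i,2r)$ as a gluing
\[
B(p_i,2r)\simeq [r,2r]\times S^1\times S^2\ \bigcup_{\varphi_i'\circ\varphi_i^{-1}}\ \{r\}\times D^2\times S^2,
\]
and invoking $\cal M_+(S^1\times S^2)\simeq\Z_2\oplus\Z_2$ with its explicit generators $f$ (orientation reversal in each factor) and $g$ (the ``spin'') to check by hand that in every isotopy class the glued manifold carries a compatible $S^2$-bundle. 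Your Schoenflies-plus-flow sketch would need to be supplemented by exactly this mapping-class-group check (or an equivalent control on the gluing map) to close the argument.
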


Since $S_{\delta}(X^2)$ is discrete for any $\delta>0$ 
when $X^2$ has no boundary,
together with Fibration Theorem \ref{thm:orig-cap}, 
Theorem \ref{thm:patchs^2} yields 
Theorem \ref{thm:dim2nobdy} in the case when the general 
fibre is a sphere.

First we begin with 

\begin{lem} \label{lem:notd^3s^1}
The fundamental group of $B(p_i,r)$ is finite and  cyclic
for any small $r>0$ and any sufficiently large $i$.
\end{lem}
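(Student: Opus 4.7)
The plan is to combine Theorem~\ref{thm:rescal} with the classification of collapse to nonnegatively curved Alexandrov spaces so as to enumerate the possible topologies of $B(p_i,r)$, and then to use the constraint $\partial B(p_i,r)\simeq S^1\times S^2$ from \eqref{eq:s^1s^2} to narrow the list to cases with finite cyclic fundamental group.

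Since $\partial B(p_i,r)\simeq S^1\times S^2\not\simeq S^3$, we have $B(p_i,r)\not\simeq D^4$, and Theorem~\ref{thm:rescal} produces sequences $\delta_i\to 0$ and $\hat p_i\to p$ with $B(p_i,r)\simeq B(\hat p_i,R\delta_i)$ whose rescaled pointed limit $(Y,y_0)$ of $(\frac{1}{\delta_i}M_i^4,\hat p_i)$ satisfies $\dim Y\in\{3,4\}$, $\dim Y(\infty)\ge 1$ by Lemma~\ref{lem:expand}, and $\dim S\le\dim Y-2$ by Proposition~\ref{prop:ideal}. If $\dim Y=4$, Corollary~\ref{cor:soul} identifies $B(p_i,r)$ with the closed normal disk-bundle over a soul $S$ of $Y$; combining $\dim S\le 2$, orientability, and the connected boundary $S^1\times S^2$ leaves only $B(p_i,r)\simeq D^2\times S^2$ (from $S\simeq S^2$ with trivial normal bundle) or $B(p_i,r)\simeq S^1\times D^3$ (from $S\simeq S^1$ with trivial normal bundle). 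If $\dim Y=3$, Theorem~\ref{thm:dim3class} supplies an explicit list of topological types, and a direct inspection of which entries have connected boundary homeomorphic to $S^1\times S^2$ again leaves exactly the same two candidates.

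The main task is therefore to rule out $B(p_i,r)\simeq S^1\times D^3$. My plan is to exploit the $S^2$-bundle structure $f_i\colon A(p_i;r,2r)\to A(p;r,2r)\simeq S^1\times I$ provided by Theorem~\ref{thm:orig-cap} together with an equivariant rescaling argument on the universal cover. Assuming $B(p_i,r)\simeq S^1\times D^3$, the generator $\gamma_i$ of $\pi_1(B(p_i,r))=\Z$ has minimal translation length $L_i\to 0$ at a lift $\tilde p_i$; performing a further rescaling of the universal cover $\tilde B(p_i,r)\simeq\R\times D^3$ by $1/L_i$ and passing to the equivariant pointed Gromov--Hausdorff limit $(Z,z_0,G)$ gives a complete nonnegatively curved Alexandrov space $Z$ equipped with an isometric $G$-action containing a nontrivial one-parameter subgroup. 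Iterating Lemma~\ref{lem:expanding2} along the chain of rescalings $M_i^4\to X^2$, $\frac{1}{\delta_i}M_i^4\to Y$, $\frac{1}{L_i}\tilde B(p_i,r)\to Z$, I expect to force $\dim Z(\infty)\ge\dim X^2+2=4$, and hence $\dim Z\ge 5$, contradicting $\dim Z\le\dim M_i^4=4$.

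The principal technical obstacle will be the iterated application of Lemma~\ref{lem:expanding2} together with the careful choice of rescaling constants so that the $\Z$-action does not degenerate at any intermediate stage (which would cost the corresponding dimension increment); this is where most of the work will be concentrated, and it requires matching the three scales $1$, $\delta_i$, and $L_i\delta_i$ against the collapsing data so that each successive limit picks up one new dimension in its ideal boundary. Once $S^1\times D^3$ is excluded, only $D^2\times S^2$ remains, whose fundamental group is trivial and therefore finite cyclic, as asserted.
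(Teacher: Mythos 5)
Your proposal takes a genuinely different, and ultimately unjustified, route from the paper. The paper's proof is short and does not pass through any soul-theorem classification: it considers the universal cover $\tilde B(p_i,r)$ with deck group $\Gamma_i$, rescales so that $\frac{1}{r_i}B(p_i,r)\to K_p$, and studies the equivariant limit $(Z,z_0,G)$ of $(\frac{1}{r_i}\tilde B(p_i,r),\tilde p_i,\Gamma_i)$. The decisive observation is that $G$ must be \emph{discrete}: if not, some $\gamma_i\neq 1$ would have arbitrarily small displacement, but the Fibration Theorem applied to a contractible ball $B\subset K_p-\{o_p\}$ yields a domain $U_i\simeq B\times S^2$ containing the short geodesic loop representing $\gamma_i$; since $B\times S^2$ is simply connected the loop is null-homotopic, a contradiction. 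Once $G$ is discrete, $\Gamma_i\cong G$, $\dim Z=\dim K_p=2$, $Z$ is forced to be a flat cone, and a discrete isometry group of a 2-dimensional flat cone with flat-cone quotient is finite cyclic. The whole argument rests on the $S^2$-fibre being simply connected; your proposal never uses this.

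There is a concrete gap in your plan for the step you yourself flag as the main obstacle, namely excluding $S^1\times D^3$. Your reduction to $\{D^2\times S^2,\,S^1\times D^3\}$ is fine (it uses only Theorem~\ref{thm:rescal}, Corollary~\ref{cor:soul}, Proposition~\ref{prop:ideal}, Theorem~\ref{thm:dim3class}, and the boundary condition \eqref{eq:s^1s^2}, none of which depend on Lemma~\ref{lem:notd^3s^1}). But the claimed contradiction $\dim Z(\infty)\ge \dim X^2 + 2 = 4$ does not follow from iterating Lemma~\ref{lem:expanding2}. That lemma gives $\dim W(\infty)\ge\dim Z-1$ when $W$ is a blow-up of a sequence converging to $Z$; it does not produce $\dim W(\infty)\ge\dim Z(\infty)+1$, so applying it twice along $M_i^4\to X^2$ and then to a further rescaling yields only $\dim W(\infty)\ge\dim Y-1$, which for $\dim Y\le 4$ gives at most $\dim W(\infty)=3$ --- consistent, not contradictory. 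Moreover, your third rescaling is performed on the universal cover $\tilde B(p_i,r)\simeq\R\times D^3$ rather than on $M_i^4$ itself, so Lemma~\ref{lem:expanding2} as stated does not apply to it (the hypotheses are about a fixed sequence of complete spaces, not a lift). Without the simple-connectivity input from the $S^2$-fibre, the dimension-counting route cannot see that $\pi_1(B(p_i,r))$ fails to collapse, and the exclusion of $S^1\times D^3$ remains unproved. You should replace the iterated rescaling with the fibration-theorem null-homotopy argument.
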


\begin{proof}
Let $\tilde B(p_i,r)$ be the univerasal covering space of $B(p_i,r)$
with the deck transformation group  $\Gamma_i$.
Take a sequence $r_i\to 0$ of positive numbers satisfying
$(\frac{1}{r_i} B(p_i,r),p_i)\to (K_p,o_p)$.
We may assume that 
$(\frac{1}{r_i}\tilde B(p_i,r),\tilde p_i,\Gamma_i)$
converges to a triplet $(Z ,z_0,G)$.
We assert that $G$ is discrete and hence $\Gamma_i$ is 
isomorphic to $G$ for large $i$.
Otherwise, we would have a sequence $\gamma_i\neq 1\in\Gamma_i$
converging to the identity of $G$ under the above convergence.
Applying Fibration Theorem \ref{thm:orig-cap} to a contractible ball $B$ 
in $K_p-\{ o_p\}$, 
we have a closed domain $U_i$ in $B(p_i,r)$ fibers over 
$B$. Let $x_i\in U_i$ be a point converging to the center of 
$B$. Obviously, the geodesic loop $c_i$ at $x_i$ represented by
$\gamma_i$ is contained in $U_i$.
Since $U_i\simeq B\times S^2$, $c_i$ must be 
null homotopic, a contradiction to $\gamma_i\neq 1$.
Since $Z/G$ is the flat cone $K_p$, it follows that 
$Z$ is also a flat cone and $G$ is a finite cyclic group.
\end{proof}

Now we consider 

\begin{proclaim}{\emph{Case} A.}
    $\dim Y = 4$.
\end{proclaim}

Intuitively, this is the case when the sphere fibre
converges to a sphere under the rescaling $\frac{1}{\delta_i} M_i^4$.
In fact we have

\begin{prop} \label{prop:y4}
If $\dim Y = 4$, then  
 \begin{enumerate}
  \item  $S\simeq S^2;$
  \item  $B(p_i,r)\simeq S^2\times D^2.$
 \end{enumerate}
\end{prop}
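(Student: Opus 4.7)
My plan is to identify $B(p_i,r)$ with a closed normal disk-bundle over the soul $S$ of $Y$, determine $S$ from $\partial B(p_i,r)\simeq S^1\times S^2$ together with the finiteness of $\pi_1(B(p_i,r))$, and then read off which disk-bundle it is.

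First I would combine Theorem~\ref{thm:rescal}\,(2) with Corollary~\ref{cor:soul} applied to the convergence $(\frac{1}{\delta_i}M_i^4,\hat p_i)\to(Y,y_0)$: for $R$ large enough, $B(p_i,r)\simeq B(\hat p_i,R\delta_i)$ is homeomorphic to the closed normal disk-bundle $E\to S$, and $E$ deformation retracts onto $S$. From \eqref{eq:dims} we have $\dim S\le 2$, and Lemma~\ref{lem:notd^3s^1} says that $\pi_1(E)=\pi_1(S)$ is finite cyclic. The case $\dim S=0$ is ruled out because $E$ would then be $D^4$ with $\partial E\simeq S^3$, contradicting \eqref{eq:s^1s^2}; the case $\dim S=1$ is ruled out because the only closed $1$-dimensional Alexandrov space without boundary is $S^1$, whose fundamental group is infinite. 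Hence $\dim S=2$, and since a closed $2$-dimensional Alexandrov space is a topological surface, $S$ must be either $S^2$ or $P^2$.

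The main obstacle is to exclude $S\simeq P^2$, which I would handle by a homological argument on the pair $(E,\partial E)$. Since $B(p_i,r)\subset M_i^4$ inherits orientability, $E$ is an orientable compact $4$-manifold with boundary, so Lefschetz duality combined with the universal coefficient theorem gives
\begin{equation*}
H_2(E,\partial E;\Z)\cong H^2(E;\Z)\cong H^2(P^2;\Z)\cong\Z_2 .
\end{equation*}
The relevant portion
\begin{equation*}
H_2(E;\Z)\longrightarrow H_2(E,\partial E;\Z)\longrightarrow H_1(\partial E;\Z)\longrightarrow H_1(E;\Z)
\end{equation*}
of the long exact sequence of the pair then reduces, using $H_2(P^2;\Z)=0$, $H_1(\partial E;\Z)=\Z$ and $H_1(P^2;\Z)=\Z_2$, to
\begin{equation*}
0\longrightarrow\Z_2\longrightarrow\Z\longrightarrow\Z_2 ,
\end{equation*}
which would require an injection of $\Z_2$ into the torsion-free group $\Z$ --- impossible. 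This contradiction forces $S\simeq S^2$ and proves (1).

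For (2), once $S\simeq S^2$ the disk-bundle $E\to S^2$ is classified by its Euler number $e\in\pi_1(SO(2))\cong\Z$, and its boundary $\partial E$ is the lens space $L(|e|,1)$ when $e\ne 0$ and is $S^1\times S^2$ when $e=0$. Comparing with \eqref{eq:s^1s^2} forces $e=0$, so $E\simeq S^2\times D^2$ as claimed.
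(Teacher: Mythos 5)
Your argument is correct and reaches the same conclusion, but the way you eliminate the wrong souls diverges usefully from the paper's. The paper disposes of $\dim S\le 1$ and of $S\in\{P^2,T^2,K^2\}$ all in one uniform way: via Corollary~\ref{cor:soul} the boundary $\partial B(p_i,r)$ must be the sphere-bundle of the normal disk-bundle, i.e.\ $S^3$, a lens space, or an $S^1$-bundle over $S$, and none of these is $S^1\times S^2$ except when $S\simeq S^2$ with zero Euler number; in particular, for $S\simeq P^2$ the paper relies implicitly on knowing that no orientable $S^1$-bundle over $P^2$ is homeomorphic to $S^1\times S^2$ (they are $P^3\#P^3$ or prism manifolds). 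You instead split the cases: $\dim S\le 1$ and $S\in\{T^2,K^2\}$ are killed by the finite-cyclic fundamental group supplied by Lemma~\ref{lem:notd^3s^1} together with \eqref{eq:s^1s^2}, and then, for the one genuinely delicate case $S\simeq P^2$, you run Lefschetz duality plus the long exact sequence of $(E,\partial E)$ to exhibit an impossible injection $\Z_2\hookrightarrow\Z$. This substitutes an elementary homological computation for the Seifert-fibration classification over $P^2$, which is a clean, self-contained alternative and arguably easier to verify from first principles; what it gives up is the uniformity of the paper's treatment, which eliminates every bad soul by the single observation ``$\partial E\not\simeq S^1\times S^2$.'' Part (2) is identical to the paper's. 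One very small stylistic point: when you write that $S$ must be $S^2$ or $P^2$ ``since a closed $2$-dimensional Alexandrov space is a topological surface,'' that sentence does not by itself exclude $T^2$ or $K^2$; you are really invoking, as you did a line earlier, that $\pi_1(S)$ is finite cyclic. Making that explicit would tighten the writing, but the mathematics is sound.
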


\begin{proof}
In view of Corollary \ref{cor:soul} together with 
\eqref{eq:s^1s^2} and Lemma \ref{lem:notd^3s^1}, we have 
the only possibility $\dim S=2$.
It follows that  $S$ is homeomorphic to 
$S^2$, $P^2$, $T^2$ or $K^2$.
If $S\simeq P^2$, then $B(p_i,r)$ must be homeomorphic to
a $D^2$-bundle over $P^2$. 
It turns out that $\partial B(p_i,r)$ is homeomorphic to 
an $S^1$-bundle over $P^2$,
a contradiction to \eqref{eq:s^1s^2}.
Similarly, if $S$ was homeomorphic to $T^2$ or $K^2$, we would have 
a contradiction.
Thus $S$ must be homeomorphic to $S^2$ and 
$B(p_i,r)$ is homeomorphic to a $D^2$-bundle over $S^2$.
In view of \eqref{eq:s^1s^2}, this bundle must be 
trivial.
\end{proof}

Next we consider 

\begin{proclaim}{\emph{Case} B.}
    $\dim Y = 3$.
\end{proclaim}

Intuitively, this is the case when the sphere fibre 
collapses to a closed interval under the rescaling 
$\frac{1}{\delta_i} M_i^4$ (see Propositions 
\ref{prop:y3nobdy} and \ref{prop:y3wbdy}).

Let 
$$
 Y\supset C(0)\supset C(1)\supset \cdots\supset C(k),
$$
be as in Section \ref{sec:ideal}.
Applying Theorem \ref{thm:dim3} to the convergence
$(\frac{1}{\delta_i} M_i^4,\hat p_i) \to (Y,y_0)$,
we have a locally smooth, local $S^1$-action $\psi_i$ 
on $B(p_i,r)\simeq B(\hat p_i, R\delta_i)$ whose orbit
space is homeomorphic to $B(y_0, R)$, where $R$
is a large positive number.
Let $F_i^*:=F^*(\psi_i)$, $E_i^*:=E^*(\psi_i)$, $S_i^*:=S^*(\psi_i)$,
$\cal C_i:=S^*_i - \partial Y$, $\ell_i$, $m_i$, $n_i$  
and  $\pi_i:B(\hat p_i,R)\to B(y_0,R)$ be as in the previous 
section.

\begin{prop} \label{prop:y3nobdy}
If $\dim Y=3$ and  $Y$ has no boundary, then 
\begin{enumerate}
 \item $Y\simeq \R^3;$
 \item $C(0)$ is isometric to a $1$-dimensional closed interval$;$
 \item $\partial C(0)=F^*_i$. In particular,
        ${\rm Ext}(Y)=\partial C(0);$
 \item $\cal C_i$ coincides with one of the following:
   \begin{gather*}
    \partial C(0),\quad C(0),\quad \gamma_1\cup\gamma_2\\
    C(0) \cup \gamma_1\cup\gamma_2,
   \end{gather*}
   where $\gamma_1$ and $\gamma_2$ denote quasigeodesics in  
   $B(y_0,R)$ starting from the endpoints $\partial C(0)$, 
   reaching $\partial B(y_0,R)$, and being perpendicular to $C(0);$
 \item $B(p_i,r)\simeq S^2\times D^2$.
\end{enumerate}
\end{prop}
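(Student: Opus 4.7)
My plan is to apply Theorem \ref{thm:dim3class} (Case A) to the rescaled sequence $(\frac{1}{\delta_i}M_i^4,\hat p_i)\to (Y,y_0)$ and then use three constraints to isolate a single sub-case. The constraints are: (i) $\partial B(p_i,r)\simeq S^1\times S^2$, from the $S^2$-bundle structure on $A(p_i;r,2r)$; (ii) $\pi_1(B(p_i,r))$ is finite cyclic, by Lemma \ref{lem:notd^3s^1}; and (iii) $\dim Y(\infty)\ge 1$ and $\dim S\le \dim Y-\dim X=1$, by Lemma \ref{lem:expand} and Proposition \ref{prop:ideal-dims}.

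Going through Theorem \ref{thm:dim3class} Case A: sub-case A-I ($\dim S=2$) is excluded by (iii); each topology in A-II ($\dim S=1$) has fundamental group containing $\Z\oplus\Z$ or the Klein-bottle group, contradicting (ii); A-III-(1) ($\dim C(0)=0$) gives either $D^4$ (excluded by (i) since $\partial D^4=S^3$) or $S^1\times D^3$ (excluded by (ii) since $\pi_1=\Z$ is infinite); and A-III-(3) ($\dim C(0)=2$) is ruled out by choosing a reference point realizing the minimum $m_Y=\inf_p\dim C_p(0)$ and invoking Lemma \ref{lem:C0codim1} together with (iii). Hence we are in sub-case A-III-(2): $\dim C(0)=1$. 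Among the four topologies listed there, only $S^2\tilde\times_{\omega} D^2$ has finite cyclic (in fact trivial) fundamental group, and its boundary $L(|\omega|,1)$ equals $S^1\times S^2$ if and only if $\omega=0$. This proves (5) and forces $(m_i,n_i)=(0,2)$ in the notation of Theorem \ref{thm:dim3class}. By Theorem \ref{thm:patch}(2) together with the inclusion $\mathrm{Ext}(Y)\subset\partial C(0)$ noted in Case A-III-(2), we obtain $F_i^*=\partial C(0)$, proving (3); and since $C(0)$ is a compact, totally convex, one-dimensional subset of $Y$ it is a closed geodesic segment, proving (2).

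For (1), the $S^1$-action $\psi_i$ on $B(\hat p_i,R\delta_i)\simeq S^2\times D^2$ has orbit space $B(y_0,R)$, which is a compact, simply connected topological $3$-manifold (by Lemma \ref{lem:simply} and Proposition \ref{prop:topmfd}) with boundary $S^2$, hence homeomorphic to $D^3$. Since $Y$ is therefore an exhaustion by nested topological $3$-cells, Brown's monotone union theorem gives $Y\simeq\R^3$; alternatively, one may invoke the three-dimensional soul theorem of \cite{SY:3mfd}, since the soul is a point.

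Finally, for (4), since $\cal C_i=F_i^*\cup E_i^*$ with $F_i^*=\partial C(0)$ consisting of two points $z_1,z_2$, the enumeration reduces to listing the possible configurations of $E_i^*$. By Lemma \ref{lem:dimC(0)=1}(2), every component of $E_i^*$ is either the open interior of $C(0)$, carrying Seifert invariants $(2,1)$, or a quasi-geodesic arc issuing from some $z_j$ perpendicularly to $C(0)$ and extending to $\partial B(y_0,R)$, again with invariants $(2,1)$. Applying the identities of Proposition \ref{prop:plumbing}(a)--(f) under the constraint $\omega=0$, together with Lemma \ref{lem:adj-inv}, one verifies that exactly the four listed configurations arise. \emph{The main obstacle} in this step is checking that $\omega=0$ forces the symmetry between the two fixed points that rules out asymmetric configurations (such as a single perpendicular arc from only one $z_j$), and using Proposition \ref{prop:graph} together with the extremality of $\cal C_i$ to conclude that each $\gamma_j$ is a genuine quasi-geodesic in $\mathrm{ES}(Y)$.
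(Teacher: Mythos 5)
Your proof follows essentially the same route as the paper's: apply Theorem \ref{thm:dim3class}, Case A, to the rescaled sequence, narrow to $\dim S=0$, $\dim C(0)=1$, $(m_i,n_i)=(0,2)$, and $\omega=0$ using the boundary condition $\partial B(p_i,r)\simeq S^1\times S^2$ together with the finite cyclic fundamental group (Lemma \ref{lem:notd^3s^1}) and $\dim Y(\infty)\ge 1$, and then read off $\cal C_i$ from Proposition \ref{prop:plumbing}. Your more explicit case-by-case elimination (A-I by $\dim S\le 1$, A-II by $\pi_1$, A-III-(1) by the boundary and $\pi_1$, A-III-(3) by choosing a reference point realizing $m_Y\le 1$ via Lemma \ref{lem:C0codim1}) is a faithful unpacking of what the paper compresses into a few lines.

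Two small points. First, in your proof of (4) you invoke Lemma \ref{lem:dimC(0)=1}(2), but that item addresses components $A_i$ of $E_i^*$ whose closure does \emph{not} meet $F_i^*$ — i.e., the $m_i$-count — and $(m_i,n_i)=(0,2)$ forces every component of $E_i^*$ to touch $F_i^*$. The relevant tools are Lemma \ref{lem:dimC(0)=1}(1) and Lemma \ref{lem:locmax}; in particular the Seifert invariants along those arcs need not be $(2,1)$. This does not damage (4), since only the geometric configuration of $\cal C_i$ is asserted, but the $(2,1)$ claim as written is incorrect. Second, your primary argument for (1) (a compact, simply connected topological $3$-manifold with boundary $S^2$ is $D^3$) secretly invokes the $3$-dimensional Poincar\'e conjecture, which this 2002 paper does not rely on; the alternative you mention — the soul theorem of \cite{SY:3mfd} applied directly since $\dim S=0$ — is the intended and self-contained route, and the paper simply cites $B(y_0,R)\simeq D^3$ from \cite{SY:3mfd} at the outset.
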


\begin{proof}
Note $\dim S\le 1$.  Suppose first 

\begin{proclaim}{\emph{Case} B-I-1.}
$Y$ has no boundary and $\dim S=1$.    
\end{proclaim}

Case A II-(1) of Theorem \ref{thm:dim3class}  yields 
a contradiction to \eqref{eq:s^1s^2}.

Therefore we have the following case:

\begin{proclaim}{\emph{Case} B-I-2.}
$Y$ has no boundary and $\dim S=0$.    
\end{proclaim}

Note that $B(y_0,R)\simeq D^3$ for large $R$
(\cite{SY:3mfd}).
Since $\dim Y(\infty)\ge 1$,  we may assume  $\dim C(0)\le 1$
(Lemma \ref{lem:C0codim1}).
In view of Case A-III of Theorem \ref{thm:dim3class} combined with
$\eqref{eq:s^1s^2}$, we can exclude the case of $\dim C(0)=0$.
Thus $C(0)$ must be a geodesic segment.
Case A-III of Theorem \ref{thm:dim3class} implies 
$(m_i,n_i)=(0,2)$ and $B(p_i,r)\simeq S^2\tilde\times_{\omega} D^2$,
where $\omega=0$ if and only if 
we have the cases (a), (c), (d) and (f) in Proposition 
\ref{prop:plumbing}.
Thus we obtain the conclusions (4) and (5).
\end{proof}

Next we consider the case when $Y$ has nonempty boundary.

\begin{prop} \label{prop:y3wbdy}
If $\dim Y=3$ and $Y$ has nonempty boundary, then 
\begin{enumerate}
 \item $Y$ is isometric to a product $Z\times I$, where 
       $Z\simeq \R^2$ has at most one essential singular point$;$
 \item $B(p_i,r)\simeq S^2\times D^2$.
\end{enumerate}
\end{prop}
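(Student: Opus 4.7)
The plan is to invoke Theorem \ref{thm:dim3class} for the pointed convergence $(\frac{1}{\delta_i}M_i^4,\hat p_i)\to (Y,y_0)$ in the cases where $Y$ has nonempty boundary (Cases B and C of that theorem), and then eliminate every subcase that is incompatible with three standing constraints: by \eqref{eq:dims} one has $\dim S\le\dim Y-\dim X=1$; by Lemma \ref{lem:expand} applied to $\Sigma_p\simeq S^1$ one has $\dim Y(\infty)\ge 1$; and by \eqref{eq:s^1s^2}, Lemma \ref{lem:notd^3s^1}, and the homeomorphism $B(p_i,r)\simeq B(\hat p_i,R\delta_i)$ from Theorem \ref{thm:rescal}, the rescaled ball has boundary $S^1\times S^2$ and finite cyclic fundamental group.

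The dimension restrictions already kill most subcases: B-I requires $\dim S=2$; and B-II-(2), B-III-(2)(ii), the two-ends subcase of B-III (where $Y$ splits as $\R\times Y_0^2$ with $Y_0^2$ compact, forcing $\dim Y(\infty)=0$), and C-II each force $\dim Y(\infty)=0$. All of these are therefore discarded, leaving only B-II-(1), the one-end subcases of B-III (including the special $(P^2\tilde\times I)\times I$ configuration), and C-I.

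Next I would apply the topological constraints to the remaining B-subcases. In B-II-(1) the conclusion is $B(p_i,r)\simeq S^1\times D^3$, whose fundamental group is $\Z$, contradicting Lemma \ref{lem:notd^3s^1}. The options $B\simeq D^4$ and $B\simeq S^2\tilde\times_\omega D^2$ with $|\omega|\in\{1,2\}$ from B-III-(1) and B-III-(2)(i) have boundary $S^3$ or the lens space $L(|\omega|,1)\in\{S^3,P^3\}$, none of which is $S^1\times S^2$. The special configuration $(P^2\tilde\times I)\times I$ has boundary the double of $P^2\tilde\times I$, homeomorphic to $P^3\# P^3$, whose fundamental group $\Z_2\ast\Z_2$ is again not that of $S^1\times S^2$.

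This leaves precisely Case C-I of Theorem \ref{thm:dim3class}, whose conclusion is that $\partial Y$ has two connected components, that $Y$ splits isometrically as $Z\times I$ with $Z\simeq\R^2$ carrying at most one essential singular point, and that $B(p_i,r)\simeq D^2\times S^2$; these are exactly assertions (1) and (2). The substantive work is not in this proof but in assembling Theorem \ref{thm:dim3class}; here one only performs the bookkeeping above. The only point requiring a moment's care will be to verify that the topological constraints on $B(p_i,r)$ transfer correctly to the rescaled ball $B(\hat p_i,R\delta_i)$ under the homeomorphism of Theorem \ref{thm:rescal}(2), but this is immediate.
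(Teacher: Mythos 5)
Your proof is correct and follows essentially the same route as the paper's: invoke Theorem \ref{thm:dim3class} (Cases B and C) and eliminate subcases using the standing constraints $\dim S\le 1$, $\dim Y(\infty)\ge 1$, $\partial B(p_i,r)\simeq S^1\times S^2$, and $\pi_1(B(p_i,r))$ finite cyclic. You are in fact more careful than the paper at one point: the paper disposes of the case $\partial Y$ connected, $\dim S=0$ by declaring "a contradiction to Lemma \ref{lem:notd^3s^1}", but the Case B-III outcomes $D^4$, $S^2\tilde\times_\omega D^2$ with $|\omega|\in\{1,2\}$, and $(P^2\tilde\times I)\times I$ all have finite cyclic fundamental group and are ruled out only by the boundary condition \eqref{eq:s^1s^2}, exactly as you observe. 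One small attribution fix: the statement of Case C-I of Theorem \ref{thm:dim3class} records only that $B(p_i,R)\simeq D^2\times S^2$; to obtain conclusion (1) you should cite Theorem \ref{thm:cptcomp} directly (disconnected $\partial Y$ yields the isometric splitting $Y\cong Z\times I$) and then use $\dim Z(\infty)=\dim Y(\infty)\ge 1$ to conclude that $Z\simeq\R^2$ with at most one essential singular point, which is exactly what the paper's proof does.
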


\begin{proof}
We first assume

\begin{proclaim}{\emph{Case} B-II}
$\partial Y$ is disconnected.
\end{proclaim}

In this case, by Theorem \ref{thm:cptcomp}, 
$Y$ is isometric to the product $Z\times I$, where $I$ is a 
closed interval and  $Z$ is a two-dimensional open Alexandrov 
surface with nonnegative curvature 
homeomorphic to $\R^2$ and with 
$\dim Y(\infty)=\dim Z(\infty)\ge 1$.
By Case C-I of Theorem \ref{thm:dim3class}, 
$B(p_i,r)$ must be homeomorphic to $D^2\times S^2$.

Next we assume the case when $\partial Y$ is connected.

\begin{proclaim}{\emph{Case} B-III-1 }
$\partial Y$ is connected and $\dim S=1$.
\end{proclaim}

In this case, by Theorem \ref{thm:dim3class}, 
$B(p_i,r)\simeq S^1\times D^3$, which contradicts
Lemma \ref{lem:notd^3s^1}.

\begin{proclaim}{\emph{Case} B-III-2}
$\partial Y$ is connected and $\dim S=0$.
\end{proclaim}

In this case, in view of Case B-III of Theorem \ref{thm:dim3class}, 
we would have a contradiction to Lemma \ref{lem:notd^3s^1}.
Thus we have proved Proposition \ref{prop:y3wbdy}.
\end{proof}

%It turnes out that 
%$$
%  \pi_1(\partial B(p_i,r) \simeq \Z_{\mu_1}\ast \cdots \ast \Z_{\mu_k},
%$$
%a contradiction to \eqref{eq:s_1s_2}.

\begin{proof}[Proof of Theorem \ref{thm:patchs^2}]
By Propositions \ref{prop:y4}, \ref{prop:y3nobdy} and 
\ref{prop:y3wbdy}, we already know that $B(p_i,r)\simeq D^2\times S^2$.
It is known (cf. \cite{CR:homotopy}, \cite{Ht:S1S2})
that the mapping class group 
$\cal M_+(S^1\times S^2)$ of orientation preserving homeomorphisms 
of $S^1\times S^2$ is isomorphic to $\Z_2\oplus \Z_2$. 
It is generated by the homeomorphisms
$f=(f_1, f_2)$ and $g$, where $f_1$ and $f_2$
are orientation reversing homeomorphisms of $S^1$ and $S^2$ respectively
and $g$ is defined as 
\begin{equation*}
 g(e^{i\theta}, x)=(e^{i\theta}, R(\theta) x),\qquad
        R(\theta)=\begin{pmatrix} 
              \cos \theta & -\sin \theta & 0 \\
              \sin \theta & \cos \theta & 0 \\
               0 & 0 & 1
                 \end{pmatrix}.
\end{equation*}
Note that $g^2$ is isotopic to the identity since
$\pi_1 SO(3)=\Z_2$.
From the  $S^2$-bundle structure on  $A(p_i;r,2r)$,
we have a homeomorphism $\varphi_i:\partial B(p_i,r)\to S^1\times S^2$.
From $B(p_i,r)\simeq D^2\times S^2$, we also have a 
homeomorphism $\varphi_i':\partial B(p_i,r)\to S^1\times S^2$.
Note that 
$$
 B(p_i,2r)\simeq [r,2r]\times S^1\times S^2
          \bigcup_{\varphi_i'\circ\varphi_i^{-1}}
                         \{ r\}\times D^2\times S^2,
$$
where 
$\varphi_i'\circ\varphi_i^{-1}$ is understood as a homeomorphism
of $\{ r\}\times S^1\times S^2$.
Since $\varphi_i'\circ\varphi_i^{-1}$ is isotopic to either the identity,
$f$, $g$ or $g\circ f$, it is now easy to verify that 
$B(p_i,2r)$ admits a compatible $S^2$-bundle structure
in either case.
\end{proof}

%               \input{dim2nobdytor}

%dim2nobdytor.tex
\section{Collapsing to two-spaces without boundary \\
             (torus fibre case)} 
\label{sec:dim2nobdytor}

 We consider the situation of the previous section that
a sequence of pointed complete $4$-dimensional orientable Riemannian
manifolds $(M_i^4,p_i)$ with $K \ge -1$ converges to a pointed
$2$-dimensional Alexandrov space $(X^2,p)$. 
Throughout this section, we assume that 
$p$ is an interior singular point of $X^2$.
\par

Now consider the local convergence $B(p_i,2r)\to B(p,2r)$
for a  sufficiently small positive number $r$.
By Fibration Theorem \ref{thm:orig-cap},
$A(p_i;r,2r)$ is homeomorphic to an $F_i$-bundle over 
$A(p;r,2r)\simeq S^1\times I$,
where $F_i$ is either $S^2$ or $T^2$.
In this section, we consider the case $F_i=T^2$.
First note 

\begin{equation}
 \partial B(p_i,r)\,\text{is homeomorphic to a $T^2$-bundle over $S^1$}
         \label{eq:s^1t^2}
\end{equation}

The purpose of this section is to prove the following

\begin{thm} \label{thm:locstrtor}
If $F_i=T^2$, then we have
 \begin{enumerate}
  \item  $B(p_i,r)$ is homeomorphic to $T^2\times D^2;$
  \item  for some $m\le 2\pi/L(\Sigma_p)$, 
         an $m$-fold cyclic cover $B'(p_i,r)$ of $B(p_i,r)$ 
         satisfies the following commutative diagram $:$
    \begin{equation*}
     \begin{CD}
       B'(p_i,r) @>{\simeq} >> T^2\times D^2 \\
          @V{\pi_i} VV      @VV{\pi} V    \\
       B(p_i,r)  @> {\simeq} >> (T^2\times D^2)/\Z_m
     \end{CD}
    \end{equation*}
        where the diagonal $\Z_m$-action on $T^2\times D^2$ 
        is free on $T^2$-factor and rotational on $D^2$-factor;
  \item  the Seifert $T^2$-bundle structure on $B(p_i,r)$ in $(2)$ 
        is compatible with the $T^2$-bundle structure on $A(p_i;r,2r)$.
 \end{enumerate}
\end{thm}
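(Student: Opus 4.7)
The plan is to mirror the strategy of Section \ref{sec:dim2nobdysph} (the sphere-fibre case), applying the rescaling machinery of Theorem \ref{thm:rescal} to produce a pointed complete nonnegatively curved limit $(Y, y_0)$ of dimension $\ge 3$ with $B(p_i,r)\simeq B(\hat p_i, R\delta_i)$, and then classifying by $\dim Y$ using Corollary \ref{cor:soul} when $\dim Y=4$ and Theorem \ref{thm:dim3class} when $\dim Y=3$. Throughout, the key topological constraint driving the argument is that $\partial B(p_i,r)$ is a $T^2$-bundle over $S^1$ (coming from Fibration Theorem \ref{thm:orig-cap} applied on the annular region $A(p_i;r,2r)$); this differs sharply from the $S^1\times S^2$ boundary of the sphere case and will force $Y$ into a very restricted list of geometries. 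A useful preliminary observation is that $\pi_1(B(p_i,r))$ must contain $\Z^2$ as a finite-index subgroup, by the generalized Margulis lemma applied to the almost nilpotent fibre.

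In the case $\dim Y=4$, Corollary \ref{cor:soul} identifies $B(p_i,r)$ with the normal disk bundle of a soul $S$, where $\dim S\le \dim Y - \dim X = 2$ by Proposition \ref{prop:ideal-dims}. The $T^2$-bundle boundary condition immediately rules out $\dim S\le 1$ (the boundary of any $D^k$-bundle over a $(4-k)$-manifold with $k\ge 3$ would have cyclic or virtually cyclic fundamental group), so $\dim S=2$. Then $S\in\{S^2,P^2,T^2,K^2\}$, and the $S^1$-bundle boundary obtained from a $D^2$-bundle over $S$ must be a $T^2$-bundle over $S^1$; this excludes $S^2$ and $P^2$, and the orientability of $M_i^4$ together with the compatibility of orientations on fibre and base forces $S\simeq T^2$ and the bundle to be trivial, giving $B(p_i,r)\simeq T^2\times D^2$.

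In the case $\dim Y=3$, I apply Theorem \ref{thm:dim3} to the convergence $(\tfrac{1}{\delta_i}M_i^4,\hat p_i)\to(Y,y_0)$ to obtain a locally smooth local $S^1$-action $\psi_i$ on $B(p_i,r)$, and then walk through the full classification of Theorem \ref{thm:dim3class}. Most cases give $B(p_i,r)$ whose boundary has $b_1\le 1$ (all the $D^4$, $S^1\times D^3$, $S^2\tilde\times_{\omega}D^2$, $P^2\tilde\times I\times I$, lens-space-times-$I$, etc.\ possibilities), and are eliminated by the requirement that $\partial B(p_i,r)$ carry a $T^2$-fibration over $S^1$. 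The cases that survive are exactly those where $Y$ is isometric (up to quotient by $\Z$) to a flat product and $\dim S=1$ with no exceptional orbits, in which case $B(p_i,r)$ is a $D^2$-bundle over $T^2$. Orientability and the compatibility of the induced $T^2$-boundary fibration with the given $T^2$-bundle structure on $A(p_i;r,2r)$ rule out the non-trivial bundles, so $B(p_i,r)\simeq T^2\times D^2$.

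Once the homeomorphism type is pinned down, the Seifert structure is extracted by returning to the $S^1$-action $\psi_i$ produced in the $\dim Y = 3$ analysis (or, in the $\dim Y = 4$ case, from the triviality of the bundle together with an auxiliary $S^1$-action on the $T^2$-factor): this $S^1$ extends the $S^1$-factor of the generic $T^2$ fibre on $A(p_i;r,2r)$, and has at worst one singular orbit over $p$, whose isotropy has order $m$. The $m$-fold cyclic cover unwrapping this isotropy yields a \emph{free} $S^1$-action over a $D^2$-base, hence a trivial bundle $T^2\times D^2$; restoring the $\Z_m$ quotient gives the diagonal model $(T^2\times D^2)/\Z_m$ and the commutative diagram of part (2). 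Finally, the bound $m\le 2\pi/L(\Sigma_p)$ comes from the standard rescaling estimate identifying $m$ with the isotropy order of the corresponding $S^1$-orbit in the new limit, controlled by the length of $\Sigma_p$ exactly as in Lemma \ref{lem:singslice} and Proposition \ref{prop:bdyseif}. The main obstacle will be the case-elimination in step three — especially distinguishing among the various $\dim C(0)$ possibilities in cases A-II and A-III — and the verification that the Seifert structure constructed on $B(p_i,r)$ is actually compatible with, rather than merely homotopic to, the given $T^2$-bundle structure on $A(p_i;r,2r)$; this compatibility argument will need a mapping-class-group computation analogous to the one for $S^1\times S^2$ in the proof of Theorem \ref{thm:patchs^2}.
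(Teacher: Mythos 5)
Your proposal correctly identifies the rescaling set-up (Theorem~\ref{thm:rescal}), the dichotomy by $\dim Y\in\{3,4\}$, and the use of Corollary~\ref{cor:soul} and Theorem~\ref{thm:dim3class} at the outset; this matches Lemma~\ref{lem:d2bdle}, the paper's first step, which gets you only as far as ``$B(p_i,r)$ is a $D^2$-bundle over $T^2$ or $K^2$.'' The genuine gap is what comes after: you claim that orientability of $M_i^4$ plus the constraint that $\partial B(p_i,r)$ is a $T^2$-bundle over $S^1$ forces $S\simeq T^2$ and triviality of the bundle. Neither claim is correct as stated. A $D^2$-bundle over $K^2$ can perfectly well be orientable (the non-orientability of the base is compensated by the twist of the bundle), and its $S^1$-boundary-bundle over $K^2$ can be a $T^2$-bundle over $S^1$ --- so orientability and the boundary fibration do not rule out a Klein-bottle soul. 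Nor does orientability alone give Euler number zero.

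The paper's actual exclusion of $K^2$ (Proposition~\ref{prop:dim2univ}(3)) goes through a second, finer rescaling $\mu_i$ with $\delta_i\ll\mu_i$ applied to the universal cover $(\tilde B(p_i,r),\Gamma_i)$, producing an equivariant limit $(Z,z_0,G)$ with $Z/G\cong K_p$. The key is that $K_p$ has \emph{no boundary} (since $p\in\interior X^2$), while if $\Gamma_i\not=\Gamma_i^*$ (the Klein-bottle case), one shows that any $g\in G-G^*$ acts on the splitting $Z=\R^2\times N^2$ by orientation-reversing isometries on both factors, so $Z/G$ would have boundary --- a contradiction. Triviality of the $D^2$-bundle is then not a consequence of orientability alone but of Lemma~\ref{lem:finitecover} (a nontrivial iterated-cover argument through Seifert bundles $S_i(L^2_j)$) showing a finite cover of $B(p_i,r)$ is $T^2\times D^2$; combined with orientability of the boundary this gives $\partial B(p_i,r)\simeq T^3$ and hence Euler number zero. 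Your Seifert-structure step is also too loose: the paper constructs the compatible fibration by applying the Equivariant Fibration Theorem~\ref{thm:cap} to the cyclic cover $B'(p_i,r)=\tilde B(p_i,r)/\Gamma_i'$, not by invoking an $S^1$-action from the $\dim Y=3$ analysis (which covers only one of the two dichotomy cases). In short, the essential technical engine --- the two-scale equivariant Gromov--Hausdorff analysis of Propositions~\ref{prop:dim2univ} and~\ref{prop:dim2univ2} and Lemma~\ref{lem:finitecover} --- is missing from your sketch, and the orientability shortcut you propose in its place does not work.
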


It should be noted that there is no restriction about the
$\Z_m$-action on the $T^2$-factor because the $T^2$-factor is 
collapsing to a point.
\par\medskip

Since $S_{\delta}(X^2)$ is discrete for any $\delta>0$ 
when $X^2$ has no boundary,
together with Fibration Theorem \ref{thm:orig-cap}, 
Theorem \ref{thm:locstrtor} yields 
Theorem \ref{thm:dim2nobdy} in the case when the general 
fibre is a torus.

Let the sequences $\delta_i\to 0$, $\hat p_i\to p$,
the pointed  complete noncompact Alexandrov space $(Y,y_0)$ with 
nonnegative curvature and 
its soul $S$ be as in the previous section.

\begin{prop} \label{prop:y4y3tor} 
Under the situation above, we have the following: 
\begin{enumerate}
 \item if $\dim Y = 4$, then  $S$ is isometric to a flat torus$;$
 \item if $\dim Y=3$,  then   
  \begin{enumerate}
   \item $Y$ has no boundary$;$
   \item $S$ is a circle.
  \end{enumerate}
\end{enumerate}
\end{prop}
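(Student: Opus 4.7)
\medskip
\noindent
\textbf{Proof plan for Proposition \ref{prop:y4y3tor}.}

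The plan is to treat $\dim Y = 4$ and $\dim Y = 3$ separately, and in each case match the possible homeomorphism types of $B(p_i,r)$ (determined by $(Y,S,\partial Y)$) against the constraint~\eqref{eq:s^1t^2} that $\partial B(p_i,r)$ is a $T^2$-bundle over $S^1$, hence has $\pi_1$ containing $\Z^2$. Throughout, Lemma~\ref{lem:expand} gives $\dim Y(\infty) \geq \dim\Sigma_p = 1$ (since $p$ is an interior singular point of $X^2$), and Proposition~\ref{prop:ideal-dims} gives $\dim S \leq \dim Y - 2$.

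For part (1), Corollary~\ref{cor:soul} identifies $B(p_i,r) \simeq B(\hat p_i, R\delta_i)$ with the closed normal disk-bundle $\nu(S)$ over the soul $S$, so $\partial\nu(S)$ is an $S^{3-\dim S}$-bundle over $S$. The cases $\dim S = 0$ and $\dim S = 1$ give $\partial\nu(S) \simeq S^3$ and an $S^2$-bundle over $S^1$ respectively, both with $\pi_1$ at most $\Z$, contradicting~\eqref{eq:s^1t^2}. Hence $\dim S = 2$; $S$ is a closed Alexandrov surface of nonnegative curvature, so $\chi(S) \geq 0$ and $S \in \{S^2, P^2, T^2, K^2\}$. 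The $\pi_1\supset \Z^2$ requirement on $\partial\nu(S)$ rules out $S^2$ (lens space boundary) and $P^2$ (boundary with $\pi_1$ of order $\leq 4$), leaving $T^2$ and $K^2$; both are flat by Gauss--Bonnet.

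For part (2), I first exclude $\partial Y \neq \emptyset$: if $\partial Y$ is nonempty, then Theorem~\ref{thm:dim3class} Cases B and C list the possible types of $B(p_i,r)$ (disk-bundles over a 2-surface, $S^1\times D^3$, $S^2\tilde\times_\omega D^2$, $(P^2\tilde\times I)\times I$, lens-space$\times I$, $S^1\times S^2\times I$, $I$-bundles over $S^1\tilde\times S^2$, etc.), whose boundaries have $\pi_1$ either finite or $\Z$, or are disconnected; none is a $T^2$-bundle over $S^1$. Hence $Y$ is boundaryless, and Proposition~\ref{prop:ideal-dims} gives $\dim S \leq 1$. The case $\dim S = 0$ (Case A-III of Theorem~\ref{thm:dim3class}) yields $B(p_i,r)$ homeomorphic to $D^4$, $S^1\times D^3$, $(K^2\tilde\times I)\times I$, $S^2\tilde\times_\omega D^2$, or gluings of two of these along $S^1\times D^2$; the boundaries in all subcases have $\pi_1$ at most cyclic or are homeomorphic to $D(K^2\tilde\times I)$, never a $T^2$-bundle over $S^1$. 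Therefore $\dim S = 1$, so $S$ is a closed $1$-manifold, i.e.\ $S \simeq S^1$.

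The main obstacle is completing part (1) by excluding $S \simeq K^2$, since a priori an orientable $S^1$-bundle over $K^2$ can be realized as a $T^2$-bundle over $S^1$ (with monodromy $\mathrm{diag}(-1,-1)\in SL(2,\Z)$). The plan here is to exploit the $T^2$-fiber structure on $A(p_i;r,2r)$ more carefully: the Fukaya--Yamaguchi theory promotes this fibration to a compatible local $T^2$-symmetry on the annulus, and under the rescaling $\frac{1}{\delta_i}$ this torus symmetry persists in the limit $Y$, producing an effective toral action on a neighborhood of the soul whose principal orbits degenerate onto $S$ as one approaches it. Since the Klein bottle admits no effective $T^2$-action (the quotient $T^2\to K^2$ has kernel $\Z_2$), this forces $S \simeq T^2$, which combined with the flatness already established finishes (1).
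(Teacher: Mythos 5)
Your first step — using Lemma~\ref{lem:expand} and Proposition~\ref{prop:ideal-dims} to get $\dim Y(\infty)\ge 1$ and $\dim S\le\dim Y-2$, then feeding these and \eqref{eq:s^1t^2} into Corollary~\ref{cor:soul} (when $\dim Y=4$) and Theorem~\ref{thm:dim3class} (when $\dim Y=3$) — is exactly the paper's Lemma~\ref{lem:d2bdle}, and it correctly narrows the soul to a flat $T^2$ or $K^2$ in the first case and shows $\partial Y=\emptyset$ with $S$ a point or a circle in the second. Beyond that there are two genuine gaps, and both are closed by a single argument in the paper that your proposal does not supply.

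In part (2) you assert that the boundary $D(K^2\tilde\times I)$ of $(K^2\tilde\times I)\times I$ is ``never a $T^2$-bundle over $S^1$,'' so that the $m_i=2$ subcase of Theorem~\ref{thm:dim3class} A-III-(2) is eliminated by \eqref{eq:s^1t^2}. This is false: $D(K^2\tilde\times I)$ \emph{is} the flat $T^2$-bundle over $S^1$ with monodromy $-\mathrm{id}$ (its fundamental group is $\Z^2\rtimes_{-I}\Z$), and you yourself point out this very phenomenon in part (1), where you note that an orientable circle bundle over $K^2$ can be presented as a $T^2$-bundle over $S^1$. So \eqref{eq:s^1t^2} alone does not exclude $\dim S=0$ with $C(0)\cong I$, and your proof of (2)(b) has a hole. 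For part (1), the proposed exclusion of $K^2$ also does not work: under the rescaling $\frac{1}{\delta_i}$ the previously collapsed torus fibres become visible and converge to the soul $S$ of $Y$ — they are the objects that converge, not a symmetry that acts. There is no $T^2$-action on $Y$ produced this way, so ``$K^2$ admits no effective $T^2$-action'' has no purchase on $S$.

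What the paper does instead (Proposition~\ref{prop:dim2univ}(3)) is pass to the universal cover $\tilde B(p_i,r)\to B(p_i,r)$ with deck group $\Gamma_i$, rescale by $\mu_i$, and take the equivariant Gromov--Hausdorff limit $(\tfrac{1}{\mu_i}\tilde B(p_i,r),\tilde p_i,\Gamma_i)\to(Z,z_0,G)$ with $Z/G\cong K_p$, which splits as $Z=\R^2\times N^2$ with $G_0^*\cong\R^2$. If $\Gamma_i$ were nonabelian — that is, if $B(p_i,r)$ were a $D^2$-bundle over $K^2$ — the element of $G\setminus G^*$ coming from the deck transformation of $T^2\to K^2$ would reverse orientation on the $\R^2$-factor; since $M_i^4$ is orientable, every element of $G$ preserves orientation on $Z$, so the $N^2$-component of that element would have to reverse orientation as well, forcing $K_p=Z/G$ to have nonempty boundary — impossible for an interior point $p$ of $X^2$. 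This gives that $B(p_i,r)$ is a $D^2$-bundle over $T^2$, which simultaneously excludes $S\simeq K^2$ when $\dim Y=4$ and excludes $\dim S=0$ when $\dim Y=3$. This is the key ingredient your proposal is missing.
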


Intuitively, $\dim Y=4$ (resp. $\dim Y=3$) occurs 
when a (singular) torus fibre converges to a torus (resp. a circle)
under the rescaling $\frac{1}{\delta_i} M_i^4$.
We can think of $S$ as the limit of  the ``singular torus fibre'',
which is invisible yet.

For the proof of Proposition \ref{prop:y4y3tor}, 
first we need

\begin{lem} \label{lem:d2bdle}
 \begin{enumerate}
  \item If $\dim Y=4$, then $S$ is isometric to a flat torus 
        or a flat Klein bottle$;$
  \item If $\dim Y=3$, then $Y$ has no boundary and $S$ is either 
        a circle or a point with $C(0)=I;$
  \item $B(p_i,r)$ is homeomorphic to a $D^2$-bundle over $T^2$ or
        $K^2$.
 \end{enumerate}
\end{lem}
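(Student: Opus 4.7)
The strategy is to combine Corollary \ref{cor:soul} (when $\dim Y=4$) and Theorem \ref{thm:dim3class} (when $\dim Y=3$) with the topological constraint \eqref{eq:s^1t^2}---namely that $\partial B(p_i,r)$ is a closed, connected, aspherical $T^2$-bundle over $S^1$---to pin down $B(p_i,r)$ case by case.

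For (1), since $Y$ is smoothable and hence topologically regular (Proposition \ref{prop:smoothable}), Corollary \ref{cor:soul} realizes $B(p_i,r)\simeq B(\hat p_i,R\delta_i)$ as the normal closed disk-bundle over a soul $S$ of $Y$. Proposition \ref{prop:ideal-dims} gives $\dim S\le 2$, while $\dim S\in\{0,1\}$ would force $\partial B(p_i,r)\simeq S^3$ or an $S^2$-bundle over $S^1$, neither of which is aspherical. Hence $\dim S=2$, the $S^1$-bundle $\partial B\to S$ has infinite $\pi_1$, ruling out $S\simeq S^2$ or $P^2$, so $S\simeq T^2$ or $K^2$. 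Gauss--Bonnet applied to $S$ (an Alexandrov surface with $\chi(S)=0$ and curvature $\ge 0$) forces the curvature to vanish identically, so $S$ is flat.

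For (2) with $\dim Y=3$, Proposition \ref{prop:ideal-dims} gives $\dim S\le 1$. Lemma \ref{lem:expand} yields $\dim Y(\infty)\ge\dim\Sigma_p=1$, so by Lemma \ref{lem:C0codim1} we have $m_Y\le 1$; choosing the reference point so that $\dim C(0)\le 1$ excludes Case A-III-(3) of Theorem \ref{thm:dim3class}. Cases B (nonempty connected $\partial Y$) and C (disconnected $\partial Y$) are ruled out by inspection: each listed topology for $B$ has boundary either $S^3$, a lens space, disconnected, or containing an $S^2$-factor, none of which is a $T^2$-bundle over $S^1$. This establishes (2)(a). Within A-III-(1),(2), boundary computations leave only the option $(m_i,n_i)=(2,0)$ of A-III-(2), where $C(0)=I$ and $B\simeq(K^2\tilde\times I)\times I$; the boundary $D(K^2\tilde\times I)$ is the half-turn flat $T^2$-bundle over $S^1$, and one identifies $(K^2\tilde\times I)\times I\simeq K^2\tilde\times D^2$ as the nontrivial rank-two disk-bundle over $K^2$ with orientable total space. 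In A-II, the remaining options give $B$ as a $D^2$-bundle over $T^2$ or $K^2$ with soul $S\simeq S^1$. This yields (2)(b) and, together with (1), gives (3).

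The main obstacle is excluding the exceptional 4-manifold in A-II-(2)---an $I$-bundle over a $K^2$-bundle over $S^1$, equivalently a $(K^2\tilde\times I)$-bundle over $S^1$---whose boundary is also a $T^2$-bundle over $S^1$ but whose fundamental group is virtually $\Z^3$ rather than $\Z^2$. I exclude it via the generalized Margulis lemma of \cite{FY:fundgp} applied to the codimension-two collapse $M_i^4\to X^2$: $\pi_1(B(p_i,r))$ has a nilpotent subgroup of finite index whose rank is bounded by $\dim M_i^4-\dim X^2=2$, while the exceptional case has rank $3$.
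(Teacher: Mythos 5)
Your proof is essentially the paper's argument, with the case analysis of Theorem \ref{thm:dim3class} unpacked explicitly where the paper simply cites ``\eqref{eq:dimyinfty}, \eqref{eq:dims}, and Theorem \ref{thm:dim3class}.'' Part (1), with Corollary \ref{cor:soul}, the constraint \eqref{eq:s^1t^2}, and Gauss--Bonnet, is precisely the paper's proof, and your explicit traversal of Cases A--C of Theorem \ref{thm:dim3class} for (2) and (3) is a correct expansion of what the paper leaves implicit. Up through the sentence ``This yields (2)(b) and, together with (1), gives (3),'' the argument is complete.

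The final paragraph, however, rests on a misreading of Theorem \ref{thm:dim3class}. Case A-II-(2) is stated under the hypothesis $\dim Y(\infty)=0$; it is not an additional possibility within A-II-(1). You have already shown $\dim Y(\infty)\ge 1$ from Lemma \ref{lem:expand} (this is exactly \eqref{eq:dimyinfty}), so A-II-(2) never arises and there is no ``exceptional 4-manifold'' left to exclude. Moreover, the Margulis-lemma substitute you propose is not a valid shortcut even in principle: the generalized Margulis lemma of \cite{FY:fundgp} controls the local fundamental group at the scale that collapses, not $\pi_1(B(p_i,r))$ at the fixed radius $r$, and it gives almost nilpotency rather than a rank bound by $\dim M -\dim X$. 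Controlling $\pi_1(B(p_i,r))$ in the torus-fibre setting is exactly the content of the paper's later Propositions \ref{prop:dim2univ}, \ref{prop:dim2univ2} and Lemma \ref{lem:finitecover}, which require genuine work. Deleting the last paragraph leaves a correct proof that agrees with the paper's route.
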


%In Proposition \ref{prop:dim2univ}, we shall exclude 
%the possibilities for $S$ being a Klein 
%bottle  or a point.

\begin{proof}
First suppose $\dim Y=4$. If $\dim S\le 1$, Corollary 
\ref{cor:soul} implies  
\begin{equation*}
  B(p_i,r) \simeq 
         \begin{cases} \quad   D^4 \qquad &\mbox{if \, $\dim S=0$} \\
                     S^1\times D^3 \qquad &\mbox{if \, $\dim S=1$},
         \end{cases}
\end{equation*}
which contradicts \eqref{eq:s^1t^2}. 
Thus  $\dim S=2$, and hence $S$ is either homeomorphic to one of 
$S^2$ and $P^2$ or isometric to a flat torus or a flat Klein bottle.
By Generalized Soul Theorem \ref{thm:soul},  
$B(p_i,r)$ is a $D^2$-bundle over $S$.
Hence if $S$ was homeomorphic to either $S^2$ or $P^2$, 
we would have a contradiction to \eqref{eq:s^1t^2}.
Therefore $S$ is isometric to a flat torus or a flat Klein bottle
and we obtain the conclusion.

Next suppose that $\dim Y=3$. In view of \eqref{eq:dimyinfty}
and \eqref{eq:dims}, Theorem \ref{thm:dim3class} shows 
that $Y$ has no boundary and (2),(3) hold. 
\end{proof}

We take a sequence $\delta_i\ll \mu_i\to 0$ such that 
$(\frac{1}{\mu_i} B(p_i,r),p_i)$ converges to $(K_p,o_p)$.
Let $\Gamma_i$ be the deck transformation group of the universal 
covering $\tilde B(p_i,r) \to B(p_i,r)$, $\tilde p_i\in \tilde B(p_i,r)$ 
a point over $p_i$.
Passing to a subsequence, we assume that 
$(\frac{1}{\mu_i} \tilde B(p_i,r),\tilde p_i, \Gamma_i)$ converges to
a triplet $(Z,z_0,G)$ with respect to the pointed equivariant 
Gromov-Hausdorff convergence. 
Note that  $Z/G$ is isometric to $K_p$ and 
$G$ contains an abelian subgroup 
of index $\le 2$ (Lemma \ref{lem:d2bdle}).

\begin{prop} \label{prop:dim2univ} 
Under the situation above, we have 
 \begin{enumerate}
  \item $Z$ is isometric to a product $\R^2\times N^2$ preserved
        by the $G$-action, where $N^2$ is a flat cone$;$
  \item $G$ is abelian and isomorphic to $\R^2\oplus \Z_m$ 
        for some integer $m\le 2\pi/L(\Sigma_p);$
  \item $B(p_i,r)$ is a $D^2$-bundle over $T^2$.
 \end{enumerate}
\end{prop}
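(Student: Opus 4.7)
The plan is to identify the equivariant limit $(Z, G)$ explicitly and then read off the topology of $B(p_i, r)$ from it. By Lemma~\ref{lem:d2bdle}(3), the universal cover $\tilde B(p_i, r)$ is homeomorphic to $\R^2 \times D^2$, and $\Gamma_i$ is either $\Z^2$ or the Klein bottle group; in both cases it contains a distinguished normal abelian subgroup $\Lambda_i \cong \Z^2$ of index $\leq 2$. My first step would be to pin down the identity component $G_0$: since the soul $S$ of $Y$ has diameter of order $\delta_i$ and $\delta_i \ll \mu_i$, the generators of $\Lambda_i$ have translation lengths on $\tilde B(p_i, r)$ that are $o(\mu_i)$, so $\Lambda_i$ collapses in the equivariant Gromov--Hausdorff limit to a connected closed abelian subgroup of $G$ of dimension $\geq 2$. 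Combined with $\dim Z \leq \dim \tilde B(p_i, r) = 4$ and $\dim Z/G = \dim K_p = 2$, this forces $\dim Z = 4$ and $G_0 \cong \R^2$ acting freely by translations in the limit.

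Second, I would invoke the Cheeger--Gromoll splitting theorem for nonnegatively curved Alexandrov spaces (cf.~\cite{BGP}) applied to the two transverse families of translation-lines produced by the free $G_0$-action, obtaining an isometric product decomposition $Z = \R^2 \times N^2$ in which $G_0$ translates the first factor. Here $N^2$ is a $2$-dimensional nonnegatively curved Alexandrov space, and since the splitting is canonical it is preserved by the full $G$-action; each $g \in G$ therefore acts as $(x, y) \mapsto (Ax + v, \phi y)$ for some $A \in O(2)$, $v \in \R^2$, $\phi \in \isom(N^2)$, and the assignment $g \mapsto A$ descends to a homomorphism $G/G_0 \to O(2)$.

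Third, I would argue that this last homomorphism is trivial, which simultaneously proves $G$ is abelian and excludes the Klein bottle case in Lemma~\ref{lem:d2bdle}(3). If some $A$ were a nontrivial involution, its fixed set would be a line in the $\R^2$-factor of $Z$; descending to $Z/G$ this line would produce a codimension-one stratum of boundary-type points in $K_p$, contradicting the fact that $K_p = K(S^1_{L(\Sigma_p)})$ is a flat cone with its only singular point at the apex. Hence $A = \mathrm{id}$ for all $g$, which rules out the Klein bottle group for $\Gamma_i$ (whose outer generator conjugates $\Lambda_i$ by a reflection) and gives $\Gamma_i \cong \Z^2$, so $B(p_i, r)$ is a $D^2$-bundle over $T^2$, settling~(3). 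Then $G = \R^2 \oplus \Z_m$ with $m := |G/G_0|$, and the $\Z_m$-action on $N^2$ (a nonnegatively curved $2$-cone) with quotient $K_p$ forces $N^2 = K(S^1_{mL(\Sigma_p)})$; the Alexandrov condition $mL(\Sigma_p) \leq 2\pi$ then yields $m \leq 2\pi/L(\Sigma_p)$, completing~(1) and~(2).

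The hardest step will be the third one, ruling out any nontrivial action of $G/G_0$ on the $\R^2$-factor. The intuitive picture above must be upgraded to a rigorous comparison of tangent cones: one must track how the $A$-component couples with the $\phi$-component via the constraint that $\bar g^2$ lies in $G_0 = \R^2$, and verify at the level of $\Sigma_{[z_0]}(Z/G)$ that any nontrivial reflection in $A$ produces a space of directions incompatible with that of $K_p$ at its apex. All other steps are essentially standard once $G_0$ and the splitting are in hand.
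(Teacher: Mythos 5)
Your overall architecture matches the paper's: identify the identity component $G_0\cong\R^2$, split $Z=\R^2\times N^2$ via the splitting theorem, then rule out the Klein bottle possibility from Lemma~\ref{lem:d2bdle}(3) so that $\Gamma_i\cong\Z^2$. But there are two genuine gaps in the execution, both concentrated in exactly the place you flagged as hardest.

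First, after you obtain the splitting $Z=\R^2\times N^2$ you assert that $G_0$ ``translates the first factor,'' i.e.\ that $p_2(G_0)=\{1\}$. This is not automatic: a priori $G_0\cong\R^2$ could act by \emph{screw motions}, translating on $\R^2$ while rotating on $N^2$. Ruling this out is a real step. The paper handles it by first showing $p_2(G_0^*)$ is compact (Claim~\ref{clm:product}), then computing $\dim Z(\infty)=3$ (Claim~\ref{clm:inf-dim}), and finally invoking Lemma~\ref{lem:screw} to deduce from $\dim(Z/G_0^*)(\infty)=1$ that the compact factor must be trivial. Your proposal skips this entirely.

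Second, and more seriously, your argument that the homomorphism $G/G_0\to O(2)$ is trivial does not work. You argue that a nontrivial involution $A$ on the $\R^2$-factor would have a fixed line descending to boundary-type points in $Z/G$. But the $\R^2$-factor is \emph{entirely collapsed} by the $G_0$-translations in passing to $Z/G$: every $G$-orbit meets $\{0\}\times N^2$, so $Z/G=N^2/p_2(G)$, and fixed lines of $A$ in the $\R^2$-factor leave no trace there. For instance $G$ generated by $G_0$ and $g=(\sigma,\tau)$ with $\sigma$ a reflection of $\R^2$ and $\tau$ the rotation by $\pi$ of $N^2=\R^2$ gives $Z/G=K(S^1_\pi)$, a perfectly good flat cone with no boundary, despite $A=\sigma\neq\mathrm{id}$. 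The correct contradiction, which the paper uses, is on the other factor: if $g\in\Gamma_i-\Gamma_i^*$, then $g$ conjugates $\Lambda_i\cong\Z^2$ by an orientation-reversing automorphism, so $p_1(g)$ is orientation-reversing on $\R^2$; since $g$ is orientation-preserving on $Z$ (by orientability of $M_i$), $p_2(g)$ must be orientation-reversing on $N^2$, and then $Z/G=N^2/p_2(G)$ has boundary, contradicting that $K_p$ has none. Moreover, even if your claim ``$A=\mathrm{id}$ for all $g$'' were established, it would only say $G$ acts on the $\R^2$-factor by translations; it would not by itself give abelianness of $G$ or cyclicity of $G/G_0$, which requires the separate analysis of the $N^2$-action above.
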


\begin{proof} 
Let $\Gamma_i^*$ be the abelian subgroup of $\Gamma_i$ of index 
$\le 2$, and $G^*$ the limit of $\Gamma_i^*$ under the convergence 
$(\frac{1}{\mu_i} \tilde B(p_i,r),\tilde p_i, \Gamma_i)\to
(Z,z_0,G)$, which is an abelian subgroup of $G$ of index 
$\le 2$.

\begin{clm} \label{clm:product}
 \begin{enumerate}
  \item The identity component $G_0^*$  of $G^*$ is 
        isomorphic to $\R^2;$
  \item $Z$ is isometric to a $G_0^*$-invariant product 
        $\R^2\times N^2$ such that $p_2(G_0^*)$ is compact,
        where $p_2:G_0^*\to\isom(N^2)$ is the projection.
 \end{enumerate}
\end{clm}

\begin{proof}
Consider the norm on $\Gamma_i^*$ defined by
$\|\gamma\| := d(\gamma \tilde p_i, \tilde p_i)/\mu_i$.
From the proof of Lemma \ref{lem:d2bdle} $(3)$ together with
$1/\delta_i\gg 1/\mu_i$, we can find generators $\gamma_i$,
$\tau_i$ of $\Gamma_i^*$ such that 
$\|\gamma_i\|\to 0$ and $\|\tau_i\|\to 0$ 
as $i\to\infty$.
Denoting by $H_i$ the subgroup of $\Gamma_i^*$ generated by $\gamma_i$,
take a minimal element $\sigma_i\in \tau_i + H_i$ with 
$\|\sigma_i\|=\min \{\,\|\gamma\|\,|\,
\gamma\in \tau_i+H_i\,\}$. 
Since $\Gamma_i^*$ is properly discontinuous, 
$\sup \{ \, d(H_i\tilde p_i, \sigma_i^n\tilde p_i)\,|\,n\in\Z\,\}
= \infty$.
Therefore for any large $R>0$ there exists an $n_i$ such that 
\begin{equation}
  R< d(H_i\tilde p_i,\sigma_i^{n_i}\tilde p_i) < R+1.
                                      \label{eq:far}
\end{equation}
Take $g_i\in H_i$ such that 
$d(g_i\tilde p_i,\sigma_i^{n_i}\tilde p_i)=
d(H_i\tilde p_i,\sigma_i^{n_i}\tilde p_i)$.
Since $\Gamma_i^*$ is abelian, it follows from homogeneity
that 
$(\frac{1}{\mu_i} \tilde B(p_i,r),g_i\tilde p_i, \Gamma_i^*)$ 
also converges to $(Z,z_0,G^*)$. 
Let $H$ and $K$ be the limit of $H_i$ and the group generated by 
$\sigma_i$ respectively,
and let $I_H$ and $I_K$ denote the minimal totally convex subsets
containing $Hz_0$ and $Kz_0$ respectively.
Since $H$ and $K$ are noncompact, both $I_H$ and $I_K$ contains lines
and the splitting theorem implies that 
$I_H$ and $I_K$ isometrically splits as 
$I_H=E_1\times\R$, $I_K=E_2\times\R$, where $E_j$ are compact
(see \cite{CG:str}).
Therefore we have an $H$-invariant line $\ell_1$ in $I_H$ and
a $K$-invariant line $\ell_2$ in $I_K$. 
\eqref{eq:far} implies that $\ell_1$ is not parallel to $\ell_2$.
It follows that 
$Z$ is isometric to a product $\R^2\times N$.

Obviously  $G_0^*$ preserves  $\R^2\times N$ and 
is isomorphic to the vector group $\R^2$. In particular, 
$\dim N=2$.
If $\isom(N^2)$ is compact, clearly $p_2(G_0^*)$ is compact.
If $\isom(N^2)$ is noncompact, then $Z=\R^4$.
Let $\Omega$ be the set of minimal 
displacement of $G_0^*$:
$$
\Omega=\{ x\in\R^4\,|\,\delta_{\gamma}(x)=\min\delta_{\gamma},
    \,\text{for all}\, \gamma\in G_0^*\},
$$
where $\delta_{\gamma}(x)=d(\gamma x,x)$.
Then 
$\Omega$ is isometric to $\R^2$ and we have the decomposition 
$Z=\Omega\times\R^2$ satisfying the required properties.
\end{proof}

\begin{clm} \label{clm:inf-dim}
 $\dim Z(\infty)=\dim Z-1=3$. In particular, $N^2\simeq\R^2$.
\end{clm}

\begin{proof}
Choose a sequence $R_i\to\infty$ such that 
the pointed Gromov-Hausdorff distance between 
$(B(\tilde p_i,R_i;\frac{1}{\mu_i}\tilde B(p_i,r)),\tilde p_i)$ and
$(B(z_0, R_i), z_0)$ is less than $1/R_i$.
Take $\epsilon_i\to 0$ with $\lim \epsilon_iR_i = \infty$ and 
$\lim \epsilon_i/\mu_i =\infty$.
By $d_{p.GH}((\epsilon_i Z,z_0), (K(Z(\infty)), o_{\infty}))\to 0$,
$(\frac{\epsilon_i}{\mu_i}\tilde B(p_i,r),\tilde p_i, \Gamma_i)$
converges to a triplet $(K(Z(\infty)),o_{\infty},G_{\infty})$,
where $o_{\infty}$ is the vertex of the cone $K(Z(\infty))$.
In a way similar to Claim \ref{clm:product},
we have 
$\dim G_{\infty}=2$ and $\dim K(Z(\infty))=4$.
\end{proof}

\begin{lem} \label{lem:screw}
Let $W^n=\R^k\times N^{n-k}$ be an $n$-dimensional complete Alexandrov 
space with nonnegative curvature and with 
$\dim N(\infty)=n-k-1$.
Let $\Gamma$ be a group of isometries of $W$ isomorphic to $\R^k$ 
preserving the splitting $W^n=\R^k\times N^{n-k}$ such that 
$p_1(\Gamma)\simeq\R^k$ and $p_2(\Gamma)$ is compact,
where $p_1: \Gamma\to\isom(\R^k)$ and $p_2:\Gamma\to\isom(N)$ are
the projections. Then 
$$
   \dim \left(\frac{W}{\Gamma}\right)(\infty)
         = \dim N^{n-k}(\infty) - \dim p_2(\Gamma).
$$ 
\end{lem}

\begin{proof}
From the assumption, $p_2(\Gamma)$ is isomorphic to a torus,
say $T^m$. Let $(\R\times K(N(\infty)),o,\Gamma_{\infty})$ be 
any limit of $(\epsilon W,w_0,\Gamma)$ as $\epsilon\to 0$.
It is not difficult see that 
$\Gamma_{\infty}\simeq\R^k\times T^m$.
Therefore
$$
 \left(\frac{W}{\Gamma}\right)(\infty)
     =  \left(\frac{K(N(\infty))}{T^m}\right)(\infty).
$$
From the assumption, there is a $p_2(\Gamma)$-invariant
point (soul) of $N$, say $p_0$.
Since we have an expanding map 
$$
 \frac{N^{n-k}(\infty)}{T^m} \to \frac{\Sigma_{p_0}(N^{n-k})}{T^m},
$$
it follows from $\dim N(\infty)=n-k-1$ 
that $\dim(N^{n-k}(\infty)/T^m)=n-k-m-1$.
Thus, the $T^m$-action on $N(\infty)$ is effective,
and we have the required equality.
\end{proof}

Since $\dim(Z/G_0^*)(\infty)=1$, it follows from 
Claims \ref{clm:product}, \ref{clm:inf-dim} and
Lemma \ref{lem:screw} that 
$p_2(G_0^*)=\{ 1\}$, where $p_2:G_0^*\to\isom(N^2)$ is the projection.
It follows that
$G^*$ is isomorphic to the direct sum $\R^2\oplus (G^*/G^*_0)$ 
which preserves the splitting $Z=\R^2\times N^2$.
Since $N^2/(G^*/G_0^*)$ is a flat cone,  so is $N^2$, and 
$G^*/G_0^*$ must be a finite cyclic group $\Z_m$
for some integer $m$. 

Next we show (3), or equivalently $\Gamma_i=\Gamma_i^*$.
Suppose that (3) does not hold.
Since the nontrivial deck transformation of the covering
$T^2\to K^2$ reverses the orientation of $T^2$
and since any element of $G$ is orientation-preserving,
any nontrivial element $g\in G-G^*$ 
has the property that both $p_1(g)\in\isom(\R^2)$  and 
$p_2(g)\in\isom(N^2)$ are orientation-reversing.
Therefore $Z/G$ must have nonempty boundary, a contradiction.

Let $x_0\in N^2$ be a fixed point of
$G/G_0$-action. Since $L(\Sigma_{x_0})/\#(G/G_0)$ $=L(\Sigma_p)$,
it follows that $m\le 2\pi/L(\Sigma_p)$.
This completes the proof of Proposition \ref{prop:dim2univ}.
\end{proof}

From Lemma \ref{lem:d2bdle} and Proposition \ref{prop:dim2univ}, 
the proof of Proposition \ref{prop:y4y3tor} is now complete.
\par\medskip
Let $\check p_i\in\tilde B(p_i,r)$ be a point over 
$\hat p_i\in B(p_i,r)$.
Passing to a subsequence, we may assume that 
$(\frac{1}{\delta_i} \tilde B(p_i,r), \check p_i, \Gamma_i)$ converges to
a triplet $(W,w_0,\Gamma)$.
Note that $\Gamma$ is abelian and 
$W/\Gamma=Y$.

\begin{prop} \label{prop:dim2univ2} 
Under the situation above, 
$W$ is isometric to a product $\R^2\times L^2$ preserved by
the $\Gamma$-action with $L^2\simeq \R^2$ satisfying the following:
 \begin{enumerate}
  \item Let $q_1:\Gamma\to\isom(\R^2)$ and $q_2:\Gamma\to\isom(L^2)$ be the 
        projections. Then $q_1(\Gamma)$ uniformly acts as 
        translation and $q_2(\Gamma) \simeq\Z_n$ 
        for some positive integer $n$.\par 
        Consequently, $\Gamma$ is isomorphic to one of $\Z^2$, 
        $\R\oplus\Z$ or $\R\oplus\Z\oplus\Z_n;$
  \item Letting $p_0\in L$ be a $q_2(\Gamma)$-invariant point,
        we have 
   \begin{equation*}
     L(\Sigma_p)\le\frac{L(\Sigma_{p_0}(L^2))}{n}\le\frac{2\pi}{n}.
          \label{eq:leng-ineq}
  \end{equation*}
 \end{enumerate}
\end{prop}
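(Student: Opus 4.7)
The plan is to repeat the double-rescaling argument of Proposition~\ref{prop:dim2univ} at the finer scale $1/\delta_i$, using the conclusions of that proposition as additional input. Since $\tilde B(p_i,r)\simeq\R^2\times D^2$ by Proposition~\ref{prop:dim2univ}\,(3), we have $\dim W=4$. Viewing the convergence $(\tfrac{1}{\delta_i}\tilde B(p_i,r),\check p_i)\to (W,w_0)$ as obtained from $(\tfrac{1}{\mu_i}\tilde B(p_i,r),\tilde p_i)\to (Z,z_0)$ by further rescaling with the factor $\delta_i/\mu_i\to 0$ (legitimate because $\check p_i$ lies over $\hat p_i$ and $d(p_i,\hat p_i)/\mu_i\to 0$), Lemma~\ref{lem:expanding2} yields an expanding map $\Sigma_{z_0}(Z)\to W(\infty)$. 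By Proposition~\ref{prop:dim2univ} together with Claim~\ref{clm:inf-dim} we have $Z\simeq \R^4$, so $\Sigma_{z_0}(Z)=S^{3}$ and $\dim W(\infty)\ge 3$. Combined with Proposition~\ref{prop:ideal} this forces $\dim W(\infty)=3$ and reduces the soul $S_W$ to a point.

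Next I would reproduce the argument of Claim~\ref{clm:product} almost verbatim, with $\mu_i$ replaced by $\delta_i$. Starting from generators of $\Gamma_i=\Z^2$ (which is abelian), selecting a generator $\sigma_i$ of minimal $1/\delta_i$-displacement in an appropriate coset, and then picking iterates $\sigma_i^{n_i}$ via the finite-displacement selection~\eqref{eq:far} at scale $1/\delta_i$, one produces in the equivariant limit two independent noncompact subgroups of $\Gamma$ whose orbits through $w_0$ give two transverse lines. Toponogov's splitting theorem then yields $W=\R^2\times L^2$ preserved by $\Gamma$, with $\dim L^2=2$. The identity component $q_2(\Gamma)_0$ must be trivial, for otherwise Lemma~\ref{lem:screw} (applied with $k=2$) would force $\dim W(\infty)<3$; hence $q_2(\Gamma)$ is finite, and since $L^2/q_2(\Gamma)$ must be a flat cone over a circle (as a piece of $Y$), $L^2$ is itself a flat cone acted on by a cyclic rotation group $\Z_n$ and therefore $L^2\simeq \R^2$. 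Because $\Gamma_i=\Z^2$ has rank $2$ and $\delta_i\ll\mu_i$, the projection $q_1(\Gamma)$ contains at most one continuous $\R$-factor, so $q_1(\Gamma)\in\{\Z^2,\,\R\oplus\Z\}$. Together with the possible $\Z_n$ in $q_2(\Gamma)$ this gives the three isomorphism types $\Z^2$, $\R\oplus\Z$, $\R\oplus\Z\oplus\Z_n$; that $q_1(\Gamma)$ acts by translations follows from the abelian equivariant limit structure combined with the fact that any nontranslation planar isometry fixes a point, which would contradict the construction of the lines.

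For assertion~(2), let $p_0\in L^2$ be fixed by $q_2(\Gamma)\simeq\Z_n$. Since $L^2$ is a flat cone, $\Sigma_{p_0}(L^2)$ is a circle of length at most $2\pi$, giving $L(\Sigma_{p_0}(L^2))/n\le 2\pi/n$. For the left inequality, observe that in the chain of rescalings relating $W$, $Z$, and $K_p$, the vertex of $K_p$ corresponds to the image of $p_0$ in $L^2/\Z_n$, and the induced expanding map at infinity contracts cone angles at the corresponding vertex; equivalently, $\Sigma_p$ is identified (or covered in an angle-decreasing way) with the link of the cone point in the $\Z_n$-quotient of $\Sigma_{p_0}(L^2)$, which has length exactly $L(\Sigma_{p_0}(L^2))/n$, yielding $L(\Sigma_p)\le L(\Sigma_{p_0}(L^2))/n$.

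The hard part will be the second paragraph, specifically the careful diagonal subsequence argument that transports the two $\R$-factors of $G^*$ in $Z$ to two transverse lines in $W$ despite the displacements at scale $1/\delta_i$ being multiplied by $\mu_i/\delta_i\to\infty$. Once this is in place, the remaining bookkeeping in (1) and the computation of cone angles in (2) is routine, as it mirrors the corresponding steps in Proposition~\ref{prop:dim2univ}.
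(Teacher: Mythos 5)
Your overall strategy — reducing to a point soul of $W$ via an ideal-boundary dimension count and then redoing the Claim~\ref{clm:product} splitting at scale $1/\delta_i$ — is genuinely different from the paper's route (the paper instead gets $\dim W=4$ from an expanding map $(Z,z_0)\to(W,w_0)$, splits into cases $\dim Y=4$ and $\dim Y=3$, extracts the $\R^2\times L^2$ splitting by elementary group-theoretic arguments on $\Gamma_0$ and $\Gamma/\Gamma_0$, and then proves (1)–(2) by passing to equivariant ideal-boundary cones $K(W(\infty))$, $K(L^2(\infty))$). But your version has several real gaps. First, ``$\dim W=4$'' does not follow from $\tilde B(p_i,r)\simeq\R^2\times D^2$: Gromov--Hausdorff limits under rescaling can drop dimension regardless of the topology of the approximating spaces; the paper's expanding map $Z\to W$ is exactly what forces $\dim W\ge\dim Z=4$. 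Second, your application of Lemma~\ref{lem:expanding2} needs identical base points at both scales, but the $W$-limit is based at $\check p_i$ (over $\hat p_i$) while the $Z$-limit is based at $\tilde p_i$ (over $p_i$); the assertion $d(p_i,\hat p_i)/\mu_i\to 0$ is made without justification, and it is not at all clear from the construction of $\hat p_i$ in Theorem~\ref{thm:rescal}.

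Third, the claim that a nontrivial $q_2(\Gamma)_0$ ``would force $\dim W(\infty)<3$'' misstates Lemma~\ref{lem:screw}: that lemma bounds $\dim(W/\Gamma_1)(\infty)$ for a subgroup $\Gamma_1\simeq\R^k$, not $\dim W(\infty)$, which equals $3$ independently of the group. Moreover, in this proposition $\Gamma_0\simeq\R$ (not $\R^2$ as in Proposition~\ref{prop:dim2univ}), so ``applied with $k=2$'' does not fit; the correct contradiction here comes from $\dim Y(\infty)\ge 1$ and a $k=1$ application, or — as the paper actually argues — from the finiteness of $\Lambda_\infty$ forced by the expanding map $K(\Sigma_p)\to K(Y(\infty))=K(L^2(\infty))/\Lambda_\infty$. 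Fourth, your argument for (2) (``the induced expanding map at infinity contracts cone angles,'' identifying $\Sigma_p$ with the link of the cone point in $L^2/\Z_n$) is too loose to carry the inequality $L(\Sigma_p)\le L(\Sigma_{p_0}(L^2))/n$; the paper establishes this via two concrete expanding maps, $K(\Sigma_p)\to K(L^2(\infty))/\Lambda_\infty$ and $K(L^2(\infty))/\Lambda_\infty\to K_{p_0}(L^2)/\Lambda$, and you would need something equally explicit. Finally, reproducing Claim~\ref{clm:product} ``almost verbatim'' glosses over the case $\dim Y=4$, where $\Gamma\simeq\Z^2$ does not collapse at scale $1/\delta_i$ and the norms $\|\gamma_i\|$, $\|\tau_i\|$ do not go to zero, so that argument's premise fails (though the conclusion there is immediate for other reasons, as in the paper).
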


\begin{proof}
Since $\delta_i\ll \mu_i$, the Alexandrov convexity in nonnegative
curvature yields an expanding map 
$(Z,z_0)\to (W,w_0)$, where $Z$ is as in Proposition
\ref{prop:dim2univ} (compare Lemma \ref{lem:expanding2}).
In particular, $\dim W=4$. 
From the noncompactness of $\Gamma$, we have a splitting 
$W=\R\times Q^3$.

We show the existence of $\Gamma$-invariant splitting
$W=\R^2\times L^2$.
First assume $\dim Y=4$. From Proposition \ref{prop:y4y3tor},
$\Gamma\simeq\Gamma_i\simeq\Z^2$ and we have 
a splitting $W=\R^2\times L^2$ preserved by
the $\Gamma$-action.

Next assume that $\dim Y=3$, yielding $\dim \Gamma=1$. 
Since any limit group of a nontrivial subgroup of $\Gamma_i$
under the convergence $\Gamma_i\to\Gamma$ is noncompact, 
we have $\Gamma_0\simeq \R^1$.
By Proposition \ref{prop:y4y3tor},
the soul of $Y$ is a circle.
It follows that $\Gamma/\Gamma_0$ is infinite.

Now we show that $W$ isometrically splits as $W=\R^2\times L^2$.
Otherwise, the group of isometries of $Q^3$ is compact,
and $\Gamma$ preserves the splitting $\R\times Q^3$.
Obviously $q_1(\Gamma)=\R$, 
where $q_1:\Gamma\to\isom(\R)$ is the projection.
It turns out that $\ker q_1$ is a infinite discrete group in the 
compact group $\isom(Q^3)$. Since $\Gamma$ is closed in 
$\isom(W)$, it is a contradiction.

Next we show that there exists a $\Gamma$-invariant 
splitting $W=\R^2\times L^2$ such that 
$q_1(\Gamma)\simeq\R\oplus\Z$ and $q_2(\Gamma)$ is 
compact.
If $\isom(L^2)$ is compact, it is clear.
If $\isom(L^2)$ is noncompact, then $W=\R^4$.
Let $\Gamma_1$ be a subgroup of $\Gamma$ isomorphic to 
$\R\oplus\Z$, and let $\Omega$ be the set of minimal 
displacement of $\Gamma_1$. Then 
$\Omega=\R^2$ and we have the decomposition 
$W=\Omega\times\R^2$ satisfying the required properties.

We show $(1)$ and $(2)$.
We put $\Lambda:=q_2(\Gamma)$ for simplicity.
Let 
$(K(W(\infty)), o, \Gamma_{\infty})$ and 
$(K(L^2(\infty)),o,\Lambda_{\infty})$
be any limits of  
$(\epsilon W, w_0, \Gamma)$ and 
$(\epsilon L^2, u_0,\Lambda)$ 
as $\epsilon\to 0$ respectively,
where $K(W(\infty))=\R^2\times K(L^2(\infty))$.
Note that $q_1(\Gamma_{\infty})=\R^2$.
It follows that 
$$
K(Y(\infty)) = \frac{\R^2\times K(L^2(\infty))}{\Gamma_{\infty}}
             = \frac{K(L^2(\infty))}{\Lambda_{\infty}}.
$$
Recall that we have an expanding map
$$
 K(\Sigma_p) \to K(Y(\infty))=\frac{K(L^2(\infty))}{\Lambda_{\infty}}.
$$
It follows that $\Lambda_{\infty}$ is finite and therefore
$\Lambda\simeq\Lambda_{\infty}$.
Taking a $\Lambda$-invariant point $p_0\in L$,
we have an injective homomorphism 
$\rho:\Lambda\to SO(2)$.
Note that we also have an expanding map
$$
 \frac{K(L^2(\infty))}{\Lambda_{\infty}} 
                   \to \frac{K_{p_0}(L^2)}{\Lambda},
$$
which concludes that 
\begin{equation*}
   L(\Sigma_p)\le\frac{L(\Sigma_{p_0}(L^2))}{n}\le\frac{2\pi}{n}.
%        \label{eq:leng-ineq}
\end{equation*}

Finally remark that 
if $\Gamma$ has no torsion,  $\Gamma$ is isomorphic to either $\Z^2$
or $\R\oplus\Z$, and if $\Gamma$ has a torsion,
$\Gamma$ is isomorphic to $\R\oplus\Z\oplus\Z_n$.
\end{proof}

\begin{lem} \label{lem:finitecover}
A finite covering space of $B(p_i,r)$ is homeomorphic to 
$T^2\times D^2$.
\end{lem}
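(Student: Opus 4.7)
My plan is to exhibit the desired finite cover as an $m$-fold cyclic cover on which a free isometric $T^2$-action can be constructed; the conclusion will then follow because a principal $T^2$-bundle over a contractible 2-disk is trivial.

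By Proposition \ref{prop:dim2univ}(3), $B(p_i,r)$ is a $D^2$-bundle over $T^2$, so $\Gamma_i:=\pi_1(B(p_i,r))\simeq \Z^2$ and the universal cover $\tilde B(p_i,r)$ is contractible. Proposition \ref{prop:dim2univ}(1)--(2) identifies the rescaled limit $(\tfrac{1}{\mu_i}\tilde B(p_i,r),\tilde p_i,\Gamma_i)\to (Z,z_0,G)$ with $Z=\R^2\times N^2$ preserved by $G=G_0^*\oplus \Z_m$, where the identity component $G_0^*\simeq \R^2$ acts as translations on the $\R^2$-factor and $N^2$ is topologically $\R^2$. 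For each sufficiently large $i$, let $\Gamma_i'\subset \Gamma_i$ be the index-$m$ subgroup which, under the equivariant Gromov--Hausdorff approximation $\Gamma_i\to G$, maps into $G_0^*=\ker(G\to G/G_0^*\simeq\Z_m)$. Put $B'(p_i,r):=\tilde B(p_i,r)/\Gamma_i'$; this is an $m$-fold cyclic cover of $B(p_i,r)$ with deck group $\Gamma_i/\Gamma_i'\simeq\Z_m$, and it is itself a $D^2$-bundle over a $2$-torus since $\Gamma_i'\simeq\Z^2$.

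To show $B'(p_i,r)\simeq T^2\times D^2$, I construct a free isometric $T^2$-action on it. The $G_0^*\simeq\R^2$-action on $Z$ is isometric and commutes with the limit of $\Gamma_i$ (as $G$ is abelian). By an equivariant regularization of the collapsing — an application of Theorem \ref{thm:cap} in its equivariant form, equivalently the abelian Cheeger--Fukaya--Gromov smoothing of a nearly-flat $G_0^*$-Killing structure adapted to the Alexandrov setting — this $\R^2$-action lifts, for large $i$, to a (nearly) isometric $\R^2$-action on a neighborhood of $\tilde p_i$ in $\tilde B(p_i,r)$ that commutes with the deck $\Gamma_i$-action. The limit $\R^2$-action on $Z$ is free (it acts by translations on the $\R^2$-factor, trivially on $N^2$), so the approximate action is free on an appropriate set containing $\tilde p_i$. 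By the defining property of $\Gamma_i'$, the quotient $\R^2/\Gamma_i'$ is a torus $T^2$, and the $\R^2$-action descends to a free isometric $T^2$-action on $B'(p_i,r)$.

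The $T^2$-orbits are 2-dimensional, so the orbit map $B'(p_i,r)\to B'(p_i,r)/T^2$ is a principal $T^2$-bundle; in the rescaled limit, this orbit space is $Z/G_0^*=N^2\simeq \R^2$, so intersected with the compact ball we obtain an orbit space homeomorphic to $D^2$. A principal $T^2$-bundle over $D^2$ is trivial, yielding $B'(p_i,r)\simeq T^2\times D^2$ as required. The main obstacle in this plan is Step~3, the passage from an approximate to an exact isometric $T^2$-action; once this is secured via the equivariant fibration/capping machinery, the remaining topological conclusion is immediate.
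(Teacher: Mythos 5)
Your plan hinges on constructing, for large $i$, an honest free isometric $T^2$-action on $B'(p_i,r)$ by lifting the limit $G_0^*\simeq\R^2$-action from $Z$ and then ``regularizing.'' That is precisely where the argument breaks down, and neither of the two tools you invoke can close the gap. Theorem \ref{thm:cap} takes as \emph{input} a $G$-action already present on the collapsing space $M$ (together with a $G$-action on the limit) and only produces a $G$-\emph{equivariant} fibration; it does not manufacture a group action on $M$ from one on the limit. The Cheeger--Fukaya--Gromov construction of approximate nilpotent actions (N-structures) does produce such actions, but it is established under a two-sided bound $|K|\le\Lambda$; under only $K\ge -1$, which is the standing hypothesis here, that machinery is unavailable --- this is emphasized in the introduction of the paper as the very reason the present problem is harder than the bounded-curvature case. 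Indeed, the entire point of Sections \ref{sec:dim3nobdyloc}--\ref{sec:dim3nobdyglob} is that even constructing a local $S^1$-action (one circle, not a torus) from collapsing under a lower curvature bound already requires a long rescaling-and-patching argument; obtaining a global free $T^2$-action on $B'(p_i,r)$ by invoking ``equivariant regularization'' is not a deduction from available results.

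There is a second, smaller discrepancy: you work with the coarser $\mu_i$-rescaling and the data of Proposition \ref{prop:dim2univ}, whereas the subgroup $\Gamma_i'$ that makes the argument run is defined in the paper via the finer $\delta_i$-rescaling of Proposition \ref{prop:dim2univ2} (as $\ker(q_2)$ in the limit), because it is at that scale that $B'(p_i,r)\simeq B(\breve p_i,R\delta_i)$ and the limit of $(\tfrac{1}{\delta_i}B'(p_i,r),\breve p_i,\Lambda_i)$ can be identified. The paper then splits by dimension of that limit. If $\dim Y=4$, the cover $B'(p_i,r)$ no longer collapses and Stability Theorem \ref{thm:stability} identifies it with a ball in $T^2\times L^2$, i.e.\ with $T^2\times D^2$; no group action is needed. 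If $\dim Y=3$, the cover collapses to a $3$-space and one applies the already-proved Theorem \ref{thm:patch} to obtain a locally smooth $S^1$-fibration on $B'(p_i,r)$ over $S^1\times D^2$; if that Seifert fibration still has a singular orbit, the proof iterates the rescaling argument to peel off the Seifert invariant, passing to a deeper finite cover $\Gamma_i''$ of explicitly bounded index. In particular, in that second case the ``finite cover'' of the lemma is not necessarily the index-$m$ cover you single out. So even setting aside the $T^2$-action issue, your proposal would need to say something about why $\Gamma_i'$ itself suffices, which is exactly what fails in the $\dim Y=3$ branch of the paper's argument.
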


\begin{proof}
Put $\Gamma':=\ker(q_2)$ and take the subgroup $\Gamma_i'\subset \Gamma_i$
such that
\begin{enumerate}
 \item  $\Gamma_i'$ converges to $\Gamma'$ under the convergence
        $(\frac{1}{\delta_i}\tilde B(p_i,r),\check p_i, \Gamma_i)\to
        (W, w_0,\Gamma)$;
 \item  $\Lambda_i:= \Gamma_i/\Gamma_i'\simeq 
            \Gamma/\Gamma'\simeq \Lambda\simeq\Z_n$.
\end{enumerate}
We consider the quotient $B'(p_i,r):=\tilde B(p_i,r)/\Gamma_i'$.
Let $\breve p_i\in B'(p_i,r)$ be a point over $\hat p_i$.
Lifting the $d_{\hat p_i}$-gradient flow on 
$B(p_i,r)-B(\hat p_i, R\delta_i)$
to that on $B'(p_i,r)-B(\breve p_i, R\delta_i)$ 
combined with $d_{\breve p_i}$-gradient flow,
we obtain
$$
      B'(p_i,r)\simeq B(\breve p_i, R\delta_i)
$$
(see Theorem \ref{thm:rescal}).

First assume $\dim Y=4$, and note that 
$(\frac{1}{\delta_i} B'(p_i,r),\breve p_i, \Lambda_i)$ 
converges to $(T^2\times L^2, w_o', \Lambda)$.
By Stability Theorem \ref{thm:stability},
$B'(p_i,r)\simeq B(\breve p_i, R\delta_i)$ is homeomorphic to
$T^2\times D^2$ for a sufficiently large $R$.

Next assume $\dim Y=3$.
Then $(\frac{1}{\delta_i} B'(p_i,r),\breve p_i, \Lambda_i)$ converges to 
$(\R^2/\Gamma'\times L^2, w_o', \Lambda)$, where 
$\R^2/\Gamma'\simeq S^1$. Recall that 
$$
   L(\Sigma_{p_0}(L^2))/n\ge L(\Sigma_p(X^2)),
$$
for the $\Lambda$-fixed point $p_0\in L^2$ (see 
Proposition \ref{prop:dim2univ2}). 
By  Theorem \ref{thm:patch},
$B'(p_i,r)$ is homeomorphic to either an 
$S^1$-bundle or a Seifert $S^1$-bundle over $S^1\times D^2$.
Thus $B'(p_i,r)$ is homeomorphic to either $T^2\times D^2$ or 
a $S_i(L^2)$-bundle over $S^1$, denoted $S^1\tilde\times S_i(L^2)$, 
where $S_i(L^2)$ is a Seifert $S^1$-bundle over $L^2$ 
with exactly one singular orbit. 
Suppose that $B'(p_i,r)$ is not homeomorphic to $T^2\times D^2$,
and consider 
$I\times S_i(L^2)\subset S^1\tilde\times S_i(L^2)=B'(p_i,r)$,
where $I\subset S^1$ is a closed interval.
For a point $q_i\in S_i(L^2)$ converging to 
the singular point locus $\in L^2$ 
and for any fixed $t_0$ in the interior of 
$I$, put $\bar p_i :=(t_0,q_i)\in I\times S_i(L^2)\subset B'(p_i,r)$.
Take $\nu_i\to 0$ such that 
$(\frac{1}{\nu_i} (I\times S_i(L^2)),\bar p_i)$ converges to
$(\R\times K(\Sigma_{p_0}(L^2)), o)$,
and let $\widetilde S_i(L^2)$ denotes the universal cover of 
$S_i(L^2)$ with the deck transformation group
$H_i\subset \Gamma_i$.
We may assume that 
$(\frac{1}{\nu_i} (I\times \widetilde S_i(L^2)),\bar p_i, H_i)$ 
converges to a triplet $(\R\times \R \times L_1^2, o, H)$.
From our assumption on the existence of singular orbit
in $S_i(L^2)$, it follows that  $H\simeq\R\oplus \Z_{n_1}$ for some 
$n_1 > 1$. Note that $n_1$ is related with the Seifert invariants 
of the singular orbit of $S_i(L^2)$ (see Proposition 4.3 in
\cite{SY:3mfd}).
 Take $H_i'\subset H_i$ such that 
\begin{enumerate}
 \item $(\frac{1}{\nu_i} (I\times\widetilde S_i(L^2)),\bar p_i,
       H_i')$ converges to $(\R\times\R\times L_1^2, o,H_0) ;$
 \item $H_i/H_i'\simeq \Z_{n_1}$.
\end{enumerate}
Note that $K(\Sigma_{p_1}(L^2_1))/\Z_{n_1}=K(\Sigma_{p_0}(L^2))$
for a $\Z_{n_1}$-invariant point $p_1\in L^2_1$. Therefore 
$$
     L(\Sigma_{p_1}(L^2_1)) \ge n_1n L(\Sigma_p(X^2)).
$$
Repeating this procedure finitely many times, 
we obtain  finite sequences of complete nonnegatively curved
surfaces $L^2$, $L^2_1$, $\ldots$, $L^2_k$, Seifert bundles
$S_i(L^2)$, $S_i(L^2_1)$, $\ldots$, $S_i(L^2_k)$
and groups 
$H_i\supset H_i^{(1)}\supset\cdots\supset H_i^{(k)}$
with  $H_i^{(j)}=\pi_1(S_i(L^2_j))$ such that the last 
$S_i(L^2_k)$ contains no singular orbits.
Letting $\rho_i:\Gamma_i'\to\pi_1(S_i(L^2))\oplus\Z$ be an 
isomorphism, 
put $\Gamma_i'':= \rho_i^{-1}(H_i^{(k)}\oplus\Z)$.
Then $\tilde B(p_i,r)/\Gamma_i''$ is homeomorphic to 
an $S^1$-bundle over $S^1\times D^2$, which is homeomorphic 
to $T^2\times D^2$.
\end{proof}

\begin{proof}[Proof of Theorem \ref{thm:locstrtor}]
Since $\partial B(p_i,r)$ is orientable, it follows from
Lemma \ref{lem:finitecover}
that $\partial B(p_i,r)\simeq T^3$
(see \cite{Sc:geometry}), and therefore $B(p_i,r)$
has Euler number $0$ as a $D^2$-bundle over $T^2$ and 
is homeomorphic to $T^2\times D^2$.

We put a compatible Seifert $T^2$-fibre structure on 
$B(p_i,r)$ as follows:
We again go back to the convergence 
$(\frac{1}{\mu_i}\tilde B(p_i,r), \tilde p_i, \Gamma_i) \to
(\R^2\times N^2,z_0,G)$ in Proposition \ref{prop:dim2univ},
and consider $B'(p_i,r)=\tilde B(p_i,r)/\Gamma_i'$, where 
$\Gamma_i'$ is a subgroup of $\Gamma_i$  converging to 
$G_0$ under the above convergence. 
Let $A'(p_i;r,2r):=\pi_i^{-1}(A(p_i;r,2r))$, where
$\pi_i:B'(p_i,2r)\to B(p_i,2r)$ is the covering projection.
Since 
$(\frac{1}{\mu_i}B'(p_i,r), p_i', \Gamma_i/\Gamma_i')$ converges to 
$(N^2, \bar z_0,G/G_0)$, by Equivariant Fibration Theorem \ref{thm:cap},
we have a $\Z_m$-equivariant $T^2$-bundle 
$A'(p_i;r,2r)\to A(\bar z_0;r,2r)$
and hence a $\Z_m$-equivariant homeomorphism 
$A'(p_i;r,2r)\simeq T^2\times A(\bar z_0;r,2r)$ where
$\Z_m=\Gamma_i/\Gamma_i'$ acts diagonally on 
$T^2\times A(\bar z_0;r,2r)$; freely on the $T^2$-factor
and rotationally on $A(\bar z_0;r,2r)$.
Therefore
$A(p_i;r,2r)$ is homeomorphic to the diagonal quotient
$(T^2\times A(\bar z_0;r,2r))/\Z_m$.
Finally we fill $B(p_i,r)$ with an obvious gluing by the $T^2$-fibred
$T^2\times D^2\simeq (T^2\times D^2(r))/\Z_m$, of the same type
of $\Z_m$-quotient
as $(T^2\times A(\bar z_0;r,2r))/\Z_m$.
We therefore obtain $B(p_i,2r)\simeq (T^2\times D^2)/\Z_m$.
This completes the proof of Theorem \ref{thm:locstrtor}.
\end{proof}

%                    \input{dim2wbdy}

%dim2wbdy.tex
\section{Collapsing to two-spaces with boundary } 
\label{sec:dim2wbdy}

In this section, 
we prove Theorem \ref{thm:dim2wbdy}.
First we define the fibre space $\cal F(X)$ stated there.

Let $X$ be a compact $2$-dimensional topological manifold with 
boundary, and let $F$ denote $S^2$ or $T^2$.
Let $\cal F_{\rm int}(X)$ denote either an $S^2$-bundle over $X$ 
(if $F=S^2$) or a Seifert $T^2$-bundle over $X$ (if $F=T^2$) 
which can be thought of as a main building block of 
$\cal F(X)$.
The building blocks of the complement of $\cal F_{\rm int}(X)$ are
constructed as follows.
Consider the following families of compact orientable $3$- 
or $4$-manifolds with boundary: 
\begin{enumerate}
\item The family $\cal A_s$ consists of $D^3$ and a twisted 
      $I$-bundle $P^2\tilde\times I$ over the projective plane $P^2$;
\item The family $\cal A_t$ consists of $S^1\times D^2$ and a twisted 
       $I$-bundle $K^2\tilde\times I$ over the Klein bottle $K^2$;
\item The family $\cal B_s$ consists of $D^4$ and $D^2$-bundles over 
       $S^2$ with Euler numbers $\pm 1$, $\pm 2$, 
       and of a $D^2$-bundle $P^2\tilde\times_0 D^2$ over $P^2$
       with Euler number $0;$
\item The family $\cal B_t$ consists of $D^4$, $S^1\times D^3$ 
      and  $D^2$-bundles over $S^2$, $P^2$ or $K^2$.
\end{enumerate}
Note that $P^2\tilde\times_0 D^2$ can be characterized as the 
$D^2$-bundle over $P^2$ with boundary homeomorphic to 
$P^3\# P^3$.

Let successive points (possibly empty) 
$q_0,\ldots, q_{k-1}$ of $C$  be associated with
each component $C$ of $\partial X$. 
Let  $r$ represent $s$ (resp. $t$ ) if $F=S^2$ (resp. if $F=T^2$).
Suppose that 
an element $Q_{\alpha-1, \alpha}\in \cal A_r$, called a {\it section},
is associated with each edge $\widehat{q_{\alpha-1}q_{\alpha}}$
of $C$,  and that 
an element $R_{\alpha}\in\cal B_r$, called a {\it connecting part},
the part connecting $Q_{\alpha-1, \alpha}$ and $Q_{\alpha, \alpha+1}$,
is associated with each point $q_{\alpha}$, 
so as to satisfy the following:
\begin{enumerate}
 \item $\partial R_{\alpha}$ is a gluing 
   of $Q_{\alpha-1, \alpha}$ and $Q_{\alpha, \alpha+1}$
   along their boundaries;
 \item  Let $I=[0,1]$ and let $\cal F_{\rm cap}(C)$ be an identification 
   space of $R_{\alpha}$,
   $Q_{\alpha-1,\alpha}\times I$,
   $0\le\alpha\le k-1$\,\,$({\rm mod}\, k)$, 
   where $Q_{\alpha-1,\alpha}\times 1$ and 
   $Q_{\alpha,\alpha+1}\times 0$ are glued with 
   $Q_{\alpha-1,\alpha}\subset\partial R_{\alpha}$ and
   $Q_{\alpha,\alpha+1}\subset\partial R_{\alpha}$ respectively$;$
 \item  Note that $\partial \cal F_{\rm cap}(C)$ has an $F$-bundle 
   structure over $C$. 
   Then $\partial \cal F_{\rm cap}(C)$ is required to be fibre-wise 
   homeomorphic to the 
   component of $\partial \cal F_{\rm int}(X)$ corresponding to 
   $C$. 
\end{enumerate}
Letting $\cal F_{\rm cap}(\partial X)$ denote
the disjoint union $\displaystyle{\amalg_{C}\cal F_{\rm cap}(C)}$,
we can glue $\cal F_{\rm int}(X)$ and $\cal F_{\rm cap}(\partial X)$ 
along their boundary fibres, which is denoted by 
$$
 \cal F(X):= \cal F_{\rm int}(X)\bigcup \cal F_{\rm cap}(\partial X).
$$
The set of points $\{ q_{\alpha}\}$ are called the {\it break points}
of $\partial X$ associated with the fibre space $\cal F(X)$.
Figure 4 illustrates the decomposition in $\cal F(X)$. 
\par

From the construction,
$\cal F(X)$ has a singular fibre structure over $X$.
More explicitly, there is a continuous surjective map
$f:\cal F(X)\to X$ such that $f$ restricted to $\interior X$
is either an $S^2$-bundle or a Seifert $T^2$-bundle and 
$f$ restricted to $\partial X$ is a singular fibration 
whose fibres are ones of a point, $S^1$, $S^2$, $P^2$ and $K^2$
determined by the topological types of sections and connecting parts
involved in $\cal F_{\rm cap}(\partial X)$
(see the figures (1) $\sim$ (10) after Lemma \ref{lem:deform-cap}).
 \par

%%%%%%%%%%%%%%%%%%%%%%%%%%%
\vspace{0.4cm}
\begin{center}
\begin{tikzpicture}
[scale = 0.5]
\draw [-,thick] (-4,0) -- (4.5,0);
\draw [-,thick] (-6,-2.5) -- (4.5,-2.5);
\draw [-, thick] (-6,-2) to [out=90, in=180] (-4,0);
\fill (-5,-1.5) circle (0pt) node [right]{$R_\alpha$};
\fill (-0.3, -1.3) circle (0pt) node [right]{$Q_{\alpha-1,\alpha}\times I$};
\draw [-,thick] (-1.7,0) -- (-1.7,-6.5);
\draw [-,thick] (-6,-2) -- (-6,-6.5);
\fill (-6.1,-5) circle (0pt) node [right]{$Q_{\alpha,\alpha+1}\times I$};
%\draw [-, thick] (2.5,-0.5) to [out=225, in=135] (2.5,-1.5);
\fill (-0.3,-5) circle (0pt) node [right]{$\mathcal F_{\rm int}(X)$};
\fill (-2, -7.5)  circle (0pt) node [below] {Figure 3};

\end{tikzpicture}
\end{center}

%\begin{figure}[h]
%\includegraphics[width=0.7\textwidth]{p82.eps}
%\caption{}
%%\label{fig_corner}
%\end{figure}

Now Theorem \ref{thm:dim2wbdy} is reformulated in more detail
as follows.

\begin{thm} \label{thm:dim2wbdy-break}
Suppose that a sequence of $4$-dimensional closed orientable 
Riemannian manifolds $M_i^4$ 
collapses to a two-dimensional compact Alexandrov
space $X$ with boundary under $K\ge -1$. 
Then $M_i^4$ is homeomorphic to a fibre space 
$\cal F(X)$ defined above such that the break points of
$\partial X$ are contained in 
${\rm Ext}(X)$.
\end{thm}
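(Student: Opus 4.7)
My plan is to decompose $M_i^4$ into an \emph{interior part} over a subdomain approximating $\interior X$ and a \emph{boundary part} over a collar of $\partial X$, identify the local models over each piece, and then patch. As a preliminary step I would establish (or invoke) the analogue of Theorem \ref{thm:collar} in dimension two, producing for each small $\nu>0$ a compact $X_\nu\subset\interior X$ such that $X - \interior X_\nu$ is a collar $\partial X\times [0,1)$. Applying Theorem \ref{thm:dim2nobdy} to the corresponding subdomain $M_{\nu,i}\subset M_i^4$ collapsing to $X_\nu$ yields $\cal F_{\rm int}$ as either an $S^2$-bundle or a Seifert $T^2$-bundle over $X_\nu$. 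This fixes the general fibre type $F\in\{S^2,T^2\}$, hence also $r\in\{s,t\}$ and the associated families $\cal A_r$, $\cal B_r$.

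Next, I would stratify $\partial X$ using its tangent geometry. For a point $q\in\partial X$ with $\diam(\Sigma_q)>\pi/2$ the double $D(X)$ admits an admissible $(2,\delta)$-strainer at $q$, so Fibration–Capping Theorem \ref{thm:orig-cap} applies to a neighborhood of $q$ and produces, on the corresponding piece of $M_i^4$, a locally trivial $F_{\rm cap}$-bundle over an arc of $\partial X$. In codimension $m=2$, Corollary \ref{cor:cap} forces $F_{\rm cap}\in\{D^3,P^2\tilde\times I,S^1\times D^2,K^2\tilde\times I\}$; compatibility with the interior bundle on the overlap restricts $F_{\rm cap}\in\cal A_s$ when $F=S^2$ and $F_{\rm cap}\in\cal A_t$ when $F=T^2$. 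These arcs furnish the sections $Q_{\alpha-1,\alpha}\times I$ in the definition of $\cal F_{\rm cap}(\partial X)$. The remaining boundary points, where the capping theorem cannot be applied, are exactly those with $\diam(\Sigma_q)\le\pi/2$, i.e.\ extremal points in $\partial X$; these are the candidates for break points.

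To treat each candidate break point $q\in{\rm Ext}(X)\cap\partial X$, I would run the rescaling argument of Theorem \ref{thm:rescal} at $q$. Writing $M_i^4\to X$ in the enlarged scale $\tfrac{1}{\delta_i}M_i^4$ around suitable $\hat p_i\to q$, we obtain a limit $(Y,y_0)$ of dimension $\ge 3$ that is complete, noncompact, nonnegatively curved, and has nonempty boundary (since $q\in\partial X$); Lemma \ref{lem:expand} gives $\dim Y(\infty)\ge \dim\Sigma_q$. Theorem \ref{thm:dim3class} then classifies the homeomorphism type of the large metric ball $B(\hat p_i,R\delta_i)\simeq B(p_i,r)$: running through the boundary cases (B-I, B-II, B-III and C), and intersecting with the constraint that the general fibre is $F$ and that the boundary of this local model matches the two adjacent section-types in $\cal A_r$, I would check that the local model is precisely an element $R_\alpha$ of $\cal B_r$, and that $\partial R_\alpha$ glues correctly to $Q_{\alpha-1,\alpha}\cup Q_{\alpha,\alpha+1}$. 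The constraint that break points occur only at extremal points of $\partial X$ falls out of the dichotomy $\diam(\Sigma_q)>\pi/2$ versus $\le\pi/2$ introduced in the previous paragraph.

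Finally, I would patch. The scheme mirrors the gluing arguments in Sections \ref{sec:dim3nobdyglob} and \ref{sec:dim3wbdy}, except that local $S^1$-actions are replaced by local $F$-bundle and $F_{\rm cap}$-bundle structures; overlaps between regular capping charts are handled fibre-wise by Theorem \ref{thm:stability-respect}, and the overlaps between a capping chart and a connecting-part chart at an extremal boundary point are handled by observing, from construction, that both induce the same $F$-bundle structure on the small sphere around that point. The main obstacle I expect is precisely this last compatibility at break points: one must verify that every homeomorphism type of connecting part forced by Theorem \ref{thm:dim3class} has its boundary decomposing as a concatenation of two sections in $\cal A_r$ with the prescribed $F$-bundle structure, and conversely that the finite list $\cal B_r$ already accommodates every geometric case. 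This requires a careful, but essentially bookkeeping, case analysis of Cases B and C of Theorem \ref{thm:dim3class}, using Proposition \ref{prop:plumbing} to identify $D^2$-bundles over $S^2$ and the mapping-class-group computations already used in Sections \ref{sec:dim3positive} and \ref{sec:dim2nobdytor} to pin down the twisted $I$-bundles $P^2\tilde\times I$, $K^2\tilde\times I$ and the bundle $P^2\tilde\times_0 D^2$.
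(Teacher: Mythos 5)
Your overall architecture (interior bundle, boundary caps, rescaling at candidate break points, patching) matches the paper's, but there is a concrete gap in the stratification of $\partial X$. You claim that for $q\in\partial X$ with $\diam(\Sigma_q)>\pi/2$ the double $D(X)$ admits an admissible $(2,\delta)$-strainer at $q$, so that the Fibration--Capping Theorem applies and produces a locally trivial $F_{\rm cap}$-bundle near $q$. This is false. For $q\in\partial X^2$ one has $\Sigma_q(X)=[0,\ell]$ and $\Sigma_q(D(X))=S^1(2\ell)$; an admissible $(2,\delta)$-strainer requires a pair $(a_1,b_1)$ with $a_1,b_1\in X$ and $\tilde\angle a_1 q b_1>\pi-\delta$, which forces $\ell>\pi-\tau(\delta)$, i.e.\ $q$ must be (almost) a \emph{regular} boundary point, not merely non-extremal. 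Thus boundary points with $\pi/2<\ell<\pi-\tau(\delta)$ lie outside $R^D_\delta(X)$, the capping theorem gives no locally trivial bundle there, and your dichotomy ``$\diam(\Sigma_q)>\pi/2$ vs.\ $\le\pi/2$'' collapses: the set of points at which the capping fails is strictly larger than $\Ext(X)\cap\partial X$.

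Because of this, the assertion that ``the constraint that break points occur only at extremal points falls out of the dichotomy'' is unsupported. Those intermediate singular boundary points are not automatically non-break: you have to \emph{prove} that the two fibre types of $f_{i,{\rm cap}}$ on the two arcs of $\partial X_\nu$ flanking such a point agree, and that $B_{\rm cap}(p_i,r)$ admits a product fibre structure over an arc there. This is exactly the content of Lemma \ref{lem:non-ext}, which the paper establishes by a direct flow-curve construction (and which is what actually certifies that the break points are contained in $\Ext(X)$). Your proposal should instead stratify $\partial X$ into (approximately) regular boundary points (capping applies), singular-but-non-extremal boundary points (rescaling plus a Lemma \ref{lem:non-ext}-type product structure), and extremal boundary points (rescaling plus Theorem \ref{thm:dim3class} gives the connecting part $R_\alpha\in\cal B_r$). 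Without the middle case handled, the proof does not close.
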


To prove Theorem \ref{thm:dim2wbdy-break}, we need to understand 
the topology of a small metric ball in a collapsed $4$-manifold 
near a boundary point in the limit space. For this reason,
we again concentrate on a local problem, and 
consider the following situation that
a sequence of pointed complete $4$-dimensional orientable Riemannian
manifolds $(M_i^4,p_i)$ with $K \ge -1$ converge to a pointed
$2$-dimensional Alexandrov space $(X^2,p)$. 
Throughout the rest of this section, we assume that 
$p$ is a boundary point of $X^2$.
\par

We consider the local convergence $B(p_i,r)\to B(p,r)$
for a sufficiently small positive number $r$.
By Theorem \ref{thm:rescal}, we have sequences $\delta_i\to 0$ and 
$\hat p_i\to p$ such that
\begin{itemize}
  \item  for any limit $(Y,y_0)$ of $(\frac{1}{\delta_i}M_i, \hat p_i)$, 
     we have  $\dim Y\ge 3$;
  \item  $B(p_i,r)$ is homeomorphic to $B(\hat p_i,R\delta_i)$ for every 
      $R\ge 1$ and large $i$ compared to $R$.
\end{itemize}

In the sequel, we shall study the topology of $B(p_i,r)$
to  prove Theorem \ref{thm:dim2wbdy}.
For a fixed small number $r>0$, let $\nu$ be a sufficiently 
small positive number with $\nu/(\tan\theta_0/2) < r/2$, 
where $\theta_0=L(\Sigma_p) (\le\pi)$.
For arbitrary fixed $r_0\in (\nu/(\tan\theta_0/2), r/2)$,
applying Fibration-Capping Theorem \ref{thm:orig-cap}
to $B(p,r)\cap X_{\nu}$ and $A(p;r_0,r)\cap X_{\nu}$, we have 
decompositions 
\begin{gather}
B(p_i,r)=B_{\rm int}(p_i,r)\cup B_{\rm cap}(p_i,r),
           \label{eq:decomp-1}\\ 
A(p_i;r_0,r)=A_{\rm int}(p_i;r_0,r)\cup A_{\rm cap}(p_i;r_0,r),
            \label{eq:decomp-2}
\end{gather}
together with compatible fibre bundle maps
\begin{gather}
f_{i,{\rm int}}:B_{\rm int}(p_i,r)\to B(p,r)\cap X_{\nu},
                    \label{eq:decomp-3}\\
f_{i,{\rm cap}}: A_{\rm cap}(p_i;r_0,r)\to 
A(p;r_0,r)\cap\partial X_{\nu}.\label{eq:decomp-4}
\end{gather}
Let us denote by $F_i$ the fibre, either $S^2$ or $T^2$,
of $f_{i,{\rm int}}$.
Note that $f_{i,{\rm cap}}$ has two types of fibres,
one of which is denoted $F_{i,{\rm cap}}$.

\begin{thm} \label{thm:dim2wbdy-text}
Under the situation above the following holds$:$
 \begin{enumerate}
  \item  If $F_i=S^2$, then 
    \begin{enumerate}
     \item $F_{i,{\rm cap}}$ is homeomorphic to either
           $D^3$ or the twisted product $P^2\tilde\times I ;$
     \item $B(p_i,r)$ is homeomorphic to either 
           $D^4$, $S^2\tilde\times_{\omega} D^2$ 
           with $|\omega|\in\{1, 2\}$, 
           or $P^2\tilde\times_0 D^2 ;$
     \item if $B(p_i,r)$ is homeomorphic to $P^2\tilde\times_0 D^2$, then
      \begin{enumerate}
       \item the universal cover $\tilde B(p_i,r)$ of $B(p_i,r)$ 
             satisfies the following commutative diagram $:$
           \begin{equation*}
            \begin{CD}
              \tilde B(p_i,r) @>{\simeq} >> S^2\times D^2 \\
                 @V{\pi_i} VV        @VV{\pi} V    \\
              B(p_i,r)  @> {\simeq} >> (S^2\times D^2)/\Z_2,
            \end{CD}
           \end{equation*}
             where the diagonal $\Z_2$-action is free on the $S^2$-factor 
             and by reflection on the $D^2$-factor$;$
       \item the singular $S^2$-bundle structure on 
             $B(p_i,r)$ in $(1)$-$(c)$-$\text{\rm (i)}$ is compatible with 
             the fibre structures  on $B_{\rm int}(p_i,r)$ and 
             $A_{\rm cap}(p_i;r_0,r)$ induced from 
             $f_{i,{\rm int}}$ and $f_{i,{\rm cap}}$ respectively.
       \end{enumerate}
    \end{enumerate}
  \item  If $F_i=T^2$, then 
    \begin{enumerate}
     \item $F_{i,{\rm cap}}$ is homeomorphic to either
           $S^1\times D^2$ or the twisted product $K^2\tilde\times I ;$
     \item $B(p_i,r)$ is homeomorphic to either
           $D^4$, $S^1\times D^3$, or a $D^2$-bundle over $S^2$,
           $P^2$ or $K^2 ;$
     \item if $B(p_i,r)$ is homeomorphic to a $D^2$-bundle
           over $K^2$, then 
      \begin{enumerate}
       \item the double cover of $B(p_i,r)$ is homeomorphic to
             $T^2\times D^2;$
       \item for some $m\le \pi/L(\Sigma_p)$, 
             an $2m$-fold dihedral cover $B'(p_i,r)$ of $B(p_i,r)$ 
             satisfies the following commutative diagram $:$
         \begin{equation*}
          \begin{CD}
             B'(p_i,r) @>{\simeq} >> T^2\times D^2 \\
                @V{\pi_i} VV      @VV{\pi} V    \\
             B(p_i,r)  @> {\simeq} >> (T^2\times D^2)/D_{2m},
          \end{CD}
         \end{equation*}
             where the diagonal dihedral $D_{2m}$-action is free 
             on the $T^2$-factor and the action on the $D^2$-factor
             is generated by a rotation and a reflection $;$
       \item the $($generalized$)$ Seifert $T^2$-bundle structure on 
             $B(p_i,r)$ in $(2)$-$(c)$-$(ii)$ is compatible with 
             the fibre structures  on $B_{\rm int}(p_i,r)$ and 
             $A_{\rm cap}(p_i;r_0,r)$ induced from 
             $f_{i,{\rm int}}$ and $f_{i,{\rm cap}}$ respectively.
   \end{enumerate}
   \end{enumerate}
  \item  There is a singular fibre structure on $B(p_i,r)$
         compatible with the fibre structures on $B_{\rm int}(p_i,r)$
         and $A_{\rm cap}(p_i;r_0,r)$
         induced from $f_{i,{\rm int}}$ and $f_{i,{\rm cap}}$ 
         respectively.
 \end{enumerate}
\end{thm}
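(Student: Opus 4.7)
The plan is to run an analysis parallel to Sections \ref{sec:dim2nobdysph} and \ref{sec:dim2nobdytor} but now allowing $p \in \partial X^2$. First I would apply Theorem \ref{thm:rescal} to obtain $\delta_i\to 0$ and $\hat p_i\to p$ with $B(p_i,r)\simeq B(\hat p_i,R\delta_i)$ for large $R$, and let $(Y,y_0)$ be a limit of $(\tfrac{1}{\delta_i}M_i^4,\hat p_i)$ with $\dim Y\ge 3$. Since $p\in\partial X^2$, $\Sigma_p$ is an arc of length $\theta_0\le\pi$, so by Lemma \ref{lem:expand} the expanding map $\Sigma_p\to Y(\infty)$ forces $\dim Y(\infty)\ge 1$, and moreover $Y$ inherits a nonempty boundary. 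Together with Proposition \ref{prop:ideal} this bounds $\dim S\le\dim Y-2$. The topology of $B(p_i,r)\simeq B(\hat p_i,R\delta_i)$ is then read off from Theorem \ref{thm:dim3class} applied to the convergence $(\tfrac{1}{\delta_i}M_i^4,\hat p_i)\to(Y,y_0)$.

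For the sphere-fibre case $F_i=S^2$, the prescribed bundle $f_{i,\mathrm{int}}$ imposes that $\partial B(p_i,r)$ contain a large $S^2$-bundle region, while $F_{i,\mathrm{cap}}$ must be a $3$-manifold whose double bounds a cap inside a $D^2$-bundle; by Corollary \ref{cor:cap} this forces $F_{i,\mathrm{cap}}\in\{D^3,\,P^2\tilde\times I\}$. Among the cases listed in Theorem \ref{thm:dim3class}, only those producing $4$-manifolds whose boundary contains such a fibre decomposition survive: they are $D^4$, $S^2\tilde\times_\omega D^2$ and $P^2\tilde\times_0 D^2$. The bound $|\omega|\le 2$ follows from the estimate in Proposition \ref{prop:plumbing} used in the proof of Proposition \ref{prop:y3nobdy}, since $\theta_0\le\pi$ forces each of the two extremal endpoints of $C(0)$ to contribute at most $1$ to $|\omega|$. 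For the $P^2\tilde\times_0 D^2$ case the equivariant splitting of the rescaled universal cover, done exactly as in Proposition \ref{prop:dim2univ}, gives the $\Z_2$-diagonal description and the lift $\tilde B(p_i,r)\simeq S^2\times D^2$.

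For the torus-fibre case $F_i=T^2$ the strategy is that of Section \ref{sec:dim2nobdytor} adapted to a boundary base point. I would look at the universal cover $(\tfrac{1}{\delta_i}\tilde B(p_i,r),\tilde p_i,\Gamma_i)\to(W,w_0,\Gamma)$, apply the splitting theorem together with Lemma \ref{lem:screw} to obtain a $\Gamma$-invariant product decomposition $W=\R^2\times L^2$ in which $L^2$ is a nonnegatively curved open surface with boundary, and classify $\Gamma$ as in Proposition \ref{prop:dim2univ2}. When $q_2(\Gamma)$ contains a reflection across $\partial L^2$ the quotient is a $D^2$-bundle over $K^2$ with double cover $T^2\times D^2$; iterating the cover construction of Lemma \ref{lem:finitecover} yields the explicit dihedral $D_{2m}$-cover $B'(p_i,r)\simeq T^2\times D^2$. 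The length inequality $m\le\pi/L(\Sigma_p)$ is obtained by comparing $L(\Sigma_{p_0}(L^2))$ with $L(\Sigma_p)$ as in Proposition \ref{prop:dim2univ2}, with the extra factor of $2$ absorbed by the dihedral versus cyclic distinction. The remaining capping possibilities for $F_{i,\mathrm{cap}}$ are $S^1\times D^2$ and $K^2\tilde\times I$, matching item (2)(a).

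Finally, the compatible global fibre structure in part (3) is installed by the mapping-class-group gluing argument used to conclude Theorems \ref{thm:patchs^2} and \ref{thm:locstrtor}: the two fibrations on the overlap $A(p_i;r_0,r)$ coming from $f_{i,\mathrm{int}}$ and from the interior fibration on $B(p_i,r)$ differ by an element of $\cal M_+(S^1\times S^2)$ or $\cal M_+(T^3)$, and each of these representatives extends across the cap. The step I expect to be the main obstacle is producing the equivariant global identification in cases (1)(c)(ii) and (2)(c)(iii), namely showing that the $\Z_2$- or $D_{2m}$-symmetry detected on the rescaled universal cover actually propagates to a compatible action on the whole of $B(p_i,r)$ and matches both the interior bundle structure given by $f_{i,\mathrm{int}}$ and the capping structure given by $f_{i,\mathrm{cap}}$. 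Overcoming this will require applying the equivariant version of Fibration-Capping Theorem \ref{thm:cap} to $Y_\nu$ inside the limit $Y$, so as to lift the gluing equivariantly rather than merely topologically.
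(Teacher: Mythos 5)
Your overall strategy (rescale to a noncompact nonnegatively curved limit $Y$, then read off the topology of the small ball from the classification in Theorem \ref{thm:dim3class} and the soul theorem, and handle the $P^2\tilde\times_0 D^2$ and $K^2$-bundle cases via equivariant covers) is essentially the paper's approach, but several steps are incorrect or missing.

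First, the claim that ``$Y$ inherits a nonempty boundary'' is wrong. Being at a boundary point of $X^2$ only gives $\dim\Sigma_p=1$ and hence $\dim Y(\infty)\ge 1$ (Lemma \ref{lem:expand}); it does not force $Y$ to have boundary. In the paper's analysis of the regular-boundary case, $Y$ splits as $Y_0\times\R$ where $Y_0$ can have \emph{empty} boundary (for instance $Y_0\simeq\R^2$ with soul a point, giving $F_{i,\mathrm{cap}}\simeq D^3$). Similarly your claim that, in the torus case, ``$L^2$ is a nonnegatively curved open surface with boundary'' is false: in the $\Gamma$-invariant splitting $W=\R^2\times L^2$ of the universal cover, $L^2\simeq\R^2$ has no boundary, and the boundary in the orbit space $Y=W/\Gamma$ is created by a reflecting element of $\Gamma$, not by boundary of $L^2$. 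These mistakes don't just affect precision; built on them, Theorem \ref{thm:dim3class} would be applied to the wrong case list (Cases B and C rather than potentially A), and you could miss the needed possibilities.

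Second, and more seriously, you have no argument excluding a $D^2$-bundle over $T^2$ in the torus-fibre case. That exclusion is the content of Sublemma \ref{slem:not-t^2}: when $\dim Y=4$ with flat $2$-dimensional soul, if $S$ were a torus then $\partial B(p_i,r)$ would be an $S^1$-bundle over $T^2$, hence have nilpotent $\pi_1$; but from (2)(a) one knows $\partial B(p_i,r)\simeq K^2\tilde\times I\cup K^2\tilde\times I$, so Van Kampen gives a $\Z^2$-amalgam of two Klein-bottle groups, which is not nilpotent — contradiction. Without this step the list of $D^2$-bundle base surfaces in (2)(b) cannot be pinned down.

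Third, the paper does not prove (1)(a) and (2)(a) by citing Corollary \ref{cor:cap}; it proves Proposition \ref{prop:capfibre} directly by showing via critical-point theory that $F_{i,\mathrm{cap}}\simeq U_{\delta_iR_1}(\hat q_i,\hat p_i)$ and then identifying this set via Theorem \ref{thm:stability-respect} with $B(y_0,R;Y_0)$ (for $\dim Y=4$) or with $\pi_i^{-1}(\{y_0\}\times B(y_0,R_1;Y_0))$ (for $\dim Y=3$, using Theorem \ref{thm:dim3}). This is a genuinely direct argument and also gives the constraint $\partial F_{i,\mathrm{cap}}\simeq F_i$ automatically, which you would otherwise have to add on top of Corollary \ref{cor:cap}. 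Your route via Corollary \ref{cor:cap} is not circular (the corollary's $m=2$ case rests on the parametrized 3-manifold classification, not the present theorem), but it is indirect and requires supplementary reasoning to separate the $S^2$- and $T^2$-fibre sub-cases, which you do not provide. Finally, for part (3), the paper first proves Lemma \ref{lem:deform-cap} ($B(p_i,r)\simeq B_{\mathrm{cap}}(p_i,r)$) and then extends the fibre structure of $\partial B_{\mathrm{cap}}$ by cone, listing the ten collapsing-data cases; your mapping-class-group gluing is more reminiscent of Theorem \ref{thm:patchs^2}/\ref{thm:locstrtor} and is not the argument used here, and you do not address the preliminary reduction to $B_{\mathrm{cap}}$.
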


First we investigate 

\begin{proclaim}{\emph{Case} A.}
   $p\in \partial X$ is a regular boundary point, namely,
$D(\Sigma_p(X))$ is isometric to $S^1(1)$.
\end{proclaim}

The argument below provides a parametrized version of collapsing.
Namely, when $(M_i^4,p_i)$ collapses to a product $(X_0\times\R, x_0)$,
then collapsing phenomenon can be described as a one-parameter
family of collapsing of $3$-manifolds.
The local version of this is already proved in Section \ref{sec:rescal}.

For sufficiently small $R\gg r\gg \nu >0$,
from \eqref{eq:decomp-1} $\sim$ \eqref{eq:decomp-4} 
we have a decomposition 
$B(p_i,r)=B_{\rm int}(p_i,r)\cup B_{\rm cap}(p_i,r)$
and a map $f_i:B(p_i,r)\to B(p,r)\cap X_{\nu}$ such that 
both $f_{i,{\rm int}}$ and $f_{i,{\rm cap}}$ are fibre bundles.
To prove Theorem \ref{thm:dim2wbdy-text} $(1)$-$(a)$ and $(2)$-$(a)$,
it suffices to show that 

\begin{prop} \label{prop:capfibre}
\begin{equation*}
 F_{i,{\rm cap}}\simeq
  \begin{cases}
   \quad D^3\,\,{\rm or}\,\, P^2\tilde\times I\hspace{1.45cm}
                                          {\rm if}\,\, F_i=S^2 \\
         S^1\times D^2\,\,{\rm or}\,\, K^2\tilde\times I\qquad 
            {\rm if}\,\, F_i=T^2.
  \end{cases}
\end{equation*}
\end{prop}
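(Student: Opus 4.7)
My plan is to reduce the proposition to Corollary \ref{cor:cap} combined with a boundary-matching observation. In Case A the point $p\in\partial X$ is a regular boundary point, so a small neighborhood of $p$ admits arbitrarily fine admissible $(2,\delta)$-strainers, which is exactly the hypothesis needed for the Fibration-Capping Theorem \ref{thm:orig-cap} to apply to a neighborhood of $p$ inside $X_\nu$. This is already the justification for the decomposition \eqref{eq:decomp-1}--\eqref{eq:decomp-4}. Since the codimension is $m=\dim M_i^4-\dim X^2=2$, Corollary \ref{cor:cap}(2) immediately restricts the cap fibre $F_{i,\mathrm{cap}}$ to the list
\[
 \{\,D^3,\ P^2\tilde\times I,\ S^1\times D^2,\ K^2\tilde\times I\,\}.
\]

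Next I would verify that $\partial F_{i,\mathrm{cap}}\simeq F_i$. This is built into Theorem \ref{thm:orig-cap}: over any point $x\in\partial_0 Y_\nu$, the cap fibre $f_{i,\mathrm{cap}}^{-1}(x)=F_{i,\mathrm{cap}}$ meets $N_{\mathrm{int}}$ only along its own boundary, and this intersection is a fibre of $f_{i,\mathrm{int}}$, hence homeomorphic to $F_i$ (compare the disk example stated after the theorem). A direct computation of the boundaries of the four candidates gives
\[
 \partial D^3\simeq S^2,\ \ \partial(P^2\tilde\times I)\simeq S^2,\ \
 \partial(S^1\times D^2)\simeq T^2,\ \ \partial(K^2\tilde\times I)\simeq T^2,
\]
where in the twisted-bundle cases the boundary is the orientation double cover of the base. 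Matching $\partial F_{i,\mathrm{cap}}$ with $F_i\in\{S^2,T^2\}$ then forces $F_{i,\mathrm{cap}}\in\{D^3,P^2\tilde\times I\}$ when $F_i=S^2$ and $F_{i,\mathrm{cap}}\in\{S^1\times D^2,K^2\tilde\times I\}$ when $F_i=T^2$, which is exactly the stated dichotomy.

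The substantive part of the argument is really contained in Corollary \ref{cor:cap}, which is proved in Part \ref{part:cap}; the parametrised-collapsing remark preceding the proposition is more a guide to intuition than a necessary ingredient here. The only point I would pay extra attention to is that the identification $\partial F_{i,\mathrm{cap}}\simeq F_i$ is genuinely realised by the fibre-preserving gluing between $N_{\mathrm{int}}$ and $N_{\mathrm{cap}}$, not just an abstract homeomorphism, so that this matching can later be upgraded to the compatible fibre structure required for parts $(1)$-$(a)$ and $(2)$-$(a)$ of Theorem \ref{thm:dim2wbdy-text}. Tracing through the construction of $N=N_{\mathrm{int}}\cup N_{\mathrm{cap}}$, this compatibility is automatic, and the proposition follows.
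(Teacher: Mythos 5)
Your boundary-matching observation is correct: the fibre $F_{i,\rm cap}$ of $f_{i,\rm cap}$ has $\partial F_{i,\rm cap}$ a single connected surface identified with a fibre of $f_{i,\rm int}$, hence with $F_i$; and since the orientable $I$-bundles $P^2\tilde\times I$ and $K^2\tilde\times I$ have boundary $S^2$ and $T^2$ respectively (the orientation double covers of the bases), matching $\partial F_{i,\rm cap}$ to $F_i\in\{S^2,T^2\}$ does split the four candidates of Corollary~\ref{cor:cap}(2) exactly as stated.

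The problem is your appeal to Corollary~\ref{cor:cap}(2) itself. You treat it as a black box ``proved in Part~\ref{part:cap},'' and dismiss the parametrized-collapsing remark preceding the proposition as ``a guide to intuition.'' But if you look at the actual proof of Corollary~\ref{cor:cap} at the end of Section~\ref{sec:cap-lip}, item (2) is dispatched in one sentence: ``by the parametrized versions of Theorem 0.5 in \cite{SY:3mfd} ... we can get the conclusions ... The actual proofs of (2) and (3) are done by contradiction, and the details are omitted.'' That is not a self-contained proof; it is a pointer to a ``parametrized version'' of a rescaling/classification argument. The paper's own proof of Proposition~\ref{prop:capfibre} --- identifying $F_{i,\rm cap}$ with the annular region $U_r(q_i,p_i)$, passing to the rescaled limit $\left(\tfrac{1}{\delta_i}M_i^4,\hat p_i\right)\to(Y,y_0)$ with $Y=Y_0\times\R$, splitting into the cases $\dim Y=4$ (stability, soul of $Y_0$) and $\dim Y=3$ (local $S^1$-action via Theorem~\ref{thm:dim3}), and reading off $B(y_0,R;Y_0)$ --- is precisely the detailed instance of that parametrized rescaling. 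So you have not replaced the substantive argument by something else; you have pointed at a sketch that points back at the argument you were asked to give. The ``intuition'' you waved away is the mechanism.

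There is a second, smaller issue: the paper's proof does more than produce the dichotomy. By running the rescaling and recording the soul $S$ of $Y_0$ (resp.\ $Y$), it establishes exactly which of the four topologies occurs in terms of $\dim S$ and the presence of singular orbits --- information that is used immediately afterward in the Case~B analysis and in Theorem~\ref{thm:dim2wbdy-text}. A proof that only delivers the list, even if accepted, would leave that refinement unproved. In short: the boundary-matching half of your argument is fine and is a mildly nicer way to organize the dichotomy, but to make the proof honest you would need to actually carry out (some version of) the rescaling argument rather than defer it to Corollary~\ref{cor:cap}(2).
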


Take $q_i$ with $d(p_i, q_i)=R$ and $\varphi_i(q_i)\in\partial X$,
where $\varphi_i:(M_i,p_i)\to (X,p)$ is an $\epsilon_i$-approximation
with $\lim\epsilon_i=0$.
Since $d_{q_i}$-flow curves are transversal to $F_{i,{\rm cap}}$ and 
since $(d_{p_i}, d_{q_i})$ are regular on 
$\partial F_{i,{\rm cap}}$, 
$F_{i,{\rm cap}}$ is homeomorphic to 
$U_r(q_i,p_i):=\partial B(q_i,R)\cap B(p_i,r)$.
Now consider the convergence 
$(\frac{1}{\delta_i} M_i,\hat p_i)\to (Y,y_0)$.
Since $Y$ contains a line, 
$Y$ splits isometrically as $Y=Y_0\times \R$. 
We may think of a soul $S$ of $Y$ as a soul of $Y_0$.
Let $R_0\gg R_1\gg 1$ be sufficiently large, and 
take $z_0\in \{ y_0\}\times \R$ with $d(y_0,z_0)=R_0$
and $\hat q_i\in M_i$ with $d(\hat p_i,\hat q_i)=\delta_i R_0$ 
and $\hat q_i\to z_0$.
From critical point theory, we obtain 
$U_r(q_i,p_i)\simeq U_{\delta_i R_1}(\hat q_i,\hat p_i)$,
and therefore
\begin{align}
     & F_{i,{\rm cap}}\simeq 
         U_{\delta_i R_1}(\hat q_i,\hat p_i),  \\
     & \partial U_{\delta_i R_1}(\hat q_i,\hat p_i) \simeq F_i.
                 \label{eq:s^2}
\end{align}

First we consider 
\begin{proclaim}{\emph{Case} A-I.}
       $\dim Y=4$.
\end{proclaim}

By Theorem \ref{thm:stability-respect},
$U_{\delta_i R_1}(\hat q_i,\hat p_i)\simeq U_{R_1}(z_0,y_0)$.
Since $Y$ has nonnegative curvature, 
$U_{R_1}(z_0,y_0)$ is homeomorphic to 
$B(y_0, R;Y_0)$, by which the topology of $F_{i,{\rm cap}}$
is determined.

\begin{lem}
Suppose that $p\in \partial X$ is a regular boundary point and 
$\dim Y=4$.
\begin{enumerate}
 \item If $F_i=S^2$, then the soul $S$ is either a point or 
       homeomorphic to $P^2$, and Proposition \ref{prop:capfibre}
       holds;
 \item If $F_i=T^2$, then $S$ is isometric to either a circle or 
       a flat Klein bottle and Proposition \ref{prop:capfibre}
       holds.
\end{enumerate}
\end{lem}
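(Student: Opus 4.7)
The plan is to combine three ingredients: the isometric splitting $Y=Y_0\times\R$ already recorded in the preamble to the lemma, Stability Theorem \ref{thm:stability-respect} which identifies the cap fibre $F_{i,\mathrm{cap}}$ with a large metric ball in $Y_0$, and the generalized soul theorem in dimension three from \cite{SY:3mfd}, which describes such a ball as the normal closed disk-bundle over a soul. The whole argument will then be a classification: list the possible souls of a complete noncompact nonnegatively curved Alexandrov $3$-space, and pick out precisely those whose normal disk-bundle has connected boundary of the prescribed type $F_i$.

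Concretely, I would first remark that $Y_0$ is a complete noncompact nonnegatively curved Alexandrov $3$-space whose soul may be taken to be $S$, and that the preceding discussion gives $F_{i,\mathrm{cap}}\simeq B(y_0',R;Y_0)$, where $y_0'$ is the $Y_0$-component of $y_0$; the three-dimensional soul theorem then realises this ball as the normal closed disk-bundle over $S$. Next I would enumerate the compact boundaryless Alexandrov spaces of dimension at most two and nonnegative curvature that can appear as $S$: a point, a circle, $S^2$, $P^2$, a flat $T^2$, or a flat $K^2$. The corresponding normal disk-bundles, together with their boundaries, are $D^3$ with boundary $S^2$; $S^1\times D^2$ and $S^1\tilde\times D^2$ with boundaries $T^2$ and $K^2$ respectively; $S^2\times I$ with boundary $S^2\sqcup S^2$; $P^2\tilde\times I$ with boundary $S^2$; $T^2\times I$ with boundary $T^2\sqcup T^2$; and $K^2\tilde\times I$ with boundary $T^2$. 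Imposing the identification $\partial F_{i,\mathrm{cap}}\simeq F_i$ from \eqref{eq:s^2}, which demands a single copy of the prescribed surface on the boundary, leaves exactly the point and $P^2$ options when $F_i=S^2$, and the circle (with orientable normal bundle) and $K^2$ options when $F_i=T^2$. This yields the claimed possibilities for $S$ and for $F_{i,\mathrm{cap}}$, and hence Proposition \ref{prop:capfibre} in this case.

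The main technical point to be careful about is the circle case: both $S^1\times\R^2$ and $S^1\tilde\times\R^2$ arise as open nonnegatively curved $3$-manifolds, but only the trivial one has $T^2$-boundary, so the selection is forced by the hypothesis $F_i=T^2$ rather than by the orientability of $M_i^4$ alone. A second, minor point is to confirm that the disconnected-boundary cases ($S=S^2$ and $S=T^2$ with their trivial normal bundles) really cannot occur: they would force $\partial F_{i,\mathrm{cap}}$ to split into two components, contradicting the fact that $F_i$ is a single general fibre of the bundle $f_{i,\mathrm{int}}$. Once these checks are in place the lemma follows directly from the soul classification of $Y_0$.
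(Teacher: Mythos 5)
Your argument is correct and follows the same route as the paper: identify $F_{i,\mathrm{cap}}$ with a metric ball $B(y_0,R;Y_0)$ via the stability theorem, invoke the three-dimensional soul theorem of \cite{SY:3mfd} to realise that ball as the normal disk-bundle over the soul $S$ of $Y_0$, and then match boundaries against $F_i$. The paper's own proof is a terser version of exactly this case analysis, running through $\dim S=0,1,2$ and appealing to connectedness of the boundary to kill the orientable surface-soul cases; you fill in two small points the paper leaves implicit — that the twisted $D^2$-bundle over $S^1$ is ruled out because its boundary is $K^2$ rather than $T^2$, and that the disconnected-boundary cases $S\times I$ with $S\in\{S^2,T^2\}$ fail because the boundary of the cap fibre must be the single connected surface $F_i$ — which makes the selection step cleaner and correctly supplies the reason behind the paper's bare assertion $B(y_0,R;Y_0)\simeq S^1\times D^2$ when $\dim S=1$.
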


\begin{proof}
If $\dim S=0$, then $B(y_0, R;Y_0)\simeq D^3$, which is 
possible when $F_i=S^2$.
If $\dim S=1$, then $B(y_0, R;Y_0)\simeq S^1\times D^2$,
which is possible when $F_i=T^2$.
Suppose $\dim S=2$. If $S$ is orientable, then 
$B(y_0, R;Y_0)\simeq S\times I$, which is impossible 
because $\partial F_i$ is connected.
If $S$ is nonorientable, then 
$B(y_0, R;Y_0)$ is homeomorphic to a twisted 
$I$-bundle $S\tilde\times I$, which is possible when
$F_i=S^2$ and $S\simeq P^2$ and when $F_i=T^2$ and $S\simeq K^2$.
\end{proof}

\begin{proclaim}{\emph{Case} A-II.}
     $\dim Y=3$.
\end{proclaim}

By Theorem \ref{thm:dim3}, we have a locally smooth, local 
$S^1$-action $\psi_i$ on $B(\hat p_i,\delta_i R_1)$ whose orbit space is
homeomorphic to $B(y_0,R_1)$. Let 
$\pi_i:B(\hat p_i,\delta_i R_1)\to B(y_0,R_1)$ be the orbit map.
Since $S_{\delta_3}(Y)$ consists of parallel lines, 
with the critical point theory, one can construct such a $\psi_i$ 
satisfying
\begin{equation}
   U_{\delta_i R_1}(\hat q_i,\hat p_i) \simeq 
           \pi_i^{-1}(\{ y_0\}\times B(y_0,R_1;Y_0)),
              \label{eq:slicetop}
\end{equation}
(recall the construction of $\psi_i$ in Sections \ref{sec:dim3nobdyloc}, 
\ref{sec:dim3nobdyglob} and \ref{sec:dim3wbdy}).

\begin{lem}
Suppose that $p\in \partial X$ is a regular boundary point and
$\dim Y=3$. 
 \begin{enumerate}
  \item If $F_i=S^2$, then $Y_0$ is homeomorphic to $\R^2_+$ 
        and $F_{i,{\rm cap}}$ is homeomorphic to either $D^3$
        or $P^2\tilde\times I;$
  \item Suppose $F_i=T^2$. 
   \begin{enumerate}
    \item If $Y_0$ has no boundary, then either $Y_0\simeq\R^2$ and
        Proposition \ref{prop:capfibre} holds, or $Y_0$ is isometric to
        a flat M\"obius strip and 
        $F_{i,{\rm cap}}$ is homeomorphic to
        $K^2\tilde\times I;$
    \item If  $Y_0$ has nonempty boundary, then 
        $Y_0$ is isometric to a flat half cylinder and 
        $F_{i,{\rm cap}}$ is homeomorphic to
        $S^1\times D^2$.
    \end{enumerate}
 \end{enumerate}
\end{lem}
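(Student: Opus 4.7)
The strategy is to classify $Y_0$ geometrically, read off the topological type of $B(y_0, R_1; Y_0)$ for large $R_1$, and then use the $S^1$-quotient relation \eqref{eq:slicetop} to recover $F_{i,\text{cap}}$. Since $Y=Y_0\times\R$ with $\dim Y_0=2$ and $Y_0$ is a complete noncompact nonnegatively curved Alexandrov space, $Y_0$ is isometric to one of: $\R^2$ or a flat M\"obius strip (if $\partial Y_0=\emptyset$); or $\R^2_+$, a flat half-cylinder $S^1\times[0,\infty)$, or a flat strip $[0,a]\times\R$ (if $\partial Y_0\neq\emptyset$). A preliminary observation — arguing from the fact that $p$ is a regular boundary point of $X$ and so the rescaled limit $(Y,y_0)$ has only one ``normal'' end to $\partial X$ — rules out the flat strip (two ends). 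For large $R_1$, $B(y_0, R_1; Y_0)$ is accordingly a disk, a M\"obius band, a half-disk, or a closed cylinder $S^1\times[0,a]$.

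Next, by Theorem \ref{thm:dim3} we have the local $S^1$-action $\psi_i$ on $B(\hat p_i,\delta_i R_1)$, and by \eqref{eq:slicetop} the space $F_{i,\text{cap}}$ is realized as the 3-manifold $\pi_i^{-1}(\{y_0\}\times B(y_0,R_1;Y_0))$ carrying an $S^1$-action with orbit space $B(y_0,R_1;Y_0)$ and boundary $F_i$. The topological types of $F_i$ now severely restrict the admissible $Y_0$. For case (1) with $F_i=S^2$, any effective $S^1$-action on $S^2$ has orbit space a closed interval, so $\partial B(y_0,R_1;Y_0)$ must contain an arc-portion in $\partial Y_0$; this rules out $\R^2$, the M\"obius strip and the half-cylinder, leaving only $Y_0\simeq\R^2_+$ with $B(y_0,R_1;Y_0)\simeq D^2$. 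The classification of $S^1$-actions on 3-manifolds whose orbit space is $D^2$ with fixed points along the $\partial Y_0$-arc (a codimension-one version of Proposition \ref{prop:plumbing}, combined with the constraints from Theorem \ref{thm:patchwbdy} on slice representations) then forces $F_{i,\text{cap}}$ to be either $D^3$ (no exceptional orbits) or $P^2\tilde\times I$ (a single interior exceptional orbit of multiplicity 2 coming from an essential singular point of $Y_0$).

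For case (2) with $F_i=T^2$, an effective $S^1$-action on $T^2$ is free with orbit space $S^1$, so at least one component of $\partial B(y_0,R_1;Y_0)$ must be a circle. In case (2)(a) with $\partial Y_0=\emptyset$, the options are $Y_0\simeq\R^2$, giving $B(y_0,R_1;Y_0)\simeq D^2$ and reducing $F_{i,\text{cap}}$ to the setting of Proposition \ref{prop:capfibre} (so $F_{i,\text{cap}}\simeq S^1\times D^2$ or $K^2\tilde\times I$); or $Y_0$ a flat M\"obius strip, giving $B(y_0,R_1;Y_0)$ a M\"obius band. In the latter case the $S^1$-bundle structure together with the orientability constraint inherited from $M_i^4$ forces the bundle to be the $I$-bundle associated to $K^2$, hence $F_{i,\text{cap}}\simeq K^2\tilde\times I$. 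In case (2)(b) with $\partial Y_0\neq\emptyset$, the existence of a circle component of $\partial B(y_0,R_1;Y_0)$ eliminates $\R^2_+$ (whose ball has connected arc boundary in $\partial Y_0$), leaving $Y_0\simeq S^1\times[0,\infty)$ with $B(y_0,R_1;Y_0)\simeq S^1\times[0,a]$; the classification of free $S^1$-bundles over an annulus subject to orientability of the 4-ambient then produces $F_{i,\text{cap}}\simeq S^1\times D^2$. The main obstacle I expect is to justify the end-count arguments ruling out the flat strip and the noncompatible half-cylinder in case (1), and to propagate the orientability of $M_i^4$ down to the $S^1$-quotient in the M\"obius and Klein bottle subcases — both of which require careful tracking of how the regular boundary point of $X$ manifests at the level of the rescaled limit $Y$ and its $S^1$-action.
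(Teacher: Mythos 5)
Your overall strategy — classify the possible $Y_0$, compute $F_{i,\text{cap}}\simeq\pi_i^{-1}(\{y_0\}\times B(y_0,R_1;Y_0))$ via the $S^1$-action, and constrain using the boundary condition — is the same as the paper's, and most of the case analysis matches. There are, however, two genuine gaps.

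First, your enumeration of boundaryless $Y_0$ omits the flat cylinder $S^1\times\R$. Since $\dim S=1$ is a possibility when $\partial Y_0=\emptyset$, both the flat cylinder and the flat M\"obius strip must appear; you listed only the latter. The cylinder is easily disposed of by the same mechanism that handles everything else: $\pi_i^{-1}(\{y_0\}\times B(y_0,R_1;Y_0))\simeq T^2\times I$, whose boundary has two components and hence cannot equal the single surface $F_i$, contradicting \eqref{eq:s^2}. But the case has to be listed and killed; silently dropping it leaves the classification incomplete.

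Second, the "one normal end" argument you invoke to rule out the flat strip $Y_0\simeq[0,a]\times\R$ is not justified, and I do not see how to make it rigorous. The only relevant consequence of $p\in\partial X$ being a regular boundary point that the rescaling machinery actually yields is an expanding map $\Sigma_p(X)\to Y(\infty)$ (Lemma \ref{lem:expanding2}). If $Y_0=[0,a]\times\R$ then $Y=[0,a]\times\R^2$ and $Y(\infty)$ is a circle of circumference $2\pi$; an expanding map from the half-circle $\Sigma_p=[0,\pi]$ into that circle exists, so the expanding-map constraint does not see the "extra" boundary component of $Y_0$. Worse, $Y$ itself has a single topological end regardless of whether $Y_0$ is a half-plane or a strip, so no end count of $Y$ distinguishes them. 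The paper discards this case by the same boundary argument as above: $\pi_i^{-1}(\{y_0\}\times B(y_0,R_1;Y_0))\simeq S^2\times I$, which again has disconnected boundary, contradicting \eqref{eq:s^2}. You should replace your end-count reasoning with this direct topological obstruction — it is both correct and uniform across all the excluded cases, whereas your argument, as you yourself flag, rests on an unsubstantiated structural claim about $Y$.
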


\begin{proof}
Suppose that $Y_0$ has no boundary.
If $\dim S=0$, then $Y_0\simeq R^2$ and 
the number $m_i$ of singular orbits of $\psi_i$ over 
$\{y_0\}\times B(y_0,R_1;Y_0)$ is at most two.
I follows that 
$\pi_i^{-1}(\{ y_0\}\times B(y_0,R_1;Y_0))$ is homeomorphic to
either $D^2\times S^1$  (if $m_i\le 1$) or 
$K^2\tilde\times I$ (if $m_i=2$),
which is possible when $F_i=T^2$.
If $\dim S=1$, $Y_0$ is isometric to either a flat cylinder or 
a flat M\"obius strip.
If $Y_0$ is isometric to a flat cylinder, then 
$\pi_i^{-1}(\{ y_0\}\times B(y_0,R_1;Y_0))$ is homeomorphic to
$I\times T^2$, which contradicts \eqref{eq:s^2}.
If $Y_0$ is isometric to a flat M\"obius strip, then 
$\pi_i^{-1}(\{ y_0\}\times B(y_0,R_1;Y_0))$ is homeomorphic to
$K^2\tilde\times I$, which is possible when $F_i=T^2$.

Suppose that $\partial Y_0$ is disconnected. 
Then $Y_0$ is isometric to a product $\R\times I$.
It follows that 
$\pi_i^{-1}(\{ y_0\}\times B(y_0,R_1;Y_0))$ is homeomorphic to
$I\times S^2$, which contradicts \eqref{eq:s^2}.
Therefore $\partial Y_0$ is connected.
If $\dim S=1$, $Y_0$ is isometric to a flat half cylinder.
It follows from Theorem \ref{thm:patchwbdy} that 
$\pi_i^{-1}(\{ y_0\}\times B(y_0,R_1;Y_0))$ is homeomorphic to
$S^1\times D^2$, which is possible when $F_i=T^2$.
If $\dim S=0$, $Y_0 \simeq \R^2_+$ and 
the number $m_i$ of singular orbits of $\psi_i$ over 
$\{y_0\}\times \interior B(y_0,R_1;Y_0)$ is at most one.
Hence
$\pi_i^{-1}(\{ y_0\}\times B(y_0,R_1;Y_0))$ is homeomorphic to
either $D^3$ (if $m_i=0$) or $P^2\tilde\times I$ (if $m_i=1$).
\end{proof}

We have just proved Proposition \ref{prop:capfibre},
and hence Theorem \ref{thm:dim2wbdy-text} $(1)$-$(a)$ and $(2)$-$(a)$.

Now consider any singular boundary point $p\in\partial X$.

\begin{proclaim}{\emph{Case} B.}
    $p\in \partial X$ is any singular boundary point of $X$.
\end{proclaim}

From Theorem \ref{thm:dim2wbdy-text} $(1)$-$(a)$ and $(2)$-$(a)$, 
the topology of 
$\partial B(p_i,r)$ is classified as follows:
If $F_i=S^2$, then 
\begin{equation} \label{eq:s^2bdy}
  \partial B(p_i,r) \simeq
     \begin{cases}
         & \quad S^3 = D^3 \cup D^3   \\
         & \quad P^3 = D^3 \cup P^2\tilde\times I  \\
   %               \qquad ({\rm if}\,\,F_i=S^2)\\
         & P^3 \# P^3 = P^2\tilde\times I \cup P^2\tilde\times I,
     \end{cases}
\end{equation}
and if  $F_i=T^2$, then 
\begin{equation} \label{eq:t^2bdy}
  \partial B(p_i,r) \simeq
     \begin{cases}
         & S^1\times D^2 \bigcup S^1\times D^2  \\
         & \,\, S^1\times D^2 \bigcup K^2\tilde\times I  \\
%                  \qquad ({\rm if}\,\,F_i=T^2)\\
         & \,\,\,\,K^2\tilde\times I \bigcup K^2\tilde\times I.
     \end{cases}
\end{equation}
We determine the topology of $B(p_i,r)$ 
under the boundary conditions \eqref{eq:s^2bdy} and 
\eqref{eq:t^2bdy} as follows. 

\begin{prop} 
\begin{enumerate}
 \item If $F_i=S^2$, then $B(p_i,r)$ is homeomorphic to either
       $D^4$, $S^2\tilde\times_{\omega} D^2$ $(|\omega|\in \{ 1,2\})$,
       or $P^2\tilde\times_0 D^2;$
 \item If $F_i=T^2$, then $B(p_i,r)$ is homeomorphic to either
       $D^4$, $S^1\times D^3$, or a $D^2$-bundle over $S^2$, 
       $P^2$ or $K^2$.
\end{enumerate}
\end{prop}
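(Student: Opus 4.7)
The strategy is to apply the rescaling argument of Theorem~\ref{thm:rescal} to reduce the problem to the study of a complete noncompact Alexandrov space of dimension $\ge 3$ with nonnegative curvature, and then combine the classification obtained in Sections \ref{sec:dim3positive} (Theorem~\ref{thm:dim3class}) and the generalized soul theorem (via Corollary~\ref{cor:soul}) with the known boundary topologies \eqref{eq:s^2bdy} and \eqref{eq:t^2bdy} to read off the allowed homeomorphism types of $B(p_i,r)$. By Theorem~\ref{thm:rescal} there exist $\delta_i\to 0$ and $\hat p_i\to p$ such that $B(p_i,r)\simeq B(\hat p_i,R\delta_i)$ for every large $R$, and for any limit $(Y,y_0)$ of $(\tfrac{1}{\delta_i}M_i^4,\hat p_i)$ one has $\dim Y\in\{3,4\}$. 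Lemma~\ref{lem:expand} forces $\dim Y(\infty)\ge 1$, and since $p\in\partial X^2$, the expanding map $\Sigma_p\to Y(\infty)$ (together with the fact that $\Sigma_p$ is a closed arc in $S^1$) already restricts the possible ideal boundaries of $Y$.

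In the case $\dim Y=4$, Corollary~\ref{cor:soul} identifies $B(p_i,r)$ with the closed normal disk-bundle of a soul $S$ of $Y$. Proposition~\ref{prop:ideal} gives $\dim S\le 2$, so $S$ is a point, a circle, or a closed surface. I would run through the list: $\dim S=0$ yields $D^4$; $\dim S=1$ yields a $D^3$-bundle over $S^1$ (hence $S^1\times D^3$ by orientability), which is excluded when $F_i=S^2$ because $\partial(S^1\times D^3)=S^1\times S^2$ does not occur in \eqref{eq:s^2bdy}; and $\dim S=2$ produces a $D^2$-bundle over $S^2$, $P^2$, $T^2$, or $K^2$, each of which has to be matched against the boundary list in \eqref{eq:s^2bdy}--\eqref{eq:t^2bdy}. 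The orientability of $M_i^4$ and the explicit form of the boundary lens spaces/gluings (for instance $\partial(S^2\tilde\times_\omega D^2)=L(|\omega|,1)$ and $\partial(P^2\tilde\times_0 D^2)=P^3\#P^3$) then single out precisely the spaces listed in the proposition.

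In the case $\dim Y=3$, Theorem~\ref{thm:dim3} and Theorem~\ref{thm:patchwbdy} equip $B(\hat p_i,R\delta_i)$ with a locally smooth, local $S^1$-action whose orbit space is $B(y_0,R)$, and Theorem~\ref{thm:dim3class} then enumerates all possible topologies of this ball in terms of the soul $S$ of $Y$, the dimension of $C(0)$, the number of extremal points, and the components of the singular locus $\mathcal C_i$. I would go through the cases A--C of that theorem and discard every entry whose boundary fails to appear in the appropriate list \eqref{eq:s^2bdy} or \eqref{eq:t^2bdy}; the surviving spaces are, in the $F_i=S^2$ column, $D^4$ and $S^2\tilde\times_\omega D^2$ with $|\omega|\in\{1,2\}$ (cases A-III(2) and B-III), and in the $F_i=T^2$ column, $D^4$, $S^1\times D^3$, and various disk-bundles over $S^2$, $P^2$ or $K^2$ (the last two arising from A-II or from the $K^2\tilde\times I$ twisted $I$-bundle pieces appearing in Theorem~\ref{thm:dim3class}). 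The orientability of $M_i^4$ together with Lemma~\ref{lem:m=n=1} rules out the remaining non-orientable candidates.

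The principal obstacle I anticipate is the appearance of the nonorientable bundles $P^2\tilde\times_0 D^2$ in (1) and the $D^2$-bundles over $P^2$ or $K^2$ in (2). These only arise when the soul or orbit space is itself nonorientable, and one must check that the global orientability of $M_i^4$ is still compatible, which amounts to tracking a $\Z_2$-action on an appropriate double cover (as in the analysis of Lemma~\ref{lem:d2bdle} and Proposition~\ref{prop:dim2univ}). The detailed identification of these covers and of the gluing of the caps $F_{i,\mathrm{cap}}$ into the interior bundle---in particular verifying that the boundary $P^3\#P^3$ (resp.\ the mixed $K^2\tilde\times I$ gluings) forces the specific bundle structure---is the delicate point; once it is settled, the remaining matching of boundary types to disk-bundles is routine.
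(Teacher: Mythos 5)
Your overall strategy coincides with the paper's: rescale via Theorem~\ref{thm:rescal} to obtain the nonnegatively curved limit $(Y,y_0)$ with $\dim Y\in\{3,4\}$; in the $\dim Y=4$ case invoke Corollary~\ref{cor:soul} to identify $B(p_i,r)$ with the normal disk-bundle over a soul $S$ of $Y$ and then compare against the boundary lists \eqref{eq:s^2bdy}--\eqref{eq:t^2bdy}; in the $\dim Y=3$ case use $\dim Y(\infty)\ge 1$ together with Theorem~\ref{thm:dim3class} to enumerate the possibilities. That is exactly the paper's outline.

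However, there is a genuine gap in your handling of the $\dim Y=4$, $F_i=T^2$ case. You claim that orientability of $M_i^4$ together with boundary matching ``singles out precisely the spaces listed,'' but orientability cannot exclude the flat-torus soul: $T^2\times D^2$ (and more generally every orientable $D^2$-bundle over $T^2$) is orientable, and its boundary $T^3$ could in principle be realised by the gluing $S^1\times D^2\cup S^1\times D^2$ appearing in \eqref{eq:t^2bdy}. What the paper actually uses here is Sublemma~\ref{slem:not-t^2}, which is a fundamental-group argument entirely independent of orientability: if $B(p_i,r)$ were a $D^2$-bundle over $T^2$, then $\partial B(p_i,r)$ would be a circle bundle over $T^2$, hence a nilmanifold with nilpotent $\pi_1$; but the boundary decomposition $K^2\tilde\times I\cup K^2\tilde\times I$ forced here gives, via Van Kampen, $\pi_1(\partial B(p_i,r))\cong \Lambda_1*_{\Z^2}\Lambda_2$ with $\Lambda_j\cong\pi_1(K^2)$, which contains a copy of the non-nilpotent group $\pi_1(K^2)$ --- a contradiction. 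Without this argument your list in part (2) would erroneously include $D^2$-bundles over $T^2$, so the exclusion of $T^2$ (while retaining $K^2$, which your orientability reasoning would also threaten to exclude) is precisely the point your proposal fails to settle.
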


\begin{proof}
First suppose $\dim Y=4$.
If $F_i=S^2$, in view of \eqref{eq:s^2bdy}, we can eliminate the cases
when $\dim S=1$ or $S$ is a flat surface, and obtain $(1)$.
In the case of $F_i=T^2$,
it suffices to eliminate the case when $S$ is isometric to 
a flat torus.
This is done in the following

\begin{slem} \label{slem:not-t^2}
If $p\in\partial X^2$ and $F_i=T^2$, then $B(p_i,r)$ cannot 
be homeomorphic to a $D^2$-bundle over $T^2$.
\end{slem}

\begin{proof}
Suppose that $B(p_i,r)$ is homeomorphic to a 
$D^2$-bundle over $T^2$.
Then $\partial B(p_i,r)$ is an $S^1$-bundle over $T^2$,
and hence $\Gamma:=\pi_1(\partial B(p_i,r))$ is nilpotent.
On the other hand, it follows from \eqref{eq:t^2bdy}
that 
$\partial B(p_i,r)\simeq K^2\tilde\times I \cup K^2\tilde\times I$.
By Van Kampen's theorem, 
$\Gamma$ is isomorphic to a form 
$\Lambda_1 * _{\Z^2} \Lambda_2$, where 
$\Lambda_j\simeq\pi_1(K^2)$ and is considered as the $\Z_2$-extension 
of $\Z^2$. Let $\iota:\Lambda_1\to\Lambda_1*\Lambda_2$ be 
a natural inclusion and 
$\pi:\Lambda_1*\Lambda_2\to\Gamma$ the projection.
Then $\pi\circ\iota:\Lambda_1\to\Gamma$ is an injective
homomorphism.
Since $\pi_1(K^2)$ is not nilpotent, this is a contradiction.
\end{proof}

Next suppose $\dim Y=3$.
Then in view of \eqref{eq:s^2bdy},\eqref{eq:t^2bdy} together with
$\dim Y(\infty)\ge 1$, 
Theorem \ref{thm:dim3class} yields the conclusion.
\end{proof}

We have just proved Theorem \ref{thm:dim2wbdy-text}
(1)-(b) and (2)-(b).\par

Next we show Theorem \ref{thm:dim2wbdy-text} (1)-(c).
Suppose $B(p_i,r)\simeq P^2\tilde\times_0 D^2$, and 
let $\Gamma_i$ be the deck transformation group of the 
universal cover $\pi_i:\tilde B(p_i,r)\to B(p_i,r)$.
Take a sequence $\mu_i\to 0$ such that 
$(\frac{1}{\mu_i} B(p_i,r),p_i)$ converges to 
$(K_p,o_p)$.
We may assume that 
$(\frac{1}{\mu_i}\tilde B(p_i,r),\tilde p_i,\Gamma_i)$
converges to a triplet $(Z^2,z_0,\Gamma)$.
If $Z^2$ had nonempty boundary,  then 
we would have a contradiction 
to Theorem \ref{thm:dim2wbdy-text} (1)-(b) since
$\tilde B(p_i,r)\simeq S^2\times D^2$.
Therefore $Z^2$ must be isometric to a flat cone
without boundary and $\Gamma\simeq\Z_2$.
It follows from $Z^2/\Gamma=K_p$ that 
the action of $\Gamma$ on $Z^2$ is by reflection.
Let $\tilde A(p_i;r/2,r):=\pi_i^{-1}(A(p_i;r/2,r))$.
By Equivariant Fibration Theorem \ref{thm:cap},
we have a $\Z_2$-equivariant $S^2$-bundle 
$\tilde A(p_i;r/2,r)\to A(z_0;r/2,r)$
and hence a $\Z_2$-equivariant homeomorphism 
$\tilde A(p_i;r/2,r)\simeq S^2\times A(z_0;r/2,r)$, where
$\Z_2$ acts diagonally on 
$S^2\times A(z_0;r/2,r)$; freely on the $S^2$-factor
and by reflection on the $A(z_0;r/2,r)$-factor.
Therefore
$A(p_i;r/2,r)$ is homeomorphic to the diagonal quotient
$(S^2\times A(z_0;r/2,r))/\Z_2$.
Finally we fill $B(p_i,r/2)$ with an obvious gluing by 
$P^2\tilde\times_0 D^2\simeq (S^2\times D^2(r/2))/\Z_2$.
We therefore obtain $B(p_i,r)\simeq (S^2\times D^2)/\Z_2$.
This completes the proof of Theorem \ref{thm:dim2wbdy-text} (1)-(c).

In view of Theorem \ref{thm:locstrtor} (2), 
the proof of Theorem \ref{thm:dim2wbdy-text}(2)-(c)
is similar and hence omitted.

Finally we show Theorem \ref{thm:dim2wbdy-text} (3).

\begin{lem} \label{lem:deform-cap}
$B(p_i,r)$ is homeomorphic to 
$B_{{\rm cap}}(p_i,r)$.
\end{lem}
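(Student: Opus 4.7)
The plan is to show that $B_{\rm int}(p_i,r)$ is nothing but an external collar attached to $B_{\rm cap}(p_i,r)$ along the common boundary piece, so that the gluing does not change the homeomorphism type. I would carry this out in three steps.

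First, I would analyze the base. Since $X^2$ is a topological $2$-manifold (with boundary near $p$), for sufficiently small $r$ and $\nu$ the set $B(p,r)\cap X_\nu$ is a closed topological $2$-disk in which the arc $\partial_0 Y_\nu\cap B(p,r)$ is a boundary sub-arc. Hence I may fix a homeomorphism
\[
   B(p,r)\cap X_\nu \;\cong\; J\times[0,1],
\]
with $J\cong\partial_0 Y_\nu\cap B(p,r)$ corresponding to $J\times\{0\}$ and the complementary boundary lying in $J\times\{1\}\cup(\partial J\times[0,1])$.

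Second, I would trivialize the two fibrations $f_{i,\rm int}$ and $f_{i,\rm cap}$ provided by the Fibration-Capping Theorem. Since the base $J\times[0,1]$ of $f_{i,\rm int}$ is contractible and $J$ is contractible, both bundles are globally trivial, so one can choose homeomorphisms
\[
   B_{\rm int}(p_i,r)\cong F_i\times J\times[0,1],\qquad
   B_{\rm cap}(p_i,r)\cong F_{\rm cap}\times J,
\]
which, by the compatibility built into Theorem \ref{thm:orig-cap}, can be arranged to agree fibrewise over $J$ on the common locus $F_i\times J\times\{0\} = \partial F_{\rm cap}\times J$. Combining these trivializations, the gluing
\[
   B(p_i,r)=B_{\rm int}(p_i,r)\cup B_{\rm cap}(p_i,r)
\]
is identified with $\bigl(F_{\rm cap}\cup_{F_i}(F_i\times[0,1])\bigr)\times J$.

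Third, I would observe that attaching the external collar $F_i\times[0,1]$ to $F_{\rm cap}$ along its boundary $\partial F_{\rm cap}=F_i$ yields a manifold homeomorphic to $F_{\rm cap}$ itself (any collar can be absorbed into an internal collar of $\partial F_{\rm cap}$ in $F_{\rm cap}$). Therefore
\[
   B(p_i,r)\;\cong\; F_{\rm cap}\times J\;\cong\; B_{\rm cap}(p_i,r),
\]
completing the argument. Concretely, the absorption can be realized by a single isotopy obtained by lifting a gradient-like vector field of $d(\partial_0 Y_\nu,\cdot)$ on $B(p,r)\cap X_\nu$ through the bundle projections $f_{i,\rm int}$ and $f_{i,\rm cap}$ and extending across the gluing.

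The only delicate point I anticipate is the compatibility of the two trivializations on the overlap; the freedom to choose them consistently comes from the fact that both bundles, with their identifications on $F_i\times J$ already fixed by the capping theorem, are trivial relative to $J\times\{0\}$ because $J\times[0,1]$ deformation retracts to $J\times\{0\}$. Once this relative triviality is recorded, the remainder of the argument is a standard collar-absorption manoeuvre.
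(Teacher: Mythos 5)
There is a genuine gap, and it lies in your step 2. You assume that the cap piece $B_{\rm cap}(p_i,r)$ carries a global $F_{\rm cap}$-bundle structure over the arc $J=\partial_0 Y_\nu\cap B(p,r)$, and hence is homeomorphic to $F_{\rm cap}\times J$. But the map $f_{i,\rm cap}$ from \eqref{eq:decomp-4} is defined, and is a fibre bundle, only over $A(p;r_0,r)\cap\partial X_\nu$ — the two end sub-arcs of $J$ lying in the annulus $A(p;r_0,r)$, \emph{not} over the middle part of $J$ near $p$. This is precisely because the Fibration--Capping Theorem cannot be applied across a singular boundary point $p$ (the admissible strain radius collapses there). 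In fact, the cap fibres over the two ends of $J$, denoted $F_{i,\rm cap}$ and $F'_{i,\rm cap}$ in the text, can be of different topological types: the collapsing-data list in this section explicitly includes triples like $(S^2\tilde\times_{\pm 2} D^2; D^3, P^2\tilde\times I)$ and $(P^2\tilde\times_0 D^2;S^1\times D^2, K^2\tilde\times I)$. So $B_{\rm cap}(p_i,r)$ is \emph{not} a product over $J$ in general. Indeed, the paper isolates exactly the case where the product structure exists as Lemma~\ref{lem:non-ext}, valid only when $p$ is \emph{not} an extremal point; your argument, if correct, would prove that lemma for every $p\in\partial X$, which is false. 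The same flaw infects your last sentence: lifting a gradient-like vector field of $d(\partial_0 Y_\nu,\cdot)$ through $f_{i,\rm cap}$ presupposes a cap-side bundle projection over all of $J$.

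The paper's proof sidesteps the whole issue. It takes a $\mu$-net $N_\mu$ of $\partial X\cap B(p,r)$ with $\mu\ll\nu$, approximates it by a subset $N_{\mu,i}\subset M_i^4$, and uses the flow of the gradient-like vector field of the smoothed distance $\tilde d_{N_{\mu,i}}$. This function is regular in the relevant range (its critical set is confined to $N_{\mu,i}$ and well past the scale $\nu$), so its flow pushes $B(p_i,r)$ onto $B_{\rm cap}(p_i,r)$ directly in $M_i^4$, without ever invoking a bundle structure on $B_{\rm cap}$ or any product decomposition of the base disk. Your collar-absorption intuition is the right picture, but to implement it one must flow via a distance function defined intrinsically in $M_i^4$ (or at least one whose regularity does not depend on fibering the cap), not via a lift through $f_{i,\rm cap}$.
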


\begin{proof}
For a sufficiently small $\mu\ll\nu$, take
a $\mu$-net $N_{\mu}$ of $\partial X\cap B(p,r)$, and 
let $N_{\mu,i}$ be a subset of $M_i^4$ converging
to $N_{\mu}$. Using the flow curves of $\tilde d_{N_{\mu,i}}$,
one can easily prove the lemma.
\end{proof}

Let us denote the two fibres of $f_{i,{\rm cap}}$ in 
\eqref{eq:decomp-4} by $F_{i,{\rm cap}}$ and $F_{i,{\rm cap}}'$.
We call the topological types of the triplet 
$(B(p_i,r);F_{i,{\rm cap}},F_{i,{\rm cap}})$ 
the {\it the collapsing data} around $p_i$.

Using the topological information on
$B_{{\rm cap}}(p_i,r)\simeq B(p_i,r)$ given in (1)-(b) and 
(2)-(b), we can define a compatible singular fibre
structure on $B_{{\rm cap}}(p_i,r)$ by 
extending the fiber structure on 
$\partial B_{{\rm cap}}(p_i,r)$ by cone, which is indicated 
below in terms of the collapsing data.
\par\medskip

\begin{proclaim}{\emph{Case} A.}
      $F_i=S^2$.
\end{proclaim}

%%%%%% (1) - (2) %%%%%%%%%%%%%%%%%%%%%
\begin{center}
\begin{tikzpicture}
[scale = 0.5]
\fill (-2, 1)  circle (0pt) node [left] {$(1)$};
\draw [-,thick] (0,0) -- (0,-3);
\draw [-,thick] (0,0) -- (3,0);
\fill (0.1,0.3) circle (0pt) node [left]{${\rm pt}$};
\fill (-0.1,-1.9) circle (0pt) node [left]{${\rm pt}$};
\fill (1.7,0.1) circle (0pt) node [above]{${\rm pt}$};
\fill (1.7,-1) circle (0pt) node [below]{$S^2$};
\fill (1,-3.5) circle (0pt) node [below]{$(D^4;D^3,D^3)$};
%%%% (2) %%%%
\fill[shift={(12,0)}] (-2, 1)  circle (0pt) node [left] {$(2)$};
\draw[shift={(12,0)}] [-,thick] (0,0) -- (0,-3);
\draw[shift={(12,0)}][-,thick] (0,0) -- (3,0);
\fill[shift={(12,0)}] (0.1,0.3) circle (0pt) node [left]{$S^2$};
\fill[shift={(12,0)}] (-0.1,-1.9) circle (0pt) node [left]{${\rm pt}$};
\fill[shift={(12,0)}] (1.7,0.1) circle (0pt) node [above]{${\rm pt}$};
\fill[shift={(12,0)}] (1.7,-1) circle (0pt) node [below]{$S^2$};
\fill[shift={(12,0)}] (1,-3.5) circle (0pt) node [below]{$(S^2\tilde\times_{\pm 1} D^2;D^3,D^3)$};
\end{tikzpicture}
\end{center}
\pbig
%%%%%%  (3)-(4) %%%%%%%%%%%%%%%%%%%%%
\begin{center}
\begin{tikzpicture}
[scale = 0.5]
\fill (-2, 1)  circle (0pt) node [left] {$(3)$};
\draw [-,thick] (0,0) -- (0,-3);
\draw [-,thick] (0,0) -- (3,0);
\fill (0.1,0.3) circle (0pt) node [left]{$S^2$};
\fill (-0.1,-1.9) circle (0pt) node [left]{$P^2$};
\fill (1.7,0.1) circle (0pt) node [above]{${\rm pt}$};
\fill (1.7,-1) circle (0pt) node [below]{$S^2$};
\fill (1,-3.5) circle (0pt) node [below]{$(S^2\tilde\times_{\pm 2} D^2;D^3,P^2\tilde\times I)$};
%%%%%% (4) %%
\fill[shift={(12,0)}] (-2, 1)  circle (0pt) node [left] {$(4)$};
\draw[shift={(12,0)}] [-,thick] (0,0) -- (0,-3);
\draw[shift={(12,0)}][-,thick] (0,0) -- (3,0);
\fill[shift={(12,0)}] (0.1,0.3) circle (0pt) node [left]{$P^2$};
\fill[shift={(12,0)}] (-0.1,-1.9) circle (0pt) node [left]{$P^2$};
\fill[shift={(12,0)}] (1.7,0.1) circle (0pt) node [above]{$P^2$};
\fill[shift={(12,0)}] (1.7,-1) circle (0pt) node [below]{$S^2$};
\fill[shift={(12,0)}] (1,-3.5) circle (0pt) node [below]{$(P^2\tilde\times_0 D^2;P^2\tilde\times I,P^2\tilde\times I)$};
\end{tikzpicture}
\end{center}
\bigskip
\begin{proclaim}{\emph{Case} B.}
      $F_i=T^2$.
\end{proclaim}

\par\bigskip
%%%% (5)-(6) %%%%%%%%%
\begin{center}
\begin{tikzpicture}
[scale = 0.5]
\fill (-2, 1)  circle (0pt) node [left] {$(5)$};
\draw [-,thick] (0,0) -- (0,-3);
\draw [-,thick] (0,0) -- (3,0);
\fill (0.1,0.3) circle (0pt) node [left]{${\rm pt}$};
\fill (-0.1,-1.9) circle (0pt) node [left]{$S^1$};
\fill (1.7,0.1) circle (0pt) node [above]{$S^1$};
\fill (1.7,-1) circle (0pt) node [below]{$T^2$};
\fill (1,-3.5) circle (0pt) node [below]{$(D^4;S^1\times D^2,S^1\times D^2)$};
%%%% (2) %%%%
\fill[shift={(12,0)}] (-2, 1)  circle (0pt) node [left] {$(6)$};
\draw[shift={(12,0)}] [-,thick] (0,0) -- (0,-3);
\draw[shift={(12,0)}][-,thick] (0,0) -- (3,0);
\fill[shift={(12,0)}] (0.1,0.3) circle (0pt) node [left]{$S^1$};
\fill[shift={(12,0)}] (-0.1,-1.9) circle (0pt) node [left]{$S^1$};
\fill[shift={(12,0)}] (1.7,0.1) circle (0pt) node [above]{$S^1$};
\fill[shift={(12,0)}] (1.7,-1) circle (0pt) node [below]{$T^2$};
\fill[shift={(12,0)}] (1,-3.5) circle (0pt) node [below]{$(S^1\times D^3;S^1\times D^2,S^1\times D^2)$};
\end{tikzpicture}
\end{center}
\pbig

%%%%%% (7)-(8) %%%%%%%%%%%%%
\begin{center}
\begin{tikzpicture}
[scale = 0.5]
\fill (-2, 1)  circle (0pt) node [left] {$(7)$};
\draw [-,thick] (0,0) -- (0,-3);
\draw [-,thick] (0,0) -- (3,0);
\fill (0.1,0.3) circle (0pt) node [left]{$S^1$};
\fill (-0.1,-1.9) circle (0pt) node [left]{$K^2$};
\fill (1.7,0.1) circle (0pt) node [above]{$S^1$};
\fill (1.7,-1) circle (0pt) node [below]{$T^2$};
\fill (1,-3.5) circle (0pt) node [below]{$(S^1\times D^3;S^1\times D^2,K^2\tilde\times I)$};
%%%% (8) %%%%
\fill[shift={(12,0)}] (-2, 1)  circle (0pt) node [left] {$(8)$};
\draw[shift={(12,0)}] [-,thick] (0,0) -- (0,-3);
\draw[shift={(12,0)}][-,thick] (0,0) -- (3,0);
\fill[shift={(12,0)}] (0.1,0.3) circle (0pt) node [left]{$S^2$};
\fill[shift={(12,0)}] (-0.1,-1.9) circle (0pt) node [left]{$S^1$};
\fill[shift={(12,0)}] (1.7,0.1) circle (0pt) node [above]{$S^1$};
\fill[shift={(12,0)}] (1.7,-1) circle (0pt) node [below]{$T^2$};
\fill[shift={(12,0)}] (1,-3.5) circle (0pt) node [below]{$(S^2\tilde\times_{\omega} D^2;S^1\times D^2,S^1\times D^2)$};
\end{tikzpicture}
\end{center}
\pbig
%%%%%% (9)-(10) %%%%%%%%%%%%%
\begin{center}
\begin{tikzpicture}
[scale = 0.5]
\fill[shift={(-2,0)}] (-2, 1)  circle (0pt) node [left] {$(9)$};
\draw[shift={(-2,0)}] [-,thick] (0,0) -- (0,-3);
\draw[shift={(-2,0)}] [-,thick] (0,0) -- (3,0);
\fill[shift={(-2,0)}] (0.1,0.3) circle (0pt) node [left]{$P^2$};
\fill[shift={(-2,0)}] (-0.1,-1.9) circle (0pt) node [left]{$K^2$};
\fill[shift={(-2,0)}] (1.7,0.1) circle (0pt) node [above]{$S^1$};
\fill[shift={(-2,0)}] (1.7,-1) circle (0pt) node [below]{$T^2$};
\fill[shift={(-2,0)}] (1,-3.5) circle (0pt) node [below]{$(P^2\tilde\times_0 D^2;S^1\times D^2,K^2\tilde\times I)$};
%%%% (10) %%%%
\fill[shift={(12,0)}] (-2, 1)  circle (0pt) node [left] {$(10)$};
\draw[shift={(12,0)}] [-,thick] (0,0) -- (0,-3);
\draw[shift={(12,0)}][-,thick] (0,0) -- (3,0);
\fill[shift={(12,0)}] (0.1,0.3) circle (0pt) node [left]{$K^2$};
\fill[shift={(12,0)}] (-0.1,-1.9) circle (0pt) node [left]{$K^2$};
\fill[shift={(12,0)}] (1.7,0.1) circle (0pt) node [above]{$K^2$};
\fill[shift={(12,0)}] (1.7,-1) circle (0pt) node [below]{$T^2$};
\fill[shift={(12,0)}] (1,-3.5) circle (0pt) node [below]{$(\text{ a $D^2$-bundle over $K^2$};
       K^2\tilde\times I,K^2\tilde\times I)$};
\end{tikzpicture}
\end{center}
\pbig
This completes the proof of Theorem \ref{thm:dim2wbdy-text} (3). 
\par
\medskip

In the cases of (1), (4) and (6) in the above list, 
$B_{\rm cap}(p_i,r)$
has a product fibre structure. More precise fiber structures
for the cases of (4) and  (10) are
described in Theorem \ref{thm:dim2wbdy-text}(1)-(c) and 
(2)-(c) respectively.
The examples of collapsing related with (5) and (7) are given 
in Examples \ref{ex:d^2d^2} and \ref{ex:p2d2}.
\par\medskip

To finish the proof of Theorem \ref{thm:dim2wbdy-break}, it suffices 
to show the following 

\begin{lem} \label{lem:non-ext}
If $p\in\partial X$ is not an extremal point of $X$,
then  $B_{\rm cap}(p_i,r)$ 
has a product $F_{i,{\rm cap}}$-fibre structure over $I$
compatible with those of 
$B_{\rm int}(p_i,r)$ and $A_{\rm cap}(p_i;r_0,r)$.
\end{lem}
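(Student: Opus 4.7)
\emph{Proof plan.} The strategy is to exploit the non-extremality by constructing a pair of boundary-anchored distance functions whose lift to $M_i^4$ is regular in Perelman's sense, and whose gradient-like flow trivialises $B_{\rm cap}(p_i,r)$.

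Since the regular boundary case $L(\Sigma_p)=\pi$ has been treated in Case A, we may assume $\pi/2 < L(\Sigma_p) < \pi$. Choose points $q^+, q^-\in\partial X\cap B(p,r)$ lying on the two opposite boundary arcs from $p$, at distance comparable to $r$, so that the directions $(q^\pm)'_p$ approximate the two endpoints $\xi^\pm\in\Sigma_p$, which lie at angular distance $L(\Sigma_p)$. We assert that on the punctured domain $X\cap B(p,r/2)\setminus\{p\}$ the pair $(d_{q^+},d_{q^-})\colon X\cap B(p,r/2)\to\R^2$ is $(c,\epsilon)$-regular in the sense of Perelman, with $c=(\pi-L(\Sigma_p))/2-o(r)>0$. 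Indeed, at an interior point $x$ close to $p$ the space $\Sigma_x$ is a topological circle of length $2\pi$ containing $(q^\pm)'_x$ at mutual angular distance $\approx L(\Sigma_p)$, so the arc of directions at distance $>\pi/2$ from both has length $\pi-L(\Sigma_p)>0$, and any point $w$ of it fulfils the regularity inequalities.

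Lifting to $M_i^4$ via $q^\pm_i\to q^\pm$, the same pair $(d_{q^+_i},d_{q^-_i})$ is $(c,\epsilon)$-regular on $B(p_i,r)\setminus\interior B(p_i,r/100)$ for large $i$, by triangle comparison and Gromov--Hausdorff stability. By the theory of regular maps (\cite{Pr:alex2,Pr:morse} and \cite{Si:stratified}), it is then a proper topological submersion, hence a locally trivial fibre bundle over a disk in its image $\subset\R^2$. The signed coordinate $h_i:=d_{q^+_i}-d_{q^-_i}$ yields a map $B_{\rm cap}(p_i,r)\to I$, and a gradient-like vector field for $h_i$ constructed as in Lemma \ref{lem:flow-rescal} gives a trivialisation $B_{\rm cap}(p_i,r)\simeq F_{i,{\rm cap}}\times I$; by construction, this flow preserves the pieces of $\partial B_{\rm cap}(p_i,r)$ meeting $A_{\rm cap}(p_i;r_0,r)$ and $B_{\rm int}(p_i,r)$, whence the required compatibility.

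The hard part is that regularity of $(d_{q^+},d_{q^-})$ genuinely fails at points $x\in\partial X$ near $p$: there $\Sigma_x$ is a short arc in which no direction of $X$ is simultaneously at distance $>\pi/2$ from both $(q^\pm)'_x$. This is circumvented by working in the local double $D(X\cap B(p,r))$, where $\Sigma_x$ becomes a circle of length $2L(\Sigma_x)>\pi$ and the regularity is restored; one applies the above argument on the double $D(B(p_i,r))$ produced via the collar of Theorem \ref{thm:collar}, and descends to $X$ by the equivariant fibration theorem (Theorem \ref{thm:cap}) respecting the canonical $\Z_2$-symmetry.
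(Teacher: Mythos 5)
You take a genuinely different route from the paper, replacing its two-scale interpolation between cap fibrations by a Perelman-regularity argument for the pair $(d_{q^+},d_{q^-})$. Your computation that at interior points of $X$ near $p$ the regularity constant is $c\approx(\pi-L(\Sigma_p))/2>0$ is correct, and this is indeed where non-extremality enters. However, the plan breaks down exactly at the step you flag as the hard part. You propose to restore regularity at boundary points by passing to a double $D(B(p_i,r))$ ``produced via the collar of Theorem \ref{thm:collar}'' and invoking the equivariant fibration theorem (Theorem \ref{thm:cap}) with ``the canonical $\Z_2$-symmetry.'' There is no such double and no such symmetry: $M_i^4$ is a closed manifold, $B(p_i,r)$ has no boundary component corresponding to $\partial X$ --- the collapse absorbs $\partial X$ into the interior of $M_i^4$ --- so there is nothing to reflect across; Theorem \ref{thm:collar} concerns collars of $\partial X^3$ for $3$-dimensional Alexandrov spaces and is not applicable here; and Theorem \ref{thm:cap} requires an actual isometric $\Z_2$-action on $M_i$ that is equivariantly GH-close to the reflection of $D(X\cap B(p,r))$, which does not exist. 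If instead one drops the doubling and tries to verify regularity of $(d_{q_i^+},d_{q_i^-})$ directly in $M_i^4$ at points GH-collapsing towards $\partial X$, the condition must be checked in $\Sigma_{x_i}\simeq S^3$, and since $\Sigma_x$ degenerates to an arc in the limit this cannot be deduced from the limit space $X$ alone; the proposal gives no argument for it. Finally, the compatibility with the fibre structures on $B_{\rm int}(p_i,r)$ and $A_{\rm cap}(p_i;r_0,r)$ is asserted (``by construction, this flow preserves the pieces\ldots'') but not established, whereas that compatibility is precisely the substance of the lemma.

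For comparison, the paper's proof sidesteps the boundary-regularity problem entirely. It runs the fibration-capping theorem at two collar depths $\nu$ and $\nu/2$, notes the shell identification
$A_{{\rm cap},\nu}(p_i;r_0,r)-\interior A_{{\rm cap},\nu/2}(p_i;r_0,r)\simeq (F_i\times I\times\partial I)\times I$,
uses a gradient-like flow (in the spirit of Lemma \ref{lem:dim3balltop}, which is exactly where $\diam(\Sigma_p)>\pi/2$ is exploited) to produce $A_{{\rm cap},\nu/2}(p_i;r_0,r)\simeq F_{i,{\rm cap}}\times I\times\partial I$, and then explicitly interpolates the two $F_i$-bundle structures induced on the shell boundaries into a one-parameter family of trivial $F_i$-bundles. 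Because the endpoints of this interpolation are precisely the structures induced by $f_{i,{\rm int}}$ and $f_{i,{\rm cap}}$, the compatibility in the statement is automatic. To salvage your approach you would need either an honest verification of regularity of $(d_{q^+_i},d_{q^-_i})$ in $M_i^4$ near the collapsed boundary, or some other device that does not appeal to a nonexistent reflection symmetry.
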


\begin{proof}
Since both $B_{\rm int}(p_i,r)$ and $A_{\rm cap}(p_i;r_0,r)$
depend on $\nu$, we rewrite them as 
$B_{{\rm int},\nu}(p_i,r)$ and $A_{{\rm cap},\nu}(p_i;r_0,r)$ 
respectively.
We have an obvious homeomorphism 
$$
A_{{\rm cap},\nu}(p_i;r_0,r)-\interior A_{{\rm cap},\nu/2}(p_i;r_0,r)
     \simeq (F_i\times I\times\partial I)\times I.
$$
In a way similar to Lemma \ref{lem:dim3balltop},
we have 
\begin{equation}
   A_{{\rm cap},\nu/2}(p_i;r_0,r)\simeq
        F_{i,{\rm cap}}\times I\times\partial I,
                                   \label{eq:prod-cap}
\end{equation}
together with a flow $\phi$ giving the above product
structure \eqref{eq:prod-cap} on $A_{{\rm cap},\nu/2}(p_i;r_0,r)$
(compare also the argument in Section \ref{sec:dim3wbdy}).
This provides a trivial $F_i$-bundle
$\xi_1$ on $\partial A_{{\rm cap},\nu/2}(p_i;r_0,r)
\simeq F_i\times I\times\partial I\times \{ 1\}$.
We also have the trivial $F_i$-bundle
$\xi_0$ on $\partial A_{{\rm cap},\nu}(p_i;r_0,r)
\simeq F_i\times I\times\partial I\times \{ 0\}$.
For any $(x,u)\in F_i\times I\times\partial I\times \{ 1\}$,
let $t(x,u)\in\R$ denote the parameter at which 
the flow curve $\phi_{(x,u)}$ starting from $(x,u)$
meets the fibre $F(\xi_1)_u$ of $\xi_1$ over $u$.
Note that the fibres $F(\xi_1)_u$ are not compatible with
the fibres $\partial F_{i,{\rm cap}}$.
Then $\varphi_s(x,u):=\phi_{(x,u)}(s t(x,u))$, $0\le s\le 1$,
presents a one-parameter family of homeomorphisms $\varphi_s$ of 
$F_i\times I\times\partial I$. Define a homeomorphism 
$\Phi$ of $F\times I\times\partial I\times I$ by
$\Phi(x,u,s):=(\varphi_s(x,u),s)$.
This gives a one-parameter family of trivial $F_i$-bundle
joining $\xi_0$ and $\xi_1$, which defines the required
product $F_{i,{\rm cap}}$-fibre structure on 
$B_{\rm cap}(p_i,r)$.
\end{proof}

\begin{ex} \label{ex:d^2d^2}
Let $T^2=SO(2)\times SO(2)$ act on $D^2\times D^2$ by
$$
(e^{i \varphi},e^{i \theta})\cdot (r_1e^{i\alpha_1}, r_2e^{i\alpha_2})
      =(r_1e^{i(\alpha_1+m_1\varphi+n_1\theta)}, 
           r_2e^{i(\alpha_2+m_2\varphi+n_2\theta)}),
$$
where we assume 
\begin{equation*}
\begin{vmatrix}
    m_1 & m_2 \\
    n_1 & n_2
\end{vmatrix}
   = \pm 1
\end{equation*}
to make the action effective.
Then we have a sequence of metrics $g_i$ on $D^2\times D^2$ 
of nonnegative sectional curvature 
such that $(D^2\times D^2, g_i)$ collapses to 
$I\times I$. In this case, we have the same collapsing data
as (5).
\end{ex}

\begin{ex}
By gluing $T^2$-equivariantly suitable two
$T^2$-action on $D^2\times D^2$ as described in 
Example \ref{ex:d^2d^2}, one can construct 
an $T^2$-action on $S^2\tilde\times_{\omega} D^2$ for any 
$\omega$  with the orbit space homeomorphic $I^2$
having two fixed points on $\partial I^2$
(see \cite{Fn:circleI}).

To be more explicit, let us restrict our attention to
the case $\omega =1$.
Let $E$ be an $\R^2$-vector bundle over $S^2$ with 
a fiber metric such that the unit disk-bundle
$E_1=S^2\tilde\times_{1} D^2(1)\subset E$ has boundary
homeomorphic to $S^3$.
For any $r>0$, we have a faithful $T^2$-action on 
$\partial (S^2\tilde\times D^2(r))$.
This induces the $T^2$-action on $E$ with 
$E/T^2\simeq I\times [0,\infty)$.
For a $T^2$-invariant metric  $g$ on $E_1$,
we have a sequence of metrics $g_i$ on $E_1$ 
such that $(E_1, g_i)$ collapses to 
$(E_1/T^2,\bar g)$.
In this case for any extremal point $p\in\partial I\times [0,\infty)$,
$B(p_i,r)\simeq D^4$ with the same collapsing data as (5).
\end{ex}

\begin{ex} \label{ex:p2d2}
In the product $D^2\times \R^2$, identify 
$(x,t)$ and $(x^*, t')$ for any $x\in\partial D^2$ and $t\in\R^2$,
where $x^*$ and $t'$ denote the antipodal point of $x^*$ and the image
of $t$ by a reflection respectively.
We consider the resulting identification space 
$D^2\times\R^2/\sim$, which is an $\R^2$-bundle over 
$P^2$ denoted $P^2\tilde\times_0\R^2$.
Let $g_i$ and $h_i$ be sequences of rotationally symmetric 
nonnegatively curved metrics on $D^2$ and $\R^2$
such that $(D^2,g_i)$ and $(\R^2,o,h_i)$ converge
to $I$ and $[0,\infty)$ respectively.
Then the product $(D^2\times \R^2, g_i\times h_i)$
gives a nonnegatively curved metric $k_i$ on $P^2\tilde\times_0 \R^2$
such that $(P^2\tilde\times_0\R^2,o,k_i)$ collapses to 
the product $I\times [0,\infty)$.
For the extremal point $p\in\partial (I\times [0,\infty))$ corresponding
to $\partial D^2\times 0$,
$B(p_i,r)$ is homeomorphic to a $D^2$-bundle over $\Mo$, 
that is $S^1\times D^3$, 
and has the same collapsing data as (7).
\end{ex}

\begin{prob}
Determine if there exists a sequence of collapsed metrics on a 
$D^2$-bundle over $S^2$ or $P^2$ with a definite lower 
sectional curvature bound having the same collapsing data as 
(2), (3), (8) or (9).
\end{prob}

%         \input{dim2positive}

%dim2positive.tex
\section{Classification of collapsing to noncompact
          two-spaces with nonnegative curvature}
                     \label{sec:dim2positive}

Let a sequence of pointed complete $4$-dimensional orientable 
Riemannian manifolds $(M_i^4,p_i)$ with $K \ge -1$ collapses to 
a pointed complete noncompact $2$-dimensional Alexandrov 
space $(Y^2,y_0)$ with 
nonnegative curvature. In this section, using the results of 
Sections \ref{sec:dim2nobdysph}, \ref{sec:dim2nobdytor} and 
\ref{sec:dim2wbdy},
we classify the topology of a large metric ball 
$B(p_i,R)$ in terms of geometric properties of $Y^2$.
The classification result will be used in the next section
to describe the phenomena
of orientable $4$-manifolds collapsing to a closed interval.

Applying Theorems \ref{thm:dim2nobdy} and \ref{thm:dim2wbdy-break} to 
the convergence $(M_i^4, p_i) \to (Y^2,y_0)$,
we have a singular fibration $\pi_i:B(p_i,R)\to B(y_0, R)$
with general fibre $F_i$, either $S^2$ or $T^2$, 
where $R$ is a large positive number.
Actually, we have such a singular fibre structure
on a small perturbation of $B(p_i,R)$, which is homeomorphic to 
$B(p_i,R)$.
Let $m_i$ and $n_i$ denote the numbers of singular fibres of 
$\pi_i:B(p_i,R)\to B(y_0, R)$ over $B(y_0,R)\cap\interior Y$ 
and over $B(y_0,R)\cap\partial Y$ respectively.
From the classification of Alexandrov surfaces with nonnegative
curvature (cf. \cite{SY:3mfd}) together with Theorems
\ref{thm:dim2nobdy} and \ref{thm:dim2wbdy-break}, 
if $Y$ has no boundary, then $m_i\le 2$, and 
if $Y$ has nonempty boundary, then 
$m_i\le 1$, $n_i\le 2$ and the possible cases are
$$
   (m_i, n_i)=(0,0),\quad (0,1),\quad (0,2),\quad (1,0),
$$
in the latter case. 
Note also that $m_i=0$ if $F_i=S^2$.

\begin{thm} \label{thm:dim2class}
Under the situation above, the topology of $B(p_i,R)$ can be 
classified as follows: 
\begin{proclaim}{\emph{Case} I.}
      $\dim Y(\infty)=1$.
\end{proclaim}
\begin{enumerate}
 \item Suppose $Y$ has no boundary. 
  \begin{enumerate}
   \item If $F_i=S^2$, $B(p_i,R)\simeq D^2\times S^2;$
   \item If $F_i=T^2$, $B(p_i,R)\simeq D^2\times T^2$.
  \end{enumerate}
 \item  Suppose $Y$ has nonempty boundary. 
  \begin{enumerate}
   \item If $F_i=S^2$, $B(p_i,R)$ is homeomorphic to either 
         $D^4$, $S^2\tilde\times_{\omega} D^2$ with 
         $|\omega|\in\{ 1,2\}$ or $P^2\tilde\times_0 D^2;$
   \item If $F_i=T^2$, $B(p_i,R)$ is homeomorphic to either 
         $D^4$, $S^1\times D^3$, or a $D^2$-bundle over 
         $S^2$, $P^2$ or $K^2$.
  \end{enumerate}
\end{enumerate}
\begin{proclaim}{\emph{Case} II.}
      $\dim Y(\infty)=0$.
\end{proclaim}
\begin{enumerate}
 \item Suppose $Y$ has no boundary.
  \begin{enumerate}
   \item[(i)] If $Y\simeq \R^2$, then the following holds $:$
    \begin{enumerate}
     \item[(a)] If $F_i=S^2$, $B(p_i,R)$ is homeomorphic to the space in 
            Case $I$-$(1)$-$(a)$ $;$
     \item[(b)] If $F_i=T^2$, $B(p_i,R)$ is homeomorphic to either 
           the space in  Case $I$-$(1)$-$(b)$, or an $I$-bundle 
           over $T^2\tilde\times S^1$, a twisted $S^1$-bundle over $T^2$
           doubly covered by $T^3$.
    \end{enumerate}
   \item[(ii)] If $Y$ is isometric to either a flat cylinder or a 
           flat M\"obius strip, then 
           $B(p_i,R)$ is homeomorphic to
           an $I$-bundle over an $F_i$-bundle over $S^1$.
  \end{enumerate}
 \item Suppose $Y$ has nonempty boundary.
  \begin{enumerate}
   \item[(i)] If $Y\simeq \R^2_+$, then the following holds$:$
    \begin{enumerate}
     \item[(a)] If $(m_i,n_i)=(0,2)$, then $B(p_i,R)$ is homeomorphic to 
       a gluing of two disk-bundles, denoted 
       $N_j^{k_j}\tilde\times D^{4-k_j}$, $j=1,2$,
       over $k_j$-dimensional closed manifolds $N_j^{k_j}$
       with nonnegative Euler numbers,
       where $0\le k_j\le 2$. The gluing is done along
       one of $D^3$, $P^2\tilde\times I$ $($if $F_i=S^2$$)$, 
       and one of
       $S^1\times D^2$, $K^2\tilde\times I$ $($if $F_i=T^2$$)$,
       imbedded in the boundaries
       of $N_j^{k_j}\tilde\times D^{4-k_j};$
     \item[(b)]  If $(m_i,n_i)=(1,0)$, then $B(p_i,R)$ is 
       homeomorphic to one of the gluings$:$
       \begin{equation*}
%        \begin{center}
         \hspace*{2.5cm}
          S^1\times D^3 \bigcup_{T^2\times I} T^2\times D^2,\quad
          K^2\tilde\times D^2 \bigcup_{T^2\times I} T^2\times D^2.
%        \end{center}
       \end{equation*}
    \end{enumerate}
   \item[(ii)] If $Y$ is isometric to a flat half cylinder, then 
               the following holds$:$
    \begin{enumerate}
     \item[(a)] If $F_i=S^2$, $B(p_i,R)$ is homeomorphic to either 
           $S^1\times D^3$ or an $I$-bundle over a $S^1\times P^2;$
     \item[(b)] If $F_i=T^2$, $B(p_i,R)$ is homeomorphic to either
           $D^2$-bundle over $T^2$ or $K^2$, or an 
           $I$-bundle over a $K^2$-bundle over $S^1$.
    \end{enumerate}
   \item[(iii)] If $Y$ is isometric to a product $I\times \R$, then 
               the following holds$:$
    \begin{enumerate}
     \item[(a)] If $F_i=S^2$, $B(p_i,R)$ is homeomorphic to either 
           $S^3\times I$, $P^3\times I$ or $(P^3\# P^3)\times I;$
     \item[(b)] If $F_i=T^2$, $B(p_i,R)$ is homeomorphic to 
           a form $(U^3\cup V^3)\times I$, where $U^3$ and 
           $V^3$ are ones of $S^1\times D^2$ and $K^2\tilde\times I$,
           and $U^3\cup V^3$ denotes a gluing along their boundary
           tori.
    \end{enumerate}
  \end{enumerate}
  \end{enumerate}
\end{thm}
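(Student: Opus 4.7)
The plan is to split into the cases Case I ($\dim Y(\infty) = 1$) and Case II ($\dim Y(\infty) = 0$), and within each case, to use the classification of complete noncompact nonnegatively curved Alexandrov surfaces together with the singular fibre structure on $B(p_i, R)$ already provided by Theorems \ref{thm:dim2nobdy} and \ref{thm:dim2wbdy-break}. Specifically, an elementary Gauss--Bonnet/soul argument shows that such a $Y^2$ must be one of the following up to isometry: $\R^2$ (with at most one essential singular point), a flat cylinder, a flat M\"obius strip, $\R^2_+$ (with $\partial Y$ having at most two essential singular points and $\interior Y$ at most one), a flat half-cylinder, or the product $I \times \R$. In each case the topology of $B(y_0, R)$ is easy to read off (it is a 2-disk, a collared annulus, or a product), and the singular fibre structure $\pi_i : B(p_i, R) \to B(y_0, R)$ then determines $B(p_i, R)$ up to the ambiguity of the attaching homeomorphisms for the capping parts.

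For Case I, since $\dim Y(\infty) = 1$, the space $B(y_0, R)$ is homeomorphic to $D^2$ (if $\partial Y = \emptyset$) or to a disk meeting $\partial Y$ in a single arc, with at most one essential singular point in its interior and at most one on the boundary arc. When $\partial Y = \emptyset$, the local theorems Theorem \ref{thm:patchs^2} and Theorem \ref{thm:locstrtor} immediately yield $D^2\times S^2$ respectively $D^2\times T^2$, since the possible $\Z_m$-quotient in the torus case must trivialize globally by the argument in Lemma \ref{lem:finitecover}. When $\partial Y \neq \emptyset$, I apply Theorem \ref{thm:dim2wbdy-text} at the boundary extremal point (at most one) and patch with the fibre structure on the interior; the resulting topologies are exactly those listed in (1)-(b) and (2)-(b) of that theorem.

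For Case II, $Y$ is ``compact-like''. If $\partial Y = \emptyset$ and $Y \simeq \R^2$, the same argument as in Case I-(1) applies, except that when $F_i = T^2$ the global $\Z_m$-action may be nontrivial and give rise to the additional space (an $I$-bundle over the twisted $S^1$-bundle $T^2 \tilde\times S^1$), which I would construct from the equivariant stability argument of Sections \ref{sec:dim2nobdytor} applied globally instead of locally. When $Y$ is a flat cylinder or flat M\"obius strip, $B(p_i, R)$ is an $F_i$-bundle over an $I$-bundle over $S^1$, and one reads off the answer as an $I$-bundle over an $F_i$-bundle over $S^1$ using the mapping class computations $\cal M_+(S^1\times S^2) \cong \Z_2\oplus \Z_2$ and $\cal M_+(S^1\times T^2) \subset SL(3,\Z)$. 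For $\partial Y \neq \emptyset$ and $Y$ a flat half-cylinder or $I\times \R$, the same kind of $F_i$-bundle-over-$3$-manifold analysis combined with Theorem \ref{thm:dim2wbdy-text} gives the listed answers.

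The main obstacle will be subcase II-(2)-(i), where $Y \simeq \R^2_+$ with both interior and boundary essential singular points present simultaneously. Here $B(y_0, R) \simeq D^2$ but with a distinguished boundary arc on which two singular fibres sit, and one has to show that $B(p_i, R)$ decomposes globally as a gluing of two disk-bundles $N_j^{k_j} \tilde\times D^{4-k_j}$ along one of the model caps listed in Theorem \ref{thm:dim2wbdy-text}. My plan is to cut $B(y_0, R)$ by a properly embedded arc separating the two boundary extremal points, apply the capping theorem and Theorem \ref{thm:dim2wbdy-text} on each half to identify the two pieces as disk-bundles over closed manifolds of dimension $\le 2$ (with the nonnegative Euler number constraint coming from the local $S^1$-action model of Theorem \ref{thm:patchwbdy} and Proposition \ref{prop:plumbing}), and finally to show that the gluing homeomorphism along the middle fibre is isotopic to one of the model gluings via the mapping class group arguments of Lemma \ref{lem:extend}. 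The case $(m_i, n_i) = (1, 0)$ is similar but requires handling a $T^2$-singular fibre in the interior together with one boundary cap, forcing the two listed gluings.
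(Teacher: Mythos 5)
Your overall strategy — classify $Y^2$ by soul dimension and dimension of the ideal boundary, apply the local Seifert/cap structure theorems, and cut and paste along properly embedded arcs in singular cases — matches the paper's proof. However, there are concrete errors and gaps in the details.

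First, your Gauss--Bonnet classification claims that $Y\simeq\R^2$ carries ``at most one essential singular point.'' This is only true when $\dim Y(\infty)=1$ (total deficit $<2\pi$, so two deficit-$\pi$ points are excluded). When $\dim Y(\infty)=0$ the total deficit equals $2\pi$ and $Y$ can have exactly two essential singular points of cone angle $\pi$, forcing $Y$ to be isometric to the double $D(I\times[0,\infty))$ of a half-strip. This is precisely the case $m_i=2$ that produces the extra $I$-bundle over $T^2\tilde\times S^1$ in Case II-(1)-(i), so your classification statement is inconsistent with the very case you later try to handle.

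Second, your explanation of Case II-(1)-(i) for $F_i=T^2$ via ``the global $\Z_m$-action may be nontrivial'' is off the mark. Theorem \ref{thm:locstrtor} shows that a \emph{single} local $\Z_m$-quotient $(T^2\times D^2)/\Z_m$ is always homeomorphic to $T^2\times D^2$ (the $\Z_m$-action on the torus factor is free, so the quotient is again a torus), so the twist cannot arise from one such model. It arises instead from the presence of \emph{two} essential singular points: the paper splits $B(y_0,R)$ along a proper segment into two disks each containing one singular point, applies Theorem \ref{thm:locstrtor} to each to get $T^2\times D^2\bigcup_{I\times T^2}T^2\times D^2$, and then identifies the gluing with the $\Z_2$-quotient $(T^2\times S^1\times\R)/\Z_2$ (diagonal, free on $T^2$, orientation-reversing on both other factors). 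An appeal to ``the equivariant stability argument applied globally'' without spelling this out does not close the gap.

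Third, the mapping class group facts you invoke are the wrong ones. For the flat half-cylinder and $I\times\R$ cases (II-(2)-(ii),(iii)), the cap part over $\partial Y\simeq S^1$ is a $Q$-bundle over $S^1$ with $Q\in\{D^3,P^2\tilde\times I\}$ or $\{S^1\times D^2, K^2\tilde\times I\}$, and the relevant monodromy groups are $\cal M(P^2)$ (trivial) and $\cal M_+(K^2\tilde\times I)=\cal M(K^2)$, not $\cal M_+(S^1\times S^2)$ or $\cal M_+(S^1\times T^2)$. The former control the cap fibre structure over the circle, which is what determines the list of possible total spaces.

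Finally, a minor remark: you describe subcase II-(2)-(i) as having ``both interior and boundary essential singular points simultaneously,'' but the relevant cases are $(m_i,n_i)=(0,2)$ (two on $\partial Y$, none interior) and $(m_i,n_i)=(1,0)$ (one interior, none on $\partial Y$); $(1,1)$ does not occur. Your subsequent cut-and-paste plan is consistent with this, so it is only the introductory sentence that is misleading.
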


\begin{proof}
In Case I-(1), $Y$ is homeomorphic to $\R^2$ and has at most 
one essential singular point. 
In Case I-(2), there are no essential singular points in $\interior Y$ 
and at most one extremal point on $\partial Y$.
Hence the conclusions are
the direct consequences of 
Theorems \ref{thm:dim2nobdy} and \ref{thm:dim2wbdy-break}.

In Case II-(1)-(i), in view of Case I-(1) and Theorem 
\ref{thm:dim2nobdy} we may assume that $F_i=T^2$
and $m_i=2$.
Therefore $Y$ is isometric to the double
$D(I\times [0,\infty))$ for some closed interval $I$.
Split $B(y_0,R)$ with a proper segment into two closed domain 
$B_1$ and $B_2$ each of which contains one of the two essential 
singular points of $Y$. 
Applying Theorem \ref{thm:locstrtor} to each
$B_j$, we have 
$$
   B(p_i, R)\simeq T^2\times D^2 \bigcup_{I\times T^2} T^2\times D^2.
$$
But actually $\interior B(p_i,R)$ is homeomorphic to 
the complete flat manifold defined as the $\Z_2$-quotient
of $T^2\times S^1\times \R$ by an  involution,
where the action of $\Z_2$ is diagonal, free on $T^2$,
and orientation reversing on both factors $S^1$ and $\R$.
Therefore $B(p_i,R)\simeq ((T^2\times S^1)/\Z_2)\tilde\times I$.

Consider Case II-(1)-(ii), the case of $\dim S=1$.
Obviously $B(p_i,R)$ is an $F_i$-bundle over
$B(y_0, R)$, and the conclusion follows.

Next consider Case II-(2)-(i). 
If $(m_i,n_i)=(0,2)$, then $Y$ is isometric to 
$I\times [0,\infty)$ for some closed interval $I$.
Split $B(y_0,R)$ with a proper segment into two closed regions
$B_1$ and $B_2$ each of which contains one of the two 
extremal points in $\partial Y$.
Applying Theorem \ref{thm:dim2wbdy-break} to each $B_j$, we obtain 
the required gluing.
If $(m_i,n_i)=(1,0)$, then $Y$ is isometric to 
$D([0,\infty)\times \{ x\ge 0\})\bigcap \{ x\le a\}$ for some 
$a>0$. By a similar cutting and pasting argument, 
we obtain the required gluing.

Consider the cases II-(2)-(ii) and (iii), the cases of $\dim S=1$ 
and $\partial Y$ being disconnected respectively.
By Theorem \ref{thm:dim2wbdy-break} together with 
the facts that the mapping class group $\cal M(P^2)$ of all 
homeomorphisms of $P^2$ is trivial and  
$\cal M_+(K^2\tilde\times I)=\cal M(K^2)$, 
we obtain the conclusions.
\end{proof}

\bigskip
We are in a position to prove Theorem \ref{thm:dim1}.
Let us consider 
a sequence of $4$-dimensional closed orientable Riemannian
manifolds $M_i^4$ with $K \ge -1$ converging to a
$1$-dimensional closed interval $X^1$.
Let $\{ p,q\}:=\partial X$ and take $p_i$ and $q_i$ in $M_i^4$ with 
$p_i\to p$ and $q_i\to q$.
By Fibration Theorem \ref{thm:orig-cap}, 
$M_i^4$ is homeomorphic to a gluing
$$
     B(p_i,r)\bigcup B(q_i,r),
$$
for any sufficiently small positive number $r$ and any large
$i$ compared to $r$, where 
$\partial B(p_i,r)\simeq\partial B(q_i,r)$ is homeomorphic to
one of the closed 3-manifolds given in Theorem \ref{thm:circle}.
Now we consider the local convergence $B(p_i,r)\to B(p,r)$.
By Theorem \ref{thm:rescal}, we have sequences $\delta_i\to 0$ and 
$\hat p_i\to p$ such that
\begin{enumerate}
  \item  for any limit $(Y,y_0)$ of $(\frac{1}{\delta_i}M_i, \hat p_i)$, 
     we have  $\dim Y\ge 2$;
  \item  $B(p_i,r)$ is homeomorphic to $B(\hat p_i,R\delta_i)$ for every 
      $R\ge 1$ and large $i$ compared to $R$.
\end{enumerate}
Applying Theorems \ref{thm:dim3class} and \ref{thm:dim2class}
to the new convergence 
$(\frac{1}{\delta_i}M_i, \hat p_i)\to (Y,y_0)$ together
with the boundary condition stated above, 
we can determine or classify the possible topological type of
$(p_i,r)$ as:

\begin{thm} \label{thm:dim1-text}
If $p\in\partial X^1$, then $B(p_i,r)$ is homeomorphic to 
a disk bundle over a $k$-dimensional closed manifold $N^k$ with
$0\le k\le 3$, or a gluing of two disk-bundles over
$k_j$-dimensional closed manifolds
$Q^{k_j}$, $j=1,2$, with $0\le k_j\le 2$,
where $N^k$ and $Q^{k_j}$ have nonnegative Euler numbers,
and if $k=3$,  $N^3$ is one of the closed 3-manifolds given in 
Theorem \ref{thm:circle}.
\end{thm}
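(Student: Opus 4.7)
The plan is to apply the rescaling technique and then the classification theorems from Sections \ref{sec:dim3positive} and \ref{sec:dim2positive}. First I would apply Theorem \ref{thm:rescal} to the local convergence $(B(p_i, r), p_i) \to (B(p, r), p)$, obtaining sequences $\delta_i \to 0$ and $\hat p_i \in B(p_i, r)$ with $\hat p_i \to p$, $B(p_i, r) \simeq B(\hat p_i, R\delta_i)$ for every $R \ge 1$ and all sufficiently large $i$, and such that any pointed Gromov--Hausdorff limit $(Y, y_0)$ of $(\frac{1}{\delta_i} M_i^4, \hat p_i)$ is a pointed complete noncompact Alexandrov space of nonnegative curvature with $\dim Y \ge 2$. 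Since $\dim Y \le 4$, the proof splits into the three cases $\dim Y \in \{2, 3, 4\}$.

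For $\dim Y = 4$ (no collapse under rescaling), Corollary \ref{cor:soul} identifies $B(\hat p_i, R\delta_i)$ with the normal closed disk-bundle over a soul $S \subset Y$, of some dimension $k$ with $0 \le k \le 3$. When $k \le 2$, $S$ is a point, a circle, or a closed surface; among closed surfaces only $S^2$, $P^2$, $T^2$, $K^2$ can occur (precisely those with nonnegative Euler characteristic), since $M_i^4$ is orientable and $\partial B(p_i,r)$ is prescribed. When $k = 3$, Proposition \ref{prop:ideal} forces $\dim Y(\infty) = 0$, so $Y$ is isometric to the normal line bundle over $S$; the boundary constraint $\partial B(p_i, r) \simeq \partial B(q_i, r)$ then identifies $N^3 = S$ with one of the fibre types listed in Theorem \ref{thm:circle}. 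For $\dim Y = 3$ and $\dim Y = 2$ I would apply Theorems \ref{thm:dim3class} and \ref{thm:dim2class} to the rescaled convergence $(\frac{1}{\delta_i} M_i^4, \hat p_i) \to (Y, y_0)$. A direct inspection of the two classification lists shows that every admissible topological type for $B(p_i, r)$ arises either as a single disk-bundle over a closed manifold $N^k$ of dimension at most $3$ and nonnegative Euler characteristic (for instance $D^4$, $S^1 \times D^3$, a $D^2$-bundle over one of $S^2, P^2, T^2, K^2$, or $P^2 \tilde\times_{0} D^2$), or as a gluing along the boundary of two disk-bundles over closed manifolds of dimension at most $2$ and nonnegative Euler characteristic (for instance $(\text{lens space}) \times I$, $(P^3 \# P^3) \times I$, $(K^2 \tilde\times I) \times I$, or plumbings such as $S^2 \tilde\times_{\omega} D^2 \cup_{S^1 \times D^2} S^2 \tilde\times_{\omega'} D^2$ occurring in cases A-III and B-III of Theorem \ref{thm:dim3class}).

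The main obstacle will be the bookkeeping across the many subcases of Theorems \ref{thm:dim3class} and \ref{thm:dim2class}: in each case one must verify that the enumerated model fits the advertised disk-bundle or two-disk-bundles-glued-along-boundary description with the correct constraints on the base dimensions and Euler characteristics, and that subcases whose boundary topology is incompatible with the list of closed $3$-manifolds in Theorem \ref{thm:circle} are eliminated. A slightly subtle point is the reinterpretation of some product models such as $S^1 \times (P^2 \tilde\times I)$ or an $I$-bundle over $S^1 \tilde\times K^2$ as disk-bundles over lower-dimensional bases; this is routine but must be carried out in each instance.
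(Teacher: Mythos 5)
Your proposal is correct and follows the paper's own route: rescale via Theorem \ref{thm:rescal} to get $\dim Y \ge 2$, then read off the topology of $B(p_i,r)$ from Corollary \ref{cor:soul} when $\dim Y = 4$, Theorem \ref{thm:dim3class} when $\dim Y = 3$, or Theorem \ref{thm:dim2class} when $\dim Y = 2$, using the boundary constraint $\partial B(p_i,r)\simeq\partial B(q_i,r)$ coming from Theorem \ref{thm:circle} to cut down the lists. The paper's brief sketch cites only Theorems \ref{thm:dim3class} and \ref{thm:dim2class}; your explicit treatment of the $\dim Y = 4$ case via Corollary \ref{cor:soul} supplies a step the text leaves implicit.
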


We have a similar topological information on 
$B(q_i,r)$. This completes the proof of Theorem \ref{thm:dim1}.
\par
\medskip
In the situation of Theorem \ref{thm:dim1}, the author does not 
know a specific example of a closed $4$-manifold built of 
gluing of three or four pieces of disk-bundles
which admits a sequence of metrics collapsing to a closed interval
under $K\ge -1$.
In view of Proposition \ref{prop:find-top}, one of typical 
problems arising from the results of the present paper is 
the following:

\begin{prob}
Letting $M^4$ be either $S^2\times S^2 \# S^2\times S^2$ or 
the connected sum of three or four pieces of $\pm \C P^2$,
determine if $M^4$ admits 
a sequence of metrics collapsing to a closed interval
under $K\ge -1$.
\end{prob}

%                       \input{local}

% local.tex
\part{Complete Alexandrov spaces 
with nonnegative curvature} \label{part:alex}
  In Part \ref{part:alex},  we develop the geometry of 
complete Alexandrov spaces with nonnegative curvature.
Most of them  are needed in Part \ref{part:collapse}.
In particular, we establish the Generalized Soul Theorem 
\ref{thm:soul}.
\par

Throughout this part, let $X$ be an $n$-dimensional complete  
Alexandov space with nonnegative curvature.
We assume $X$ to be either noncompact or having nonempty
boundary.
Applying the Cheeger-Gromoll basic construction,
we obtain a sequence of finitely many 
nonempty compact totally convex sets of $X$:
\begin{equation}
  C(0)\supset C(1)\supset C(2)\supset\cdots\supset C(k),
            \label{eq:soul-const}
\end{equation}
with $S=C(k)$ as in Section \ref{sec:ideal} except 
that $C(0)$ coincides with the minimum set of $d(\partial X, \,\cdot\,)$
if $X$ is compact.

From the construction, we have the 
filtration  $\{ X^t\}_{0\le t < t_*}$, $t^*\le\infty$, by compact totally 
convex subsets such that
\begin{enumerate}
  \item  $X^s=\{x\in X^t\,|\,d(x,\partial X^t)\ge t-s\}$\qquad for  $s\le t;$
  \item  the closure of $\bigcup_{0\le t < t_*} X^t$ coincides with $X;$
  \item  $X^0=C$.
\end{enumerate}
Note $t_*<\infty$ if and only if $X$ is compact.
\par

\section{Local regularities} \label{sec:local}

In this section, we are concerned with three local regularity 
properties of $X$.

%%%%%%%%%%%%%
We begin with a more general situation described as follows:
Let $X$ be an $n$-dimensional complete nonnegatively curved 
Alexandrov space, and suppose $X$ has a filtration 
$\{ X^t_*\}_{0\le t < t_*}$  by closed totally 
convex subsets satisfying the same conditions as (1), (2), (3) above,
which is determined by a convex function
$\rho$ with minimum $0$ in such a way that $X^t_*=\{ \rho\le t\}$,
where $t_*$ is the supremum of the values of $\rho$.
However we do not assume that each $X^t_*$ is compact here.
Put  $C_*:=X^0_*$ and let us assume $\dim C_*=n-1$.

A point $x\in C_*$ is called a {\it one-normal point}
(resp. {\it two-normal point})
if there exists exactly one (resp. two) geodesic ray(s)
from $x$ perpendicular to $C_*$. 
Note that each point of $C_*$ is either
a one-normal point or a two-normal point (see \cite{SY:3mfd}).

\begin{prop}  \label{prop:codim1-reg}
Under the hypothesis above, 
suppose that  $p\in\interior C_*$ is a topologically nice 
point of $X^n$. Then 
\begin{enumerate}
 \item $\Sigma_p(X)$ is isometric to the spherical suspension over
       $\Sigma_p(C_*);$
 \item $p$ is a two-normal point. 
\end{enumerate}
\end{prop}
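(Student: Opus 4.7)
My plan is to identify $\Sigma := \Sigma_p(X)$ with the spherical suspension over $\Sigma_0 := \Sigma_p(C_*)$; once this is done, (2) is immediate because the two suspension poles are exactly the two rays from $p$ normal to $C_*$.

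First I would record the setup. Since $p\in\interior C_*$, the subspace $\Sigma_0\subset\Sigma$ is a closed, totally convex subset of dimension $n-2$ without boundary as an Alexandrov space with curvature $\ge 1$; topological niceness gives $\Sigma\simeq S^{n-1}$. Lemma~\ref{lem:perp} already yields $d(\,\cdot\,,\Sigma_0)\le\pi/2$ on $\Sigma$, and every $\xi$ attaining the value $\pi/2$ satisfies $d(\xi,v)=\pi/2$ for all $v\in\Sigma_0$. I would then produce one such direction $\xi_+$ from the convex function $\rho$: its directional derivative $\rho'_p\colon\Sigma\to\R$ is nonnegative, vanishes on $\Sigma_0$, and is not identically zero (as $\dim C_*=n-1$). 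Using the standard fact that the ascending direction of a convex function at an interior minimum point is perpendicular to the minimum set, any maximizer $\xi_+$ of $\rho'_p$ satisfies $d(\xi_+,v)=\pi/2$ for every $v\in\Sigma_0$, and the spherical join $\{\xi_+\}*\Sigma_0$ then embeds isometrically in $\Sigma$ with full dimension $n-1$.

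The crux of the argument is producing a second perpendicular direction $\xi_-$ at distance $\pi$ from $\xi_+$. The embedded hemisphere $\{\xi_+\}*\Sigma_0$ is a closed subset of the same dimension as $\Sigma$ but cannot be all of $\Sigma$ (a spherical cone is topologically a disk, not the topological sphere $\Sigma$), so its complement is a nonempty open set; on it, a maximizer $\xi_-$ of $d(\xi_+,\,\cdot\,)$ still satisfies $d(\xi_-,\Sigma_0)=\pi/2$ by Toponogov comparison, hence $d(\xi_-,v)=\pi/2$ for all $v\in\Sigma_0$ by Lemma~\ref{lem:perp}. A direct comparison-triangle computation using $d(\xi_\pm,v)=\pi/2$ forces $d(\xi_+,\xi_-)=\pi$. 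Consequently $\{\xi_+,\xi_-\}*\Sigma_0$ embeds isometrically in $\Sigma$ as a closed set of full dimension and therefore coincides with $\Sigma$, giving the suspension description in~(1); then~(2) follows since the suspension picture exhibits $\xi_\pm$ as the unique directions at distance $\pi/2$ from $\Sigma_0$.

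The main obstacle is this third step, namely the existence of $\xi_-$ with $d(\xi_+,\xi_-)=\pi$. Without topological niceness one can imagine one-normal points where $\Sigma_0$ fails to separate $\Sigma$ and $\{\xi_+\}*\Sigma_0$ already exhausts $\Sigma$; the topological hypothesis is exactly what rules this out by forcing $\Sigma\simeq S^{n-1}$, and the delicate work is turning this qualitative topological separation into the sharp metric equality $d(\xi_+,\xi_-)=\pi$ by combining niceness with curvature-$\ge 1$ join rigidity in the spirit of Proposition~\ref{prop:concave-rigid}.
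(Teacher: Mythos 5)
Your plan is to build the suspension $\{\xi_+,\xi_-\}*\Sigma_p(C_*)$ directly inside $\Sigma_p(X)$, whereas the paper proves only statement~(2) (deducing~(1) from Proposition~\ref{prop:concave-rigid}) and proceeds by induction on $n$ combined with a fundamental-group argument. These are genuinely different routes, and the place where yours stalls is exactly the step you flag at the end: you cannot conclude from ``$\xi_-$ maximizes $d(\xi_+,\cdot)$ over $\Sigma\setminus H$'' that $d(\xi_-,\Sigma_0)\ge\pi/2$ — Toponogov alone gives no such thing, since the complement of the hemisphere could easily hug $\Sigma_0$ closely. Likewise, the subsequent claim that ``$d(\xi_\pm,v)=\pi/2$ for all $v$ forces $d(\xi_+,\xi_-)=\pi$ by a comparison-triangle computation'' needs a rigidity input you have not supplied: two points at distance $\pi/2$ from a convex hypersurface need not be antipodal without extra structure. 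So the sketch stops precisely where the hard work begins, and the topological observation (a spherical cone is a disk, not a sphere) is not by itself converted into the required metric statement.

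The paper finesses these difficulties as follows. It first reduces to~(2) via Proposition~\ref{prop:concave-rigid}. Then it sets up an induction on dimension: the filtration $\{X^t_*\}$ descends through the blow-up $(\tfrac1r X,p)\to(K_p,o_p)$ to a filtration of $K_p$ with $K_p^0=K_p(C_*)$, and then through $K_v(K_pX)=\R\times K_v(\Sigma_pX)$ to a filtration of $K_v(\Sigma_pX)$ with $K_v^0=K_v(\Sigma_pC_*)$; topological niceness of $p$ passes to $v\in\Sigma_p(C_*)$, so the induction hypothesis gives that every $v\in\Sigma_p(C_*)$ is a two-normal point of $K_p(C_*)$. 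Finally, assuming $o_p$ is one-normal with unique perpendicular $\xi$, the two-normality at each $v$ together with the rigidity of Proposition~\ref{prop:concave-rigid} makes the ``link'' $\Sigma_\xi(\Sigma_pX)$ a double cover of $\Sigma_p(C_*)$, whence $\pi_1(\Sigma_p(C_*))=\Z_2$. But $\Sigma_p(C_*)$ deformation retracts $\Sigma_p(X)\setminus\interior B(\xi,\epsilon)\simeq D^{n-1}$ (this is where topological niceness enters concretely), which is contractible — a contradiction. If you want to salvage your direct construction, the missing ingredient is exactly this double-cover/fundamental-group mechanism, or some other rigidity argument of comparable strength; the elementary max/min manoeuvre you propose does not close the gap.
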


\begin{proof}
In view of Proposition \ref{prop:concave-rigid}, it suffices to 
show only (2).
This is done by induction on $n$.
Under the convergence $(\frac{1}{r} X, p)\to (K_p,o_p)$ as 
$r\to 0$, the filtration $\{ X^t_*\}_{0\le t < t_*}$
gives rise to a filtration $\{ K_p^t\}_{0\le t < \infty}$
with $K_p^0=K_p(C_*)$
satisfying the same conditions as  (1), (2), (3) above.
This is done as follows. Let $\rho_{\infty}$ be the limit of 
$m\rho$ under the convergence 
$(mX,p)\to (K_p,o_p)$, $m\to\infty$. Then 
$K_p^t=\{ \rho_{\infty}\le t\}$.
By Proposition \ref{prop:concave-rigid},
$K_p^0=K_p(C_*)$.
Obviously 
\[
  \Sigma_p(C_*)= \Sigma_p(C_*)\times 1\subset K_p(C_*)\subset K_p(X).
\]
We show that any point $v$ of $\Sigma_p(C_*)$ is a two-normal 
point of $K_p(C_*)$.
Note that $K_v(K_pX)=\R\times K_v(\Sigma_p X)$ and that 
$K_v(\Sigma_p X)$ has a filtration $\{ K_v^t\}_{0\le t<\infty}$ 
with $K_v^0=K_v(\Sigma_p C_*)$
satisfying the conditions as above.
From the assumption, 
$\Sigma_v(\Sigma_p X)\simeq S^{n-2}$ and $v$ is a topologically nice 
point of $\Sigma_p(X)$.
Thus we can apply the induction hypothesis to the
filtration $\{ K_v^t\}_{0\le t<\infty}$ to conclude that
$v=o_v$ is a two-normal point of $K_v(\Sigma_p C_*)$ and therefore of
$K_p(C_*)$.
   
Now suppose that $o_p$ is a one-normal point of $K_p(C_*)$ with 
a unique direction $\xi\in\Sigma_p(X)$ normal to $\Sigma_p(C_*)$. 
From the previous argument together with
Proposition \ref{prop:concave-rigid},
$\Sigma_{\xi}(\Sigma_p X)$ is 
a double cover of $\Sigma_p(C_*)$. In particular
$\pi_1(\Sigma_p(C_*))=\Z_2$. Furthermore
$\Sigma_p(C_*)$ is a deformation retract of 
$\Sigma_p(X)-\interior B(\xi,\epsilon)$ for a small $\epsilon>0$.
However by the assumption, 
$\Sigma_p(X)-\interior B(\xi,\epsilon)\simeq D^{n-1}$,
a contradiction.
\end{proof}

For a subset $D\subset C_*$, ${\cal N}(D)$ denotes  
the union of all the geodesic rays starting from the points of $D$ 
perpendicularly to $C_*$.
By Propositions \ref{prop:concave-rigid} and \ref{prop:codim1-reg}, 
if $D\subset\interior C_*$ and if any point of $D$ is topologically 
nice in $X$, then ${\cal N}(D)$ is a line-bundle over $D$.\par
\medskip
%%%%%%%%%%%%%%%%%%%%%%%%               
Now we go back to our situation in \eqref{eq:soul-const}.
For simplicity we put $C:=C(0)$.

Let us first assume that $\partial C$ is nonempty.
For a small $\epsilon >0$, consider the following function
$$
     f_{\epsilon}(x)=d(C_{\epsilon}, x)
$$
on $X-C_{\epsilon}$, where
$$
   C_{\epsilon}=\{ x\in C\,|\,d(\partial C,x)\ge\epsilon\}.
$$
The local regularity property we next study is related with 
the regularity of $f_{\epsilon}$. Note that 
the critical point set of $f_{\epsilon}$
is contained in $\partial C$.

Let $p\in\partial C$ be given.
In view of the convergence 
$(\frac{1}{\delta} X,p) \to (K_p,o_p)$, the 
following lemma is obvious.

\begin{lem} \label{lem:corner}
For any $p\in\partial C$ there exist positive numbers
$\epsilon_p$, $\delta_p$ and $c_p>1$ such that 
$$
   B(p,\delta)\cap\partial C \subset 
      \{\epsilon/2\le f_{\epsilon}\le c_p\epsilon\}
$$
for every $\delta\le\delta_p$ and $\epsilon$ with
$\epsilon/\delta\le\epsilon_p$.
\end{lem}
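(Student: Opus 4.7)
The plan is to prove the two inequalities separately. The lower bound is immediate: for any $q \in \partial C$ and $x \in C_\epsilon$ the triangle inequality gives $d(q,x) \ge d(\partial C, x) - d(\partial C, q) = d(\partial C, x) \ge \epsilon$, so $f_\epsilon(q) \ge \epsilon > \epsilon/2$, with no restriction on $\delta$ or $\epsilon$.

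For the upper bound, I plan to fix a single interior reference point $w_0 \in C$ depending only on $p$, and then to exploit the concavity on $C$ of the boundary-distance function $h := d(\partial C, \cdot)$ (recalled at the beginning of Section~\ref{sec:ideal}) along a minimal $X$-geodesic from $q$ to $w_0$ (which lies in $C$ by total convexity). Since $p \in \partial C$ forces $\dim C \ge 1$, the space $\Sigma_p(C)$ has nonempty boundary, and I pick $\xi^*_p \in \Sigma_p(C)$ maximizing $d(\cdot, \partial\Sigma_p(C))$ and set
\begin{equation*}
  \alpha := \min(d(\xi^*_p, \partial\Sigma_p(C)), \pi/2) > 0
\end{equation*}
(with the convention $\alpha = \pi/2$ if $\partial\Sigma_p(C) = \emptyset$). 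The pointed Gromov--Hausdorff convergence $(\tfrac{1}{t}X, p) \to (K_p, o_p)$ as $t \to 0$, together with the cone identity $d(\partial K_p(C), \xi^*_p) = \sin\alpha$ (for $\xi^*_p$ identified with a unit vector of $K_p$), then supplies a small $t_0 > 0$ and a point $w_0 \in C \cap B(p, t_0)$ whose rescaled image in $K_p$ lies near $\xi^*_p$, so that $c_0 := d(\partial C, w_0) \ge \tfrac{1}{2} t_0 \sin\alpha$.

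With $w_0$ fixed, I set $\delta_p := t_0$, $\epsilon_p := (\sin\alpha)/4$ and $c_p := 4/\sin\alpha$, which is $>1$ because $\sin\alpha \le 1$. For $q \in B(p, \delta) \cap \partial C$ with $\delta \le \delta_p$ and $\epsilon/\delta \le \epsilon_p$, let $\sigma \colon [0, \ell] \to C$ be the arc-length parametrized minimal geodesic from $q$ to $w_0$, so $\ell \le \delta + t_0 \le 2 t_0$. The function $f(s) := h(\sigma(s))$ is concave on $[0, \ell]$ with $f(0) = 0$ and $f(\ell) \ge c_0$, hence $f(s) \ge s c_0 / \ell$. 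Taking $s_* := \ell \epsilon / c_0$, which is $\le \ell$ because $\epsilon \le \epsilon_p \delta_p = c_0/2$, gives $\sigma(s_*) \in C_\epsilon$ and
\begin{equation*}
  f_\epsilon(q) \le d(q, \sigma(s_*)) = s_* \le 2 t_0 \epsilon / c_0 = c_p \epsilon.
\end{equation*}

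The only nontrivial ingredient is the tangent-cone step producing $w_0$ with $d(\partial C, w_0) / t_0 \ge \tfrac{1}{2} \sin \alpha$; apart from that, everything reduces to the one-variable concavity trick above. I do not anticipate a substantial obstacle, since the blowup of $X$ at $p$ is the cone $K_p$ and the function $d(\partial C, \cdot)$ blows up to $d(\partial K_p(C), \cdot)$, whose value at $\xi^*_p$ is precisely $\sin\alpha > 0$ by the standard cone distance formula.
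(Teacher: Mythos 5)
Your proof is correct. The paper offers no argument at all for this lemma beyond the one-line pointer ``in view of the convergence $(\tfrac{1}{\delta}X,p)\to(K_p,o_p)$, the following lemma is obvious,'' which presumably intends a soft scale-invariance/compactness computation carried out entirely in the cone $K_p$. You instead use the blowup only to manufacture a reference point $w_0$, and then close the argument with a genuinely different tool: the concavity of $d(\partial C,\cdot)$ on $C$ (Perelman, recalled at the start of Section \ref{sec:ideal}) applied along a geodesic $\sigma\subset C$ from $q$ to $w_0$. Two comments. First, your lower bound $f_\epsilon(q)\ge\epsilon$ is sharper than the stated $\epsilon/2$, which is harmless. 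Second, and more interestingly, the concavity mechanism actually makes the tangent-cone step superfluous: one may simply fix an arbitrary interior point $w_0$ of $C$ (independent of scale), set $c_0:=d(\partial C,w_0)>0$ and $c_p:=\max\bigl(\diam(C)/c_0,\,2\bigr)$, pick $\epsilon_p$, $\delta_p$ with $\epsilon_p\delta_p\le c_0$, and run your one-variable concavity estimate with $\ell=d(q,w_0)\le\diam(C)$ to get $f_\epsilon(q)\le\ell\epsilon/c_0\le c_p\epsilon$. This bypasses entirely the convergence $\tfrac{1}{t}\partial C\to\partial K_p(C)$, which you rightly flag as the one nontrivial ingredient (it does rely on stability of boundaries under non-collapsing Gromov--Hausdorff limits), and so is strictly more elementary. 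Your argument as written is correct, but it carries unnecessary weight precisely at the point you yourself identify as delicate.
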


From the filtration $\{ X^t\}_{t\ge 0}$, we obtain the 
filtration $\{ K_p^t\}_{t\ge 0}$, satisfying the same 
conditions as (1), (2), (3) in the begining of Part \ref{part:alex},
of $K_p$ by totally convex sets,
in a way similar to the proof of Proposition \ref{prop:codim1-reg}.
Let $C_{\infty}$ be the limit of $C$ under the above convergence and 
$$
   C_{\infty\epsilon}=\{ x\in C_{\infty}\,|\,d(\partial C_{\infty}, x)\ge
                           \epsilon \,\},
$$ 
which is the limit of $mC_{\epsilon/m}$.
Note that $C_{\infty}$ does not necessary coincide with $K_p^0$
because of $p\in\partial C$.

We shall also consider the function
\[
  f_{\infty \epsilon}(x)= d(C_{\infty\epsilon},x),
\]
on $K_p-C_{\infty\epsilon}$,
where
$$
   C_{\infty\epsilon}=\{ x\in C_{\infty}\,|\,
                    d(\partial C_{\infty},x)\ge\epsilon\}.
$$

\begin{lem} \label{lem:loc-regular}
Suppose $\dim X=4$ and $\dim C\in\{ 2,3\}$. Then 
for any $p\in\partial C$ and $c\ge c_p$, there exist positive numbers
$\epsilon_{p,c}$, $\delta_{p,c}$ and $\mu_p$ such that 
for every $\delta'<\delta\le\delta_{p,c}$ and $\epsilon$
with $\epsilon/\delta'\le\epsilon_{p,c}$ 
\begin{enumerate}
 \item $(f_{\epsilon}, d_p)$ is regular on 
   $\{\delta'\le d_p\le\delta, \epsilon/2\le f_{\epsilon}\le c\epsilon\}
    -B(\partial C,\epsilon/100);$
 \item $(f_{\epsilon},d_C)$ is regular on 
   $\{ d_p\le\delta, \epsilon/3\le f_{\epsilon}\le 2\epsilon/3,
         0<d_C\le\mu_p\epsilon\};$
 \item $(f_{\epsilon}, d_p, d_C)$ is regular on 
   $\{\delta'\le d_p\le\delta, \epsilon/3\le f_{\epsilon}\le 2\epsilon/3,
         0<d_C\le\mu_p\epsilon\}$.
\end{enumerate}
\end{lem}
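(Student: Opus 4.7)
The strategy is a blow-up at $p$: rescale $X$ by an appropriate factor so that Gromov-Hausdorff limits exist, verify the required regularity in the tangent cone $K_p$ where the geometry is explicit, and transfer back by compactness plus continuity of angles. In the cone, the filtration $\{K_p^t\}$ induced by $\{X^t\}$ makes $C_\infty$ a totally convex subset of the same dimension as $C$, so under the hypothesis $\dim C \in \{2,3\}$, $C_\infty$ has codimension $\ge 1$ in the $4$-dimensional cone $K_p$. When $\dim C = 3$, Proposition \ref{prop:codim1-reg} gives at topologically nice interior points of $C_\infty$ the two-normal splitting $\Sigma = S(\Sigma_p(C_\infty))$; when $\dim C = 2$, the normal cone at such an interior point is a two-dimensional nonnegatively curved cone, hence still offers a full half-space of directions transversal to $C_\infty$.

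For part (1), I would argue by contradiction, rescaling by the factor $1/d_p(x_i)$ with basepoint $p$. Under the constraint $\epsilon_i/\delta'_i \to 0$, the rescaled set $C_{\epsilon_i}$ converges to $C_\infty$, while $x_i$ converges to a point $x_\infty \in K_p$ with $d_{o_p}(x_\infty) = 1$; the assumptions $\epsilon/2 \le f_\epsilon(x) \le c\epsilon$ and $x \notin B(\partial C, \epsilon/100)$ ensure that $d_{C_\infty}(x_\infty) > 0$ and that $x_\infty$ is bounded away from $\partial C_\infty$. In the cone I exhibit explicitly a direction $w_\infty \in \Sigma_{x_\infty}(K_p)$ in the normal cone to $C_\infty$ but pointing outward in the radial sense, with $\angle(w_\infty, (o_p)'_{x_\infty}) > \pi/2 + c_0$ and $\angle(w_\infty, (C_\infty)'_{x_\infty}) > \pi/2 + c_0$ for some definite $c_0 > 0$. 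A lift of $w_\infty$ to $X$ then gives a regular direction at $x_i$ for large $i$, contradicting the failure of regularity.

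For parts (2) and (3), the appropriate rescaling factor is $1/\epsilon$, so that $f_\epsilon/\epsilon$ has a nontrivial limit equal to $f_{\infty 1}$ on $K_p$. The hypothesis $0 < d_C(x) \le \mu_p \epsilon$ localizes the limit $x_\infty$ inside a small normal tube of $C_\infty$ of radius $\mu_p$; choosing $\mu_p$ small relative to the splitting given by Proposition \ref{prop:codim1-reg} (or, in the codimension-$2$ case, by the direct normal-cone analysis) ensures $x_\infty$ lies in a slice where the tangential directions to $C_\infty$ and the direction to $o_p$ (which is essentially tangential to $C_\infty$ when $d_p/\epsilon$ is large as in part~(3)) are mutually almost perpendicular. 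The same outward normal direction $w_\infty$ used in part (1) then also satisfies $\angle(w_\infty, (C_\infty)'_{x_\infty}) > \pi/2 + c_0$, verifying the three-function regularity of $(f_{\infty 1}, d_{o_p}, d_{C_\infty})$ at $x_\infty$. Pulling back gives (2) and (3).

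The main obstacle is the codimension-$2$ case $\dim C = 2$, where Proposition \ref{prop:codim1-reg} does not apply and the required normal structure of $C_\infty$ in $K_p$ must be built by hand, verifying that the two-dimensional normal slice contains a direction realizing the required angular inequalities simultaneously with the tangential direction to $C_\infty$ and the radial direction to $o_p$; a careful examination of $\Sigma_{x_\infty}(K_p)$ is needed, using that $x_\infty$ is a topologically nice point by the hypothesis on $p$. A secondary bookkeeping obstacle is the interplay of the three scales $\epsilon \ll \delta' \le \delta$: one must ensure that the constants $\epsilon_{p,c}, \delta_{p,c}, \mu_p$ depend only on $p$ (and $c$), which follows by a standard diagonal compactness argument once the cone regularity with a definite angular gap $c_0$ is established.
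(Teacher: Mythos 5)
Your blow-up strategy for part (1) contains a scale mismatch that makes the claimed conclusions about the limit point false. You rescale at $p$ by $1/d_p(x_i)$ subject to $\epsilon_i/\delta_i'\to 0$; since $d_p(x_i)\ge\delta_i'$, every quantity of order $\epsilon_i$ vanishes after the rescaling. In particular $f_{\epsilon_i}(x_i)/d_p(x_i)\le c\,\epsilon_i/\delta_i'\to 0$ and $d(\partial C, x_i)/d_p(x_i)\ge\epsilon_i/(100\,d_p(x_i))$ \emph{also tends to zero}, so the limit point $x_\infty$ actually lies on $C_\infty$ and may well lie on $\partial C_\infty$, exactly the opposite of your assertion that ``$d_{C_\infty}(x_\infty)>0$ and $x_\infty$ is bounded away from $\partial C_\infty$.'' The regularity problem is concentrated at scale $\epsilon$ near $\partial C$, and that information is destroyed by normalizing $d_p(x_i)$ to $1$. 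Consequently the ``explicit outward normal direction $w_\infty$'' you propose to exhibit does not exist in the form you describe, because at a boundary point of $C_\infty$ the two-normal splitting of Proposition \ref{prop:codim1-reg} is unavailable.

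The paper instead rescales only by $1/\delta$, keeping $\epsilon$ and $\delta'$ as parameters, and proves the cone statements $(1)'$--$(3)'$ directly in $K_p$. The critical configuration---your missing case---is when $x$ is close to $\partial C_\infty$ relative to $\epsilon$ and the foot points on $C_{\infty\epsilon}$ and $C_\infty$ nearly coincide (the paper's Case 3). There the argument is by contradiction with a \emph{second} rescaling, by $\epsilon_i$ and centered at $x_i$ rather than at $p$, producing a new limit space $\hat X$; the key input is Sublemma \ref{slem:extend} (an almost-everywhere $(4,\delta_1)$-strainer along $\partial C_\infty$ coming from Lemma 1.8 of \cite{Ym:conv}), which supplies the direction realizing the angular gap after one reduces to the case $x_\infty\in\hat X^0$. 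Nothing in your sketch performs this second blow-up or invokes a strainer-extension property of $\partial C_\infty$, and without it the case that actually threatens regularity is left unaddressed. Your $1/\epsilon$ rescaling proposal for (2)--(3), if taken literally as centered at $p$, also pushes $x_i$ to infinity ($d_p(x_i)/\epsilon\ge\delta'/\epsilon\to\infty$), so the basepoint needs to be $x_i$ there as well.
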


\begin{proof}
Under the convergence $(\frac{1}{\delta} X,p) \to (K_p,o_p)$,
$C_{\delta\epsilon}$, $f_{\delta\epsilon}$, 
$d_{p}$ and $d_C$ converge to 
$C_{\infty\epsilon}$, $f_{\infty\epsilon}$, 
$d_{o_p}$ and $d_{C_{\infty}}$ respectively.
Therefore it suffices to show that 
\begin{enumerate}
 \item[$(1)^{\prime}$] $(f_{\infty\epsilon}, d_{o_p})$ is regular on 
   $\{\delta'\le d_{o_p}\le 1, \epsilon/2\le f_{\infty\epsilon}\le c\epsilon\}
    -B(\partial C_{\infty},\epsilon/100);$
 \item[$(2)^{\prime}$] $(f_{\infty\epsilon},d_{C_{\infty}})$ is regular on 
   $\{ d_{o_p}\le 1, \epsilon/3\le f_{\infty\epsilon}\le 2\epsilon/3,
         0<d_C\le\mu_p\epsilon\};$
 \item[$(3)^{\prime}$] $(f_{\infty\epsilon}, d_{o_p}, d_{C_{\infty}})$ 
   is regular on 
   $\{\delta'\le d_{o_p}\le 1, \epsilon/3\le 
   f_{\infty\epsilon}\le 2\epsilon/3, 0<d_{C_{\infty}}\le\mu_p\epsilon\}$.
\end{enumerate}
First suppose $\dim C=3$. We show $(1)^{\prime}$. 
For every $x\in 
\{\delta'\le d_{o_p}\le 1, \epsilon/2\le f_{\infty\epsilon}\le c\epsilon\}
    -B(\partial C_{\infty},\epsilon/100)$,
let $y\in \partial C_{\infty\epsilon}$,
$z\in \partial C_{\infty}$ and  $u\in C_{\infty}$
be  nearest points of $\partial C_{\infty\epsilon}$,
of $\partial C_{\infty}$ and of $C_{\infty}$ respectively  from $x$.
Note that 
\[
   |\tilde\angle o_pxy - \pi/2| < \tau(\delta',\epsilon/\delta').
\]
We must show that 
\begin{equation*}
  \tilde\angle o_pxw >\pi/2+c,\quad
          \tilde\angle yxw >\pi/2+c, 
\end{equation*}
for some point $w$, where $c$ is a uniform positive constant
not depending on $\epsilon$.
Letting $a$ be a point on 
the ray from $o_p$ through $x$ with 
$d(o_p,a)>d(o_p,x)$, we obtain 
$\tilde\angle yxa > \pi/2-\tau(\delta',\epsilon/\delta')$.

We consider the following three cases.

\begin{proclaim}{\emph{Case} 1.}
  $d(z,u)\ge\epsilon/1000$.
\end{proclaim}
  
  Let $b\in\partial C_{\infty}$ and $v\in C_{\infty}$ be such that
$d(o_p,b)=d(o_p,z)+d(z,b)$ and $zbvu$ forms a square in $C_{\infty}$.
Now observe that 
the normal bundle ${\cal N}(\interior C_{\infty})$ over 
$\interior C_{\infty}$ is naturally 
imbedded in $K_p$.
Let $z_1$, $b_1$ and $v_1$ be points in ${\cal N}(\interior C_{\infty})$ 
such that
$uzbvxz_1b_1v_1$ forms a parallelepiped in ${\cal N}(\interior C_{\infty})$. 
Then we have
$$
 \tilde\angle yxb_1 >\pi/2 + c_1, \qquad \tilde\angle o_pxb_1 >\pi/2 + c_1,
$$
for some uniform constant $c_1>0$.
This implies that $(f_{\epsilon},d_{o_p})$ is 
$(c_1,\tau(\delta',\epsilon/\delta'))$-regular at $x$.
\par
 
\begin{proclaim}{\emph{Case} 2.}
  $d(z,u)\le\epsilon/1000$ and $\tilde\angle xyz\ge 1/100$.
\end{proclaim}

Let $x_1$ be the point on $xy\cap {\cal N}(C_{\infty})$
such that $\tilde\angle yux_1=\pi/2$ ($x_1=x$ if $z\neq u$).
Take $v\in {\cal N}(C_{\infty})$ such that  
$d(u,v)=d(u,x_1)+d(x_1,v)$ and
$d(x,x_1)/d(v,x)$ is sufficiently small.
Then 
$\tilde\angle yxv > \pi/2 +c_2$.
Let $w$ be a point such that $w_x^{\prime}$ is a midpoint between
$v_x^{\prime}$ and $a_x^{\prime}$.
Then we have
$$
  \tilde\angle zxw >\pi/2 + c_2^{\prime},  
    \qquad \tilde\angle o_pxw >\pi/2 + c_2^{\prime},
$$
for some uniform constant $c_2^{\prime}>0$.
This implies that $(f_{\epsilon},d_{o_p})$ is 
$(c,\tau(\delta',\epsilon/\delta'))$-regular at $x$.

\begin{proclaim}{\emph{Case} 3.}
  $d(z,u)\le\epsilon/1000$ and $\tilde\angle xyz\le 1/100$.
\end{proclaim}

 Note that $z=u$ in this case. Let $\delta_1$ be a small positive number, 
and let $R_{\delta_1}$ denote the set of $(4,\delta_1)$-strained points 
in $\partial C_{\infty}\cap B(o_p,1)$. By Lemma 1.8 of \cite{Ym:conv},
we have the following sublemma.

\begin{slem} \label{slem:extend}
There is a small $\epsilon>0$ so that 
for every $x\in B(R_{\delta_1},\epsilon)$ and 
$z\in \partial C_{\infty}$ with $d(x,z)=d(x,\partial C_{\infty})$,
there exists a point $v$ satisfying $\tilde\angle zx v>\pi-\delta_1$.
\end{slem}

Now suppose that the required regularity property does not hold
in Case 3.
Then we have a sequence
$\epsilon_i$ of positive numbers tending to $0$ and
a sequence $x_i\in
\{ \epsilon_i/2\le f_{\infty\epsilon_i}\le c\epsilon_i,
\delta'\le d_{o_p}\le 1\}-B(\partial C_{\infty}, \epsilon_i/100)$
such that 
\begin{equation*}
  \tilde\angle o_px_iw \le \pi/2+o_i,\quad\text{\rm or} \quad
         \tilde\angle y_ix_iw \le \pi/2+o_i
\end{equation*}
for any point $w$, where $y_i\in (C_{\infty})_{\epsilon_i}$ and 
$z_i\in \partial C_{\infty}$ denote  nearest points of
$(C_{\infty})_{\epsilon_i}$  and of $C_{\infty}$ from $x_i$
respectively, and $\lim o_i=0$.
Passing to a subsequence, we may assume that 
$(\frac{1}{\epsilon_i} K_p, x_i)$ converges to a pointed 
nonnegatively curved Alexandrov space $(\hat X,x_{\infty})$.
There exists a filtration $\{ \hat X^t\}_{t\ge 0}$
of $\hat X$ by totally convex subsets induced from 
$\{ K_p^t\}_{t\ge 0}$ as before.
Note that  $\hat X^0$ is not necessary the limit of $C_{\infty}$.
Let  $o_{\infty}\in\hat X(\infty)$, 
$y_{\infty}\in\hat X$ and $z_{\infty}\in\hat X$ are the limits of
$o_p$,  $y_i$  and $z_i$ respectively.
If  $d(x_{\infty}, \hat X^0)>0$, letting $t_0$ be such that 
$x_{\infty}\in\partial\hat X^{t_0}$, we can find 
a point $w\in \hat X-\hat X^{t_0}$ such that 
\begin{equation*}
  \tilde\angle o_{\infty}x_{\infty}w \ge \pi/2,\quad
       \tilde\angle y_{\infty}x_{\infty}w \ge \pi/2+c,
\end{equation*}
for some $c>0$.
Now it is possible to take  $w_1$ near $w$ such that  
\begin{equation*}
  \tilde\angle o_{\infty}x_{\infty}w_1 \ge \pi/2+c_1,\quad
     \tilde\angle z_{\infty}x_{\infty}w_1 \ge \pi/2+c_1,
\end{equation*}
for some $c_1>0$. This yields a contradiction.

Thus we may assume that 
$x_{\infty}\in \hat X^0$. Since this implies 
$z_{\infty}\in\interior \hat X^0$, there are
exactly two directions at $z_{\infty}$ normal to
$\hat X^0$ (see Proposition \ref{prop:codim1-reg}). 
Since we may assume that
$\lim \tilde\angle x_iz_iy_i=\pi$, it follows that 
$z_{\infty}$ is a $(4,0)$-straind point.
Therefore $z_i\in R_{\delta_1}$ for large $i$.
From the choice of $\epsilon$, it is possible to take $v_i$ satisfying
$\tilde\angle z_ix_iv_i >\pi/2 + c_3$ for large $i$.
Obviously $\tilde\angle y_ix_iv_i >\pi/2 + c_3/2$.
Taking  $w_i$ as in Case 2, we would obtain the regularity of
$(f_{\epsilon},d_{o_p})$ at $x_i$, a contradiction.

$(2)'$ follows from the existence of ${\cal N}(C_{\infty})$.
$(3)'$ follows from $(2)'$ and the idea of the proof of 
$(1)'$. 
Therefore the details are omitted.

Next suppose $\dim C=2$. In this case, Sublemma \ref{slem:extend}
obviously holds, and similar arguments applies.
Therefore the details are omitted.
\end{proof}

The last local regularity property is a purely metric one
on the tangent cone.
The following Proposition is also important in the proof of 
Theorem \ref{thm:soul}.

\begin{prop} \label{prop:product}
Let $X^4$ be a $4$-dimensional complete Alexandrov 
space with nonnegative curvature, and $C(i)$ as in 
\eqref{eq:soul-const}, $0\le i\le k$.
If $p\in\interior C(i)$ is a topological regular point of $X^4$, then  
$K_p(X^4)$ is isometric to the 
product of $K_p(\interior C(i))$ and a Euclidean cone.
\end{prop}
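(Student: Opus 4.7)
The plan is to establish the splitting by induction on the codimension $m := n - \dim C(i)$ within $X$. The base case $m = 0$ is trivial, and $m = 1$ is exactly Proposition \ref{prop:codim1-reg}, which handles a codimension-one totally convex subset defined by a convex function (the Busemann function $b$ when $i = 0$ and $X$ is noncompact, or $\max d(\partial C(i-1),\cdot) - d(\partial C(i-1),\cdot)$ when $i \geq 1$ or $X$ is compact with boundary).

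For the inductive step, I would first apply the inductive hypothesis inside the totally convex subspace $C(i-1) \subset X$, endowed with its own Cheeger--Gromoll filtration $C(i-1) \supset C(i) \supset \cdots$, to obtain $K_p(\interior C(i-1)) = K_p(\interior C(i)) \times E'$ for some Euclidean cone $E'$. It then suffices to prove a single outermost splitting $K_p X = K_p(\interior C(i-1)) \times E''$ for some Euclidean cone $E''$, since the product of two Euclidean cones is again a Euclidean cone (as the cone over the spherical join of their links).

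To establish this outermost splitting, I would show that $\Sigma_p X$ is isometric to the spherical join $\Sigma_p(\interior C(i-1)) \ast \Sigma''$ for some spherical Alexandrov space $\Sigma''$ of dimension $m' - 1$, where $m' = n - \dim C(i-1)$. The two main ingredients are Proposition \ref{prop:concave-rigid}, which produces flat rectangles and hence parallel normal rays along any geodesic in $\interior C(i-1)$ through $p$ (from these one manufactures the normal link $\Sigma''$), and Lemma \ref{lem:perp}, which forces $d(v,w) = \pi/2$ whenever $v \in \Sigma_p(\interior C(i-1))$ and $w$ lies in the normal set $\{\xi : d(\xi, \Sigma_p(\interior C(i-1))) \geq \pi/2\}$. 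The topological regularity of $p$ in $X^4$ guarantees that $\Sigma_p X$ is a topological $3$-sphere, which rules out branching phenomena and allows the join decomposition to sweep out all of $\Sigma_p X$.

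The main obstacle I anticipate is handling the inductive step when the codimension jump $m' = n - \dim C(i-1)$ exceeds one, since Proposition \ref{prop:codim1-reg} extracts only one Euclidean factor at a time. Iterating to strip off $m'$ factors requires constructing an intermediate chain of codimension-one totally convex subsets between $X$ and $C(i-1)$, which I would do by slicing $X$ with sublevels $\{\rho \leq t\}$ of the defining convex function and applying Proposition \ref{prop:codim1-reg} at each generic level. A subsidiary technical issue is preserving the topological regularity hypothesis through each slice, which is automatic in dimension four by the iterated-regularity property at topologically nice points (cf.\ Proposition \ref{prop:smoothable}).
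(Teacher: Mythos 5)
Your proposal has a genuine gap in the inductive step, and it also misses the central difficulty that the paper's proof is designed to overcome.

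The codimension-one case is indeed Proposition \ref{prop:codim1-reg}, and you are right that products of Euclidean cones are Euclidean cones, so one could in principle chain codimension-one splittings together. But the chain you need does not exist. In the Cheeger--Gromoll filtration, the dimension can drop by more than one at a single step, and the crucial case of this proposition in $X^4$ is precisely $\dim C(i)=2$ when there is no intermediate codimension-one totally convex subset between $X^4$ and $C(i)$. Your proposed remedy, slicing by sublevels $\{\rho\le t\}$ of the defining convex function, does not produce lower-dimensional totally convex sets: for $t>0$ those sublevels have dimension $n$, and their boundaries, while codimension one, are not totally convex, so Proposition \ref{prop:codim1-reg} does not apply to them. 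So the inductive ladder collapses exactly at the codimension-two rung, which is the only nontrivial rung in dimension four.

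The paper handles this case head-on. Setting $\Sigma_0=\Sigma_p(\interior C(i))$ and $\Sigma_1=\{\xi:\angle(\xi,\xi_0)=\pi/2\ \forall\,\xi_0\in\Sigma_0\}$, Proposition \ref{prop:concave-rigid} shows every point of $K_p(X^4)$ lies on a flat rectangle spanned by directions in $\Sigma_0$ and $\Sigma_1$; the remaining and essential step is to prove that the minimal geodesic in $\Sigma_p(X)$ joining a given $\xi_0\in\Sigma_0$ to a given $\xi_1\in\Sigma_1$ is \emph{unique}, for otherwise $\Sigma_p(X)$ need not be the spherical join $\Sigma_0*\Sigma_1$. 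The paper proves uniqueness by parallel transporting the direction of a putative second geodesic around the closed curve $\Sigma_0$, distinguishing the holonomy-trivial case (which produces two disjoint closed geodesics in $\Sigma_{\xi_1}(\Sigma_p X)\simeq S^2$ of curvature $\ge 1$, impossible) from the holonomy-nontrivial case (which is killed by a knot-group computation, using $\Sigma_p(X)\simeq S^3$ and a Wirtinger presentation). Your phrase that topological regularity of $p$ ``rules out branching phenomena and allows the join decomposition to sweep out all of $\Sigma_p X$'' gestures at this but supplies no argument; this uniqueness statement is the whole content of the proof and cannot be obtained from Lemma \ref{lem:perp} and Proposition \ref{prop:concave-rigid} alone, which only give the existence of flat rectangles, not their uniqueness.
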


 Proposition \ref{prop:product} is true for the $3$-dimensional
complete open Alexandrov spaces with nonnegative curvature
(see \cite{SY:3mfd}).

\begin{conj} Proposition \ref{prop:product} is true 
for every complete Alexandrov space with 
nonnegative curvature if $p\in\interior C(i)$ is a topological 
regular point of $X^n$.
\end{conj}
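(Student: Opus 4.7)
The plan is to prove the proposition by induction on the index $i$ in the Cheeger--Gromoll filtration, splitting off one Euclidean cone factor at each stage. First I reduce to the tangent cone setting: since the desired decomposition $K_p X^4 \cong K_p(\interior C(i)) \times K(\Lambda)$ is scale-invariant, one rescales via $(\tfrac{1}{r}X^4, p) \to (K_p X^4, o_p)$ and observes that each $C(j)$ converges to a compact totally convex subcone $\widetilde C(j) := K_p(C(j))$ of $K_p X^4$, with $\widetilde C(j+1)$ the maximum set of $d(\partial \widetilde C(j),\cdot)$ inside $\widetilde C(j)$, and with $o_p \in \interior \widetilde C(i)$. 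Thus it suffices to produce the product structure on $K_p X^4$ near $o_p$ using the filtration of subcones.

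I then proceed by induction on $j = 0, 1, \ldots, i$, showing at each stage that $K_p X^4$ splits off one further Euclidean cone factor. For the inductive step, assume $K_p X^4 \cong K_p(\interior C(j)) \times K(\Xi_j)$ for some cross-section $\Xi_j$. Because $d(\partial \widetilde C(j),\cdot)$ is constant on $\widetilde C(j+1)$, Proposition \ref{prop:concave-rigid} applied inside $\widetilde C(j)$ yields, for every minimal segment from $o_p$ to $\partial \widetilde C(j)$ and every sufficiently short geodesic in $\widetilde C(j+1)$ from $o_p$, a flat totally geodesic rectangle spanned by these two segments. Invoking Lemma \ref{lem:perp} to see that each such normal direction at $o_p$ lies at distance $\pi/2$ from every direction tangent to $\Sigma_p(\interior C(j+1))$, the union of these flat rectangles yields an isometric splitting $K_p(\interior C(j)) \cong K_p(\interior C(j+1)) \times K(\Theta_{j+1})$, where $\Theta_{j+1}$ is the closed locally convex subset of $\Sigma_p C(j)$ consisting of directions perpendicular to $\Sigma_p(\interior C(j+1))$. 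The base case $j = 0$ is handled analogously: when $X^4$ has boundary one applies Proposition \ref{prop:concave-rigid} directly to $d(\partial X,\cdot)$, while in the noncompact boundaryless case one combines the method of Proposition \ref{prop:ideal} (showing that rays from $p \in \interior C(0)$ are perpendicular to $C(0)$) with the same flat-strip construction at the level of tangent cones. Iterating from $j = 0$ up to $j = i$ and using that a product of Euclidean cones is again a Euclidean cone over the spherical join of the cross-sections, one obtains $K_p X^4 \cong K_p(\interior C(i)) \times K(\Lambda)$ with $\Lambda$ the iterated spherical join of the $\Theta_{j+1}$'s.

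The chief obstacle is to verify that each split-off cross-section $\Theta_{j+1}$ is a genuine Alexandrov space of curvature $\ge 1$ and that the successively split-off Euclidean factors are mutually perpendicular, so that the join structure on $\Sigma_p X^4$ assembles coherently without degeneration. This is exactly where topological regularity of $p$ in $X^4$ enters: in dimension $n = 4$ a topologically regular point is automatically topologically nice (Proposition \ref{prop:smoothable} together with the remark that for $n \le 4$ topological regularity implies topological niceness), so $\Sigma_p X^4 \simeq S^3$, and the iterated join factors must be topological spheres of the correct dimensions. When $\dim C(i) = n - 1 = 3$ the codimension-one step is immediate from Proposition \ref{prop:codim1-reg}; for $\dim C(i) \le n-2$ the hard point is confirming at each inductive step that the previously split-off factor $K(\Xi_j)$ is orthogonal to the newly produced $K(\Theta_{j+1})$, which one checks by applying Lemma \ref{lem:perp} to the totally convex inclusion $\widetilde C(j+1) \subset \widetilde C(j)$ and using that every direction normal to $\widetilde C(j)$ at $o_p$ is \emph{a fortiori} normal to $\widetilde C(j+1)$, while the sphericity of $\Sigma_p X^4$ rules out the pathological possibility that such a cross-section fails to be suspended.
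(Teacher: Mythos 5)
The central gap is the step where you assert that ``the union of these flat rectangles yields an isometric splitting $K_p(\interior C(j)) \cong K_p(\interior C(j+1)) \times K(\Theta_{j+1})$.'' Proposition~\ref{prop:concave-rigid} does give, for every segment in $\widetilde C(j+1)$ from $o_p$ and every normal direction, a totally geodesic flat rectangle, and so shows that every direction at $o_p$ lies on some quarter-circle from $\Sigma_p(\interior C(j+1))$ to the perpendicular set $\Theta_{j+1}$. But for a spherical join --- and hence a product of cones --- you need, for each pair $(\xi_0,\xi_1)\in \Sigma_p(\interior C(j+1))\times\Theta_{j+1}$, a \emph{unique} minimal geodesic of length $\pi/2$ joining them; mere existence only shows $\Sigma_p(C(j))$ is \emph{covered} by the join, not isometric to it. The paper's proof of Proposition~\ref{prop:product} (the $n=4$ case) makes exactly this reduction --- ``it suffices to show that there exists a unique minimal geodesic from each point $\xi_0\in\Sigma_0$ to each point $\xi_1\in\Sigma_1$'' --- and then devotes all of its effort to that uniqueness, via parallel transport of normal directions around the circle $\Sigma_0$, a singular surface $F$ in $\Sigma_p\simeq S^3$, and the Wirtinger presentation of the knot group $\pi_1(\Sigma_p - K)$ with abelianization $\Z$. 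That argument lives in $S^3$ and has no known analog for $\Sigma_p\simeq S^{n-1}$ with $n>4$, which is precisely why the general statement remains a conjecture. Your appeals to Lemma~\ref{lem:perp} and to ``the sphericity of $\Sigma_p X^4$ rules out the pathological possibility'' do not replace this: Lemma~\ref{lem:perp} only controls distances to a locally convex set, and a topological sphere can perfectly well contain a locally convex circle $\Sigma_0$ admitting more than one length-$\pi/2$ geodesic to a given perpendicular point without being a spherical join.

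A secondary problem is that your writeup reverts to $X^4$ and ``in dimension $n=4$'' in the final paragraph, so as written it at most reproves the case already established as Proposition~\ref{prop:product}, not the conjectural general-$n$ statement --- and even there the uniqueness step is missing. Until uniqueness of the connecting geodesic is supplied at each inductive stage, the flat rectangles do not assemble into a product, and the proposed induction does not close.
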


\begin{rem}
Let $X$ be the Euclidean cone over the round sphere $S^n$
of diameter $< \pi$.
For a great sphere $S^{n-1}$ of $S^n$, the subcone $K_0\subset X$
over $S^{n-1}$ is a locally convex set of $X$. 
This shows that Proposition \ref{prop:product} does not hold
for a general locally convex set $S$.
The following example also shows that  
Proposition \ref{prop:product} does not hold for a topological 
singular point $p$.
\end{rem}

\begin{ex}
Let $\gamma$ be the isometric involution on $S^2(1)\times\R^2$
defined by $\gamma(x,t)=(R_{\pi}(x), -t)$, where
$R_{\pi}$ denote a rotation by angle $\pi$.
Then $X^4=S^2(1)\times\R^2/\gamma$ is a complete Alexandrov 
space with nonnegative curvature with soul $S$
isometric to the spherical suspension over $S^1_{\pi}$.
Let $p\in S$ be one of the two topological singular points 
of $X^4$. Then $\Sigma_p(X^4)$ is isometric to the projective 
space of constant curvature $1$, and the conclusion of 
Proposition \ref{prop:product} does not hold in this case.
\end{ex}

For the proof of Proposition \ref{prop:product},
we may assume  $\dim C(i)=2$.
Let $\Sigma_0:=\Sigma_p(\interior C(i))$ and 
$$
 \Sigma_1:=\{ \xi_1\in\Sigma_p(X)\,|\,\angle(\xi_1,\xi_0)=\pi/2\,\,
             \text{for any $\xi_0\in\Sigma_0$} \}.
$$
From the construction of the soul $S$ together with Proposition 
\ref{prop:concave-rigid}, every $\xi\in\Sigma_p(X)$ 
lies on a minimal geodesic from a point of $\Sigma_0$ 
to a point of $\Sigma_1$. Thus every 
element of $K_p(X^4)$ lies on an infinite  flat rectangle
isometric to $[0,\infty)\times [0,\infty)$ spanned by 
some pair of 
elements of $\Sigma_0$ and $\Sigma_1$.

\begin{proof}[Proof of Proposition \ref{prop:product}]
We have to show that $K_p(X^4)$ is isometric to the product 
$K(\Sigma_0)\times K(\Sigma_1)$, in other words,
$\Sigma_p(X)$ is isometric to the spherical 
join $\Sigma_0 * \Sigma_1$. 
In view of the previous observation, 
it suffices to show that there exists a unique minimal geodesic 
from each point 
$\xi_0\in\Sigma_0$ to each point $\xi_1\in\Sigma_1$.
Suppose that there are two minimal geodesics joining $\xi_0$ and 
$\xi_1$ with directions, say $v_0\in\Sigma_{\xi_0}(\Sigma_p(X))$ 
and $w_0\in\Sigma_{\xi_0}(\Sigma_p(X))$.

Let $c(t)$, $0\le t\le \ell=L(\Sigma_p(\interior C(i)))$, be the 
arc-length parameter of $\Sigma_p(\interior C(i))$ with $c(0)=\xi_0$.
The direction $v_0$ 
define a parallel field $v_t$  along $c(t)$ such that
$v_0$, $v_t$ and $c|_{[0,t]}$ span a totally geodesic triangle surface
of constant curvature $1$ with vertices $\xi_0$, $c(t)$ and $\xi_1$.
Similarly $w_0$ define a parallel field $w_t$  along $c(t)$.

First suppose that $v_{\ell}=v_0$ and $w_{\ell}=w_0$.
Then it turns out that the surface 
$\Sigma_{\xi_1}(\Sigma_p(X))\simeq S^2$ 
with curvature $\ge 1$ contains two disjoint 
closed geodesics of length $\ell$, which is impossible.

Thus we may assume that $v_{\ell}\neq v_0$.
Note that $K_{\xi_1}(\Sigma_{p}(X))$ is isometric 
to a product $\R\times C_1$, where $C_1$ is a $2$-dimensional
Euclidean cone over a circle, say $S^1_1$, and the $\R$-factor is 
given by $\Sigma_1$.
Let $\bar v_0$ and $\bar v_{\ell}$ be directions at $\xi_1$
given by the minimal geodesics to $\xi_0$ with directions 
$v_0$ and $v_{\ell}$ at $\xi_0$ respectively.
Let $\bar v_t\in S^1_1$, $0\le t\le \ell$, be the parallel field joining
$\bar v_0$ and $\bar v_{\ell}$ determined 
by the geodesics joining $\xi_0$ to $\xi_1$ with directions
$v_t$.
Let $\bar v_t$, $\ell\le t\le \ell_0$, be a unit speed geodesic joining
$\bar v_{\ell}$ to $\bar v_{0}$ in $S^1_1$,
such that $\{ \bar v_t\}_{0\le t\le\ell_0}$ forms the closed geodesic
$S^1_1$.
Let $\bar c(t)\in\Sigma_p(X)$  denote the point
on the geodesic with direction $\bar v_t$ such that 
$d(\xi_1,\bar c(t))=\pi/2$.
It is easy to see that $\bar c(t)=c(t)$ $\mod \ell\Z$ 
and that $v_{\ell_0}=v_0$. In particular, $\ell_0=n\ell$
for some integer $n\ge 2$.
Thus we have a singular surface $F$ in $\Sigma_p(X)$ 
spanned by the geodesics from $c(t)$ to $\xi_1$ 
with directions $v_t$, $0\le t\le n\ell$.
Let $D$ be a small disk in $F-\Sigma_0$ around $\xi_1$.
Note that $\Sigma_0$ is a deformation retract of $F-\interior D$.
Let $\alpha\in \pi_1(F-\interior D)\simeq\Z$
be the homotopy class defined by $\partial D$,
which is the $n$-th power of a generator of $\pi_1(F-\interior D)$.
Let $c_1$ be a path intersecting $F$ with a unique point
in $\interior D$. From the construction of $F$,
it is possible to join the endpoints of $c_1$  by
a path $c_2$ in $\Sigma_p(X)-F$.
Let $K$ denote the knot defined as the composition 
$c_1$ and $c_2$. 
We set $\Gamma:=\pi_1(\Sigma_p(X)-K)$ and consider the homomorphisms
$$
 \pi_1(F-\interior D) \overset{i_*}\longrightarrow
    \Gamma \overset{\rho}\longrightarrow
      H_1(\Sigma_p(X)-K)=\Gamma/[\Gamma,\Gamma]\simeq \Z,
$$
where $\rho$ is the natural homomorphism.
The above discussion implies that $\rho\circ i_*(\alpha)\equiv 0$
$\mod n\Z$. However since $i_*(\alpha)$ is one of the generators of 
a Wirtinger presentation of $\Gamma$, 
$\rho\circ i_*(\alpha)$ is a generator of $\Z$ (cf.\cite{CrFx:knot}), 
a contradiction.
This completes the proof of the proposition.
\end{proof}

%                     \input{soulI}

% soulI.tex

\section{Soul Theorem in dimension four I} \label{sec:soulI}

Throughout this section, let $X$ be a complete open 
Alexandrov space with nonnegative curvature.

\begin{conj} \label{conj:soul}
Suppose in addition that $X$ is topologically nice.
Then there exists a positive number $\epsilon$ such that 
\begin{enumerate}
 \item $X$ is homeomorphic to $\interior B(S,\epsilon);$ 
 \item $B(S,\epsilon)$ is homeomorphic to a disk-bundle 
       over $S$, called the normal bundle of $S$.
\end{enumerate}
\end{conj}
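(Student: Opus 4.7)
The plan is to induct on the length of the soul-construction filtration $C(0) \supset C(1) \supset \cdots \supset C(k) = S$, establishing at each stage that a small metric neighborhood $B(C(i), \epsilon_i)$ carries a topological disk bundle structure over $C(i)$, with these structures compatible as $i$ decreases. The base case consists of showing that $X$ deformation retracts onto $\interior B(C(0), \epsilon_0)$, which follows from the fact that the Busemann function $b$ is concave with minimum set $C(0)$: a gradient-like flow for $b$ has no critical points outside $C(0)$, yielding a homeomorphism $X \simeq \interior B(C(0), \epsilon_0)$ via standard Morse theory for distance/concave functions on Alexandrov spaces. For the inductive step, we view $C(i)$ as an Alexandrov space with nonnegative curvature in its own right (with boundary $\partial C(i)$ when $\dim C(i) > \dim C(i+1)$), so that $C(i+1)$ plays the role of its soul, and we glue the normal bundle of $C(i+1)$ inside $C(i)$ with the already-constructed normal disk bundle of $C(i)$ in $X$ to obtain a disk bundle of rank $n - \dim C(i+1)$ over $C(i+1)$.

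For the local bundle structure at an interior point $p \in \interior C(i)$, the essential tool is Proposition \ref{prop:product}, which gives the isometric splitting of $K_p(X)$ as a product $K_p(\interior C(i)) \times K(\Sigma_1)$. The topologically nice hypothesis ensures that $K(\Sigma_1)$ is a topological disk, so the generalized normal exponential map $\cal N$ furnishes a local homeomorphism between a neighborhood of $p$ in $X$ and a product $U_p \times D^{n-\dim C(i)}$, compatible with the tube around $C(i)$. At a boundary point $p \in \partial C(i)$ we appeal to Lemma \ref{lem:loc-regular}, which yields joint regularity of $(f_\epsilon, d_p)$ and of $(f_\epsilon, d_p, d_C)$ on nested annular regions, so Perelman's regularity-implies-fibration result (\cite{Pr:alex2} together with \cite{Si:stratified}) provides a local topological disk bundle structure extending that from the interior across $\partial C(i)$. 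These local structures are then patched globally by means of the respectful stability theorem (Theorem \ref{thm:stability-respect}), which guarantees that an approximation along the convergence $(\tfrac{1}{r}X,p) \to (K_p, o_p)$ can be taken compatibly with a prescribed regular map.

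The main obstacle will be the corner behavior at points where several strata meet: a point $p$ lying in $C(k-j) \cap \partial C(k-j-1) \cap \cdots \cap \partial C(0)$ requires simultaneous regularity of the full tuple of distance functions $(f^{(0)}_{\epsilon_0}, f^{(1)}_{\epsilon_1}, \ldots, d_p)$, i.e.\ the higher-codimensional and iterated analogue of Lemma \ref{lem:loc-regular}. In the $4$-dimensional topologically regular case (Theorem \ref{thm:soul}) this is tractable because the filtration length is at most three and dimension counting severely restricts the stratified picture, so only the cases already covered by Lemma \ref{lem:loc-regular} arise. In the general topologically nice setting one must propagate the product structure of Proposition \ref{prop:product} iteratively through the strata using Proposition \ref{prop:concave-rigid}, exploiting the topologically nice hypothesis at each stage to guarantee that the relevant iterated spaces of directions are topological spheres that bound local disks; this is the point where a new idea beyond the four-dimensional argument would be required, and is the reason the statement in higher dimensions remains a conjecture.
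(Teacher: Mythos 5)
The statement you are addressing is labeled a \emph{conjecture} in the paper; the paper proves it only for $n\le 4$ (Theorem \ref{thm:soul}, under the slightly stronger hypothesis of topological regularity, which in dimension four coincides with topological niceness), and you correctly observe this and honestly flag the open gap at the end. Your sketch of the $n=4$ argument is aligned with the paper's in broad outline: Proposition \ref{prop:product} for the tangent-cone splitting at interior points of $C(i)$, Lemma \ref{lem:loc-regular} for regularity near $\partial C$, and the reduction $SC_{n-1,0}\Rightarrow SC_{n,1}$ via the splitting theorem all appear in the paper in the roles you assign them.

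However, the proposal glosses over exactly the step where the paper does its hard work, and this matters. Your base-case claim that $X$ is homeomorphic to $\interior B(C(0),\epsilon_0)$ ``via standard Morse theory for the concave Busemann function'' is not directly available: the function $f_\epsilon=d(C_\epsilon,\cdot)$ is critical precisely along $\partial C$, and there is no a priori reason that its flow curves near $\partial C$ organize into a manifold collar. The paper's resolution is the $f_\epsilon$-pseudo-gradient normal bundle over $\partial C$ (Definition \ref{def:grad-nbd}), constructed in Proposition \ref{prop:flow} and Theorem \ref{thm:norm-nbd} by a careful local-to-global patching that uses the topological regularity of $\Sigma_p$ and the generalized Schoenflies theorem to cap off local pseudo-gradient bundles; only after this is built can one perturb the flow across $\partial C$ and obtain both the homeomorphism $X\simeq\{f_\epsilon<\epsilon/2\}$ and the disk-bundle structure. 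Your proposed patching tool, Theorem \ref{thm:stability-respect}, is used in the paper only for the codimension-two case $\dim S=\dim C=2$ (Lemma \ref{lem:patch}); in the other cases it is the pseudo-gradient normal bundle, not a convergence argument, that closes the gap. Also, the paper's organization is not an induction on the filtration length but a case analysis on $(\dim C,\dim S)$; your filtration-descent idea corresponds to what actually happens only in the $\dim C=3$ case (Theorem \ref{thm:soul-C3}), where the problem is reduced to a three-dimensional compact Alexandrov space with boundary. Your diagnosis of the obstruction to higher dimensions — simultaneous regularity at corner points lying in multiple strata — is correct, and it is indeed why the general statement remains a conjecture.
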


Conjecture \ref{conj:soul}
is certainly true for $n=3$ (\cite{SY:3mfd}).
Theorem \ref{thm:soul} asserts that 
the conjecture is also true for $n=4$.
The purpose of Sections \ref{sec:soulI} and \ref{sec:soulII}
is to prove Theorem \ref{thm:soul}.

The assumption on the topological niceness of $X$ 
is essential in Conjecture \ref{conj:soul}.

\begin{ex} \label{ex:double-cone}
\begin{enumerate}
 \item Let $S(\Sigma)$  and $S_+(\Sigma)$ denote the 
   spherical suspension and the half of the spherical suspension 
   of $\Sigma$ respectively, and 
   let $\Sigma^3$ denote the Poincare homology $3$-sphere.
   Then the gluing $X^5$ of $S_+(S(\Sigma^3))$ and 
   $S(\Sigma^3)\times [0,\infty)$ along their boundaries is a complete 
   open Alexandrov space with nonnegative curvature.
   Note that $X^5$ is homeomorphic to $\R^5$ and 
   the soul of it is a point.
   However for any $\epsilon>0$, 
   $B(S,\epsilon)$ is not homeomorphic to $D^5$ but
   to the closed unit cone $K_1(S(\Sigma^3))$ over $S(\Sigma^3)$.
 \item Let $X^5$ be as in (1) above. Then $S^1\times X^5$
   is a complete 
   open Alexandrov space with nonnegative curvature whose  soul is 
   a circle.
   Note that $S^1\times X^5$ is topologically 
   regular.
   However for any $\epsilon>0$, 
   $B(S,\epsilon)$ is not homeomorphic to $S^1\times D^5$ but
   to $S^1\times K_1(S(\Sigma^3))$.
\end{enumerate}
\end{ex}

Example \ref{ex:double-cone} (1) (resp. (2)) shows that 
the topological niceness assumption in Conjecture \ref{conj:soul} 
cannot be weakened by the assumption that $X$ being a topological 
manifold (resp. being topologically regular) at least 
in dimension $\ge 5$ (resp. $\ge 6$).
Of course, Conjecture \ref{conj:soul} is metric by nature.
The topological version of Conjecture \ref{conj:soul} is

\begin{conj} \label{conj:soul-top}
Let $X$ be a complete open nonnegatively curved Alexandrov space, 
and suppose that $X$ is a topological manifold. Then it is
homeomorphic to {\em a} small neighborhood of a soul $S$ of it which 
has the structure of open disk-bundle over $S$.
\end{conj}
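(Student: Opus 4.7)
The plan is to adapt the proof of Theorem \ref{thm:soul}, replacing the topologically nice hypothesis with arguments that exploit only the ambient topological manifold structure of $X$. First I would carry out the Cheeger--Gromoll basic construction to obtain the filtration \eqref{eq:soul-const}, with $C(0)\supset\cdots\supset C(k)=S$, and the associated filtration $\{X^t\}_{0\le t<\infty}$ by compact totally convex sets. Starting from the outside in, the distance functions $\rho$ and $d_{\partial X^t}$ are regular on $X-\interior X^t$ for any $t>0$ so, applying Perelman's stability theorem \ref{thm:stability-respect} together with the regular map techniques after \cite{Pr:alex2}, $X-\interior B(S,\epsilon)$ is homeomorphic to $\partial B(S,\epsilon)\times[0,\infty)$ for small $\epsilon>0$. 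Thus the problem reduces to constructing a disk-bundle structure on $B(S,\epsilon)$ itself.

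Second, for each $p\in S$ I would analyze $\Sigma_p(X)$ using the iterated rigidity of Proposition \ref{prop:concave-rigid}. Applied inductively along the filtration, this rigidity forces $K_p(X)$ to contain an isometric image of $K_p(\interior C(i))$ as a totally geodesic factor, and every element of $K_p(X)$ should lie in a flat cell spanned by a direction in $\Sigma_p(S)$ and a direction perpendicular to $S$. The hoped-for conclusion, generalizing Proposition \ref{prop:product}, is that $\Sigma_p(X)$ decomposes as a spherical join $\Sigma_p(S)\ast N_p$, where $N_p$ is the unit sphere in the normal cone. The topological manifold hypothesis on $X$ then forces $K_p(X)\simeq\R^n$, and the join decomposition combined with this local Euclidean structure should identify $N_p$ as a topological sphere of the correct dimension (even when $N_p$ is not a metric sphere, by a codimension argument on the join structure).

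Third, I would assemble these local normal structures into a global disk bundle. The key technical step is to produce, for each $p\in S$, a neighborhood $U_p$ in $X$ together with a homeomorphism $U_p\simeq V_p\times D^{n-k}$ with $V_p\subset S$ a neighborhood of $p$, in a manner compatible with the Sharafutdinov-type retraction of $X$ onto $S$ produced by the gradient flow of $-\rho$. On overlaps $U_p\cap U_q$, the transition maps are obtained from equivariant stability applied to the frames of normal directions, and they yield a genuine topological $\R^{n-k}$-bundle structure by a standard Čech patching argument.

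The main obstacle, and the reason this is phrased as a conjecture rather than a theorem, is the third paragraph: in dimension $\ge 5$ the space of directions $\Sigma_p$ at a manifold point of an Alexandrov space need not be a sphere (double-suspension phenomena, cf.\ Example \ref{ex:double-cone}), so the join decomposition of $\Sigma_p(X)$ cannot be read off directly from the local topology as it is in dimension four. Making the inductive rigidity argument go through without the topologically nice hypothesis, and in particular ruling out pathological normal fibers, appears to require either a substantial strengthening of Proposition \ref{prop:product} or a more delicate topological patching that works even when individual tangent cones fail to split metrically as products.
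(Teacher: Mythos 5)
This statement is stated in the paper as a conjecture (Conjecture~\ref{conj:soul-top}), not a theorem, and the paper itself offers no proof; it only records it as an open problem whose metric counterpart (Conjecture~\ref{conj:soul}) fails without the topologically nice hypothesis, as shown by Example~\ref{ex:double-cone}. So there is nothing in the paper for your proposal to match, and your own closing paragraph correctly recognizes this.

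That said, there is a real tension within your sketch that is worth naming precisely. In the first paragraph you reduce the problem to putting a disk-bundle structure on the metric ball $B(S,\epsilon)$, and in the second paragraph you assert that $K_p(X)\simeq\R^n$ (correct for a topological manifold) together with the hoped-for join decomposition of $\Sigma_p(X)$ should force $N_p$ to be a topological sphere ``by a codimension argument on the join structure.'' But this is exactly what the double suspension phenomenon refutes: $\Sigma_p(S)*N_p\simeq S^{n-1}$ does not imply $N_p\simeq S^{n-k-1}$, and more to the point, $K_p(X)\simeq\R^n$ does not even force $\Sigma_p(X)\simeq S^{n-1}$. In Example~\ref{ex:double-cone}(1), $K_S(X^5)=K(S(\Sigma^3))\simeq\R^5$ while $B(S,\epsilon)=K_1(S(\Sigma^3))$ is not a disk (its boundary $S(\Sigma^3)$ is not a manifold), so the reduction in your first paragraph to a disk-bundle structure on the closed ball $B(S,\epsilon)$ is precisely what cannot be done, and the paper says as much in the remark immediately following the conjecture. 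Your fourth paragraph does flag double suspension as the difficulty, but it locates the ``main obstacle'' in the patching step of paragraph three, when in fact the earlier steps already fail: the metric nearest-point projection onto $S$ need not be a bundle projection topologically, so there is no local normal-bundle structure to patch in the first place. A correct diagnosis would place the obstruction squarely at paragraph two (or even paragraph one), not paragraph three. With that relocation, your conclusion that this is an open problem requiring a strengthening of Proposition~\ref{prop:product} or a fundamentally non-metric construction of the neighborhood is sound.
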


Example \ref{ex:double-cone} tells us that the small neighborhood 
of $S$ required in Conjecture \ref{conj:soul-top} never be a metric one
in general. 

For simplicity we denote by $SC_{n,k}$ 
Conjecture \ref{conj:soul} for $n=\dim X$ and $k=\dim S$.
From Proposition \ref{prop:codim1-reg},
Conjecture \ref{conj:soul} is true in the case of
$\dim S = n-1$:
If $\dim S=n-1$,
then $X$ is isometric to 
the normal bundle $N(S)$ with the canonical metric.

For $\dim S=1$, we have

\begin{lem}
If the conjecture $SC_{n-1,0}$ is true, then 
$SC_{n,1}$ is also true.
\end{lem}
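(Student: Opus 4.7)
The plan is to pass to the universal cover and reduce to the hypothesis $SC_{n-1,0}$ via the splitting theorem. Since $S$ is a closed geodesic homeomorphic to $S^1$ and is totally convex in $X$, any lift $\tilde S$ in the universal cover $\pi\colon \tilde X \to X$ is isometric to $\R$ and totally convex in $\tilde X$; hence $\tilde S$ is a line. By the Burago-Gromov-Perelman splitting theorem, $\tilde X$ splits isometrically as $\tilde X = \R\times X'$ with $\tilde S = \R\times\{p_0\}$, where $X'$ is an $(n-1)$-dimensional complete open Alexandrov space with nonnegative curvature. The topological niceness of $X$ is inherited by $\tilde X$ (a local property), and then by $X'$, because for any $p\in X'$ the space of directions $\Sigma_{(t,p)}(\tilde X)$ is the spherical suspension of $\Sigma_p(X')$, and iterated spaces of directions of $X'$ appear as iterated spaces of directions of $\tilde X$. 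Moreover, the soul of $X'$ must be a point: if $\dim\text{soul}(X')\ge 1$, a soul of $\tilde X$ of dimension $\ge 1$ would descend to a soul of $X$ of dimension $\ge 2$, contradicting $\dim S=1$.

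By the assumed $SC_{n-1,0}$ there exists $\epsilon>0$ such that $X'$ is homeomorphic to $\interior B(p_0,\epsilon)$ and $B(p_0,\epsilon)$ is homeomorphic to the disk $D^{n-1}$. I would next analyze the deck transformation action. Because $X'$ has soul a point it contains no line, so the $\R$-factor in $\tilde X = \R\times X'$ is canonical and every deck transformation $\sigma$ preserves the splitting, hence has the form $\sigma(t,x) = (\varepsilon_\sigma t + a_\sigma,\phi_\sigma(x))$ with $\varepsilon_\sigma = \pm 1$ and $\phi_\sigma\in\isom(X')$. Restricting to $\tilde S$ recovers the free $\pi_1(X)$-action on $\tilde S\simeq\R$ with quotient the circle $S$, which forces $\pi_1(X)\cong\Z$ with generator $\sigma$ acting with $\varepsilon_\sigma = +1$, $a_\sigma$ equal to the length of $S$, and $\phi:=\phi_\sigma$ fixing $p_0$. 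In particular $\phi$ preserves every metric ball $B(p_0,\delta)$ about the soul point.

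Finally I would descend the structure. The preimage of $B(S,\epsilon)$ in $\tilde X$ is $\R\times B(p_0,\epsilon)$, and the $\Z$-action is fiberwise via $\phi$, so $B(S,\epsilon)$ is homeomorphic to the mapping torus of $\phi\colon B(p_0,\epsilon)\to B(p_0,\epsilon)$; since $B(p_0,\epsilon)\simeq D^{n-1}$ and the $\Z$-orbit map is a locally trivial bundle projection onto $S\simeq S^1$, this is a disk-bundle over $S$. Similarly $X=\tilde X/\Z$ is homeomorphic to the mapping torus of $\phi$ acting on all of $X'\simeq\interior B(p_0,\epsilon)$, which is $\interior B(S,\epsilon)$, proving both conclusions of $SC_{n,1}$.

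The main obstacle is equivariance: we have abstract homeomorphisms $X'\cong\interior B(p_0,\epsilon)$ and $B(p_0,\epsilon)\cong D^{n-1}$ from $SC_{n-1,0}$, but we need the descent to the quotient to go through. This is handled by observing that it is enough to produce a fiber bundle with disk fibers over $S$; the fiber over a point of $S$ is canonically the isometric image of $B(p_0,\epsilon)\subset X'$ under the splitting, hence a topological $D^{n-1}$, and the local triviality follows because the splitting map $\R\times B(p_0,\epsilon)\to B(\tilde S,\epsilon)$ is a metric isometry and $\pi$ is a normal covering. In the construction of the interior homeomorphism one can alternatively use the canonicity of the Sharafutdinov-type pseudo-gradient retraction associated with the filtration $\{X^t\}$, which commutes with isometries fixing the soul, making the homeomorphism $\phi$-equivariant directly.
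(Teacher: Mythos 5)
Your proof follows the same route as the paper's — pass to the universal cover, split off a line via the splitting theorem, observe that topological niceness descends to the cross-section $X'$ whose soul is a point, and apply $SC_{n-1,0}$ — but with far more detail about the deck-transformation group and the mapping-torus descent, which the paper leaves implicit (it stops essentially at ``$N\simeq\R^{n-1}$''). Your equivariance worry at the end can in fact be sidestepped without invoking an equivariant Sharafutdinov retraction: since the topological mapping class group of $\R^{n-1}$ (and of $D^{n-1}$) is detected by orientation alone, the homeomorphism type of the mapping torus of $\phi$ on $X'$ (resp.\ on $\interior B(p_0,\epsilon)$) depends only on whether $\phi$ preserves orientation, so $X$ and $\interior B(S,\epsilon)$ are homeomorphic without the homeomorphism furnished by $SC_{n-1,0}$ having to commute with $\phi$.
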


\begin{proof}
Let $X^n$ be an $n$-dimensional complete open Alexandrov space of 
nonnegative curvature which is topologically nice.
Applying the splitting theorem to 
the universal covering
space of $X^n$, we see that $X^n$ is isometric to a quotient
$(\R\times N)/\Z$, where $N$ is an $(n-1)$-dimensional complete open 
Alexandrov space with nonnegative curvature whose soul is a point.
Since $X^n$ is topologically nice, so is $N$. Therefore
the assumption yields $N\simeq \R^{n-1}$.
\end{proof}

Thus for $n=4$, the remaining cases are those of $\dim S=0$ or $2$.

From now on let $X$ be as in Theorem \ref{thm:soul} unless otherwise 
stated.
Let $C:= C(0)$ for simplicity.
In this section, we consider the case of $\dim C=2$.

We begin with the case of $\dim S=2$,
for which the essential case in the proof 
is that $S\simeq S^2$ or $S\simeq P^2$:
If $S$ is homeomorphic to either a torus or a Klein bottle,
then it is flat. Thus the universal covering space 
$\tilde X$ splits as $\R\times N$, where $N\simeq \R^2$, and 
Theorem \ref{thm:soul} certainly holds. 
\par

\begin{thm}   \label{thm:S2}
Theorem  $\ref{thm:soul}$ holds in the case of 
$\dim S=2$ and $\dim C=2$.
\end{thm}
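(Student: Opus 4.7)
Since $\dim C(0)=\dim S=2$ and the filtration in \eqref{eq:soul-const} is strictly decreasing in dimension, we must have $k=0$, so $C=C(0)=S$ is a closed $2$-dimensional Alexandrov space of nonnegative curvature without boundary. As observed in the excerpt, the cases $S\simeq T^2$ and $S\simeq K^2$ reduce via the splitting theorem applied to the universal cover to $\widetilde X\simeq \R\times N$ with $N\simeq \R^2$ (topologically regular), so I may concentrate on the essential cases $S\simeq S^2$ and $S\simeq P^2$. The plan is to build a normal disk-bundle structure directly from the tangent-cone splitting of Proposition~\ref{prop:product}, then use critical-point theory for $d_S$ to deformation retract $X$ onto a metric neighborhood $B(S,\epsilon)$.

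First I would verify that Proposition~\ref{prop:product} applies at every $p\in S$. Since $X^4$ is assumed topologically regular and has dimension $4\le 4$, every point is topologically nice (this is the remark right after the definition of topologically nice). Hence $K_p(X)$ splits isometrically as $K_p(S)\times K(\Sigma_1^p)$, where $\Sigma_1^p$ is a circle of length $\ell(p)\le 2\pi$. The union $\mathcal N(S)$ of the geodesic rays emanating from points of $S$ in directions of $K(\Sigma_1^p)$ is then defined; by the local product structure and totally geodesic extension of flat rectangles (Proposition~\ref{prop:concave-rigid} applied horizontally to $S$), these rays are disjoint for small time and give a continuous map $\mathcal E\colon \mathcal N(S)_{\epsilon}\to B(S,\epsilon)$, where $\mathcal N(S)_{\epsilon}$ denotes the ``$\epsilon$-neighborhood'' of the zero-section in $\mathcal N(S)$.

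Next I would produce local trivializations of $\mathcal N(S)_\epsilon\to S$ as a topological $D^2$-bundle. Cover $S$ by finitely many small topological disks $U_\alpha\subset S$; over each $U_\alpha$ choose a point $p_\alpha$ and use the $(c,\epsilon)$-regular framework from Theorem~\ref{thm:stability-respect} applied to the convergence $(\tfrac{1}{r}X,p_\alpha)\to(K_{p_\alpha}(X),o_{p_\alpha})=K(\Sigma_1^{p_\alpha})\times K_{p_\alpha}(S)$ to obtain a fibre-preserving homeomorphism from a neighborhood of $p_\alpha$ in $\mathcal N(S)_\epsilon$ onto a neighborhood of $p_\alpha$ in $B(S,\epsilon)$. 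This produces a disk-bundle atlas on $B(S,\epsilon)$ over $S$, establishing (2); the Euler number is constrained but not pinned down, consistent with the fact that the normal cones $K(\Sigma_1^p)$ may have varying cone-angles. Here the main obstacle is showing continuity of the local trivializations across points where $\ell(p)$ drops (topological singular points of the normal direction are allowed in the Alexandrov sense but not topologically, by topological regularity of $X$); I would handle this by noting that any drop of $\ell(p)$ must be accompanied by an essential singular point of $\Sigma_p(X)$ in the normal factor, which contradicts topological regularity of $X^4$.

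Finally, for part (1), I would invoke critical-point theory for the concave function $b$ (or equivalently for $d_S$) on $X-S$. By construction, $C(0)=S$ is the minimum set of the Busemann function $b$, so $b$ has no critical points outside $S$; the rigidity Proposition~\ref{prop:concave-rigid} together with the tangent-cone product structure guarantees that a gradient-like vector field for $d_S$ exists on $X-S$ and its flow is transverse to every $\partial B(S,t)$ for $0<t\le \epsilon$. This identifies $X-\interior B(S,\epsilon)$ with $\partial B(S,\epsilon)\times[\epsilon,\infty)$, so $X\simeq \interior B(S,\epsilon)$, completing the proof. The hardest step will be the continuous local triviality in the previous paragraph; all remaining steps are by now standard in the Alexandrov setting.
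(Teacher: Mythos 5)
Your high-level strategy — reduce to $S\simeq S^2$ or $P^2$, exploit the tangent-cone product splitting from Proposition~\ref{prop:product}, then flow out along $d_S$ — is in the right spirit, but there are two genuine gaps, and one of them is at the heart of the matter.

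The central gap is your construction of $\mathcal N(S)$ as a union of geodesic rays emanating from $S$ ``in directions of $K(\Sigma_1^p)$.'' In an Alexandrov space this is not a well-defined bundle: geodesics need not exist in every direction of $\Sigma_p$, need not be unique, and the resulting ``exponential map'' from $K(\Sigma_1^p)$-directions need not be continuous or injective. Proposition~\ref{prop:product} gives you the tangent cone $K_p(X)=K_p(S)\times K(\Sigma_1^p)$, which is an infinitesimal statement; it does not manufacture an actual tubular neighborhood. Proposition~\ref{prop:concave-rigid}, which you invoke ``horizontally'' to argue the rays stay disjoint, applies only to spaces with boundary and geodesics at constant distance from $\partial C$; here $S$ is closed and $X$ has no boundary, so it doesn't apply as stated. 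The paper sidesteps all of this: it uses the $L^2(S)$-embedding machinery (the fibration theorem apparatus of \cite{Ym:conv}) to build an almost Lipschitz submersion $f:B(S,\epsilon)\to S$ over the regular locus $R_{\delta,r}(S)$, then shows that the pair $(f,d_S)$ is a topological submersion there; Proposition~\ref{prop:product} is used \emph{only} in a separate lemma to handle a small neighborhood of each $p\in S_\delta(S)$ via regularity of $(d_{o_p},d_{K_p(S)})$, and the two $D^2$-bundle structures are then patched via Lemma~\ref{lem:patch} and a small-isotopy argument.

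Your proposed way around the singular points of $S$ is also wrong. You argue that ``any drop of $\ell(p)$ must be accompanied by an essential singular point of $\Sigma_p(X)$ in the normal factor, which contradicts topological regularity of $X^4$.'' But $\Sigma_p(X)\cong\Sigma_p(S)*S^1_{\ell(p)}$ is the join of two circles, which is homeomorphic to $S^3$ for \emph{any} pair of lengths $\le 2\pi$, and its iterated spaces of directions are spheres as well. Having $\ell(p)<2\pi$ is a purely metric singularity and is perfectly compatible with topological regularity — indeed, these are exactly the points $S_\delta(S)$ that the paper must (and does) treat with a separate local argument. The metric conical angle being $<2\pi$ is the whole reason a naive geodesic exponential cannot give a uniform local trivialization near such $p$, which is why the paper relies on the stability theorem (Theorem~\ref{thm:stability-respect}) with a respected regular map $(d_{o_p},d_{K_p(S)})$ there rather than on a ray construction.

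The reduction $C=C(0)=S$ when $\dim C=\dim S$, the reduction to $S\simeq S^2$ or $P^2$ via the splitting theorem, and the critical-point flow for part~(1) are fine and match the paper's intent. But parts~(2) of the theorem as you have written them do not follow from your argument; you would need to replace the geodesic-ray tubular neighborhood with the fibration/stability construction used in the paper.
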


\begin{proof}
Consider
$R_{\delta,r}(S):=S-B(S_{\delta}(S),r)$ for  sufficiently small 
$\delta>0$  and $r>0$ so that one can apply Fibration Theorem 
\ref{thm:orig-cap}
to $R_{\delta, r/2}$. 
Now we consider the convergence, $B(S,\epsilon)\to S$ as
$\epsilon\to 0$. 
Letting $\varphi=\iota:S\to B(S,\epsilon)$ be the inclusion, 
and $\psi:B(S,\epsilon)\to S$ be a measurable map such that 
$d(\psi\varphi x, x)<2\epsilon$ for every $x\in B(S,\epsilon)$, 
we define $f_S:S\to L^2(S)$,
$f_{B(S,\epsilon)}:B(S,\epsilon)\to L^2(S)$
and $f=f_S^{-1}\circ\pi\circ f_{B(S,\epsilon)}:B(S,\epsilon)\to S$
by the same formulae as in \cite{Ym:conv}.
$f$ is $(1-\tau(\epsilon_2,\delta))$-open and 
is an almost Lipschitz submersion(\cite{Ym:conv}).
Let $0<\epsilon_1\ll\epsilon_2\ll r$, 
and denote by $f_{\epsilon_1,\epsilon_2}$ the restriction of 
$f$ to $\interior A(S;\epsilon_1,\epsilon_2)-B(S_{\delta}(S),r)$:
$$
f_{\epsilon_1,\epsilon_2}:
 \interior A(S;\epsilon_1,\epsilon_2)-B(S_{\delta}(S),r)
   \to R_{\delta,r/2}(S).
$$
Since each point of 
$B(S,\epsilon_2)-B(S_{\delta}(S),r)-S$ is almost
regular, it follows from Fibration Theorem 
\ref{thm:orig-cap} (see the final step of the proof in Section 
\ref{sec:cap-lip}) that 
$f_{\epsilon_1,\epsilon_2}$
is a topological submersion.
For any $x\in  A(S;\epsilon_1,\epsilon_2)-B(S_{\delta}(S),r)$,
let $(a_i,b_i)_{1\le i\le 4}$ be a $(4,\delta)$-strainer 
of $X$ at $x$ such that 
\begin{enumerate}
 \item  $(a_i,b_i)_{i=1,2}$ gives a $(2,\delta)$-strainer
   of $S$ at $f(x);$
 \item $d(S,a_3)=d(S,b_3)=d(S,x).$
\end{enumerate}
Then $(a_1)_x'$ and $(a_2)_x'$ are almost orthogonal
to the fibre $F_x:=f^{-1}(f(x))$, and 
$(a_3)_x'$ and $(a_4)_x'$ are almost tangent to $F_x$
(see \cite{Ym:conv}).
Note also that $(a_i)_x'$, $1\le i\le 3$, are almost
tangent to $\{ d_S=d_S(x)\}$.
This shows that $(f,d_S,d_{a_3})$ is a 
$\tau(\epsilon_2,\delta_2)$-open and gives a bi-Lipschitz 
coordinates around $x$. Therefore 
$(f, d_S):A(S;\epsilon_1,\epsilon_2)-B(S_{\delta}(S),r)
\to  R_{\delta,r/2}(S)\times\R$ is a 
topological submersion. Thus
for $\epsilon_1<\epsilon'<\epsilon<\epsilon_2$,
$(f, d_S)$ provides an $S^1$-bundle on 
$A_{\epsilon',\epsilon,r}:= f^{-1}(R_{\delta,r}(S))\cap
\{\epsilon'\le d_S\le\epsilon\}$.
It is clear that the fibre of $f_{\epsilon_1,\epsilon_2}$ 
is homeomorphic to $S^1\times I$.
Letting $\epsilon'\to 0$,
it is now obvious that 
$f_{\epsilon}$, the restriction of $f$ to 
$f^{-1}(R_{\delta,r}(S))\cap \{ d_S\le \epsilon\}$, is 
a locally trivial $D^2$-bundle
whose restriction to 
$R_{\delta,r}(S)$ is the identity. 

\begin{lem}
 For any $p\in S_{\delta}(S)$, there are $\epsilon_p>0$ and 
$r_p>0$ such that for every $r\le r_p$ and $\epsilon$
with $\epsilon/r\le\epsilon_p$,
 \begin{enumerate}
  \item  $B(p,r)\cap B(S,\epsilon)$ is 
         a $D^2$-bundle over $B(p,r;S);$
  \item  $\partial B(p,t)\cap B(S,\epsilon)$ is the union of
         the fibres over $\partial B(p,t;S)$ for every $r/2\le t\le r$.
 \end{enumerate}
\end{lem}

\begin{proof}
Consider the convergence $(\frac{1}{r}X,p) \to (K_p, o_p)$,
$r\to 0$.
By Proposition \ref{prop:product}, 
$K_p$ is isometric to the product of $K_p(S)$ and a flat cone
$N_p$.  Since $B(o_p,1)\cap B(K_p(S),\epsilon)$ is a
$D^2$-bundle over $B(o_p,1;K_p(S))$, the lemma 
follows from the regularity of $(d_{o_p}, d_{K_p(S)})$
on $A(o_p;1/2, 1)\cap (B(K_p(S),\epsilon)-K_p(S))$.
\end{proof}

We are now going to patch the $D^2$-bundle structures on 
$A_{\epsilon,r}:=f^{-1}(R_{\delta,r}(S))\cap\{ d_S\le \epsilon\}$ 
and on $B(p,r)\cap B(S,\epsilon)$ for each 
$p\in S_{\delta}(S)$.

Let $U_{\epsilon,r}(p)$ denote the component of 
$B(S,\epsilon) - A_{\epsilon,2r}$ containing $p$,
and $L_{\epsilon,r }(p)=f_{\epsilon}^{-1}(\partial B(p,2r))$.

\begin{lem} \label{lem:patch}
For small enough $0<\epsilon\ll r$,
   $U_{\epsilon,r}(p)-\interior B(p,r/2)$ is 
   homeomorphic to $L_{\epsilon,r}(p)\times I$.
\end{lem}

\begin{proof}
Suppose the lemma does not hold, and put  $\epsilon:=\epsilon_0r$,
with $\epsilon_0=10^{-10}$.
Under the convergence $(\frac{1}{r}X,p)\to (K_p,o_p)$ as $r\to 0$,
$f_{\epsilon}:B(S,\epsilon)-B(S_{\delta}(S),r)\to R_{\delta,r/2}(S)$
converges to a Lipschitz map
$$
f_{\infty}:B(K_p(S),\epsilon_0)-B(o_p,1)\to K_p(S)-B(o_0,1/2).
$$
It is easy to see that $f_{\infty}$ is the restriction of 
the projection $K_p=K_p(S)\times N_p\to K_p(S)$, where
$N_p$ denotes a flat cone.
Let $U_{\epsilon_0,1}(o_p)$ denote the limit of $U_{\epsilon,r}(p)$
under the above convergence. The obviously
$U_{\epsilon_0,1}(o_p)-\interior B(o_p,1/2)$ is homeomorphic to
$L_{\epsilon_0,1}(o_p)\times I$, where 
$L_{\epsilon_0,1}(o_p)=f_{\infty}^{-1}(\partial B(o_p,2))$.
The lemma follows from Stability Theorem \ref{thm:stability}.
\end{proof}

From the $D^2$-bundle structures on $B(p,r/2)\cap B(S,\epsilon)$
and on $L_{\epsilon,r}(p)$,
we have homeomorphisms 
$\partial B(p,r/2)\cap B(S,\epsilon) \to S^1\times D^2$
and $L_{\epsilon,r}(p) \to S^1\times D^2$.
In view of Lemma \ref{lem:patch}, these provide a gluing homeomorphism 
$h:S^1\times D^2\to S^1\times D^2$. Letting $r\to 0$ with
$\frac{\epsilon}{r}\ll 1$,
we conclude that $h|_{S^1\times S^1}$ is homotopic, and hence
isotopic, to the identity.
Therefore we can patch the $D^2$-bundle structures 
on $B(p,r/2)\cap B(S,\epsilon)$
and on $\partial U_{\epsilon,r}(p)$ 
to get a $D^2$-bundle structure on $B(S,\epsilon)$.
This completes the proof of Theorem \ref{thm:S2}.
\end{proof}

Next we consider the case of $S\neq C$.
Recall that the critical point set of $f_{\epsilon}$
is contained in $\partial C$, where 
\[
     f_{\epsilon}(x)=d(C_{\epsilon}, x).
\]
Therefore by \cite{Pr:alex2}, 
there is an $f_{\epsilon}$-gradient flow $\psi$ on 
$X-\partial C- C_{\epsilon}$.

\begin{defn} \label{def:grad-nbd}
Let $\dim X=n$, $\dim C=k$, and $A\subset\partial C$. 
We say that a subset $V$ with $A\subset V$ is 
is an  {\em $f_{\epsilon}$-pseudo-gradient
normal bundle} over $A$  with respect to $\psi$ if 
the following conditions are satisfied:
\begin{enumerate}
 \item $V$ is a $D^{n-k+1}$-bundle over $A$ with projection, say
     $\pi:V\to A;$
 \item Let $\partial V\subset V$ denote the total space of the 
     $S^{n-k}$-bundle induced from the $D^{n-k+1}$-bundle of $(1)$.
     Then  $\partial V$ is a gluing of two $D^{n-k}$-bundles, say
     $J_i$,$i=0,1$, and an $I\times S^{n-k}$-bundle, say $K$, 
     over $A$ with projections
     $\pi|_{J_i}$ and $\pi|_{K}$ such that 
  \begin{enumerate}
   \item  there is a neighborhood $W$ of every $p\in A$ in 
     $A$ and a locally trivializing homeomorphism 
     $h:\pi^{-1}(W)\to W\times I^{n-k+1}$ with $\pi=p_1\circ h$, 
     where $p_1:W\times I^{n-k+1}\to W$ is the projection, inducing
     following homeomorphisms for $i=0,1$
      \begin{gather*}
       (\pi|_{J_i})^{-1}(W)\simeq W\times(i\times I^{n-k}), \\
       (\pi|_{K})^{-1}(W)\simeq W\times(I\times\partial I^{n-k}).
      \end{gather*}
   \item $J_1\subset \{ f_{\epsilon}=\epsilon/2\}$,
     $J_0\subset \{ f_{\epsilon}= c\epsilon\}$ for some constant
     $c>1$ and every fibre of $\pi|_{K}$ gives a flow curve of 
     $\psi;$
   \item the flow curves of $\psi$ are transversal to $J_i$
  \end{enumerate}
\end{enumerate}
The $f_{\epsilon}$-pseudo-gradient normal bundle $V$ has 
{\em height $\nu>0$} if
$d_C$ restricted to $J_1$ takes the maximal value $\nu$
at every point of $\partial J_1$.
We call the subbundles $J_i$ and  $K$,  $J_i$-part and  $K$-part of $V$
respectively.
\end{defn}

Note that if we are given an $f_{\epsilon}$-pseudo-gradient
normal bundle over $\partial C$  with respect to $\psi$,
then parturbing and extending $\psi$ along the fibres of the
normal bundle, we can define a 
new $f_{\epsilon}$-pseudo-gradient flow 
defined on $X-C_{\epsilon}$
(see Section 10 of \cite{SY:3mfd} for the definition of 
pseudo-gradient flows).

\begin{thm} \label{thm:soul1}
Let $\dim X=4$.
If $\dim C=2$ and if $\dim S=0$, then Theorem \ref{thm:soul} holds.
\end{thm}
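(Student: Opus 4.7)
The plan is to establish both conclusions of Theorem \ref{thm:soul} in this case by constructing a deformation retraction of $X$ onto a compact neighborhood of $S$ that is homeomorphic to $D^4$. Since $\dim S = 0$, the ``normal disk bundle'' is just a $4$-disk, so it suffices to find $\epsilon > 0$ with $X \simeq \interior B(S,\epsilon)$ and $B(S,\epsilon) \simeq D^4$.

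First I would work with the concave function $f_\epsilon = d(C_\epsilon,\,\cdot\,)$, whose critical set lies in $\partial C$. Outside a neighborhood of $\partial C$, the Perelman gradient flow of $f_\epsilon$ retracts $\{f_\epsilon \ge \epsilon/2\}$ onto level sets $\{f_\epsilon = c\epsilon\}$. Inside $C$, since $C$ is a $2$-dimensional compact nonnegatively curved Alexandrov space that collapses to a point under the Cheeger--Gromoll construction, the classification in dimension $2$ gives $C \simeq D^2$, and the gradient flow of $d_{\partial C_\epsilon}$ on $C_\epsilon$ contracts $C_\epsilon$ to $S$. So the only nontrivial step is to reconcile these two flows across $\partial C$.

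The central construction is a global $f_\epsilon$-pseudo-gradient normal bundle $V$ over $\partial C \simeq S^1$ in the sense of Definition \ref{def:grad-nbd}, of small height $\nu \ll \epsilon$. Locally, around each $p \in \partial C$, Lemma \ref{lem:loc-regular}(1)--(3) guarantees regularity of $(f_\epsilon, d_p)$, of $(f_\epsilon, d_C)$, and of the triple $(f_\epsilon, d_p, d_C)$ on the relevant regions. By Perelman's fibration theorem for regular maps (as used in Theorem \ref{thm:stability}), this produces a local $D^3$-bundle chart fibered compatibly with $f_\epsilon$-levels and with the collar of $C$. I would then patch these local charts along the compact $1$-manifold $\partial C$ (a finite union of circles) using the gradient-like vector field construction. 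Having $V$ in hand, I extend the flow of $f_\epsilon$ on $X - C_\epsilon - V$ across $V$ via the $K$-part of $V$, so that the $J_0$-part sits inside $\{f_\epsilon = c\epsilon\}$, the $J_1$-part inside $\{f_\epsilon = \epsilon/2\}$, and the flow curves remain transversal to both. Concatenating with the contraction of $C_\epsilon$ to $S$ yields a continuous deformation retraction $X \to S$ whose level hypersurfaces are genuine topological $3$-manifolds; this proves $X \simeq \interior B(S,\epsilon)$.

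For the disk-bundle conclusion, I would identify $\partial B(S,\epsilon)$ as the level hypersurface $M_\epsilon$ built from the $\{f_\epsilon = c\epsilon\}$-part and the outer boundary of $V$. To show $M_\epsilon \simeq S^3$, I use Proposition \ref{prop:product} at a topologically regular interior point of $C$ to split the tangent cone $K_p(X)$ as $K_p(C) \times N_p$ with $N_p$ a $2$-dimensional flat cone, which identifies model fibers of $f_\epsilon$ over $\interior C$ with $2$-disks; together with the $D^3$-collar over $\partial C$ supplied by $V$, this assembles $M_\epsilon$ into the boundary of a mapping-cylinder-type neighborhood of $C \simeq D^2$, which is combinatorially a $3$-sphere. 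Applying the Generalized Schoenflies Theorem \ref{thm:shoen} to the locally flat embedding $M_\epsilon \hookrightarrow X$ then gives $B(S,\epsilon) \simeq D^4$.

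The main obstacle will be the construction of the pseudo-gradient normal bundle $V$ in the third paragraph: one must ensure that the local $D^3$-bundle charts near different points of $\partial C$ patch compatibly with respect to the level sets of $f_\epsilon$, the collar function $d_C$, and the flow on the $K$-part, with the transition functions globally trivializing over $S^1$. The three-parameter regularity of $(f_\epsilon, d_p, d_C)$ from Lemma \ref{lem:loc-regular}(3) is what makes such a patching possible, but verifying that the height-$\nu$ requirement and the bundle-compatibility conditions of Definition \ref{def:grad-nbd} can be achieved simultaneously and uniformly along $\partial C$ is delicate and will occupy the bulk of the argument.
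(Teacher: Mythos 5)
Your overall architecture parallels the paper's quite closely: you identify the pseudo-gradient normal bundle over $\partial C$ (the paper's Proposition \ref{prop:flow}) as the central construction, you reduce conclusion (1) to a flow argument, and you reduce conclusion (2) to showing a level hypersurface is $S^3$. However, your final step contains a genuine gap. You invoke Theorem \ref{thm:shoen} (Generalized Schoenflies) applied to ``the locally flat embedding $M_\epsilon \hookrightarrow X$,'' but Theorem \ref{thm:shoen} is a statement about locally flat $S^3$'s in $\R^4$. At this stage of the argument you do not yet know that $X$, or even a neighborhood of $\{f_\epsilon \le \nu\}$ in $X$, is homeomorphic to $\R^4$ --- that is precisely what you are trying to establish, so the appeal to Schoenflies is circular. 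The correct tool here is Freedman's characterization of $D^4$: a compact contractible topological $4$-manifold whose boundary is $S^3$ is homeomorphic to $D^4$. The paper explicitly records the contractibility of $\{f_\epsilon \le \nu\}$ (using the $f_\epsilon$-flow curves) and then cites Freedman; you should do the same. Note that this step is not a formality --- it uses the four-dimensional topological Poincar\'e theorem and cannot be replaced by the classical Schoenflies statement.

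A secondary remark: your argument that the level hypersurface is ``combinatorially a $3$-sphere'' via a ``mapping-cylinder-type neighborhood of $C\simeq D^2$'' is too impressionistic to carry the weight it needs. The paper does this precisely by exhibiting an explicit genus-one Heegaard splitting of the level set: the part lying over an inner sub-disk $C^*_{\nu - \lambda^2/\nu}$ of $C$ via the $D^2$-bundle projection is $D^2 \times S^1$ (the disk fibers of the bundle cut the level set in circles), while the part near the boundary circle $L = C \cap \{f_\epsilon = \nu\}$, obtained by running the pseudo-gradient-normal-bundle construction in one lower dimension, is $S^1 \times D^2$, and these glue along a torus to give $S^3$. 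Making your ``model fibers are $2$-disks over $\interior C$'' claim precise requires exactly this decomposition; invoking Proposition \ref{prop:product} at a single regular point is not enough to globalize without the bundle structure over a compact core of $C$, which is where the argument analogous to Theorem \ref{thm:S2} enters.
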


For the proof of Theorem \ref{thm:soul1},
we apply a method similar to one used in \cite{SY:3mfd},
which actually gives a simplification of the argument there.

\begin{prop} \label{prop:flow}
Under the same hypothesis as Theorem \ref{thm:soul1},
for some $\mu>0$ and 
any small enough $\epsilon>0$, there exists an  
$f_{\epsilon}$-pseudo-gradient normal bundle over $\partial C$  
of height $\mu\epsilon$ with respect to some
$f_{\epsilon}$-gradient flow.
\end{prop}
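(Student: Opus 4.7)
The plan is to construct the pseudo-gradient normal bundle locally near each point $p\in\partial C$ using the coordinate functions supplied by Lemma \ref{lem:loc-regular}, and then patch these local structures over the compact $1$-dimensional set $\partial C$. Since $\dim X=4$, $\dim C=2$, and $\dim S=0$, each fibre of $V$ should be homeomorphic to $D^3$, with the $J_1$-face lying in $\{f_\epsilon=\epsilon/2\}\cap\{d_C\le\mu\epsilon\}$, the $J_0$-face in $\{f_\epsilon=c\epsilon\}$ with $c\ge c_p$ as in Lemma \ref{lem:corner}, and the $K$-face consisting of pseudo-gradient flow curves joining the two faces.

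First I would carry out the local construction at a fixed $p\in\partial C$. Choose $\delta=\delta_p$, $\epsilon\ll\delta$ and $\mu\le \mu_p/2$ as in Lemma \ref{lem:loc-regular}. By part (2) of that lemma, $(f_\epsilon, d_C)$ is regular on the slab $\{d_p\le\delta,\,\epsilon/3\le f_\epsilon\le 2\epsilon/3,\,0<d_C\le\mu_p\epsilon\}$, which by \cite{Pr:alex2} gives a topological submersion and allows me to take an $f_\epsilon$-gradient flow there that I can perturb to be tangent to the level hypersurface $\{d_C=\mu\epsilon\}$, producing the candidate $J_1$-face as (a neighborhood of $p$ in) $\{f_\epsilon=\epsilon/2\}\cap\{d_C\le\mu\epsilon\}$. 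By part (3), the map $(f_\epsilon,d_p,d_C)$ is regular on the corresponding annular region, yielding a local $\R\times\R\times\R$ trivialization that identifies the $J_1$-face and the $J_0$-face as $D^2$-fibres over the $d_p$-direction (which parametrizes a short arc in $\partial C$ through $p$) and the $K$-part as an $I\times S^1$-fibre of flow curves. This produces a local $f_\epsilon$-pseudo-gradient $D^3$-normal bundle $V_p$ of height $\mu\epsilon$ over a closed arc neighborhood $W_p\subset\partial C$ of $p$, with $\partial C\cap W_p\subset V_p$ confirmed by Lemma \ref{lem:corner}.

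Second I would patch these local bundles globally. The set $\partial C$ is a compact subset of $X$ of dimension $1$, hence a finite graph; in fact, arguing as in the one-normal/two-normal dichotomy (Proposition \ref{prop:codim1-reg}) and the remark that $C$ is a compact totally convex $2$-surface with boundary, $\partial C$ is a disjoint union of circles (or at worst a finite graph). Cover $\partial C$ by finitely many arc neighborhoods $\{W_{p_i}\}$ with the local $f_{\epsilon}$-gradient-like vector fields $X_i$ and local normal bundles $V_{p_i}$ constructed as above, and set $\mu:=\tfrac12\min_i\mu_{p_i}$. Glue the $X_i$ using a partition of unity subordinate to $\{W_{p_i}\}$ to obtain a single $f_\epsilon$-gradient-like flow $\psi$ defined on a neighborhood of $\partial C$, then push the $J_1$-faces so that they all lie exactly on $\{f_\epsilon=\epsilon/2\}\cap\{d_C\le\mu\epsilon\}$ and the $J_0$-faces exactly on $\{f_\epsilon=c\epsilon\}$ using the transversality afforded by regularity. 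On each overlap $W_{p_i}\cap W_{p_j}$ the two local $D^3$-bundle structures have identical $J_0$- and $J_1$-strata (they are the cited level sets), and the $K$-parts can be made to agree by using the $\psi$-flow to identify the two cylindrical collars; since the base $\partial C$ is $1$-dimensional, the resulting transitions automatically define a well-patched $D^3$-bundle structure.

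The main obstacle is step two of the patching: the choice of a single height constant $\mu$ valid along all of $\partial C$, and the simultaneous control of the pseudo-gradient vector field so that the fibres of $\pi|_K$ are genuine $\psi$-flow curves transverse to both $J_0$ and $J_1$. Uniformity of $\mu$ is guaranteed by the compactness of $\partial C$ and by the fact that the constants $\mu_p,\delta_p,\epsilon_{p,c}$ of Lemma \ref{lem:loc-regular} depend continuously on $p$ via the convergence $(\tfrac{1}{\delta}X,p)\to(K_p,o_p)$; the flow-compatibility across patches is standard once the level structures match, exactly as in the construction of the pseudo-gradient flow in Section 10 of \cite{SY:3mfd}, which we are now generalizing from dimension $3$ to the present setting.
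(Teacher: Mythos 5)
The local regularity analysis at the start of your argument is on the right track, but there is a genuine gap: you never address how to obtain the \emph{interior} of the $D^3$-fibres from the boundary strata you construct, and in fact the regularity you invoke cannot supply it. Lemma \ref{lem:loc-regular}(1) gives regularity of $(f_\epsilon,d_p)$ on $\{\epsilon/2\le f_\epsilon\le c\epsilon\}$ \emph{only away from} $B(\partial C,\epsilon/100)$, while Lemma \ref{lem:loc-regular}(3) gives regularity of the full triple $(f_\epsilon,d_p,d_C)$ only on the narrower slab $\epsilon/3\le f_\epsilon\le 2\epsilon/3$. By Lemma \ref{lem:corner} the set $\partial C$ itself sits inside $\{\epsilon/2\le f_\epsilon\le c_p\epsilon\}$, squarely in the interior of each would-be $D^3$-fibre, and it is a critical set of $f_\epsilon$; so no chart regularity can hand you a product trivialization across it. Your claim of ``a local $\R\times\R\times\R$ trivialization that identifies the $J_1$-face and the $J_0$-face as $D^2$-fibres \ldots{} and the $K$-part as an $I\times S^1$-fibre of flow curves'' describes only the boundary $S^2 = J_0\cup K\cup J_1$ of the fibre; jumping from there to ``a local $D^3$-normal bundle $V_p$'' asserts exactly the filling that has to be proved.

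The paper's proof is built around precisely this filling step and takes a different route from the start. It does not patch locally trivialized bundles via a partition of unity; instead it first constructs the normal bundle over a discrete net $p_1,\ldots,p_N\subset\partial C$, then interpolates the $J_1$-face over each arc $\widehat{p_jp_{j+1}}$, generates the $K$-part by $f_\epsilon$-flow curves, and then invokes the generalized Schoenflies theorem twice: once inside the $3$-manifold $\{f_\epsilon=c\epsilon\}$ to produce the cap $B_j\simeq D^3$ bounded by the locally flat $S^2$, and once in the ambient $4$-space to conclude that the resulting locally flat $S^3 = J_0(p_j)\cup J_0(p_{j+1})\cup J_1(\widehat{p_jp_{j+1}})\cup A_j\cup B_j$ bounds a $D^4$, which \emph{is} the bundle block over $\widehat{p_jp_{j+1}}$. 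You should incorporate this Schoenflies filling into the argument; the partition-of-unity gluing of gradient fields can then be dispensed with (or used only to produce the single global $\psi$ after the bundle is built), and the vague claim that overlapping local bundle structures ``automatically define a well-patched $D^3$-bundle structure'' is no longer needed.
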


\begin{proof}
Applying Lemma \ref{lem:loc-regular} (3), we can take 
finitely many consecutive points $p_1,\ldots,p_N$ of $\partial C$
with $d(p_j, p_{j+1})$ small enough such that 
there is an $f_{\epsilon}$-pseudo-gradient normal $D^3$-bundle
$E$ over $\{ p_1,\ldots,p_N\}$ of height 
$\mu\epsilon$ with respect to some
$f_{\epsilon}$-gradient flow.
Let $J_1(\widehat{p_jp_{j+1}})$ denote the $D^2$-bundle 
over the arc $\widehat{p_jp_{j+1}}\subset \partial C$
extending $J_1(p_j)\cup J_1(p_{j+1})$
with
$J_1(\widehat{p_jp_{j+1}})\subset f_{\epsilon}^{-1}(\epsilon/2)\}$
and of ``height $\mu\epsilon$'.
Let $A_j$ denote the union of the $f_{\epsilon}$-flow
curves contained in $\{ \epsilon/2\le f_{\epsilon}\le c\epsilon\}$
and through the total space of the $S^1$-bundle induced from 
$J_1(\widehat{p_jp_{j+1}})$. 
Let $J_0(p_j)$ denote the $J_0$-part of $E|_{p_j}$.
Since 
$(A_j\cap\{ f_{\epsilon}= c\epsilon\})\cup J_0(p_j)\cup J_0(p_{j+1})$
is homeomorphic to $S^2$ and is locally flat , 
it bounds a domain $B_j$
in $\{ f_{\epsilon}= c\epsilon\}$ homeomorphic to $D^3$.
Now it is clear that 
\[
  J_0(p_j)\cup J_0(p_{j+1})\cup J_1(\widehat{p_jp_{j+1}})
   \cup A_j\cup B_j
\]
is homeomorphic to $S^3$ and is locally flat, 
and therefore it bounds a domain homeomorphic to $D^4$,
which defines a structure of an $f_{\epsilon}$-pseudo-gradient 
normal $D^3$-bundle over $\widehat{p_jp_{j+1}}$.
This completes the proof.
\end{proof}

\begin{proof}[ Proof of Theorem \ref{thm:soul1}]
It follows from Proposition \ref{prop:flow} that 
\begin{equation*}
   X \simeq \{ f_{\epsilon} < \epsilon/2\}
     \simeq \{ f_{\epsilon} < \nu\},
\end{equation*}
for any $\nu\ll\epsilon$. We have to 
prove $\{ f_{\epsilon} \le \nu\}\simeq D^4$.
Using the $f_{\epsilon}$-flow curves, it is easy to see
that $\{ f_{\epsilon}\le\nu\}$ is contractible.
By Freedman's celebrated work (see\cite{FQ:4mfd}),
it suffices to show $\{ f_{\epsilon} = \nu\}\simeq S^3$.
Put $L:=C\cap\{ f_{\epsilon} = \nu\}\simeq S^1$, and 
consider the distance function $d_L$.
Since $(f_{\epsilon},d_L)$ is regular near
$\{ f_{\epsilon} = \nu, 0<d_L \le\lambda \}$
with $\lambda\ll\nu$, it follows from 
a method similar to Proposition \ref{prop:flow}
that 
$\{ f_{\epsilon} = \nu, d_L\le\lambda\}\simeq S^1\times D^2$.
For simplicity, we put
$C^*_{\mu}:=\{ f_{\epsilon} \le \mu\}\cap C$ with
$\mu\le\epsilon/2$.
By a method similar to Theorem \ref{thm:S2},
$\{ f_{\epsilon} \le \epsilon/2, d_C\le 2\nu\}$ is a 
$D^2$-bundle over $C^*_{\epsilon/2}$ if 
$\nu\ll\epsilon$. Let 
$\pi:\{ f_{\epsilon} < \epsilon/2, d_C\le 2\nu\}\to
C^*_{\epsilon/2}$ be the projection.

\begin{slem}
$$
\pi^{-1}\left(C^*_{\nu-\lambda^2/\nu}\right)\cap
     \{ f_{\epsilon}=\nu\}
        \simeq D^2\times S^1.
$$
\end{slem}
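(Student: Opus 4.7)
The plan is to exhibit $M := \pi^{-1}(C^*_{\nu-\lambda^2/\nu}) \cap \{f_\epsilon = \nu\}$ as a trivial $S^1$-bundle over the disk $C^*_{\nu-\lambda^2/\nu}$, via the restriction of $\pi$. First, since $\dim S = 0$ and $\dim C = 2$, the compact totally convex set $C$ is a two-dimensional nonnegatively-curved Alexandrov space whose soul is a point, and is therefore homeomorphic to $D^2$. The compact totally convex subset $C_\epsilon$ of $C$ is a disk as well, and $f_\epsilon|_C = d(C_\epsilon,\,\cdot\,)$ has no critical points on $C - C_\epsilon$, so each sublevel set $C^*_\mu$ with $\mu < \epsilon$ deformation retracts onto $C_\epsilon$ and is homeomorphic to $D^2$; in particular $C^*_{\nu-\lambda^2/\nu} \simeq D^2$. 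The $D^2$-bundle $\pi$ restricts to a trivial bundle over this disk, so $E := \pi^{-1}(C^*_{\nu-\lambda^2/\nu}) \simeq D^2 \times D^2$.

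Next I would study $f_\epsilon$ on each fiber $F_y := \pi^{-1}(y) \simeq D^2$ for $y \in C^*_{\nu-\lambda^2/\nu}$. The estimate $f_\epsilon(y) \le \nu - \lambda^2/\nu < \epsilon/2$ forces $d(\partial C, y) > \epsilon/2$, so $y \in \interior C$. By the topological niceness of $X^4$ together with Proposition~\ref{prop:product}, the tangent cone splits isometrically as $K_y(X) = K_y(C) \times K_y^\perp$ with $K_y^\perp$ a flat two-dimensional Euclidean cone. Combining this with the rigidity of Proposition~\ref{prop:concave-rigid} applied to the concave function $d_C$, a minimal segment from $y$ to $C_\epsilon$ in $C$ and any perpendicular ray $\gamma$ from $y$ span a flat totally geodesic rectangle in $X$, so that for $x = \gamma(t) \in F_y$ one obtains the Pythagorean identity
\begin{equation*}
 f_\epsilon(x)^2 = f_\epsilon(y)^2 + d_C(x)^2.
\end{equation*}
Hence $f_\epsilon|_{F_y}$ has a unique strict minimum at $y$ and is a strict radial function of $d_C$; its level set at $\nu$ is the circle $\{d_C = \sqrt{\nu^2 - f_\epsilon(y)^2}\}$, whose radius lies in $(\lambda, 2\nu)$ precisely because of the margin $\lambda^2/\nu$ subtracted from $\nu$ in the statement. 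Since $f_\epsilon$ is regular on $E - C$ (its critical set lies in $\partial C$, disjoint from $E$), the intersection $\{f_\epsilon = \nu\} \cap F_y$ is exactly this circle, and $\pi|_M : M \to C^*_{\nu-\lambda^2/\nu}$ is a locally trivial $S^1$-bundle. Any $S^1$-bundle over $D^2$ is trivial, so $M \simeq D^2 \times S^1$.

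The main obstacle is upgrading the infinitesimal splitting of Proposition~\ref{prop:product} to the global Pythagorean identity on $F_y$. The splitting by itself furnishes only a tangential statement at $y$; extending it to an honest flat rectangle across $F_y$ requires invoking Proposition~\ref{prop:concave-rigid} for the distance from the totally convex set $C$ (an analogue of the rigidity proved there for $\partial C$), and verifying that the $D^2$-bundle $\pi$ -- built in the style of Theorem~\ref{thm:S2} via an almost Lipschitz submersion -- can be isotoped so that each fiber $F_y$ coincides, up to an isotopy not affecting the level $\{f_\epsilon=\nu\}$, with the exponential image of $K_y^\perp$. Once the fibers are identified with perpendicular exponential disks, the Pythagorean identity and the monotonicity of $f_\epsilon$ in $d_C$ follow, and the argument closes.
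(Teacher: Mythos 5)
Your overall strategy — exhibit $M:=\pi^{-1}(C^*_{\nu-\lambda^2/\nu})\cap\{f_\epsilon=\nu\}$ as an $S^1$-bundle over the disk $C^*_{\nu-\lambda^2/\nu}$ via the restriction of $\pi$ — matches the paper's, and your preliminary steps ($C^*_{\nu-\lambda^2/\nu}\simeq D^2$, $\pi^{-1}(C^*_{\nu-\lambda^2/\nu})\simeq D^2\times D^2$) are sound. The gap is exactly where you flag it, and it is a genuine one: neither of the cited propositions delivers the exact Pythagorean identity $f_\epsilon(x)^2=f_\epsilon(y)^2+d_C(x)^2$ on the fiber $F_y$. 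Proposition~\ref{prop:product} is strictly infinitesimal (a statement about $K_y(X^4)$ only), while Proposition~\ref{prop:concave-rigid} is a rigidity statement whose hypothesis — constancy of $d(\partial C,\gamma(t))$ along a geodesic $\gamma\subset C$ — cannot be arranged for the minimal segment from $y$ to $C_\epsilon$ you need (along which $d(C_\epsilon,\cdot)$ decreases from $f_\epsilon(y)$ to $0$). Even after isotoping the fibers of $\pi$ to exponential images of $K_y^\perp$, an exact Pythagorean identity would require $X$ to be a metric product over $\interior C$, which is not established and in general fails. Your closing sentence "the Pythagorean identity and the monotonicity of $f_\epsilon$ in $d_C$ follow, and the argument closes" is precisely the step that does not go through.

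The paper's own argument is softer and correct: it only needs an approximate version, and uses it only to get uniform transversality. Because every point of $\{f_\epsilon\le\epsilon/2,\ 0<d_C\le 2\nu\}$ is almost regular (this is where the splitting of Proposition~\ref{prop:product} enters, as an approximate local statement, not an exact one), the margin $\lambda^2/\nu$ built into $C^*_{\nu-\lambda^2/\nu}$ forces, for any $y\in\pi^{-1}(x)\cap\{f_\epsilon=\nu\}$ with $x\in C^*_{\nu-\lambda^2/\nu}$, a lower bound $d_C(y)\gtrsim\lambda$ and hence a uniform bound $\pi/2-\theta\ge c(\lambda/\nu)>0$ on the angle $\theta$ at $y$ between $(C_\epsilon)_y'$ and the tangent to the fiber $\pi^{-1}(x)$. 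Together with the almost-Lipschitz-submersion property of $\pi$, this makes $(\pi,f_\epsilon)$ regular on the relevant region, so each fiber meets $\{f_\epsilon=\nu\}$ in a circle and $M$ is an $S^1$-bundle over a disk, hence $D^2\times S^1$. Note also that your intermediate claim that regularity of $f_\epsilon$ alone (critical set in $\partial C$, away from $E$) implies the level set meets each fiber in a circle conflates regularity of $f_\epsilon$ with transversality to the fibers of $\pi$; what is actually needed, and what the paper proves, is the uniform angle bound.
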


\begin{proof}
Note that each point of 
$\{ f_{\epsilon} \le \epsilon/2, 0<d_C\le 2\nu\}$ is 
almost regular. For
any point $x$  of $C^*_{\nu-\lambda^2/\nu}$ and 
any point $y$ of 
$\pi^{-1}\left(C^*_{\nu-\lambda^2/\nu}\right)
\cap\{ f_{\epsilon}=\nu\}$,
let $\theta$ be the angle at $y$ between 
$(C_{\epsilon})_y'$ and the fibre  $\pi^{-1}(x)$.
Since $\pi/2-\theta$ is bounded from below by a uniform constant
depending only on $\lambda/\nu$ and since 
$\pi$ is almost Lipschitz submersion, it follows that
$\pi^{-1}(x)\cap\{ f_{\epsilon}=\nu\}$ is a circle.
\end{proof}

By using the $d_L$-flow curves, we obtain
\begin{align*}
   \{ f_{\epsilon}=\nu\}- & \pi^{-1}(C^*_{\nu-\lambda^2/\nu})\\
        & \simeq  \{ f_{\epsilon}=\nu, d_L\le\lambda\}\\
        & \simeq S^1\times D^2.
\end{align*}
Thus we conclude 
$\{ f_{\epsilon}=\nu\}\simeq S^1\times D^2\cup D^2\times S^1
\simeq S^3$.
\end{proof}

\begin{thm}\label{thm:soul2}
Conjecture \ref{conj:soul}  is true in the case when 
$\dim C=1$ and $\dim S=0$, where $\dim X$ be arbitrary.
\end{thm}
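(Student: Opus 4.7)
The plan is to follow the strategy of Theorem \ref{thm:soul1} in the much simpler situation where the totally convex set $C = C(0)$ is one-dimensional. Since $\dim C = 1$, $C$ is a compact geodesic segment with $\partial C = \{p_0, p_1\}$ consisting of two points; since $\dim S = 0$, the soul $S$ is a single point $s_0$, the midpoint of $C$ (the unique maximum of $d(\partial C, \cdot)|_C$). As before, set $f_\epsilon(x) = d(C_\epsilon, x)$ where $C_\epsilon = \{x \in C : d(\partial C, x) \ge \epsilon\}$, which for small $\epsilon$ is a sub-segment of $C$ containing $s_0$. The critical set of $f_\epsilon$ is contained in the two-point set $\partial C$, so an $f_\epsilon$-gradient flow $\psi$ exists on $X - \partial C - C_\epsilon$ by \cite{Pr:alex2}.

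First I would construct an $f_\epsilon$-pseudo-gradient normal $D^n$-bundle over $\partial C$ in the sense of Definition \ref{def:grad-nbd}. This step is considerably simpler than Proposition \ref{prop:flow} because $\partial C$ is discrete: no patching along arcs of $\partial C$ is required, and one only needs to choose, independently at each $p_j$, a closed ball $V_j \simeq D^n$ (available since $X$ is topologically nice, so $\Sigma_{p_j} \simeq S^{n-1}$ and hence $B(p_j, r) \simeq K_1(\Sigma_{p_j}) \simeq D^n$ for small $r$) whose boundary decomposes into the required $J_0$-, $J_1$- and $K$-parts. This decomposition comes from the regularity of the pair $(f_\epsilon, d_{p_j})$ on an annular region $\{r/2 \le d_{p_j} \le r\} \cap \{\epsilon/2 \le f_\epsilon \le c\epsilon\} - B(\partial C, \epsilon/100)$, obtained by a direct generalization of Lemma \ref{lem:loc-regular}(1) together with Proposition \ref{prop:concave-rigid} applied to the tangent cone $K_{p_j}$.

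With $V := V_0 \cup V_1$ in hand, the flow $\psi$ can be extended across $V$ to an $f_\epsilon$-pseudo-gradient flow defined on all of $X - C_\epsilon$, whose orbits are transversal to the level sets $\{f_\epsilon = t\}$ for $0 < t < \epsilon/2$. Pushing along this flow yields $X \simeq \{f_\epsilon < \nu\}$ for any $\nu \in (0, \epsilon/2)$. It remains to identify $\{f_\epsilon \le \nu\}$ topologically. For $\nu \ll \epsilon$, this set is a thin $\nu$-tubular neighborhood of the segment $C_\epsilon$; each point $x \in C_\epsilon$ lies in $\interior C$ and is topologically nice, so a topological version of Proposition \ref{prop:product} (whose content here is that the iterated sphere structure of $\Sigma_x, \Sigma_v \Sigma_x, \dots$ forces $\Sigma_x$ to be a topological join $S^0 * \Sigma^\perp$ with $\Sigma^\perp \simeq S^{n-2}$) provides a local product neighborhood $I \times D^{n-1}$ around $x$. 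Combined with the regularity of $(f_\epsilon, d_q)$ for auxiliary points $q$ moving along $C_\epsilon$, these local products assemble to a locally trivial $D^{n-1}$-bundle on $\{f_\epsilon \le \nu\}$ over $C_\epsilon \simeq I$; triviality over a contractible base then gives $\{f_\epsilon \le \nu\} \simeq I \times D^{n-1} \simeq D^n$. Choosing $\epsilon_0$ so that $B(S, \epsilon_0)$ corresponds (via the flow) to such a $\{f_\epsilon \le \nu\}$ yields both conclusions of Theorem \ref{thm:soul}.

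The main obstacle is dimension-freeness: Lemma \ref{lem:loc-regular} and Proposition \ref{prop:product} are proved in the text only for $n \le 4$, so in arbitrary $n$ one must verify that the regularity estimates carry over. For the regularity lemma the argument is essentially dimension-free once the induction step is handled via Proposition \ref{prop:concave-rigid} and Sublemma \ref{slem:extend}, which are general. For the product structure, the \emph{topologically nice} hypothesis is precisely tailored to bypass the full metric Conjecture on splitting of $K_p$: it guarantees the sphere-join structure of $\Sigma_x$ \emph{topologically}, which is what the local-triviality argument for the $D^{n-1}$-bundle in Step 4 actually needs. I expect verifying this topological splitting, and making the regularity of $(f_\epsilon, d_{p_j}, d_C)$ work uniformly across the endpoint regions, to be the most delicate part of a complete write-up.
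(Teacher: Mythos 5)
The paper gives no proof of Theorem \ref{thm:soul2} beyond the one-line remark that it is ``similar to the 3-dimensional case'' and a reference to Section 13 of \cite{SY:3mfd}, so your task is effectively to reconstruct a proof by adapting Theorem \ref{thm:soul1}. Your first three steps do that correctly, and your reduction of the pseudo-gradient normal bundle over $\partial C = \{p_0, p_1\}$ to two independent $D^n$-pieces (with the $J_0$-, $J_1$-, $K$-decomposition of each $\partial D^n$ supplied by regularity of $(f_\epsilon, d_{p_j})$) is exactly the simplification one wants when $\dim C = 1$. One small correction on the role of topological niceness: for $x \in \interior C$ the metric spherical suspension $\Sigma_x = S^0 * \Sigma^\perp$ is \emph{automatic} (the two directions of $C$ at $x$ are antipodal in $\Sigma_x$ because $C$ is a geodesic; by the rigidity for $\operatorname{diam} = \pi$ in curvature $\ge 1$, $\Sigma_x$ is then a spherical suspension), so you do not need Proposition \ref{prop:product} at all here. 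What topological niceness actually buys you is that the equator $\Sigma^\perp = \Sigma_{\xi_\pm}(\Sigma_x)$ is homeomorphic to $S^{n-2}$, and hence $K_x(X) = \R \times K(\Sigma^\perp) \simeq \R \times \R^{n-1}$ is a topological manifold.

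The genuine gap is in your step 4. You claim $\{f_\epsilon \le \nu\}$ is a locally trivial $D^{n-1}$-bundle over $C_\epsilon \simeq I$ and therefore $\simeq I \times D^{n-1} \simeq D^n$. But $C_\epsilon$ is a compact segment with two interior endpoints $q_0, q_1$, and the set $\{f_\epsilon \le \nu\}$ does \emph{not} fibre over it: near $q_0, q_1$ the ``tube'' caps off rather than continuing as a product (already in $\R^2$ with $C_\epsilon$ a line segment, the $\nu$-neighborhood is a stadium whose nearest-point projection onto $C_\epsilon$ has half-disk fibres at the two endpoints, not segments). So there is no $D^{n-1}$-bundle over $C_\epsilon$, and triviality-over-a-contractible-base does not apply. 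Compare the paper's own treatment of the analogous final step in Theorem \ref{thm:soul1}: it does not attempt to fibre $\{f_\epsilon \le \nu\}$ over $C$, but instead (a) shows $\{f_\epsilon \le \nu\}$ is contractible using the flow, (b) proves $\{f_\epsilon = \nu\} \simeq S^3$ by decomposing the level set into two pieces along an intermediate level of $d_L$ with $L = C \cap \{f_\epsilon = \nu\}$, and (c) invokes Freedman. A dimension-free version of Theorem \ref{thm:soul2} would proceed the same way, showing $\{f_\epsilon = \nu\} \simeq S^{n-1}$ (the decomposition ``$D^{n-1}\times S^0$ caps glued to an $S^{n-2}\times I$ tube'' still works, using your spherical-suspension structure at interior points of $C$ and the $D^{n-1}$-caps near the endpoints of $C_\epsilon$), and then concluding $\{f_\epsilon \le \nu\} \simeq D^n$ via the generalized Poincar\'e conjecture and Schoenflies-type theorems appropriate to the dimension ($n \ge 5$: Smale/Newman/Brown; $n = 4$: Freedman; $n \le 3$: classical). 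Your write-up should replace the bundle argument with this contractible-plus-sphere-boundary argument, and acknowledge the dimension-dependent input required at the end.
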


The proof of Theorem \ref{thm:soul2} is 
similar to the 3-dimensional case, and hence omitted
(see Section 13 of \cite{SY:3mfd} for details).

\section{Soul Theorem in dimension four II}
                     \label{sec:soulII}

In this section, we shall consider the case of $\dim C=3>\dim S$.\par
We also assume $\dim X=4$.

\begin{thm} \label{thm:codim1-C}
Let $X$ be a $4$-dimensional complete open 
Alexandrov space with nonnegative curvature, and suppose that it is  
topologically regular.
If $\dim C=3>\dim S$,
then $X$ is homeomorphic to 
${\cal N}(\interior C)$.
\end{thm}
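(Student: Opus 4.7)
The plan is to build a homeomorphism $X\to \mathcal{N}(\interior C)$ by combining a natural $\R$-bundle structure over $\interior C$ with a compatible $D^2$-bundle structure absorbing the neighborhood of $\partial C$, then collapsing the latter into a collar of $\partial \mathcal{N}(C_\epsilon)$ inside $\mathcal{N}(\interior C)$.

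First I would show that $\mathcal{N}(\interior C)$ is naturally an open, locally flat subset of $X$ carrying an $\R$-bundle structure over $\interior C$. Since $\dim X=4$ and $X$ is topologically regular, $X$ is topologically nice, so Proposition \ref{prop:codim1-reg} applies at every $p\in\interior C$: $p$ is a two-normal point and $\Sigma_p(X)$ is the spherical suspension over $\Sigma_p(\interior C)$. Together with the rigidity supplied by Proposition \ref{prop:concave-rigid} (applied to the convex function $d(\partial C,\cdot)$ inside $C$ and extended normally), this gives a globally defined normal exponential map $\nu:\interior C\times\R\to X$ which is a local homeomorphism onto the open set $\mathcal{N}(\interior C)$ and exhibits it as an $\R$-bundle (possibly nontrivial if $\interior C$ is nonorientable in a suitable sense).

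Next I would pass to the complement and analyze it near $\partial C$. Fix a small $\epsilon>0$, set $f_\epsilon=d(C_\epsilon,\cdot)$, and recall that its critical set on $X\setminus C_\epsilon$ lies in $\partial C$ by Lemma \ref{lem:corner}. For each $p\in\partial C$, Lemma \ref{lem:loc-regular} (applied with $\dim C=3$) yields the regularity of $(f_\epsilon,d_p)$, $(f_\epsilon,d_C)$ and $(f_\epsilon,d_p,d_C)$ on the appropriate shells near $p$. Using an argument parallel to Proposition \ref{prop:flow}, these regular distance maps let me construct, on a small neighborhood of $p$ in $\partial C$, an $f_\epsilon$-pseudo-gradient normal bundle in the sense of Definition \ref{def:grad-nbd}, which here is a $D^{n-k+1}=D^2$-bundle, the two fibre coordinates being the ``$f_\epsilon$-level'' and ``$d_C$-level'' directions controlled by the two regularity statements of the lemma.

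Then I would patch these local $f_\epsilon$-pseudo-gradient $D^2$-bundles into a single $D^2$-bundle $V\to \partial C$ of height $\mu\epsilon$, modifying the pseudo-gradient flow accordingly. Since $\partial C$ is a closed topological $2$-manifold (the boundary of the topologically nice compact $3$-dimensional Alexandrov space $C$), the patching is a standard cover-and-glue argument in the spirit of Sections \ref{sec:dim3nobdyglob} and \ref{sec:dim3wbdy}, where local fibre structures on overlapping charts are reconciled on a $2$-dimensional base. The resulting flow on $X\setminus C_\epsilon$ retracts $X$ onto $\{f_\epsilon\le \epsilon/2\}\cup V$, which in turn is foliated by the $\R$-fibres of $\mathcal{N}(\interior C)$: the interior fibres come from the line bundle of Paragraph 1, while over a neighborhood of $\partial C_\epsilon$ the extra $d_C$-direction of each $D^2$-fibre of $V$ is absorbed into a collar of $\partial\mathcal{N}(C_\epsilon)\subset \mathcal{N}(\interior C)$. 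Running this identification explicitly along the flow produces the required homeomorphism $X\simeq \mathcal{N}(\interior C)$.

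The main obstacle will be Step 3: the global patching of the $D^2$-bundles over the surface $\partial C$ so that the extended pseudo-gradient flow meshes coherently with the $\R$-bundle on $\interior C$. One must verify that the two ``normal'' directions (normal in $C$ to $\partial C$, and normal in $X$ to $C$) can be globally coordinatized across $\partial C$ in a way consistent with the canonical pairs of perpendicular rays at two-normal points of $\interior C$ approaching $\partial C$. This is essentially a local-to-global compatibility problem on the $2$-manifold $\partial C$, and handling possible topological complications of $\partial C$ (nonorientability, nontrivial $H^2$ obstructions to the $D^2$-bundle) is where the most care is required.
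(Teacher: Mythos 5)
Your proposal follows the same route as the paper: the paper derives Theorem \ref{thm:codim1-C} as a direct corollary of Theorem \ref{thm:norm-nbd} (the existence of a global $f_\epsilon$-pseudo-gradient normal $D^2$-bundle over $\partial C$, built from Lemma \ref{lem:loc-regular} and Lemma \ref{lem:loc-n-bdle} and patched via a triangulation of $\partial C$), which is precisely your steps 2--3; the normal bundle is then used to deform the pseudo-gradient flow into one defined on all of $X-C_\epsilon$, giving $X\simeq\{f_\epsilon<\epsilon/2\}\simeq\mathcal{N}(\interior C)$. One imprecision in your step 4: you retract $X$ onto the \emph{compact} set $\{f_\epsilon\le\epsilon/2\}\cup V$ and then try to identify that with $\mathcal{N}(\interior C)$, but a compact set cannot be homeomorphic to the open, noncompact $\mathcal{N}(\interior C)$; the correct move, as in the paper, is to use the extended flow to carry $X$ onto the \emph{open} sublevel set $\{f_\epsilon<\epsilon/2\}$, which is then seen (via the $\R$-bundle structure of Paragraph 1) to be homeomorphic to $\mathcal{N}(\interior C)$.
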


\begin{thm} \label{thm:norm-nbd}
Under the same hypothesis as Theorem \ref{thm:codim1-C},
for some $\mu>0$ and 
any small enough $\epsilon>0$, there exists an  
$f_{\epsilon}$-pseudo-gradient normal bundle over $\partial C$  
of height $\mu\epsilon$ with respect to some
$f_{\epsilon}$-gradient flow.
\end{thm}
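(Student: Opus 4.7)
The plan is to carry out the strategy of Proposition \ref{prop:flow} one dimension higher. Here $\dim C = 3$, so $\partial C$ is $2$-dimensional (a topological surface, by the topological regularity of $X$ together with Theorem \ref{thm:collar}), and the fibre $D^{n-k+1} = D^{2}$ of the pseudo-gradient normal bundle should appear, in local coordinates coming from Lemma \ref{lem:loc-regular}, as the rectangular region $R_{\epsilon,\mu} = \{\epsilon/2 \le f_\epsilon \le c\epsilon,\ 0 \le d_C \le \mu\epsilon\}$ in the $(f_\epsilon, d_C)$-plane, with $J_1$ the edge $\{f_\epsilon = \epsilon/2\}$, $J_0$ the edge $\{f_\epsilon = c\epsilon\}$, and the $K$-part the edge $\{d_C = \mu\epsilon\}$. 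Fix a fine triangulation $K$ of $\partial C$ (whose mesh will be chosen small compared to the radii coming from Lemma \ref{lem:loc-regular}), and build the bundle inductively over the $0$-, $1$-, and $2$-skeleta of $K$.

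Over each vertex $p_j \in K^0$, Lemma \ref{lem:loc-regular}(3) says that $(f_\epsilon, d_{p_j}, d_C)$ is a regular map on the annular region $\{\delta' \le d_{p_j} \le \delta,\ \epsilon/3 \le f_\epsilon \le 2\epsilon/3,\ 0 < d_C \le \mu_{p_j}\epsilon\}$, so by \cite{Pr:alex2} it defines a local bi-Lipschitz chart in which the desired fibre over $p_j$ is precisely $\{p_j\} \times R_{\epsilon,\mu}$; enlarging slightly one obtains a local pseudo-gradient normal $D^2$-bundle over a small disk neighbourhood $D_{p_j} \subset \partial C$, together with a choice of $f_\epsilon$-gradient vector field $\psi_j$ whose flow lines are the horizontal segments $\{d_C = \text{const}\}$. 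Over each edge $e = \widehat{p_j p_{j+1}}$ of $K^1$ one extends as in Proposition \ref{prop:flow}: by Lemma \ref{lem:loc-regular}(1) the functions are regular along $e$, so one first extends the $J_1$-piece to a $D^1$-bundle $J_1(e) \subset \{f_\epsilon = \epsilon/2\}$ over $e$ whose boundary matches $J_1|_{p_j} \cup J_1|_{p_{j+1}}$, then carries it by the $f_\epsilon$-flow to the $K$-part $K(e) \subset \{d_C = \mu\epsilon\}$, and caps off in $\{f_\epsilon = c\epsilon\}$ by a $D^1$-bundle $J_0(e)$ chosen so that $J_0(e) \cup K(e) \cup J_1(e)$ together with the $S^1$-fibres over $p_j$ and $p_{j+1}$ bounds a locally flat $S^2$ in $\{\epsilon/2 \le f_\epsilon \le c\epsilon\}$. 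By the 3-dimensional Schoenflies theorem this $S^2$ bounds a $D^3$, which is the total space of the bundle over $e$.

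Finally, over each $2$-simplex $\Delta \in K^2$, the construction above produces, over $\partial \Delta$, a locally flat $S^1$-bundle (the $\partial D^2$-sub-bundle of $V|_{\partial \Delta}$); filling each of $J_0, J_1$ over $\Delta$ as a $D^2$ in the respective level set $\{f_\epsilon = c\epsilon\}$ resp.\ $\{f_\epsilon = \epsilon/2\}$ and flowing $\partial J_1(\Delta)$ via $\psi$ to extend $K$ over $\partial \Delta$, the union $J_0(\Delta) \cup J_1(\Delta) \cup K(\partial \Delta) \cup V|_{\partial \Delta}$ is a locally flat topologically embedded $S^3$ in $X$, and the Generalized Schoenflies Theorem \ref{thm:shoen} produces a $D^2 \times D^2$-parametrised filling, which serves as $\pi^{-1}(\Delta)$. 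The main obstacle lies precisely in this last step: one must ensure that the Schoenflies fillings over neighbouring $2$-simplices can be chosen compatibly and that the gradient field $\psi$ can be perturbed inside each filling so that its flow curves in the interior genuinely realise the $K$-part of the bundle structure — this is handled by processing the $2$-cells one at a time, using that $D^2$-bundles over the contractible piece already built are trivial so each transition map is isotopic to the identity on the boundary, and then interpolating $\psi$ along this isotopy, in parallel with the patching arguments of Sections \ref{sec:dim3nobdyglob} and \ref{sec:dim3wbdy}.
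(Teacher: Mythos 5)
You have the right global architecture: triangulate $\partial C$, use the local regularity of $(f_{\epsilon}, d_{p}, d_{C})$ from Lemma \ref{lem:loc-regular} to produce local pseudo-gradient normal bundles, and extend inductively over the skeleta with Schoenflies-type caps. Your $2$-cell filling argument is also essentially the one the paper uses (it is dispatched there by a reference back to the argument of Lemma \ref{lem:loc-n-bdle}, which is exactly the locally-flat-$S^3$-plus-Schoenflies picture you describe). But you have misidentified where the difficulty lies, and that misidentification hides a genuine gap in your edge step.

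The local bundle $U_{p_{j}}$ near a vertex $p_{j}$ is built with respect to a gradient flow $\psi_{p_{j}}$ adapted to $d_{p_{j}}$ by Lemma \ref{lem:loc-regular}(1); in particular the $K$-part of $U_{p_{j}}$ consists of $\psi_{p_{j}}$-flow curves. Adjacent vertices $p_{j}$ and $p_{j+1}$ come with \emph{different} flows $\psi_{p_{j}}$, $\psi_{p_{j+1}}$ which need not be compatible on the overlap. When you write that one ``carries $J_1(e)$ by the $f_{\epsilon}$-flow to the $K$-part $K(e)$,'' you are implicitly assuming a single flow that agrees with $\psi_{p_{j}}$ near $p_{j}$ and with $\psi_{p_{j+1}}$ near $p_{j+1}$ while still realising the product structure $J_1(e)\times I$ along the whole edge. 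That is exactly what has to be proved. The paper handles it by extending $V_1$ to a bundle $\tilde V_1$ over the longer arc still governed by $\psi_{p_1}$, choosing a glued flow $\psi_{p_1p_2}$ matching the two local flows near the endpoints, and then --- the crucial step --- deforming $\psi_{p_1p_2}$ via the topological Morse theory of Siebenmann \cite{Si:stratified} so that its flow map preserves the level family $\mathcal{S}=\{H\times t\}$ induced by $\tilde W_1$; only then do the flow curves through the connecting rectangle $\hat H$ yield a bona fide pseudo-gradient normal bundle over $L\cup\sigma$. This flow-compatibility deformation is the technical heart of the theorem and is absent from your proposal. Your concluding remark that $\psi$ must be perturbed inside the $2$-cell fillings is not wrong, but it is the comparatively routine part; it is over $K^1$ that the perturbation is delicate.
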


\begin{proof}[Proof of Theorem \ref{thm:codim1-C} assuming
Theorem \ref{thm:norm-nbd}]
Let $U$ be an $f_{\epsilon}$-pseudo-gradient
normal bundle of $\partial C$  with respect to some
$f_{\epsilon}$-gradient flow $\psi$.
It is possible to deform $\psi$ in $U$ along the fibres of 
$\pi:U\to\partial C$ to obtain an $f_{\epsilon}$-pseudo-gradient 
flow on $X-C_{\epsilon}$. Therefore we have
\[
  X\simeq \{ f_{\epsilon}<\epsilon/2\} \simeq 
   {\cal N}(\interior C).
\]
\end{proof}

First we construct an $f_{\epsilon}$-pseudo-gradient
normal bundle over a small neighborhood of any
point $p$ of $\partial C$.

Let $c_p\ge 1$, $\epsilon_{p,c}$, $\delta_{p,c}$ and $\mu_p$ 
be as in Lemmas \ref{lem:corner} and \ref{lem:loc-regular}.

\begin{lem} \label{lem:loc-n-bdle}
For any $p\in\partial C$ and $c\ge c_p$,
let $\epsilon$, $\delta'<\delta$  be constants as in 
Lemma \ref{lem:loc-regular}. Then 
there is an $f_{\epsilon}$-pseudo-gradient normal bundle over 
$\partial C\cap B(p,\delta)$ of height $\mu_p\epsilon$ .
\end{lem}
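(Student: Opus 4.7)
The plan is to construct the bundle $V$ fibre-by-fibre using the regularity of $(f_\epsilon, d_p, d_C)$ from Lemma \ref{lem:loc-regular}, and to build a compatible gradient-like flow $\psi$ for $f_\epsilon$ cutting out the decomposition $\partial V = J_1 \cup K \cup J_0$. The whole construction lives in a small neighbourhood of $A := \partial C \cap B(p,\delta)$ in $X$, where, by topological regularity of $X$ together with $\dim X = 4$, the point $p$ is topologically nice. Near $p$ one parameterises the relevant region of $X$ by four quantities: a pair $(y_1, y_2)$ of distance functions from two reference points of $\partial C$ that give bi-Lipschitz coordinates on $A$, together with $f_\epsilon$ and $d_C$.

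The key step is to produce the $D^2$-bundle. On the region $\Omega := \{d_p \le \delta,\, \epsilon/3 \le f_\epsilon \le 2\epsilon/3,\, 0 < d_C \le \mu_p\epsilon\}$, the pair $(f_\epsilon, d_C)$ is regular by Lemma \ref{lem:loc-regular}(2) and so is a topological submersion in the sense of \cite{Pr:alex2}, and the level sets of $(y_1, y_2)$ cut across its fibres transversely by Lemma \ref{lem:loc-regular}(3). Together these yield local trivializations $\pi^{-1}(W) \simeq W \times R$ where $W$ ranges over small open sets of $A$ and $R$ is the $(f_\epsilon, d_C)$-rectangle $[\epsilon/2, c\epsilon] \times [0, \mu_p\epsilon]$. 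Because $f_\epsilon$ and $d_C$ are globally defined, the transition maps between overlapping trivializations act only on $W$ and preserve the fibre coordinates, so the pieces assemble into a global $D^2$-bundle $V$ over $A$ with fibre $R$. The flow $\psi$ is then built by combining the standard Petrunin gradient-like vector field for $f_\epsilon$ on $\Omega$ (which is regular by Lemma \ref{lem:loc-regular}(1) on the complement of $B(\partial C, \epsilon/100)$) with a vector field tangent to $\partial C$ on $B(\partial C, \epsilon/100)$, glued by a partition of unity; the resulting pseudo-gradient field leaves $\partial C$ invariant and matches the fibre coordinates of $V$.

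With $V$ and $\psi$ in hand, the decomposition is immediate: set $J_1 := \{f_\epsilon = \epsilon/2\} \cap V$ and $J_0 := \{f_\epsilon = c\epsilon\} \cap V$, and let $K$ consist of the $\psi$-flow segments in $V \cap \{d_C = 0\}$ together with those in $V \cap \{d_C = \mu_p\epsilon\}$. Lemma \ref{lem:corner} ensures $A \subset \{\epsilon/2 < f_\epsilon < c_p\epsilon\}$, so $A$ genuinely sits inside $V$ along its $K$-part at $d_C = 0$, and the height condition holds by construction. The main obstacle I expect is the behaviour at $\partial C$: the regularity of $(f_\epsilon, d_C)$ degenerates at $d_C = 0$, so the $D^2$-bundle structure must be extended across the stratum $A$ without losing local triviality. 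This is where topological regularity of $X$ — which in dimension four forces $\Sigma_p X \simeq S^3$ and, by the discussion preceding Proposition \ref{prop:smoothable}, topological niceness at every point of $A$ — becomes essential: it provides Schoenflies-type control over the fibre circles in the 4-manifold neighbourhood of $p$ so that the disks they bound vary continuously with the base point, promoting the topological fibration over $\interior A \cap \{d_C > 0\}$ to a locally trivial $D^2$-bundle over all of $A$ with the required $f_\epsilon$-pseudo-gradient normal bundle structure.
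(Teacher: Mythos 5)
Your proposal does not close the gap and in fact rests on a misuse of Lemma \ref{lem:loc-regular}. The regularity of the pair $(f_\epsilon, d_C)$ that you invoke is asserted there only on the narrow band $\{\epsilon/3 \le f_\epsilon \le 2\epsilon/3,\ 0 < d_C \le \mu_p\epsilon\}$, \emph{not} over the full fibre rectangle $[\epsilon/2, c\epsilon]\times[0,\mu_p\epsilon]$ that you then declare to be the fibre of the local trivialization; and even on that band the regularity is stated with the strict inequality $d_C > 0$, so it degenerates precisely on the stratum $A = \partial C \cap B(p,\delta)$ over which the bundle is supposed to live. There is therefore no regularity available that makes $(y_1,y_2,f_\epsilon,d_C)$ a system of product coordinates across $A$, and the claim that "the transition maps preserve fibre coordinates" has nothing to stand on. Your proposed gradient field is also incoherent as stated: a vector field tangent to $\partial C$ cannot be gradient-like for $f_\epsilon$ near $\partial C$ (that is exactly the set of critical points of $f_\epsilon$), and what Definition \ref{def:grad-nbd} actually demands is a flow whose trajectories run through the fibres transversely to $A$, not one that leaves $\partial C$ invariant.

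The paper's proof handles these obstacles by a dimension reduction that your sketch has no analogue of. It first constructs the pseudo-gradient normal bundle $V$ over the \emph{circle} $F = \partial(\partial C\cap B(p,\delta'))$ — there the base is $1$-dimensional and the regularity of $(f_\epsilon, d_p)$ from Lemma \ref{lem:loc-regular}(1) suffices, by the method of Theorem \ref{thm:soul1}. It then extends $V$ to a bundle $W$ over the annulus $\partial C \cap A(p;\delta',\delta)$ using the cone structure of $B(p,\delta)$. Finally, for the remaining disk $\partial C\cap B(p,\delta')$ it builds a locally flat $3$-sphere $V\cup E\cup G\cup H$, where $E$ and $H$ are disk domains capped off via the generalized Schoenflies theorem, and the crucial step (Sublemma \ref{slem:part-in-cell}) is that $\partial(V\cup E\cup G)$ sits inside an $\R^3$-domain of the level set $f_\epsilon^{-1}(c\epsilon)$ — established by blowing up at $p$ and using $\Sigma_p \simeq S^3$. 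That $S^3$ then bounds a $D^4$ extending the bundle over the cap. Your concluding paragraph gestures at exactly this phenomenon ("Schoenflies-type control over the fibre circles"), so your instinct about where topological regularity enters is right, but the gesture is not a proof: you need the explicit reduction to a bundle over $F$, the extension over the annulus, and the locally-flat-$S^3$-bounds-$D^4$ cap, and none of these are supplied or implied by the $(f_\epsilon, d_C)$-coordinate picture.
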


\begin{proof}
By Lemma \ref{lem:loc-regular}, we have an 
$f_{\epsilon}$-gradient flow $\psi_p$
on $A(\partial C;\epsilon/100,\epsilon/10)$
preserving $d_p$ on $A(p;\delta',\delta)$. 
Put $\Delta^2:=\partial C\cap B(p,\delta')\simeq D^2$, and
$F:=\partial\Delta^2=\partial C\cap\partial B(p,\delta')$.
We first construct an  $f_{\epsilon}$-pseudo-gradient
normal bundle $V$ over $F$
with respect to $\psi_p$ such that $V\subset\partial B(p,\delta')$.
Since $\dim F=1$, using the regularity of $(f_{\epsilon}, d_{p})$
we can apply the method of the proof of Theorem \ref{thm:soul1}
(compare the gluing technique used in Section 12 of \cite{SY:3mfd})
to $F\subset\Delta^2\subset\partial B(p,\delta')$, 
and obtain the required normal bundle $V$ over $F$ such that
\begin{enumerate}
 \item $V\subset\partial B(p,\delta');$
 \item $V$ has height $\mu_p\epsilon$.
\end{enumerate}
Using the cone structure of $B(p,\delta)$ from $p$, 
we extend $V$ to an $f_{\epsilon}$-pseudo-gradient
normal bundle $W$ over $\partial C\cap A(p;\delta',\delta)$
with respect to $\psi_p$.

Note that 
$\partial J_1$ bounds two disjoint domains $D_1$ and $D_2$ 
of $\{f_{\epsilon}=\epsilon/2, d_C=\mu_p\epsilon\}$ homeomorphic 
to $D^2$.
Since $\partial J_1\cup D_1\cup D_2$ is homeomorphic to $S^2$ and 
is locally flat, it bounds a 
domain $E$ of $\{f_{\epsilon}=\epsilon/2, d_C\le\mu_p\epsilon\}$
homeomorphic to $D^3$.
Let $G$ denote the union of all the $\psi_p$-flow curves in 
$\{\epsilon/2\le f_{\epsilon}\le c\epsilon\}$ meeting 
$D_1\cup D_2$.
Note $G\simeq D^2\times (I\times\partial I)$.

\begin{slem} \label{slem:part-in-cell}
$\partial(V\cup E\cup G)$ is contained in a domain of
$f_{\epsilon}^{-1}(c\epsilon)$ homeomorphic to $\R^3$.
\end{slem}

\begin{proof}
Under the convergence
$(\frac{1}{\delta} X,p) \to (K_p,o_p)$, the level set
$\{ f_{\epsilon=\epsilon'\delta}=c\epsilon\}$ converges to 
$\{ f_{\infty,\epsilon'}=c\epsilon'\}$.
Since $\Sigma_p\simeq S^3$ is a topological manifold, 
the sublemma obviously follows.
\end{proof}

Since $V\cup E\cup G\simeq D^3$ and  
$\partial(V\cup E\cup G)$ is locally flat, it follows from 
Sublemma \ref{slem:part-in-cell} that $\partial(V\cup E\cup G)$
bounds a domain $H$ 
in $f_{\epsilon}^{-1}(c\epsilon)$ homeomorphic to $D^3$.
Since $V\cup E\cup G\cup H$ is homeomorphic to $S^3$ and is locally
flat, it bounds a domain  $K$ of $X$ homeomorphic to $D^4$.
Therefore the $D^2$-bundle $W$ over $\partial C\cap A(p;\delta',\delta)$
extends to a required $D^2$-bundle structure on $K$ over 
$\partial C\cap B(p;\delta)$.
\end{proof}

\begin{proof}[Proof of Theorem \ref{thm:norm-nbd}]
We are going to glue those local pseudo-gradient normal bundles to 
construct a pseudo-gradient normal bundle over $\partial C$.

By the compactness of $\partial C$, we have
\[
    c:=\sup_{p\in\partial C} c_p >0, \qquad 
        \mu:=\sup_{p\in\partial C} \mu_p >0.
\]
By Lemmas \ref{lem:loc-regular} and \ref{lem:loc-n-bdle},
for any $p\in\partial C$ and $c$ as above, there are 
$\delta_{p,c}>0$ and an 
$f_{\epsilon}$-pseudo-gradient normal bundle $U_p$ over
$\partial C\cap B(p,\delta_{p,c})$ of height $\mu\epsilon$, 
where $\epsilon$ is any 
sufficiently small positive number. 
Choose $p_1,\ldots,p_N$ of $\partial C$ such that 
$\{ B(p_i,\delta_i)\}_{1\le i\le N}$ covers $\partial C$.
Let $K$ be a sufficiently fine triangulation of $\partial C$ by 
Lipschitz curves with $\{p_i\}_{i=1}^N\subset K^0$ all of whose 
simplices of $K$ have diameters
less than $\min_{1\le i\le N} \delta_{p_i,c}$, where 
$K^j$ denotes the $j$-skeleton of $K$, $0\le j\le 2$.
Take $\delta_{p_i,c}'$ small enough compared with 
$\min_{1\le i\le N} d(p_i, K^0-\{ p_i\})$.
With the above choice of $p_i$ and $\delta_{p_i,c}'<\delta_{p_i,c}$,
take $\epsilon$  small enough compared with 
$\min \delta_{p_i,c}'$.
Now $U_{p_i}$ is an $f_{\epsilon}$-pseudo-gradient normal bundle 
over $\partial C\cap B(p,\delta_{p_i,c})$ with respect to some 
$f_{\epsilon}$-gradient flow $\psi_{p_i}$ on 
$X-\partial C-C_{\epsilon}$.
By an argument similar to that of Lemma \ref{lem:loc-n-bdle},
it suffices to construct to an 
$f_{\epsilon}$-pseudo-gradient normal bundle 
over $K^1$ of height $\mu\epsilon$.

Let us suppose that we have already constructed an 
$f_{\epsilon}$-pseudo-gradient normal bundle $U$ over 
$L\subset K^1$ of height $\mu\epsilon$ such that 
$U$ restricted to a small neighborhood of each vertex of
$L$ is defined by the restriction of the bundle 
$U_{p_i}$ for some $p_i$.
Let $\sigma\in K^1-L$. We shall extend $U$ to an
$f_{\epsilon}$-pseudo-gradient normal bundle $V$ over 
$L\cup\sigma$ of height $\mu\epsilon$.
Suppose $\sigma\subset B(p_1,\delta_1)\cap B(p_2,\delta_2)$.
Let $x_0$ and $y_0$ be the endpoints of $\sigma$, and take
$x_1, y_1, y_2\in\interior\sigma$ with
$x_0<x_1<y_1<y_2<y_0$. Suppose 
$\sigma\cap L$ is nonempty and consists of a point, say $x_0$.
The other cases are similar and hence omitted.
We may assume that $U=U_{p_1}$ on a neighborhood of $x_0$ in $L$.
Let $V_1$ and $V_2$ be the $f_{\epsilon}$-pseudo-gradient normal bundles
over $\sigma|_{[x_0,x_1]}$  and $\sigma|_{[y_1,y_0]}$ 
of height $\mu\epsilon$ determined by 
the restriction of the bundles $U_{p_1}$ and $U_{p_2}$ respectively. 

We are going to glue $V_1$ and $V_2$.
Let $W_1\supset V_1$ and $W_2\supset V_2$ be 
$f_{\epsilon}$-pseudo-gradient normal bundles
over normal (tubular) neighborhoods of $\sigma|_{[x_0,x_1]}$  
and $\sigma|_{[y_1,y_0]}$ of height $\mu\epsilon$ determined by 
$U_{p_1}$ and $U_{p_2}$ respectively. 
Take an $f_{\epsilon}$-gradient flow $\psi_{p_1p_2}$ on 
$X-\partial C-C_{\epsilon}$ such that 
it coincides with $\psi_{p_j}$ on $W_j$, $j=1,2$.
Let $\tilde V_1$ be the $f_{\epsilon}$-pseudo-gradient normal bundle
over $\sigma|_{[x_0,y_1]}$ of height $\mu\epsilon$ determined by 
$U_{p_1}$ and extending $V_1$.
Let $\tilde W_1\supset W_1$ be the $f_{\epsilon}$-pseudo-gradient 
normal bundle over a normal (tubular) neighborhood of 
$\sigma|_{[x_0,y_1]}$ in 
$\partial C$ determined by the restriction of 
$U_{p_1}$. Let $H:=\tilde W_1\cap \{ f_{\epsilon}=c\epsilon\}$.
By definition,
\[   
      \tilde W_1 \simeq H\times I,
\]
via a homeomorphism induced from $(U_{p_1}, \psi_{p_1})$,
where $I:=[\epsilon/2,c\epsilon]$.
Consider the family 
${\cal S}:=\{ H\times t\,|\, t\in I\}$.
Let $H'\subset H$ and $I'\subset I$
be such that $H'\times I'$ provides a small neighborhood of 
$\partial C\cap \tilde W_1$.
Note that any $f_{\epsilon}$-pseudo-gradient flow $\psi$
defines an embedding $f_{\psi}:H\times I\to H_0\times I$
preserving ${\cal S}$ on $H\times I-H'\times I'$,
where $H_0$ is an open set of $\{ f_{\epsilon}=c\epsilon\}$
containing the closure of $H$.
By using the topological Morse theory in 
\cite{Si:stratified}, one can slightly deform $\psi$ (on 
$H'\times I'$) to an
$f_{\epsilon}$-pseudo-gradient flow $\psi'$
such that $f_{\psi'}$ preserves ${\cal S}$ on $H\times I$.
Consider the $J_0$-subbundles in $\{ f_{\epsilon}=c\epsilon\}$, 
denoted $J_0(\tilde V_1)$ and $J_0(V_2)$, of $\tilde V_1$ and $V_2$ 
respectively. Take a rectangle $R$ in $\{ f_{\epsilon}=c\epsilon\}$
by which  $J_0(\tilde V_1)$ and $J_0(V_2)$ are connected in such a 
way that 
\begin{enumerate}
 \item $R\cap J_0(\tilde V_1)=\pi_{\tilde V_1}^{-1}(y_1)
            \cap J_0(\tilde V_1);$
 \item  $R\cap J_0(V_2)=\pi_{V_2}^{-1}(y_2)
            \cap J_0(V_2);$
 \item $\hat H:=J_0(\tilde V_1)\cup R\cup J_0(V_2)\simeq I^2$,
\end{enumerate}
where $\pi_{\tilde V_1}$ and $\pi_{V_2}$ denote the bundle 
projections of $\tilde V_1$ and $V_2$ respectively.
The union $\hat V$ of all the flow curves of $\psi_{p_1p_2}'$, 
a deformation of $\psi_{p_1p_2}$ as above, through $\hat H$ 
in $\{ \epsilon/2\le f_{\epsilon}\le c\epsilon\}$ provides 
a gluing of $V_1$ and $V_2$.
Taking a smaller $\mu>0$ if necessary, we finally obtain the required
$f_{\epsilon}$-pseudo-gradient normal bundle $V\subset \hat V$
over $L\cup\sigma$ of height $\mu\epsilon$.
This completes the proof of Theorem \ref{thm:norm-nbd}.
\end{proof}

\begin{thm} \label{thm:soul-C3}
Theorem $\ref{thm:soul}$ holds in the case of $\dim C=3$.
\end{thm}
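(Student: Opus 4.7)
The plan is to reduce Theorem \ref{thm:soul-C3} via Theorem \ref{thm:codim1-C} to an analysis of the normal-ray set ${\cal N}(\interior C)$, and then combine this with the three-dimensional soul theorem applied to $C$ itself. Since $\partial C\neq\emptyset$ (otherwise the Cheeger--Gromoll construction would already have stopped with $C = S$), we have $\dim S<3$, so the subcases to treat are $\dim S\in\{0,1,2\}$. By Theorem \ref{thm:codim1-C}, $X$ is homeomorphic to ${\cal N}(\interior C)$, so the problem reduces to showing that ${\cal N}(\interior C)$ is homeomorphic to the interior of a disk bundle of rank $4-\dim S$ over $S$, and that this disk bundle is realized by the closed metric neighborhood $B(S,\epsilon)$ for small $\epsilon>0$.

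First I would establish that ${\cal N}(\interior C)\to\interior C$ carries the structure of a locally trivial rank-one $\R$-bundle. By Proposition \ref{prop:codim1-reg}, every interior point $p$ of $C$ is a two-normal point, and $\Sigma_p(X)$ is the spherical suspension over $\Sigma_p(C)$; this uses that topological regularity of $X^4$ implies topological niceness in dimension four. Local triviality near $\interior C$ then follows by patching these local product decompositions, in the spirit of the normal $D^2$-bundle construction in the proof of Theorem \ref{thm:S2}. Second, and in parallel, $C$ is a compact nonnegatively curved $3$-dimensional Alexandrov space with boundary whose soul equals $S$; by the compact version of the three-dimensional soul theorem of \cite{SY:3mfd}, $C$ is homeomorphic to the normal disk bundle $\nu_C(S)$ of $S$ in $C$, of rank $3-\dim S$, and comes equipped with a strong deformation retraction $r\colon C\to S$.

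The final step combines the two bundle structures. Because $\interior C$ deformation retracts to $S$ via $r$, the rank-one $\R$-bundle ${\cal N}(\interior C)\to\interior C$ is isomorphic to $r^\ast L_0$ for a unique line bundle $L_0\to S$ (classified by $H^1(S;\Z/2)$). Hence ${\cal N}(\interior C)$ is homeomorphic to the interior of the Whitney-sum-like bundle $\nu_C(S)\oplus L_0\to S$, which is a rank-$(4-\dim S)$ disk bundle $\nu_X(S)$ over $S$. Together with Theorem \ref{thm:codim1-C} this gives $X\simeq\interior\nu_X(S)$. To promote this to $B(S,\epsilon)\simeq\nu_X(S)$ for small $\epsilon>0$, I would run a pseudo-gradient flow argument analogous to that of Theorem \ref{thm:norm-nbd}: combine the flow for $d_{\partial C}$ within $C$ (which pushes into a neighborhood of $S$) with the gradient flow of $d_C$ transverse to $C$, matched on overlaps via the two-normal local product structure from Proposition \ref{prop:codim1-reg} and the splitting of tangent cones from Proposition \ref{prop:product}.

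The main obstacle I anticipate is the careful globalization of the local rank-one normal bundle structure and its identification as the pullback of a line bundle from $S$. Matching the two choices of normal direction coherently across all of $\interior C$ requires a patching of local pseudo-gradient flows of the type used in the proofs of Lemma \ref{lem:loc-n-bdle} and Theorem \ref{thm:norm-nbd}, together with controlled Schoenflies-type extensions to pass from local to global trivializations; in particular, one must show that no obstruction arises from the stratification of $\partial C$ when extending the rank-one bundle over $\interior C$ to a genuine normal disk bundle of $S$ in $X$. The key technical input making this work is Proposition \ref{prop:product}, which ensures that at points of $S$ the tangent cone splits as the product of $K_p(S)$ with a Euclidean cone of dimension $4-\dim S$, so the fiber type of $\nu_X(S)$ is what one expects and the gluing with the three-dimensional disk-bundle structure of $C$ is compatible.
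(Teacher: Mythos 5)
Your proposal is correct and follows essentially the same route as the paper: reduce to Theorem \ref{thm:codim1-C} (giving $X\simeq{\cal N}(\interior C)$) and the fact that $C$ itself is a disk bundle over $S$ (the paper's Proposition right after Theorem \ref{thm:soul-C3}, proved via Theorem \ref{thm:collar} and the three-dimensional methods of \cite{SY:3mfd}). What you have done beyond the paper's terse ``this immediately follows'' is to spell out the bundle-theoretic globalization — identifying the line bundle ${\cal N}(\interior C)\to\interior C$ as a pullback $r^*L_0$ of a line bundle on $S$ and forming the Whitney sum $\nu_C(S)\oplus L_0$ — and to supply a pseudo-gradient flow argument to pass from the open identification $X\simeq\interior\nu_X(S)$ to the metric-ball statement $B(S,\epsilon)\simeq\nu_X(S)$; both of these details are genuinely needed and consistent with the methods used elsewhere in Sections \ref{sec:soulI} and \ref{sec:soulII}.
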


\begin{proof}
This immediately follows from Theorem \ref{thm:codim1-C}
and the following proposition.
\end{proof}

\begin{prop}
Let $C$ be a $3$-dimensional compact Alexandrov space with 
nonnegative curvature and with
boundary. If $C$ is a topological manifold,
then $C$ is homeomorphic to the normal bundle of the soul of $C$.
\end{prop}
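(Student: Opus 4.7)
The plan is to adapt the four-dimensional arguments of Sections \ref{sec:soulI}--\ref{sec:soulII} to the present compact, three-dimensional, boundaried setting, splitting the analysis according to $k:=\dim S\in\{0,1,2\}$. Because $C$ is a topological $3$-manifold, each $\Sigma_p$ is homeomorphic to $S^2$ (for $p\in\interior C$) or to $D^2$ (for $p\in\partial C$); hence $C$ is topologically nice, and the local regularity tools of Section \ref{sec:local}---in particular Propositions \ref{prop:concave-rigid}, \ref{prop:codim1-reg} and \ref{prop:product} together with Lemmas \ref{lem:corner} and \ref{lem:loc-regular} (and their evident three-dimensional analogues)---are all available. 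The soul arises from the Cheeger--Gromoll filtration $C\supset C(0)\supset\cdots\supset C(k)=S$, with $C(0)=\{d_{\partial C}=\max d_{\partial C}\}$.

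If $k=2$, then $S=C(0)$ is a closed nonnegatively curved Alexandrov surface, hence one of $S^2$, $P^2$, $T^2$ or $K^2$. Proposition \ref{prop:concave-rigid}, applied to the concave function $d_{\partial C}$, yields for each $p\in S$ exactly two antipodal unit directions in $\Sigma_p(C)$ normal to $\Sigma_p(S)$; together with Proposition \ref{prop:codim1-reg} these assemble into a normal $I$-bundle $\cal N(S)\to S$, and the flat rectangles produced by Proposition \ref{prop:concave-rigid} show that the endpoint map $\cal N(S)\to C$ is a homeomorphism. If $k=1$, then $S$ is a circle; the splitting theorem applied to the universal cover $\tilde C$ gives $\tilde C\simeq\R\times N$ with $N$ a simply connected compact nonnegatively curved Alexandrov surface with nonempty boundary, so $N\simeq D^2$, and the $\Z$ action preserves the splitting, presenting $C$ as the $D^2$-bundle over $S^1$ prescribed by the soul.

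The remaining case $k=0$ subdivides according to $\dim C(0)$. If $\dim C(0)=0$, then $C(0)=S$ is a point and standard Morse theory for $d_S$ gives $C\simeq B(S,\epsilon)\simeq D^3$. If $\dim C(0)=1$, I invoke the three-dimensional instance of Theorem \ref{thm:soul2}. The essential remaining subcase is $\dim C(0)=2$, where $C(0)$ is a totally convex nonnegatively curved compact surface whose boundary is the circle $\partial C(0)\simeq S^1$ and whose interior contains $S$. Here I follow the strategy of Theorems \ref{thm:soul1} and \ref{thm:norm-nbd}, now with $D^2$-fibers in place of the $D^3$-fibers used in Section \ref{sec:soulII}: using the three-dimensional analogue of Lemma \ref{lem:loc-regular} I build, for sufficiently small $\epsilon$, a local $f_\epsilon$-pseudo-gradient normal $D^2$-bundle around each $p\in\partial C(0)$, where $f_\epsilon=d(C(0)_\epsilon,\,\cdot\,)$; the local pieces are patched across $\partial C(0)\simeq S^1$ by the $1$-dimensional gluing technique of Section 12 of \cite{SY:3mfd}, producing a global pseudo-gradient normal bundle of some positive height $\mu\epsilon$. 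The associated pseudo-gradient flow identifies $C$ with $\{f_\epsilon<\epsilon/2\}$, and an endgame mirroring the conclusion of Theorem \ref{thm:soul1}---decomposing the level set $\{f_\epsilon=\nu\}$ for $\nu\ll\epsilon$ as $D^2\times S^1\cup S^1\times D^2\simeq S^3$---yields $C\simeq D^3$.

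The hard part will be precisely this last subcase: constructing and coherently patching the pseudo-gradient normal bundle along $\partial C(0)$. However, because the base circle is one-dimensional and the fibers are only $D^2$, the gluing reduces to the $S^1$-patching carried out in Section 12 of \cite{SY:3mfd}, so the inductive scheme of Theorem \ref{thm:norm-nbd} transports with only notational modifications; the other cases and the identification of the global normal bundle topology are direct lower-dimensional simplifications of the four-dimensional arguments already in Part \ref{part:alex}.
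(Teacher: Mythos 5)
Your proof follows the pseudo-gradient-flow casework (splitting by $\dim S$ and then by $\dim C(0)$) that the paper uses for the four-dimensional Generalized Soul Theorem, and several individual steps are sound; but you omit a crucial ingredient that the paper's proof uses, namely the metric collar theorem (Theorem~\ref{thm:collar}). The paper's proof is essentially ``$C\simeq C_\epsilon$ by Theorem~\ref{thm:collar}, then apply the method of \cite{SY:3mfd} to $C_\epsilon$.'' The point is that $C$ is compact with nonempty boundary, whereas the pseudo-gradient flow machinery of \cite{SY:3mfd} and of Sections~\ref{sec:soulI}--\ref{sec:soulII} is set up for complete \emph{open} (noncompact, boundaryless) spaces. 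When, in your $\dim S=0$ cases, you conclude $C\simeq\{f_\epsilon<\epsilon/2\}$ from the pseudo-gradient flow of $f_\epsilon=d(C(0)_\epsilon,\cdot)$, or assert $C\simeq B(S,\epsilon)$ by ``standard Morse theory for $d_S$,'' you need that flow to be well-behaved all the way out to $\partial C$; yet $f_\epsilon$ (resp.\ $d_S$) attains its maximum on $\partial C$, so $\partial C$ may contain critical points, and nothing you cite controls the metric neighbourhood $\{d_{\partial C}<\nu\}$. Brown's topological collar theorem gives \emph{some} collar, not the \emph{metric} collar $C-C_\epsilon$ that your flow argument tacitly requires. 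Theorem~\ref{thm:collar} is exactly what supplies that: it lets one replace $C$ by $C_\epsilon$, whose boundary $\{d_{\partial C}=\epsilon\}$ is a \emph{regular} level set of the concave function $d_{\partial C}$, so the boundary creates no obstruction to the flow.

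There is also a dimensional slip in your endgame for the $\dim C(0)=2$, $\dim S=0$ subcase: the level set $\{f_\epsilon=\nu\}$ is a $2$-dimensional hypersurface in the $3$-dimensional space $C$, so it cannot decompose as $D^2\times S^1\cup S^1\times D^2\simeq S^3$. The correct analogue of the Heegaard-style splitting used in Theorem~\ref{thm:soul1} is rather $D^2\cup(S^1\times I)\cup D^2\simeq S^2$, after which one identifies the contractible compact $3$-manifold $\{f_\epsilon\le\nu\}$ with $S^2$ boundary as $D^3$. This is fixable, but as written it is an error, and it signals that the passage from the $4$-dimensional argument to the present $3$-dimensional one has not been carried out carefully.
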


\begin{proof}
Take a small $\epsilon>0$ with $C_{\epsilon}\simeq C$
(Theorem \ref{thm:collar}). Then one can apply the method of 
\cite{SY:3mfd} to obtain that $C_{\epsilon}$
is homeomorphic to the normal bundle of the soul of $C$.
\end{proof}

%                     \input{alexwbdy}

% alexwbdy.tex
\section{The classification of nonnegatively curved
Alexandrov three-spaces with boundary} \label{sec:alexwbdy}

In this section, we assume $X^n$ to be 
an $n$-dimensional  complete  nonnegaitvely curved Alexandov space 
with boundary, and give a classification of such a space in 
dimension three.

\begin{prop} \label{prop:codim1}
Suppose that $X^n$ has nonempty boundary.
If $\dim S = n-1$ and if $X^n$ is a topological manifold, 
then $X^n$ is isometric to 
either $S\times [0,\infty)$  or an $I$-bundle over $S$
for some closed interval $I$.
\end{prop}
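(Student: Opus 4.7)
My plan is to identify $S$ as $C(0)$ itself, use the rigidity of $d_{\partial X}$ to construct a locally isometric map from the normal line (or half-line) bundle of $S$ into $X$, and then upgrade this local map to a global isometry by a covering-space argument.

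\emph{Step 1 (the soul sits one step down).} Since the dimensions in $C(0)\supset C(1)\supset\cdots\supset C(k)=S$ strictly decrease and $\dim S=n-1$, we must have $k=0$ and $S=C(0)$. Thus $S\subset\interior X$, the function $d_{\partial X}$ is identically equal to some constant $a>0$ on $S$, and $S$ is an $(n-1)$-dimensional compact totally convex subset of $X$ with no boundary as an Alexandrov space. The convex function $\rho:=a-d_{\partial X}$ satisfies the set-up preceding Proposition \ref{prop:codim1-reg} with $C_*=S$.

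\emph{Step 2 (normal directions come in pairs).} I would apply Proposition \ref{prop:codim1-reg} to the filtration $X^t_*=\{\rho\le t\}$ at each $p\in\interior S$. The topological-manifold assumption on $X^n$ supplies the topological niceness needed to invoke this proposition (this is automatic in the low-dimensional range $n\le 4$, which is the case we will actually use). The conclusion is that $\Sigma_p(X)$ is isometric to the spherical suspension over $\Sigma_p(S)$ and that $p$ is a two-normal point. Combined with Proposition \ref{prop:concave-rigid}, any two perpendicular geodesic segments from points $p,q\in\interior S$ to $\partial X$, together with a geodesic $pq$ in $S$, bound a flat totally geodesic rectangle. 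Hence the two normal line fields along $\interior S$ are globally coherent and integrable to give a locally isometric map $\Phi:\interior S\times [-a,a]\to X$ which sends $(q,t)$ to the point at distance $|t|$ from $q$ along the appropriate normal geodesic.

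\emph{Step 3 (extension and globalization).} By continuity and the fact that minimal geodesics from $S$ to $\partial X$ have common length $a$, the map $\Phi$ extends to $\Phi:S\times[-a,a]\to X$ with $\Phi(S\times\{\pm a\})\subset\partial X$. The flat-rectangle rigidity shows $\Phi$ is a local isometry away from $\partial X$ and a local homeomorphism everywhere, and its image is closed; since $X$ is connected, $\Phi$ is surjective and in fact a covering projection from $S\times[-a,a]$ onto $X$ whose nontrivial deck symmetry, if any, is a $\Z_2$-action acting as $(q,t)\mapsto(\sigma(q),-t)$ for some free isometric involution $\sigma$ of $S$. Accordingly $X$ is isometric to the trivial $I$-bundle $S\times[-a,a]$ when the two normal line fields are globally distinguishable, and to the twisted $I$-bundle $(S\times[-a,a])/\Z_2$ otherwise; both cases are $I$-bundles over $S$ in the sense of the statement, with $I=[-a,a]$ or $I=[0,a]$ respectively.

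\emph{Step 4 (the noncompact case).} If $X$ is noncompact, then $d_{\partial X}$ is unbounded (otherwise its sublevel sets would be compact and the same construction of $C(0)$ would apply, making $X$ compact). The same rigidity argument, now with one-sided rays of infinite length replacing the segments of length $a$, produces an isometry $S\times[0,\infty)\to X$, which is automatically a homeomorphism because each point of $S$ admits a unique normal ray of infinite length.

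The main obstacle is \emph{globalizing} Step 2: the local flat-rectangle data must be shown to extend coherently across $S$ to yield a well-defined covering map, and the two candidate normal directions at each point of $\interior S$ must be propagated consistently. This is handled by combining the rigidity of Proposition \ref{prop:concave-rigid} (which controls the local isometric product structure) with the compactness of $S$ (which supplies a uniform scale for the local trivializations) and the topological-manifold hypothesis (which rules out one-normal interior points via Proposition \ref{prop:codim1-reg}). A secondary subtlety is the correct bookkeeping at $\partial X$, where the two normal geodesics from a given $q\in S$ may or may not have distinct endpoints; this determines whether $\partial X$ has one or two connected components and hence which of the three topological types ($S\times[0,\infty)$, trivial $I$-bundle, or twisted $I$-bundle) actually occurs.
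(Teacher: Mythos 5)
Your overall strategy follows the paper's: both invoke Proposition~\ref{prop:codim1-reg} to identify two-normal directions at each $p\in S$, use Proposition~\ref{prop:concave-rigid} for the flat-rectangle rigidity that makes the normal lengths $\ell_{\pm}$ constant along $S$, and then conclude by distinguishing whether the normal line bundle $N(S)$ is trivial, together with whether $X$ is compact. The paper's own proof is terse and essentially just records these observations.

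However, there is a genuine gap in your Step~3. You define $\Phi:\interior S\times[-a,a]\to X$ by $\Phi(q,0)=q$ and then assert it is a covering projection admitting a nontrivial $\Z_2$ deck symmetry $(q,t)\mapsto(\sigma(q),-t)$ for a free involution $\sigma$ of $S$. But restricting the deck relation to $t=0$ gives $\Phi(q,0)=\Phi(\sigma(q),0)$, i.e.\ $q=\sigma(q)$ for all $q\in S$, contradicting freeness of $\sigma$. More fundamentally: since a normal geodesic started at $q\in\interior S$ travels monotonically outward in $d_{\partial X}$, it never returns to $S$ at an interior time, so $\Phi^{-1}(q)=\{(q,0)\}$ for every $q\in S$; a covering map with a singleton fibre is a homeomorphism. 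So whenever $\Phi$ is \emph{globally} well-defined, you only obtain the product $S\times I$ (or $S\times[0,\infty)$). The twisted $I$-bundle case arises precisely because $\Phi$ \emph{cannot} be globally defined: choosing one of the two normal directions at each $q$ is a section of the $S^0$-bundle of $N(S)$, and when $N(S)$ is nontrivial no such global choice exists. The correct bookkeeping is to pass to the orientation double cover $\tilde S\to S$ (equivalently $\partial X$ in the compact twisted case), construct $\tilde\Phi:\tilde S\times[-a,a]\to X$, and observe that $\tilde\Phi$ is a $\Z_2$-quotient map with the deck action $(\tilde q,t)\mapsto(\tau(\tilde q),-t)$, $\tau$ the deck involution of $\tilde S\to S$ --- so the soul of the quotient is $\tilde S/\tau=S$ as required. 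Your version would instead produce a soul $S/\sigma\not\cong S$. The paper avoids the issue by concluding directly from the triviality/nontriviality of $N(S)$ without the auxiliary covering map from $S\times I$.

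A smaller point: your Step~1 statement that $S\subset\interior X$ with $d_{\partial X}\equiv a>0$ on $S$ is only the compact picture. In the noncompact case the Cheeger--Gromoll construction uses a Busemann function, and the soul lies in $\partial X$ (cf.\ Theorem~\ref{thm:cptcomp}(2)); your Step~4 should say so explicitly rather than implicitly reusing the Step~1 set-up.
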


\begin{proof}
By Proposition \ref{prop:codim1-reg}, for any point $p\in S$,
\begin{enumerate}
 \item $\Sigma_p(X)$ is isometric to the spherical suspension 
       over $\Sigma_p(S);$
 \item for the directions $\xi_{\pm}\in\Sigma_p(X)$ perpendicular
       to $\Sigma_p(S)$ satisfying $\angle(\xi_+, \xi_-)=\pi$, there exist
       maximal geodesics 
       $\gamma_{\pm}:[0,\ell_{\pm})\to\interior X$ with 
       $\dot\gamma_{\pm}(0)=\xi_{\pm}$.
\end{enumerate}
Note that $\ell_{\pm}$ does not depend on the particular choice 
of $p\in S$,
and that $\gamma_{\pm}(\ell_{\pm})$ (if $\ell_{\pm}<\infty$)
belongs to $\partial X$.
Proposition \ref{prop:codim1-reg} then implies that 
if the normal bundle $N(S)$ is nontrivial, then 
$X^n$ is isometric to a twisted product of $S$ and $I$
for some closed interval $I$, 
and that if $N(S)$ is trivial, then 
$X^n$ is isometric to either $S\times I$ or $S\times [0,\infty)$.
\end{proof}

\begin{prop} \label{prop:dim1}
Suppose that $X^n$ has nonempty boundary.
If $\dim S=1$, then $X^n$ is isometric to a quotient 
$(\R\times X_0^{n-1})/\Lambda$, where $\Lambda\simeq\Z$ and 
$X_0^{n-1}$ is a complete, contractible Alexandrov space with 
nonnegative curvature and with boundary.
Topologically, $X^{n}$ is a $X_0^{n-1}$-bundle over $S^1$.
\end{prop}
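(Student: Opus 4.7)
The plan is to lift to the universal cover, split off the Euclidean de Rham factor using a line coming from $S$, and then read off the bundle structure from the deck group action.

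Since $S$ is a $1$-dimensional compact Alexandrov space with curvature $\ge 0$ and empty boundary, it is isometric to a round circle $S^1_\ell$, realized as a closed geodesic in $\interior X$ (souls lie in the interior by construction). Its lift to the universal cover $\pi:\tilde X\to X$ is a $\pi_1(S)$-invariant geodesic line, and iterated application of the splitting theorem for complete nonnegatively curved Alexandrov spaces (applied to the double of $\tilde X$ if necessary to handle $\partial\tilde X$, then projecting back) yields an isometric decomposition $\tilde X=\R^k\times X_0$ with $X_0$ containing no line. Since lines never meet the boundary and the splitting extends to $\partial\tilde X=\R^k\times\partial X_0\ne\emptyset$, the factor $X_0$ is a complete nonnegatively curved Alexandrov space with nonempty boundary.

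By Perelman's soul theorem \cite{Pr:alex2}, $X\simeq S$ homotopically, so $\Lambda:=\pi_1(X)\cong\pi_1(S)\cong\Z$ acts freely isometrically on $\tilde X$ with quotient $X$. The no-line condition on $X_0$ makes the Euclidean de Rham factor $\R^k$ canonical, so $\Lambda$ preserves the splitting. A soul $\tilde T$ of $\tilde X$ decomposes as $\{v_0\}\times T_0$ where $T_0$ is a soul of $X_0$ (Busemann functions on a product split as sums of Busemann functions on the factors), and the image of $(\R^k\times T_0)/\Lambda$ in $X$ is a compact totally convex subset of dimension $k+\dim T_0$, using that $\Lambda$ acts freely and cocompactly on the affine $\R^k$-orbit through $v_0$. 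Since this dimension is bounded above by $\dim S=1$, we obtain $k=1$ and $\dim T_0=0$, so $T_0=\{x_0\}$ is a single point.

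Writing the generator $\gamma$ of $\Lambda$ as $\gamma(t,x)=(t+a,\phi(x))$ with $a\ne 0$ and $\phi\in\isom(X_0)$ fixing $x_0$ (the linear part on $\R$ is forced to be the identity by freeness of the $\Z$-action), the soul theorem applied to $X_0$ yields a deformation retraction $X_0\to\{x_0\}$, hence $X_0$ is contractible. Finally, the isometric quotient $X=(\R\times X_0)/\langle\gamma\rangle$ is by construction the mapping torus of $\phi$, hence topologically an $X_0$-bundle over $S^1=\R/a\Z$. The main technical obstacle is the dimension-counting step producing $k=1$ and $\dim T_0=0$: one must check that the Cheeger--Gromoll soul construction is compatible with the product splitting of $\tilde X$ (so a soul of a product is a product of souls), and that the projection $(\R^k\times T_0)/\Lambda\to X$ lands injectively on a totally convex subset of the asserted dimension.
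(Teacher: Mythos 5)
Your overall strategy (lift to $\tilde X$, split, read off $\Lambda\cong\Z$) matches the paper's one-line indication, but the dimension-counting step is incorrect, and the claim it is meant to establish, namely $k=1$, is in fact false.

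\medskip
\noindent\textbf{Why $k=1$ fails.}
Take $X=S^1\times\R\times D^{n-2}$ where $D^{n-2}$ is a flat Euclidean disk. The Cheeger--Gromoll construction gives $C(0)=S^1\times\{0\}\times D^{n-2}$ and then $S=C(1)=S^1\times\{0\}\times\{0\}$, so $\dim S=1$. But $\tilde X=\R^2\times D^{n-2}$, and since $D^{n-2}$ contains no line, the maximal Euclidean de~Rham factor is $\R^2$, i.e.\ $k=2$. Your argument goes wrong precisely where you assert that the image of $(\R^k\times T_0)/\Lambda$ in $X$ is a \emph{compact} totally convex subset: for $k\ge 2$ a group $\Lambda\simeq\Z$ cannot act cocompactly on $\R^k$ (it acts cocompactly only on the line $\ell=\pi^{-1}(S)\subset\R^k\times\{v_0\}$, not on all of $\R^k\times\{v_0\}$), so $(\R^k\times T_0)/\Lambda$ is noncompact and there is no contradiction with $\dim S=1$. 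Even for a compact totally convex subset, there is no general bound ``$\dim\le\dim S$'': the soul is the \emph{last} term $C(k)$ of the Cheeger--Gromoll filtration, and the earlier totally convex sets $C(i)$ (or $X$ itself when compact) typically have larger dimension.

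\medskip
\noindent\textbf{How to repair the argument.}
The proposition does not require the second factor $X_0^{n-1}$ to be lineless, only contractible with boundary; so you should \emph{not} insist on the maximal de~Rham splitting for the final decomposition. Two equivalent fixes: (i) Apply the splitting theorem directly to the $\Lambda$-invariant line $\ell=\pi^{-1}(S)$. The Busemann functions of $\ell$ are $\Lambda$-invariant up to additive constants, so the resulting product decomposition $\tilde X=\R\times X_0$ (with the $\R$-factor the direction of $\ell$) is $\Lambda$-invariant, and $\gamma$ is forced to act as $(t,y)\mapsto(t+a,\psi(y))$ by freeness. (ii) Keep your de~Rham decomposition $\tilde X=\R^k\times X_0'$, note that $\gamma$ preserves the direction $w$ of $\ell$ inside $\R^k$, and regroup $\R^k\times X_0'=\R w\times(w^\perp\times X_0')$; set $X_0:=w^\perp\times X_0'\cong\R^{k-1}\times X_0'$. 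In either case $X_0$ has nonempty boundary ($\partial\tilde X\ne\emptyset$), and $X_0$ is contractible because $\tilde X$ is: $X$ deformation retracts to $S=S^1$ by the soul theorem, the retraction lifts, and $\tilde X$ deformation retracts to $\ell\cong\R$, so the factor $X_0$ is simply connected and weakly contractible, and its soul is therefore a point. This recovers $\dim T_0=0$ correctly, via contractibility of $\tilde X$ rather than via the flawed dimension count.

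\medskip
\noindent\textbf{A minor inaccuracy.} The parenthetical ``souls lie in the interior by construction'' is not true: for $X=[0,\infty)\times S^1$ the soul is $\{0\}\times S^1\subset\partial X$. This does not affect the argument, since $\pi^{-1}(S)$ is still a totally convex line.
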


\begin{proof}
Just apply the splitting theorem to the universal cover 
of $X^n$.
\end{proof}

\begin{thm} \label{thm:cptcomp}
For a complete nonnegatively curved Alexandrov space 
$X^n$, we have the following splitting$:$
\begin{enumerate}
 \item If $\partial X^n$  is disconnected,
       then $X^n$ is isometric to a product $X_0\times I$,
       where $X_0$ is a component of $\partial X;$
 \item If $\partial X^n$  is compact and connected and if 
       $X^n$ is noncompact,then  $X^n$ is isometric to the
       product $\partial X^n\times [0,\infty)$.
\end{enumerate}
\end{thm}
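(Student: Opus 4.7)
Both assertions will be derived from two ingredients: the concavity of $f := d(\partial X, \,\cdot\,)$ on $X$ (proved by Perelman for Alexandrov spaces of nonnegative curvature with boundary), and the rigidity of Proposition \ref{prop:concave-rigid}, which upgrades constancy of $f$ along a geodesic to a flat totally geodesic rectangle bounded by minimal geodesics to $\partial X$. In each case the strategy is to produce a distinguished family of minimizing segments transverse to $\partial X$, verify that $f$ behaves rigidly on the appropriate transverse slice, and then use Proposition \ref{prop:concave-rigid} to assemble flat rectangles into the desired isometric product.

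For (1), I would pick, among all pairs of distinct components of $\partial X$, a pair $X_0, X_1$ realizing the minimum $L := d(X_0, X_1)$, together with $p_0 \in X_0$, $p_1 \in X_1$ attaining this minimum and a minimizing geodesic $\gamma : [0, L] \to X$ from $p_0$ to $p_1$. By minimality $\gamma$ is perpendicular to $\partial X$ at both endpoints; shortcutting arguments give $d(\gamma(t), X_0) = t$ and $d(\gamma(t), X_1) = L - t$, and the fact that $L$ is the minimum over \emph{all} pairs of components gives $d(\gamma(t), X_k) \geq \max(t, L-t)$ for any other component $X_k$, so $f(\gamma(t)) = \min(t, L-t)$ and in particular $f(\gamma(L/2)) = L/2$. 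After establishing $\max_X f = L/2$, the maximum set $C^* := \{f = L/2\}$ is a nonempty totally convex subset; on any geodesic $\sigma \subset C^*$ one has $f \circ \sigma \equiv L/2$, so Proposition \ref{prop:concave-rigid} provides, for each minimal descent $\gamma_0$ from an endpoint of $\sigma$ to $\partial X$, a parallel descent from the other endpoint bounding a flat totally geodesic rectangle. Iterating this local parallel transport, the foot-point maps $C^* \to X_j$ ($j = 0, 1$) become local and therefore global isometries, and the rectangle structure globalizes to an isometry $X \cong C^* \times [-L/2, L/2] \cong X_0 \times [0, L]$, which also forces $\partial X$ to have exactly two components.

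For (2), the compactness of $\partial X$ and noncompactness of $X$ produce, for each $p \in \partial X$, a perpendicular ray $\gamma_p$: realize $d(q_n, \partial X) \to \infty$ by perpendicular segments with footpoints $p_n$, extract a convergent subsequence $p_n \to p$, and take a limit ray, whose initial direction remains perpendicular by upper semicontinuity of angles. Concavity of $f \circ \gamma_p$ together with $(f \circ \gamma_p)(0) = 0$ and $(f \circ \gamma_p)'(0^+) = 1$, combined with the triangle bound $f(\gamma_p(t)) \leq t$, forces $f(\gamma_p(t)) = t$ for all $t \geq 0$. The key rigidity step is then to show that for any two such rays $\gamma_p, \gamma_{p'}$ and any $c > 0$, a minimizing segment in $X$ joining $\gamma_p(c)$ to $\gamma_{p'}(c)$ has $f$ constantly equal to $c$; granting this, Proposition \ref{prop:concave-rigid} delivers a flat rectangle spanned by $\gamma_p|_{[0,c]}$, the chord, and a corresponding segment to $\partial X$ at the other end, which matches up with $\gamma_{p'}|_{[0,c]}$. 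Assembled over all $p, p'$ and all $c$, this yields the isometry $\partial X \times [0, \infty) \to X$ given by $(p, t) \mapsto \gamma_p(t)$.

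The principal obstacle in both parts is the \emph{horizontal constancy step}: verifying that $f$ is constant on a geodesic joining two points at the same $f$-level. At a maximum level this is automatic from concavity, so in (1) the real content is reducing to this situation by proving $\max_X f = L/2$; this requires ruling out interior ``bumps'' of $f$ by extending perpendicular descents from a hypothetical bump point to boundary geodesics on both sides and contradicting either the choice of $L$ among all component pairs or the minimizing property of $\gamma$. In (2) there is no maximum level, and horizontal constancy must be extracted directly from the asymptotic geometry, playing the role of the Cheeger--Gromoll splitting theorem in the Alexandrov-with-boundary setting: the Busemann function $b_{\gamma_p}$ of each perpendicular ray is $1$-Lipschitz and concave with $b_{\gamma_p} \leq f$, and the expected equality $b_{\gamma_p} = f$ along the ray, together with a parallel-transport argument, should force the constancy of $f$ on horizontal chords needed to apply Proposition \ref{prop:concave-rigid}. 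Controlling this interaction between different perpendicular rays, and globally patching the local flat-rectangle structures into a single isometric product, is where the proof is most delicate.
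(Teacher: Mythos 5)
Your overall framework (concavity of $d(\partial X, \cdot)$ plus the rigidity of Proposition \ref{prop:concave-rigid}) is the right toolkit, and the paper does use exactly these ingredients. But there are two genuine gaps, one in each part, and your part (2) misses the paper's key simplification.

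In part (1) you propose to first prove $\max_X f = L/2$, where $L = d(X_0, X_1)$ is the minimal intercomponent distance, and then read off the product structure from the maximum set $\{f = L/2\}$. But establishing $\max_X f = L/2$ \emph{before} knowing the product structure is essentially as hard as the theorem itself; the sketch you give (``ruling out interior bumps of $f$ by extending perpendicular descents from a hypothetical bump point'') is not an argument. The paper circumvents this entirely: it never considers $\max f$ or the minimum over pairs. Instead, for a single pair of components $X_0, X_1$ it uses the two concave functions $f_i = d(X_i,\cdot)$ separately and sets $t_0 := \sup\{t : d(X_0^t, X_1^t) > 0\}$ for the sublevel sets $X_i^t := f_i^{-1}([0,t])$. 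The key step is then to show, for any $x$ in the intersection $C_{**} := f_0^{-1}(t_0)\cap f_1^{-1}(t_0)$, that the two footpoint directions make angle $\pi$ at $x$; this forces $C_* := \{f_0\ge t_0, f_1\ge t_0\}$ to equal $C_{**}$ and to have dimension exactly $n-1$, after which Proposition \ref{prop:concave-rigid} gives the rectangle structure. You also take for granted that $d(X_0, X_1) > 0$; the paper needs its Lemma \ref{lem:positive} (via a $1$-systole argument on the double) precisely because $X$ may be noncompact, so distinct boundary components are not automatically a positive distance apart.

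In part (2) the gap is larger and you essentially acknowledge it: the ``horizontal constancy step'' (showing $f$ is constant on a minimizing chord between $\gamma_p(c)$ and $\gamma_{p'}(c)$) is the entire substance of the assertion, and you describe it as where the proof ``should'' work and is ``most delicate'' without giving the argument. The paper sidesteps all of this asymptotic rigidity by a short reduction to part (1): take a ray $\gamma$ from $\partial X$, consider the Busemann function $b_\gamma$, and observe that for large $a$ the sublevel set $W := b_\gamma^{-1}((-\infty, a])$ is a compact nonnegatively curved Alexandrov space whose boundary has two components ($\partial X$ and $b_\gamma^{-1}(a)$). Part (1) applied to $W$ gives $W \cong \partial X \times [0,a]$, and letting $a\to\infty$ finishes the proof. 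Incidentally, your stated route to $f(\gamma_p(t)) = t$ via ``concavity plus derivative $1$ at $0$'' gives only the upper bound $f(\gamma_p(t)) \le t$ (which is already the triangle inequality); the equality actually comes from $\gamma_p$ being a limit of minimizing segments to $\partial X$, so that $d(\gamma_p(t),\partial X) = t$ is preserved in the limit.
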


In the Riemannian case, Theorem \ref{thm:cptcomp} was 
proved in \cite{BZ}. 
However it seems to the author
that the method used in \cite{BZ} cannot be directly 
applied for the proof of Theorem \ref{thm:cptcomp}(1).
We make use of
the notion of $1$-systole, instead. For a non-simply connected
space $Y$, let $\sys_1(Y)$ denote the $1$-systole of $Y$,
the infimum of the lengths of non-null homotopic loops
in $Y$.

\begin{prop}
Let $X^n$ be a noncompact, non-simply connected, complete
Alexandrov space with nonnegative curvature, and let
$S$ be a soul of $X^n$. Then 
$$
   \sys_1(X^n) =\sys_1(S).
$$
In particular $\sys_1(X^n)>0$.
\end{prop}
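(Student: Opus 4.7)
The plan is to prove two inequalities, both of which rely on the fact that the soul $S$ is a deformation retract of $X^n$, so that the inclusion $S \hookrightarrow X^n$ induces an isomorphism on fundamental groups (cf. \cite{Pr:alex2}). The easy direction $\sys_1(X^n) \le \sys_1(S)$ is immediate: every loop in $S$ that is non-null-homotopic in $S$ remains non-null-homotopic in $X^n$, so taking infima over such loops on each side gives the claimed inequality.

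For the reverse direction $\sys_1(S) \le \sys_1(X^n)$, I would invoke (a generalization of) Sharafutdinov's retraction for complete nonnegatively curved Alexandrov spaces: a distance-nonincreasing deformation retraction $P: X^n \to S$. It is constructed by composing the gradient-flow retractions (in Perelman's sense, for semiconcave functions on Alexandrov spaces) associated with the filtration $C(0) \supset C(1) \supset \cdots \supset C(k) = S$, using that on each $C(i)$ the distance function from $\partial C(i)$ is concave and that its gradient flow is $1$-Lipschitz between level sets. Given any non-null-homotopic loop $\gamma:S^1\to X^n$, the composite $P\circ\gamma$ is a loop in $S$ satisfying $L(P\circ\gamma)\le L(\gamma)$, and, since $P$ is a homotopy equivalence, $P\circ\gamma$ remains non-null-homotopic in $S$. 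Taking the infimum over such $\gamma$ yields $\sys_1(S)\le\sys_1(X^n)$, completing the equality.

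The final assertion $\sys_1(X^n)>0$ then reduces to $\sys_1(S)>0$: since $S$ is compact and, being homotopy equivalent to the non-simply-connected space $X^n$, itself non-simply-connected, any minimizing sequence of non-null-homotopic loops of uniformly bounded length subconverges, by Arzel\`a--Ascoli on a compact length space, to a closed geodesic loop whose non-triviality survives in the limit by semicontinuity of $\pi_1$-classes along uniformly convergent loops in $S$. The main obstacle is rigorously establishing the distance-nonincreasing property of the Sharafutdinov-type retraction in the singular Alexandrov setting; this is not purely formal, and requires Perelman's machinery of gradient curves for semiconcave functions together with the first variation formula and the concavity of $d(\partial C(i),\cdot)$ on each totally convex set in the filtration.
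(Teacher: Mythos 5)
Your proof takes essentially the same approach as the paper: push a non-contractible loop $\gamma$ onto the soul $S$ via the Sharafutdinov $1$-Lipschitz retraction and observe that the image remains non-contractible in $S$ because the retraction is a homotopy inverse to the inclusion $S\hookrightarrow X^n$. The one genuine gap is in how you set up the retraction. You define $P$ by composing the gradient flows of $d(\partial C(i),\cdot)$ along the filtration $C(0)\supset C(1)\supset\cdots\supset C(k)=S$, but this map is only defined on the compact set $C(0)$, while a non-contractible loop in $X^n$ need not lie in $C(0)$. So the construction you describe does not actually give a retraction $X^n\to S$; you still need an initial step pushing a loop in $X^n$ into $C(0)$, which uses the Busemann function $b$ rather than the boundary-distance functions of the $C(i)$. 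The paper sidesteps this cleanly: for each $\gamma$ it first chooses a large compact totally convex set $C\supset\gamma$ (a sublevel set of $b$) and then applies the distance-decreasing Sharafutdinov retraction $R:C\times[0,1]\to C$ onto $S$ constructed in \cite{Pr:alex2}, which already incorporates the Busemann step as part of the filtration $\{X^t\}$.

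Your argument for $\sys_1(X^n)\le\sys_1(S)$ is exactly right, and your Arzel\`a--Ascoli argument for $\sys_1(S)>0$ is correct, though a bit more than is needed: since $S$ is a compact, locally contractible, non-simply-connected length space, a sequence of non-null-homotopic loops of length tending to zero would eventually lie inside a contractible ball, which is already a contradiction.
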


\begin{proof}
The basic idea goes back to Sharafutdinov \cite{Shr:Pogorelov}.
For any non-null homotopic loop $\gamma$ in $X$,
take a large compact totally convex set $C$ such that 
\begin{enumerate}
 \item $C\supset \gamma;$
 \item there is a distance-decreasing retraction 
       of $C$ onto $S$ (the Sharafutdinov
       retraction  constructed in \cite{Pr:alex2}),
       $R: C\times [0,1]\to C$
       with $R(\,\cdot\,,0)=\mbox{identity}$, $R(\,C,1)=S$.
\end{enumerate}
Then obviously, $R_1(\gamma)$ is a non-null homotopic loop in $S$  
satisfying $L(R_1(\gamma))\le L(\gamma)$.
\end{proof}

\begin{lem} \label{lem:positive}
Let $X^n$ be a complete
nonnegatively curved Alexandrov space with disconnected boundary,
and $X_0$ and $X_1$ be any distinct components 
of $\partial X$. Then we have $d(X_0, X_1)>0$.
\end{lem}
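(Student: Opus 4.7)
The plan is to argue by contradiction. Suppose $d(X_0,X_1)=0$, and pick sequences $p_i\in X_0$, $q_i\in X_1$ with $L_i:=d(p_i,q_i)\to 0$. Since $X_0$ and $X_1$ are distinct connected components of the closed subset $\partial X\subset X$, they are closed and mutually disjoint as subsets of $X$.

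First I would dispose of the bounded case. If some subsequence of $\{p_i\}$ converges to a point $p\in X$, then the closedness of $X_0$ yields $p\in X_0$, while $d(p_i,q_i)\to 0$ forces the corresponding subsequence of $\{q_i\}$ to converge to the same $p$, so the closedness of $X_1$ yields $p\in X_1$ as well. This contradicts $X_0\cap X_1=\emptyset$ and already handles the case where $X$ is compact, and more generally whenever one of the sequences stays in a compact subset of $X$.

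It remains to exclude the possibility that both sequences escape every compact set. Here I would rescale and pass to a limit: consider the pointed nonnegatively curved Alexandrov spaces $(X,L_i^{-1}d_X,p_i)$, use local volume comparison (available in nonnegative curvature) to obtain pointed Gromov--Hausdorff precompactness, and extract a subsequential limit $(Y,y_0)$, again a nonnegatively curved Alexandrov space with boundary. By construction $y_0$ and $y_1:=\lim q_i$ both lie in $\partial Y$ with $d_Y(y_0,y_1)=1$. On a minimizing segment $\beta$ from $y_0$ to $y_1$ the concave function $d_{\partial Y}$ vanishes at the endpoints, and applying Proposition~\ref{prop:concave-rigid} to a maximum-height subsegment of $\beta$ together with the rigidity built into that proposition should force either $\beta\subset\partial Y$ or the existence of a flat totally geodesic rectangle having two opposite sides on $\partial Y$ joining $y_0$ to $y_1$. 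Either outcome produces a continuous curve in $\partial Y$ from $y_0$ to $y_1$; lifting this curve through the Gromov--Hausdorff approximations and using the topological collar of Proposition~\ref{prop:collar-general} to push the approximate lift onto $\partial X$ yields, for $i$ large, a path in $\partial X$ connecting $p_i$ to $q_i$, contradicting $p_i\in X_0$, $q_i\in X_1$.

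The main obstacle is this final lifting step: Gromov--Hausdorff convergence of the ambient spaces does not, of itself, carry a path in $\partial Y$ to a path in $\partial X$. To overcome this I would combine the topological collar of Proposition~\ref{prop:collar-general} with the concavity of $d_{\partial X}$ and the local compactness of small metric balls to manufacture a retraction from a tubular neighborhood of $\partial X$ onto $\partial X$ along which approximate lifts can be projected; the uniformity of the tube radius along the escaping sequence $\{p_i\}$, after rescaling, is precisely what the nonnegative curvature hypothesis is meant to secure, and verifying that this uniformity genuinely holds will be the delicate point of the argument.
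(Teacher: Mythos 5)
Your proposal takes a rescaling-and-contradiction route that is genuinely different from the paper's argument, and as written it leaves real gaps. The paper proceeds directly via the double: one first reduces to the noncompact case (if $X$ is compact, $X_0$ and $X_1$ are disjoint compact closed subsets and the conclusion is immediate), takes an arbitrary path $c$ from a point of $X_0$ to a point of $X_1$, forms the reflected loop $D(c)$ in $D(X)$, notes that $D(c)$ is not null-homotopic, and applies the preceding proposition $\operatorname{sys}_1(D(X))=\operatorname{sys}_1(S)>0$, giving $2L(c)=L(D(c))\ge\operatorname{sys}_1(S)$ and hence $d(X_0,X_1)\ge\operatorname{sys}_1(S)/2$. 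No blow-up limit and no lifting are needed, and the argument even gives an explicit quantitative lower bound.

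Two steps in your plan do not go through. First, the dichotomy you extract from Proposition~\ref{prop:concave-rigid} is not correct. On the minimizing segment $\beta$ from $y_0$ to $y_1$ the concave function $d_{\partial Y}$ vanishes at both ends; so either it vanishes identically on $\beta$, or its maximum $h>0$ is attained on a (possibly degenerate) subsegment $\beta_0$. When $\beta_0$ is a single point, Proposition~\ref{prop:concave-rigid} says nothing; when $\beta_0$ is nondegenerate, the flat rectangle it produces has its two vertical sides running from the endpoints of $\beta_0$ down to $\partial Y$, and those feet on $\partial Y$ are not $y_0$ and $y_1$ --- the two end-pieces of $\beta$ outside $\beta_0$ are left unaccounted for. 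So neither horn of your dichotomy yields a curve in $\partial Y$ from $y_0$ to $y_1$. Second, the lifting step you yourself flag as delicate is a genuine obstruction: even granting non-collapsing convergence and boundary stability under the rescaling, you still must show that a boundary path in the limit pulls back to a boundary path in the approximating space joining $p_i$ to $q_i$, and the collar of Proposition~\ref{prop:collar-general} is only topological and local, with no uniform tube radius along an escaping sequence. The paper's systole argument sidesteps both of these difficulties entirely.
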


\begin{proof} 
We may assume that $X^n$ is noncompact.
Let us consider the double $D(X)$.
Let $c$ be any path from a point of $X_0$ to a point of $X_1$, 
and $D(c)\subset D(X)$ the double of $c$.
Then $D(c)$ is non-null homotopic in $D(X)$ and therefore 
$L(D(c))\ge\sys_1(S)>0$ for a soul $S$ of $D(X)$.
This shows that $d(X_0, X_1)\ge\sys_1(S)/2$.
\end{proof}

\begin{proof}[Proof of Theorem \ref{thm:cptcomp}]
The essential part is (1).
Suppose that $\partial X$ is disconnected, and
let $X_0$  and $X_1$ be distinct components of $\partial X$,
and consider the functions $f_i=d(X_i,\, \cdot\,)$, which are
concave.
Put $X_i^t := f_i^{-1}([0,t])$.
Let $t_0$ denote the supremum of those $t$ with $d(X_0^t, X_1^t)>0$.
Lemma \ref{lem:positive} ensures $t_0>0$.
Consider the set 
$C_* := f_0^{-1}([t_0,\infty))\cap f_1^{-1}([t_0,\infty))$.
In what follows, we investigate the geometric properties of $C_*$.
Note that $C_*$ is a nonempty closed totally convex subset.
Note also that $C_{**}:=f_0^{-1}(t_0)\cap f_1^{-1}(t_0)$
is nonempty.

We claim that for any $x\in C_{**}$ and $x_i\in X_i$ with 
$d(x, x_i)=d(x, X_i)$, $i=1,2$, we have 
$\angle x_0 x x_1=\pi$.
For if $\angle x_0 x x_1 < \pi$, then $d(x_0, x_1)<2t_0$.
Letting $y$ be the midpoint of a minimal geodesic joining 
$x_0$ and $x_1$, we would have $y\in X_0^{t_1}\cap X_1^{t_1}$
for some $t_1< t_0$, a contradiction.

Next we show that $\dim C_*\le n-1$.
Let $x\in C_{**}$ and $x_i\in X_i$ satisfy
$d(x, x_i)=d(x, X_i)$, $i=1,2$.
If we set $\Sigma^{n-2}$ to be the set of $\xi\in \Sigma_x$ with 
$\angle((x_0)_x', \xi)=\angle((x_1)_x', \xi)=\pi/2$.
Then the above claim implies that $\Sigma_x$ is the 
spherical suspension over $\Sigma^{n-2}$.
By the first variation formula, for any $p\in C_*$ we have
\begin{equation}
     p_x'\in\Sigma^{n-2}, \label{eq:direC}
\end{equation}
 and hence $\Sigma_x(C_*)\subset\Sigma^{n-2}$
showing $\dim C_*\le n-1$.

We now show that $C_{*}=C_{**}$.
For any $x\in C_{**}$ and $p\in C_*$, let $\gamma:[0,1]\to X$ be a 
minimal geodesic
from $x$ to $p$. \eqref{eq:direC} yields that 
$f_i(\dot\gamma(0))=0$, $i=1,2$.
It follows from the concavity of $f_i$ that
$f_i(\gamma(t))\equiv t_0$ and $p\in C_{**}$.

Now for any $x\in X_0$, take a $x'\in X_1$ satisfying
$d(x, X_1)=d(x,x')$.
Let $y=\varphi(x)$ be a point on $xx'$ with $f_0(y)=f_1(y)$.
By the choice of $t_0$, $f_0(y)\ge t_0$ and $f_1(y)\ge t_0$.
It follows that $y\in C_*=C_{**}$.
Let $x_0\in X_0$ be a point of $X_0$ with 
$d(y,x_0)=d(y,X_0)$. Then 
$\angle x'yx_0=\pi$ and therefore $x=x_0$.
This also shows that  $y=\varphi(x)$ is uniquely determined by
$x$ and that $\varphi:X_0\to C_*$ is injective.
Thus one can conclude that $\dim C_*=n-1$.

For every two points $x_0$ and $x_0'$ of $X_0$ take
$x_1, x_1'\in X_1$ with  $d(x_0, x_1)=d(x_0', x_1')=2t_0$.
By Proposition \ref{prop:concave-rigid}, $x_0, x_1, x_1', x_1$ 
span a totally geodesic flat rectangle,
concluding that $X$ is isometric to $X_0\times [0, 2t_0]$.

Next suppose that $\partial X$ is connected and compact
and that $X$ is noncompact.
Let $\gamma:[0,\infty)\to X$ be a ray starting from a point of 
$\partial X$ and 
consider the Busemann function $b_{\gamma}$ associated with 
$\gamma$. For any sufficiently large $a\gg 1$,
the nonnegatively curved Alexandrov space 
$W:= b_{\gamma}^{-1}((-\infty, a])$ has disconnected boundary.
It follows from the previous argument that 
$W$ is isometric to $\partial X\times [0, a]$.
Letting $a\to\infty$ completes the proof.
\end{proof}

First we recall and reconstruct some  
$3$-dimensional complete open 
Alexandrov spaces with nonnegative curvature which are not  
topological manifolds(see \cite{SY:3mfd}).

\begin{ex}[\cite{SY:3mfd}]\label{ex:flat-orb}
Let $\Gamma$ be the discrete subgroup of isometries of $\R^3$
generated by 
$\gamma(x,y,z)=-(x,y,z)$ and $\sigma(x,y,z)=(x+1,y,z)$.
Then $\R^3/\Gamma$ is a complete open nonnegatively curved Alexandrov 
space having two topological singular 
points.
\end{ex}

\begin{ex}[\cite{SY:3mfd}]\label{ex:sph-orb}
Let $S$ be a nonnegatively curved Alexandrov surface 
homeomorphic to $S^2$ having two essential singular points,
say $p_1$ and $p_2$. 
Cut $S$ along a minimal geodesic joining $p_1$ and $p_2$.
The result of this cutting,  say $S_0$, is a
nonnegatively curved Alexandrov surface with boundary.
The double $\hat S$ of $S_0$ has an obvious isometric involution 
$\sigma$ such that $\hat S/\sigma=S$.
Consider the $\Z_2$-action on $\hat S\times\R$
defined by $(x,t)\to (\sigma(x), -t)$.
The orbit space $(\hat S\times\R)/\Z_2$, denoted $L_{\rm sph}(S)$, 
has the two topological singular points $p_1$ and $p_2$.
This space corresponds to 
$L(S_2;2)$ in Example 9.4 of \cite{SY:3mfd}.
\end{ex}

\begin{ex}[\cite{SY:3mfd}]\label{ex:tor-orb}
Let $S$ denote the double of a rectangle
$[0,a]\times [0,b]$, and 
$T$ the flat torus defined by the rectangle
$[-a,a]\times [-b,b]$.
Note that $S=T/\sigma$ for the isometric 
involution $\sigma$ on $T$ defined by $(x,y)\to (-x,-y)$.
Consider the $\Z_2$-action on $T\times\R$
defined by $(x,t)\to (\sigma(x), -t)$.
The orbit space $(T\times\R)/\Z_2$, denoted
$L_{\rm tor}(S)$, has  four topological singular 
points. This space corresponds to 
$L(S_4;4)$ in Example 9.4 of \cite{SY:3mfd}.
\end{ex}

\begin{ex}\label{ex:proj-orb}
Let the flat torus $T$ and the isometric 
involution $\sigma$ on $T$ be as in the previous example.
Consider the involution $\tau$ on $T\times\R$
defined by $(x,y)\to (-x+a,y+b)$.
Let $\Omega\simeq\Z_2\oplus\Z_2$ be the group generated by
$\sigma$ and $\tau$.
Consider the $\Z_2\oplus\Z_2$-action
on $T\times\R$ defined by $(x,t)\to (\sigma(x), -t)$
and $(x,t)\to (\tau(x), t)$.
The orbit space $(T\times\R)/\Z_2\oplus\Z_2$ is doubly covered by 
$L_{\rm tor}(T/\sigma)$ and has two topological singular 
points.  This orbit space is denoted by $L_{\rm proj}(S)$, 
where we put $S:=T/\Omega\simeq P^2$.
\end{ex}

Each space of types   $L_{\rm sph}(S)$,  $L_{\rm tor}(S)$
and  $L_{\rm proj}(S)$ is a $3$-dimensional complete open 
Alexandrov space with nonnegative curvature which is not a 
topological manifold, and admits the structure of a singular
line bundle; the singular fibre are the geodesic rays 
starting from the topological singular points.
Note that the zero-section $S$ of the singular line bundle 
is the unique soul in each case. 

Here it should be remarked that no space of 
type $L(S_1;1)$ in Example 9.4 of \cite{SY:3mfd}
actually exists since some compatibility condition is not 
satisfied ! (see the proof of Case $A$-$(3)$ of 
Theorem \ref{thm:dim3wbdy-sp} below.) 
Of course, Theorem 9.6 in \cite{SY:3mfd} is true for
the $3$-dimensional complete open 
Alexandrov spaces with nonnegative curvature which are 
topological manifolds (the generalized soul theorem), 
but for non-topological manifolds,
it should be modified as follows:

\begin{thm} \label{thm:modify}
Every $3$-dimensional complete open 
Alexandrov space with nonnegative curvature which is not a  
topological manifold is either homeomorphic to one of
the cone $K(P^2)$ and $\R^3/\Gamma$ in Example \ref{ex:flat-orb},
or isometric to one of the spaces of types  
$L_{\rm sph}(S)$,  $L_{\rm tor}(S)$ and  $L_{\rm proj}(S)$.
\end{thm}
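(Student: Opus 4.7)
The plan is to reduce the classification to the topological-manifold case of the $3$-dimensional generalized soul theorem by constructing a branched double cover that resolves each topological singularity.

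Since $X^3$ is boundaryless and noncompact, the space of directions $\Sigma_p$ at every point is a closed $2$-dimensional Alexandrov space with curvature $\ge 1$, hence homeomorphic to $S^2$ or $P^2$. Thus $X$ fails to be a topological manifold precisely when the topological singular set $TS(X):=\{p\mid\Sigma_p\simeq P^2\}$ is nonempty, and each such $p$ satisfies $\rad(\Sigma_p)\le\pi/2$ and is essential. I would first show $TS(X)$ is a finite discrete subset of $X$: by Proposition~\ref{prop:graph} and Lemma~\ref{lem:essential}, an accumulation would force $TS(X)$ to contain a nontrivial quasigeodesic arc $\gamma$; but at an interior point $p\in\gamma$, an iterated tangent-cone argument in the spirit of Lemma~\ref{lem:expanding2} splits off an $\R$-factor of $K_p$ in the direction of $\gamma$, making $\Sigma_p$ a spherical suspension, which is incompatible with $\Sigma_p\simeq P^2$. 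Global finiteness then follows from the compactness of $C(0)$ together with Lemma~\ref{lem:locmax}-type monotonicity along $TS(X)$.

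Next I would construct a branched double cover $\pi\colon\hat X\to X$ branched exactly over $TS(X)$. Near each $p\in TS(X)$ the tangent cone is $K(P^2)=\R^3/\{\pm 1\}$, which admits the canonical resolving double cover $K(S^2)=\R^3$; on the manifold part $X\setminus TS(X)$ the local $\Z_2$-monodromies around each $p$ determine the classifying data for a connected $\Z_2$-cover, which extends across each $p\in TS(X)$ by filling in with $K(S^2)$. Using Petrunin's gluing theorem~\cite{Pt:apply} at the branch points, $\hat X$ inherits the structure of a complete nonnegatively curved Alexandrov $3$-space, is now a topological manifold, and carries an isometric involution $\sigma$ with $\hat X/\sigma=X$ and $\Fix(\sigma)=\pi^{-1}(TS(X))$.

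I then apply the generalized soul theorem of \cite{SY:3mfd} (the version valid for topological manifolds) to $\hat X$, choosing a $\sigma$-invariant soul $\hat S$, and split on $\dim\hat S$. For $\dim\hat S=0$, $\hat X$ is homeomorphic to $\R^3$ with Euclidean-cone metric, and the only isometric involution whose quotient has $P^2$-type tangent cones is $-\operatorname{id}$, giving $X\simeq K(P^2)$. For $\dim\hat S=1$, the splitting theorem applied to the universal cover yields $\hat X\simeq S^1\times\R^2$ (or the twisted $\R^2$-bundle), and the admissible isometric involutions with isolated fixed points produce precisely $\R^3/\Gamma$ of Example~\ref{ex:flat-orb}. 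For $\dim\hat S=2$, Proposition~\ref{prop:codim1} shows $\hat X$ is isometric to a metric line bundle over $\hat S$, and $\sigma$ preserves this bundle and acts by $-1$ on the fibers over its fixed-point set on $\hat S$; depending on whether $\hat S\simeq S^2$, $\hat S\simeq T^2$, or $\hat S$ is the intermediate double cover of $P^2$ realized as a quotient of $T^2$, one obtains exactly $L_{\rm sph}(S)$, $L_{\rm tor}(S)$, or $L_{\rm proj}(S)$, where $S=\hat S/\sigma|_{\hat S}$.

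The main obstacle is the construction and metric control of $\hat X$: one must verify rigorously that the locally defined branched double covers glue to a globally defined complete Alexandrov space of nonnegative curvature, which relies on a careful application of \cite{Pt:apply} at each branch point where the metric structure is delicate. A related subtlety is the parity-type compatibility condition that excludes the would-be family $L(S_1;1)$ from Example~9.4 of \cite{SY:3mfd}: one must check that an admissible isometric involution $\sigma|_{\hat S}$ cannot produce an odd number of $P^2$-type topological singular points in the quotient — equivalently, the flip involution $(\sigma_0,-1)$ on $\hat S\times\R$ is only metrically realizable when the fixed-locus data on $\hat S$ matches one of the three listed geometries.
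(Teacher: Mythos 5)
Your approach — resolve each topological singularity by a branched double cover $\hat X\to X$, apply the manifold version of the generalized soul theorem on $\hat X$, and then classify the covering involution — is genuinely different from the paper's. The paper argues directly on $X$: using the soul filtration, Proposition~\ref{prop:concave-rigid} exhibits $X$ (when $\dim S=2$) as a singular flat $I$-bundle over $S$, and the classification then reduces to checking a gluing/compatibility condition over the essential singular points of $S$, with the $L_{\rm proj}$ case handled by passing to the universal cover of $X$ itself at the very end. The strategy you propose is appealing because it transfers all the work to a space where the manifold soul theorem applies, but as written it has two substantive gaps.

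First, the \emph{existence} of a global branched double cover is not established. You need a homomorphism $\pi_1\bigl(X\setminus TS(X)\bigr)\to\Z_2$ sending each meridian (the generator of $\pi_1(\Sigma_p)\simeq\Z_2$ for $p\in TS(X)$) to the nontrivial element; equivalently, a class in $H^1\bigl(X\setminus TS(X);\Z_2\bigr)$ restricting nontrivially to every link. This is a nontrivial cohomological constraint, and since the topology of $X$ is exactly what one is trying to determine, you cannot simply read it off. One has to argue, e.g.\ from the soul structure and the concavity of the Busemann function, that such a class exists — but your sketch does not do this. Second, and more seriously, the claim that $\hat X$ has nonnegative curvature is not justified by the reference you give. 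Petrunin's gluing theorem (\cite{Pt:apply}) concerns gluing two Alexandrov spaces along isometric boundaries under a convexity/compatibility hypothesis; it says nothing about completing a branched cover across an isolated branch point, where there is no boundary and no gluing in that sense. What is true is that $\hat X\setminus\pi^{-1}(TS(X))$ is locally isometric to $X\setminus TS(X)$ and so has curvature $\ge 0$ locally, and that the tangent cone at a branch point is $K(\tilde\Sigma_p)$ with $\tilde\Sigma_p$ a curvature-$\ge1$ sphere — but neither of these alone gives curvature $\ge 0$ on all of $\hat X$; one needs to control comparison triangles through the branch point. This is exactly the sort of delicate local analysis that Proposition~\ref{prop:concave-rigid} and the filtration argument were designed to avoid, and you flag it yourself as "the main obstacle" without resolving it. Finally, the end-game classification of admissible involutions $\sigma$ on $\hat X$ in each soul dimension (in particular the parity/compatibility condition that rules out the spurious $L(S_1;1)$ family, which the paper handles explicitly with the M\"obius-band boundary obstruction in Case~A-(3)) is only asserted, not proved.
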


The proof of Theorem \ref{thm:modify} is identical with
that of Theorem 9.6 of \cite{SY:3mfd}. Compare 
the proof of Case $A$-$(3)$ of 
Theorem \ref{thm:dim3wbdy-sp} below.\par
\medskip

Our concerns are $3$-dimensional complete nonnegatively curved 
spaces with boundary. Let $t$ be a positive number.

\begin{ex}\label{ex:nonmfd}
\begin{enumerate}
 \item 
   Define a $\Z_2$-action on $D^2(t)\times\R$ by 
   $(x,s)\to (-x,-s)$. Then the orbit space 
   $(D^2(t)\times\R)/\Z_2$ is a complete noncompact 
   nonnegatively curved Alexandrov 
   space with boundary having a topological singular point.
   Note that the boundary of $(D^2(t)\times\R)/\Z_2$ is 
   homeomorphic to a M\"obius strip, and that 
   the double of $(D^2(t)\times\R)/\Z_2$ is isometric to 
   $L_{\rm sph}(S)$ with $S=D(D^2(t)/\Z_2);$
 \item 
   The identification space
     $$
         [0, t]\times \R^2/(t,x)\sim (t,-x)
     $$
   is a complete noncompact nonnegatively curved Alexandrov space 
   with boundary
   having a topological singular point.
   Note that the boundary of this identification space is 
   homeomorphic to $\R^2$ and that 
   the double of this identification space is isometric to 
   $\R^3/\Gamma$ in Example \ref{ex:flat-orb} up to a rescaling
   of metric$;$
 \item 
   For the discrete group $\Gamma$ in Example \ref{ex:flat-orb},
   $(\R\times D^2(t))/\Gamma$ is a compact nonnegatively 
   curved Alexandrov space with boundary
   having two topological singular points.
 \item 
   Let $L_{\rm sph}^t(S)$ denote the totally convex subset 
   of $L_{\rm sph}(S)$ defined as 
    $$
       L_{\rm sph}(S) := d_{S}^{-1}([0,t]),
    $$
   which is a compact nonnegatively curved Alexandrov 
   space with boundary. Here $d_S$ is the distance function from the 
   soul $S$.  $L_{\rm tor}^t(S)$ and $L_{\rm proj}^t(S)$
   are defined similarly.
\end{enumerate}
\end{ex}

\begin{lem} \label{lem:dinting}
Let $X^3$ be a complete noncompact Alexandrov space with nonnegative
curvature and with boundary such that for a point $p\in X^3$
$B(p,R)\simeq K_1(P^2)$ for any large $R>0$. Consider one of the 
following conditions:
\begin{enumerate}
 \item $B(p,R)\cap\partial X^3$ is homeomorphic to a M\"obius band$;$
 \item $B(p,R)\cap\partial X^3$ is homeomorphic to $D^2$.
\end{enumerate}
If $X^3$ satisfies $(1)$ $($resp. $(2)$$)$, then it is homeomorphic 
to $(D^2\times\R)/\Z_2$ 
$($resp. $[0,1]\times\R^2/(1,x)\sim(1,-x)$$)$.
\end{lem}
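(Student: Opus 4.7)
The plan is to form the double $D(X^3) = X^3 \cup_{\partial X^3} X^3$, which by the gluing theorem of \cite{Pt:apply} (cited in Observation~\ref{obs:collar}) is again a complete noncompact $3$-dimensional nonnegatively curved Alexandrov space. Since $B(p,R) \simeq K_1(P^2)$ for all large $R$, the space of directions $\Sigma_p$ is homeomorphic to $P^2$, which is boundaryless, so $p$ lies in $\interior X^3$ and produces two topological singular points in $D(X^3)$, each with link $P^2$. In particular, $D(X^3)$ is not a topological manifold, and the natural doubling involution $\tau$ is an isometry of $D(X^3)$ with $\Fix(\tau) = \partial X^3$.

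By Theorem~\ref{thm:modify}, $D(X^3)$ is either homeomorphic to $K(P^2)$ or $\R^3/\Gamma$, or isometric to one of $L_{\rm sph}(S)$, $L_{\rm tor}(S)$, $L_{\rm proj}(S)$. Counting topological singular points immediately rules out $K(P^2)$ (one) and $L_{\rm tor}(S)$ (four), leaving precisely three candidates, each with exactly two topological singular points matching those of $D(X^3)$.

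To single out the correct candidate in each case, I would classify those isometric involutions whose fixed set is $2$-dimensional and which necessarily exchange the two topological singular points (as $\tau$ must, since both singular points of $D(X^3)$ correspond to the single point $p$). A direct computation using the explicit models in Examples~\ref{ex:flat-orb}, \ref{ex:sph-orb}, \ref{ex:proj-orb} yields: in $L_{\rm sph}(S) = (\hat S \times \R)/\Z_2$, the equator reflection of $\hat S$ combined with the identity on $\R$ descends to an involution with fixed set $(S^1 \times \R)/\Z_2$, a M\"obius band; in $\R^3/\Gamma$, the reflection $\rho(x,y,z) = (1/2 - x, y, z)$ normalizes $\Gamma$ (via $\rho\gamma\rho = \sigma\gamma$ and $\rho\sigma\rho = \sigma^{-1}$) and descends to an involution with fixed set isometric to $\R^2$; in $L_{\rm proj}(S)$, all isometries commuting with the defining $\Z_2 \oplus \Z_2$-action have compact fixed set, so this candidate is excluded in both cases. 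Hence case~(1) forces $D(X^3) \simeq L_{\rm sph}(S)$ and case~(2) forces $D(X^3) \simeq \R^3/\Gamma$.

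Finally, a fundamental domain calculation yields $X^3 = D(X^3)/\tau$ explicitly: in case~(1), identifying $\hat S$ with the double of a flat disk produces $X^3 \simeq (D^2 \times \R)/\Z_2$ with the action $(x,s) \mapsto (-x,-s)$; in case~(2), a fundamental domain for $\langle \Gamma, \rho\rangle$ in $\R^3$ (namely $[0,1/4] \times \R^2$ with $(0,y,z)\sim(0,-y,-z)$) is homeomorphic, after rescaling and relabeling the two ``sides'', to $[0,1] \times \R^2 / ((1,x) \sim (1,-x))$. The hard part will be the involution classification above: one must verify that no exotic isometric involution outside the visible symmetries of these candidates could produce a fixed set of the required topological type, which requires using the soul structure of each candidate together with the normalizer of the covering group inside the full isometry group.
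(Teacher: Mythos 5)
Your approach — doubling $X^3$, invoking Theorem~\ref{thm:modify} to classify $D(X^3)$, and then analyzing the doubling involution $\tau$ — is genuinely different from the paper's. The paper argues directly: by the pseudo-gradient flow method of Assertion~\ref{ass:B(S,R)capY}, $X^3$ deformation retracts onto $B(p,\epsilon)\simeq K_1(P^2)$ together with a half-open collar attached along the ``free'' disk of $\partial B(p,\epsilon)$, and this attached space is visibly $(D^2\times\R)/\Z_2$ in case (1) (resp.\ the identification space in case (2)). That is short, self-contained, and avoids everything you propose.

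Your route has substantive gaps beyond the one you flag at the end. First, the involution you exhibit on $L_{\rm sph}(S)$ is the wrong one. The equator reflection $\rho$ of $\hat S$ fixes $\partial S_0$ pointwise, and $\partial S_0$ passes through the two branch points $p_1,p_2$ of $\hat S\to S$; so $\rho\times\mathrm{id}$ \emph{fixes} the two topological singular points of $L_{\rm sph}(S)$. But the doubling involution $\tau$ of $D(X^3)$ must \emph{interchange} the two singular points, since both lie over the single interior point $p\in X^3$. Moreover the fixed set of the involution induced by $\rho\times\mathrm{id}$ is not a M\"obius band: one also gets the image of $\Fix(\rho\sigma)\times\{0\}$, an arc joining $p_1$ and $p_2$, from the branch $\rho(x)=\sigma(x),\ t=-t$ in the quotient. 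So $\rho\times\mathrm{id}$ is not a model for $\tau$, and the classification step that would single out the correct involution is not merely ``hard'' as you say — the specific example you offer is wrong and needs to be replaced.

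Second, Theorem~\ref{thm:modify} only gives a \emph{homeomorphism} of $D(X^3)$ with $K(P^2)$ or $\R^3/\Gamma$ (the isometric conclusion is reserved for the $L$-types). Thus $\tau$, though an isometry of $D(X^3)$, transports to a merely topological involution of the flat model $\R^3/\Gamma$, and your computation of isometric involutions there ($\rho(x,y,z)=(1/2-x,y,z)$) does not directly apply; you would need a topological classification of involutions of $\R^3/\Gamma$, a considerably stronger statement. Third, the exclusion step is missing: nothing you write shows that in case (1) $D(X^3)$ cannot be $\R^3/\Gamma$ (or vice versa). This can actually be fixed cheaply — since $X^3$ is contractible, Mayer--Vietoris for $D(X^3)=X^3\cup_{\partial X^3}X^3$ gives $H_2(D(X^3))\cong H_1(\partial X^3)$, which is $\Z$ for the M\"obius band and $0$ for $D^2$, while $H_2(L_{\rm sph}(S))=\Z$ and $H_2(\R^3/\Gamma)=H_2(L_{\rm proj}(S))=0$ — but you should say it. Even with that fix, the involution-classification and the homeomorphism-versus-isometry issue remain, and they are more work than the paper's direct argument.
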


\begin{proof} 
Note that the soul of $X^3$ is a point, say $p$, with 
$\Sigma_p\simeq P^2$.
Suppose first $(1)$. Using the method in the proof of 
Assertion \ref{ass:B(S,R)capY}, we see that 
$$
    X^3\simeq B(p,\epsilon)- U \simeq (D^2\times\R)/\Z_2,
$$
where $U$ is an open disk of $\partial B(p,\epsilon)$.
The proof in the case of $(2)$ is similar and hence omitted.
\end{proof}

Now we are ready to state the classification result.

\begin{thm} \label{thm:dim3wbdy-sp}
The $3$-dimensional complete nonnegatively curved Alexandrov 
spaces $X^3$ with boundaries are classified as follows$:$\par
\medskip
\noindent 
If $\partial X^3$ is disconnected, then $X^3$ is isometric to a 
product $X_0\times I$, where $X_0$ is a component of $\partial X^3$.
\par
\medskip
\noindent
Suppose $\partial X^3$ is connected.\par
\begin{proclaim}{\emph{Case} A.}
      $X^3$ is compact.
\end{proclaim}
\begin{enumerate}
 \item If $\dim S=0$, $X^3$ is homeomorphic to either $D^3$,
       the unit cone $K_1(P^2)$ or $(\R\times D^2(1))/\Gamma;$
 \item If $\dim S=1$, $X^3$ is isometric to the form 
       $(\R\times N^2)/\Lambda$, where $N^2$ is homeomorphic
       to $D^2$ and $\Lambda\simeq\Z$. In particular, 
       $X^3$ is homeomorphic to a $D^2$-bundle over $S^1;$
 \item If $\dim S=2$, $X^3$ is isometric to one of a flat  
       $I$-bundle over $S$, $L_{\rm sph}^t(S)$,  $L_{\rm tor}^t(S)$
       and $L_{\rm proj}^t(S)$ for some $t>0$.
\end{enumerate}
\begin{proclaim}{\emph{Case} B.}
      $X^3$ is noncompact.
\end{proclaim}
\begin{enumerate}
 \item If $\dim S=2$, $X^3$ is isometric to 
       $S\times [0,\infty);$
 \item If $\dim S=1$, $X^3$ is isometric to the form 
       $(\R\times N^2)/\Lambda$, where $N^2$ is either homeomorphic
       to $\R^2_+$ or isometric to $\R\times I$ for a closed interval $I$,
       and $\Lambda\simeq\Z$. In particular, $X^3$ is homeomorphic 
       to an $N^2$-bundle over $S^1;$
 \item Suppose $\dim S=0$. If $X^3$ has two ends, then 
       it is isometric to a product $\R\times X_0$, where $X_0\simeq D^2$.
       \par
       Suppose that $X^3$ has exactly one end, and
       let $C$ be the maximum set $($possibly empty$)$ of 
       $d_{\partial X^3}$ . Then $C$ is either empty  or of dimension 
       $\ge 1$.
  \begin{enumerate}
   \item If $C$ is empty, $X^3$ is homeomorphic to 
         $\R^3_+;$
   \item If $\dim C=1$,  $C$ is a geodesic ray and 
         $X^3$ is either homeomorphic to $\R^3_+$ or the identification space 
         $D^2\times \R/(x,y)\simeq (-x,-y);$
   \item If $\dim C=2$, $C$ is homeomorphic to either $\R^2$ or $\R^2_+$.
    \begin{enumerate}
     \item 
     If $C \approx \mathbb R^2$, then $X$ is  either homomorphic to  $\mathbb R^3_+$ or isometric to one of  the quotient spaces 
\begin{align*}
 \hspace{1cm} & \hspace{1cm}\hat C \times [-t,t] / (x,y) \sim (\sigma(x), -y), \\
& \hspace{1cm} S_{\ell}^1 \times \mathbb R \times [-t,t] /(z,x,y) \sim (\bar z,-x,-y),
\end{align*}
for some $t, \ell> 0;$,
where $\hat C$ is a branched double covering of $C$ with a single branching point that is a unique topological singular point in $C$, and  $\sigma$ is the deck transformation of the branched covering $\hat C \to C$. 
%
%$\mathbb R^2 \times [-1,1]/(x,y) \sim (-x,-y)$,
% or isometric to 
%    
%     
%     If $C\simeq\R^2$, there exists an essential singular point
%           $p$ in $C$, and $X^3$ is isometric to the identification 
%           space 
%             $$
%              [0,t]\times \hat C/(t,x)\simeq (t,\sigma(x)),
%             $$
%           where $t$ is the maximum of $d_{\partial X^3}$,
%           $\hat C$ is the double of the result of cutting of $C$ along
%           a geodesic ray starting from $p$ and $\sigma$ is the involution 
%           of $\hat C$ such that $\hat C/\sigma=C;$
     \item 
      If $C \approx \mathbb R^2_+$, then $X^3$ is either  homeomorphic to $\mathbb R^3_+$ or isometric to 
the quotient  
$$
     D\times\R/(x,y)\sim (\sigma(x),-y),
$$
where $D$ is a compact Alexandrov surface of nonnegative curvature with boundary  suth that 
\begin{itemize}
\item the farthest points from $\pa D$ forms a segment
or a point, say $L\,;$
\item $D$ is symmetric with respect to the midpoint $O$ of $L$,
and $\sigma$ is the symmetry of $D$  about $O$.
\end{itemize}
  \end{enumerate}
  \end{enumerate}
 \end{enumerate}
\end{thm}

\begin{proof}
By Theorem \ref{thm:cptcomp}, we assume that $\partial X$ is 
connected. By the splitting theorem, we also assume 
$\dim S\neq 1$.
Letting $C$ be the maximum set of $d_{\partial X^3}$, we note 
that the topological singular points of $X^3$ are contained
in ${\rm Ext}(\interior X^3)\bigcap C$.
Let $t$ be the maximum of $d_{\partial X^3}$.

Suppose $X^3$ is compact. 
To discuss Case $A$, we use Theorem 9.6 in \cite{SY:3mfd} and 
the argument there. First of all, the conclusion in Case $A$
certainly holds if $X^3$ is a topological manifold.
Suppose $X^3$ is not topological manifold.
In Case $A$-(1), if $\dim C=0$, then $X^3$ 
is homeomorphic to $K_1(P^2)$, and if $\dim C\ge 1$, 
then $X^3$ is homeomorphic to either $K_1(P^2)$ or 
$(\R\times D^2(1))/\Gamma$ depending on the number $\le 2$
of the topological singular points of $\interior X^3$.

In Case $A$-(3), first note that 
the number of essential singular points of $S$ is at most
$4$. By Proposition \ref{prop:concave-rigid}, $X^3$ is isometric 
to a singular flat $I$-bundle over $S$ with possible
singular fibres over essential singular points, say $p$, of $S$.
Let $\pi:X^3\to S$ be the natural projection along fibres.
For a small disk neighborhood $B$ of $p$, 
$\pi^{-1}(B)$ is fibre-wise homeomorphic to 
$(D^2\times\R)/\Z_2$ in Example \ref{ex:nonmfd}.
Now if $S\simeq S^2$, $X^3$ is built from 
some pieces of $(D^2\times\R)/\Z_2$ by gluing along boundaries.
To realize this gluing, we encounter some compatibility condition.
This observation implies that $X$ is isometric to either
$L_{\rm sph}^t(S)$ or  $L_{\rm tor}^t(S)$.
If $S\simeq P^2$, the compatibility condition
implies that the number $m$ of the singular fibres in $X^3$
is two. Because if $m=1$, take a disk neighborhood  $B$  of 
the singular point $p$ as above.
Since the line bundle is regular one on 
$S-B$ which is homeomorphic to a M\"obius band, 
its restriction to $\partial(S-B)$ must be trivial.
On the other hand, $\partial (D^2\times\R)/\Z_2)$ is 
an open M\"obius band.
Therefore the compatibility condition does not hold.
Now the universal cover $\tilde X^3$ of $X^3$ is 
isometric to $L_{\rm tor}^t(\tilde S)$, where $\tilde S$
is the universal cover of $S$. Thus 
$X^3$ must be isometric to 
$L_{\rm proj}^t(S)=L_{\rm tor}^t(\tilde S)/Z_2$.

Next consider Case $B$. If $\dim S=2$, using Theorem \ref{thm:cptcomp},
we easily obtain the conclusion.
Suppose $\dim S=0$. In view of the splitting theorem, we assume
that $X^3$ has exactly one end. 
If $X^3$ is a topological manifold, then Assertion 
\ref{ass:B(S,R)capY} shows $X^3\simeq\R^3_+$.
If $C$ is empty, $X^3$ is a topological manifold. Therefore
we assume that $C$ is nonempty.
Let $t$ denote the maximum of $d_{\partial X^3}$.
From the concavity of $d_{\partial X^3}$, 
every geodesic rays starting from a point of $C$ is contained in $C$.
In particular $C$ is noncompact.
Suppose $\dim C=1$. Since $X^3$ has  exactly one end,
$C$ represents a geodesic ray, say $\gamma$. 
Since every point of $\interior C$ is  topologically regular,
we assume that $p:=\partial C$ is a topological singular 
point. 
The critical point theory to the map 
$(d_{\partial X^3}, d_p)$ shows that 
$X^3$ is homeomorphic to 
$\{ d_{\partial X^3}\ge t-\epsilon_1\}\cap\{ d_p<\epsilon_2\}$
for every $0<\epsilon_1\ll \epsilon_2\ll 1$.
It is easy to see that $B(p,\epsilon_2)$ is homeomorphic to 
$\{ d_{\partial X^3}\ge t-\epsilon_1\}\cap\{ d_p<\epsilon_2\}$.
It follows from Stability Theorem \ref{thm:stability} that 
the boundary of
$\{ d_{\partial X^3}\ge t-\epsilon_1\}\cap\{ d_p\le\epsilon_2\}$
is homeomorphic to $P^2$.
Since 
$\{ d_{\partial X^3}\le\epsilon_1\}\cap\{ d_p=\epsilon_2\}$
is homeomorphic to $D^2$ for any sufficiently small $\epsilon_1$
and $\epsilon_2$, 
$\partial X^3$ must be homeomorphic to an open M\"obius
band. From Lemma \ref{lem:dinting}, we conclude that
$X^3$ is homeomorphic to $(D^2\times\R)/\Z_2$.

Suppose finally $\dim C=2$. 
Using the Sharafutdinov retraction constructed in \cite{Pr:alex2},
one can prove that $X$ is homotopy equivalent to  $C$.
If $C$ was isometric to $I\times\R$, then $X^3$ would have 
two ends contradicting the assumption. 
Therefore $C$ is homeomorphic to either $\R^2$ or $\R^2_+$.

Suppose $C\simeq \R^2$.
%%%%%%
By Proposition \ref{prop:concave-rigid}, 
there exist exactly two minimal geodesics 
from any $x\in C-ES(C)$ to $\partial X^3$.
Let  $m\le 2$ be number of the topological singular
points of $X^3$, which is contained in $ES(C)$.
For such a topological sigular point $x\in ES(C)$, 
there exists a unique geodesic from  $x$ to $\partial X^3$.
Thus we have the structure of a singular flat $I$-bundle 
on $X^3$ over $C$, which provides a map
$\pi:\partial X^3\to C$ whose
restriction to the complement of the topological singular point set
is locally isometric double covering.

If $m=0$, then $X^3$ is a topological manifold, and 
is homepmorphic to $\R^3_+$ as pointed above.

If  $m=1$, 
 $X^3$ is isometric to the singular flat $I$-bundle 
over $C$ with a singular $I$-fiber over a point of
$ES(C)$. From this regidity, we can conclude that  
$X$ is isometric to the 
quotient space
$\hat C \times [-t,t] / (x,y) \sim (\sigma(x), -y)$
as stated in Theorem \ref{thm:dim3wbdy-sp}.

%$\mathbb R^2 \times [-t,t]/(x,y) \sim (-x,-y)$.
%%%

If $m=2$, $C$ is isometric to the double 
$D(I\times[0,\infty))$ for a closed interval $I$,
and $X^3$ is isometric to the singular flat $I$-bundle 
over $C$ with the two singular $I$-fiber over 
$ES(C)$. From this regidity, we can conclude that  
$X$ is isometric to the 
quotient
\[
S_{\ell}^1 \times \mathbb R \times [-t,t] /(z,x,y) \sim (\bar z,-x,-y),
\] 
where  $\ell=2L(I)$.
Here $C$ corresponds to the set 
$S_{\ell}^1 \times \mathbb R \times\{ 0\} 
/(z,s,0) \sim (\bar z,-s,0)$.
\pmed
Suppose $C\simeq \R^2_+$. In this case $m\le 1$.
 By Proposition 11.3 of \cite{SY:3mfd},  any $p\in\partial C$ is 
topologically regular. 
If $m=0$, then  $X^3\simeq\R^3_+$ as before.
If $m=1$, then $C$ is isometric to 
$D(\{ x,y\ge 0\})\cap\{ y\le h\}$ for some $h>0$,
and the origin, say $p_0=[(0,0)]$, of $C$ is topologically singular.

Let $\gamma$ be the geodesic ray of $X$ from $p_0$ represented as 
$\gamma(s)= [(s,0)]\in C$, $(s\ge 0)$,
and let $b$ be the Busemann function associated with 
$\gamma$.
Set $D_0:=b^{-1}(0)$ and $D_s:=b^{-1}(s)$ for any $s>0$.
Note that $\Sigma_{p_0}(X)$ is isometric to 
$P^2$ with constant curvature $1$.
This implies that 
$D_0=\min b$, which is an 
Alexandrov surface.

By Proposition \ref{prop:concave-rigid}, we have a bijective locally isometric imbedding
$\tau:(0,\infty)\times (-h,h)\times [-t,t]
\to X$ such that 
\beq \label{eq:isometric=tau}
\begin{aligned}
 &\text{$\tau((0,\infty)\times (-h,h)\times\{\pm u\})\subset d_{\pa X}^{-1}(t-u)$
for any $0\le u\le t\,;$} \\
& \text{$\tau(s,0,0)=\gamma(s)$ for any $s>0$.}     
\end{aligned}
\eeq
%\begin{itemize}
% \item $\tau((0,\infty)\times (-h,h)\times\{\pm u\})\subset d_{\pa X}^{-1}(t-u)$
%for any $0\le u\le t\,;$
%\item $\tau(s,0,0)=\gamma(s)$ for any $s>0$.
%\end{itemize}
Note that 
\beq \label{eq:tau(s)D}
\text{
$\tau(\{ s\} \times (-h,h)\times [-t,t])\subset D_s$ for any $s>0$.}
\eeq

\begin{clm}\label{claim:invE}
For any $s>0$,
$D_s$ is convex, and there is an isometric involucion $\sigma_s$ on $D_s$ such that 
$D_0$ is isometric to $D_s/\sigma_s$.
\end{clm}
\begin{proof}
Let $L_0:=\{ x=0,  y\le h\} = C\cap D_0$,
$L:_s=\{ x=s, y\le h\} = C\cap D$.
\eqref{eq:isometric=tau} and \eqref{eq:tau(s)D} implies that $\pa D_0\subset \pa X$ and 
$L_0=d_{\pa D_0}^{-1}(t)$.
%Let $p:=[(s,0)]\in L$.
Thus  the midpoint of $L_0$ is a soul of $X$,
and  $D_0$ is a disk.
%
%
% with 
%$\pa D_0=D_0\cap \pa X$ and $L_0=d_{\pa D_0}^{-1}(t_m)$, where
%$t_m=\max d_{\pa X}$.

Let $F_0:=D(\{ y\ge 0,z\ge 0\})\cap \{ y< h, z\le t\}$.
By  Proposition \ref{prop:concave-rigid}, $\tau$ extends to an isometric imbedding
$\tau_0:F_0\to D_0$ sending $\{ 0\le y< h, z=0\}$ to $L_0\setminus \{ q_0\}$,
where $q_0$ is the other endpoint of 
$L_0$ than $p_0$.
Let $G_0$ be the closure of $D_0-\tau_0(F_0)$. 
Note that $G_0$ is either an arc or  a disk of nonnegative curvature.

If $G_0$ is an arc, then 
$D_0=D([0,h]\times \{z\ge 0\})\cap \{ z\le t\}$. Cosider the quotient space
\[
     E:=[-h,h]\times [-t,t]/ (\pm h, u)\sim (\pm h, -u),\quad |u|\le t.
\]

Suppose that $G_0$ is a disk, and let $G_1, G_2$ be two copies of $G_0$.
For a fixed $s_0>0$, let $L=L_{s_0}$, and set 
$\{ q_1,q_2\}:=\pa L$, and consider the gluing  
$$
E:=L\times [-t, t]\cup_{\{ q_1\}\times[-t, t]}G_1
    \cup_{\{ q_2\} \times[-t, t]} G_2.
$$  
In either case, 
it is easy to define the isometric involution 
$\sigma$ on $E$ such that $E/\sigma=D_0$.
Proposition \ref{prop:concave-rigid}
implies that $E$ is isometric to $D_s$
with respect to the interior metric for any 
$s>0$, and $\tau$ extends to an injective 
locally  isometrc map
$\iota:E\times (0,\infty)\to  X$.
It follows that  $D_s$ is 
convex in $X$.
\end{proof}

It follows from Claim \ref{claim:invE} that 
$X$ is isimetric to $D\times\R/(x,y)\sim (\sigma(x), -y)$.
\end{proof}

As an application of our argument, we give a classification 
of $3$-dimensional complete Alexandrov spaces $X^3$ with nonnegative 
curvature in terms of the number of extremal points.
We assume that $X^3$ has nonempty boundary.
Let $e_{\interior X^3}$ denote the number of extremal points in 
$\interior X^3$. If $e_{\interior X^3}=0$, then $X^3$ is a topological manifold,
and the classification of such spaces is given by
Theorem \ref{thm:dim3wbdy-sp}. Hence we assume $e_{\interior X^3}>0$.

\begin{cor}
Let $X^3$ be a $3$-dimensional complete Alexandrov space with 
nonnegative curvature and  with nonempty boundary. 
Then $0\le e_{\interior X^3}\le 4$ and the structure of $X^3$ is described 
in terms of $e_{\interior X^3}$ as follows.
In the below, $S$ is a soul of $X^3$ and $t$ denote the maximum 
of $d_{\partial X^3}$.
\begin{enumerate}
 \item If $e_{\interior X^3}=4$,  $X^3$ is isometric to either
       $D(I_1\times I_2\times \{ x\ge 0\})\bigcap \{x\le t\}$
       with some closed intervals $I_1$ and $I_2$, 
       or $L_{\rm tor}^t(S);$
 \item If $e_{\interior X^3}=3$,  $X^3$ is homeomorphic to either 
       $D^3$ or $K_1(P^3);$ 
 \item If $e_{\interior X^3}=2$,  $X^3$ is either homeomorphic to one of $D^3$,
       $K_1(P^2)$, $(\R\times D^2)/\Gamma$ and $\R^3_+$, 
       or isometric to one of $L_{\rm sph}^t(S)$ and 
       $L_{\rm proj}^t(S);$
 \item If $e_{\interior X^3}=1$,  $X^3$ is homeomorphic to one of
       $D^3$, $K_1(P^2)$, $\R^3_+$, 
       $([0,1]\times \R^2)/(1,x)\simeq (1,-x)$ and 
       $(D^2\times\R)/\Z_2$.
\end{enumerate}
\end{cor}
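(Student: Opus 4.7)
The plan is to leverage the complete metric classification in Theorem \ref{thm:dim3wbdy-sp} and augment it with a direct count of interior extremal points in each listed case. An interior extremal point $p\in\interior X^3$ is characterized by $\diam(\Sigma_p)\le\pi/2$; since $\Sigma_p$ is a closed Alexandrov surface of curvature $\ge 1$, a Toponogov-type rigidity forces $\Sigma_p$ to be either topologically a projective plane (so $p$ is a topological singular point of $X^3$) or a round sphere of diameter exactly $\pi/2$, which occurs only in the exceptional $\dim S=0$, $X^3\simeq D^3$ scenario. These interior extremal points must therefore lie in the totally convex set $C$ built from $d_{\partial X^3}$ via Section~\ref{sec:ideal}, and the local model around each such point is dictated by the Examples \ref{ex:flat-orb}--\ref{ex:proj-orb} and \ref{ex:nonmfd}.

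For each explicit model listed in Theorem \ref{thm:dim3wbdy-sp} I would directly count interior extremal points: the $I$-bundles and product-type spaces in Cases $A$-(2), $B$-(1), $B$-(2) and the topological-manifold subcases of $B$-(3) contribute no interior extremal points, while the non-manifold exceptional models yield the precise counts: $K_1(P^2)$ contributes one, $(\R\times D^2)/\Gamma$, $L_{\rm sph}^t(S)$ and $L_{\rm proj}^t(S)$ each contribute two, $L_{\rm tor}^t(S)$ contributes four, and the identification spaces $(D^2\times\R)/\Z_2$ and $[0,1]\times\R^2/(1,x)\sim(1,-x)$ each contribute one. The topological manifold $D^3$ accommodates varying numbers of interior extremal points depending on the corner configuration of the totally convex set $C$: the doubled prism $D(I_1\times I_2\times\{x\ge 0\})\cap\{x\le t\}$ realizes four corners, and analogous doubled prisms with fewer corners give metrics on $D^3$ with $e=3$ and $e=2$, explaining the appearance of $D^3$ in multiple cases of the corollary. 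The space $\R^3_+$ in Case $B$-(3) similarly appears with varying $e$ according to the number of extremal points on the maximum set $C$ of $d_{\partial X^3}$.

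The main obstacle will be the case $e_{\interior X^3}=4$: showing that the only models achieving four interior extremal points are $L_{\rm tor}^t(S)$ and the doubled prism $D(I_1\times I_2\times\{x\ge 0\})\cap\{x\le t\}$. This requires combining Proposition \ref{prop:concave-rigid} (which yields the flat product structure from the concavity rigidity of $d_{\partial X^3}$) with a Gauss--Bonnet-type bound on the essential singular points of the relevant two-dimensional totally convex set, in the spirit of Lemma \ref{lem:class-singsurf} applied here to the soul $S$ or to the maximum set $C$. The uniform bound $e_{\interior X^3}\le 4$ emerges from this case analysis, and enumeration in each of the cases $e=1,2,3$ is then a matter of pattern-matching the topological singular point configurations against the list in Theorem \ref{thm:dim3wbdy-sp}.
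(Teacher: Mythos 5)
Your overall strategy — read off the answer from Theorem~\ref{thm:dim3wbdy-sp} and count interior extremal points — is essentially the paper's strategy, but your opening characterization of interior extremal points is wrong, and this error undermines the reduction on which the whole count rests.

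You assert that an interior extremal point $p$ must have $\Sigma_p\simeq P^2$ (a topological singular point) or $\Sigma_p$ a round sphere of diameter exactly $\pi/2$. This is false. For a closed Alexandrov surface $\Sigma$ with curvature $\ge 1$ and $\diam(\Sigma)\le\pi/2$, there is no rigidity forcing roundness: take the double of a geodesic triangle on $S^2(1)$ with all three angles $\pi/2$ (the spherical octant), or more generally the double of a convex spherical polygon of diameter $\le\pi/2$. These are topologically $S^2$, carry essential singular points, and are perfectly admissible links of interior extremal points in a $3$-space; they occur precisely at the corners of the doubled-prism model $D(I_1\times I_2\times\{x\ge 0\})\cap\{x\le t\}$ that you yourself list. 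Because your "therefore" (placing all interior extremal points inside $C$) is derived from this false dichotomy, the crucial inclusion is not established by your argument.

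The inclusion $\Ext(\interior X^3)\subset C$ is what actually carries the corollary, and the paper gets it without any claim about the link: at an interior extremal point $p$, $\diam(\Sigma_p)\le\pi/2$ forces the concave function $d_{\partial X^3}$ to have no direction of strict increase at $p$ (any such direction would make an angle $>\pi/2$ with a foot-direction to $\partial X^3$, contradicting $\diam(\Sigma_p)\le\pi/2$), so $p$ is a maximum point, i.e.\ $p\in C$. Moreover, $\Sigma_p(C)\subset\Sigma_p(X^3)$ implies $p\in\Ext(C)$, hence $\Ext(\interior X^3)\subset\Ext(C)$. Since $C$ is a compact nonnegatively curved Alexandrov space of dimension $\le 2$, the bound $\#\Ext(C)\le 4$ (Lemma~\ref{lem:class-singsurf}, or Perelman's $2^{\dim}$ bound) gives $e_{\interior X^3}\le 4$ directly — you should not expect the bound to "emerge from case analysis" of the homeomorphism types in Theorem~\ref{thm:dim3wbdy-sp}, since those types (e.g.\ $D^3$, $\R^3_+$, $K_1(P^2)$) do not carry a fixed number of interior extremal points. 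With the reduction to $C$ in hand, the case analysis on the metric structure of $C$ (rectangle, doubled rectangle, etc.) then proceeds as you sketch, and your extremal-point counts for the rigid models $L_{\rm tor}^t$, $L_{\rm sph}^t$, $L_{\rm proj}^t$, $(\R\times D^2)/\Gamma$, and the identification spaces are all correct.
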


\begin{proof}
Let $C$ denote the maximum set of $d_{\partial X^3}$.
We may assume $C$ to be nonempty because of $e_{\interior X^3}>0$.
Note also that ${\rm Ext}(\interior X^3)\subset {\rm Ext}(C)$.
It follows that $e_{\interior X^3}\le 4$ and that if $e_{\interior X^3}\ge 3$, then 
$C$ is a compact surface, and $X^3$ is also compact.

Suppose $e_{\interior X^3}=4$. Then $C$ is isometric to either 
a rectangle $I_1\times I_2$ or the double $D(I_1\times I_2)$.
If $C=I_1\times I_2$, the assumption $e_{\interior X^3}=4$ forces 
all the boundary points $p$ of $C$ to be one-normal points
in the sense that there is a unique direction at 
$p$ normal to $C$. This implies that $X^3$ is isometric to 
$D(I_1\times I_2\times \{ x\ge 0\})\bigcap \{x\le t\}$.
If $C=D(I_1\times I_2)$, it is the soul $S$, and
$X^3$ is a singular flat $I$-bundle over $S$ with 
four singular fibres. Thus $X^3$ must be isometric to 
$L_{\rm tor}^t(S)$. 

Suppose $e_{\interior X^3}=3$.  
If $C$ had no boundary, then 
$X^3$ would be a singular flat $I$-bundle over $S$ with 
three singular fibres, which is impossible.
Therefore $C$ has nonempty boundary. 
In view of Lemma \ref{lem:class-singsurf}, 
if there is no topological singular point, then 
$X^3\simeq D^3$ because the soul of $X^3$ is a point.
If there is a topological singular point, 
$C$ must be the result of cutting of the double 
$D(I^2)$ of a square $I^2$ along the diagonals,
and the interior singular point $p$ of $C$ is the unique
topological singular point. Thus 
Theorem \ref{thm:dim3wbdy-sp} yields 
$X^3\simeq K_1(P^2)$.

The cases $e_{\interior X^3}\le 2$ also follows 
from Theorem \ref{thm:dim3wbdy-sp} together 
with an argument similar to the above argument,
and hence we omit the detail.
\end{proof}

\begin{rem}
\begin{enumerate}
 \item In the case when $X^3$ is a complete open Alexandrov space
       with nonnegative curvature, considering the sublevel 
       set $\{ b\le t\}$ with large $t>0$, one 
       can reduce the classification problem for $X^3$ as in 
       Theorem \ref{thm:dim3wbdy-sp} to that of 
       a compact space with boundary$;$
 \item For $n$-dimensional compact Alexandrov spaces with 
       nonnegative curvature, the total number of extremal points 
       does not exceed $2^n$ (\cite{Pr:spaces}).
       Recently, the metric classification of nonnegatively curved 
       spaces with maximal extremal points has been proved in 
       \cite{Ym:extrem}.
\end{enumerate}
\end{rem}

%                     \input{state}

%cap-state.tex
\part{Equivariant fibration-capping theorem} \label{part:cap}

\section{Preliminaries} 
    \label{sec:cap-state}

Let $X$ be a $k$-dimensional complete Alexandrov space with curvature 
$\ge -1$. 
Now suppose that $\partial X$ is nonempty.

Let a Lie group $G$ act on $X$ as isometries.
Note that the group of isometries of $X$ is a Lie group (\cite{FY:isomgp}).
Later on we shall assume that $X/G$ is compact.

First we discuss the convergence in $G$.

Let $\Sigma(X)$ denote the union of $\Sigma_p(X)$ when $p$ runs over 
$X$. We topologize $\Sigma(X)$ by the following convergence:
Let $v_n\in\Sigma_{p_n}(X)$ and $v\in\Sigma_p(X)$.
Then $v_n$ converges to $v$ if and only if 
$p_n\to p$ and $\angle(v_n,\xi_n)\to \angle(v,\xi)$ 
for any $x\in X-\{ p,p_n\}$ and $x_n$ with
$\gamma_{p_n,x_n}\to\gamma_{p,x}$, where 
$\xi_n=\dot\gamma_{p_n,x_n}(0)$ and
$\xi=\dot\gamma_{p,x}(0)$.
Note that $\Sigma(X)$ is not necessary locally compact.

The following lemma shows that the action of $G$ on $\Sigma(X)$ is
continuous.

\begin{lem} \label{lem:gp-conv}
A sequence $g_n$ in $G$ converges to $g$ if and only if 
$g_np\to gp$ and $g_n^{-1}v_n\to g^{-1}v$ for any $p\in X$,
$v_n\in\Sigma_{g_np}$ and $v\in\Sigma_{gp}$ with
$v_n\to v$.
\end{lem}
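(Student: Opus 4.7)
The plan is to prove both implications by exploiting the two basic facts: (i) isometries form an equicontinuous family since they are $1$-Lipschitz, and (ii) isometries preserve angles and send minimal geodesics to minimal geodesics. The topology on $G$ is the one making it a Lie group acting continuously on the locally compact Alexandrov space $X$, which coincides with uniform convergence on compact sets of $X$.

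For the direction $(\Leftarrow)$, I will first reduce convergence in $G$ to pointwise convergence on $X$. Assuming $g_n p\to gp$ for every $p\in X$, I would invoke the following standard Arzel\`a--Ascoli argument: because each $g_n$ is a $1$-Lipschitz map of a locally compact space to itself and the family is pointwise precompact (since orbits of a single point are bounded in $X$), the maps $g_n$ converge to $g$ uniformly on every compact subset of $X$. The direction hypothesis on $\Sigma(X)$ plays no essential role here -- it is an automatic consequence of this uniform convergence together with the first implication -- but it is convenient to record both conditions symmetrically for later use.

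For the direction $(\Rightarrow)$, assume $g_n\to g$ uniformly on compacta. The statement $g_np\to gp$ is then immediate. To prove $g_n^{-1}v_n\to g^{-1}v$ whenever $v_n\in\Sigma_{g_np}$ converges to $v\in\Sigma_{gp}$, I unwind the definition of convergence in $\Sigma(X)$. Take any $y\in X-\{p\}$ and any sequence $y_n\to y$ with $y_n\ne p$ such that $\gamma_{p,y_n}\to\gamma_{p,y}$, and set $\eta_n=\dot\gamma_{p,y_n}(0)$, $\eta=\dot\gamma_{p,y}(0)$. Since each $g_n$ (resp.\ $g$) is an isometry preserving angles and mapping minimizing segments to minimizing segments, one has
\[
\angle(g_n^{-1}v_n,\eta_n)=\angle(v_n,dg_n(\eta_n))=\angle(v_n,\dot\gamma_{g_np,g_ny_n}(0)),
\]
and similarly for the limit pair. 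By uniform convergence of $g_n$ on a compact neighborhood of $\{p,y\}$ one has $g_ny_n\to gy$ and $g_np\to gp$, which upgrades to $\gamma_{g_np,g_ny_n}\to \gamma_{gp,gy}$ after passing to subsequences and using compactness of the space of minimizing segments with bounded endpoints. The convergence $v_n\to v$ then yields $\angle(v_n,\dot\gamma_{g_np,g_ny_n}(0))\to\angle(v,\dot\gamma_{gp,gy}(0))$, which is exactly the angle identity required to conclude $g_n^{-1}v_n\to g^{-1}v$.

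The main technical subtlety, and the step I expect to require the most care, is the passage from pointwise convergence $g_ny_n\to gy$ to convergence of the whole minimal segments $\gamma_{g_np,g_ny_n}\to\gamma_{gp,gy}$ in the sense required by the definition of convergence in $\Sigma(X)$. This is not entirely automatic, because minimizing segments in an Alexandrov space need not be unique; one must argue by extracting a subsequential limit, showing that any such limit is a minimizing segment from $gp$ to $gy$, and checking that the angle computation is independent of the particular choice of $\gamma_{gp,gy}$ (which it is, by definition of $v$). Once this convergence of segments is established, the rest of the argument is a direct application of the isometry property and the hypothesis $v_n\to v$.
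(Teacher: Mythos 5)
Your proof is correct and takes essentially the same route as the paper's: the heart of the argument in both is the isometry-invariance identity $\angle(g_n^{-1}v_n,\eta_n)=\angle(v_n,\dot{(g_n\gamma_{p,y_n})}(0))$ followed by an appeal to the definition of convergence in $\Sigma(X)$. One small remark: the step you single out as the main technical subtlety is in fact immediate, because the geodesic $\gamma_{g_np,g_ny_n}$ appearing in the identity is not an arbitrary minimizer between those endpoints but is canonically $g_n\gamma_{p,y_n}$ (the image of your chosen segment under the isometry $g_n$), so its convergence to $g\gamma_{p,y}$ follows directly from $\gamma_{p,y_n}\to\gamma_{p,y}$ together with uniform convergence of $g_n$ on compacta, with no subsequence extraction or non-uniqueness issues to resolve.
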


\begin{proof}
We may assume $g$ to be the identity.
Let $g_n\to 1$, and suppose that  
$\gamma_{g_np,x_n}\to\gamma_{px}$ and $v_n\in\Sigma_{g_np}$
converges to $v\in\Sigma_p$. Then from  definition, 
$\angle(g_n^{-1}v_n, (x_n)_p')=\angle(v_n, (g_nx_n)_{g_np_n}')$
converges to $\angle(v,x_p')$. 
The converse is obvious.
\end{proof}

For any $p$, $G_p$ denotes the isotropy subgroup of $G$ at $p$.

\begin{lem} \label{lem:ell_p}
For each $p\in X$, there is a nonnegative integer $\ell(p)$ together 
with the $G_p$-invariant isometric splitting 
$K_p(X)=\R^{\ell(p)}\times K_p^{\prime}$ satisfying 
\begin{enumerate}
 \item $K_p(Gp)=\R^{\ell(p)};$
 \item $\{K_p(X)=\R^{\ell(p)}\times K_p^{\prime}\}_{p\in X}$ 
    are the $G$-invariant splittings.
\end{enumerate}
\end{lem}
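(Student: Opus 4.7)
The plan is to realize $\R^{\ell(p)}$ as the tangent cone $K_p(Gp)$ of the orbit and invoke the splitting theorem in nonnegative curvature.

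First I would set $\ell(p) := \dim G - \dim G_p$, which is the dimension of the orbit $Gp = G/G_p$, and choose one-parameter subgroups $\phi^1_t,\dots,\phi^{\ell(p)}_t$ of $G$ whose infinitesimal generators project to a basis of $\mathfrak{g}/\mathfrak{g}_p$. For each $i$, the curve $t\mapsto \phi^i_t p$ has constant speed because $\phi^i_t$ acts isometrically, so $d(\phi^i_tp,\phi^i_sp)=d(\phi^i_{t-s}p,p)$. Blowing up at $p$, the limits of $(\phi^i_sp)'_p$ and $(\phi^i_{-s}p)'_p$ as $s\downarrow 0$ produce two directions $\xi^i_\pm\in\Sigma_p$. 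The isometry identity $d(\phi^i_tp,\phi^i_{-t}p)=d(\phi^i_{2t}p,p)$ forces $\tilde\angle \phi^i_tp\,p\,\phi^i_{-t}p\to\pi$, so $\xi^i_+$ and $\xi^i_-$ are joined by a line in $K_p(X)$ through the vertex $o_p$.

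Next, since $K_p(X)$ is a complete Alexandrov space of nonnegative curvature, the presence of this line lets me invoke the splitting theorem (the Alexandrov version, cf.\ \cite{BGP}) to factor off an $\R$-summand. I would iterate: at each stage, the remaining one-parameter subgroups still produce isometry-invariant curves, and the resulting directions project to new lines orthogonal to the $\R$-factors already extracted (orthogonality coming from the fact that the projection to a Euclidean factor reduces distance). After $\ell(p)$ steps I obtain an isometric $G_p$-equivariant splitting $K_p(X)=\R^{\ell(p)}\times K_p'$ in which the $\R^{\ell(p)}$ factor is spanned by the directions $\xi^i_\pm$. An elementary argument comparing orbit-distances with Euclidean distances then identifies the $\R^{\ell(p)}$ summand with $K_p(Gp)$, establishing (1). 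The hardest link here is producing the orthogonality at each iterative step, which I expect will follow from the abelianization in the limit: the rescaled orbits $\frac1r Gp$ Gromov--Hausdorff converge to a homogeneous space of the vector group which must be $\R^{\ell(p)}$.

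For the $G_p$-invariance, I note that $G_p$ fixes $p$ and hence acts on $K_p(X)$ by isometries; since $G_p$ permutes $Gp$ setwise, it preserves $K_p(Gp)=\R^{\ell(p)}$, and consequently its orthogonal complement $K_p'$ as well. Finally, for (2) I observe that each $g\in G$ induces an isometry $g_*\colon K_p(X)\to K_{gp}(X)$ which sends $Gp$ to $G(gp)=Gp$ and therefore carries $K_p(Gp)$ to $K_{gp}(Gp)$; by uniqueness of the orthogonal complement the splittings at $p$ and at $gp$ correspond under $g_*$, which is exactly the $G$-equivariance asserted in (2). The principal obstacle I anticipate is the rigorous verification that the $\ell(p)$ directions extracted from independent one-parameter subgroups really assemble into a flat $\R^{\ell(p)}$ (rather than a more general nonnegatively curved cone of the same dimension); the key point there is that $G$ acts by global isometries, so the orbit map $G/G_p\to Gp\subset X$ is $1$-Lipschitz with respect to a suitable invariant Riemannian metric on $G/G_p$, and the rescaling limit identifies the infinitesimal structure with the Euclidean one.
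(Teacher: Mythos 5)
Your proposal takes a genuinely different route than the paper. You parametrize the orbit by one-parameter subgroups $\phi^i_t$ and extract lines from the comparison triangle $\triangle\,\phi^i_tp\,p\,\phi^i_{-t}p$, whereas the paper defines $\Sigma_p(Gp)$ as the set of \emph{all} limit directions $(g_np)'_p$ for sequences $g_n\to 1$ and proves closure under antipodes by a more indirect device: for each $\xi$ realized by $g_n$, it picks auxiliary points $q_n$ with $\tilde\angle p\,p_n\,q_n>\pi-\epsilon_n$ at $p_n=g_np$, shows $(q_n)'_{p_n}\to\xi$, and only then deduces that $(g_n^{-1}p)'_p\to-\xi$ by applying $g_n^{-1}$. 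That detour exists for a reason, and it is where your argument has a gap.

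Your key step is ``the isometry identity $d(\phi^i_tp,\phi^i_{-t}p)=d(\phi^i_{2t}p,p)$ forces $\tilde\angle\phi^i_tp\,p\,\phi^i_{-t}p\to\pi$.'' The isoceles comparison triangle with legs $\delta(t):=d(\phi^i_tp,p)$ and base $\delta(2t)$ has angle tending to $\pi$ at $p$ only if $\delta(2t)/\delta(t)\to 2$. Subadditivity of $\delta$ gives that $\delta(t)/t$ is monotone nonincreasing, so it has a limit $L\in(0,\infty]$, and when $L<\infty$ your ratio does go to $2$. But nothing in your argument rules out $L=\infty$; for example $\delta(t)=\sqrt{t}$ is subadditive, symmetric, and continuous, yet gives $\delta(2t)/\delta(t)\to\sqrt{2}\neq 2$. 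In the smooth Riemannian setting $L<\infty$ is automatic (the orbit map $G/G_p\to M$ is an immersion), but on an Alexandrov space the action is merely isometric, and showing the orbit curve has a finite nonzero metric speed at $t=0$ is precisely what needs to be justified before the blow-up even produces a well-defined pair of directions $\xi^i_\pm$. Relatedly, you set $\ell(p):=\dim G-\dim G_p$ at the outset, which silently assumes $\dim K_p(Gp)=\dim(G/G_p)$; and the closing ``elementary argument'' identifying the $\R^{\ell(p)}$-factor with $K_p(Gp)$ is actually the more delicate half of the paper's proof, done there by a contradiction argument extracting sequences $p_n\in Gp$, $q_n\notin Gp$ with $d(q_n,p_n)=d(q_n,Gp)$ and showing a limit direction would have to be perpendicular to $\Sigma_p(Gp)$. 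So the overall strategy (splitting theorem on the tangent cone, equivariance of the splitting) matches, but two central steps — antipodality of the orbit directions and the identification with $K_p(Gp)$ — are asserted rather than proven, and the first cannot be repaired by the comparison-triangle argument as stated.
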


\begin{proof}
Let $\Sigma_p(Gp)$ denote the set of all $\xi\in\Sigma_p$
such that there exists a sequence $g_n\in G$ satisfying
$g_np\to p$ and $(g_np)_p'\to\xi$.
First we show that for any $\xi\in \Sigma_p(Gp)$ there exists an 
$\eta\in\Sigma_p(Gp)$ such that $\angle(\xi,\eta)=\pi$.
Let $g_n\in G$ be as above.
Taking a subsequence if necessary, we may assume  $g_n\to g$ for some
$g\in G$. Considering $g^{-1}g_n$, we may also assume that
$g_n$converges to the identity.
Put $p_n:=g_np$ and 
take $q_n\in X$ with $\tilde\angle pp_nq_n>\pi-\epsilon_n$,
$\lim \epsilon_n=0$.
Passing to a subsequence, we may assume that 
$(g_n^{-1}q_n)_p'$ converges to a direction $\xi_0\in\Sigma_p$.
In view of $g_n\to 1$, $(q_n)_{p_n}'\to\xi_0$.
We conclude $\xi_0=\xi$ as follows:
Putting $\xi_n:=(p_n)_p'$,  we have for any $x\in X-\{ p\}$
\begin{align*}
  |\angle(\xi,x_p')- & \angle(\xi_0,x_p')|  \le
    |\angle(\xi,x_p')-\angle(\xi_n,x_p')| \\
   & +|\angle(\xi_n,x_p')-\tilde\angle p_npx| 
     +  |\tilde\angle p_npx-(\pi-\tilde\angle pp_nx)| \\ 
   &  + |\pi-\tilde\angle pp_nx - \tilde\angle q_np_nx| 
      +  |\tilde\angle q_np_nx - \angle q_np_nx| \\
   &  +   |\angle q_np_nx - \angle(\xi_0,x_p')| < \epsilon_n,
\end{align*}
where $\lim\epsilon_n=0$. This implies $\xi=\xi_0$.
It follows that
$(g_n^{-1}p)_p'$ converges to $-\xi$.

By the above argument together with the splitting theorem,
$\Sigma_p(Gp)$ spans a Euclidean space of 
dimension, say $\ell(p)$, in $K_p$, and 
we have a $G_p$-invariant isometric splitting 
$K_p(X)=\R^{\ell(p)}\times K_p^{\prime}$.

We show $\R^{\ell(p)}=K_p(Gp)$.
Suppose the contrary. Since $K_p(Gp)$ is closed in $K_p$,
we can find sequences $p_n\in Gp$ and $q_n\in X-Gp$ such that 
\begin{enumerate}
 \item[$(a)$] $\angle((q_n)_p',\R^{\ell(p)})\to 0;$
 \item[$(b)$] $d(q_n,p_n)=d(q_n,Gp);$
 \item[$(c)$] $(p_n)_p'$ and $(q_n)_p'$ converge to elements of 
       $\Sigma_p(Gp)$ and $\Sigma_p-\Sigma_p(Gp)$ respectively.
\end{enumerate}
We may assume that $p_n=g_np$ with $g_n\to 1$. Putting
$v_n:=(q_n)_{p_n}'$, we may assume 
$g_n^{-1}v_n$ converges to a direction $w$ in $\Sigma_p$.
By $g_n\to 1$, we obtain $v_n\to w$. 

Now we claim that $w\in\R^{\ell(p)}$.
Otherwise, since $w$ is perpendicular to any element of 
$\Sigma_p(Gp)$, $w$ must be perpendicular to $\R^{\ell(p)}$.
Choose a point $x$ in the direction $w$.
Set $s:=d(p,x)$, $\epsilon_n:=d(p,p_n)$ and 
$\delta_n:=d(p_n,q_n)$. Note that
\begin{align*}
  d(x,p_n)=&\sqrt{s^2+\epsilon_n^2} + o(\epsilon_n),\\
  d(p_n,q_n)=&\sqrt{\epsilon_n^2+\delta_n^2} + o(\epsilon_n),\\
  d(x,q_n)=&\sqrt{s^2+\epsilon_n^2+\delta_n^2} + o(\epsilon_n),
\end{align*}
where $\lim o(\epsilon_n)/\epsilon_n=0$, which implies 
\[
    \angle(v_n, (x)_{p_n}')\ge \tilde\angle xp_nq_n \ge 
      \pi/2-O(\epsilon_n),
\]
where $\lim O(\epsilon_n)=0$. Since $\angle(w,x_p')=0$,
this contradicts the convergence $v_n\to w$.

It turns out that $K_p(Gp)$ is contained in the hyperplane
of $\R^{\ell(p)}$ perpendicular to $w$, which is also a 
contradiction to the definition of $\ell(p)$.

(2) is obvious.
\end{proof}

In view of $K_p=\R^{\ell(p)}\times K_p'$, we consider 
$$
   L_p := \exp_p(K_p'\cap B(o_p,\epsilon))
$$
for a sufficiently small $\epsilon>0$,
where $\exp_p:K_p'\cap B(o_p,\epsilon)\to X$ is 
defined by using quasigeodesics (see \cite{Pt:apply}). 
Note that $L_p$ is $G_p$-invariant.
Let $G\times_{G_p} L_p$ denote the orbit space of
$G\times L_p$ by the diagonal $G_p$-action defined as
$h(g,x)=(gh^{-1},hx)$. Remark that 
$G\times_{G_p} L_p$ has a natural $G$-action and is an $L_p$-bundle 
over $G/G_p\simeq Gp$.

We assume the following:

\begin{asmp} \label{asmp:slice}
For each $p\in X$, $L_p$ gives a {\it slice} at $p$. Namely
$U_p:=GL_p$ provides a $G$-invariant tubular neighborhood of $Gp$
which is $G$-isomorphic to $G\times_{G_p} L_p$.
\end{asmp}

In the Riemannian case, Assumption \ref{asmp:slice}
is satisfied (cf. \cite{Brd:intro}). 
The author believes that Assumption \ref{asmp:slice}
is satisfied in the present general case, but he does not know
the proof yet. Note that Assumption \ref{asmp:slice} is automatically
satisfied if $G$ is discrete.

Let $M^n$ be another complete Alexandrov space with curvature 
$\ge -1$, and let $G$ also act on  $M^n$ 
as isometries. Let $d_{e.GH}((M,G), (X,G))$ 
denote the equivariant Gromov-Hausdorff distance
between the $G$-spaces (cf. \cite{FY:fundgp} for the definition).

The following theorem is an equivariant-version of Theorem
\ref{thm:orig-cap}. See Section \ref{sec:cap} for the notations
below.

\begin{thm}[Equivariant Fibration-Capping Theorem] \label{thm:cap}
Let $X$ and $G$ be as above satisfying Assumption \ref{asmp:slice}.
We also assume that $X/G$ is compact.
Given $k$ and $\mu>0$ there exist positive numbers $\delta=\delta_k$, 
$\epsilon_{X,G}(\mu)$ and $\nu=\nu_{X,G}(\mu)$ satisfying 
the following $:$\,\, Let  $Y\subset R^D_{\delta}(X)$ be a $G$-invariant
closed domain
such that $\text{$\delta_D$-{\rm str.rad}$(Y)$}>\mu$. 
Let $M$ be an $n$-dimensional complete Alexandrov space with 
curvature $\ge -1$ and with $M=R_{\delta_n}(M)$.
Suppose 
$d_{e.GH}((M,G),(X,G))<\epsilon$ for some 
$\epsilon\le\epsilon_{X,G}(\mu)$.
Then there exists a $G$-invariant closed domain $N\subset M$ and 
a $G$-invariant decomposition 
$$
   N = N_{\rm int} \cup N_{\rm cap}
$$
of $N$ into two closed domains glued along their boudaries and a
$G$-equivariant Lipschitz map  $f:N \to Y_{\nu}$ such that 
\begin{enumerate}
 \item $N_{{\rm int}}$ is the closure of $f^{-1}(\interior_0 Y_{\nu})$, and
       $N_{\rm cap} = f^{-1}(\partial_0 Y_{\nu})$;
 \item the restrictions $f|_{N_{\interior}}: N_{\interior} \to Y$ and
     $f|_{N_{\rm cap}} : N_{\rm cap} \to \partial Y$ are 
   \begin{enumerate}
     \item  locally trivial fibre bundles;
     \item  $\tau(\delta,\nu,\epsilon/\nu)$-Lipschitz submersions.
   \end{enumerate}
\end{enumerate}
\end{thm}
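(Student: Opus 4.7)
The plan is to upgrade the non-equivariant Fibration-Capping Theorem \ref{thm:orig-cap} to the equivariant setting by exploiting the slice decomposition guaranteed by Assumption \ref{asmp:slice}, combined with $G$-invariant averaging of the defining strainers. First I would use the compactness of $X/G$ to pick finitely many orbits $Gp_1,\ldots,Gp_N$ so that the slice tubes $U_{p_\alpha} = G\times_{G_{p_\alpha}} L_{p_\alpha}$ cover $Y$; since $G_{p_\alpha}$ is compact (it fixes a point and acts by isometries), I may average over $G_{p_\alpha}$ inside $\Sigma_{p_\alpha}(X)$ to replace an admissible $(k,\delta)$-strainer of length $\ge\mu$ at $p_\alpha$ by a $G_{p_\alpha}$-invariant collection of strainer balls — by openness of the strainer inequalities this averaging only enlarges $\delta$ by a controlled amount, and admissibility survives because the splitting $K_{p_\alpha}=\R^{\ell(p_\alpha)}\times K'_{p_\alpha}$ of Lemma \ref{lem:ell_p} is $G_{p_\alpha}$-invariant and the boundary directions are preserved.

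Next I would run the construction of Theorem \ref{thm:orig-cap} on each $L_{p_\alpha}$ using these $G_{p_\alpha}$-invariant strainers. The defining map of that theorem is built from sums of distance functions $d(a_i,\cdot)$, so replacing each $a_i$ by its $G_{p_\alpha}$-orbit and averaging yields a $G_{p_\alpha}$-equivariant Lipschitz map $f_\alpha: N_\alpha \cap L_{p_\alpha}$-piece $\to L_{p_\alpha}\cap Y_\nu$ which satisfies the same almost-Lipschitz-submersion estimate up to $\tau(\delta,\nu,\epsilon/\nu)$. The $G_{p_\alpha}$-equivariance then extends uniquely to a $G$-equivariant map on the whole tube $U_{p_\alpha}$ via the bundle structure $G\times_{G_{p_\alpha}} L_{p_\alpha}$. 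For the global patching I would use a $G$-invariant partition of unity subordinate to $\{U_{p_\alpha}\}$ (obtained by averaging any partition of unity over $G$, which converges since the sum is locally finite on $X/G$), and paste the local $f_\alpha$ together. The gluing argument of the non-equivariant proof carries over verbatim because every step is $G$-equivariant by construction, producing the required global $G$-equivariant Lipschitz map $f: N \to Y_\nu$.

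The decomposition $N = N_{\rm int}\cup N_{\rm cap}$ is automatically $G$-equivariant: admissibility of the strainer means that $b_k$ lies in the copy $X^*$ inside the double $D(X)$, and since the $G$-action extends canonically to $D(X)$ leaving $\partial X$ invariant, the cap level set $\partial_0 Y_\nu = \{d_{\partial X} = \nu\}$ is $G$-invariant, as is its preimage $f^{-1}(\partial_0 Y_\nu)$. The local triviality and Lipschitz-submersion properties of the two restrictions $f_{\rm int}$, $f_{\rm cap}$ are inherited from Theorem \ref{thm:orig-cap} applied in each slice. The main obstacle I anticipate is not the averaging (which is standard once the slices exist) but the capping gluing near the boundary: one must verify that the $G$-invariant cap fibres over a neighborhood of a stratum of $\partial Y$ on which the orbit type changes glue together consistently, which requires the local trivializations near two adjacent slices to differ by a $G$-equivariant homeomorphism close to the identity. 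This is handled by using the openness of the strainer condition to arrange overlapping strainers that are simultaneously invariant under both relevant isotropy groups, reducing the compatibility to the non-equivariant gluing of Section \ref{sec:cap-lip}.
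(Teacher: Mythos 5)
Your high-level plan — exploit the slice structure, average over the compact isotropy groups, and glue local pieces — is a reasonable first instinct, but it diverges from what the paper does and, more importantly, it has a technical gap at its core that the paper's strategy is specifically designed to avoid.

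The paper does not build $f$ by running Theorem \ref{thm:orig-cap} slice by slice and then gluing. Instead it constructs a single global map at once. The key observation is that the embedding $f_X:X\to L^2(X)$ defined by $f_X(p)(x)=h(d(p,x))$ is \emph{automatically} $G$-equivariant for the action $g\cdot\phi(x)=\phi(g^{-1}x)$, since $f_X(gp)(x)=h(d(gp,x))=h(d(p,g^{-1}x))$. The same is true of $f_M$, defined via a $G$-equivariant measurable approximation $\varphi$. So the map itself needs no averaging. The only thing that needs to be made $G$-equivariant is the family of tangent $k$-planes used to define the tubular neighborhood, and the paper does that not by averaging strainer points in $X$ but by a center-of-mass construction on the Grassmannian $G_k(L^2(X))$ (Lemmas \ref{lem:inv} and \ref{lem:T}), using the slices and a partition of unity on the compact quotient $X/G$. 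The slices enter the proof only there and in Lemma \ref{lem:GL}.

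Your proposed averaging step — replacing an admissible $(k,\delta)$-strainer at $p_\alpha$ by a $G_{p_\alpha}$-invariant collection via averaging "inside $\Sigma_{p_\alpha}(X)$" — does not go through as stated when $G_{p_\alpha}$ has positive dimension. The $G_{p_\alpha}$-orbit of a strainer point $a_i$ is then a positive-dimensional submanifold, and there is no canonical center of mass for points in an Alexandrov space of curvature bounded below (and even if one could be defined, the averaged points would typically collapse onto the fixed-point set, destroying the strainer conditions rather than preserving them up to $\tau$). This is precisely why the paper transfers the averaging problem to the Grassmannian $G_k(L^2(X))$, where a convexity/center-of-mass argument is available and produces a $G_p$-invariant plane close to $df_X(K_p)$; your openness heuristic does not resolve the nonexistence of the average itself. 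A secondary issue is the gluing step: the local maps $f_\alpha$ you would produce have targets $U_{p_\alpha}\cap Y_\nu$ that differ from tube to tube, and a partition-of-unity argument for pasting maps with varying targets requires a coherent system of identifications that you do not construct; again, the paper avoids this entirely by working once and for all in the linear space $L^2(X)$, where the single map $f=f_X^{-1}\circ\pi\circ f_M$ is defined globally. Your remarks on the $G$-invariance of the decomposition $N=N_{\rm int}\cup N_{\rm cap}$ (via $G$-invariance of $\partial X$ and $d_{\partial X}$) are correct, but they come after the hard part.
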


In the proof of Theorem \ref{thm:cap} below, 
we generalize the argument in \cite{Ym:conv}.
Although the basic procedures are similar in several steps,
we give the proof  for readers' convenience.\par

The following result is of fundamental significance to describe the 
basic local properties of a neighborhood of a strained point.

\begin{lem}[\cite{BGP}] \label{lem:almisom}
There exists a positive number $\delta_k$ satisfying the following:
Let $(a_i,b_i)$ be an $(k,\delta)$-strainer in $D(X)$  at $p\in X$
with length $\ge\mu_0$ and with  $\delta\le \delta_k$.
Then the map $f:X\to \R^k$ defined by
$$
   f(x) = (d(a_1,x), \ldots, d(a_k,x))
$$
provides a $\tau(\delta,\sigma)$-almost isometry of the metric 
ball $B(p,\sigma;X)$ onto an open subset of $\R^k$ or 
$\R^k_+$, where $\sigma\ll \mu_0$.
\end{lem}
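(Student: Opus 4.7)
The plan is to follow the classical argument of Burago--Gromov--Perelman: the $(k,\delta)$-strainer at $p$ propagates to a nearly orthonormal coordinate frame in the space of directions at every nearby point, and then the first variation formula converts $f$ into an almost-Euclidean chart.

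First, I would establish the following strainer-propagation statement: for every $x\in B(p,\sigma;X)$, setting $\xi_i(x):=(a_i)'_x$ and $\eta_i(x):=(b_i)'_x$ in $\Sigma_x(D(X))$, one has
\[
|\angle(\xi_i(x),\eta_i(x))-\pi|<\tau(\delta,\sigma/\mu_0),\qquad
|\angle(\xi_i(x),\xi_j(x))-\pi/2|<\tau(\delta,\sigma/\mu_0)
\]
for all $i\neq j$, together with the analogous bounds for the pairs $(\eta_i,\eta_j)$ and $(\xi_i,\eta_j)$. The input is the strainer inequalities at $p$ and Alexandrov convexity $\angle\ge\tilde\angle$: the triangles $\triangle a_ipx$ and $\triangle b_ipx$ have one side of length $\ge\mu_0$ and another of length $\le\sigma$, so the law of cosines in the comparison plane shows that shifting the basepoint from $p$ to $x$ distorts the relevant comparison angles by at most $O(\sigma/\mu_0)$. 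The lower bounds on all angles then force the upper bounds, since the angle deficits in the almost-antipodal pair $(\xi_i(x),\eta_i(x))$ are controlled by the $\delta$-deficits in the almost-right pairs via summation on $\Sigma_x$.

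Next, for $x,y\in B(p,\sigma;X)$ the first variation formula gives
\[
d(a_i,y)-d(a_i,x) \;=\; -\,d(x,y)\cos\angle(\xi_i(x),y'_x) \;+\; o(d(x,y)),
\]
and similarly for $b_i$. Squaring and summing over $i$, and using that the $\{\xi_i(x)\}_{i=1}^k$ form a $\tau(\delta,\sigma/\mu_0)$-almost-orthonormal $k$-frame in the $k$-dimensional space $\Sigma_x$, a Bessel/Parseval-type identity yields
\[
\sum_{i=1}^k \cos^2\angle(\xi_i(x),y'_x) \;=\; 1 \pm \tau(\delta,\sigma),
\]
so that $|f(y)-f(x)|^2=(1\pm\tau(\delta,\sigma))\,d(x,y)^2$. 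Combined with the obvious $1$-Lipschitz bound $|d(a_i,y)-d(a_i,x)|\le d(x,y)$ on each coordinate, this gives the claimed almost-isometry on $B(p,\sigma;X)$; openness of the image then follows from the almost-isometry together with invariance of domain in $\R^k$.

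Finally, for the $\R^k_+$ alternative one uses admissibility of the strainer in $D(X)$: if $p\in\partial X$, then after reindexing $b_k\in X^*$, so the minimal segment from any $x\in X$ near $p$ to $b_k$ crosses $\partial X$, and the function $d(a_k,\cdot)|_X$ attains its minimum on $\partial X\cap B(p,\sigma)$ with gradient almost normal to $\partial X$. Hence $\partial X\cap B(p,\sigma)$ is mapped (up to $\tau(\delta,\sigma)$-error) into a hyperplane $\{t_k=\mathrm{const}\}$ while the interior is mapped to the corresponding half-space, giving the image in $\R^k_+$. The main technical obstacle is the strainer-propagation step: one must absorb all errors into a single $\tau(\delta,\sigma/\mu_0)$ while handling comparison triangles whose sidelengths differ by the large factor $\mu_0/\sigma$, which is precisely why the hypothesis $\sigma\ll\mu_0$ (rather than merely $\sigma<\mu_0$) is needed.
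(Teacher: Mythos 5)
The paper does not prove this lemma; it is quoted from [BGP], so there is no internal proof to compare against. Your overall strategy (strainer propagation $\Rightarrow$ almost-orthonormal directional frame at each point $\Rightarrow$ Pythagorean identity for $|f(y)-f(x)|$) is the standard Burago--Gromov--Perelman route, and the first and last paragraphs are essentially sound.

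There is, however, a genuine gap in the middle step. The first variation formula
$d(a_i,y)-d(a_i,x)=-d(x,y)\cos\angle(\xi_i(x),y'_x)+o(d(x,y))$
is an infinitesimal statement valid as $y\to x$ for fixed $x$; the $o(d(x,y))$ term is not controlled uniformly over $B(p,\sigma;X)$, so you cannot simply ``square and sum'' to obtain $|f(y)-f(x)|^2=(1\pm\tau)d(x,y)^2$ for arbitrary $x,y$ in the ball. If one instead integrates the first variation along a minimal geodesic from $x$ to $y$, one gets $d(a_i,y)-d(a_i,x)=-\int_0^L\cos\angle\bigl((a_i)'_{\gamma(t)},\dot\gamma(t)\bigr)\,dt$, and now one must show that each integrand is nearly constant in $t$ before the Pythagorean identity can be applied -- this is an additional estimate, not a formality. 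The BGP argument sidesteps this entirely by working with the \emph{comparison} angle $\tilde\angle a_ixy$ rather than the Alexandrov angle: the Euclidean law of cosines in the comparison triangle gives the finite, uniform estimate $\bigl|d(a_i,y)-d(a_i,x)+d(x,y)\cos\tilde\angle a_ixy\bigr|\le d(x,y)\,\tau(\delta,\sigma/\mu_0)$ directly (with an extra $O(\sigma)$ term from replacing the $(-1)$-plane by the Euclidean plane), and the strainer condition is then used to show $\sum_i\cos^2\tilde\angle a_ixy=1\pm\tau$. That the Alexandrov angles $\angle a_ixy$ and the comparison angles $\tilde\angle a_ixy$ are actually close is itself a consequence of the strainer condition, not something that can be assumed. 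You have identified the strainer-propagation step as the main obstacle, but the finite (vs.\ infinitesimal) distortion estimate via comparison angles is the more essential and more delicate point, and it is missing from the write-up. A secondary minor point: the naive 1-Lipschitz bound $|d(a_i,y)-d(a_i,x)|\le d(x,y)$ gives $|f(y)-f(x)|\le\sqrt{k}\,d(x,y)$, not $(1+\tau)d(x,y)$, so it does not serve as a fallback for the upper bound as your phrasing suggests.
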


From now on we assume
\begin{equation}
  \text{$\delta_D$-str.rad}(X) > \mu_0 
\end{equation}
for a fixed $\mu_0>0$ and a small $\delta>0$. \par

The purpose of the rest of this paper is devoted to prove 
Theorem \ref{thm:cap} in the case of 
$$ 
       Y=R^D_{\delta}(X)=X.
$$
The general case is similar,
and hence the proof will be omitted.\par

By definition, 
we may assume that for every $p\in X$ \
there exists an admissible  $(k,\delta)$-strainer 
$(a_i,b_i)$ of length $>\nu_0/2$ at all points in $B_p(\sigma)$.

\begin{lem} \label{lem:basic}
Under the situation above,

\begin{enumerate}
 \item for every $q,r,s\in B(p,\sigma)$ with 
   $1/100 \le d(q,r)/d(q,s) \le 1$, we have
   $|\angle rqs - \tilde\angle rqs| < \tau(\delta,\sigma)$;
 \item  for every $q\in X$ with $\sigma/10\le d(p,q) \le\sigma$ and 
   for every $x\in X$ with $d(p,x)\ll\sigma$, we have 
    $$
       |\angle xpq - \tilde\angle xpq| < \tau(\delta,\sigma, d(p,x)|/\sigma);
    $$
 \item  if $d(p, \partial X) \ge 2\sigma$, then 
       for every $q\in B_p(\sigma)$ and for every $\xi\in\Sigma_q$, 
       there exist points $r,s$ such that $d(q,r)\ge\sigma$, $d(q,s)\ge\sigma$
       and 
       $$
        \angle(\xi, r^{\prime}_q) < \tau(\delta,\sigma), \qquad
              \tilde\angle rqs > \pi-\tau(\delta,\sigma);
       $$
 \item  if $d(p,\partial X)\le 2\sigma$, then for every $q\in B_p(\sigma)$
       and for every $\xi\in\Sigma_q$ with 
       $\angle(\xi, (a_k)_q^{\prime})\le\pi/2$, 
       there exist points $r\in X$ and $s\in D(X)$ such that 
       $d(q,r)\ge\sigma$, $d(q,s)\ge\sigma$ and 
       $$
         \angle(\xi, r^{\prime}_q) < \tau(\delta,\sigma), \qquad 
         \tilde\angle rqs > \pi-\tau(\delta,\sigma).
       $$
\end{enumerate}
\end{lem}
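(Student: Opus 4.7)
The plan is to exploit the admissible strainer structure and the almost isometry of Lemma \ref{lem:almisom} applied in the double $D(X)$, reducing each statement to an essentially Euclidean computation with an error controlled by $\tau(\delta,\sigma)$.

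For (1), I would first pass to the double $D(X)$ so that the $(k,\delta)$-strainer at $p$ becomes an interior $(k,\delta)$-strainer there and Lemma \ref{lem:almisom} applies: the distance map $F=(d(a_1,\cdot),\dots,d(a_k,\cdot))$ gives a $\tau(\delta,\sigma)$-almost isometry of $B(p,\sigma;D(X))$ into $\R^k$. For $q,r,s \in B(p,\sigma)$ with comparable pairwise distances in the sense $1/100 \le d(q,r)/d(q,s)\le 1$, the Euclidean law of cosines applied to $F(q),F(r),F(s)$ computes $\tilde\angle rqs$ up to $\tau(\delta,\sigma)$. To control the Alexandrov angle itself, I would use the standard monotonicity $\angle rqs \ge \tilde\angle rqs$ together with the reverse estimate obtained from the strainer: the same Euclidean computation applied to small shifts along the strainer rays gives upper and lower bounds for the comparison angles of short tripods at $q$, and hence for $\angle rqs$ by passing to the limit inside $\Sigma_q$. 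The distance-comparability hypothesis is exactly what keeps the comparison-angle error bounded by $\tau(\delta,\sigma)$.

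For (2), I would argue similarly but localize at $p$ rather than at $q$. The strainer centered at $p$ gives Euclidean coordinates on $B(p,\sigma)$; in these coordinates $x$ lies very near $p$ while $q$ lies at distance between $\sigma/10$ and $\sigma$, so the Euclidean comparison angle at $p$ depends continuously on $F(x)/\sigma$ and tends to $\angle xpq$ as $F(x)\to 0$. The $\tau(\delta,\sigma,d(p,x)/\sigma)$ bound then comes by combining the almost isometry error $\tau(\delta,\sigma)$ with the continuous modulus of the Euclidean angle function, which is controlled by $d(p,x)/\sigma$.

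For (3) and (4), which I expect to be the real obstacle, the task is to realize an arbitrary direction $\xi\in\Sigma_q$ as an almost-direction to a genuine point $r$ for which an almost-antipode $s$ of comparable distance exists. When $d(p,\partial X)\ge 2\sigma$ we may work entirely inside $X$: the strainer provides $2k$ almost-orthogonal directions at $q$ spanning $\Sigma_q$ up to error $\tau(\delta,\sigma)$, and Euclidean geometry applied via the almost isometry shows that any unit vector in $\R^k$ is realized, up to error $\tau(\delta,\sigma)$, by the direction to some point in $B(p,\sigma)$ at distance $\ge\sigma$; antipodes come from passing through $q$ to a comparable point on the opposite side. When $d(p,\partial X)\le 2\sigma$ the opposite point may leave $X$, but since the strainer is admissible the ``missing'' direction is controlled by the $b_k$-side of the strainer, which lives in $X^*\subset D(X)$; hence for directions $\xi$ with $\angle(\xi,(a_k)_q')\le\pi/2$ we can always find the forward point $r$ inside $X$, while the almost-antipode $s$ is found in $D(X)$ via the same Euclidean argument performed in $D(X)$. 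The main technical care is to verify the uniformity of the error $\tau(\delta,\sigma)$ along this construction and to ensure that the constraint $\angle(\xi,(a_k)_q')\le\pi/2$ precisely rules out the directions that would force $r$ to cross $\partial X$.
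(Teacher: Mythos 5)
Your treatments of parts (1), (3), and (4) are broadly consistent with the paper's own approach: (1) really does reduce to Lemma~\ref{lem:almisom} applied in the double $D(X)$, and for (3)--(4) the paper simply cites \cite{Ym:conv}, whose argument is the strainer-spanning-$\Sigma_q$ reasoning you sketch, with the admissibility of the strainer exactly what lets the antipodal point live in $D(X)$ rather than $X$.

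The gap is in (2). You try to read the comparison angle $\tilde\angle xpq$ off the Euclidean triangle $F(x)F(p)F(q)$, where $F=(d_{a_1},\dots,d_{a_k})$ is the $\tau(\delta,\sigma)$-almost isometry. But an almost isometry controls distances only \emph{multiplicatively}, and the law of cosines is unstable when the triangle is very thin. Writing $a=d(p,x)$, $b=d(p,q)$, $c=d(x,q)$, one has $\cos\tilde\angle xpq=(a^2+b^2-c^2)/(2ab)$; a relative error of size $\tau$ in each of $b$ and $c$ (which are nearly equal when $a\ll b$) perturbs the numerator by an amount of order $\tau b^2$, hence perturbs $\cos\tilde\angle$ by an amount of order $\tau b/a = \tau\, d(p,q)/d(p,x)$. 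This \emph{blows up} precisely in the regime $d(p,x)\ll\sigma$ that (2) addresses, rather than shrinking with $d(p,x)/\sigma$. Equivalently, the almost isometry does not control the \emph{direction} of the increment $F(x)-F(p)$ up to the error $\tau(\delta,\sigma,d(p,x)/\sigma)$ that the lemma demands; the rate of convergence of the difference quotients $(d(a_i,x)-d(a_i,p))/d(p,x)$ to $-\cos\angle(x'_p,(a_i)'_p)$ is exactly the nontrivial content, and it is not a consequence of the almost isometry.

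The paper instead invokes Lemma~5.6 of \cite{BGP}, which runs a different mechanism: the straining pair $(a_i,b_i)$ pins the angle from both sides. One combines $\angle a_ipx+\angle b_ipx\le\pi+\tau(\delta)$ (from $\tilde\angle a_ipb_i>\pi-\delta$ and semicontinuity), $\tilde\angle a_ipx+\tilde\angle b_ipx\ge\pi-\tau(\delta,d(p,x)/\sigma)$ (from monotonicity of comparison angles along the near-geodesic $a_ipb_i$), and the universal $\angle\ge\tilde\angle$, to squeeze $|\angle a_ipx-\tilde\angle a_ipx|$; the statement for general $q$ then follows because the directions $(a_i)'_p$ span $\Sigma_p$ almost orthonormally. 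This is where the $d(p,x)/\sigma$ dependence enters legitimately. You should replace the almost-isometry argument in (2) with this first-variation/strainer argument, or cite BGP Lemma~5.6 directly as the paper does.
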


\begin{proof}
(1) follows from Lemma  \ref{lem:almisom}.
(2) follows from Lemma 5.6 in \cite{BGP}.
For the proof of (3) and (4), see \cite{Ym:conv}.
\end{proof}

We consider 
$X_{\nu} := \{ x\in X\, |\, d(x,\partial X)\ge \nu \}$.

By contradiction argument, one can prove the following two lemmas.

\begin{lem} \label{lem:basic2}
There exist positive numbers $\delta\ll 1/k$
and $\nu\ll \sigma\ll \mu_0$ such that 
for every $p\in\partial X_{\nu}$ and $x\in\partial X_{\nu}$
with $\sigma/10\le d(p,x)\le\sigma$, there exists 
$y\in\partial X_{\nu}$ with $\sigma/10\le d(p,y)\le\sigma$ such that 
$\tilde\angle xpy > \pi - \tau(\delta, \sigma, \nu)$.
\end{lem}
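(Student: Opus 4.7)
The plan is to reduce to a Euclidean reflection via Lemma \ref{lem:almisom}. For any $p\in\partial X_{\nu}$, I will use that $X=R^D_{\delta}(X)$ with strain radius $>\mu_0$ to produce an admissible $(k,\delta)$-strainer $\{(a_{\alpha},b_{\alpha})\}_{\alpha=1}^{k}$ at $p$ of length $\ge\mu_0/2$, with $a_k\in X$ and $b_k\in X^{*}$, and then apply Lemma \ref{lem:almisom} to the map $f(q):=(d(a_1,q),\ldots,d(a_k,q))$ to obtain a $\tau(\delta,\sigma/\mu_0)$-almost isometry $f\colon B(p,\sigma;X)\to U\subset\R^{k}_{+}$, shifted so that $f(p)=(0,\ldots,0,\nu)$ and $\R^{k}_{+}=\{t\ge 0\}$ where $t$ denotes the last coordinate.

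The first step will be to show that $f$ carries $d_{\partial X}$ essentially to the coordinate $t$. From the strainer condition $\tilde\angle a_k p b_k>\pi-\delta$ in $D(X)$ and the fact that a minimizing $D(X)$-geodesic from $a_k$ to $b_k$ crosses $\partial X$ transversely, one gets $d(a_k,q)=d(a_k,\partial X)-d_{\partial X}(q)+O(\tau(\delta,\sigma)\sigma)$ for $q\in B(p,\sigma)$, so that $f(\partial X_{\nu}\cap B(p,\sigma))$ lies in a $\tau(\delta,\sigma)\sigma$-neighborhood of the horizontal slice $\{t=\nu\}$.

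Given $x\in\partial X_{\nu}$ with $\sigma/10\le d(p,x)\le\sigma$, I would write $f(x)=(\xi_1,\ldots,\xi_{k-1},\xi_k)$ with $|\xi_k-\nu|=O(\tau\sigma)$, and form the Euclidean reflection $\tilde y:=(-\xi_1,\ldots,-\xi_{k-1},\nu)\in U\cap\{t=\nu\}$. Then $|\tilde y-f(p)|\in[\sigma/10-\tau\sigma,\sigma+\tau\sigma]$ and $\angle f(x)\,f(p)\,\tilde y=\pi$ in the Euclidean metric. Pulling back by the almost isometry gives $y^{*}\in B(p,\sigma)$ with $|f(y^{*})-\tilde y|<\tau\sigma$, hence $|d_{\partial X}(y^{*})-\nu|<\tau\sigma$; a short slide of $y^{*}$ along the nearly-normal direction $(a_k)'_{y^{*}}$---feasible because the first step shows the $d_{\partial X}$-level sets foliate $B(p,\sigma)$ as almost parallel hyperplanes---lands at some $y\in\partial X_{\nu}$ with $d(y,y^{*})<\tau\sigma$, still satisfying $\sigma/10\le d(p,y)\le\sigma$. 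The $\tau$-almost isometry property, combined with Lemma \ref{lem:basic}(1) on the bounded ratio $d(p,y)/d(p,x)$, then yields $\tilde\angle xpy>\pi-\tau(\delta,\sigma,\nu)$.

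The main obstacle will be making precise the first step, namely that $f$ aligns $d_{\partial X}$ with the last coordinate up to $\tau$-accuracy. Interior strainers give a standard Euclidean picture in $\R^{k}$, but here the presence of $b_k\in X^{*}$ forces one to work inside the double $D(X)$ and to control the crossing angle of the $D(X)$-minimizer $a_kb_k$ with $\partial X$. The admissibility condition $b_k\in X^{*}$ is precisely what guarantees this crossing angle is $\tau$-close to $\pi/2$, so that $d(a_k,\cdot)$ and $d_{\partial X}$ share level sets up to a $\tau(\delta,\sigma)\sigma$-error; once this alignment is in hand, everything else is a Euclidean reflection with all errors absorbed into $\tau(\delta,\sigma,\nu)$.
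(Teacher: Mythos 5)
The paper dispatches this lemma (and its sibling \ref{lem:basic3}) with the single phrase ``by contradiction argument,'' which is the standard blow-up: suppose there were sequences $\delta_i,\nu_i,\sigma_i\to 0$ (with $\nu_i/\sigma_i\to 0$, $\sigma_i\to 0$) and configurations $p_i,x_i$ violating the conclusion by some fixed $\tau_0>0$, rescale by $1/\sigma_i$, use the strainer hypothesis to force the pointed limit to be $(\R^k_+,p_\infty)$ with $p_\infty$ on the boundary hyperplane and $\partial X_{\nu_i}\to\partial\R^k_+$, reflect $x_\infty$ across $p_\infty$ inside $\partial\R^k_+$ to get $y_\infty$ with $\tilde\angle x_\infty p_\infty y_\infty=\pi$, and transfer back for a contradiction. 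Your proposal is a direct, quantitative version of exactly this picture: rather than blowing up, you install the Euclidean half-space model once and for all via the almost isometry of Lemma \ref{lem:almisom}, reflect $f(x)$ across $f(p)$ inside the slice $\{t=\nu\}$, and slide the preimage onto $\partial X_\nu$. The two routes are morally the same; the contradiction argument is shorter and frees one from tracking error constants, while your direct argument makes the dependence of $\tau$ on $(\delta,\sigma,\nu)$ explicit. Both are valid.

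The one place your write-up is imprecise is the justification of the alignment step, which you correctly flag as the crux. The condition $b_k\in X^*$ alone does not force the $D(X)$-geodesic from $a_k$ to $b_k$ to cross $\partial X$ nearly perpendicularly --- a shallow, grazing crossing is entirely compatible with $b_k$ lying on the $X^*$ side. What actually forces near-perpendicularity is the full admissibility package together with the scale separation $\nu\ll\mu_0$: the points $a_1,\dots,a_k$ and $b_1,\dots,b_{k-1}$ are required to lie in $X$ at distance at least $\mu_0$ from $p$, while $p$ sits only $\nu$ above $\partial X$. In the Euclidean model, if the $k$-th strainer axis deviated from the inward normal by an angle $\alpha$, then by the mutual near-orthogonality of the $k$ strainer axes at least one of the axes with index $i<k$ would have a normal component of size at least about $\alpha/\sqrt{k}$, so the corresponding $a_i$ or $b_i$ would drop roughly $\mu_0\alpha/\sqrt{k}$ below the level of $p$; admissibility ($a_i,b_i\in X$) therefore forces $\mu_0\alpha/\sqrt{k}\le\nu$ up to strainer error, i.e.\ $\alpha=O(\delta+\nu/\mu_0)$. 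This misalignment bound, not $b_k\in X^*$ by itself, is what makes $d(a_k,\cdot)$ approximately constant on $\partial X_\nu\cap B(p,\sigma)$ with error $O((\delta+\nu/\mu_0)\sigma)$, which is of the form $\tau(\delta,\sigma,\nu)\sigma$. With that correction the rest of your argument goes through; the only remaining bookkeeping is to scale the reflected point slightly toward $f(p)$ so that the final $y$ stays strictly inside the annulus $\sigma/10\le d(p,y)\le\sigma$ after the slide.
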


\begin{lem} \label{lem:basic3}
There exist positive numbers $\delta\ll 1/k$
and $\nu\ll \sigma\ll \mu_0$ such that 
for every $p\in\partial X_{\nu}$ and $x\in\partial X_{\nu}$
with $d(p,x)\le\sigma$, there exists 
$y\in\partial X_{\nu}$ with $\sigma/10\le d(p,y)\le\sigma$ such that 
$\angle(x_p', y_p') < \tau(\delta, \sigma, \nu)$.
\end{lem}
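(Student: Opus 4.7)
My approach is a direct construction via the admissible strainer chart at $p$. Take an admissible $(k,\delta)$-strainer $(a_i,b_i)_{i=1}^k$ at $p$ of length $\ge\mu_0/2$, so that $a_i\in X$ for all $i$, $b_j\in X$ for $j<k$, and $b_k\in X^*$. By Lemma~\ref{lem:almisom} the map
\[
f(z):=(d(a_1,z),\ldots,d(a_k,z))\colon B(p,2\sigma)\to\R^k
\]
is a $\tau(\delta,\sigma)$-almost isometry onto a relatively open subset of $\R^k$. The case $d(p,x)\ge\sigma/10$ is handled by taking $y=x$, so in what follows I assume $d(p,x)<\sigma/10$.

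The preliminary step is to show that $\partial X_\nu$ is well approximated under $f$ by the affine hyperplane $H_\nu:=\{w\in\R^k:w_k=d(a_k,p)\}$, with error $\tau(\delta,\sigma,\nu)$. Since $b_k\in X^*$, any minimal path in $D(X)$ from $p$ to $b_k$ must traverse $\partial X$, and the $(k,\delta)$-strainer inequality forces the crossing to lie near the foot of $p$ on $\partial X$; this identifies $(b_k)'_p$ with the inward unit normal to $\partial X$ up to error $\tau(\delta,\nu/\sigma)$. Combined with $\angle((a_k)'_p,(b_k)'_p)>\pi-\delta$, the first variation formula yields
\[
\bigl|\,d(\partial X,z)+d(a_k,z)-\nu-d(a_k,p)\,\bigr|<\tau(\delta,\sigma,\nu)
\quad\text{for all }z\in B(p,\sigma),
\]
which identifies $f(\partial X_\nu\cap B(p,\sigma))$ as a $\tau(\delta,\sigma,\nu)$-neighborhood of $H_\nu$.

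With this in hand, for $x\in\partial X_\nu$ with $d(p,x)<\sigma/10$ I set
\[
\tilde y:=f(p)+\frac{\sigma/5}{|f(x)-f(p)|}\bigl(f(x)-f(p)\bigr)\in\R^k,
\]
which lies within $\tau(\delta,\sigma,\nu)$ of $H_\nu$. Almost-surjectivity of $f$ provides $\bar y\in B(p,\sigma)$ with $|f(\bar y)-\tilde y|<\tau(\delta,\sigma)$, so $d(\bar y,\partial X_\nu)<\tau(\delta,\sigma,\nu)$; a short motion of $\bar y$ along the regular direction $\pm(a_k)'_{\bar y}$, which is approximately normal to the level sets of $d(\partial X,\cdot)$, pushes $\bar y$ to a point $y\in\partial X_\nu$ with $\sigma/10\le d(p,y)\le\sigma$. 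Since $f(x)$ and $f(y)$ lie on a common ray from $f(p)$ up to the approximation error, Lemma~\ref{lem:basic}(1) then gives $\tilde\angle xpy<\tau(\delta,\sigma,\nu)$ and hence $\angle(x'_p,y'_p)<\tau(\delta,\sigma,\nu)$.

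The principal obstacle is the preliminary step, i.e.\ the quantitative closeness of $f(\partial X_\nu)$ to $H_\nu$, which hinges on an accurate identification of $(b_k)'_p$ with the inward normal to $\partial X$ and on relating $d(b_k,\cdot)|_X$ to $d(\partial X,\cdot)$ on $B(p,\sigma)$. A secondary technicality appears when $d(p,x)$ becomes comparable to the additive error $\tau(\delta,\sigma)$, in which case $(f(x)-f(p))/|f(x)-f(p)|$ no longer tracks $x'_p$ reliably; there one rescales the space by $1/d(p,x)$ and runs the same construction in the pointed Gromov--Hausdorff limit, which by admissibility of the strainer is the half-space $\R^k_+$ and handles the extension trivially inside the boundary hyperplane.
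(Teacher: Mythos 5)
Your proof takes a direct, constructive route via the admissible strainer chart, whereas the paper dismisses Lemmas~\ref{lem:basic2} and \ref{lem:basic3} together with a one-line reference to a contradiction (blow-up) argument; so the methods are genuinely different, and if completed yours would be the more quantitative of the two. The preliminary step, however, needs more care: you identify $(b_k)'_p$ with the inward normal at $p$ and then assert that "the first variation formula yields $|d(\partial X,z)+d(a_k,z)-\nu-d(a_k,p)|<\tau(\delta,\sigma,\nu)$ for all $z\in B(p,\sigma)$." First variation at $p$ only controls the derivative at the single point $p$; the uniform bound on $B(p,\sigma)$ requires either running the same normal-identification at every $z\in B(p,\sigma)$ (where the admissible strainer of length $\sim\mu_0$ is still a strainer) or appealing directly to the structure of the chart coming from Lemma~\ref{lem:almisom}, and as written you do neither.

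The more serious gap is in the final angle estimate when $d(p,x)\ll\sigma$. Lemma~\ref{lem:basic}(1) requires $1/100\le d(p,x)/d(p,y)\le 1$, which fails as soon as $d(p,x)<\sigma/1000$ while $d(p,y)\ge\sigma/10$. Worse, the intermediate claim $\tilde\angle xpy<\tau$ is itself false in that regime: writing $a=d(p,x)$, $b=d(p,y)$, $c=d(x,y)$, collinearity of $f(p),f(x),f(y)$ together with the multiplicative error of the almost isometry only gives $|c-(b-a)|\lesssim\tau b$, and the law of cosines then produces $1-\cos\tilde\angle xpy\sim\tau\,b/a$, which is unbounded as $a/b\to 0$. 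To compare $x'_p$ and $y'_p$ when $d(p,x)$ is tiny one must work with directional derivatives rather than comparison triangles: show $(f_i(x)-f_i(p))/d(p,x)\approx-\cos\angle((a_i)'_p,x'_p)$ and likewise for $y$ (the finite-dimensional analogue of Lemma~\ref{lem:diff}, or Lemma~\ref{lem:basic}(2)), then invoke the $\tau(\delta)$-almost isometry $\Sigma_p\to S^{k-1}(1)$ of Lemma~\ref{lem:almlinear}. Your rescaling fallback does not repair this: after rescaling by $1/d(p,x)$ the required target $y$, which must satisfy $\sigma/10\le d(p,y)\le\sigma$, escapes to infinite rescaled distance, so "the same construction in the limit" produces nothing at the right scale; and when $d(p,x)\ll\nu$ the blow-up limit at scale $d(p,x)$ is the full $\R^k$ rather than $\R^k_+$, since $\partial X$ recedes to infinite rescaled distance.
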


%                     \input{embed}

%\cap-embed.tex 
\section{Embedding $X$ into $L^2(X)$ } \label{sec:embed}
\medskip

  Let $L^2(X)$ denote the Hilbert space consisting of all $L^2$ functions on 
$X$ with respect to the Hausdorff $k$-measure, where $G$ acts on 
$L^2(X)$ by $g\cdot\phi(x)=\phi(g^{-1}x)$ for any $\phi\in L^2(X)$.
In this section  
we study the map $f_X:X\to L^2(X)$  defined by  
$$
    f_X(p)(x) = h(d(p,x)),
$$
where  $h:\R \to [0,1]$ is a smooth non-increasing function 
such that\par

\begin{enumerate}
 \item  $h=1$ \quad on $(-\infty,0]$, \qquad 
                     $h=0$ \quad on $[\sigma,\infty)$;
 \item  $h^{\prime} = 1/\sigma$ \quad on 
                  $[2\sigma/10, 8\sigma/10]$;
 \item  $-\sigma^2 < h^{\prime} < 0 $ \quad on 
                  $(0, \sigma/10]$;
 \item  $|h^{\prime\prime}| < 100/\sigma^2$.
\end{enumerate}

Remark that $f_X$ is a $G$-equivariant Lipschitz map. 
\par
 From now on, we use $c_1,c_2,\dots$ to express positive 
constants depending only on the dimension $k$.
 First we remark that by Lemma \ref{lem:almisom}
 there exist constants $c_1$ and $c_2$ 
such that for every $p\in X$,

\begin{equation}
    c_1 < \frac{\cal H^k(B_p(\sigma))}{b_0^k(\sigma)} < c_2,
\end{equation}
where $\cal H^k$ and $b_0^k(\sigma)$ denote 
the Hausdorff $k$-measure and 
the volume of a $\sigma$-ball
in $\R^k$ respectively.

 We next consider the directional derivatives of $f_X$.  
For $\xi\in \Sigma_p$, putting

\begin{equation}
  df_X(\xi)(x) = -h^{\prime}(d(p,x))\cos \angle(\xi, x^{\prime}_p), 
              \quad (x\in X),  \label{eq:deriv}
\end{equation}
we have 
$$
    df_X(\xi) = \lim_{t\downarrow 0} \frac{f_X(\operatorname{exp}t\xi) - f_X(p)}
                                  {t} \qquad \text{in $L^2(X)$}.
$$
From now on we use the following norm of $L^2(X)$ with normalization:
$$
 |f|^2 = \frac{\sigma^2}{b^k_0(\sigma)}\int_X |f(x)|^2 d\cal H^k(x).
$$ 

\begin{lem} \label{lem:df_X}
There exist positive numbers $c_3$ and $c_4$ such that 
$$
       c_3 < |df_X(\xi)| < c_4
$$
for every $p\in X$ and $\xi\in \Sigma_p$. 
\end{lem}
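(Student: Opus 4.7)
The plan is to estimate
\[
 |df_X(\xi)|^2 = \frac{\sigma^2}{b_0^k(\sigma)} \int_X \bigl(h^{\prime}(d(p,x))\bigr)^2 \cos^2\angle(\xi, x^{\prime}_p)\, d\mathcal{H}^k(x)
\]
from above and below, uniformly in $p\in X$ and $\xi\in\Sigma_p$. The integrand is supported in $B(p,\sigma)$ by property~(1) of $h$, and properties~(2)--(4) together with $|h^{\prime\prime}|<100/\sigma^2$ yield a uniform bound $|h^{\prime}|\le C_0/\sigma$ on this support. Combined with the volume estimate $\mathcal{H}^k(B(p,\sigma))\le c_2 b_0^k(\sigma)$ recalled in (15.1), this at once gives the upper bound $|df_X(\xi)|^2 \le C_0^2 c_2 =: c_4^2$.

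The lower bound is where the strainer structure enters. I will restrict the integral to the annulus $A := A(p;\,3\sigma/10,\,7\sigma/10)$, where $(h^{\prime})^2 = 1/\sigma^2$ exactly by property~(2), so it suffices to produce a dimensional constant $c>0$ with
\[
 \int_{A\cap X} \cos^2\angle(\xi,x^{\prime}_p)\, d\mathcal{H}^k(x) \ \ge\ c\, b_0^k(\sigma)
\]
independent of $p$ and $\xi$. The idea is to locate one point in $X$ whose direction from $p$ is close to $\xi$, and then harvest a small ball around it. In the interior case ($d(p,\partial X)\ge 2\sigma$), Lemma~\ref{lem:basic}(3) supplies $r\in X$ with $d(p,r)\ge\sigma$ and $\angle(\xi,r^{\prime}_p)<\tau(\delta,\sigma)$. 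In the boundary case, since $\xi\in\Sigma_p(X)$ and $(a_k)^{\prime}_p$ points from $p$ into $X$, one has $\angle(\xi,(a_k)^{\prime}_p)\le\pi/2+\tau$, so Lemma~\ref{lem:basic}(4) applies and delivers a point $r\in X$ of the same kind. Rescaling $r$ along the geodesic $pr$ to a point $r^*\in A$ at distance $\sigma/2$ from $p$ preserves the direction, so $\angle(\xi,(r^*)^{\prime}_p)<\tau$.

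I then fix a small $\eta>0$ (independent of $p$, $\xi$) and examine $B(r^*,\eta\sigma)\cap X$, which lies inside $A$ provided $\eta<1/10$. For any $x$ in this ball, Lemma~\ref{lem:basic}(1) together with a comparison-angle calculation forces $\angle(x^{\prime}_p,(r^*)^{\prime}_p)<\tau(\delta,\sigma,\eta)$, hence $\cos^2\angle(\xi,x^{\prime}_p)>1/2$ once $\delta$ and $\eta$ are sufficiently small. The almost-isometry Lemma~\ref{lem:almisom} applied at $r^*$ yields $\mathcal{H}^k(B(r^*,\eta\sigma)\cap X) \ge c_1^{\prime}\,\eta^k b_0^k(\sigma)$, valid even when $r^*$ lies close to $\partial X$ because an admissible strainer persists there. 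Putting these together produces the required lower bound, and hence $c_3>0$. The main obstacle, and the reason the boundary case is not altogether routine, is the uniformity over $\xi$ at points near $\partial X$: one must verify that \emph{every} direction $\xi\in\Sigma_p$ is realized by a point of $X$ itself, not merely of the double $D(X)$. This is exactly what admissibility of the $(k,\delta)$-strainer buys through Lemma~\ref{lem:basic}(4); without it, the realizing point could lie in the mirror copy $X^*$ and contribute nothing to the integral over $X$.
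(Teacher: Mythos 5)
Your overall strategy — bound $|h'|$ and $\mathcal{H}^k(B(p,\sigma))$ for the upper bound, then harvest a small ball in a controlled direction for the lower bound using Lemma~\ref{lem:basic}~(3),(4) — is exactly the route the paper intends (the paper's entire proof is the one-line instruction to use those two parts). However, your treatment of the boundary case contains a genuine error. You assert that for any $\xi\in\Sigma_p(X)$ one has $\angle(\xi,(a_k)'_p)\le\pi/2+\tau$, reasoning that $\xi$ ``points within $X$.'' This is false when $p\in\interior X$ with $d(p,\partial X)<2\sigma$: there $\Sigma_p(X)$ is a full (approximate) sphere, not a hemisphere, and the direction from $p$ toward its nearest boundary point is a perfectly valid element of $\Sigma_p(X)$ with $\angle(\xi,(a_k)'_p)\approx\pi$. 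For such $\xi$ there simply is no $r\in X$ at distance $\ge\sigma$ from $p$ with $\angle(\xi,r'_p)$ small, so the ball you propose to integrate over does not exist and Lemma~\ref{lem:basic}~(4) does not apply.

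The repair is short but must be made explicit, and it exploits the $\cos^2$ in $|df_X(\xi)|^2$. When $\angle(\xi,(a_k)'_p)>\pi/2$, pick $\xi^*\in\Sigma_p$ with $\angle(\xi,\xi^*)>\pi-\tau(\delta)$ (such a near-antipode exists since $p$ is an interior, $(k,\delta)$-strained point); then $\angle(\xi^*,(a_k)'_p)<\pi/2+\tau(\delta)$, Lemma~\ref{lem:basic}~(4) applies to $\xi^*$ and produces $r\in X$ with $\angle(\xi^*,r'_p)<\tau$, hence $\angle(\xi,r'_p)>\pi-\tau$ and $\cos^2\angle(\xi,x'_p)$ is still close to $1$ on a ball around the rescaled point $r^*$. (When $p\in\partial X$ itself, $\Sigma_p(X)$ is genuinely a hemisphere and your original reasoning works.) With this case distinction inserted, the rest of your argument — Lemma~\ref{lem:basic}~(1) to control $\angle(x'_p,(r^*)'_p)$ on $B(r^*,\eta\sigma)$, Lemma~\ref{lem:almisom} for the volume lower bound — closes the proof as you describe.
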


\begin{proof}
Use Lemma \ref{lem:basic} (3), (4).
\end{proof}

\begin{lem} \label{lem:inj}
There exist positive numbers  $c_5$ and $c_6$ such that
for every $p,q\in X$ with $d(p,q)\le \sigma$, 
$$
c_5 < \frac{|f_X(p)-f_X(q)|}{d(p,q)} < c_6.
$$
In particular $f_X$ is injective.
\end{lem}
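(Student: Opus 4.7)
The plan is to establish both inequalities by direct estimation of the $L^2$-integral
\begin{equation*}
|f_X(p)-f_X(q)|^2 = \frac{\sigma^2}{b_0^k(\sigma)}\int_X\bigl(h(d(p,x))-h(d(q,x))\bigr)^2\,d\mathcal{H}^k(x).
\end{equation*}
For the upper bound, the integrand is supported in $B(p,2\sigma)$ since $d(p,q)\le\sigma$, and is pointwise bounded by $\|h'\|_\infty^2\,d(p,q)^2\le C\sigma^{-2}d(p,q)^2$ from properties (2)--(4) of $h$. The almost isometry of Lemma \ref{lem:almisom} applied at $p$ gives $\mathcal{H}^k(B(p,2\sigma))\le C\sigma^k$, so the integral above is bounded by $C\sigma^{k-2}d(p,q)^2$; the normalizing prefactor $\sigma^2/b_0^k(\sigma)\sim\sigma^{2-k}$ absorbs this and yields $|f_X(p)-f_X(q)|\le c_6\,d(p,q)$.

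For the lower bound I would exhibit a region of definite measure on which the integrand is quantitatively large. Fix $\eta:=q'_p\in\Sigma_p$ and set
\begin{equation*}
U:=\bigl\{x\in X : d(p,x)\in[3\sigma/10,\,7\sigma/10],\ \angle(x'_p,\eta)<\pi/4\bigr\}.
\end{equation*}
The almost isometry of Lemma \ref{lem:almisom} identifies $B(p,\sigma)$ with an open subset of $\R^k$ (or $\R^k_+$) up to distortion $\tau(\delta,\sigma)$, under which $U$ corresponds to a cone-annulus sector of $k$-volume $\ge c\sigma^k$; hence $\mathcal{H}^k(U)\ge c_7\sigma^k$. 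For $x\in U$, the comparison inequality in the $(-1)$-plane combined with the angle estimate $|\angle xpq-\tilde\angle xpq|<\tau(\delta,\sigma)$ from Lemma \ref{lem:basic}(1),(2) yields
\begin{equation*}
d(q,x)\le d(p,x)-\tfrac{1}{2}\,d(p,q)\cos\angle(x'_p,\eta)+\tau(\delta,\sigma)\,d(p,q),
\end{equation*}
and both $d(p,x)$ and $d(q,x)$ remain in $[\sigma/10,8\sigma/10]$ where $|h'|\sim 1/\sigma$ by property~(2). Hence $|h(d(p,x))-h(d(q,x))|\ge c\,d(p,q)/\sigma$ throughout $U$, and substituting into the $L^2$-integral produces $|f_X(p)-f_X(q)|\ge c_5\,d(p,q)$.

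The main obstacle is that this linearization is only valid when $d(p,q)$ is small compared with $\sigma$; in the complementary regime $d(p,q)\gtrsim\sigma/100$ the first-order expansion of $d(q,\cdot)$ breaks down. This borderline case I would handle by a separate estimate: restricting the integral to the small ball $B(p,\sigma/200)$, on which $f_X(p)\ge h(\sigma/200)$ while $f_X(q)$ takes a much smaller value since $d(q,x)\ge d(p,q)-\sigma/200$, one still recovers $|f_X(p)-f_X(q)|^2\ge c\,\sigma^2\ge c\,d(p,q)^2$. Once both scales are handled, injectivity of $f_X$ follows automatically: from the lower bound when $d(p,q)\le\sigma$, and from $f_X(p)(p)=1$ versus $f_X(q)(p)=0$ when $d(p,q)\ge\sigma$.
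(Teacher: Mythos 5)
The paper declares this proof ``straightforward'' and omits it, so there is nothing in the source to compare against directly; the $L^2$-computation you set up is certainly the intended approach, and your upper bound is correct.

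The lower bound has a genuine gap in the case split. For $d(p,q)$ of order $\sigma/100$ and $x\in B(p,\sigma/200)$, both $d(p,x)<\sigma/200$ and $d(q,x)<d(p,q)+\sigma/200<\sigma/10$, so both lie in $(0,\sigma/10]$ where property~(3) of $h$ forces $|h'|<\sigma^2$; hence $|f_X(p)(x)-f_X(q)(x)|\lesssim\sigma^3$, nowhere near the required $\sim d(p,q)/\sigma\sim 10^{-2}$. The claim that ``$f_X(q)$ takes a much smaller value'' is simply false here: $h$ is essentially the constant $1$ on $(0,\sigma/10]$, and a ball of radius $\sigma/200$ about $p$ separates $f_X(p)$ from $f_X(q)$ only once $d(p,q)\gtrsim 2\sigma/10$, so that $d(q,\cdot)$ escapes the plateau of the cutoff. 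As written your two cases therefore miss, roughly, the range $\sigma/100\lesssim d(p,q)\lesssim 2\sigma/10$. The repair runs opposite to your worry: with $d(p,x)\ge 3\sigma/10$ and $\angle(x'_p,\eta)<\pi/4$ the Euclidean law of cosines already gives $s-\sqrt{s^2+t^2-2st\cos\theta}\ge\tfrac12 t\cos\theta$ for all $t/s\le 0.8$, so the cone--annulus linearization in fact survives up to $d(p,q)\lesssim\sigma/4$ and then overlaps the (correctly thresholded) ball argument. Also note that property~(2) puts $|h'|\sim 1/\sigma$ only on $[2\sigma/10,8\sigma/10]$, not $[\sigma/10,8\sigma/10]$, and $d(q,x)$ can drop below $2\sigma/10$ inside $U$; this is harmless because $h$ is monotone and $\int_{\max(d(q,x),\,2\sigma/10)}^{d(p,x)}|h'|$ already supplies the needed decrement, but your stated justification does not say this.
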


The proof is straightforward, and hence omitted.
\medskip

 Let $K_p=K(\Sigma_p)$ be the tangent cone at $p$. 
We make an identification $\Sigma_p=\Sigma_p\times \{ 1\} \subset K_p$. 
The map $df_X:\Sigma_p\to L^2(X)$ naturally extends to 
$df_X:K_p\to L^2(X)$. Next we show that $df_X(K_p)$ can be approximated by 
a $k$-dimensional subspace of $L^2(X)$. \par

\begin{lem} \label{lem:almlinear}
For any $p\in X$, let $(a_i,b_i)$ be an admissible
 $(k,\delta)$-strainer at $p$. 
Taking $\xi_i$ in $(a_i)_p^{\prime}$, we have for 
any $\xi\in \Sigma_p$,
$$
  |df_X(\xi) - \sum_{i=1}^n c_i\,df_X(\xi_i)| < \tau(\delta),
$$
where $c_i = \cos\,\angle(\xi_i,\xi)$. 
In particular, $df_X(\xi_1),\dots, df_X(\xi_k)$ are linearly
independent in $L^2(X)$.

\end{lem}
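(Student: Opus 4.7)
The plan is to reduce the $L^2$ bound to a pointwise almost-linearity statement for the cosine of angles at the $(k,\delta)$-strained point $p$, then integrate against the weight $|h'|^2$, and finally deduce linear independence from a Gram matrix argument.

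First, I would establish the pointwise identity that, for every $\xi\in\Sigma_p$ and every $x\in X$ with $x\neq p$,
\[
\Bigl|\cos\angle(\xi,x'_p) - \sum_{i=1}^{k} c_i\cos\angle(\xi_i,x'_p)\Bigr| < \tau(\delta),
\qquad c_i=\cos\angle(\xi_i,\xi).
\]
The justification is that the admissibility of the strainer and Lemma~\ref{lem:almisom} give an almost isometry $(d(a_1,\cdot),\dots,d(a_k,\cdot))\colon B(p,\sigma)\to\R^k$ (or $\R^k_+$), whose differential at $p$ sends $\eta\in\Sigma_p$ to $-(\cos\angle(\xi_i,\eta))_{i=1}^k$ by the first variation formula. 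Hence the map $\eta\mapsto(\cos\angle(\xi_i,\eta))_i$ is a $\tau(\delta)$-almost isometric embedding of $\Sigma_p$ into the unit sphere of $\R^k$, and the Euclidean identity $\langle\xi,\eta\rangle = \sum_i\langle e_i,\xi\rangle\langle e_i,\eta\rangle$ transports to the displayed inequality. The angle-comparison bounds from Lemma~\ref{lem:basic} (1)--(2) provide the error control.

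Next, from the formula \eqref{eq:deriv},
\[
df_X(\xi)(x) - \sum_{i=1}^{k} c_i\,df_X(\xi_i)(x)
  = -h'(d(p,x))\Bigl[\cos\angle(\xi,x'_p) - \sum_i c_i\cos\angle(\xi_i,x'_p)\Bigr].
\]
Taking the $L^2$ norm with the pointwise estimate, using $\operatorname{supp} h'\subset[0,\sigma]$, $|h'|\le C/\sigma$, and the volume bound $\mathcal H^k(B(p,\sigma))\le c_2 b_0^k(\sigma)$, I obtain
\[
\Bigl|df_X(\xi)-\sum_i c_i\,df_X(\xi_i)\Bigr|^2
  \le \frac{\sigma^2}{b_0^k(\sigma)}\,\tau(\delta)^2\int_{B(p,\sigma)}|h'|^2\,d\mathcal H^k
  \le C_k\,\tau(\delta)^2,
\]
and the constant can be absorbed into $\tau$.

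For the linear independence, I would examine the Gram matrix $A_{ij}:=\langle df_X(\xi_i),df_X(\xi_j)\rangle$. Applying the main inequality with $\xi=\xi_j$, and using the strainer property $|\cos\angle(\xi_i,\xi_j)-\delta_{ij}|<\tau(\delta)$, one gets $|A_{ij}-\delta_{ij}|df_X(\xi_i)|^2|<c_4\,\tau(\delta)$. Combined with the lower bound $|df_X(\xi_i)|>c_3$ from Lemma~\ref{lem:df_X}, the matrix $A$ is strictly diagonally dominant for $\delta$ small enough, hence invertible, giving linear independence. The main obstacle is Step~1: converting the $(k,\delta)$-strainer data into the precise bilinearity estimate for the cosine on $\Sigma_p$. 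Everything afterwards is an elementary $L^2$ integration and a dominance-of-diagonal argument.
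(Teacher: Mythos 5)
Your Step~1 (reduce the $L^2$ bound to a pointwise cosine bilinearity, obtained by transporting the Euclidean Parseval identity along the almost-isometry $\phi\colon\Sigma_p\to S^{k-1}$ coming from the strainer) and Step~2 (plug into \eqref{eq:deriv} and integrate against $|h'|^2$) are exactly what the paper does; it cites BGP for the almost isometry $\phi(\xi)=(\cos\angle(\xi_i,\xi))_i/|(\cos\angle(\xi_i,\xi))_i|$, states the pointwise inequality $|\cos\angle(\xi,\eta)-\sum_i c_i\cos\angle(\xi_i,\eta)|<\tau(\delta)$, and says ``from which the lemma follows.''

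Step~3, however, does not work as written. Plugging $\xi=\xi_j$ into the main inequality only tells you $|\sum_{l\neq j}\cos\angle(\xi_l,\xi_j)\,df_X(\xi_l)|<\tau(\delta)$, and since each coefficient $\cos\angle(\xi_l,\xi_j)$ with $l\neq j$ is already $O(\tau(\delta))$ by the strainer property and each $|df_X(\xi_l)|\le c_4$, this is trivially satisfied and says nothing about the off-diagonal Gram entries $A_{ij}$. The estimate $|A_{ij}|<\tau(\delta)$ for $i\neq j$ is true, but one has to get it directly: using Lemma~\ref{lem:almisom} to transport the integral defining $A_{ij}$ to an integral over a Euclidean ball (or half-ball in the boundary case) and observing that $\int_{B}|h'(|u|)|^2\,\frac{u_iu_j}{|u|^2}\,du=0$ by the reflection symmetry $u_i\mapsto -u_i$ (note that for an admissible strainer only the $k$-th coordinate is constrained to a half-line, so for $i\neq j$ at least one of $i,j$ can be reflected). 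Alternatively, use the almost-surjectivity of $\phi$: if $\sum_i\lambda_i\,df_X(\xi_i)=0$ with $|\lambda|=1$, pick $\xi$ with $\phi(\xi)\approx(\lambda_i)_i$, so that $c_i\approx\lambda_i$; then the main inequality forces $|df_X(\xi)|<\tau(\delta)$, contradicting the lower bound $c_3$ of Lemma~\ref{lem:df_X}. Either repair makes the linear-independence conclusion go through.
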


\begin{proof}
Let $\phi : \Sigma_p(D(X))\to S^{k-1}(1)$ be the $\tau(\delta)$-almost 
isometry defined by
$$
  \phi(\xi)=(\cos\,\angle(\xi_i,\xi))/|(\cos\,\angle(\xi_i,\xi))|.
$$ 
(See \cite{BGP}).  It is easy to verify 
$$
\left|\cos\,\angle(\xi,\eta) - \sum_{i=1}^k c_i\cos\,\angle(\xi_i,\eta)\right| 
                    < \tau(\delta),
$$
for every $\eta\in \Sigma_p$, from which the lemma follows. 
\end{proof}

Thus if $p\in\interior X$ (resp. if $p\in\partial X$), then 
$df_X(K_p)$ can be approximated by the $k$-dimensional 
subspace $\Pi_p$  generated by $\{ df_X(\xi_i)\}_{1\le i\le k}$
(resp. by a $k$-dimensional half-space of $\Pi_p$). 
In view of Lemma \ref{lem:almlinear}, one may say that 
$df_X$ is almost linear.
\par
\medskip

\section{$G$-invariant tubular neighborhood} 
            \label{sec:cap-tube}

In this section, we construct a 
$G$-invariant tubular neighborhood of $f_X(X_{\nu})$ 
in $L^2(X)$ in a generalized sense, 
where $\nu\ll\sigma$.

Let $G_k(L^2(X))$ be the infinite-dimensional Grassmann manifold 
consisting of all $k$-dimensional subspaces of $L^2(X)$. 

For any $p\in X$, let $K_p=\R^{\ell(p)}\times K'$ be as in Lemma
\ref{lem:ell_p}, and let $U_p=GL_p$ be as in Assumption 
\ref{asmp:slice}.

\begin{lem} \label{lem:inv}
There exists a $G_p$-invariant $k$-dimensional subspace
$\Pi_p$ of $L^2(X)$ such that 
$\angle(\Pi_p, df_X(K_p))<\tau(\delta)$.
\end{lem}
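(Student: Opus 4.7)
The plan is to produce $\Pi_p$ by averaging an obvious non-invariant candidate over the isotropy group $G_p$. First observe that $G_p$ is compact: the Lie group $G$ acts by isometries on the proper metric space $X$ and so acts properly, whence each isotropy subgroup is compact. Let $(a_i,b_i)_{i=1}^k$ be an admissible $(k,\delta)$-strainer at $p$ and set $\xi_i\in(a_i)'_p$; then by Lemma \ref{lem:almlinear} the $k$-dimensional subspace $\Pi_p^0:=\mathrm{span}\{df_X(\xi_1),\ldots,df_X(\xi_k)\}\subset L^2(X)$ lies within angle $\tau(\delta)$ of $df_X(K_p)$. This $\Pi_p^0$ is the natural first guess, but it will generally not be $G_p$-invariant because the strainer is not chosen equivariantly.

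To $G_p$-symmetrize, note that equivariance of $f_X$ together with $gp=p$ for $g\in G_p$ gives $g\cdot df_X(\xi)=df_X(g_*\xi)$ for every $\xi\in\Sigma_p$, so $df_X(K_p)$ is $G_p$-invariant as a subset of $L^2(X)$. Let $P_0$ be the orthogonal projection onto $\Pi_p^0$ and define
\[
P \;:=\; \int_{G_p} gP_0g^{-1}\,d\mu(g),
\]
where $\mu$ is the normalized Haar measure on $G_p$; the continuity of the $G_p$-action on $L^2(X)$ given by Lemma \ref{lem:gp-conv} makes the integrand strongly continuous, so the Bochner integral is well defined. Then $P$ is a self-adjoint bounded operator that commutes with $G_p$. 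For each $g\in G_p$ the subspace $g\Pi_p^0=\mathrm{span}\{df_X(g_*\xi_i)\}$ is within angle $\tau(\delta)$ of $df_X(K_p)$ and hence of $\Pi_p^0$, so $\|gP_0g^{-1}-P_0\|<\tau(\delta)$; averaging gives $\|P-P_0\|<\tau(\delta)$.

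Since $P_0$ has eigenvalues $1$ (multiplicity $k$) and $0$, a standard spectral perturbation argument shows that, for $\delta$ sufficiently small, $P$ has exactly $k$ eigenvalues (counted with multiplicity) in the interval $(1-\tau(\delta),1+\tau(\delta))$ while the remainder of the spectrum lies in $(-\tau(\delta),\tau(\delta))$. Define $\Pi_p$ to be the sum of the eigenspaces of $P$ corresponding to the first cluster; it is $k$-dimensional, and $G_p$-invariant because $P$ commutes with $G_p$. A further application of spectral perturbation yields $\angle(\Pi_p,\Pi_p^0)<\tau(\delta)$, and combined with $\angle(\Pi_p^0,df_X(K_p))<\tau(\delta)$ this gives the required estimate. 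The main obstacle is the quantitative spectral-gap step: one must verify that the cluster of eigenvalues of $P$ near $1$ has cardinality exactly $k$ with a gap bounded below by a constant depending only on $k$, independent of $p$ and of the particular orbit $G_p\cdot\Pi_p^0$; this follows from the uniform estimate $\|P-P_0\|<\tau(\delta)$ via the min-max characterization of eigenvalues.
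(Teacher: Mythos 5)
Your proof is correct, but it takes a genuinely different route from the paper's. The paper first uses the $G_p$-invariant splitting $K_p=\R^{\ell(p)}\times K_p'$ from Lemma~\ref{lem:ell_p}: the factor $\R^{\ell(p)}=K_p(Gp)$ is already $G_p$-invariant, so $df_X(K_p(Gp))$ supplies an $\ell(p)$-dimensional $G_p$-invariant piece for free, and the symmetrization is then applied only to a $(k-\ell(p))$-dimensional transverse subspace $\Pi'$ close to $df_X(K_p')$, via the center-of-mass construction on the Grassmannian $G_{k-\ell(p)}(L^2(X))$; the answer is $\Pi_p=\Pi\oplus df_X(K_p(Gp))$. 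You instead treat the whole $k$-dimensional candidate $\Pi_p^0$ at once, replace the Grassmannian center of mass with a Bochner average of projection operators over $G_p$, and recover the invariant subspace as the top-$k$ spectral cluster of the averaged self-adjoint operator; the estimate $\|gP_0g^{-1}-P_0\|<\tau(\delta)$ follows from Lemma~\ref{lem:almlinear} applied to the $g$-translated strainer together with the uniform bounds of Lemma~\ref{lem:df_X}, and the spectral-gap/rank-counting step is standard for a compact self-adjoint perturbation of a finite-rank projection. Your version is a bit leaner in that it avoids the $\ell(p)$-splitting (which the paper in any case needs again for Lemma~\ref{lem:GL}, so it is not wasted work there), while the paper's version makes the role of the orbit direction explicit; both are valid symmetrization arguments and the operator averaging is essentially an explicit realization of the Grassmannian center of mass. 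Two small things worth tightening in your write-up: the compactness of $G_p$ needs $G$ to be a \emph{closed} subgroup of $\isom(X)$ (which acts properly on the proper space $X$), a hypothesis the paper is implicitly using; and it should be stated that $P$ is compact (it is a norm limit of rank-$k$ operators over the compact group $G_p$), so that its spectrum near $1$ really does consist of finitely many eigenvalues.
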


\begin{proof}
%Let $H_i$ be a sequence of finte subgroups of $G_p$
%with $\lim_{i\to\infty} H_i = G_p$.
Let $\Pi'$ be a $k-\ell(p)$-dimensional
subspace of $L^2(X)$ which is $\tau(\delta)$-close to $df_X(K')$.
Then $G_p\Pi'$ is a $G_p$-invariant compact subset 
of $L^2(X)$.
Since $G_p\Pi'$ can be considered as a $G_p$-invariant
subset of $G_{k-\ell(p)}(L^2(X))$ whose diameter is small,
we can find a $G_p$-fixed point, say $\Pi$, near $\Pi'$ 
by using the center of mass technique on 
$G_{k-\ell(p)}(L^2(X))$. 
It suffices to put $\Pi_p:=\Pi\oplus df_X(K_p(Gp))$.
\end{proof}

Now fix $p$ and 
take $L_p$ so small that $G_q\subset G_p$ for all $q\in L_p$.
For any $q\in L_p$ and $x\in Gq$, we put $\Pi_q:=\Pi_p$,
$\Pi_x:=g(\Pi_{p})$, where $x=gq$.
Note that $\{ \Pi_{x}\}_{x\in U_p}$ provides a $G$-invariant field
of $k$-dimensional subspaces of $L^2(X)$ which are 
$\tau(\delta)$-almost tangent to $f_X(X)$.

\begin{lem} \label{lem:GL}
If $L_p$ is sufficiently small,
 then  $\angle(\Pi_x, \Pi_y)\le C d(x,y)$ 
 for all $x$, $y\in U_p$, where $C=C_{U_p}$.
\end{lem}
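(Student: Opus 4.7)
The plan is to reduce to a local estimate at $p$ by $G$-invariance, and then exploit the slice structure from Assumption~\ref{asmp:slice} together with the fact that $G$ acts on $L^2(X)$ by linear isometries. First I would note that the definition $\Pi_{gq} := g\Pi_p$ for $q\in L_p$ is well-posed on $U_p=GL_p$ precisely because $\Pi_p$ is $G_p$-invariant. Given two close points $x,y\in U_p$, write $x=g_1q_1$ and $y=g_2q_2$ with $q_i\in L_p$. Since $G$ acts on $L^2(X)$ by linear isometries (hence preserves angles between subspaces) and $d$ is $G$-invariant,
$$
\angle(\Pi_x,\Pi_y)=\angle(\Pi_p,h\Pi_p),\qquad d(x,y)=d(q_1,hq_2),\qquad h:=g_1^{-1}g_2.
$$
It therefore suffices to prove $\angle(\Pi_p,h\Pi_p)\le C\,d(q_1,hq_2)$ whenever $q_1,q_2\in L_p$ and $hq_2$ remains close to $p$, for a constant $C=C_{U_p}$ depending only on the slice.

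Next I would use the slice decomposition $U_p\cong G\times_{G_p}L_p$: the assignment $(g,q)\mapsto gG_p$ descends to a continuous projection $\pi\colon U_p\to G/G_p$. After shrinking $L_p$ so that $L_p$ is almost isometric to a ball in the Euclidean factor $K'_p$ (via Lemma~\ref{lem:almisom}) and $G/G_p$ is locally parametrized near $eG_p$ by a ball in $\R^{\ell(p)}$ almost tangent to $\Sigma_p(Gp)$, the map $\pi$ becomes a Lipschitz submersion with respect to any fixed left-invariant Riemannian metric $d_G$ on $G$. Consequently $d(q_1,hq_2)$ controls the coset distance $d_{G/G_p}(eG_p,hG_p)$: there exists $h_0\in G_p$ with $d_G(e,hh_0^{-1})\le C\,d(q_1,hq_2)$. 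By the $G_p$-invariance of $\Pi_p$, $h\Pi_p=(hh_0^{-1})\Pi_p$, so the problem reduces to bounding $\angle(\Pi_p,g\Pi_p)$ for $g\in G$ close to the identity.

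For this final step, the orbit map $G\to G_k(L^2(X))$, $g\mapsto g\Pi_p$, is Lipschitz at $e$: since $\Pi_p$ is a finite-dimensional subspace with basis close to $\{df_X(\xi_1),\ldots,df_X(\xi_k)\}$ (Lemmas~\ref{lem:df_X} and \ref{lem:almlinear}) and $G$ acts on $L^2(X)$ by linear isometries via a continuous Lie group homomorphism, the restriction to $\Pi_p$ of the action of any $g$ in a neighborhood of $e$ differs from the identity by a bounded operator of norm at most $c\,d_G(e,g)$, which yields $\angle(\Pi_p,g\Pi_p)\le C'd_G(e,g)$ and closes the argument. The principal obstacle will be the quantitative bi-Lipschitz comparison between the slice-coordinate distance on $U_p$ and the ambient Alexandrov metric in the presence of possible singular points on or near $Gp$; I expect to handle it by working strictly within the straining radius, where Lemma~\ref{lem:almisom} provides the required Euclidean approximation to $L_p$, and by using compactness of $G_p$ together with continuity of the $G$-action (Lemma~\ref{lem:gp-conv}) to make the choice of $h_0\in G_p$ uniform in $h$.
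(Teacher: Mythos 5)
Your proposal is correct and follows essentially the same route as the paper's proof. Both arguments rest on the same two ingredients: (a) the orbit map $g\mapsto g\Pi_p$ into the Grassmannian is Lipschitz near $e$ because $G$ acts on $G_k(L^2(X))$ by isometries and $\Pi_p$ is spanned by compactly supported Lipschitz functions, and (b) a distance comparison between $G/G_p$ and $X$ coming from the orbit/slice orthogonality at $p$. The paper realizes (b) via the bi-Lipschitz estimate $d([e],[g])\le C_2 d(p,gp)$ on the orbit $Gp$ together with the decomposition $x=gx_0$, $x_0,y\in g_0L_p$ and $\tilde\angle xx_0y\approx\pi/2$; you package the same fact as the Lipschitz continuity of the slice projection $U_p\to G/G_p$, which is cleaner to state but has the same content. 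One small misstatement worth fixing: in the splitting $K_p=\R^{\ell(p)}\times K'_p$ the Euclidean factor is $\R^{\ell(p)}$, not $K'_p$, and Lemma~\ref{lem:almisom} gives an almost-isometry of the ambient ball $B(p,\sigma)$ with an open set of $\R^k$; the identification of $L_p$ with a neighborhood of $o_p$ in $K'_p$ comes instead from the definition of $L_p$ via $\exp_p$ and the convergence $(\frac{1}{r}X,p)\to(K_p,o_p)$ together with Lemma~\ref{lem:ell_p} and Assumption~\ref{asmp:slice}. Once that is corrected, the argument is sound and equivalent to the paper's.
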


\begin{proof}
Since $G$ acts on $G_k(L^2(X))$ isometrically, 
the map $\pi:G/G_p\to G_k(L^2(X))$ defined by 
$\pi([g]) := g(\Pi_p)$ is Lipschitz with respect to a 
$G$-invariant metric on $G/G_p$.
This implies that $\angle(\Pi_p, \Pi_{gp})\le C_1 d([e], [g])$
for some constant $C_1$. 
Since it is straightforward to see that 
$d([e], [g]) \le C_2 d(p, gp)$ for some constant $C_2$,
it follows that $\angle(\Pi_p, \Pi_{gp})\le C d(p,gp)$.
If $x\in Gq$, $q\in L_p$ and $y\in Gx$, this argument shows that
$\angle(\Pi_x, \Pi_y)\le C' d(x,y)$ for sufficiently small 
$L_p$.
Next consider the general case when $x=gx_0$, $x_0\in g_0L_p$ and 
$y\in g_0L_p$ for some $g$ and $g_0$ in $G$. 
We may assume that both $x$ and $y$ are close to $x_0$.
Then the conclusion follows from the above argument and 
the fact that $\tilde\angle x x_0 y$ is close to $\pi/2$.
\end{proof}

\begin{lem} \label{lem:parallel}
For any $p,q\in\interior X$ or for any $p,q\in\partial X$,
$$
 d_H^{L^2}(df_X(\Sigma_p),df_X(\Sigma_q))<
        \tau(\delta,\sigma,d(p,q)/\sigma),
$$
where $d_H^{L^2}$ denotes the Hausdorff distance in $L^2(X)$.
\end{lem}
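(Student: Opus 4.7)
My plan is to construct, for every $\xi\in\Sigma_p$, a direction $\eta\in\Sigma_q$ with $|df_X(\xi)-df_X(\eta)|<\tau(\delta,\sigma,d(p,q)/\sigma)$; by symmetry this yields the claimed Hausdorff bound. The matching will be done in the almost-Euclidean coordinates provided by an admissible $(k,\delta)$-strainer $(a_i,b_i)$ of length $\ge\mu_0$ simultaneously valid at $p$ and $q$, which exists because $d(p,q)\ll\sigma\ll\mu_0$ and both points lie in the same stratum (interior or boundary). Concretely, pick $\xi_i^p\in (a_i)'_p$ and $\xi_i^q\in (a_i)'_q$. Given $\xi\in\Sigma_p$, set $c_i:=\cos\angle(\xi_i^p,\xi)$; since the $(\xi_i^q)$ form an almost-orthonormal $k$-frame in $\Sigma_q$ (resp.\ half-frame when $q\in\partial X$), there exists $\eta\in\Sigma_q$ with $\cos\angle(\xi_i^q,\eta)=c_i$ up to error $\tau(\delta)$. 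Lemma~\ref{lem:almlinear} then reduces the problem to bounding $|df_X(\xi_i^p)-df_X(\xi_i^q)|$ for each $i$, since
\[
df_X(\xi)=\sum_i c_i\,df_X(\xi_i^p)+E_p,\qquad df_X(\eta)=\sum_i c_i\,df_X(\xi_i^q)+E_q,
\]
with $|E_p|,|E_q|<\tau(\delta)$ and $|c_i|\le 1$.

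To estimate $|df_X(\xi_i^p)-df_X(\xi_i^q)|$, I would write the pointwise integrand
\[
-h'(d(p,x))\cos\angle(\xi_i^p,x'_p)+h'(d(q,x))\cos\angle(\xi_i^q,x'_q),
\]
which is supported on $B(p,\sigma)\cup B(q,\sigma)$. Split this support into the almost-regular annular region $A:=\{x:d(p,x)\in[\sigma/10,\sigma]\}$ and its complement. On $A$, Lemma~\ref{lem:basic}(2) (using Lemma~\ref{lem:basic}(4) in the boundary case, where admissibility of the strainer is essential) replaces $\cos\angle(\xi_i^\bullet,x'_\bullet)$ by the comparison angles $\cos\tilde\angle a_ipx$ and $\cos\tilde\angle a_iqx$ up to an error $\tau(\delta,\sigma,d(p,x)/\sigma)$; these two comparison angles differ by $\tau(d(p,q)/\sigma)$ since $p$ and $q$ are close relative to $\sigma$ and to $d(a_i,p)$, while $|h'(d(p,x))-h'(d(q,x))|\le\|h''\|_\infty\,d(p,q)\le 100\,d(p,q)/\sigma^2$. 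The pointwise bound on $A$ is therefore $\tau(\delta,\sigma,d(p,q)/\sigma)/\sigma$. After the normalised $L^2$-integration (mass $\asymp\sigma^k$, weight $\sigma^2/b_0^k(\sigma)$), this contributes at most $\tau(\delta,\sigma,d(p,q)/\sigma)$. On the complementary regions $\{d(p,x)<\sigma/10\}$ and the thin shell $\{\sigma\le d(p,x)\le\sigma+d(p,q)\}$ one only has $|h'|\le\sigma^2$ (or $h'\equiv 0$), giving an a priori $L^\infty$-bound with a shrinking support and hence a negligible $L^2$ contribution.

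The main technical obstacle is the boundary case $p,q\in\partial X$: the admissible strainer has some $b_j\in D(X)-X$, so the angle-cosine identity used above for $\xi_i^p$ must be applied in the form guaranteed by Lemma~\ref{lem:basic}(4), and one must also verify that the correspondence $\xi\mapsto\eta$ respects boundary-tangency when $\angle(\xi,(a_k)'_p)=\pi/2$. This is handled by noting that admissibility of the strainer transfers between $p$ and $q$ (distance $\ll\sigma$) and that $(\xi_i^p)_{i<k}$ together with one half-line direction parametrize $\Sigma_p$ in the same way at $q$. Once this bookkeeping is done, assembling the estimates gives $|df_X(\xi)-df_X(\eta)|<\tau(\delta,\sigma,d(p,q)/\sigma)$, and the symmetric construction $\eta\mapsto\xi$ yields the Hausdorff bound.
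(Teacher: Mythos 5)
Your proof is correct in substance, but it takes a different route than the paper's, and a more roundabout one. The paper's proof of Lemma~\ref{lem:parallel} is quite direct: given $\xi\in\Sigma_q$, it invokes Lemma~\ref{lem:basic}(3) (or (4) in the boundary case) to choose a single far point $r$ with $d(q,r)\ge\sigma$ and $\angle(\xi,r'_q)<\tau(\delta,\sigma)$, sets $\xi_1\in r'_p$, and then observes that for all $x$ in the support annulus $\sigma/10\le d(p,x)\le\sigma$ one has $|\angle(\xi,x'_q)-\angle(\xi_1,x'_p)|<\tau(\delta,\sigma,d(p,q)/\sigma)$, which immediately gives $|df_X(\xi)-df_X(\xi_1)|<\tau$. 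You instead detour through Lemma~\ref{lem:almlinear}: you fix the $k$ strainer directions $\xi_i^p,\xi_i^q$, expand $df_X(\xi)$ as $\sum c_i\,df_X(\xi_i^p)+E_p$, solve for $\eta\in\Sigma_q$ with matching coefficients up to $\tau(\delta)$, and then reduce everything to the $k$ comparisons $|df_X(\xi_i^p)-df_X(\xi_i^q)|<\tau$, each proved by the same kind of pointwise angle-and-$h'$ estimate. Both arguments rest on the identical geometric fact --- that angles at $p$ and $q$ subtended by a far point $r$ (or $a_i$) against a point $x$ in the mid-range annulus differ by $\tau(\delta,\sigma,d(p,q)/\sigma)$ --- so you are not buying anything by the extra layer of linear algebra; the paper's one-line matching $\xi\mapsto r\mapsto\xi_1\in r'_p$ is what your coefficient-matching accomplishes after the fact. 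The one place your route genuinely adds work is the boundary case: the paper simply takes $r\in X$ rather than $D(X)$ and is done, whereas you must verify that your $\eta$ lands in $\Sigma_q(X)$ rather than only in $\Sigma_q(D(X))$, which you correctly flag but only sketch; this would need the observation that $\Sigma_q(X)$ is (up to $\tau(\delta)$) the sub-level set $\{\cos\angle(\xi_k^q,\cdot)\ge-\tau(\delta)\}$ and that $c_k\ge-\tau(\delta)$ is preserved. Also be aware that using the same strainer $(a_i,b_i)$ at both $p$ and $q$ is only legitimate because $d(p,q)\ll\sigma\ll\mu_0$ makes the strainer conditions stable under the move from $p$ to $q$; this should be stated, since the hypothesis only guarantees some admissible strainer at each point, not a common one.
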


\begin{proof}
First suppose that  $p,q\in\interior X$.
Let $(a_i,b_i)$ be an admissible $(n,\delta)$-stainer 
in $D(X)$ at $p$.
For every $\xi\in \Sigma_q$  take a point 
$r\in D(X)$ satisfying $d(q,r)\ge\sigma$ and 
$\angle(\xi,r_q^{\prime})< \tau(\delta,\sigma)$.
Taking $\xi_1$ in $r_p^{\prime}$, we have 
$$
   |\angle(\xi,x_q^{\prime}) - \angle(\xi_1,x_p^{\prime})|
            < \tau(\delta,\sigma, d(p,q)/\sigma),
$$
for all $x$ with $\sigma/10\le d(p,x)\le \sigma$.
It follows that 
$|df_X(\xi) - df_X(\xi_1)| < \tau(\delta,\sigma, d(p,q)/\sigma)$.

If $p,q\in\partial X$, take the above $r$ from $X$ in place of 
$D(X)$. Then the conclusion follows from a similar argument.
\end{proof}

\begin{lem}  \label{lem:diff}
For any $p,q\in X$ and $\xi$ in $q_p^{\prime}$,  
\begin{equation}
 \left| \frac{f_X(q)-f_X(p)}{d(q,p)} - df_X(\xi) \right| < 
                 \tau(\delta,\sigma,d(p,q)/\sigma). \label{eq:diff}
\end{equation}
\end{lem}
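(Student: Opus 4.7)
The plan is to estimate the $L^2$ norm of the difference pointwise on $X$, splitting the support of the integrand into a "good" region where $x$ is not too close to $p$ or $q$ (so that first variation and Taylor expansion of $h$ give a good pointwise bound) and a "bad" region near $\{p,q\}$ (small measure, controlled by the global Lipschitz bound from Lemma \ref{lem:inj}). Throughout I write $t=d(p,q)$.

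First, the integrand of $|(f_X(q)-f_X(p))/t - df_X(\xi)|^2$ is supported in $B(p,\sigma)\cup B(q,\sigma)$, a set of Hausdorff $k$-measure comparable to $b^k_0(\sigma)$. On the "good" region, say $G := \{x : \sigma/10 \le d(p,x) \le \sigma\}$ (the region where $h'(d(p,\,\cdot\,))$ is a fixed constant $1/\sigma$), I would expand
\[
h(d(q,x))-h(d(p,x)) = h'(d(p,x))\bigl(d(q,x)-d(p,x)\bigr) + R(x),
\]
with $|R(x)|\le (100/\sigma^2)\bigl(d(q,x)-d(p,x)\bigr)^2\le 100\,t^2/\sigma^2$ by hypothesis (4) on $h$. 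Combining Lemma~\ref{lem:basic}(2) with the curvature-$\ge -1$ comparison then gives
\[
\bigl|d(q,x)-d(p,x) + t\cos\angle(\xi,x'_p)\bigr| \;<\; t\cdot\tau(\delta,\sigma,t/\sigma)
\]
for $x\in G$, using that $\angle xpq$ is $\tau(\delta,\sigma,t/\sigma)$-close to $\tilde\angle xpq$ and that $\tilde\angle xpq$ is $\tau(\delta,\sigma,t/\sigma)$-close to $\angle(\xi,x'_p)$ (first variation with quantitative error). Dividing by $t$, subtracting $df_X(\xi)(x)$, and using $|h'|\le c/\sigma$ on the support, the pointwise error on $G$ is $\tau(\delta,\sigma,t/\sigma)/\sigma$.

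On the transition strips $\{0<d(p,x)<\sigma/10\}\cup\{8\sigma/10<d(p,x)<\sigma\}$ and the analogous strips around $q$, the only change is that $|h'|$ is bounded by $c/\sigma$ and the derivative estimates on $h$ still hold, so the same pointwise bound $\tau/\sigma$ applies. In the "bad" region $B := B(p,10t)\cup B(q,10t)$, I do not have first-variation control, but Lemma~\ref{lem:inj} yields
\[
\left|\frac{f_X(q)(x)-f_X(p)(x)}{t}\right|\le c_6,
\]
while $|df_X(\xi)(x)|\le c_4$ by Lemma~\ref{lem:df_X}, so the pointwise difference is bounded by a universal constant, and by Lemma~\ref{lem:almisom} the measure of $B$ is at most $C(t/\sigma)^k b_0^k(\sigma)$.

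Putting these together with the normalized $L^2$ norm $|f|^2=(\sigma^2/b_0^k(\sigma))\int|f|^2\,d\mathcal H^k$, the contribution from $G$ and the transition strips is at most
\[
\frac{\sigma^2}{b_0^k(\sigma)}\cdot b_0^k(\sigma)\cdot\Bigl(\frac{\tau(\delta,\sigma,t/\sigma)}{\sigma}\Bigr)^{\!2} = \tau(\delta,\sigma,t/\sigma)^2,
\]
while the contribution from $B$ is at most $\frac{\sigma^2}{b_0^k(\sigma)}\cdot C(t/\sigma)^k b_0^k(\sigma) = C\sigma^2 (t/\sigma)^k$, which is itself a $\tau(\delta,\sigma,t/\sigma)$. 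Taking square roots gives the claimed estimate.

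The step I expect to require the most care is the quantitative first variation on $G$: I need $|d(q,x)-d(p,x) + t\cos\angle(\xi,x'_p)|\le t\cdot\tau(\delta,\sigma,t/\sigma)$ uniformly in $x\in G$, with an error that vanishes as $\delta, t/\sigma\to 0$ independently of the specific choice of $\xi\in q'_p$. This requires combining the comparison angle bound of Lemma~\ref{lem:basic}(2) with a uniform control on $\angle(\xi,x'_p)-\tilde\angle xpq$, which one obtains from the admissible strainer at $p$ and the same lemma applied in the cone $K_p$ (or equivalently from the monotonicity of comparison angles along the geodesic $pq$). Once this quantitative first variation is in hand, the rest of the argument is the elementary two-region integration outlined above.
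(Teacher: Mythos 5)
Your approach is essentially the paper's: establish the quantitative first-variation estimate $|d(q,x)-d(p,x)+t\cos\angle(\xi,x'_p)|<t\,\tau(\delta,\sigma,t/\sigma)$ on the region where $h'$ is of order $1/\sigma$, and then integrate. The paper is terse here --- it states the first-variation estimate for $\sigma/10\le d(p,x)\le\sigma$ and concludes with ``which yields (\ref{eq:diff})'' without spelling out the integration --- so you are right to supply the measure-theoretic step and the handling of the remaining regions.

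Two slips are worth flagging. First, you cite Lemmas \ref{lem:inj} and \ref{lem:df_X} as pointwise bounds, but both are statements about the normalized $L^2$-norm, not about values at a point $x$. The pointwise bounds you actually need are just $|h(d(q,x))-h(d(p,x))|/t \le \mathrm{Lip}(h)=1/\sigma$ and $|df_X(\xi)(x)|=|h'(d(p,x))\cos\angle(\xi,x'_p)|\le 1/\sigma$. Using these also corrects your arithmetic on the bad region: the pointwise difference there is $O(1/\sigma)$, not $O(1)$, so the normalized $L^2$-contribution is $C(t/\sigma)^k$ rather than $C\sigma^2(t/\sigma)^k$ --- still a $\tau(t/\sigma)$, so nothing breaks, but the factor of $\sigma^2$ was wrong. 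Second, your claim that the first-variation-based pointwise bound ``still applies'' on the inner strip $\{0<d(p,x)<\sigma/10\}$ is not supported by Lemma \ref{lem:basic}(2), which requires $\sigma/10\le d(p,x)$. The conclusion is nevertheless true for an unrelated reason: on that strip hypothesis (3) on $h$ gives $|h'(d(p,x))|\le\sigma^2$, so $|df_X(\xi)(x)|\le\sigma^2$ is already small, and for $d(p,x)\le\sigma/10-t$ the difference quotient is likewise $\le\sigma^2$; the remaining thin shell where $d(q,x)$ can escape $(0,\sigma/10]$ has measure $O((t/\sigma)\,b_0^k(\sigma))$ and can be treated like your bad region. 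With those corrections the argument is complete and matches the paper's route.
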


\begin{proof}
Note that
$|\angle xpq-\tilde\angle xpq|<\tau(\delta,\sigma,d(p,q)/\sigma)$
for all $x$ with $\sigma/10\le d(p,x)\le\sigma$ and that
$|d(x,q)- d(x,p)+t\cos\tilde\angle xpq|<t\tau(t/\sigma)$,  $t=d(p,q)$.
It follows that 
\begin{equation}
  |d(x,q)-d(x,p) + t\cos \angle(\xi, x^{\prime})| 
                < t \,\tau(\delta,\sigma,t/\sigma), 
\end{equation}
which yields \eqref{eq:diff}.
\end{proof}

Let $\sigma_1\ll\sigma$
and let us use the simpler notation $\tau_{\delta}$ 
to denote a positive function 
of type $\tau(\delta,\sigma,\sigma_1/\sigma)$. \par

Let $\pi:X\to X/G$ be the orbit projection, and for 
$\bar p\in\pi(\partial X)$
let $p\in \partial X$ be a point over $\bar p$.
For a $G$-slice $L_p$ at $p$ with $\diam(L_p)\le\sigma_1$, let 
$U_p = GL_p$ be the $G$-invariant neighborhood of $Gp$ as in 
Assumption \ref{asmp:slice}, 
and $U_{\bar p} := \pi(U_p)$.
We take a finite open covering $\{ U_{\bar p_i} \}_{i=1,2,\ldots}$
of $X/G$ as above.

\begin{lem} \label{lem:T}
Suppose that $X/G$ is compact.
Then there exists a $G$-equivariant Lipschitz map
$T: X\to G_k(L^2(X))$ such that \par
\begin{enumerate}
 \item  $\angle(T(x),\Pi_{p_i}) < \tau_{\delta} \quad$ if $x\in U_{p_i}$;
 \item  $\angle(T(x),T(y)) < Cd(x,y), \quad$ 
          where $C=C_{G,X}$ is a constant depending 
          only on the $G$-action on $X$.
\end{enumerate}
\end{lem}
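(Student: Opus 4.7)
The plan is to produce $T$ by averaging the locally defined $G$-invariant fields $\{\Pi_x\}_{x\in U_{p_i}}$ (constructed via Lemmas \ref{lem:inv} and \ref{lem:GL}) against a $G$-invariant partition of unity pulled back from the orbit space $X/G$. Concretely, since $X/G$ is compact, I would first refine the covering $\{U_{\bar p_i}\}$ so that it is locally finite and each $\bar p\in X/G$ meets only uniformly boundedly many $U_{\bar p_i}$; then I would pick a smooth (or just Lipschitz) partition of unity $\{\bar\phi_i\}$ on $X/G$ subordinate to this covering and set $\phi_i:=\bar\phi_i\circ\pi$, which is automatically $G$-invariant. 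On each $U_{p_i}$ I already have the $G$-equivariant field $x\mapsto \Pi_x^{(i)}:=g(\Pi_{p_i})$ (for $x=gy$, $y\in L_{p_i}$) satisfying $\angle(\Pi_x^{(i)},df_X(K_x))<\tau_\delta$ by Lemma \ref{lem:inv} and the $G$-invariance of the splittings in Lemma \ref{lem:ell_p}, and this field is locally Lipschitz by Lemma \ref{lem:GL}.

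The next step is to glue the $\Pi_x^{(i)}$ into a single $G$-equivariant field. For each $x\in X$, consider the (finite) collection $\{\Pi_x^{(i)}\,:\,x\in U_{p_i}\}$. By Lemma \ref{lem:parallel} together with Lemma \ref{lem:inv}, all these $k$-dimensional subspaces lie within angle $\tau_\delta$ of the common subspace $df_X(K_x)$, so they sit in a small metric ball in the Grassmannian $G_k(L^2(X))$. On such a small ball $G_k(L^2(X))$ (which is a Hilbert Riemannian manifold of nonpositive sectional curvature on unit Grassmannians, though here one only needs locally uniform convexity of the distance squared in a neighborhood) one has the Karcher/Cartan center of mass operation: define
\[
T(x) := \operatorname{cm}\!\Big(\{\Pi_x^{(i)}\}_i,\{\phi_i(x)\}_i\Big),
\]
i.e., the unique minimizer of $\sum_i\phi_i(x)\,d_{G_k}(\cdot,\Pi_x^{(i)})^2$ in a small geodesic ball. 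Because $G$ acts by isometries on $L^2(X)$ and hence on $G_k(L^2(X))$, and because the weights $\phi_i(x)$ and the points $\Pi_x^{(i)}$ are both $G$-equivariant under $x\mapsto gx$, uniqueness of the center of mass forces $T(gx)=gT(x)$, giving the required $G$-equivariance.

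The verification of (1) and (2) is then essentially formal: for (1), since every $\Pi_x^{(i)}$ entering the average with nonzero weight satisfies $\angle(\Pi_x^{(i)},\Pi_{p_j})<\tau_\delta$ whenever $x\in U_{p_j}$ (by Lemmas \ref{lem:inv}, \ref{lem:parallel}, \ref{lem:GL} applied to compare the local models at $p_i$ and $p_j$), the minimizer $T(x)$ is also within $\tau_\delta$ of $\Pi_{p_j}$. For (2), one combines the Lipschitz continuity of the weights $\phi_i$ (inherited from $\bar\phi_i$), the Lipschitz continuity $\angle(\Pi_x^{(i)},\Pi_y^{(i)})\le C_i d(x,y)$ from Lemma \ref{lem:GL} on each $U_{p_i}$, finiteness of the covering multiplicity, and Lipschitz dependence of the Karcher mean on its inputs inside a small convex ball (a standard implicit function argument applied to the defining variational equation).

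The main obstacle is making the center-of-mass construction work rigorously on the \emph{infinite-dimensional} Grassmannian $G_k(L^2(X))$: one needs a neighborhood of the common approximate tangent space $df_X(K_x)$ inside $G_k(L^2(X))$ on which the distance squared is strictly convex along geodesics and the minimizer depends smoothly (hence Lipschitz) on the data. This is where some care is required; the cleanest workaround, should the intrinsic Grassmannian geometry prove awkward, is to replace the Karcher mean by an explicit orthogonal-projection averaging: represent each $\Pi_x^{(i)}$ by its orthogonal projection operator $P_x^{(i)}\in B(L^2(X))$ (a subset of a Banach space on which $G$ acts linearly by isometric conjugation), form $P_x:=\sum_i\phi_i(x)P_x^{(i)}$, which is $G$-equivariant and Lipschitz in $x$, and let $T(x)$ be the span of the top $k$ eigenvectors of $P_x$; since all $P_x^{(i)}$ are $\tau_\delta$-close to a common rank-$k$ projector, $P_x$ has a spectral gap and $T(x)$ varies in a Lipschitz fashion by standard perturbation theory. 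Either route completes the proof.
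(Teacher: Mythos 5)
Your plan coincides with the paper's in all essentials: pull back a Lipschitz partition of unity from the compact orbit space $X/G$, and define $T(x)$ as a weighted center of mass in $G_k(L^2(X))$ of the $G$-equivariant local fields $\Pi_x^{(i)}$, using Lemmas~\ref{lem:inv} and~\ref{lem:GL} to keep all the inputs within $\tau_\delta$ of each other, equivariance of the weights and inputs to force $G$-equivariance of $T$, and a convexity argument for (2). You also correctly flag the one real technical hazard, namely whether the Karcher/Cartan mean is well-defined and Lipschitz on the \emph{infinite-dimensional} Grassmannian $G_k(L^2(X))$. Where you diverge from the paper is in how that hazard is discharged. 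The paper's resolution is a short finite-dimensional reduction: since only finitely many weights $\rho_i(x)$ are nonzero (the covering of the compact $X/G$ is finite), all the competing subspaces $\Pi_{p_i,x}$ lie inside $G_k(E)$ for a single finite-dimensional Euclidean subspace $E\subset L^2(X)$, and $G_k(E)$ is totally geodesic in $G_k(L^2(X))$, so the minimization takes place on an ordinary compact Grassmannian where convexity on small balls is classical. Your fallback — averaging the orthogonal projection operators $P_x^{(i)}$ in the Banach space $B(L^2(X))$ and extracting $T(x)$ as the range of the spectral projection of $P_x=\sum_i\phi_i(x)P_x^{(i)}$ above a spectral gap — is a genuinely different and equally rigorous route; it trades the Riemannian-geometric convexity argument for linear-operator perturbation theory (Riesz projections), at the cost of a bit more bookkeeping to verify Lipschitz dependence but with the advantage of sidestepping infinite-dimensional Grassmannian geometry entirely. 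One small simplification you could make: since $X/G$ is compact, the covering is already finite, so the refinement to a locally finite cover with bounded multiplicity is unnecessary.
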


\begin{proof}
Let $\{ \bar\rho_i \}$ be a partition of unity consisting of 
Lipschitz functions associated with 
$\{ U_{\bar p_i} \}_{i=1,2,\ldots}$, and set
$\rho_i:= \bar\rho_i\circ \pi$.

We use the center of mass technique on $G_k(L^2(X))$.
For each $x\in X$, consider the weighted distance functions
$$
  \phi_x :=    \sum_{i\in I_x}  \rho_i(x) 
        d(\Pi_{p_i,x},\, \cdot\,)
$$
on $G_k(L^2(X))$ with weights $\rho_i(x)$, where
$I_x := \{ i\,|\, x\in U_{p_i} \}$.
Since $\rho_i(x)=0$ except finitely many $i$,
all $\Pi_{p_i,x}$ in the righthand  side 
actually lies in some finite dimensional Euclidean space $E$.
Since $G_k(E)$ is totally geodesic in $G_k(L^2(X))$,
$\phi_x$ has a unique minimum point, say $T(x)$, on $G_k(E)$.
It is straightforward to see that $T:X\to G_k(L^2(X))$
is $G$-equivariant. A convexity argument also shows that 
$T$ is Lipschitz.
\end{proof}

 Let $G_k^*(L^2(X))$ be the Grassmann manifold consisting of 
all subspaces of 
codimension $k$ in $L^2(X)$, and $N:X \to G_k^*(L^2(X))$ the dual 
of $T$, $N(x)=T(x)^{\perp}$, where $T(x)^{\perp}$ denotes
the orthogonal complement of $T(x)$.
The angle $\angle(N(x), N(y))$ is defined
as $\angle(N(x), N(y))=\angle(T(x),T(y))$.

Lemma \ref{lem:T} immediately implies 

\begin{lem} \label{lem:N}
Suppose that $X/G$ is compact.
Then the map $N:X\to G_k^*(L^2(X))$ is $G$-equivariant and
Lipschitz with Lipschitz constant $C=C_{G,X}$.
\end{lem}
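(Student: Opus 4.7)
The plan is to deduce both assertions directly from the corresponding statements for $T$ established in Lemma \ref{lem:T}, since by construction the map $N$ is obtained from $T$ by orthogonal complementation and its metric on $G_k^*(L^2(X))$ is defined so as to match that on $G_k(L^2(X))$. The only substantive point is that the $G$-action on $L^2(X)$ must commute with orthogonal complementation, and this is immediate from the definition of the action.

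First I would verify $G$-equivariance. The $G$-action on $L^2(X)$ defined at the beginning of Section \ref{sec:embed} by $g\cdot\phi(x)=\phi(g^{-1}x)$ preserves the $L^2$-norm, because $G$ acts on $X$ as isometries and hence preserves the Hausdorff $k$-measure used to define this norm. Thus the induced action on $L^2(X)$ is by unitary operators, so $(gV)^\perp = g(V^\perp)$ for every closed subspace $V\subset L^2(X)$. Applying this to $V=T(x)$ and using the $G$-equivariance $T(gx)=gT(x)$ from Lemma \ref{lem:T}, one obtains
\[
 N(gx) = T(gx)^\perp = (gT(x))^\perp = g\,T(x)^\perp = g\,N(x),
\]
which is $G$-equivariance of $N$.

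Next I would verify the Lipschitz estimate. By the convention introduced right before the statement, the angle on $G_k^*(L^2(X))$ is defined by $\angle(N(x),N(y)) := \angle(T(x),T(y))$. Then Lemma \ref{lem:T}(2) gives
\[
 \angle(N(x),N(y)) = \angle(T(x),T(y)) < C\,d(x,y),
\]
with exactly the same constant $C=C_{G,X}$ depending only on the $G$-action on $X$, which is the asserted Lipschitz bound.

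In summary, the lemma is essentially a formal transcription of Lemma \ref{lem:T} through the dualization $V\mapsto V^\perp$; no real obstacle is present, as the only thing that has to be checked is that this dualization is $G$-equivariant, and that follows at once from the unitarity of the $G$-action on $L^2(X)$.
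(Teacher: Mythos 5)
Your proof is correct and fills in precisely the details the paper leaves implicit when it says Lemma \ref{lem:T} "immediately implies" Lemma \ref{lem:N}: the unitarity of the $G$-action on $L^2(X)$ (so that orthogonal complementation is $G$-equivariant) gives equivariance of $N$, and the Lipschitz bound is a direct transcription via the angle convention $\angle(N(x),N(y))=\angle(T(x),T(y))$. This is essentially the same approach as the paper.
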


Now we consider the set $\partial X_{\nu}$.
For each $x\in\partial X_{\nu}$, Let $V_x$ denote 
the set of directions at $x$ consisting of all minimal
segments from $x$ to $\partial X$.
Since $\diam(V_x)<\tau(\delta,\sigma)$, a center of mass technique 
on $S(L^2(X)) := \{ v\in L^2(X)\,| \, |v|=1\}$  
similar to Lemma \ref{lem:T} yields

\begin{lem} \label{lem:n}
Suppose that $X/G$ is compact.
Then there exists a $G$-equivariant Lipschitz map 
${\bf n}:\partial X_{\nu} \to S(L^2(X))$  such that 
\begin{enumerate} 
 \item  $\angle({\bf n}(x), df_X(V_x)) < \tau_{\delta}$;
 \item  $\angle({\bf n}(x), {\bf n}(y)) < Cd(x,y),$ 
        where $C=C_{G,X}$ is a constant depending 
          only on the $G$-action on $X$.
\end{enumerate}
\end{lem}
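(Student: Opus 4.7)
The plan is to mimic the proof of Lemma \ref{lem:T}, with two modifications: we work on the unit sphere $S(L^2(X))$ (a symmetric space of constant curvature $1$) instead of the Grassmannian, and the approximating object at each point is the small subset $df_X(V_x) \subset S(L^2(X))$ rather than a linear subspace. The smallness hypothesis $\diam(V_x) < \tau(\delta,\sigma)$ is what makes a center-of-mass construction meaningful on the sphere. Note first that $\partial X_\nu$ is $G$-invariant since $G$ preserves $\partial X$, and that $\partial X_\nu/G$ is compact because $X/G$ is compact, so a finite cover will be available.

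First I would produce $G$-equivariant local data. By Assumption \ref{asmp:slice}, for each $\bar p \in \pi(\partial X_\nu)$ pick a slice $L_{p}$ of small diameter at a representative $p \in \partial X_\nu$ and form $U_p = GL_p$. The set $V_p \subset \Sigma_p$ is $G_p$-invariant (because both $\partial X$ and the metric are), hence so is $df_X(V_p) \subset S(L^2(X))$. Since this set has diameter $< \tau_\delta$, it lies in a geodesically convex ball in $S(L^2(X))$, and $G_p$ acts on $L^2(X)$ by isometries fixing $f_X(p)$, so averaging over the (compact) orbit structure via the intrinsic center of mass on this convex ball yields a $G_p$-fixed unit vector $v_p \in S(L^2(X))$ with $\angle(v_p, df_X(V_p)) < \tau_\delta$. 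Define $v_p(gq) := g\cdot v_p$ for $q \in L_p$, $g \in G$; this is well-defined by $G_p$-invariance of $v_p$ and gives a $G$-equivariant Lipschitz map $v_p : U_p \to S(L^2(X))$. Choose finitely many such slices $\{U_{p_i}\}$ covering $\partial X_\nu$.

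Next I would patch globally via a partition of unity. Pick a Lipschitz partition of unity $\{\bar\rho_i\}$ on $\pi(\partial X_\nu)$ subordinate to $\{\pi(U_{p_i})\}$ and set $\rho_i := \bar\rho_i \circ \pi$, which is $G$-invariant. For $x \in \partial X_\nu$, the active local normals $\{v_{p_i}(x) : i \in I_x\}$ all approximate $df_X(V_x)$ to within $\tau_\delta$ (using Lemma \ref{lem:parallel} applied on $\partial X$ to control how $df_X(V_y)$ varies with $y$), hence lie in a common small convex ball $B_x \subset S(L^2(X))$ contained in a finite-dimensional great sphere $E_x$. Define
\[
\psi_x(v) = \sum_{i \in I_x} \rho_i(x)\, d_{S(L^2(X))}(v_{p_i}(x), v)^2,
\]
and let $\mathbf{n}(x)$ be the unique minimizer of $\psi_x$ on $E_x \cap B_x$, which exists and is unique by strict convexity of the squared distance on a small convex ball of the sphere. $G$-equivariance of $\mathbf{n}$ follows from $G$-invariance of $\rho_i$ together with $G$-equivariance of each $v_{p_i}$; property (1) is immediate because $\mathbf{n}(x)$ lies in $B_x$, which is within $\tau_\delta$ of $df_X(V_x)$.

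The Lipschitz estimate (2) is the one step requiring real care, and I would handle it by the same convexity argument used implicitly in Lemma \ref{lem:T}: for $x,y \in \partial X_\nu$ close, both weights $\rho_i$ and the local sections $v_{p_i}$ are Lipschitz with constants depending only on the cover and the $G$-action on $X$, so the functionals $\psi_x$ and $\psi_y$ differ by a uniformly Lipschitz perturbation, and strict convexity of the squared distance on the small ball transfers this into a Lipschitz bound on the minimizers with constant $C = C_{G,X}$. The main obstacle — really the only subtle point — is ensuring that the initial $G_{p_i}$-invariant representative $v_{p_i}$ genuinely exists and is within $\tau_\delta$ of $df_X(V_{p_i})$; this rests on $df_X(V_{p_i})$ being both $G_{p_i}$-invariant and of diameter $< \tau_\delta$ inside a convex ball of $S(L^2(X))$, which is where the hypothesis on $V_x$ and the isometric action of $G$ on $L^2(X)$ combine.
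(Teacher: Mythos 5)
Your proposal follows the same route the paper intends: the paper's own argument is a one-line appeal to ``a center of mass technique on $S(L^2(X))$ similar to Lemma~\ref{lem:T},'' and you have fleshed out exactly that --- local $G_{p_i}$-invariant normals via center of mass (paralleling Lemma~\ref{lem:inv}), extended equivariantly over slices, patched with a $G$-invariant partition of unity, and the Lipschitz bound extracted by convexity. The one small imprecision is the appeal to Lemma~\ref{lem:parallel}, which is stated for $df_X(\Sigma_p)$ rather than $df_X(V_p)$; the variant you need (controlling $df_X(V_x)$ in $x$ by comparing foot points on $\partial X$) is proved by the same comparison argument and is implicitly assumed in the paper's terse proof as well.
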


For $x\in \partial X_{\nu}$, let us denote by
$\hat H(x)$ the subspace of codimension $k-1$ generated by 
$N(x)$ and  ${\bf n}(x)$, and by 
$H(x)$ the half space of $\hat H(x)$ containing
${\bf n}(x)$ and bounded by $N(x)$.

We consider the ``normal bundle'' $\cal W$ of $f_X(X_{\nu})$ as
$\cal W := \{ (x, v)\in X_{\nu}\times L^2(X)\, |\,
v\in W(x)\}$, where
\begin{equation*}
   W(x) = 
      \begin{cases}
          H(x)    & \qquad x\in \partial X_{\nu}\\
          N(x)    & \qquad x\in \interior X_{\nu}.
      \end{cases}
\end{equation*}
Note that $\cal W$ is $G$-invariant.
For $c>0$, we put 

$$
 \cal W(c)=\{ (x,v)\in \cal W\,|\, |v|<c \}.
$$

\begin{lem} \label{lem:tubular}
There exists a positive number $\kappa= C$ such that 
$\cal W(\kappa)$ provides
a $G$-invariant tubular neighborhood of $f_X(X_{\nu})$.
Namely 
\begin{enumerate}
 \item \text{ $f_X(p_1)+v_1\neq f_X(p_2)+v_2$ for every 
      $(p_1,v_1)\neq (p_2,v_2)\in \cal W(\kappa)$};
 \item \text{ the set 
       $U(\kappa)=\{ x+v| (x,v)\in\cal W(\kappa)\}$ 
          is open in $L^2(X)$}.
\end{enumerate}
\end{lem}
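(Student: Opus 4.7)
The plan is to verify the two defining properties of a tubular neighborhood for the map $\Phi(x,v) := f_X(x) + v$ restricted to $\cal W(\kappa)$, namely injectivity and openness of the image. The $G$-invariance is essentially automatic: since $T$, $N$, and $\mathbf{n}$ are all $G$-equivariant (Lemmas \ref{lem:T}, \ref{lem:N}, \ref{lem:n}), the bundle $\cal W$ is $G$-invariant and $\Phi$ is $G$-equivariant, so any injective, open parametrization is $G$-invariant by construction. The quantitative inputs are the uniform lower bound $|df_X(\xi)| > c_3$ (Lemma \ref{lem:df_X}), the approximate first-order expansion $(f_X(q)-f_X(p))/d(p,q) \approx df_X(\xi)$ (Lemma \ref{lem:diff}), the $\tau_\delta$-tangency $\angle(T(x), df_X(K_x))<\tau_\delta$ (Lemma \ref{lem:T}) together with its dual $\angle(N(x), df_X(K_x)^\perp) < \tau_\delta$, and the Lipschitz dependence of $T$, $N$ and $\mathbf{n}$ on $x$ with constant $C = C_{G,X}$.

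For injectivity, I would suppose $\Phi(p_1,v_1)=\Phi(p_2,v_2)$ with $p_1 \ne p_2$ and derive a contradiction via the projection $\pi_{T(p_1)}$. Writing $f_X(p_1)-f_X(p_2) = v_2 - v_1$ and projecting to $T(p_1)$: by Lemmas \ref{lem:diff} and \ref{lem:df_X}, the left-hand projection has norm $\ge (c_3/2)\,d(p_1,p_2)$ up to a $\tau_\delta$-error. In the interior-interior case $v_j \in N(p_j) = T(p_j)^\perp$, so $\pi_{T(p_1)}(v_1)=0$, while $|\pi_{T(p_1)}(v_2)| \le \sin \angle(T(p_1),T(p_2))\cdot |v_2| \le C\kappa\,d(p_1,p_2)$ by Lemma \ref{lem:N}; taking $\kappa < c_3/(4C)$ yields the contradiction, and $p_1=p_2$ forces $v_1=v_2$. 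The mixed and boundary-boundary cases require an extra observation: if $p_1, p_2 \in \partial X_\nu$ then by Lemma \ref{lem:basic3} the direction $\xi \in (p_2)'_{p_1}$ is almost tangent to $\partial X_\nu$, hence $\angle(df_X(\xi),\mathbf{n}(p_1)) \ge \pi/2 - \tau_\delta$; consequently $df_X(\xi)$ has a genuine tangential component orthogonal to $\mathbf{n}(p_1)$, while $v_2 - v_1 \in H(p_1) + H(p_2)$ projects to $T(p_1)$ with norm still controlled by $C\kappa\,d(p_1,p_2)$ thanks to the Lipschitz variation of both $N$ and $\mathbf{n}$. The same choice of $\kappa$ closes this case.

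For openness I would take $y_0 = f_X(x_0) + v_0 \in U(\kappa)$ and for $|w|$ small seek $(x,v)\in\cal W(\kappa)$ with $f_X(x)+v = y_0 + w$. Orthogonally decompose $w = w_T + w_N$ along $T(x_0)$ and $T(x_0)^\perp$; by Lemma \ref{lem:almlinear}, $w_T$ is $\tau_\delta$-close to $df_X(u)$ for some $u\in K_{x_0}$ (in the boundary case, $u$ may be required to lie in the inward half-cone so that the resulting $x$ remains in $X_\nu$), and exponentiating along $u$ produces a candidate $x$ with $f_X(x)-f_X(x_0)$ close to $w_T$ by Lemma \ref{lem:diff}. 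Setting $v := y_0 + w - f_X(x)$, the vector $v$ differs from $v_0 + w_N$ by a term of size $\tau_\delta(|w|+|v_0|)$, hence has small component along $T(x)$ by the Lipschitz continuity of $T$. A standard contraction/Picard iteration — now applied in the approximately linear setting — refines $x$ so that $v$ lies exactly in $W(x)$, and the estimates keep $|v| < \kappa$ whenever $|w|$ is small enough depending on $\operatorname{dist}(v_0, \partial W(x_0))$.

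The main obstacle is the boundary case of injectivity, where the enlarged fibers $H(x)$ permit "cap" displacements along $\mathbf{n}(x)$ that could in principle cancel the tangential displacement $f_X(p_1)-f_X(p_2)$; this is precisely what makes the capping half unlike the classical interior fibration. The quantitative angle estimates from Lemma \ref{lem:basic3} (controlling how tangents to $\partial X_\nu$ relate to $V_{p_1}$), combined with the uniform Lipschitz behavior of $\mathbf{n}$ from Lemma \ref{lem:n}, are what ultimately prevent such a collapse, and together they fix the universal constant $\kappa = \kappa(C) \ll c_3/C$ for which the whole construction works.
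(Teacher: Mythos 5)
Your local injectivity argument is sound and is essentially the paper's argument repackaged: where the paper constructs auxiliary points $y$, $z$ to show the normal fibers at $p_1$ and $p_2$ can meet only at distance $\gtrsim 1/C$ from $f_X(p_1)$, you project onto $T(p_1)$ and compare the tangential size of $f_X(p_1)-f_X(p_2)$ (of order $d(p_1,p_2)$ by Lemmas \ref{lem:diff} and \ref{lem:df_X}) against the tangential slop of $v_2-v_1$ (of order $C\kappa\,d(p_1,p_2)$ by Lemma \ref{lem:T}). This is the same geometric fact viewed from the other side, and the paper also reduces the cap and mixed cases to the interior case by replacing $N(p_i)$ with $\hat H(p_i)$ or $H(p_1)$. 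Your openness argument by approximate linearization and iteration matches the paper's in spirit.

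There is, however, a genuine gap: your injectivity argument as written proves only \emph{local} injectivity. It rests on Lemma \ref{lem:diff}, whose error term is $\tau(\delta,\sigma,d(p,q)/\sigma)$, so the first-order expansion of $f_X$ is only useful when $d(p_1,p_2)$ is small compared to $\sigma$, i.e. $d(p_1,p_2)\le\sigma_1$. For intermediate distances $\sigma_1< d(p_1,p_2)$, the estimate degrades to $O(1)$ and the projection argument no longer yields a contradiction; Lemma \ref{lem:inj} gives $|f_X(p_1)-f_X(p_2)|\ge c_5\sigma_1$, but since $\sigma_1\ll\sigma$ while $\kappa$ is meant to be a constant of order $1/C$ independent of $\sigma_1$, this alone does not rule out $v_2-v_1=f_X(p_1)-f_X(p_2)$. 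The paper closes exactly this case with a path-lifting (monodromy) argument at the end of its proof: given a spurious coincidence $f_X(p_0)+v_0=f_X(p_1)+v_1$ with $d(p_0,p_1)>\sigma_1$, it connects $p_0$ to $p_1$ by a curve $p_s$, deforms $v(s,t)=(1-t)f_X(p_s)+tq_0$, and shows — using the local injectivity just proved (for openness of the admissible set of $t$) together with the Lipschitz estimate $d((x_1,v_1),(x_2,v_2))\le C\,d(x_1+v_1,x_2+v_2)$ coming from Lemma \ref{lem:proj} (for closedness) — that a continuous lift $V(s,t)\in\cal W$ exists; then $V(\cdot,1)$ is a path over the single point $q_0$, hence constant by discreteness of the fiber, forcing $(p_0,v_0)=(p_1,v_1)$. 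Your proposal needs some such global step before the injectivity claim can be considered established.
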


\begin{proof}
Suppose that $x_1+v_1=x_2+v_2$ for $x_i=f_X(p_i)$ and  $v_i\in
W(p_i)$. We first assume  $d(p_1,p_2)\le\sigma_1$.

\medskip
Case 1.\quad $p_1, p_2\in \interior X_{\nu}$.
\medskip

Put $K=\{ x_1+N(p_1)\}\cap \{x_2+N(p_2)\}$, and let 
$y\in K$ and $z\in x_2+ N(p_2)$ be such that 
$|x_1-y|=d(x_1,K)$,$|x_1-y|=|y-z|$ and that 
$\angle x_1yz = \angle(x_1-y, N(p_2))\le\angle(N(p_1), N(p_2))$.
Then Lemma \ref{lem:parallel} implies that $\angle x_1yz < \tau_{\delta}$. 
It follows from the choice of $z$ that 
$|\angle(x_1-z, N(p_2)) - \pi/2| < \tau_{\delta}$. 
On the other hand 
the fact $\angle(x_2-x_1,T(p_1))<\tau_{\delta}$ (Lemma \ref{lem:diff}) 
also implies that
$|\angle(x_2-x_1, N(p_2)) - \pi/2|<\tau_{\delta}$. 
It follows that $|x_2-z|<\tau_{\delta}|x_1-x_2|$. 
Putting $\ell=|y-x_1|=|y-z|$ and using Lemma \ref{lem:T}, we then have 
\begin{align*}
  |x_1-z| &\le \ell\angle x_1yz \\
       &\le \ell\angle(T(p_1),T(p_2)) \\
       &\le \ell C |x_1-x_2|.
\end{align*}
Thus we obtain $\ell \ge (1-\tau_{\delta})/C$ as required. \par

\medskip
Case 2.\quad $p_1, p_2\in \partial X_{\nu}$.
\medskip

Apply the above argument to $\hat H(p_i)$ instead of $N(p_i)$.
\par

\medskip
Case 3.\quad $p_1\in \partial X_{\nu}$ and $p_2\in \interior X_{\nu}$.
\medskip

Apply the above argument to $H(p_1)$  and $N(p_2)$ 
instead of $N(p_i)$.  Let $K=\{ x_1+H(p_1)\}\cap \{x_2+N(p_2)\}$.
If $K$ meet $x_1+N(p_1)$, we can apply the argument of Case 1 to 
$N(p_i)$. If $K$ does not meet $N(p_1)$, it is an affine subspace 
parallel to $N(p_1)$, and let $\hat K$ be the affine space generated 
by $K$ and line segment from $x_1$ to $K$.
Then we can apply the argument of Case 1
to $\hat K$ and $N(p_2)$. Thus we obtain (1).

Next we shall prove (2), which  follows from the argument above: 
We assume Case 1. The other cases are similar, and hence omitted.

For any $y\in U(\kappa)$ with 
$y\in f_X(p_0)+ N(p_0)$, $x_0:=f_X(p_0)\in f_X(X_{\nu})$
and for any $z\in L^2(X)$ 
close to $y$, let $T_0$ be the $n$-plane through $z$ and parallel to 
$T(x_0)$, and $y_0$ the intersection point of $T_0$ and $N(x_0)$. 
If $x\in f_X(X_{\nu})$ is near $x_0$, then $N(x)$ meets $T_0$ 
at a unique point, say $\alpha(x)$.
With the above argument, we can observe that 
$\alpha$ is a homeomorphism of a 
neighborhood of $x_0$ in $f_X(\partial X_{\nu})$ onto a 
neighborhood of $y_0$ in $T_0$.
Hence $z\in U(\kappa)$ as required.\par

Finally we shall finish the proof of (1).
Suppose that 
$q_0:=f_X(p_0)+v_0 = f_X(p_1)+v_1$ for some $p_i$ and $v_i$ with
$d(p_0, p_1)>\sigma_1$, $|v_i|<C$.
For a curve $p_s$ joining $p_0$ to $p_1$, put 
$v(s,t):= (1-t)f_X(p_s)+tq_0$.
We assert that there exists a continuous map
$V:[0,1]\times [0,1]\to \cal W$ such that
$Exp_{\cal W}(V(s,t))=v(s,t)$, yielding
$Exp_{\cal W}(V(s,1))=q_0$ for any $s\in I$,
a contradiction to the previous argument.
To prove this assertion,
consider the set $I\subset [0,1]$ such that for $t\in I$
there exists a lift $V:[0,1]\times [0,t]\to \cal W$
of $v$ as above. Actually $0\in I$ and the previous argument shows 
that $I$ is open. 
We define a metric of $\cal W$ by 
$d((x_1,v_1), (x_2, v_2)):= (d(x_1, x_2)^2 + |v_1-v_2|^2)^{1/2}$.
Then the proof of Lemma \ref{lem:proj} implies that 
$$
  d((x_1,v_1), (x_2, v_2))\le Cd(x_1+v_1, x_2+v_2),
$$
from which the closedness of $I$ follows.
\end{proof}

 Next let us study the properties of the projection 
$\pi:\cal W(\kappa) \to
f_X(X_{\nu})$ along $\cal W$. 
By definition, $\pi(x)=y$ if $x\in W(y)$.

\begin{lem} \label{lem:proj}
The map $\pi:\cal W(\kappa)\to f_X(X_{\nu})$ is  
Lipschitz continuous.
More precisely, if $x,y\in\cal W(\kappa)$ 
are close to each other and 
$t=|x-\pi(x)|$, then
\begin{enumerate}
 \item   $|\pi(x)-\pi(y)|/|x-y| < 1+\tau_{\delta} + Ct$;
 \item   if the angle between $y-x$ and the fibre $W(\pi(x))$ is 
         equal to $\pi/2$, then 
       $$
            |(y-x)-(\pi(y)-\pi(x))| < (\tau_{\delta} + Ct)|x-y|.
       $$
\end{enumerate}
\end{lem}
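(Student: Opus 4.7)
The plan is to exploit the almost-tangentiality properties established in Lemmas~\ref{lem:diff}, \ref{lem:T} and \ref{lem:n}, together with the Lipschitz control on the fiber field $W(\cdot)$. Set $p_1 := \pi(x)$, $p_2 := \pi(y)$, $x_i := f_X(p_i)$, and write $x = x_1 + v_1$, $y = x_2 + v_2$ with $v_i \in W(p_i)$ and $|v_1| = t$. Let $L := |x_1 - x_2|$. By Lemma~\ref{lem:inj}, $|\pi(x) - \pi(y)| = L$, so (1) amounts to bounding $L$ by $(1 + \tau_\delta + Ct)|x - y|$.

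The backbone of the argument consists of two quantitative facts. \emph{Fact A:} the vector $x_2 - x_1$ makes angle at most $\tau_\delta$ with $T(p_1)$; this follows from Lemma~\ref{lem:diff} together with the $\tau_\delta$-approximation of $df_X(K_{p_1})$ by $T(p_1)$ (Lemma~\ref{lem:almlinear} and the construction in Lemma~\ref{lem:T}). \emph{Fact B:} the fibers satisfy $\angle(W(p_1), W(p_2)) \le C L$; in the interior case this is exactly the dual bound from Lemma~\ref{lem:N} (combined with $d(p_1,p_2) \le L/c_5$ via Lemma~\ref{lem:inj}), and in the boundary case it is obtained by combining the same bound for $N(\cdot)$ with the Lipschitz estimate $\angle({\bf n}(p_1), {\bf n}(p_2)) \le C d(p_1, p_2)$ from Lemma~\ref{lem:n}. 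Both facts are purely local and essentially quoted.

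Now I would decompose $y - x = (x_2 - x_1) + (v_2 - v_1)$ and project orthogonally onto $T(p_1)$ and $W(p_1)$. For (1), Fact A gives $|(x_2 - x_1)_T| \ge (1-\tau_\delta)L$, and Fact B together with $v_1 \in W(p_1)$ gives $|(v_2 - v_1)_T| \le |v_2| \cdot CL \le (t + |x-y|)CL$. Since $|(y - x)_T| \le |y - x|$, rearranging yields $L(1 - \tau_\delta - Ct - C|x-y|) \le |y - x|$, which is assertion (1) after absorbing the $C|x-y|L$ term on the right for $|x-y|$ small. For (2), the hypothesis forces the $W(p_1)$-component of $y-x$ to vanish. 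The $W(p_1)$-component of $x_2-x_1$ has norm $\le \tau_\delta L$ by Fact A, and the part of $v_2$ outside $W(p_1)$ has norm $\le C|v_2|L \le C(t+|x-y|)L$ by Fact B. Cancelling forces $|v_2 - v_1| \le (\tau_\delta + Ct)L$, and then $|(y-x) - (x_2 - x_1)| = |v_2 - v_1|$; applying (1) to convert $L$ into $|x - y|$ completes the estimate.

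The main obstacle I anticipate is bookkeeping in the boundary case, where $W(p_1) = H(p_1)$ is only a half of the linear subspace $\hat H(p_1)$. Orthogonal projection onto a half-space is not linear, so one must verify that the ``projected'' $v_2$ remains on the ${\bf n}(p_1)$-side of $N(p_1)$; this is true because $v_2$ lies on the ${\bf n}(p_2)$-side of $N(p_2)$, and a rotation by angle $\le CL$ cannot cross the separating hyperplane provided $|v_2|$ is bounded away from zero only by $\kappa$, which is $\le C$ chosen small. All such issues are technical rather than conceptual, and reduce to the two facts above once notation is set up carefully.
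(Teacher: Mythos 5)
Your proposal is correct in spirit but runs the argument in a genuinely different direction from the paper, which is worth noting. The paper proves~(2) first, in the perpendicular case, by an explicit Euclidean construction: it introduces auxiliary points $y_1, y_2$ as the intersections of the fibres $N(\pi(y))$ and $N := y + N(\pi(x))$ with the affine tangent plane $T(\pi(x))$, then a further pair $z, y_3$ on $N(\pi(y))$, and bounds $|y_1-y_2|$, $|y_1-y_3|$, $|y_2-y_3|$ by $\tau_\delta$- and $C t$-errors using the Lipschitz control on $N(\cdot)$. Assertion~(1) is then \emph{derived from}~(2) by replacing $y$ with its closest point $y_0$ on the fibre $N(\pi(y))$. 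You reverse this: you prove~(1) directly by decomposing $y - x = (x_2 - x_1) + (v_2 - v_1)$ (where $x_i = f_X(p_i)$, $v_i \in W(p_i)$) and projecting orthogonally onto $T(p_1)$ and onto $W(p_1)$, and then derive~(2) from~(1). Both approaches use the same inputs (your Facts~A and~B correspond to Lemmas~\ref{lem:diff}/\ref{lem:almlinear} and to Lemmas~\ref{lem:N}/\ref{lem:n} respectively), but the decomposition route avoids the auxiliary-point chase and is arguably more transparent.

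A few small repairs are needed in your write-up. First, the bound $|v_2| \le t + |x-y|$ is missing an $L$: one only has $|v_2| \le |y - x_1| + L \le t + |x-y| + L$, so the projection estimate produces a $C L^2$ term and the argument requires a small bootstrap (first a crude bound $L \le C'|x-y|$ from Lemma~\ref{lem:tubular}, then substitute back) before the $C|x-y|$-type terms can be absorbed. Second, the remark that ``$|\pi(x)-\pi(y)| = L$ by Lemma~\ref{lem:inj}'' is a notational slip: $\pi$ maps into $f_X(X_\nu) \subset L^2(X)$, so $\pi(x) = x_1$ and the identity is definitional; Lemma~\ref{lem:inj} is used only inside Fact~B to pass from $L$ to $d(p_1,p_2)$. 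Third, and more substantively, your flagged worry about the boundary case is aimed at the wrong feature. The issue is not that $H(p_1)$ is only a half of $\hat H(p_1)$; it is that $\hat H(p_1)$ is strictly larger than $N(p_1) = T(p_1)^\perp$ (it also contains the nearly tangential direction ${\bf n}(p_1)$), so one must project onto $\hat H(p_1)^\perp$ rather than onto $T(p_1)$. For the projected $x_2 - x_1$ to still have length $\ge (1-\tau)L$ one then needs the additional geometric input that the geodesic direction $(p_2)'_{p_1}$ is nearly perpendicular to $V_{p_1}$ (hence $df_X((p_2)'_{p_1})$ nearly perpendicular to ${\bf n}(p_1)$) when both $p_1, p_2 \in \partial X_\nu$; this is the comparison-geometric fact hiding behind the paper's replacement of $N$ by $\hat H$ in its Case~2. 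With those adjustments your proof goes through and constitutes a legitimate alternative.
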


\begin{proof}
First we prove (2).

\medskip
\noindent Case 1. \quad $\pi(x)\in f_X(\interior X_{\nu})$. \par
\medskip

Let $N$ be the affine space of codimension $k$ 
parallel to $N_{\pi(x)}$
and through $y$. Let $y_1$ and $y_2$ be the intersections of 
$N_{\pi(y)}$ and 
$N$ with $T_{\pi(x)}$ respectively.
Let $z$ be the point in $K=N\cap N(\pi(y))$ such that 
$|y_2-z|=d(y_2,K)$, and $y_3\in N(\pi(y))$ the point such that 
$|y_2-z|=|y_3-z|$ and
$\angle y_2zy_3 = \angle(y_2-z, N(\pi(y)))\le\angle(N,N(\pi(y)))$.
An argument similar to that in Lemma \ref{lem:tubular} yields that  
\begin{align*}
 &  |y_1-y_3| < \tau_{\delta} |y_1-y_2|,\\
 &  |y_2-y_3|/|z-y_2| \le \angle(N(\pi(x)),N(\pi(y)))
    \le C |\pi(x)-\pi(y)|. 
\end{align*}
It follows that $|y_1-y_2|< Ct |\pi(x)-\pi(y)|$.  Furthermore
the assumption implies $|(\pi(x)-y_2)-(x-y)|<\tau_{\delta} |x-y|$. 
Therefore we get

\begin{align*}
 |(\pi(x)- & y_1) - (x-y)| \\
               & \le |(\pi(x)-y_1)-(\pi(x)-y_2)| +|(\pi(x)-y_2)-(x-y)| \\
               & \le |y_1-y_2| + \tau_{\delta}|x-y| \\
               & <  Ct |\pi(x)-\pi(y)|+\tau_{\delta}|x-y|.
\end{align*}
On the other hand,
since $\angle y_1\pi(x)\pi(y) < \tau_{\delta}$, 
$$
|(\pi(x)-\pi(y))-(\pi(x)-y_1)| < \tau_{\delta}|\pi(x)-\pi(y)|.
$$
Combining the two inequalities, we obtain that 
$$
|(\pi(x)-\pi(y))-(x-y)|<(\tau+Ct)|\pi(x)-\pi(y)|+\tau_{\delta}|x-y|,
$$
from which $(2)$ follows. 

\medskip
\noindent Case 2. \quad  The $\pi$-image of a small 
     neighborhood of $x$ is contained in $f_X(\partial X_{\nu})$. \par
\medskip

Apply the above argument to the affine subspaces 
$\hat H(\pi(x))$ 
and $\hat H(\pi(y))$ in place of $N(\pi(x))$ and $N(\pi(y))$.

\medskip
\noindent Case 3. \quad The other case. \par
\medskip

This case can be reduced to the Case 1.\par

For $(1)$, we assume Case 1. The other case is similar.
Take $y_0\in N(\pi(y))$ such that $|x-y_0|=d(x,N(\pi(y)))$.
Then $(2)$ implies 

\begin{align*}
 \frac{|\pi(x)-\pi(y)|}{|x-y|} & \le \frac{|\pi(x)-\pi(y)|}{|x-y_0|} \\
                             & \le  1+\tau_{\delta} + Ct.
\end{align*}
\end{proof}

\section{Almost Lipschitz submersion } \label{sec:cap-lip}

\medskip
In this section, we shall prove Theorem \ref{thm:cap}. \par

Let $M$ be as in Theorem \ref{thm:cap}. We assume 
$d_{\text{$G$-$GH$}}((M,G),(X,G))<\epsilon$ and 
$\epsilon\ll\sigma_1, \nu$.
Let $\varphi:X\to M$ and $\psi:M\to X$ be 
$2\epsilon$-G-approximations such that
$d(\psi\varphi(x),x)<\epsilon$, $d(\varphi\psi(x),x)<\epsilon$, 
where we can take such a $\varphi$ to be measurable
and $G$-equivariant. Then the map $f_M:M \to L^2(X)$ 
defined by 
$$
   f_M(p)(x) = h(d(p,\varphi(x))), \quad (x\in X)
$$
is $G$-equivariant.
Since $f_M(M)\subset \cal W(\tau(\nu+\epsilon))$, 
the map 
$$
  f=f_X^{-1}\circ\pi\circ f_M :M\to X_{\nu}
$$
is well defined, 
and Lemma \ref{lem:tubular} shows that 
it is $G$-equivariant.  It also follows from Lemma\ref{lem:proj} 
that $f$ is a Lipschitz map.\par

As before, $df_M(\xi)\in L^2(X)$, $\xi\in\Sigma_p$, 
is given by 
\begin{equation}
  df_M(\xi)(x)=-h^{\prime}(d(p,\varphi(x))) 
            \cos \angle(\xi,\varphi(x)_p^{\prime}).
                        \label{eq:deriv2}
\end{equation}

\begin{lem} \label{lem:deriv2}
For every $p,q\in M$ and $\xi\in q_p^{\prime}$,
$$
  \left|\frac{f_M(q)-f_M(p)}{d(q,p)} - df_M(\xi) \right| <   
    \tau(\delta,\sigma,\epsilon/\nu,d(p,q)/\sigma).
$$
\end{lem}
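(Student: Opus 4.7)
The plan is to mimic the proof of Lemma \ref{lem:diff}, working pointwise in the $L^2(X)$-integrand and then integrating, while carrying along the extra error introduced by passing from $X$ to $M$ via the $G$-approximation $\varphi$. Fix $p,q\in M$ with $t:=d(p,q)\le \sigma_1$ and $\xi\in q_p^{\prime}$. For each $x\in X$, write $\bar x:=\varphi(x)\in M$ and compare the integrand of the left-hand side with that of \eqref{eq:deriv2}. Since only $x$ with $\sigma/10\le d(p,\bar x)\le\sigma$ contribute to the $L^2$-norm (outside this range $h'$ vanishes or is $O(\sigma^2)$ times a factor that is also $O(\sigma^2)$), the effective domain of integration is a neighborhood in $X$ whose Hausdorff $k$-measure is bounded by $c\,\sigma^k$, matching the normalization of $|\cdot|$.

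The pointwise estimate proceeds in two steps. First, a first-order Taylor expansion of $h$ at $d(p,\bar x)$ together with $|h''|\le 100/\sigma^2$ gives
\[
 h(d(q,\bar x))-h(d(p,\bar x)) = h'(d(p,\bar x))\,\bigl(d(q,\bar x)-d(p,\bar x)\bigr) + O(t^2/\sigma^2).
\]
Second, since $M$ has curvature $\ge -1$ and $M=R_{\delta_n}(M)$, every point of $M$ carries a $(n,\delta_n)$-strainer, so Lemma \ref{lem:basic}(2) applied in $M$ (at scale $\sigma$) yields
\[
 \bigl|\angle q p \bar x - \tilde\angle q p \bar x\bigr| < \tau(\delta_n,\sigma,t/\sigma)
\]
for $\sigma/10\le d(p,\bar x)\le\sigma$. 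Combined with the triangle comparison in the $(-1)$-plane (or equivalently the first variation formula on $M$), this gives
\[
 \bigl|d(\bar x,q)-d(p,\bar x)+t\cos\angle(\xi,\bar x_p^{\prime})\bigr|
     < t\,\tau(\delta_n,\sigma,t/\sigma),
\]
exactly as in the derivation of \eqref{eq:diff}. Plugging this into the Taylor expansion and dividing by $t$, the pointwise integrand is bounded by $\tau(\delta,\sigma,t/\sigma)\,|h'(d(p,\bar x))|$, uniformly in $x$.

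The Hausdorff-approximation error enters as follows. The identifications used silently above replace $d(\bar x,\cdot)$ (a distance in $M$) by $d(x,\cdot)$ (the corresponding distance in $X$ used to build $f_X$); the two differ by at most $2\epsilon$, and the relevant $h'$ is supported where $d(p,\bar x)\ge\sigma/10$, so the substitutions cost an extra $\tau(\epsilon/\sigma)$. However, to carry out Lemma \ref{lem:basic}(2) at points $\bar x$ near the boundary $\partial M$ (which corresponds via $\psi$ to a neighborhood of $\partial X$), one needs the admissible strainer to sit inside the domain where comparison angles are controlled; this forces $\epsilon$ to be small compared to $\nu$, producing the factor $\tau(\epsilon/\nu)$ in the final bound. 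Once these errors are absorbed, integrating the pointwise estimate over $X$ against the normalized $L^2$-norm and using $\int h'(d(p,\bar x))^2\,d\mathcal H^k/b_0^k(\sigma)\le c_4^2$ (cf.\ Lemma \ref{lem:df_X}) yields the stated inequality.

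The main technical obstacle is Step~2, verifying the angle comparison inside $M$ rather than inside $X$: one must pass the $(k,\delta)$-strainer from $p$ through $\psi$ to a $(k,\delta+\tau(\epsilon/\nu))$-strainer at $\psi(p)$, apply Lemma \ref{lem:basic} there, and then transfer the resulting estimate back to $M$. The bookkeeping for points $p$ or $\bar x$ near $\partial M$ is what produces the $\epsilon/\nu$-dependence and is the reason the Hausdorff bound must be taken small relative to $\nu$, not just relative to $\sigma$.
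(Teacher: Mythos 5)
Your Step 2 has a genuine gap, and it is the heart of the lemma. You claim that since $M = R_{\delta_n}(M)$, every point $p\in M$ carries an $(n,\delta_n)$-strainer, and hence Lemma \ref{lem:basic}(2) can be applied directly in $M$ at scale $\sigma$. But the condition $M = R_{\delta_n}(M)$ says nothing about the \emph{length} of those strainers, and for collapsing $M$ they will be short: in the toy model $M = S^1(\epsilon)\times X$, the full $(n,\delta_n)$-strainer at any $p$ has length at most $\pi\epsilon/2 \ll \sigma$, because in the $S^1(\epsilon)$-direction one cannot go farther. Lemma \ref{lem:basic}(2) (which is BGP Lemma 5.6) requires a nearly-antipodal point at a scale comparable to $\sigma$; a strainer of length $O(\epsilon)$ cannot supply it. So the estimate $|\angle q p \varphi(x) - \tilde\angle q p \varphi(x)| < \tau(\delta_n,\sigma,t/\sigma)$ is not justified by what you cite.

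Your proposed workaround in the final paragraph — ``pass the strainer from $p$ through $\psi$ to $\psi(p)$, apply Lemma \ref{lem:basic} there, and transfer the resulting estimate back to $M$'' — also fails, because the quantity to be transferred is an \emph{intrinsic angle} at $p\in M$, and intrinsic angles are not preserved by Gromov--Hausdorff approximations (only distances, and hence comparison angles $\tilde\angle$, are). What the paper's proof actually does is dual to this: it takes the point $x\in X$ together with a nearly-antipodal point $y\in X$ at scale $\nu$ from $\psi(p)$, so that $\tilde\angle x\,\psi(p)\,y > \pi-\tau(\delta,\sigma)$, and then transfers only this \emph{comparison-angle} inequality through $\varphi$, obtaining $\tilde\angle \varphi(x)\,p\,\varphi(y) > \pi-\tau(\delta,\sigma,\epsilon/\nu)$. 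Since this is a statement about six distances, it survives the $\epsilon$-approximation, with relative error $\epsilon/\nu$ coming from the shortest side $d(p,\varphi(y))\approx\nu$. Having manufactured a near-antipodal pair $(\varphi(x),\varphi(y))$ inside $M$, one then applies the BGP angle comparison \emph{intrinsically} in $M$, exactly as in the proof of Lemma \ref{lem:diff}. Your Taylor expansion, normalization of the $L^2$-norm, and the final integration are fine; what is missing is this construction of the antipodal pair in $X$ and its transfer to $M$, which is also the precise mechanism that produces the $\tau(\epsilon/\nu)$ in the conclusion.
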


\begin{proof}
For every $x$ with $\sigma/10\le d(\psi(p),x)\le\sigma$, 
take $y\in X$ such that
$\tilde\angle x\psi(p)y >\pi-\tau(\delta,\sigma)$,
$\nu/10\le d(\psi(p),y)\le \nu$.
Since $\tilde\angle \varphi(x)p\varphi(y) > 
\pi-\tau(\delta,\sigma,\epsilon/\nu)$, it follows from
an argument similar to Lemma \ref{lem:diff} that 
\begin{align*}
|d(q,\varphi(x))-d(p,\varphi(x)) + d(q,p) & \cos 
        \angle(\xi, \varphi(x)_p^{\prime})| \\
    & < d(q,p)\tau(\delta,\sigma,\epsilon/\nu,d(p,q)/\sigma),
\end{align*}
which implies the required inequality. 
\end{proof}

We put 

\begin{center}
  $M_{\text{int}} := \text{ the closure of }\,\,
            \{ p\in M\,|\, f(p)\in \interior X_{\nu} \}$, \\
  $M_{\text{cap}} := \{ p\in M\,|\, f(p)\in \partial X_{\nu} \}$.
\end{center}

\begin{lem}
Every $p\in M_{\interior}$ $($resp. $p\in M_{\capp}$$)$ satisfies 
$d(f(p),\psi(p))<\tau(\epsilon)$ $($resp.
$d(f(p),\psi(p))<\tau(\nu)$$)$.
\end{lem}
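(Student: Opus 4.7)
The plan is to compare $f_M(p)$ with $f_X(\psi(p))$ in $L^2(X)$ and then transfer the comparison to $X$ through the Lipschitz projection $\pi$ (Lemma \ref{lem:proj}) and the bi-Lipschitz map $f_X$ (Lemma \ref{lem:inj}). First, since $\varphi$ and $\psi$ are $2\epsilon$-approximations with $d(\psi\varphi(x),x)<\epsilon$, I would derive the pointwise bound $|f_M(p)(x)-f_X(\psi(p))(x)| \le \sup|h'|\cdot|d(p,\varphi(x))-d(\psi(p),x)| < 300\,\epsilon/\sigma^2$; since the integrand is supported in $B(\psi(p),\sigma)$, integration in the normalized $L^2$ norm gives
\[
  |f_M(p)-f_X(\psi(p))|_{L^2} < \tau(\epsilon).
\]

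Next, let $\psi(p)' \in X_\nu$ be a point of $X_\nu$ nearest to $\psi(p)$, so that $d(\psi(p),\psi(p)') \le \max\{0,\nu-d(\psi(p),\partial X)\} \le \nu$. Then $f_X(\psi(p)') \in f_X(X_\nu)$, hence $\pi(f_X(\psi(p)')) = f_X(\psi(p)')$, and combining Step 1 with Lemma \ref{lem:inj} yields $|f_M(p)-f_X(\psi(p)')| < \tau(\epsilon) + c_6\,d(\psi(p),\psi(p)')$. Both $f_M(p)$ and $f_X(\psi(p)')$ lie in $\mathcal{W}(\kappa)$ and are close in $L^2$, so Lemma \ref{lem:proj}(1) gives $|f_X(f(p))-f_X(\psi(p)')| = |\pi(f_M(p))-\pi(f_X(\psi(p)'))| \le (1+\tau_\delta)|f_M(p)-f_X(\psi(p)')|$. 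Inverting via Lemma \ref{lem:inj} and applying the triangle inequality,
\[
  d(f(p),\psi(p)) < \tau(\epsilon) + C\,d(\psi(p),\psi(p)').
\]

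For $p \in M_{\capp}$, feeding the crude estimate $d(\psi(p),\psi(p)') \le \nu$ into the displayed inequality already gives $d(f(p),\psi(p)) < \tau(\nu)$. For $p \in M_{\interior}$, one must further show $d(\psi(p),\psi(p)') < \tau(\epsilon)$, since then the same inequality produces the desired $\tau(\epsilon)$-bound. The refined estimate rests on the defining property of $M_{\interior}$: for $p \in f^{-1}(\interior X_\nu)$, the offset $f_M(p)-f_X(f(p))$ lies in $N(f(p))$, which by Lemma \ref{lem:n} is almost orthogonal to $\mathbf{n}$; on the other hand, Proposition \ref{prop:codim1-reg} together with an argument analogous to Lemma \ref{lem:diff} applied to geodesics from $\psi(p)'$ to $\partial X$ shows that $f_X(\psi(p))-f_X(\psi(p)')$ is approximately $d(\psi(p),\psi(p)')\cdot\mathbf{n}(\psi(p)')$. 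Since $|f_M(p)-f_X(\psi(p))|_{L^2}<\tau(\epsilon)$ by Step 1, the $\mathbf{n}$-component of $f_X(\psi(p))-f_X(\psi(p)')$ is forced to be $\tau(\epsilon)$-small, so $d(\psi(p),\psi(p)')<\tau(\epsilon)$ and the closure of $f^{-1}(\interior X_\nu)$ is handled by continuity.

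The main obstacle will be executing the orthogonality step cleanly: $N$ and $\mathbf{n}$ are defined on $X_\nu$ whereas $\psi(p)$ can lie outside, so one must carefully transport these fields between $f(p)$ and $\psi(p)'$ using the Lipschitz bounds in Lemmas \ref{lem:N} and \ref{lem:n}, together with the angle estimate between tangent and normal directions near $\partial X_\nu$ supplied by Lemma \ref{lem:parallel} and the construction of $\mathbf{n}$.
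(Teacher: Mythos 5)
Your overall strategy matches the paper's terse proof, which records only the key inequality $d(f_M(p),f_X(X_\nu))<\tau(\epsilon)$ for $p\in M_{\rm int}$ (resp.\ $<\tau(\nu)$ for $M_{\rm cap}$) and declares the lemma immediate. Your Steps 1--3 correctly supply the Lipschitz bookkeeping hidden in ``immediate,'' and the $M_{\rm cap}$ case is fine: the crude bound $d(\psi(p),\psi(p)')\le\nu$ suffices.

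Step 4, however, has a genuine gap. You argue that because $v:=f_M(p)-f_X(f(p))\in N(f(p))$ has no $\mathbf n$-component and $|f_M(p)-f_X(\psi(p))|<\tau(\epsilon)$, the $\mathbf n$-component of $f_X(\psi(p))-f_X(\psi(p)')\approx d\,\mathbf n(\psi(p)')$, $d:=d(\psi(p),\psi(p)')$, is forced to be $\tau(\epsilon)$-small. But
$$
  f_M(p)-f_X(\psi(p))= v + u - \bigl(f_X(\psi(p))-f_X(\psi(p)')\bigr),
  \qquad u:=f_X(f(p))-f_X(\psi(p)'),
$$
and the middle term $u$ also contributes an $\mathbf n$-component $u_{\mathbf n}$, which a priori is only of order $d(f(p),\psi(p)')$, i.e.\ possibly as large as $\tau(\nu)$; what your estimate actually gives is $|u_{\mathbf n}-d|<\tau(\epsilon)$, which does not preclude $d\sim\nu$ if $u_{\mathbf n}\sim d$. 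The missing ingredient is a sign argument: since $\psi(p)'\in\partial X_\nu$ and $f(p)\in X_\nu$, the direction $\xi$ from $\psi(p)'$ to $f(p)$ points \emph{into} $X_\nu$, so from the $\R^k_+$ almost-isometry of Lemma \ref{lem:almisom} one gets $\angle(\xi,V_{\psi(p)'})\ge\pi/2-\tau_\delta$, hence $\langle df_X(\xi),\mathbf n(\psi(p)')\rangle\le\tau_\delta$ and $u_{\mathbf n}\le\tau_\delta\,d(f(p),\psi(p)')$; feeding in the bound on $d(f(p),\psi(p)')$ from your Step 2 and using $d\ge 0$ closes the estimate to $d<\tau(\epsilon)$. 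Equivalently, and closer to the paper's phrasing: $f(p)\in\interior X_\nu$ forces $f_M(p)\in\pi^{-1}(f_X(\interior X_\nu))$, the inner sheet of the tubular neighborhood, whereas $f_X(\psi(p))$ lies in the outer half-slab $\pi^{-1}(f_X(\partial X_\nu))$ at distance comparable to $d$ from the interface between the two sheets; their $\tau(\epsilon)$-closeness then forces $d<\tau(\epsilon)$, which is exactly the paper's displayed inequality $d(f_M(p),f_X(X_\nu))<\tau(\epsilon)$.
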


\begin{proof}
Every $p\in M_{\text{int}}$ (resp. $p\in M_{\text{cap}}$)satisfies 
$d(f_M(p), f_X(X_{\nu}))<\tau(\epsilon)$ (resp.
$d(f_M(p), f_X(X_{\nu}))<\tau(\nu)$),
from which the lemma follows immediately.
\end{proof}

\begin{lem}[Comparison Lemma] \label{lem:compar}
Let $x_i\in M$ and $\bar x_i\in X$, $1\le i\le 3$, be given.
\begin{enumerate}
 \item If $x_1, x_2\in M_{\interior}$, 
      $\nu/10\le d(x_1, x_2)\le\nu$,
      $\sigma/10\le d(x_2, x_3)\le\sigma$ and 
      $d(f(x_i),\bar x_i) < \tau(\epsilon)$, $1\le i\le 3$,
    then 
     $$
       |\angle xyz - \angle \bar x\bar y\bar z|<
        \tau(\delta,\sigma,\epsilon/\nu);
     $$
 \item If $x_1, x_2\in M_{\text{cap}}$,
      $\sigma/10\le d(x_1, x_2), d(x_2, x_3)\le\sigma$,
      and 
      $d(f(x_i),\bar x_i) < \tau(\epsilon)$, $1\le i\le 3$,
   then 
     $$
      |\angle xyz - \angle \bar x\bar y\bar z|<
        \tau(\delta,\sigma,\nu/\sigma).
      $$
\end{enumerate}
\end{lem}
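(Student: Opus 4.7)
The plan is to translate the geodesic angle comparison into a Euclidean computation inside the ambient Hilbert space $L^2(X)$, using the two embeddings $f_M$ and $f_X$ together with the tubular neighborhood structure from Lemma \ref{lem:tubular}. First, by Lemma \ref{lem:deriv2} applied at $x_2$ with unit directions $\xi_j \in (x_j)^{\prime}_{x_2}$, $j=1,3$, we have
\[
  \frac{f_M(x_j)-f_M(x_2)}{d(x_j,x_2)} = df_M(\xi_j) + \vec e_j, \qquad |\vec e_j| < \tau(\delta,\sigma,\epsilon/\nu),
\]
and symmetrically for $f_X$ and $\bar\xi_j \in (\bar x_j)^{\prime}_{\bar x_2}$ via Lemma \ref{lem:diff}, with errors $\tau(\delta,\sigma,\nu/\sigma)$ in case (1) and $\tau(\delta,\sigma)$ in case (2) (using the allowed ranges of $d(\bar x_j,\bar x_2)$). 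The $L^2$-angle between $df_M(\xi_1)$ and $df_M(\xi_3)$ approximates $\angle(\xi_1,\xi_3)=\angle x_1x_2x_3$ up to $\tau(\delta,\sigma)$: one expands $df_M(\xi_j)$ via the analogue of Lemma \ref{lem:almlinear} in $(k,\delta)$-strained coordinates at $x_2$ (transferring the strainer from $X$ via $\varphi$, which introduces an additional $\tau(\epsilon/\nu)$ error), and computes the inner product using the spherical almost-isometry $\phi$ of \cite{BGP} used in Lemma \ref{lem:almlinear}. The same applies to $df_X(\bar\xi_1), df_X(\bar\xi_3)$ with errors only depending on $\delta,\sigma$.

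The second step is to show that the two pairs of secants in $L^2(X)$ are close. Writing $f_M(x_i) = f_X(f(x_i)) + w_i$ with $w_i\in W(f(x_i))$, a direct estimate from the definition of $f_M$ and $f_X$, together with the fact that $\varphi,\psi$ are $\epsilon$-$G$-approximations, gives $|f_M(p)-f_X(\psi(p))|<c\,\epsilon$ pointwise; hence $|w_i|<\tau(\epsilon)$ for $x_i\in M_{\rm int}$, while for $x_i\in M_{\rm cap}$ only the weaker $|w_i|<\tau(\nu)$ is available (since then $f(x_i)$ sits on $\partial X_{\nu}$ and the normal component can be as large as $\nu$). Combined with $d(f(x_i),\bar x_i)<\tau(\epsilon)$ and the Lipschitz bound of Lemma \ref{lem:inj} for $f_X$, this yields
\[
  \bigl| (f_M(x_j)-f_M(x_2)) - (f_X(\bar x_j)-f_X(\bar x_2)) \bigr| < \tau(\epsilon)+\tau(\nu)\cdot\mathbf 1_{\rm cap}.
\]
Dividing by $d(x_j,x_2)\ge\nu/10$ in case (1) (resp. $\sigma/10$ in case (2)) converts this into a $\tau(\epsilon/\nu)$ (resp. $\tau(\nu/\sigma)$) bound on the difference of the normalized secants.

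Putting the two steps together via the triangle inequality for $L^2$-angles (and using $d(x_1,x_2)\approx d(\bar x_1,\bar x_2)$, etc., which follows from $f$ being an almost-isometry on the relevant scale), the $L^2$-angle between the $f_M$-secants differs from the $L^2$-angle between the $f_X$-secants by $\tau(\delta,\sigma,\epsilon/\nu)$ in case (1), and by $\tau(\delta,\sigma,\nu/\sigma)$ in case (2). Combined with the first step, this gives the stated bounds for $|\angle x_1x_2x_3 - \angle \bar x_1\bar x_2\bar x_3|$.

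The main obstacle I anticipate is the quantitative control of the normal components $w_i$ and, more delicately, of the differences $w_j-w_2$. For case (1) one needs the refined statement that $f_M$ sends $M_{\rm int}$ into $\mathcal W(\tau(\epsilon))$ rather than merely $\mathcal W(\kappa)$, which uses in an essential way that $f_X(\psi(p))$ lies in the interior $f_X(\interior X_{\nu})$ and that the fiber $W(f(x_i))=N(f(x_i))$ is orthogonal to $f_X(\interior X_{\nu})$; for case (2), the relevant fiber is the half-space $H(f(x_i))$, and the $w_i$ can have a component along $\mathbf n(f(x_i))$ of size up to $\tau(\nu)$, which is precisely why the error in (2) depends on $\nu/\sigma$ and not on $\epsilon/\nu$. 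Handling this carefully, using the Lipschitz continuity of $T$ and $\mathbf n$ (Lemmas \ref{lem:T} and \ref{lem:n}), is the technical heart of the argument.
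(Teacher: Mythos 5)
Your approach is genuinely different from the paper's, and unfortunately it runs into a gap that I do not see how to repair within your framework.

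The paper's proof is a short comparison-geometry "trapping" argument. It picks an auxiliary point $\bar w$ almost antipodal to $\bar x_1$ at $\bar x_2$ (so $\tilde\angle\bar x_1\bar x_2\bar w > \pi - \tau(\delta,\sigma)$), pulls it back to $w = \varphi(\bar w)\in M$, and uses (a) the one-sided inequality $\angle\ge\tilde\angle$ valid in any lower-curvature-bound space, (b) the near-equality $\angle\approx\tilde\angle$ at strained points of $X$ (Lemma \ref{lem:basic}), and (c) the closeness of comparison angles $\tilde\angle x_1x_2x_3 \approx \tilde\angle\bar x_1\bar x_2\bar x_3$ that comes directly from the $\epsilon$-approximation, because the relevant side lengths are bounded below by $\nu/10$ and $\sigma/10$. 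This yields one-sided estimates for both $x_1$ and $w$, and the near-antipodality of $\xi_1$ and $\eta := w'_{x_2}$ in $\Sigma_{x_2}(M)$ (a space of curvature $\ge 1$, so the perimeter of the triangle $\xi_1\xi_3\eta$ is at most $2\pi$) flips one of the one-sided bounds into the missing direction. No $L^2$-embedding machinery is needed.

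Your argument has a concrete flaw at the step where you apply Lemma \ref{lem:deriv2} (respectively Lemma \ref{lem:diff}) to the pair $(x_2,x_3)$ (respectively $(\bar x_2,\bar x_3)$). The error in those lemmas is $\tau(\delta,\sigma,\epsilon/\nu, d(p,q)/\sigma)$ (respectively $\tau(\delta,\sigma, d(p,q)/\sigma)$), and in the lemma under consideration $d(x_2,x_3)$ and $d(\bar x_2,\bar x_3)$ lie in $[\sigma/10,\sigma]$, so the last argument is bounded away from $0$. Thus the secant $\frac{f_M(x_3)-f_M(x_2)}{d(x_3,x_2)}$ is \emph{not} controllably close to $df_M(\xi_3)$, and similarly for $f_X$; the embeddings $f_M, f_X$ are bi-Lipschitz (Lemma \ref{lem:inj}) but are nowhere near linear at the scale $\sigma$, so $L^2$-angles between secants of length $\sim\sigma$ do not approximate the geodesic angle. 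This is not a cosmetic issue: it is precisely the fact that the third point sits at the maximal scale $\sigma$ that forces the paper to use the pointwise comparison-angle argument rather than a differential one.

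Two further concerns, secondary to the one above. First, the claim that the $L^2$-angle between $df_M(\xi_1)$ and $df_M(\xi_3)$ approximates $\angle(\xi_1,\xi_3)$ needs $\xi_1,\xi_3$ to be (almost) horizontal, since $df_M$ essentially annihilates fiber directions; you do not verify this, and indeed horizontality of $\xi_1$ at this stage would itself require an estimate of the kind you are trying to prove. Second, in the paper the chain of dependence is Lemma \ref{lem:compar} $\Rightarrow$ Lemma \ref{lem:almtang} ($df_M(\xi)\approx df_X(\bar\xi)$), so re-deriving the comparison lemma from estimates on $df_M$ vs.\ $df_X$ risks circularity unless you establish those estimates from scratch, which is essentially what you are trying to avoid.
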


\begin{proof}
We prove (1). The proof of (2) is similar and hence omitted. 
From the assumption, we can take a point $\bar w\in X$ such that 
\begin{equation}
\tilde\angle \bar x_1\bar x_2\bar w > \pi-\tau(\delta,\sigma) 
            \label{eq:pi}
\end{equation}
and $d(\bar x_2,\bar w)\ge\nu/10$. Put $w=\varphi(\bar w)$. 
Then 
\begin{align*}
 \angle x_1x_2x_3 & > \angle \bar x_1\bar x_2\bar x_3  
                    - \tau(\delta,\sigma,\epsilon/\nu),\\
 \angle w x_2x_3 & > \angle \bar w\bar x_2\bar x_3 -
                    - \tau(\delta,\sigma,\epsilon/\nu).
\end{align*}
Since \eqref{eq:pi} implies 
$$
  |\angle x_1x_2w-\pi|<\tau(\delta,\sigma,\epsilon/\nu),
$$
we have the required inequality.
\end{proof}

We now fix $p\in M$, and put
\begin{center}
  $H^{\interior}_p  := \{ \xi\in\Sigma_p \,|\,
       \xi\in x^{\prime}_p, d(p,x)\ge\nu/10 \}, 
         \quad (\text{if\,\, $p\in M_{\interior}$}),$ \\
  $H^{\capp}_p  := \{ \xi\in\Sigma_p \,|\,
       \xi\in\varphi(x)^{\prime}_p, x\in\partial X_{\nu},
        \sigma/10\le d(f(p),x)\le\sigma \}$, \\
      $\hspace{5cm}   \quad (\text{if\,\,$p\in M_{\capp}$})$.
\end{center}
which can be regarded as the sets of $^{\prime\prime} \text{horizontal 
directions} ^{\prime\prime}$ at $p$. 

For $\bar p :=f(p)\in X_{\nu}$, we also put 
\begin{gather*}
  H^{\interior}_{\bar p} := \{ \xi\in\Sigma_{\bar p}\,|\,
           \xi\in x^{\prime}_{\bar p}, 
              d(\bar p,\bar x)\ge\nu/10 \},
        \quad (\text{if\,\, $\bar p\in \partial X_{\nu}$}),\\
  H^{\capp}_{\bar p} := \{ \xi\in\Sigma_{\bar p}\,|\,
           \xi\in x^{\prime}_{\bar p}, 
              \sigma/10\le d(\bar p,\bar x)\le\sigma \},
        \quad (\text{if\,\, $\bar p\in \partial X_{\nu}$}).
\end{gather*}

For any $\bar \xi\in H^{\interior}_{\bar p}$,
take $\bar q\in X$ with $\bar\xi = q_p'$ and 
$d(\bar p, \bar q)\ge\nu/10$.
Put $\xi:=(\varphi(\bar q))_p'\in H^{\interior}_p$, and consider the map
$\chi_{\interior}: H^{\interior}_{\bar p}\to H^{\interior}_p$ 
defined by $\chi_{\interior}(\bar\xi)=\xi$. 
For any $\bar \xi\in H^{\capp}_{\bar p}$,
take $\bar q\in\partial X$ with $\bar\xi = q_p'$ and 
$\sigma/10\le d(\bar p, \bar q)\le\sigma/10$.
Put $\xi:=(\varphi(\bar q))_p'\in H^{\capp}_p$, and consider the map
$\chi_{\capp}: H^{\capp}_{\bar p}\to H^{\capp}_p$ 
defined by $\chi_{\capp}(\bar\xi)=\xi$.

\begin{lem} \label{lem:almtang}
For sufficiently small $t>0$, the following holds:
\begin{enumerate}
 \item For every $\bar\xi\in H^{\interior}_{\bar p}$, 
       put $\xi :=\chi_{\interior}(\bar\xi)$. Then
    $$
      d(f(\exp\,t\xi),\exp\, t\bar\xi)
         <t\tau(\delta,\sigma,\sigma_1/\sigma,\epsilon/\nu);
    $$
 \item For every $\bar\xi\in H^{\capp}_{\bar p}$, 
       put $\xi :=\chi_{\capp}(\bar\xi)$. Then
    $$
      d(f(\exp\,t\xi),\exp\, t\bar\xi)
       <t\tau(\delta,\sigma,\sigma_1/\sigma,\nu/\sigma,\epsilon/\nu).
    $$
\end{enumerate}
Conversely for every $\xi\in H^{\interior}_p$ 
$($resp. $\xi\in H^{\capp}_p$$)$, 
there exists $\bar\xi\in H^{\interior}_{\bar p}$ 
$($resp. $\bar\xi\in H^{\capp}_{\bar p}$$)$
satisfying the above inequality $(1)$ $($resp. $(2)$$)$.
\end{lem}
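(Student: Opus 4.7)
The plan is to show that $f$ is an almost-Riemannian submersion by estimating the $L^2$-difference $df_M(\xi) - df_X(\bar\xi)$ and then transporting these first-order estimates to the maps themselves via the tubular neighborhood projection $\pi$ and the bi-Lipschitz embedding $f_X$. First I would treat part (1). Fix $\bar\xi \in H^{\interior}_{\bar p}$ represented by a point $\bar q \in X$ with $d(\bar p,\bar q)\ge \nu/10$, set $q=\varphi(\bar q)$ and $\xi=\chi_{\interior}(\bar\xi)=q'_p$. Comparing the integrands in \eqref{eq:deriv} and \eqref{eq:deriv2}, one has for a.e.\ $x\in X$, on the one hand $|d(p,\varphi(x))-d(\bar p,x)|<\tau(\epsilon)$ (from $\varphi,\psi$ being $\epsilon$-approximations and $d(\bar p,\psi(p))<\tau(\epsilon)$), and on the other hand $|\angle(\xi,\varphi(x)'_p)-\angle(\bar\xi,x'_{\bar p})|<\tau(\delta,\sigma,\epsilon/\nu)$ by Comparison Lemma \ref{lem:compar}(1) applied to the triangles $\bar p\,\bar q\,x$ in $X$ and $p\,q\,\varphi(x)$ in $M$. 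Combined with $|h'|,|h''|\le C/\sigma^2$, these give $|df_M(\xi)-df_X(\bar\xi)|_{L^2} < \tau(\delta,\sigma,\epsilon/\nu)$.

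Next, setting $y_t:=\exp t\xi\in M$ and $\bar y_t:=\exp t\bar\xi\in X$, Lemma \ref{lem:deriv2} gives $|f_M(y_t)-f_M(p)-t\, df_M(\xi)|<t\,\tau(\delta,\sigma,\epsilon/\nu,t/\sigma)$ and Lemma \ref{lem:diff} gives the analogous estimate for $f_X(\bar y_t)-f_X(\bar p)-t\, df_X(\bar\xi)$. Since $|f_M(p)-f_X(\bar p)|<\tau(\epsilon)$ (as $f(p)=\bar p$ and $f_X$ is a bi-Lipschitz embedding), combining these yields $|f_M(y_t)-f_X(\bar y_t)|<t\,\tau(\delta,\sigma,\sigma_1/\sigma,\epsilon/\nu)$. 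Because $\bar y_t\in \interior X_\nu$ for small $t$, we have $f_X(\bar y_t)=\pi(f_X(\bar y_t))$, so applying the Lipschitz projection $\pi$ (Lemma \ref{lem:proj}) and then $f_X^{-1}$ (using the bi-Lipschitz bounds of Lemma \ref{lem:inj}) gives the desired inequality $d(f(y_t),\bar y_t) < t\,\tau(\delta,\sigma,\sigma_1/\sigma,\epsilon/\nu)$.

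For part (2), with $\bar p\in\partial X_\nu$ and $\bar\xi\in H^{\capp}_{\bar p}$ represented by $\bar q\in\partial X_\nu$, the same chain of estimates applies, now invoking Comparison Lemma \ref{lem:compar}(2) for the angle bound and paying a $\tau(\nu/\sigma)$ penalty. The new subtlety is that the projection $\pi$ is defined via the half-space $H(x)=N(x)+\mathbb{R}_+{\bf n}(x)$; one must verify that $f_M(y_t)$ lands on the correct side of $N(\bar p)$ so that $\pi(f_M(y_t))\in f_X(\partial X_\nu)$ and hence $f(y_t)\in \partial X_\nu$. This follows from the fact that $df_X(\bar\xi)$ is $\tau$-almost orthogonal to ${\bf n}(\bar p)$ (since $\bar\xi$ is tangent to $\partial X_\nu$ by the choice of $\bar q$) combined with Lemma \ref{lem:n}. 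The converse statements are proved by reversing the correspondence: given $\xi\in H^{\interior}_p$ (resp.\ $H^{\capp}_p$), defined via $\varphi(\bar q)'_p$, take $\bar\xi=\bar q'_{\bar p}$ and run the identical argument.

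The main obstacle will be the cap case: one must verify carefully that applying $\pi$ to $f_M(y_t)$ correctly hits $f_X(\partial X_\nu)$ rather than some interior point of $X_\nu$ (so the image lies in the intended boundary stratum), and that the Lipschitz bound $\kappa=C$ of the tubular neighborhood in Lemma \ref{lem:tubular} does not interact badly with the boundary normal direction ${\bf n}$ as one approaches $\partial X$. This is where the admissibility of strainers and the uniform control $\mu_0$ on strain radius enter essentially through Lemmas \ref{lem:basic2}--\ref{lem:basic3}.
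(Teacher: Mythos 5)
Your overall strategy—compare $df_M(\xi)$ to $df_X(\bar\xi)$ via \eqref{eq:deriv}, \eqref{eq:deriv2} and Lemma \ref{lem:compar}, upgrade to difference-quotients via Lemmas \ref{lem:diff} and \ref{lem:deriv2}, then project and invert $f_X$—matches the paper, and you correctly identify every ingredient. However, there is a genuine gap at the projection step.

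You claim that the difference-quotient estimate combined with $|f_M(p)-f_X(\bar p)|<\tau(\epsilon)$ yields $|f_M(\exp t\xi)-f_X(\exp t\bar\xi)|<t\,\tau(\ldots)$. This does not follow. The quantity $|f_M(p)-f_X(\bar p)|$ is a fixed small number determined by $\epsilon$ (and $\nu$); it does \emph{not} scale with $t$. Unwinding the difference quotient gives only $|f_M(\exp t\xi)-f_X(\exp t\bar\xi)|<\tau(\epsilon)+t\,\tau(\ldots)$, and for $t$ small this is dominated by the constant $\tau(\epsilon)$, so it is not of the required form $t\,\tau(\ldots)$. Applying $\pi$ (using only its Lipschitz property, Lemma \ref{lem:proj}(1)) and then $f_X^{-1}$ cannot repair this: a Lipschitz map of a $\tau(\epsilon)+t\,\tau(\ldots)$ quantity stays of that order.

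The correct finish, which is what the paper does, never compares $f_M(c(t))$ to $f_X(\bar c(t))$ directly. It works entirely with the difference $f_M(c(t))-f_M(p)$. After observing that this difference is almost perpendicular to the fibre $N_{\pi(f_M(p))}$ (since $\xi$ is a horizontal direction), one invokes the \emph{second} part of Lemma \ref{lem:proj}—the estimate $|(y-x)-(\pi(y)-\pi(x))|<(\tau_{\delta}+Ct')|x-y|$ under the perpendicularity hypothesis—to control
$$\left|\frac{f_M(c(t))-f_M(p)}{t}-\frac{\pi\circ f_M(c(t))-\pi\circ f_M(p)}{t}\right|<\tau(\delta,\sigma,\sigma_1/\sigma,\epsilon/\nu).$$
Only at this point does the exact identity $\pi\circ f_M(p)=f_X(\bar p)$ (a consequence of the definition $f=f_X^{-1}\circ\pi\circ f_M$) enter: it eliminates the constant offset entirely, yielding $|\pi\circ f_M(c(t))-f_X(\bar c(t))|<t\,\tau(\ldots)$ and then, via Lemma \ref{lem:inj}, the claimed bound on $d(f(\exp t\xi),\exp t\bar\xi)$. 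So the missing ideas in your write-up are (a) using the difference-of-differences structure rather than a pointwise comparison of $f_M$ and $f_X$, (b) invoking Lemma \ref{lem:proj}(2) rather than just Lipschitz continuity, and (c) exploiting that $\pi$ maps $f_M(p)$ \emph{exactly} onto $f_X(\bar p)$, not merely $\tau(\epsilon)$-close to it.
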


 In other words, the curve $f(\exp t\xi)$ is almost tangent to 
$\exp t\bar\xi$.

\begin{proof}
We prove (1). The proof of (2) is similar and hence omitted.
By using \eqref{eq:deriv},\eqref{eq:deriv2} and 
Lemma \ref{lem:compar} we get 
   $|df_M(\xi)-df_X(\bar\xi)| < 
        \tau(\delta,\sigma,\epsilon/\nu)$.
Lemmas \ref{lem:diff} and \ref{lem:deriv2} then imply 
$$
\left|\frac{f_M(c(t))-f_M(p)}{t}-\frac{f_X(\bar c(t))-f_X(\bar p)}{t}\right|
   <\tau(\delta,\sigma,\epsilon/\nu),
$$                
for sufficiently small $t>0$. In particular $f_M(c(t))-f_M(p)$ is almost 
perpendicular to $N_{\pi(f_M(p))}$. 
It follows from \ref{lem:proj} that
$$
 \left|\frac{f_M(c(t))-f_M(p)}{t}-\frac{\pi\circ f_M(c(t))-\pi\circ f_M(p)}{t}
    \right| < \tau(\delta,\sigma,\sigma_1/\sigma,\epsilon/\nu)
$$
and hence $|\pi\circ f_M(c(t))-f_X(\bar c(t)) 
< t\tau(\delta,\sigma,\sigma_1/\sigma,\epsilon/\nu)$.
Lemma \ref{lem:inj} then implies the required inequality. 
\end{proof}

 From now on we use the simpler notation 
$\tau_{\delta,\nu,\epsilon}$ to denote a positive function of type 
$\tau(\delta,\sigma,\sigma_1/\sigma,\nu/\sigma,\epsilon/\nu)$.\par
We show that both $f|_{M_{\text{int}}}$ and $f|_{M_{\text{cap}}}$ 
are $\tau_{\delta,\nu,\epsilon}$-Lipschitz submersions.

The following fact follows from Lemma \ref{lem:almtang}.\par
\begin{equation}
 \left| \frac{d(f(\exp t\xi),\bar p)}{t} - 1 \right| <\tau_{\delta,\nu,\epsilon}, 
\end{equation}
for all $\xi\in H^{\interior}_p\cup H^{\capp}_p$ and 
sufficiently small $t>0$. 

\begin{lem} \label{lem:lip}
For every $p,q\in M_{\interior}$ $($resp.
$p,q\in M_{\capp}$$)$, we have 
$$
   \left|\frac{d(f(p),f(q))}{d(p,q)} - \cos \theta \right|<\tau_{\delta,\nu,\epsilon},
$$
where $\theta=\angle(q_p^{\prime}, H^{\interior}_p)$
$($resp. $\theta=\angle(q_p^{\prime}, H^{\capp}_p)$$)$.
\end{lem}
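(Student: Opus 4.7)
The plan is to compute $d(f(p), f(q))$ through the $L^2$ embedding and identify the ``horizontal component'' of the derivative $df_M(v)$ for $v = q_p'$. First I would set $t = d(p, q)$, $\bar p = f(p)$, $\bar q = f(q)$, and choose $\xi^* \in H_p^{\interior}$ realizing $\theta = \angle(v, H_p^{\interior})$; the case $p, q \in M_{\capp}$ is handled identically with $H_p^{\capp}$ and a strainer whose last pair reaches $\partial X$. Then fix an admissible $(k,\delta)$-strainer $(\bar a_i, \bar b_i)$ at $\bar p$ in $X$, giving horizontal directions $\xi_i := (\varphi(\bar a_i))_p' \in \Sigma_p M$ and $\bar\xi_i := (\bar a_i)_{\bar p}' \in \Sigma_{\bar p} X$, related by the correspondence of Lemma \ref{lem:almtang}.

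Next I would reduce the problem to an $L^2$ computation. Since $f_X \circ f = \pi \circ f_M$, Lemma \ref{lem:diff} applied at $\bar p$ in direction $\bar v := \bar q_{\bar p}'$ yields
$$|f_X(\bar q) - f_X(\bar p)| = d(\bar p, \bar q)\, |df_X(\bar v)| + O\bigl(d(\bar p, \bar q)\,\tau_{\delta,\nu,\epsilon}\bigr).$$
Expanding $df_X(\bar v)$ in the strainer via Lemma \ref{lem:almlinear} and computing the squared norm from the integral formula \eqref{eq:deriv}, the value $|df_X(\bar v)|$ equals a constant $C_h = C_h(k, h)$ up to $\tau(\delta)$, since $|h'(d(\bar p, x))|^2 \cos^2\angle(\bar v, x_{\bar p}')$ averages over the nearly-Euclidean ball $B(\bar p, \sigma)$ to a value independent of $\bar v$. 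Lemma \ref{lem:proj} then shows $|\pi(f_M(q)) - \pi(f_M(p))|$ equals $|P_T(f_M(q) - f_M(p))|$ modulo $\tau_{\delta,\nu,\epsilon}\cdot t$, with $T = T(p)$ the subspace from Lemma \ref{lem:T}, and Lemma \ref{lem:deriv2} replaces $f_M(q) - f_M(p)$ by $t\, df_M(v)$ modulo $t\, \tau_{\delta,\nu,\epsilon}$.

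The crux will be to show $|P_T df_M(v)| = \cos\theta \cdot C_h + O(\tau_{\delta,\nu,\epsilon})$. I would compute the $L^2$-inner products $\langle df_M(v), df_M(\xi_j)\rangle$ from the integral expression \eqref{eq:deriv2} and use Lemma \ref{lem:compar} to transfer angles at $p \in M$ to angles at $\bar p \in X$; the resulting integral over $X$ splits, by the dual of Lemma \ref{lem:almlinear}, as $a_j \cdot C_h^2 + O(\tau)$, where $a_j := \cos\angle(\xi_j, v)$. Since the $\{df_M(\xi_j)\}$ are $\tau$-orthogonal with common norm $C_h + O(\tau)$ by the same computation, this identifies $P_T df_M(v) \approx \sum_j a_j\, df_M(\xi_j)$. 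The defining property of $\xi^*$ gives $a_j = \cos\theta \cdot \cos\angle(\xi_j, \xi^*) + O(\tau)$, whence $|P_T df_M(v)|^2 = \cos^2\theta \cdot C_h^2 + O(\tau)$. Combining with the previous paragraph yields $d(\bar p, \bar q) = t\cos\theta + O(t\, \tau_{\delta,\nu,\epsilon})$.

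I expect the main obstacle to lie in the third paragraph: rigorously isolating the ``vertical'' contributions to $df_M(v)$, since $\Sigma_p M$ has dimension $n - 1$ which generally exceeds $k - 1$, so there is no direct analogue of Lemma \ref{lem:almlinear} for $M$. One must show that for any direction $v \in \Sigma_p M$ perpendicular to every $\xi_j$ in the tangent cone, the vector $df_M(v) \in L^2(X)$ is $\tau_{\delta,\nu,\epsilon}$-orthogonal to each $df_M(\xi_j)$; this reduces to an angle-comparison estimate between $M$ and $X$ that uses Lemma \ref{lem:compar} together with the near-Euclidean structure at strained points. Once this orthogonality is in hand, the horizontal projection reduces to the $X$-side computation already governed by Lemma \ref{lem:almlinear}, and the rest of the argument assembles by routine estimates.
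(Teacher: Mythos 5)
Your proposal takes a genuinely different route from the paper's. After reducing to $d(p,q)$ small, the paper passes to bi-Lipschitz $(k,\delta)$-strainer coordinates $\lambda$ on $X$ near $\bar p$ and tracks $u(t)=\lambda(f(c(t)))$ along a minimal geodesic $c$ from $p$ to $q$: it shows $\dot u$ is nearly constant with magnitude $\approx\cos\theta_0$, and integrates. The infinitesimal engine is Sublemma~\ref{slem:fproj}, which proves $d(f(\exp t\xi), f(\exp t\cos\theta\,\xi_1))<t\tau_{\delta,\nu,\epsilon}$ by an expanding map $\rho:\Sigma_p\to S^{n-1}(1)$ and the spherical-Pythagoras identity $\cos d(\rho(\xi),\zeta(\eta))\approx\cos d(\rho(\xi),\eta_0)\cos d(\eta_0,\zeta(\eta))$, where $\eta_0$ is the foot of the perpendicular from $\rho(\xi)$ to the great sphere approximating $\rho(H_p)$; this is combined with Lemma~\ref{lem:almtang} and angle-propagation via Lemma~\ref{lem:basic}. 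Your proposal instead works in $L^2(X)$: replace $f_M(q)-f_M(p)$ by $d(p,q)\,df_M(v)$ using Lemma~\ref{lem:deriv2}, compute $|P_T\,df_M(v)|$ by inner products with the strainer derivatives $df_M(\xi_j)$, and divide by $|df_X(\bar v)|$ to recover $d(\bar p,\bar q)$.

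The main gap you acknowledge --- that $\langle df_M(v), df_M(\xi_j)\rangle\approx\cos\angle(v,\xi_j)\cdot C_h^2$, equivalently that $df_M$ of a vertical direction is almost orthogonal to the horizontal span --- is real, but it is exactly what Sublemma~\ref{slem:fproj} establishes; the expanding-map argument there fills it directly, so this is not a fatal obstacle. Two unacknowledged points deserve more caution. First, $|df_X(\bar v)|=C_h+\tau$ is not a universal constant $C_h(k,h)$: the $L^2$ integral is supported in $B(\bar p,\sigma)\cap X$, which transitions from a full ball to a half-ball as $\bar p$ approaches $\partial X$, so the constant depends on $\bar p$. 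For a fixed $\bar p$ the half-ball $\cos^2$-average is still independent of $\bar v$ (by the evenness of $\cos^2$ under reflection across the boundary hyperplane), and the same $\bar p$-dependent factor reappears in $|df_M(\xi_j)|$, so the cancellation you need does survive --- but this has to be checked, and it is precisely the bookkeeping the paper avoids by working in $\lambda$-coordinates, where the speed factor is literally $\cos\theta_0$ and no $L^2$ normalization constant ever enters. Second, the near-orthonormality of $\{df_M(\xi_j)\}_j$ and the identification of $T(\bar p)$ with their span modulo $\tau$ need to be proved (via Lemma~\ref{lem:compar} combined with Lemma~\ref{lem:almtang}, say); they are not stated in the paper. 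So your outline is sound, but it front-loads several $L^2$ estimates that the paper's coordinate-and-integration argument sidesteps, and the crux estimate you correctly flag is the one the paper handles in Sublemma~\ref{slem:fproj}.
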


 For the proof of Lemma \ref{lem:lip} we need two sublemmas.\par

Lemma \ref{lem:compar} and the second half of Lemma \ref{lem:almtang}
imply the following

\begin{slem} \label{slem:HbarH}
$\chi_{\interior}$ $($resp. $\chi_{\capp}$$)$ gives 
a $\tau(\delta,\sigma,\epsilon/\nu)$-approximation
$($resp. $\tau(\delta,\sigma,\nu/\sigma)$-approximation$)$.
\par
In particular, 
$d_{GH}(H^{\interior}_p, S^{k-1}(1))<\tau_{\delta,\nu,\epsilon}$
and 
$d_{GH}(H^{\capp}_p, S^{k-2}(1))<\tau_{\delta,\nu,\epsilon}$.
\end{slem}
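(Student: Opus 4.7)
The plan is to establish that $\chi_{\interior}$ (and analogously $\chi_{\capp}$) is simultaneously almost distance-preserving and has dense image in $H^{\interior}_p$ (resp.\ $H^{\capp}_p$); these two facts together say it is a GH-approximation, from which the ``In particular'' statements follow by transporting the known near-sphericity of $H^{\interior}_{\bar p}$ and $H^{\capp}_{\bar p}$ through the approximation.

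First I would handle distance preservation. For $\bar\xi_i=(\bar q_i)'_{\bar p}\in H^{\interior}_{\bar p}$ the definition gives
\[
\angle(\bar\xi_1,\bar\xi_2)=\angle\bar q_1\bar p\bar q_2,\qquad
\angle(\chi_{\interior}(\bar\xi_1),\chi_{\interior}(\bar\xi_2))=\angle\varphi(\bar q_1)\,p\,\varphi(\bar q_2),
\]
and Comparison Lemma \ref{lem:compar}(1) applied to the triples $(p,\varphi(\bar q_1),\varphi(\bar q_2))$ and $(\bar p,\bar q_1,\bar q_2)$ (after renormalizing the two legs to the scales $\nu$ and $\sigma$ required in that lemma) yields equality up to $\tau(\delta,\sigma,\epsilon/\nu)$. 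The same argument with Lemma \ref{lem:compar}(2) handles $\chi_{\capp}$.

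The heart of the argument is density. Given $\xi\in H^{\interior}_p$, the second half of Lemma \ref{lem:almtang} supplies $\bar\xi\in H^{\interior}_{\bar p}$ with $d(f(\exp t\xi),\exp t\bar\xi)<t\,\tau_{\delta,\nu,\epsilon}$ for small $t>0$. Applying the forward direction of the same lemma to $\chi_{\interior}(\bar\xi)\in H^{\interior}_p$ gives
\[
d(f(\exp t\chi_{\interior}(\bar\xi)),\exp t\bar\xi)<t\,\tau_{\delta,\nu,\epsilon},
\]
so the triangle inequality yields $d(f(\exp t\xi),f(\exp t\chi_{\interior}(\bar\xi)))<2t\,\tau_{\delta,\nu,\epsilon}$. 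To convert this into an angle estimate I would write $f=f_X^{-1}\circ\pi\circ f_M$ and invert the three factors separately: the analogue of Lemma \ref{lem:inj} for $f_M$ gives a uniform lower bound $|f_M(a)-f_M(b)|\ge c_5'\,d(a,b)$; Lemma \ref{lem:proj} controls $\pi$ with Lipschitz constant near $1$, while the normal displacement $|f_M-\pi\circ f_M|$ is of order $\tau(\nu+\epsilon)$; and $f_X^{-1}$ is bi-Lipschitz by Lemma \ref{lem:inj}. Choosing $t$ in the regime $\epsilon\ll t\ll\sigma$ makes the normal error negligible relative to $t$, so the displayed bound produces $d(\exp t\xi,\exp t\chi_{\interior}(\bar\xi))<Ct\,\tau_{\delta,\nu,\epsilon}$, and letting $t\to 0$ within this regime gives $\angle(\xi,\chi_{\interior}(\bar\xi))<\tau_{\delta,\nu,\epsilon}$.

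With $\chi_{\interior}$ established as a $\tau_{\delta,\nu,\epsilon}$-approximation, the ``In particular'' part is short: the strainer structure on $X$ and Lemma \ref{lem:almlinear} imply $H^{\interior}_{\bar p}\subset\Sigma_{\bar p}$ is $\tau(\delta)$-close to $S^{k-1}(1)$, and $H^{\capp}_{\bar p}$, consisting of directions tangent to the $(k-1)$-dimensional set $\partial X_{\nu}$, is $\tau(\delta)$-close to $S^{k-2}(1)$; composing with the approximations $\chi_{\interior},\chi_{\capp}$ transfers these bounds to $H^{\interior}_p$ and $H^{\capp}_p$. The main obstacle is the apparent circularity of the density step, since a naive reading asks for $f$ to be almost bi-Lipschitz (which is Lemma \ref{lem:lip}, proved later using this sublemma); the resolution is that only the individual factors $f_M$, $\pi$, $f_X^{-1}$ need to be inverted, and none of their bi-Lipschitz constants depend on the present statement.
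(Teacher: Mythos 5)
Your distance-preservation step is right and is exactly what the paper uses: apply Comparison Lemma~\ref{lem:compar} to the triples $(p,\varphi(\bar q_1),\varphi(\bar q_2))$ and $(\bar p,\bar q_1,\bar q_2)$. Likewise the ``in particular'' deduction (transport the near-sphericity of $H^{\interior}_{\bar p}$ and $H^{\capp}_{\bar p}$ through the approximation) is correct.

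The density step, however, contains a genuine error. You claim an ``analogue of Lemma~\ref{lem:inj} for $f_M$'' giving a uniform lower bound $|f_M(a)-f_M(b)|\ge c_5'\,d(a,b)$. No such bound exists, and in fact cannot exist in the collapsing setting. Lemma~\ref{lem:inj} holds for $f_X$ precisely because $X$ is $k$-dimensional and every direction in $\Sigma_{\bar p}(X)$ is nearly realized by some $x'_{\bar p}$; this is what feeds the first-variation formula a nontrivial cosine. For $f_M$ the source $M$ is $n$-dimensional with $n>k$, while the functions $f_M(p)(x)=h(d(p,\varphi(x)))$ only ``see'' the net $\varphi(X)\subset M$, whose directions from a given point cluster near the horizontal set. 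If $b$ is obtained from $a$ by moving in a fiber direction (i.e.\ a direction nearly orthogonal to all $\varphi(x)'_a$), then $\cos\angle\bigl((b)'_a,\varphi(x)'_a\bigr)\approx 0$ for all $x$ and the first-variation formula \eqref{eq:deriv2} gives $|f_M(a)-f_M(b)|\ll d(a,b)$. So the ratio $|f_M(a)-f_M(b)|/d(a,b)$ has no uniform lower bound; asserting one would make $f_M$ a uniformly bi-Lipschitz embedding and force $M$ to converge to $X$ in the non-collapsed sense, contradicting $n>k$. Your attempted escape from the circularity (``only the individual factors $f_M$, $\pi$, $f_X^{-1}$ need to be inverted'') therefore does not help: the obstruction to inverting $f$ is already present in $f_M$, and restricting attention to the specific pair $\exp t\xi$, $\exp t\chi_{\interior}(\bar\xi)$ does not remove it without first knowing that horizontal directions are transverse to the fibers --- which is precisely the content of Lemma~\ref{lem:lip}, proved later using this sublemma.

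The intended argument is shorter and avoids inverting $f$ altogether. Given $\xi=x'_p\in H^{\interior}_p$ with $d(p,x)\ge\nu/10$, the $\bar\xi$ furnished by the converse half of Lemma~\ref{lem:almtang} is (up to the usual perturbation) $\bar\xi=(\psi(x))'_{\bar p}$, so that $\chi_{\interior}(\bar\xi)=(\varphi\psi(x))'_p$. Since $d(x,\varphi\psi(x))<\epsilon\ll\nu$, both $x$ and $\varphi\psi(x)$ lie far from $p$ and close to each other, and one bounds $\angle\bigl(x'_p,(\varphi\psi(x))'_p\bigr)$ directly by comparing through $X$: for any auxiliary reference point, Comparison Lemma~\ref{lem:compar} transfers the angle at $p$ to the angle at $\bar p$, where the long admissible strainer and Lemma~\ref{lem:basic} give $|\angle-\tilde\angle|<\tau(\delta,\sigma)$ and $\tilde\angle<\tau(\epsilon/\nu)$. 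No lower bound on $|f_M(a)-f_M(b)|$ is needed. The same construction with $\psi(x)\in\partial X_{\nu}$ handles $\chi_{\capp}$.
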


Let $H_p$ denote either $H^{\interior}_p$  or $H^{\capp}_p$.

\begin{slem}  \label{slem:fproj}
For any $\xi\in\Sigma_p^{\prime}$, 
put $\theta :=\angle(\xi, H_p)$ and let $\xi_1\in H_p$ satisfy 
$\theta = \angle(\xi, \xi_1)$. Then 
$$
 d(f(\exp t\xi), f(\exp t \cos\theta\xi_1))< t\tau_{\delta,\nu,\epsilon},
$$ 
for every sufficiently small $t>0$.
\end{slem}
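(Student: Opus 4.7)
The plan is to reduce the conclusion to a single pointwise derivative comparison in $L^2(X)$ and then transport it back to $X_\nu$ via the (bi-Lipschitz) maps $f_X$ and $\pi$. More precisely, I would show
\[
 |df_M(\xi) - \cos\theta\, df_M(\xi_1)|_{L^2} < \tau_{\delta,\nu,\epsilon},
\]
and then expand both $f_M(\exp t\xi)$ and $f_M(\exp t\cos\theta\, \xi_1)$ to first order via Lemma~\ref{lem:deriv2}, obtaining
\[
 |f_M(\exp t\xi) - f_M(\exp t\cos\theta\, \xi_1)|_{L^2} < t\tau_{\delta,\nu,\epsilon}.
\]
Applying the Lipschitz projection $\pi:\mathcal W(\kappa)\to f_X(X_\nu)$ from Lemma~\ref{lem:proj} and then the bi-Lipschitz inverse $f_X^{-1}$ from Lemma~\ref{lem:inj} yields the stated estimate for $d(f(\exp t\xi), f(\exp t\cos\theta\,\xi_1))$.

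The heart of the argument is therefore the pointwise angle estimate: for every $\eta\in H_p$,
\[
 |\cos\angle(\xi,\eta) - \cos\theta\cos\angle(\xi_1,\eta)| < \tau_{\delta,\nu,\epsilon}.
\]
Once this is available, the formula $df_M(\zeta)(x)=-h'(d(p,\varphi(x)))\cos\angle(\zeta,\varphi(x)'_p)$ gives the $L^2$-closeness of $df_M(\xi)$ and $\cos\theta\, df_M(\xi_1)$, because the integrand $h'$ is supported exactly on the distance range defining $H_p$, so every $\varphi(x)'_p$ appearing in the integral is $\tau_{\delta,\nu,\epsilon}$-close to an element of $H_p$ (by the strainer structure and Lemma~\ref{lem:compar}).

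To establish the angle estimate I would use Lemma~\ref{lem:almlinear}: the strainer $(a_i,b_i)$ at $p$ gives a $\tau(\delta)$-almost isometry between $\Sigma_p(D(X))$ and $S^{k-1}(1)$ through the direction map $\eta\mapsto (\cos\angle(\xi_i,\eta))_i$. In this Euclidean picture Sublemma~\ref{slem:HbarH} shows that $H_p$ is $\tau_{\delta,\nu,\epsilon}$-close to a great sphere (or great hemisphere) $S$ of $S^{k-1}(1)$, that $\xi_1$ is the nearest point of $S$ to $\xi$ up to the same error, and hence $\xi$ decomposes as $\cos\theta\,\xi_1 + \sin\theta\,\xi^\perp$ with $\xi^\perp$ almost orthogonal to the linear span of $S$. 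Then for any $\eta\in H_p$ the inner product with $\eta$ picks out only the $\xi_1$-component, giving the claimed cosine identity with the appropriate error.

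The main obstacle is carrying out this ``orthogonal decomposition'' of $\xi$ rigorously inside the Alexandrov space $\Sigma_p$, where one does not literally have a vector space but only an almost-Euclidean structure of size $\tau(\delta)$. The delicate point is that the error from identifying $\Sigma_p$ with $S^{k-1}(1)$ via the strainer must be combined with the error controlling how $H_p$ sits as a near-hypersphere (provided by Sublemma~\ref{slem:HbarH}), and these two errors of different flavors must be shown to compose into a single $\tau_{\delta,\nu,\epsilon}$ without amplification. Once this bookkeeping is done, all remaining steps --- Lipschitz estimates for $\pi$ and $f_X^{-1}$, and the first-order expansion of $f_M$ --- are routine applications of results already established in Sections \ref{sec:embed}--\ref{sec:cap-tube}.
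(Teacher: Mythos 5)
Your reduction to the pointwise derivative comparison in $L^2(X)$, and then back to $X_\nu$ via Lemma~\ref{lem:proj} and Lemma~\ref{lem:inj}, is exactly what the paper does in the last line of its proof, and is sound. The gap is in the angle estimate that you correctly identify as the heart of the argument.

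The first problem is dimensional. The point $p$ in this sublemma lies in $M$, not in $X$, so $\Sigma_p$ is an $(n-1)$-dimensional Alexandrov space of curvature $\ge 1$, not a $(k-1)$-dimensional one. Lemma~\ref{lem:almlinear} uses an admissible $(k,\delta)$-strainer at a point of $X$ to produce a $\tau(\delta)$-almost isometry $\Sigma_p(D(X)) \to S^{k-1}(1)$; it does not apply to the present $p\in M$, and there is no almost-isometry $\Sigma_p \to S^{k-1}(1)$ at all when $n>k$. At best the $(n,\delta)$-strainer structure of $M$ gives an almost-isometry $\Sigma_p\to S^{n-1}(1)$, and your putative decomposition $\xi = \cos\theta\,\xi_1 + \sin\theta\,\xi^\perp$ must be read there, with $H_p$ approximated not by a "hypersphere" but by a totally geodesic $(k-1)$-sphere of codimension $n-k$.

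The second problem is more serious and is the step you label "bookkeeping." Sublemma~\ref{slem:HbarH} only gives an \emph{intrinsic} Gromov--Hausdorff closeness of $H_p$ to $S^{k-1}(1)$. This does not say that $H_p$ sits inside $\Sigma_p$ as an almost totally geodesic great $(k-1)$-sphere, which is what your Euclidean decomposition needs. Passing from intrinsic GH-closeness to an extrinsic "great sphere" position requires a genuine geometric input, and this is precisely what the paper supplies: it chooses an expanding map $\rho:\Sigma_p\to S^{n-1}(1)$ (available because $\Sigma_p$ has curvature $\ge 1$), and then exploits the fact that every $v_1\in H_p$ has an almost antipode $v_1^*\in H_p$ to force $|d(\rho(v_1),\rho(v_2))-\angle(v_1,v_2)|<\tau_{\delta,\nu,\epsilon}$ on $H_p\times H_p$. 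Only after this does one know that $\rho(H_p)$ lies $\tau$-close to a totally geodesic $(k-1)$-sphere in $S^{n-1}(1)$, and then the spherical law of cosines gives your cosine identity. Without the expanding map plus the almost-antipodal argument, your proposed decomposition of $\xi$ is not justified; replacing Lemma~\ref{lem:almlinear} by the strainer structure of $M$ does not repair this, because an almost-isometry $\Sigma_p\to S^{n-1}(1)$ of size $\tau(\delta)$ says nothing about $H_p$ being close to a great $(k-1)$-sphere unless one already knows that $H_p$ is close to being totally geodesic in $\Sigma_p$.
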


\begin{proof}
Since $\Sigma_p$ has curvature $\ge 1$, we have an expanding map
$\rho:\Sigma_p\to S^{n-1}(1)$,$(n=\operatorname{dim}M)$. 
First we show that 
$|d(\rho(v_1),\rho(v_2))-\angle(v_1,v_2)|<\tau_{\delta,\nu,\epsilon}$
for every $v_1,v_2\in H_p$. Let $v_1^*\in H_p$ be such that
$\angle(v_1,v_1^*)>\pi-\tau_{\delta,\nu,\epsilon}$. 
Since $\rho$ is expanding, we obtain that
\begin{equation}
    |\angle(v_1,v_2)-d(\rho(v_1),\rho(v_2))|<\tau_{\delta,\nu,\epsilon},\quad 
         |\angle(v_1^*, v_2)- d(\rho(v_1^*), \rho(v_2))|<\tau_{\delta,\nu,\epsilon}.
           \label{eq:v_1v_2}
\end{equation}
Now we assume $H_p=H^{\interior}_p$. 
The case $H_p=H^{\capp}_p$ is similar and hence omitted.
The above argument also implies that $\rho(H_p)$ is 
$\tau_{\delta,\nu,\epsilon}$-Gromov-Hausdorff close to 
a totally geodesic $(k-1)$-sphere $S^{k-1}(1)$ in $S^{n-1}(1)$. 
Let $\zeta:H_p\to S^{k-1}(1)\subset  S^{n-1}(1)$ be a 
$\tau_{\delta,\nu,\epsilon}$-approximation 
such that $d(\zeta(v),\rho(v))<\tau_{\delta,\nu,\epsilon}$ for all 
$v\in H_p$.
For a given $\xi\in\Sigma_p$, an argument similar to 
\eqref{eq:v_1v_2} implies that
$|\angle(\xi, v)-d(\rho(\xi),\zeta(v))|<\tau$ for all $v\in H_p$.
Remark that for any $y$ with $\sigma/10\le d(p,y)\le \sigma$, 
an elementary geometry yields 
$$
   \cos d(\rho(\xi),\zeta(y^{\prime}_p)) 
  = \cos d(\rho(\xi),\eta)\cos d(\eta,\zeta(y^{\prime}_p)),
$$
where $\eta$ is an element of 
$S^{k-1}(1)$ such that 
$\angle(\rho(\xi),\eta)=\angle(\rho(\xi),S^{k-1}(1))$.
It follows that for sufficiently small $t>0$
\begin{align*}
 |f_M(\exp & t\xi)-f_M(\exp t\cos\theta \xi_1)|^2/t^2 \\
 &  = \frac{\sigma^2}{b(\sigma)} \int_X
 \left(\frac{h(d(\exp t\xi,\varphi(x)))-h(d(p,\varphi(x)))}{t} \right. \\
 &\qquad\qquad\qquad
    \left. - \frac{h(d(\exp t\cos\theta \xi_1,\varphi(x))) - 
       h(d(p,\varphi(x)))}{t} \right)^2 \, d\mu(x) \\
 & \le \frac{\sigma^2}{b(\sigma)} \int_X
   (h^{\prime}(d(p,\varphi(x))))^2(\cos\angle(\xi,\varphi(x)_p^{\prime}) - \\
 & \qquad\qquad\qquad
   \cos \theta \cos \angle(\xi_1,\varphi(x)_p^{\prime}))^2 \, 
             d\mu(x) + \tau_{\delta,\nu,\epsilon} \\
 & \le \frac{\sigma^2}{b(\sigma)} \int_X
  (h^{\prime})^2(\cos \angle(\xi,\varphi(x)_p^{\prime}) -
       \cos \angle(\rho(\xi),\rho(\varphi(x)_p^{\prime})) \\ 
 &\qquad\qquad\qquad
   + \cos \angle(\rho(\xi),\eta) 
          \cos \angle(\eta,\rho(\varphi(x)_p^{\prime})) - \\
 & \qquad\qquad\qquad
    \cos \angle(\xi,\xi_1) \cos \angle(\xi_1,\varphi(x)_p^{\prime}))^2
        \, d\mu(x) + \tau_{\delta,\nu,\epsilon} \\
 & \le \tau_{\delta,\nu,\epsilon}.
\end{align*}
Therefore by Lemmas \ref{lem:proj} and \ref{lem:inj}
we conclude the proof of the sublemma.
\end{proof}

\begin{proof}[Proof of Lemma \ref{lem:lip}]
Suppose $p, q\in M_{\interior}$.
Since $f_{\interior}$ is a $\tau(\epsilon)$-approximation,
we may assume that
$d(p,q)<\nu^2\ll\nu$. 
Let $c:[0,\ell]\to M$ be a minimal geodesic 
joining $p$ to $q$ where $\ell =d(p,q)$. 
By using Lemma \ref{lem:basic}, one can show 
that  
\begin{equation*}
    |\angle qc(t)x-\angle qpx|<\tau_{\delta,\nu,\epsilon}, \label{eq:qcx}
\end{equation*}
for every $t<\ell$ and for every $x\in M$ 
with $\nu/10\le d(p,x)\le\nu$.
Let $\xi_t := q^{\prime}_{c(t)}$,  
and $\eta_0\in H_{p}^{\interior}$ be such that 
$\angle(\xi_0, H_{c(t)}^{\interior})=\angle(\xi_0, \eta_0)$. 
Take a point $y$ such 
that $\eta_0=y_p^{\prime}$, $\nu/10\le |py|\le\nu$.
\eqref{eq:qcx} implies that 
  $|\angle(y^{\prime}_{c(t)}, \xi_t) - 
      \angle(\xi_t, H^{\interior}_{c(t)})| < \tau_{\delta,\nu,\epsilon}$.
Put $\theta_t=\angle qc(t)y$. It follows 
from Sublemma \ref{slem:fproj} and \eqref{eq:qcx} that
\begin{equation}
    d(f\circ c(t+s), f(\exp s\cos\theta_0\eta_t))<\tau_{\delta,\nu,\epsilon} s,
        \label{eq:almtang}
\end{equation}
where $\eta_t:= y_{c(t)}'$.
Put $\bar c(t)=f\circ c(t)$, and take any $\bar\eta_t$ in 
$\psi(y)^{\prime}_{\bar c(t)}$. 
Then by Lemma \ref{lem:almtang}
\begin{equation}
 d(f(\exp s\cos\theta_0\eta_t),\exp s\cos\theta_0\bar\eta_t)
        < \tau_{\delta,\nu,\epsilon} s.  
                              \label{eq:almtang2}
\end{equation}
By Lemma \ref{lem:basic}, 
we see that for every $z\in  X$ with $\nu/10\le d(\bar p, z)\le
\nu$, 
\begin{equation}
   |\angle \psi(y)\bar c(t)z -\angle \psi(y)\bar p z|<
    \tau_{\delta,\nu,\epsilon}.  \label{eq:angle}
\end{equation}  
Now let $(a_i,b_i)$ be a $(k,\delta)$-strainer at $\bar p$ with
$d(\bar p, a_i)=\nu$ and $ \lambda:B_{\bar p}(\nu^2)\to \R^k$
be the bi-Lipschitz map defined by  
$$
    \lambda(x)=(d(a_1,x),\dots, d(a_k,x)).
$$
Put $u(t)=\lambda\circ\bar c(t)$.  
Combining \eqref{eq:almtang}, \eqref{eq:almtang2} and \eqref{eq:angle}, 
we get
$$
  |\dot u(s) - \dot u(t)|<\tau_{\delta,\nu,\epsilon},\qquad   
   ||\dot u(s)| - \cos \theta_0|<\tau_{\delta,\nu,\epsilon}.
$$
for almost all $s,t \in [0,\ell]$. Thus we arrive at 
\begin{align*}
  |\ell\dot u(s) - (\lambda&(f(q))-\lambda(f(p)))|  \\
         &\le \int_0^{\ell} |\dot u(s) - \dot u(t)|\, dt 
                  \le \tau_{\delta,\nu,\epsilon}\ell,
\end{align*}
from which the conclusion follows.

Next suppose $p, q\in M_{\capp}$. We may assume 
$d(p,q)<\sigma^2\ll\sigma$, 
since $f_{\capp}$ is a $\tau(\nu)$-approximation,

By using Lemma \ref{lem:basic2}, we have 
\begin{equation*}
    |\angle qc(t)\varphi(x)-\angle qp\varphi(x)|
       < \tau_{\delta,\nu,\epsilon}, \label{eq:barqcx}
\end{equation*}
for every $t<\ell$ and for every $x\in\partial X_{\nu}$ 
with $\sigma/10\le d(p,x)\le\sigma$.
Let $\xi_t := q^{\prime}_{c(t)}$,  
and $\eta_0\in H_{p}^{\capp}$ be such that 
$\angle(\xi_0, H_{p}^{\capp})=\angle(\xi_0, \eta_0)$. 
Take a point $y\in\partial X_{\nu}$ such 
that $\eta_0=\varphi(y)_p^{\prime}$, 
$\sigma/10\le d(\bar p,y)\le\sigma$.
By an argument similar to the previous one using 
a $(k,\delta)$-strainer $(a_i,b_i)$ of $D(X)$ at $\bar p$ with
$d(\bar p, a_i)=\sigma$,
we obtain the required estimate.
\end{proof}

\begin{lem}
Let $p\in M$ and $x\in X_{\nu}$ be given.
\begin{enumerate}
 \item  If $p\in M_{\interior}$, then there exists a point 
        $q\in M_{\interior}$ such that
        $f(q)=x$ and $d(f(p),f(q))\ge (1-\tau_{\delta,\nu,\epsilon})d(p,q).$ 
 \item  If $p\in M_{\capp}$ and $x\in\partial X_{\nu}$, 
        then there exists a point 
        $q\in M_{\capp}$ such that
        $f(q)=x$ and $d(f(p),f(q))\ge (1-\tau_{\delta,\nu,\epsilon})d(p,q)$ 
\end{enumerate}
\end{lem}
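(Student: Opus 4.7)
The plan is to construct $q$ by an iterative horizontal-lifting procedure, analogous to Newton's method, exploiting the almost-submersion structure established in Lemmas \ref{lem:almtang} and \ref{lem:lip}. Starting from $q_0 := p$ with $\bar q_0 := f(q_0) = \bar p$, I would define $q_{n+1}$ inductively as follows: if $\bar q_n = x$ stop; otherwise set $r_n := d(\bar q_n, x)$, choose a direction $\bar\xi_n \in H^{\interior}_{\bar q_n}$ (resp.\ $H^{\capp}_{\bar q_n}$ in case (2)) from $\bar q_n$ pointing toward $x$, lift it via Sublemma \ref{slem:HbarH} to $\xi_n := \chi_{\interior}(\bar\xi_n) \in H^{\interior}_{q_n}$ (resp.\ $\chi_{\capp}(\bar\xi_n) \in H^{\capp}_{q_n}$), and put $q_{n+1} := \exp_{q_n}(r_n\, \xi_n)$ using quasigeodesics.

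The second step is to show the iteration contracts geometrically. Applying Lemma \ref{lem:almtang} to $\xi_n$ and $\bar\xi_n$, one obtains
\[
d\bigl(f(q_{n+1}),\, \exp_{\bar q_n}(r_n \bar\xi_n)\bigr) < \tau_{\delta,\nu,\epsilon}\, r_n.
\]
Since $\bar\xi_n$ was chosen along a minimizer to $x$ in $X_\nu$, we have $\exp_{\bar q_n}(r_n \bar\xi_n) = x$, hence $r_{n+1} \le \tau_{\delta,\nu,\epsilon}\, r_n$. Thus $r_n \le \tau^n r_0$ and $(q_n)$ is Cauchy with
\[
d(p, q_n) \le \sum_{k=0}^{n-1} r_k \le \frac{r_0}{1 - \tau_{\delta,\nu,\epsilon}}.
\]
Passing to a limit $q \in M$ yields $f(q) = x$ and $d(p,q) \le (1 + \tau_{\delta,\nu,\epsilon})\, d(f(p), x)$, which rearranges to the stated inequality $d(f(p),f(q)) \ge (1 - \tau_{\delta,\nu,\epsilon})\, d(p,q)$.

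Third, two structural points need checking. (i) All iterates must lie in a ball of radius $\le r_0/(1-\tau)$ around $p$, which for $r_0 = d(\bar p, x)$ remains well below $\nu$ (case 1) or $\sigma$ (case 2); this ensures Lemma \ref{lem:almtang} applies uniformly with a single $\tau_{\delta,\nu,\epsilon}$ at every step. (ii) In case (2) the iteration must remain in $M_{\capp}$: since $\chi_{\capp}(\bar\xi_n)$ is modelled on directions tangent to $\partial X_\nu$, $f(q_{n+1})$ deviates from $\partial X_\nu$ by at most $\tau r_n$, and the limit $q$ satisfies $f(q) = x \in \partial X_\nu$, which by definition of the decomposition $M = M_{\interior} \cup M_{\capp}$ places $q$ in $M_{\capp}$.

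The main obstacle I anticipate is the rigorous bookkeeping in case (2) to ensure the iterates genuinely track $\partial X_\nu$ rather than drifting into $\interior X_\nu$ by $O(\tau r_n)$ errors at each stage. The natural fix is to compose each horizontal step with a tiny correction toward $\partial X_\nu$ (possible by Lemma \ref{lem:basic2} and the existence of directions toward $\partial X_\nu$ produced by Lemma \ref{lem:basic3}), which adds only a geometrically decaying $\tau r_n$ contribution and preserves the contraction estimate; alternatively, one may run the iteration freely in $M$ and invoke the boundary condition $f(q)=x\in\partial X_\nu$ at the end to conclude $q\in M_{\capp}$ by continuity of $f$ and the definition of $M_{\capp}$.
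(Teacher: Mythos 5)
Your iterative horizontal-lifting scheme is exactly the argument the paper intends: it explicitly defers the proof to \cite{Ym:conv}, whose openness lemma is proved by precisely this kind of Newton-type iteration built on the infinitesimal almost-tangency estimate, so your approach matches. Two points, however, need tightening before the write-up would be complete. First, you assert that $r_0 = d(\bar p, x)$ ``remains well below $\nu$ (case 1) or $\sigma$ (case 2)'', but the lemma permits $x$ to be anywhere in $X_\nu$ (resp.\ $\partial X_\nu$), so $r_0$ may be of the order of $\diam X$, where Lemma \ref{lem:almtang} does not apply in a single step. The missing reduction is that when $d(\bar p,x)\ge\nu$ \emph{any} $q\in f^{-1}(x)$ already satisfies the conclusion, since $f$ is a $\tau(\epsilon)$-approximation and hence $d(p,q)\le d(\bar p,x)+\tau(\epsilon)\le (1+\tau_{\delta,\nu,\epsilon})\,d(\bar p,x)$; one then runs your iteration only in the regime $r_0<\nu$, where all $r_n<\nu\ll\sigma$ so the error in Lemma \ref{lem:almtang} (which, tracing its proof through Lemmas \ref{lem:diff} and \ref{lem:deriv2}, degrades like $\tau(t/\sigma)$ rather than requiring infinitesimal $t$) stays of the allowed type $\tau_{\delta,\nu,\epsilon}$. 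Second, in case (2) the direction $\bar\xi_n\in H^{\capp}_{\bar q_n}$ furnished by Lemma \ref{lem:basic3} is only $\tau(\delta,\sigma,\nu)$-close to $x'_{\bar q_n}$, so $\exp_{\bar q_n}(r_n\bar\xi_n)$ is not exactly $x$ as you wrote; this contributes an additional $r_n\tau$ error which is harmlessly absorbed into the contraction constant. Your ``alternative'' resolution of the drift problem is the right one and is immediate: since $M_{\capp}=f^{-1}(\partial X_\nu)$ by definition, the limit $q$ with $f(q)=x\in\partial X_\nu$ lies in $M_{\capp}$ regardless of where the intermediate $q_n$ land, and the iteration may freely switch between the interior and cap cases of Lemma \ref{lem:almtang} as $f(q_n)$ moves between $\interior X_\nu$ and $\partial X_\nu$.
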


 Namely $f_{\interior}$ and $f_{\capp}$ are 
$(1-\tau_{\delta,\nu,\epsilon})$-open in the sense of [BGP].

In view of Lemma \ref{lem:basic3}, the proof of the lemma above 
is similar to \cite{Ym:conv} and hence omitted.

We are in a position to complete the proof of 
Theorem \ref{thm:cap}.
So far we do not need the assumption that 
$S_{\delta_n}(M)$ is empty.
Now we use this assumption to prove that 
both $f_{\interior}$ and $f_{\capp}$ are locally trivial 
fibre bundle maps.

For any $p\in M_{\interior}$, set $F:=f_{\interior}^{-1}(\bar p)$,
and take an $(n,\delta)$-strainer $(a_i,b_i)$ at $p$ 
such that $(a_i)_p', (b_i)_p'\in H_p^{\interior}$ for 
all $1\le i\le k$. Note that $(a_i)_p'$  and $(b_i)_p'$ are
almost tangent to $F$ for $k+1\le i\le n$.
This implies that  the map 
$\Phi = (f, d_{a_{k+1}},\ldots, d_{a_n})$ is a bi-Lipschitz 
homeomorphism of a small neighborhood of $p$ onto an 
open subset of $X\times\R^{n-k}$.
It follows that $f_{\interior}$ is a topological submersion
and hence is a locally trivial bundle map by 
\cite{Si:stratified}.
The proof for $f_{\capp}$ is similar and hence omitted.

\begin{proof}[Proof of Corollary \ref{cor:cap}]
Since $f_{\rm cap}:N_{\rm cap}\to\partial_0Y_{\nu}$ is a 
locally trivial fibre bundle,
the conclusion (1) follows from the generalized Margulis lemma
of \cite{FY:fundgp}.
Note that  $f_{\rm cap}$ is also an almost Lipschitz submersion.
Therefore by the parametrized versions of 
Theorem 0.5 in \cite{SY:3mfd} and Theorem \ref{thm:dim1} in 
the present paper, we can get the conclusions (2) and (3)
respectively. Note that in the present case, we can prove
the rescaling theorem corresponding to Theorem \ref{thm:rescal}
by using the (generalized) argument of Lemma \ref{lem:split-rescal}.
The actual proofs of (2) and (3) are done by contradiction, 
and the details are omitted.
\end{proof}

%\begin{thebibliography}

\end{document}